\def\@captype{figure} %for caption
\newcommand{\ds}{\displaystyle}
\newtheorem{theorem}{Theorem}[section]
\newtheorem{lemma}[theorem]{Lemma}
\newtheorem{proposition}[theorem]{Proposition}
\newtheorem{corollary}[theorem]{Corollary}
\theoremstyle{definition}
\newtheorem{definition}[theorem]{Definition}
\newtheorem{remark}[theorem]{Remark}
\newtheorem{observation}{Observation}
\numberwithin{equation}{section}
\newtheorem{example}{Example}
\newtheorem{notation}[theorem]{Notation}
\DeclareMathOperator{\m}{\mathfrak{m}}
\DeclareMathOperator{\dm}{d\mathfrak{m}}
\DeclareMathOperator{\diam}{Diam}
\DeclareMathOperator{\dd}{d}
\DeclareMathOperator{\sgn}{sgn}
\DeclareMathOperator{\supp}{supp}
\DeclareMathOperator{\E}{\mathsf{E}}
\DeclareMathOperator{\PD}{\mathscr{O}}
\DeclareMathOperator{\FMMM}{\mathsf{FMMM}}
\DeclareMathOperator{\Rno}{\mathbb{R}_{\,\circ}^{\it n}}
\DeclareMathOperator{\Omegao}{\Omega_{\circ}}
\newcommand{\Lip}{\mathsf{Lip}}
\newcommand{\dil}{\mathsf{dil}}
\author{Benling Li}
\address{
School of Mathematics and Statistics\\
Ningbo University\\
315211 Ningbo, China}
\email{libenling@nbu.edu.cn}
\author{Wei Zhao}
\address{
School of Mathematics\\
East China University of Science and Technology\\
200237 Shanghai, China}
\email{wzhao@ecust.edu.cn; szhao\underline{ }wei@yahoo.com}
\date{November 14, 2025.}
\keywords{Hilbert's fourth problem; projectively flat; Finsler manifold; flag curvature; Sobolev space.}
\subjclass[2020]{Primary 53C60, Secondary 58J60, 46E36}
\begin{document}

\title[]{Hilbert's fourth problem in the constant curvature setting}

\begin{abstract}

Hilbert's fourth problem seeks the classification of metric geometries where straight lines are shortest paths. Its regular case  identifies the projectively flat Finsler manifolds.
This broader framework breaks the equivalence between projective flatness and constant curvature that holds in the Riemannian setting,
creating a more intricate classification problem.

This paper resolves the long-standing question of how the local structure determines the global topology for such manifolds of constant flag curvature, where flag curvature is the natural generalization of Riemannian sectional curvature.
We derive explicit distance formulas for all cases of constant flag curvature.
For non-positive constant curvature, we establish a global classification of forward complete manifolds,  a uniqueness theorem for forward complete metrics, and a characterization of maximal domains of metrics where exotic examples are constructed. For positive constant curvature, we prove a maximum diameter theorem and show the completion of such manifold is a sphere.
 A fundamental connection is revealed between Sobolev space nonlinearity and backward incompleteness.
This work provides a complete characterization of the global geometry for the regular case of Hilbert's fourth problem with constant flag curvature.
\end{abstract}

\maketitle
%\vspace*{-5pt}
\tableofcontents

\section{Introduction}

\subsection{Background and motivation}
Hilbert's fourth problem, posed at the 1900 International Congress of Mathematicians in Paris, was formulated as the
``{\it problem of the straight lines as the shortest distance between two points}" \cite{Hilbert}. In modern terms, it seeks to characterize all metric geometries in which straight lines are shortest paths.
Hilbert himself illustrated this concept through his celebrated distance on strictly convex domains
 $\Omega \subset \mathbb{R}^n$:
\begin{equation}\label{Hilbertdis}
d_{\mathsf{H}}(x_1,x_2) = \frac{1}{2} \left\{ \ln \frac{|x_1 - \bar{x}_1|}{|x_2 - \bar{x}_1|} + \ln \frac{|x_2 - \bar{x}_2|}{|x_1 - \bar{x}_2|} \right\}, \quad \forall x_1,x_2\in \Omega,
\end{equation}
where  $\bar{x}_1$, $\bar{x}_2$ are the intersection points of the boundary $\partial \Omega$ with the rays  from $x_1$ through $x_2$ and from $x_2$ through $x_1$, respectively. Hereafter, $|\cdot|$ denotes the Euclidean norm. Building upon Hilbert's construction, Funk \cite{Funk1}
introduced an asymmetric distance on $\Omega$:
\begin{equation}\label{Funkdis}
d_{\mathsf{F}}(x_1,x_2) = \ln \frac{|x_1 - \bar{x}_1|}{|x_2 - \bar{x}_1|}, \quad  \forall x_1,x_2\in \Omega,
\end{equation}
whose symmetrization yields the Hilbert distance:
\begin{equation}\label{symmdishoilfunk}
d_{\mathsf{H}}(x_1,x_2) =\frac{1}{2} \big\{ d_{\mathsf{F}}(x_1,x_2) + d_{\mathsf{F}}(x_2,x_1) \big\}.
\end{equation}
Hilbert emphasized that exploring such geometries, which generalize Minkowski space while maintaining close ties to it, represents a direction of profound mathematical significance.

Over the past century, numerous significant results concerning both symmetric and asymmetric distances have been established, as detailed in the works of  Busemann \cite{Busemann}, Pogorelov \cite{Pogorelov}, Szab{\'o} \cite{Szabo}, {\'A}lvarez Paiva \cite{Alvarez}, Papadopoulos--Troyanov \cite{PT}, and others.
Nevertheless,
 the classification problem
 remains as remarkably challenging. This difficulty  stems from the fact that Hilbert's fourth problem imposes no specific constraints on the distance functions.
Among various distances,  those induced by  families of pseudo-norm functions
have attracted considerable attention.
Specifically, consider a continuous function $(x,y)\longmapsto \|y\|_x$   on the tangent bundle $TM$ of a manifold $M$ that satisfying  the following properties for each $x\in M$ and $y,v\in T_xM$:
\begin{enumerate}[\rm (i)]
\item non-negativity:   $\|y\|_x\geq 0$ with equality $y=0$;

\item positive $1$-homogeneity:  $\| \lambda y\|_x=\lambda \|y\|_x$ for any $\lambda> 0$;

\item \label{Introtriangleine} triangle inequality:   $\|y+v\|_x\leq \|y\|_x+\|v\|_x$.
\end{enumerate}
Such a function $\|\cdot\|$ is called a {\it pseudo-norm function}.  The associated {\it distance function} is defined as
\begin{equation*}
d(x_1,x_2):=\inf \int_0^1 \|\gamma'(t)\|_{\gamma(t)} {\dd}t,
\end{equation*}
where the infimum is taken over all piecewise smooth curves $\gamma:[0,1]\rightarrow M$ with $\gamma(0)=x_1$ and $\gamma(1)=x_2$.
This distance satisfies non-negativity and the triangle inequality but may lack symmetry.
%Obviously, the distance verifies non-negativity and the triangle inequality but may fail in the symmetry.

In 1941, H. Busemann and W. Mayer \cite{BM}  established the  differentiability of the corresponding minimizing curves  under  a Lipschitz  condition which they called the weakest condition: for every $x\in M$, there exists a chart $U(x)$ of $x$ and a number $C=C(x)>0$ such that
\begin{equation}\label{weaklIP}
|\|y\|_{x_1}-\|y\|_{x_2}|\leq C |y||x_1-x_2|, \quad \forall x_1,x_2\in U(x).
\end{equation}
Conversely, for  a distance function $d$ on $\mathbb{R}^n$ whose  minimizing curves are straight lines, we find that $d$ is induced by a pseudo-norm function satisfying \eqref{weaklIP} if and only if for every $x\in \mathbb{R}^n$,  the following inequalities hold in some neighborhood of $x$:
\begin{align}\label{baseequ2}
 c|x_2-x_1|\leq d(x_1,x_2)\leq C|x_2-x_1|, \quad \left|d(x_1,x_1+y)-d(x_2,x_2+y) \right|\leq  C |y| |x_2-x_1|,
 \end{align}
where $c,C$ are two positive numbers depending on $x$ (see Theorem \ref{dandFequ}). According to  Berwald \cite{Be2} and Funk \cite{Funk1}, if the boundary of $\Omega$ is smooth and strongly convex, both the distance functions defined by \eqref{Hilbertdis} and \eqref{Funkdis}  satisfy Condition \eqref{baseequ2}.
Their associated pseudo-norm functions are respectively designated as the Hilbert metric and Funk metric in contemporary literature.
% ,and the associated pseudo-norm functions are respectively designated as the Hilbert metric and Funk metric in contemporary literature.

Condition \eqref{weaklIP} indicates that the pseudo-norm function satisfies a local Lipschitz property.
Of central importance are strongly regular  pseudo-norm functions, which include Riemannian metrics, the Hilbert metric and Funk metric as   prominent subclasses. A pseudo-norm function $\|\cdot\|$ is called {\it strongly regular} or  a {\it Finsler metric}, if the following strong regularity properties hold for any $x\in M$ and $y\in T_xM\backslash\{0\}$:
\begin{enumerate}[(i)]
\setcounter{enumi}{3} % 从(iv)开始，即设置计数器为3 (因为从0开始计数)
\item smoothness: the map  $(x,y)\longmapsto \|y\|_x$ is smooth;
\item \label{introv} positive definiteness: the quadratic form  $Q_y(v):=\frac12\frac{\dd^2}{{\dd}t^2}|_{t=0} \|y+tv\|^2_x$ is positive definite on $T_xM$.
\end{enumerate}
A Finsler metric $\|\cdot\|$ is  {\it Riemannian} if $\|\cdot\|_x$ is induced by an inner product at every $x\in M$.
Note that (\ref{introv}) is stronger than  \eqref{Introtriangleine} and enables the definition of additional geometric quantities, particularly curvature.
%Note that (v) constitutes a stronger condition than (iii), which facilitates the introduction of additional geometric quantities, notably curvature.
For simplicity of presentation,  a Finsler metric is denoted by $F$ in the sequel, i.e., $F(x,y)=\|y\|_x$.
%Recently many systems, such as directional time differences in traffic flow, directional
%energy barriers in reactions in chemistry, asymmetric trade costs in economics, etc. naturally exhibit
%Finsler structures, making Finsler geometry particularly relevant to applications.

A Finsler metric on an open subset of $\mathbb{R}^n$ is said to be {\it projectively flat} if its geodesics  are straight lines.
In this context, the regular case of Hilbert's fourth problem reduces to the classification of projectively flat Finsler manifolds.
In 1903, G. Hamel \cite{Hamel} gave a local characterization of  such metrics $F$ by the system of differential equations:
\begin{align*}
F_{x^i}-y^j  F_{x^jy^i}=0,
\end{align*}
where $(x^i,y^j)$ are the canonical coordinates on  $T\mathbb{R}^n \cong\mathbb{R}^n\times \mathbb{R}^n$. Hereafter, subscripts such as $x^i$ and $y^j$ denote partial derivatives with respect to these coordinates.
 The geodesics $\gamma(t)$ of such metrics are characterized by the equation
\begin{equation*}
\frac{{\dd}^2 \gamma^i}{{\dd}t^2} + 2 P \Big(\gamma, \frac{{\dd} \gamma}{{\dd}t} \Big) \frac{{\dd} \gamma^i}{{\dd}t} = 0,
\end{equation*}
where the {\it projective factor} $P= P(x,y)$ is defined as
$P:=\frac{1}{2}  y^k F_{x^k} / F$.
Notably, under the parameter transformation satisfying
$ \frac{{\dd}^2 s}{{\dd}t^2} = - 2 P(\gamma,\frac{{\dd} \gamma}{{\dd}t}) \frac{{\dd} s}{{\dd}t}$,
the image $\gamma(t(s))$ becomes a linear function.
Furthermore, the curvature of a projectively flat Finsler manifold can also be expressed in terms of the projective factor.

In the Riemannian setting, Beltrami's theorem \cite{Belt} shows that projective flatness is equivalent to constant sectional curvature $\mathbf{K}=\lambda$, leading to the classical models:
\begin{align} \label{Rie}
F_{\lambda}(x,y) = \frac{ \sqrt{|y|^2 + \lambda ( |x|^2|y|^2 - \langle x, y \rangle^2}) }{1+\lambda|x|^2}, \quad (x,y) \in T\Omega,
\end{align}
on $\Omega = \{x : 1+\lambda|x|^2 > 0\}$.
The Finslerian setting, freed from the constraint of quadratic metrics, exhibits a far richer structure.
The regularity hypotheses (iv) and (v) permit the extension of sectional curvature to Finsler geometry via the notion of {\it flag curvature}.
In contrast to the Riemannian case, projective flatness and constant flag curvature are independent conditions in Finsler geometry; neither implies the other.
The classification of Finsler metrics of constant flag curvature is far from understood.
While a complete classification has been achieved for Randers metrics (cf.  Bao--Robles--Shen \cite{BRS}), comprehensive characterizations for other classes have been more limited.
Moreover, even when both conditions are simultaneously satisfied, they admit a vast and largely unexplored family of metrics.

As indicated previously, if $\Omega$ is a $1$-sublevel set of a Minkowski norm $\varphi$ (i.e., $\partial\Omega$ is smooth and strongly convex),  the Hilbert distance \eqref{Hilbertdis} and the Funk distance \eqref{Funkdis} are induced by two Finsler metrics, the Hilbert metric $\mathsf{H}$ and the Funk metric $\mathsf{F}$, which possess
constant flag curvature $\mathbf{K}=-1$ and $\mathbf{K}=-\frac{1}{4}$, respectively. In particular, the relation \eqref{symmdishoilfunk} is equivalent to
\begin{align*}
\mathsf{H}(x,y) = \frac{1}{2} \left\{ \mathsf{F}(x, y) + \mathsf{F}(x, -y) \right\},
\end{align*}
and  the Funk metric  is the unique solution to the following equation
\begin{equation}\label{pdeodefqueIn}
\mathsf{F}(x, y) = \varphi\left(y + x \mathsf{F}(x,y)\right).
\end{equation}
When $\varphi$ is the Euclidean norm (i.e., $\Omega$ is the unit Euclidean ball $\mathbb{B}^n_1(\mathbf{0})$ in $\mathbb{R}^n$), the Funk metric becomes
 \begin{align}\label{Funkex1}
 \mathsf{F}(x,y) = \frac{ \sqrt{(1-|x|^2)|y|^2 + \langle x, y \rangle^2}}{1-|x|^2} + \frac{\langle x, y \rangle}{1-|x|^2},
 \end{align}
and the corresponding Hilbert metric $\mathsf{H}$ coincides
with the Riemannian metric
$F_{-1}$ defined in \eqref{Rie}.
In 1926,  Berwald \cite{Be2}
derived the local equations characterizing projectively flat Finsler metrics with flag curvature $\mathbf{K}$:
\begin{align}\label{Berequeq}
F_{x^k} = [P F]_{y^k},\qquad
P_{x^k} = P P_{y^k} - \frac{1}{3 F} [\mathbf{K} F^3]_{y^k}.
\end{align}
He also exhibited a non-Riemannian projectively flat metric with $\mathbf{K}=0$ on $\mathbb{B}^n_1(\mathbf{0})$,  called the Berwald's metric:
\begin{align} \label{Berwaldmetric}
\mathsf{B}(x,y) = \frac{ (\sqrt{ (1-|x|^2) |y|^2 + \langle x, y \rangle^2} +\langle x, y \rangle )^2}{ (1-|x|^2)^2 \sqrt{ (1-|x|^2) |y|^2 + \langle x, y \rangle^2} }.
\end{align}
The case of positive constant flag curvature remained open until Bryant's breakthrough in 1996--1997~\cite{Bry,Bry2}, who constructed the first non-Riemannian, irreversible, projectively flat Finsler metric with $\mathbf{K} = 1$ on $\mathbb{S}^2$.
 He later discussed broader generalizations in \cite{Bryant}.
 Building on this foundation, Shen \cite{Sh1} used an algebraic approach to extend Bryant's metric to higher-dimensional spheres $\mathbb{S}^n$, observing that such metrics arise from projectively flat metrics on $\mathbb{R}^n$.

Projectively flat Finsler manifolds of constant flag curvature provide a rich class of geometries satisfying Hilbert's criterion, exhibiting fundamental differences between symmetric and asymmetric settings. The \emph{local} theory of such metrics was completed in the 21st century: Shen's seminal work~\cite{Sh1} established sufficient conditions and constructed extensive examples, while Li~\cite{Li} later derived the necessary conditions, culminating in a full local classification.

In stark contrast, the \emph{global} geometry of these spaces remains largely uncharted, with no classification results available in the literature. The central, long-standing obstacle has been the following:

\medskip
 {\it How does the local structure of a projectively flat metric determine the global topology of a manifold?}
\medskip
%{\it	 how to connect the local structure of a projectively flat metric to  the global topology of a manifold?}
%\smallskip

\noindent
In this paper, we resolve this question in the constant flag curvature setting.

\subsection{Main results}
We develop a comprehensive global theory of projectively flat Finsler manifolds of constant flag curvature. Our principal contributions are:
\begin{enumerate}[{\rm (I)}]
\item \label{Contributiona-1} \textbf{Distance formula.}
%An explicit formula for the distance function, capturing its inherent asymmetry (Theorem~\ref{DisInto}).
An explicit formula for the distance function, covering both symmetric and asymmetric cases (Theorem~\ref{DisInto}).

\item \label{Contributiona} \textbf{Global classification (non-positive curvature).}
 A complete classification of   forward complete  projectively flat Finsler manifolds with $\mathbf{K} \leq 0$ (Theorems~\ref{thmK=0globalintro} and~\ref{thmK=-1globalintro}).
%We provide a complete classification of forward complete projectively flat Finsler manifolds with $\mathbf{K} \leq 0$ (Theorems~\ref{thmK=0globalintro} and~\ref{thmK=-1globalintro}).

\item \label{Contributiona-3} \textbf{Uniqueness of complete metrics.}
A uniqueness theorem for forward complete Finsler metrics on a given domain when $\mathbf{K} \leq 0$ (Theorem~\ref{uniquenssFinsler}).
%We prove the uniqueness of such forward complete metrics on given manifolds for $\mathbf{K} \leq 0$ (Theorem~\ref{uniquenssFinsler}).

\item \label{Contributionb} \textbf{Maximal domains and exotic examples.}
A characterization of maximal domains for Finsler metrics with $\mathbf{K} \leq 0$ in dimension $2$,
 revealing novel metrics whose maximal domains evolve from connected to disconnected structures
%revealing the domain's evolution of novel metric families from connected to disconnected
(Theorems~\ref{K=0conneccomconvex} and \ref{thmK=-1convex}, Proposition~\ref{K=0Ex1}, Example~\ref{K=-1ExampleAmazing}).
%A complete description of maximal domains for Finsler metrics with $\mathbf{K} \leq 0$ in  dimension $2$,
%revealing an evolution from connected to disconnected regions and constructing novel families of metrics
%(Theorem~\ref{K=0conneccomconvex}, Theorem~\ref{thmK=-1convex}, Proposition~\ref{K=0Ex1}, Example~\ref{K=-1ExampleAmazing}).
%We completely describe maximal existence domains in dimension two  for $\mathbf{K} \leq 0$, constructing new families of metrics over disconnected regions (Theorem~\ref{K=0conneccomconvex}, Theorem~\ref{thmK=-1convex}, Proposition~\ref{K=0Ex1}, Example~\ref{K=-1ExampleAmazing}).

\item \label{Contributiona-posit} \textbf{Positive curvature: diameter and completion.}
A maximum diameter and completion theorem for $\mathbf{K} > 0$ (Theorem~\ref{thmK=1globalintro}).
%We prove a maximum diameter theorem and characterize completeness for $\mathbf{K} > 0$ (Theorem~\ref{thmK=1globalintro}).

\item  \label{Contributiond} \textbf{Connection to nonlinear analysis.}
A fundamental link between the completeness of based Finsler manifolds and the nonlinearity of Sobolev spaces (Theorem~\ref{thmSobolevIntro}).
%We reveal a fundamental link between the completeness of these Finsler structures and the nonlinearity of Sobolev spaces (Theorem~\ref{thmSobolevIntro}).
\end{enumerate}

Before stating the main results, we briefly recall and introduce some necessary notations. The {\it reversibility} of a Finsler manifold $(M,F)$, introduced by Rademacher\cite{Rade1, Rade}, is defined by
\[
\lambda_F(M):=\sup_{x\in M}\bigg(\sup_{y\in T_xM\backslash\{0\}}\frac{F(x,-y)}{F(x,y)}\bigg).
\]
Note that $\lambda_F (M) \geq 1$, with equality if and only if $F$ is reversible (i.e., symmetric).
While every Riemannian metric is reversible, this property often fails for Finsler metrics. For example, the Funk metric
$\mathsf{F}$ is irreversible with infinite reversibility.

A Finsler manifold is said to be {\it forward complete} (resp., {\it backward complete}) if  every geodesic can
be extended       infinitely     in the forward (resp., backward) direction.
Although these two types of completeness  are equivalent for Riemannian manifolds, they are generally distinct in the Finsler setting. For instance, every Funk metric space $(\Omega,\mathsf{F})$  is forward complete but not backward complete.
%If $(\Omega,F)$ both forward and backward complete, it is called complete for convenience.

A  pseudo-norm $\varphi$ on $\mathbb{R}^n$ is called a {\it weak Minkowski norm} if it is smooth on $\mathbb{R}^n\backslash\{\mathbf{0}\}$.
Moreover, a solution to equation \eqref{pdeodefqueIn} is referred to as a weak Funk metric whenever
$\varphi$ is a weak Minkowski norm. In the rest of this section, $\Omega$  denotes a strictly convex connected domain in $\mathbb{R}^n$ containing the origin $\mathbf{0}$, and $F$ represents a Finsler metric on $\Omega$.
%Besides, the solution to Equation \eqref{pdeodefqueIn} is called a weak Funk metric if $\varphi$ is a weak Minkowski norm.

We now present our first main result, which provides
a direct answer to Hilbert's fourth problem from the distance point of view.
 %In order to satisfy the straight lines as the
%minimizing curves, it is natural to assume the convexity of $\Omega$.

\begin{theorem}\label{DisInto}
Let $(\Omega,F)$ be a projectively flat Finsler manifold of constant flag curvature $\mathbf{K}$ with  projective factor $P$.
Then for any pair of distinct points $x_1, x_2\in \Omega$,
 the distance $d_F(x_1, x_2)$ is given by
 \begin{align*}
d_F(x_1,x_2)=
	\begin{cases}
	     \frac{F(x_1, x_2-x_1)}{1-P(x_1, x_2-x_1)}, & \text{ if }\mathbf{K}=0;\\[12pt]
		  \ln \sqrt{\frac{1-P(x_1, x_2-x_1)+F(x_1, x_2-x_1)}{1-P(x_1, x_2-x_1)-F(x_1, x_2-x_1)}}, & \text{ if }\mathbf{K}=-1;\\[12pt]
		 \arctan{   \frac{F^2(x_1, x_2-x_1)+ P^2(x_1, x_2-x_1)-P(x_1, x_2-x_1)}{F(x_1, x_2-x_1)}  }
           + \arctan{ \frac{P(x_1, x_2-x_1)}{ F(x_1, x_2-x_1) }} , & \text{ if }\mathbf{K}=1.
	\end{cases}
\end{align*}
\end{theorem}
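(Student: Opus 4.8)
The plan is to exploit the defining feature of projective flatness: geodesics are straight lines, so the geodesic from $x_1$ to $x_2$ is the segment $\gamma(t) = x_1 + t(x_2-x_1)$ up to reparametrization. Since $F$ has constant flag curvature $\mathbf{K}$, Berwald's equations \eqref{Berequeq} constrain $F$ and $P$ along this segment. Concretely, I would restrict attention to the one real variable parametrizing the segment. Writing $f(t) := F\bigl(\gamma(t),\gamma'(t)\bigr)$ and $p(t) := P\bigl(\gamma(t),\gamma'(t)\bigr)$ with $\gamma'(t) = x_2 - x_1 =: y$ constant, one differentiates along the segment: using the chain rule together with \eqref{Berequeq} (the relations $F_{x^k} = [PF]_{y^k}$ and $P_{x^k} = PP_{y^k} - \frac{1}{3F}[\mathbf{K}F^3]_{y^k}$, contracted with $y^k$ and using Euler's homogeneity relations for the $1$-homogeneous $F$ and $0$-homogeneous $P$) should yield a closed autonomous ODE system in $(f,p)$, something like $\dot f = $ (expression in $f,p$) and $\dot p = $ (expression in $f,p,\mathbf{K})$. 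The key homogeneity identities to invoke are $y^k F_{y^k} = F$ and $y^k P_{y^k} = 0$, and $y^k[\mathbf{K}F^3]_{y^k} = 3\mathbf{K}F^3$.

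Once the ODE system is in hand, the second step is to solve it. I expect $\dot f = -pf$ (or a close variant) and $\dot p = -p^2 + \text{const}\cdot\mathbf{K}f^2$ — essentially the scalar shadow of the geodesic/Jacobi equations. For $\mathbf{K}=0$ this decouples enough to integrate directly; for $\mathbf{K}=\pm 1$ one recognizes the system as describing circular/hyperbolic motion, and the quantity $f^2 + p^2 - 2p$ (or $f^2 + (p-1)^2$) or its analogue for $\mathbf{K}=-1$ should be the natural invariant. The third step is bookkeeping: the distance is $d_F(x_1,x_2) = \int_0^1 f(t)\,dt$ (here the segment is already the minimizing geodesic because projectively flat metrics make straight lines shortest, which is precisely the hypothesis of Hilbert's problem, so no minimization argument is needed beyond citing this), and one evaluates the integral using the explicit solution with initial data $f(0) = F(x_1,y)$, $p(0) = P(x_1,y)$. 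The three cases of the statement are then just the three antiderivatives: rational for $\mathbf{K}=0$, logarithmic (inverse hyperbolic tangent) for $\mathbf{K}=-1$, and arctangent for $\mathbf{K}=1$, matching the stated closed forms.

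The main obstacle I anticipate is the first step — correctly deriving the autonomous ODE system from Berwald's PDEs. The subtlety is that $F_{x^k}$ and $P_{x^k}$ in \eqref{Berequeq} are partial derivatives on $T\Omega$, and when I differentiate $t\mapsto F(x_1+ty, y)$ I get $y^k F_{x^k}(x_1+ty,y)$, which by the first Berwald equation equals $y^k[PF]_{y^k}$. I then need $[PF]_{y^k} = P_{y^k}F + PF_{y^k}$, contract with $y^k$, and apply $y^kP_{y^k}=0$, $y^kF_{y^k}=F$ to get $y^k F_{x^k} = pf$; hence $\dot f = pf$ — I must be careful with signs and with the direction of the geodesic (the projective factor enters the geodesic equation with the segment traversed from $x_1$, and the reparametrization relating arclength to the affine parameter is governed by $P$, which is exactly why $1-P(x_1,x_2-x_1)$ appears in the denominators). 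Likewise $\dot p = y^k P_{x^k} = y^k(PP_{y^k} - \frac{1}{3F}[\mathbf{K}F^3]_{y^k}) = -\mathbf{K}f^2$ after the homogeneity contraction. So the system is genuinely $\dot f = pf$, $\dot p = -\mathbf{K}f^2$, whose first integral is $\mathbf{K}f^2 + p^2 = \text{const}$ — wait, more carefully $\frac{d}{dt}(p^2) = 2p\dot p = -2\mathbf{K}f^2 p$ while $\frac{d}{dt}(\mathbf{K}f^2) = 2\mathbf{K}f\dot f = 2\mathbf{K}f^2 p$, so indeed $\mathbf{K}f^2 + p^2$ is conserved, and combined with $\dot f = pf$ one solves explicitly. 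Getting all constants and the reparametrization from affine parameter $t\in[0,1]$ to $F$-arclength pinned down correctly, and verifying the resulting integral $\int_0^1 f\,dt$ collapses to the claimed expressions in terms of the initial values $F(x_1,x_2-x_1)$ and $P(x_1,x_2-x_1)$, is where the real care lies; everything after that is elementary integration.
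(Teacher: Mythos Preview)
Your plan is the right one, but the central calculation contains a concrete error that would derail it: the projective factor $P(x,y)=\frac{1}{2F}y^kF_{x^k}$ is positively $1$-homogeneous in $y$, not $0$-homogeneous (see the paragraph after \eqref{prjoactor}). Hence Euler gives $y^kP_{y^k}=P$, not $0$. Redoing your contractions with the correct degree yields
\[
\dot f \;=\; y^k[PF]_{y^k}\;=\;2pf,\qquad
\dot p \;=\; y^k\Bigl(PP_{y^k}-\tfrac{1}{3F}[\mathbf{K}F^3]_{y^k}\Bigr)\;=\;p^2-\mathbf{K}f^2,
\]
not $\dot f=pf$, $\dot p=-\mathbf{K}f^2$. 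Your claimed invariant $\mathbf{K}f^2+p^2$ is not conserved for the true system (already for $\mathbf{K}=0$ one has $\dot p=p^2$, not $\dot p=0$). The corrected system is nonetheless explicitly integrable, and in fact more cleanly than you anticipated: for $\mathbf{K}=0$ the $p$-equation decouples ($p=p_0/(1-p_0t)$, $f=f_0/(1-p_0t)^2$, $\int_0^1 f\,dt=f_0/(1-p_0)$); for $\mathbf{K}=-1$ the substitutions $u=p+f$, $v=p-f$ give two uncoupled Riccati equations $\dot u=u^2$, $\dot v=v^2$, whence $\int_0^1 f\,dt=\tfrac12\ln\frac{1-v_0}{1-u_0}$; for $\mathbf{K}=1$ take $w=p+if$, then $\dot w=w^2$ and $\int_0^1 f\,dt=-\operatorname{Im}\ln(1-w_0)$, which matches the stated sum of arctangents via the addition formula.

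With that fix your route is genuinely different from the paper's. The paper (Theorems~\ref{K=0lemdx_0xdxx_0}, \ref{lemK=-1dx_0xdx_0}, \ref{lemK=1dx_0xdx_0}) parametrizes the geodesic by arclength, writes $\gamma(s)=x_1+g(s)\frac{y}{F(x_1,y)}$, and via Theorem~\ref{lemgeodesicK} obtains a single third-order scalar ODE $2g'''g'-3(g'')^2=-4\mathbf{K}(g')^2$ for the reparametrization $g$; solving with $g(0)=0$, $g'(0)=1$, it identifies the remaining constant with $P(x_1,y)/F(x_1,y)$ through $g''+2P(\gamma,\gamma')g'=0$ and reads off $d_F$ from $\gamma(d_F)=x_2$. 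Your affine-parameter approach replaces that third-order equation by a first-order $2\times2$ system in $(f,p)$; this makes the Riccati structure and the decoupling via $P\pm F$ (resp.\ $P\pm iF$) transparent, and the distance is recovered as $\int_0^1 f\,dt$ rather than as an arclength endpoint. Both methods are short; yours is arguably more direct once the homogeneity is set right.
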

%\begin{theorem}\label{DisInto}
%Let $(\Omega, F)$ be a projectively flat Finsler manifold with constant flag curvature $\mathbf{K}$, and let $P$ be its projective factor. Then for any distinct points $x_1, x_2 \in \Omega$, the distance induced by $F$ is given by
%\[
%d_F(x_1, x_2) =
%\begin{cases}
%\dfrac{F(x_1, x_2 - x_1)}{1 - P(x_1, x_2 - x_1)}, & \text{if } \mathbf{K} = 0, \\[12pt]
%\ln \sqrt{ \dfrac{1 - P(x_1, x_2 - x_1) + F(x_1, x_2 - x_1)}{1 - P(x_1, x_2 - x_1) - F(x_1, x_2 - x_1)} }, & \text{if } \mathbf{K} = -1, \\[12pt]
%\arctan\left( \dfrac{F^2(x_1, x_2 - x_1) + P^2(x_1, x_2 - x_1) - P(x_1, x_2 - x_1)}{F(x_1, x_2 - x_1)} \right) \\[6pt]
%\quad + \, \arctan\left( \dfrac{P(x_1, x_2 - x_1)}{F(x_1, x_2 - x_1)} \right), & \text{if } \mathbf{K} = 1,
%\end{cases}
%\]
%where the expressions for $F$ and $P$ in each case are those provided in Theorems~\ref{K=0lemdx_0xdxx_0}, \ref{lemK=-1dx_0xdx_0}, and \ref{lemK=1dx_0xdx_0}, respectively.
%\end{theorem}
It can be verified directly that the distance formulas in Theorem \ref{DisInto} recover \eqref{Hilbertdis} and \eqref{Funkdis} via appropriate scaling. This result is established based on the following global classifications for $\mathbf{K} = 0, -1$ and Theorem \ref{lemK=1dx_0xdx_0} for the case $\mathbf{K} = 1$.
In the sequel, we adopt the convention $P = \frac{1}{2} y^k F_{x^k}/F$, which is  the projective factor when $F$ is projectively flat.

\begin{theorem}\label{thmK=0globalintro}
Let $(\Omega,F)$ be a Finsler manifold and set
\begin{equation*}%\label{defphsiK=-1}
\psi(y):=F(\mathbf{0},y), \qquad \phi(y):= P(\mathbf{0},y).
\end{equation*}
Then $(\Omega,F)$ is a  forward complete  projectively flat Finsler manifold with $\mathbf{K} =0$ if and only if the following two conditions hold:
\begin{enumerate}[\rm (i)]
\item\label{K=0basciciont}  $\phi$ is non-negative, $\Omega = \big\{ x \in \mathbb{R}^n \,:\, \phi(x) < 1 \big\}$ is either bounded or $\mathbb{R}^n$, the projective factor $P$ is the unique solution to
\[
P(x,y)=\phi(y+xP(x,y)),\quad \forall (x,y)\in \Omega\times \mathbb{R}^n,
\]
and the Finsler metric $F$ is given by
\begin{equation*}%\label{K=-1region}
F(x,y) = \psi ( y + x P(x,y)) \,\left\{ 1 +   x^k  P_{y^k}(x,y)\right\},    \quad \forall (x,y)\in \Omega\times \mathbb{R}^n;
\end{equation*}

\item\label{K=0additoncomp} exactly one of the following occurs:
\begin{enumerate}[{\rm (1)}]
\item  \label{globalK=0(i)}
$\phi=0$, in which case $(\Omega,F)=(\mathbb{R}^n,\psi)$ is a Minkowski space (backward complete with $\lambda_F(\Omega)<+\infty$);

\item \label{globalK=0(ii)}
$\phi$ is a weak Minkowski norm, in which case
  $(\Omega, F)$ is not backward complete with $\lambda_F(\Omega)=+\infty$ and particularly,  $\Omega$ is a bounded domain in $\mathbb{R}^n$ and $P$ is the unique weak Funk metric on $\Omega$.
\end{enumerate}
\end{enumerate}
\end{theorem}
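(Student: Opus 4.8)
\textbf{Proof proposal for Theorem~\ref{thmK=0globalintro}.}

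The plan is to treat the two implications separately, with the sufficiency direction resting on an explicit reconstruction of $F$ from its ``initial data'' $(\psi,\phi)$ at the origin, and the necessity direction on the Berwald equations \eqref{Berequeq} together with a completeness analysis along the radial geodesics. First, for the necessity direction, suppose $(\Omega,F)$ is forward complete, projectively flat with $\mathbf{K}=0$. Since geodesics are straight lines, Berwald's first equation $F_{x^k}=[PF]_{y^k}$ can be integrated along the segment from $\mathbf{0}$ to $x$ in direction $y$; the second equation with $\mathbf{K}=0$ reads $P_{x^k}=PP_{y^k}$, which is precisely the PDE for a Funk-type metric. Solving this Cauchy problem with datum $P(\mathbf{0},\cdot)=\phi$ along the ray $x(t)=ty$ gives the algebraic relation $P(x,y)=\phi(y+xP(x,y))$; one then recovers $F$ by integrating $F_{x^k}=[PF]_{y^k}$, yielding the stated product formula $F(x,y)=\psi(y+xP(x,y))\{1+x^kP_{y^k}(x,y)\}$. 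The domain identity $\Omega=\{\phi(x)<1\}$ comes from analyzing when the radial geodesic from $\mathbf{0}$ leaves $\Omega$: the geodesic $t\mapsto ty$ reaches the boundary in forward time $T$ with $\phi(y)\cdot(\text{something}) \to 1$, and forward completeness forces $T=+\infty$ exactly on $\{\phi(y)<1\}$ (or everywhere, when $\phi\equiv 0$, giving $\Omega=\mathbb{R}^n$). Non-negativity of $\phi$ follows because if $\phi(y_0)<0$ for some $y_0$ then, by examining the parameter change $\frac{d^2s}{dt^2}=-2P\frac{ds}{dt}$, the reversed geodesic would be forward-incomplete or $\Omega$ would fail to contain a full half-line, contradicting the setup; this needs care.

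For the sufficiency direction, assume (i) and (ii) hold. One must verify: (a) the algebraic equation $P(x,y)=\phi(y+xP(x,y))$ has a unique smooth solution $P$ on $\Omega\times\mathbb{R}^n$ — this is an implicit function theorem argument, where strict convexity of $\Omega=\{\phi<1\}$ and $1$-homogeneity of $\phi$ guarantee the relevant Jacobian $1-x^k\phi_{y^k}$ is nonzero (indeed positive) on $\Omega$; (b) the function $F$ defined by the product formula is a genuine Finsler metric, i.e. smooth off the zero section, positively $1$-homogeneous, and strongly convex — positive definiteness of the fundamental tensor should follow from that of $\psi$ plus a computation showing the factor $1+x^kP_{y^k}$ is positive and the change of ``fiber variable'' $y\mapsto y+xP(x,y)$ is a diffeomorphism with positive-definite differential; (c) $F$ is projectively flat with projective factor $P$ and has $\mathbf{K}=0$ — this is verified by checking that $(F,P)$ satisfies Berwald's equations \eqref{Berequeq} with $\mathbf{K}=0$, which reduces after differentiation of the defining relations to identities in $\psi,\phi$; (d) $F$ is forward complete — since geodesics are straight lines reparametrized via $s(t)$, one computes the forward arclength of a ray and shows it diverges precisely because $\phi(x)<1$ on all of $\Omega$ and $\phi\ge 0$ controls the reparametrization from below.

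The trichotomy in (ii) is then a structural dichotomy on $\phi$: either $\phi\equiv 0$, whence $P\equiv 0$, the defining relation collapses, $F(x,y)=\psi(y)$ is translation-invariant, $\Omega=\mathbb{R}^n$, and one checks $F$ is bi-complete with finite reversibility; or $\phi\not\equiv 0$, and since $\phi$ is a non-negative positively $1$-homogeneous function that is (by the regularity forced in (i)) smooth away from the origin with $\{\phi<1\}$ strictly convex, $\phi$ is a weak Minkowski norm, $\Omega$ is bounded (it is the open unit ball of $\phi$), and $P$, satisfying $P(x,y)=\phi(y+xP(x,y))$, is by definition the unique weak Funk metric of $\Omega$. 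Backward incompleteness and $\lambda_F(\Omega)=+\infty$ in this case follow by the same boundary-approach computation run in the backward direction: a backward geodesic ray reaches $\partial\Omega$ in finite backward time, and the ratio $F(x,-y)/F(x,y)$ blows up as $x\to\partial\Omega$ along the appropriate direction, exactly as for the classical Funk metric \eqref{Funkex1}.

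The main obstacle I expect is part (d) together with the sharp domain identification: showing that forward completeness is equivalent to the clean condition ``$\Omega=\{\phi<1\}$ and $\phi\ge 0$'' requires a careful quantitative analysis of the arclength reparametrization $\frac{d^2s}{dt^2}=-2P(\gamma,\gamma')\frac{ds}{dt}$ along straight lines, controlling how $P$ — and hence $F$ — degenerates or blows up as the line approaches $\partial\Omega$, and handling uniformly the boundary case $\phi(y)=1$ versus the interior case $\phi(y)<1$. The uniqueness claims (for $P$ as a solution of the implicit relation, and as the unique weak Funk metric) are comparatively routine via strict convexity, but interlock with the regularity of $\phi$ in a way that must be tracked to avoid circularity between conditions (i) and (ii).
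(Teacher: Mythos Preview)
Your overall strategy is sound and parallels the paper's, but there is one genuine misreading and one place where the paper's route is substantially cleaner.

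\medskip
\textbf{The misreading.} In your sufficiency direction, step (b) is not needed and in fact cannot be carried out as you describe. The theorem begins ``Let $(\Omega,F)$ be a Finsler manifold'': the strong convexity of $F$ is a \emph{hypothesis}, not something to be verified from $(\psi,\phi)$. This matters, because the paper later shows (Proposition~\ref{K=0Ex1}, Section~\ref{SecglobalexK=0}) that the product formula $F(x,y)=\psi(y+xP(x,y))\{1+x^kP_{y^k}\}$ can \emph{fail} to be positive-definite on all of $\Omega=\{\phi<1\}$ even when both $\psi$ and $\phi$ are genuine Minkowski norms. So your proposed argument for (b) (``positive definiteness should follow from that of $\psi$ plus a computation'') would actually fail; the hypothesis that $F$ is Finsler is doing real work and is not redundant.

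\medskip
\textbf{Where the paper is cleaner.} For the global part you flag as the main obstacle---the domain identification $\Omega=\{\phi<1\}$, the non-negativity of $\phi$, and the completeness dichotomy---the paper avoids your proposed ``quantitative analysis of the arclength reparametrization'' by first proving a general formula (Theorem~\ref{lemgeodesicK}): along any geodesic $\gamma(t)=f(t)y+x$ one has $\mathbf{K}F^2 = \bigl(2f'''f'-3(f'')^2\bigr)/(4(f')^2)$. With $\mathbf{K}=0$ this ODE has exactly two solutions compatible with forward completeness, $f(t)=t$ or $f(t)=c\,t/(c+t)$ with $c>0$, and a continuity argument shows the two types cannot mix (Theorem~\ref{thmK=0global}). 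This immediately gives the dichotomy in (ii), the sign of $\phi$, and the shape of $\Omega$; no direct control of how $P$ degenerates near $\partial\Omega$ is needed. For the converse direction, the paper reads forward completeness off the explicit distance formula $d_F(\mathbf{0},x)=\psi(x)/(1-\phi(x))$ (Theorem~\ref{K=0lemdx_0xdxx_0}, Corollary~\ref{Phi0uniquess}), again sidestepping the reparametrization analysis. Your route via $\tfrac{d^2s}{dt^2}=-2P\tfrac{ds}{dt}$ could be made to work, but the ODE-for-$f$ approach packages the curvature condition and the completeness analysis into one clean step.
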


\begin{theorem}\label{thmK=-1globalintro}
Let $(\Omega,F)$ be a Finsler manifold and set
\begin{equation*}%\label{defphsiK=-1}
\psi(y):=F(\mathbf{0},y), \qquad \phi(y):= P(\mathbf{0},y).
\end{equation*}
Then $(\Omega,F)$ is
a forward complete projectively flat Finsler manifold with $\mathbf{K} =-1$ if and only if the following two conditions hold:
\begin{enumerate}[\rm (i)]

\item\label{K=-1bascicondition1}  $\phi + \psi$ is a weak Minkowski norm  with $\ds\Omega = \big\{ x \in \mathbb{R}^n \,:\, (\phi + \psi)(x) < 1 \big\}$ and
\[
F(x,y) = \frac{1}{2}\big\{ \Phi_{+}(x,y) - \Phi_{-}(x,y) \big\}, \qquad P(x,y) = \frac{1}{2}\big\{ \Phi_{+}(x,y) + \Phi_{-}(x,y) \big\},
\]
where  $\Phi_\pm(x,y)$ are the unique solutions to
\begin{equation*} %\label{Phiplusminus}
\Phi_{\pm}(x,y) = (\phi \pm \psi)\big(y + x \Phi_{\pm}(x,y)\big), \quad \forall (x,y)\in \Omega\times \mathbb{R}^n;
\end{equation*}

\item\label{K=-1additoncondtion}
exactly one of the following occurs:
\begin{enumerate}[{\rm (1)}]
\item  \label{globalK=-1(i2)2} $\psi$ is reversible and $\phi$ is odd with $-\psi<\phi<\psi$ on $\mathbb{R}^n\backslash\{\mathbf{0}\}$, in which case $(\Omega,F)$ is backward complete  with $\lambda_F(\Omega)=1$ and  $F$ is a Hilbert metric with
\[
 -F(x,y)\leq P(x,y)\leq   F(x,y) \text{ with equality if and only if $y=\mathbf{0}$};
 \]

\item  \label{K=-1(2)2} $\psi=\phi$, in which case  $(\Omega, F)$ is not backward complete with $\lambda_F(\Omega)=+\infty$ and particularly,
 $2F$ is the unique Funk metric on $\Omega$ with $F=P$;

\item  \label{PgreatFK=-12}
$\psi<\phi$ on $\mathbb{R}^n\backslash\{\mathbf{0}\}$, in which case  $(\Omega, F)$ is not backward complete with $\lambda_F(\Omega)=+\infty$ and
\[
F(x,y)\leq P(x,y) \text{ with equality if and only if $y=\mathbf{0}$};
\]

\item  \label{PlessFK=-12} either $\psi$ is irreversible or $\phi$ is not odd but
$-\psi<\phi<\psi$ on $\mathbb{R}^n\backslash\{\mathbf{0}\}$, in which case  $(\Omega, F)$ is not backward complete with $\lambda_F(\Omega)=+\infty$ and
\[
 -F(x,y)\leq P(x,y)\leq   F(x,y) \text{ with equality if and only if $y=\mathbf{0}$}.
 \]
\end{enumerate}
\end{enumerate}

\end{theorem}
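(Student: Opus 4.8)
The plan is to reduce the whole statement to (a) the first–order PDE satisfied by the combinations $\Phi_\pm:=P\pm F$ and (b) a single separable ODE obtained by restricting $F$ and $P$ to straight segments, and then to read off the four sub‑cases from the parity of $\psi$ and $\phi$. For the \emph{forward} implication, assume $(\Omega,F)$ is forward complete, projectively flat, with $\mathbf{K}=-1$. Berwald's equations \eqref{Berequeq} then give $F_{x^k}=P_{y^k}F+PF_{y^k}$ and, after inserting $\mathbf{K}=-1$, $P_{x^k}=PP_{y^k}+FF_{y^k}$; adding and subtracting yields $(\Phi_\pm)_{x^k}=\Phi_\pm(\Phi_\pm)_{y^k}$. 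Next fix $v\ne\mathbf{0}$, take the geodesic $\gamma(t)=x_0+tv$ (a straight line, by projective flatness), and set $f(t)=F(\gamma(t),v)$, $p(t)=P(\gamma(t),v)$. The chain rule together with Berwald's equations and Euler's relations ($v^k(PF)_{y^k}=2PF$, $v^kP_{y^k}=P$, $v^kF_{y^k}=F$) gives $f'=2pf$, $p'=p^2+f^2$, so $\mu_\pm:=p\pm f$ solves $\mu_\pm'=\mu_\pm^2$ and $\mu_\pm(t)=\mu_\pm(0)/(1-t\mu_\pm(0))$. With $x_0=\mathbf{0}$ one has $\mu_\pm(0)=(\phi\pm\psi)(v)$, so integrating $f=\tfrac12(\mu_+-\mu_-)$ gives the $F$‑length of the segment $[\mathbf{0},Tv]$ as $\tfrac12\ln\frac{1-T(\phi-\psi)(v)}{1-T(\phi+\psi)(v)}$. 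Forward completeness forces this quantity to become infinite exactly as $t$ reaches the time the ray meets $\partial\Omega$: if $(\phi+\psi)(v)\le 0$ for some $v$ the corresponding ray has finite length; if $\Omega$ were strictly smaller than $\{(\phi+\psi)(x)<1\}$ a forward ray would leave $\Omega$ in finite length; and $\Omega$ cannot be larger, since then $F$ would blow up at an interior point. Hence $\phi+\psi$ is a weak Minkowski norm and $\Omega=\{(\phi+\psi)(x)<1\}$. Finally, solving $(\Phi_\pm)_{x^k}=\Phi_\pm(\Phi_\pm)_{y^k}$ by characteristics — along $x(s)=sx$, $y(s)=y_0-s\,x\,\Phi_\pm$ the value of $\Phi_\pm$ is constant, and arriving at $(x,y)$ at $s=1$ forces $y_0=y+x\Phi_\pm(x,y)$ — gives $\Phi_\pm(x,y)=(\phi\pm\psi)(y+x\Phi_\pm(x,y))$, with uniqueness furnished by the Funk‑type existence/uniqueness theory applied to the (possibly signed) data $\phi\pm\psi$; since $F=\tfrac12(\Phi_+-\Phi_-)$ and $P=\tfrac12(\Phi_++\Phi_-)$, this is (i).

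For the \emph{converse}, let $F$ be a Finsler metric satisfying (i) and (ii). Differentiating the Funk‑type equations gives $(\Phi_\pm)_{x^k}=\Phi_\pm(\Phi_\pm)_{y^k}$, and substituting $F=\tfrac12(\Phi_+-\Phi_-)$, $P=\tfrac12(\Phi_++\Phi_-)$ back into these identities verifies Berwald's equations \eqref{Berequeq} with $\mathbf{K}=-1$ (and, via $y^kF_{x^k}=y^k(PF)_{y^k}=2PF$, confirms $P=\tfrac12 y^kF_{x^k}/F$), so $F$ is projectively flat with constant flag curvature $-1$. For forward completeness, repeat the straight‑line computation: along $\gamma(t)=x_0+tv$ one gets $\mu_\pm(t)=\Phi_\pm(x_0,v)/(1-t\Phi_\pm(x_0,v))$, and since $\Phi_+$ is the weak Funk metric of $\Omega=\{(\phi+\psi)<1\}$ the ray meets $\partial\Omega$ at time $T^{\ast}=1/\Phi_+(x_0,v)$; then $\mu_+(t)=1/(T^{\ast}-t)$ makes $\int_0^{T^{\ast}}\tfrac12\mu_+\,dt=+\infty$, while $1-t\Phi_-(x_0,v)\ge 2F(x_0,v)/(P+F)(x_0,v)>0$ on $[0,T^{\ast}]$ keeps $\int_0^{T^{\ast}}\tfrac12\mu_-\,dt$ finite, so the geodesic has infinite length.

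The four sub‑cases come from the reverse metric $\widetilde F(x,y):=F(x,-y)$, which is again projectively flat with $\mathbf{K}=-1$, satisfies $\widetilde F(\mathbf{0},\cdot)=\psi(-\,\cdot)$, and has projective factor $\widetilde P(x,y)=-P(x,-y)$, so $\widetilde P(\mathbf{0},\cdot)=-\phi(-\,\cdot)$. Applying the forward criterion already proved to $\widetilde F$ on the same $\Omega$ shows $F$ is backward complete iff $(\psi-\phi)(-\,\cdot)$ is a weak Minkowski norm with unit ball $\Omega$, i.e.\ iff $(\psi-\phi)(-\,\cdot)=\phi+\psi$; splitting each function into even and odd parts, this is equivalent to "$\psi$ reversible and $\phi$ odd" — case (1), in which the characteristic‑uniqueness identity $\Phi_+(x,y)=-\Phi_-(x,-y)$ gives $F(x,-y)=F(x,y)$, so $\lambda_F(\Omega)=1$ and $F$ is a (weak) Hilbert metric, while $F\pm P=\Phi_+(x,\mp y)>0$ off the zero section yields $-F\le P\le F$ with equality only at $y=\mathbf{0}$. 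Outside case (1), the sign of $\Phi_-(\mathbf{0},\cdot)=\phi-\psi$ is forced to be identically $0$ (case (2): $\Phi_-\equiv 0$, $F=P=\tfrac12\Phi_+$, and $2F$ is the Funk metric of $\Omega$), positive off the origin (case (3): $\Phi_->0$, so $F\le P$), or negative off the origin (case (4): $\Phi_-<0$, so $-F\le P\le F$); in each of these the reverse unit ball $\{(\psi-\phi)(-\,\cdot)<1\}$ differs from $\Omega$, so choosing $v_0\in\partial\Omega$ with $(\psi-\phi)(-v_0)\ne 1$ and computing $\mu_\pm$ for $F$ and for $\widetilde F$ along $t\mapsto tv_0$ (where smoothness of $F$ rules out $(\psi-\phi)(-v_0)>1$) shows $F(tv_0,v_0)\to\infty$ while $F(tv_0,-v_0)$ stays bounded as $t\to 1$, whence $\lambda_F(\Omega)=+\infty$ and $F$ is backward incomplete. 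Together with the elementary partition of the regime $-\psi<\phi<\psi$ into "$\psi$ reversible and $\phi$ odd" versus its complement, this yields precisely the four alternatives.

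The main obstacle — and the local‑to‑global bridge the paper is about — is the step in the forward implication that converts the global forward‑completeness hypothesis into the exact identification $\Omega=\{(\phi+\psi)(x)<1\}$ with $\phi+\psi$ a genuine weak Minkowski norm: this requires coupling the explicit solution $\mu_\pm(t)=\mu_\pm(0)/(1-t\mu_\pm(0))$ with a delicate finite‑versus‑infinite length comparison for straight rays approaching $\partial\Omega$ and an argument excluding interior blow‑up of $F$. The remaining points — existence and uniqueness for the Funk‑type equation with possibly signed data, and the sign bookkeeping across the four sub‑cases — are technically involved but routine once the PDE reduction and the reverse‑metric trick are in hand.
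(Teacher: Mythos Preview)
Your overall strategy matches the paper's architecture (reduce to Funk-type equations for $\Phi_\pm=P\pm F$, analyze an ODE along straight lines, read off $\Omega$ from forward completeness, and use the reverse metric for the backward side), but you package the geodesic step differently. The paper writes unit-speed geodesics as $\gamma(t)=x+f(t)y$ and solves the third-order equation $2(f''/f')'-(f''/f')^2=-4$ to obtain the three explicit forms \eqref{fspecialK=-1}, then exploits the continuity of the arc-length parameter $c_{x,y}\in(-\infty,0)\cup(1,\infty)$. You instead parametrize affinely and observe that $\mu_\pm(t):=(P\pm F)(x_0+tv,v)$ satisfy the decoupled Riccati equations $\mu_\pm'=\mu_\pm^2$; this yields the length formula and the Funk implicit equations (via characteristics) in one stroke, without invoking the Li--Shen local classification that the paper cites as Theorem~\ref{localstrucK}. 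Your formulation is more economical here and makes the link $(\Phi_\pm)_{x^k}=\Phi_\pm(\Phi_\pm)_{y^k}\Leftrightarrow$ Berwald's system with $\mathbf K=-1$ completely explicit.

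Two points are undersold, however. First, the dichotomy you call ``sign bookkeeping'' --- that $\phi-\psi$ is either identically zero, everywhere positive, or everywhere negative on $\mathbb R^n\setminus\{\mathbf 0\}$ --- is not visible from the Riccati ODE alone: $\mu_-$ may vanish on a proper cone in $y$ and take both signs nearby, and since $1/\Phi_->1/\Phi_+$ whenever $\Phi_->0$, no interior blow-up occurs to obstruct this. The paper obtains the dichotomy from the real-valued continuity of $c_{x,y}$ over the connected bundle $S\Omega$ (so that $c_{x,y}$ cannot jump between $(-\infty,0)$ and $(1,\infty)$); in your affine picture $c_{x,y}=\Phi_+/\Phi_-$, which is undefined on $\{\Phi_-=0\}$, so you would need a separate argument to exclude sign changes. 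Second, the uniqueness of $\Phi_-$ with possibly signed data $\phi-\psi$ is not ``routine'': the paper's Proposition~\ref{uniqunesslemmaK=-1} devotes a full argument to it, using the already-established global geometry of $\Omega$ and a ray-counting analysis of the level set $\{\phi-\psi=1\}$ (resp.\ $\{\phi-\psi=-1\}$). Your characteristic argument shows $P-F$ \emph{satisfies} the implicit equation, but the theorem asserts it is the \emph{unique} solution on all of $\Omega\times\mathbb R^n$, and that genuinely requires the extra step.
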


Note that Conditions (i) in Theorem~\ref{thmK=0globalintro} and Theorem~\ref{thmK=-1globalintro} already
provide the characterization of forward complete projectively flat Finsler manifolds with
$\mathbf{K}=0, -1$. Condition (ii) thus gives a more refined characterization of such manifolds:

$\bullet$ $\mathbf{K}=0$: The standard Riemannian case is achieved  by setting $\psi=|\cdot|$ and $\phi=0$, which is a trivial Minkowski space.  A nontrivial non-Minkowski is obtained by taking $\psi = \phi = |\cdot|$, resulting in the space $\mathbb{B}^n_1(\mathbf{0})$ endowed with Berwald's metric $\mathsf{B}$ given in \eqref{Berwaldmetric}.

 $\bullet$ $\mathbf{K}=-1$: A representative example is given by $\psi=|\cdot|$ and $\phi=c|\cdot|$ with constant $c\geq 0$, where  Theorem \ref{thmK=-1globalintro} \eqref{K=-1additoncondtion}/(1)--(4) correspond respectively to $c=0$, $c=1$, $c\in (1,+\infty)$ and $c\in (0,1)$.
Notably, the first two cases recover the standard Riemannian metric $F_{-1}$ defined in \eqref{Rie} and the Funk metric $2\mathsf{F}$ given in \eqref{Funkex1}.
\vskip 2mm

In light of  Li \cite{Li}, Shen \cite{Sh1}, Theorem  \ref{thmK=0globalintro}, and Theorem \ref{thmK=-1globalintro}, the initial data of $F$ and $P$ at the origin play a fundamental role in classifying projectively flat Finsler metrics of constant flag curvature. Indeed, under the assumption of forward completeness, such a metric is uniquely determined by these data.
\begin{theorem}\label{uniquenssFinsler}
Given a Minkowski norm $\psi$  and a positively $1$-homogeneous function $\phi$ on $\mathbb{R}^n$, there exists at most one Finsler metric $F$ on a strictly convex domain $\Omega$ such that $(\Omega,F)$ is a forward complete projectively flat Finsler manifold  of   constant  (non-positive) flag curvature  with
\[
\psi(y)=F(\mathbf{0},y),\qquad \phi(y)=P(\mathbf{0},y),\quad \forall  y\in \mathbb{R}^n.
\]
\end{theorem}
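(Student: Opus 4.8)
The plan is to reduce the uniqueness assertion to the explicit reconstruction formulas already recorded in Theorems~\ref{thmK=0globalintro} and~\ref{thmK=-1globalintro}. Suppose $F_1$ and $F_2$ are two Finsler metrics, defined on strictly convex domains $\Omega_1$ and $\Omega_2$ respectively, such that each $(\Omega_i,F_i)$ is a forward complete projectively flat Finsler manifold of the same constant non-positive flag curvature $\mathbf{K}\in\{0,-1\}$ (after the usual normalization by scaling one reduces $\mathbf{K}<0$ to $\mathbf{K}=-1$), and such that both share the prescribed initial data $\psi(y)=F_i(\mathbf{0},y)$ and $\phi(y)=P_i(\mathbf{0},y)$ at the origin. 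First I would invoke Condition~(i) of the relevant theorem: in the case $\mathbf{K}=0$ it forces $\phi\geq 0$ and $\Omega_i=\{x:\phi(x)<1\}$, so $\Omega_1=\Omega_2=:\Omega$ is determined by $\phi$ alone; in the case $\mathbf{K}=-1$ the same role is played by the condition that $\phi+\psi$ be a weak Minkowski norm with $\Omega_i=\{x:(\phi+\psi)(x)<1\}$. Thus the domain is already pinned down by $(\psi,\phi)$.

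Next I would reconstruct the projective factor. For $\mathbf{K}=0$, Condition~(i) states that $P_i$ is \emph{the unique} solution of the implicit equation $P(x,y)=\phi(y+xP(x,y))$ on $\Omega\times\mathbb{R}^n$; hence $P_1\equiv P_2$. For $\mathbf{K}=-1$, $P_i=\tfrac12(\Phi_+^{(i)}+\Phi_-^{(i)})$ where $\Phi_\pm^{(i)}$ are the unique solutions of $\Phi_\pm(x,y)=(\phi\pm\psi)(y+x\Phi_\pm(x,y))$; since $\phi\pm\psi$ depend only on the given data, again $\Phi_\pm^{(1)}=\Phi_\pm^{(2)}$ and so $P_1\equiv P_2=:P$. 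Finally, Condition~(i) in each theorem expresses $F_i$ itself as an explicit functional of $\psi$ and $P$: for $\mathbf{K}=0$ one has $F_i(x,y)=\psi(y+xP(x,y))\,\{1+x^kP_{y^k}(x,y)\}$, and for $\mathbf{K}=-1$ one has $F_i=\tfrac12(\Phi_+-\Phi_-)$. Since the right-hand sides are identical for $i=1,2$, we conclude $F_1\equiv F_2$ on the common domain $\Omega$, which is the desired uniqueness.

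The only genuine subtlety — and the step I expect to require the most care — is the well-definedness and uniqueness of the solution to the implicit functional equations $P(x,y)=\phi(y+xP(x,y))$ (and its $\Phi_\pm$ analogues), together with the claim that a forward complete projectively flat metric of the given curvature \emph{must} satisfy these equations with this particular solution branch. The existence/uniqueness of the implicit solution should follow from a monotonicity or contraction argument in the scalar unknown $t=P(x,y)$ for fixed $(x,y)$: writing $g(t)=\phi(y+xt)$, one checks that $t\mapsto t-g(t)$ is strictly increasing on the relevant interval (using $1$-homogeneity and the defining inequality $\phi(x)<1$ on $\Omega$, which controls the slope $x^k\phi_{y^k}$), so the fixed point is unique where it exists, and forward completeness guarantees it exists on all of $\Omega\times\mathbb{R}^n$. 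That a forward complete such metric realizes precisely this branch is exactly the content of the ``only if'' direction of Theorems~\ref{thmK=0globalintro} and~\ref{thmK=-1globalintro}, which we are entitled to assume; modulo that, the uniqueness theorem is a formal consequence, since every object in sight ($\Omega$, $P$, $F$) has been exhibited as a deterministic function of the pair $(\psi,\phi)$.
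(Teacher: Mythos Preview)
Your proposal is correct and follows essentially the same route as the paper. The paper's proof is a one-line reduction to Corollaries~\ref{existsK=0metric} and~\ref{K=-1sufP>=F}, which are proved exactly by the reconstruction argument you outline: forward completeness forces $\Omega$ to be $\{\phi<1\}$ (resp.\ $\{\phi+\psi<1\}$), the functional equations for $P$ (resp.\ $\Phi_\pm$) have unique solutions on that domain by Theorem~\ref{lemsolexi} and Proposition~\ref{uniqunesslemmaK=-1}, and $F$ is then an explicit functional of these. Your monotonicity sketch for the scalar fixed-point uniqueness is slightly different from the paper's concavity argument in Theorem~\ref{lemsolexi}, but since you ultimately defer to the ``only if'' direction of Theorems~\ref{thmK=0globalintro} and~\ref{thmK=-1globalintro}, this is immaterial.
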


It is noteworthy that even when $\psi$ and $\phi$ satisfy conditions (i) and (ii) of Theorems \ref{thmK=0globalintro} or \ref{thmK=-1globalintro}, the key hypothesis ``$(\Omega,F)$  being  a Finsler manifold" is indispensable, since otherwise such a metric may fail to exist globally on $\Omega$.
%A natural question arising from previous results on the global structure of projectively flat Finsler metrics with non-positive constant flag curvature---and their dependence on
%$\psi$ and $\phi$---is how to directly construct a forward complete metric of this type from given functions
%$\psi$ and $\phi$, and whether such an
%$F$ satisfies the positive definiteness condition over the corresponding domain
%$\Omega$.
While many choices of
$\psi$ and $\phi$ do yield well-defined forward complete metrics on the entire
$\Omega$---such as the metrics of Hilbert, Funk, and Berwald---we discover that certain choices lead not only to incompleteness of manifolds but also to  disconnection of  metrics' domains.
We systematically study such phenomenon in dimension two (Theorems \ref{K=0conneccomconvex} and \ref{thmK=-1convex}).
Moreover,
 explicit examples  are constructed
 with disconnected domains:
 one with zero  flag curvature (Proposition~\ref{K=0Ex1}) and another with negative constant flag curvature  (Example~\ref{K=-1ExampleAmazing}).
% {\color{magenta}To our knowledge, this phenomenon appears to be new in the literature on Finsler geometry.}
Figure~1 illustrates
how domains of a metric family with zero flag curvature evolve from connectivity to disconnection; see Proposition \ref{K=0Ex1} for more details.
%Surprisingly, we discover that the domain can become disconnected even for a fixed pair $(\psi, \phi)$.

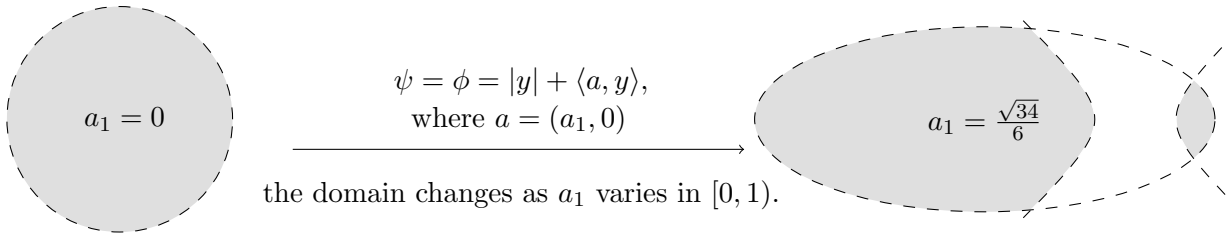
\begin{figure}[h]
\begin{tikzpicture}[scale=1]
   \filldraw[fill = gray!50,fill opacity=0.5, draw = black][dash pattern={on 4.5pt off 4.5pt}] (0,0) ellipse (1.5  and 1.5 );
   %first case a_1=\sqrt{5}/3
% Text for first case
\draw (0,0) node [inner sep=0.6pt]    {$\  a_1 = 0$};

%   \begin{scope} %second case a_1=2*\sqrt{2}/3
%   \clip   (17.8*0.3,0) ellipse (9*0.3+ 0.5  and 3*0.3+0.5 );
%   \filldraw[ fill = gray!50,fill opacity=0.5, draw = black][dash pattern={on 4.5pt off 4.5pt}] (17.8*0.3,0) ellipse (9*0.3  and 3*0.3 );
%   \draw[ draw =white] plot ({(17.8 + 6.36)*0.3},{\x});
%    \draw[fill opacity=0.5, draw = black][dash pattern={on 4.5pt off 4.5pt}] plot ({(17.8 + 6.36)*0.3},{\x});
%    \end{scope}

% Text for second case
\draw (17.8*0.3,1) node [inner sep=0.6pt]    {};
\draw (17.8*0.3,0.5) node [inner sep=0.6pt] {$\psi=\phi=|y|+ \langle a, y\rangle$, };
\draw (17.8*0.28,0) node [inner sep=0.6pt] {\ \ \ \ \ where $a=(a_1,0)$};
\draw  [->]  (2.3,-0.4) -- (8.3,-0.4) ;
\draw (17.8*0.3,-1) node [inner sep=0.6pt] {the domain changes  as $a_1$ varies in $[0, 1)$.};

    \begin{scope} %third case a_1=\sqrt{17/18}
   \clip   (11.5,0) ellipse (18*0.17  and 7.242*0.17 );
    \filldraw[fill = gray!50,fill opacity=0.5, domain = -1.2 : 1.2, draw=white] plot ( {11.5 + 17.49*0.157 + 0.17*(1.5*1.414*sec(\x r) - 2.915)/0.667 } ,{ 0.17* 3*tan(\x r)} );
    \filldraw[fill = gray!50,fill opacity=0.5, domain = -1.5 : 1.5, draw=white] plot ( {11.5 + 17.49*0.157 + 0.17*(-1.5*1.414*sec(\x r) - 2.915)/0.667 } ,{ -0.17* 3*tan(\x r)} );
    \end{scope}
     \draw[ draw = black][dash pattern={on 4.5pt off 4.5pt}] (11.5,0) ellipse (18*0.17  and 7.242*0.17 );
     \draw[ domain = -1.1 : 1.1][dash pattern={on 4.5pt off 4.5pt}] plot ( {11.5 + 17.49*0.157 + 0.17*(1.5*1.414*sec(\x r) - 2.915)/0.667 } ,{ 0.17* 3*tan(\x r)} );
     \draw[ domain = -1.2 : 1.2][dash pattern={on 4.5pt off 4.5pt}] plot ( {11.5 + 17.49*0.157 + 0.17*(-1.5*1.414*sec(\x r) - 2.915)/0.667 } ,{ -0.17* 3*tan(\x r)} );
% Text for third case
\draw (11.5,0) node [inner sep=0.6pt]    {$a_1 = \frac{ \sqrt{34}}{6}$};

\end{tikzpicture}
\caption{ \small Evolution of the domain (shaded) within $\Omega$.}
\label{K=0figIntro}
\end{figure}

For the positive curvature case, since $\Omega$ is noncompact, the Bonnet--Myers theorem (cf.~Bao--Chern--Shen \cite[Theorem 7.7.1]{BCS}) implies that every projectively flat Finsler manifold $(\Omega,F)$ with $\mathbf{K}=1$ must be incomplete.
%Now we turn to the case of  the positive flag curvature.
%By the Bonnet-Myers theorem (cf. \cite[Theorem 7.7.1]{BCS}),
% every projectively flat Finsler manifold $(\Omega,F)$ with  $\mathbf{K}=1$  must be noncompact and hence incomplete.
This motivates the study of its completion.
 Let $\mathbb{S}^n_+$ denote the upper hemisphere and  $\mathfrak{p}: \mathbb{R}^n\rightarrow \mathbb{S}^n_+\subset \mathbb{R}^{n+1}$  the natural diffeomorphism
\begin{equation*}
\mathfrak{p}(x):=\bigg( \frac{x}{\sqrt{1+|x|^2}}, \frac{  1}{\sqrt{1+|x|^2}}  \bigg).
\end{equation*}
Every Finsler metric $F$ on $\mathbb{R}^n$ can be pulled back to a metric $(\mathfrak{p}^{-1})^*F$ on $\mathbb{S}^n_+$ under $\mathfrak{p}$.
This observation leads to the following theorem, which establishes both a diameter bound and a completion result.
%Clearly,  every Finsler metric $F$ on $\mathbb{R}^n$ induces a pull-back Finsler metric  $(\mathfrak{p}^{-1})^*F$ on $\mathbb{S}^n_+$.

\begin{theorem}\label{thmK=1globalintro}
Let $(\Omega,F)$ be a projectively flat Finsler manifold with $\mathbf{K}=1$. Then the following hold:
\begin{enumerate}[\rm (i)]

\item\label{K=1theo1} $(\Omega,F)$ is neither forward complete nor backward complete;

\item\label{K=1theo2}  the diameter of $(\Omega,F)$ is at most $\pi$, and equals $\pi$ when $\Omega=\mathbb{R}^n$;

\item\label{K=1theo3} provided that $\Omega=\mathbb{R}^n$ and  $(\mathfrak{p}^{-1})^*F$ can be extended to a Finsler metric on $\overline{\mathbb{S}^n_+}$,  there exists a complete  locally projectively flat Finsler metric $\mathscr{F}$ on $\mathbb{S}^n$  such that:
\begin{itemize}

\item the flag curvature of $(\mathbb{S}^n,\mathscr{F})$ remains $1$,  the diameter is $\pi$ and $\mathscr{F}|_{\mathbb{S}^n_+}=\mathfrak{p}^*F$;

 \item  every geodesic of length $2\pi$ in $(\mathbb{S}^n,\mathscr{F})$  is  a great circle.

  \end{itemize}
 \end{enumerate}

\end{theorem}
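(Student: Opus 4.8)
\textbf{Proof proposal for Theorem~\ref{thmK=1globalintro}.}

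The plan is to treat the three assertions in sequence, with (i) and (ii) following relatively quickly from the distance formula and standard Finsler comparison theory, and (iii) being the genuine construction that forms the heart of the theorem.

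\emph{Parts (i) and (ii).}
First I would invoke the distance formula of Theorem~\ref{DisInto} in the case $\mathbf{K}=1$: for distinct $x_1,x_2\in\Omega$,
\[
d_F(x_1,x_2)=\arctan\frac{F^2+P^2-P}{F}+\arctan\frac{P}{F},
\]
where $F=F(x_1,x_2-x_1)$, $P=P(x_1,x_2-x_1)$. Since each $\arctan$ term lies in $(-\tfrac{\pi}{2},\tfrac{\pi}{2})$, the sum is bounded by $\pi$; a more careful sign analysis (using that $F>0$ and that the combination $F^2+P^2-P$ and $P$ cannot both be forced negative in a way that exceeds the bound) gives $d_F(x_1,x_2)<\pi$, hence $\operatorname{diam}(\Omega,F)\le\pi$. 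For the equality claim when $\Omega=\mathbb{R}^n$, I would let $x_2-x_1$ run to infinity along a fixed direction $y$ and show the right-hand side tends to $\pi$, using that $F(x_1,ty)\to\infty$ and controlling $P/F$ in the limit; the precise limiting value $\pi$ should drop out of the $\arctan$ asymptotics. For (i): forward-incompleteness is immediate because a forward-complete manifold with $\mathbf{K}=1$ would violate the Bonnet--Myers theorem (\cite[Theorem 7.7.1]{BCS}) unless it is compact, but $\Omega$ is noncompact (an open subset of $\mathbb{R}^n$); backward incompleteness follows symmetrically by applying the same reasoning to the reverse metric $\overline F(x,y):=F(x,-y)$, which is also projectively flat with $\mathbf{K}=1$, or directly from the finiteness of the diameter together with the fact that geodesics would otherwise have to close up or leave $\Omega$.

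\emph{Part (iii): the completion.}
Here I would exploit the diffeomorphism $\mathfrak{p}:\mathbb{R}^n\to\mathbb{S}^n_+$. Push $F$ forward to $\mathfrak{p}^*F$ on $\mathbb{S}^n_+$ (the notation in the statement), which by hypothesis extends to a Finsler metric on the closed hemisphere $\overline{\mathbb{S}^n_+}$; call the extension $G$. The key geometric input is that the great circles of $\mathbb{S}^n$ (in the round metric) restrict on $\mathbb{S}^n_+$ to the $\mathfrak{p}$-images of straight lines in $\mathbb{R}^n$, so $G$ is (locally) projectively flat in the round-sphere projective structure. To build $\mathscr F$ on all of $\mathbb{S}^n$, I would use the antipodal map $\mathfrak{a}:\mathbb{S}^n\to\mathbb{S}^n$ and set, on the lower hemisphere, $\mathscr F:=\mathfrak{a}^*\big(\overline{G}\big)$ where $\overline G(p,v):=G(p,-v)$ is the reverse of $G$ — this is the standard device (going back to Bryant and Shen) that makes the two hemispheres glue along the equator into a smooth, strongly convex Finsler metric with $\mathbf{K}=1$, because reversing the metric on the antipodal chart compensates for the orientation of geodesics passing through the equator. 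I would then verify: (a) the gluing is $C^\infty$ across the equator $\mathbb{S}^{n-1}$, by matching $G$ and $\mathfrak{a}^*\overline G$ and all derivatives there — this is where the hypothesis that the extension to $\overline{\mathbb{S}^n_+}$ is a genuine (smooth, strongly convex) Finsler metric is used decisively; (b) $\mathscr F$ is locally projectively flat, since on each hemisphere it is, and the projective structures agree (both come from great circles); (c) the flag curvature of a locally projectively flat metric is a pointwise-defined scalar function (cf.\ the Berwald equations \eqref{Berequeq}), equal to $1$ on the dense open set $\mathbb{S}^n\setminus\mathbb{S}^{n-1}$, hence $\equiv 1$ by continuity; (d) completeness: $(\mathbb{S}^n,\mathscr F)$ is compact, so it is forward and backward complete (e.g.\ Hopf--Rinow for Finsler manifolds); (e) the diameter is exactly $\pi$: $\le\pi$ follows again from Bonnet--Myers / the second assertion applied near the equator, and $=\pi$ follows from part (ii) since $\mathscr F|_{\mathbb{S}^n_+}=\mathfrak{p}^*F$ and $\Omega=\mathbb{R}^n$ forces the hemisphere's intrinsic diameter to realize the bound; (f) every geodesic of $\mathscr F$ of length $2\pi$ is a great circle: geodesics of a projectively flat metric on $\mathbb{S}^n$ lie along great circles as point sets, and one shows that a great circle, traversed once, has $\mathscr F$-length exactly $2\pi$ by computing $\int_0^{2\pi}\mathscr F(\sigma,\dot\sigma)\,\mathrm dt$ using the hemisphere decomposition and the Funk-type splitting $\mathscr F = \tfrac12(\Phi_+ - \Phi_-)$-style argument — on one half the integrand is $G$, on the other it is $\overline G$, and the two integrals sum to the full great-circle length; conversely any closed geodesic has length a multiple of this, so length exactly $2\pi$ pins it down to a singly-traversed great circle.

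\emph{Expected main obstacle.}
The delicate point is the smooth gluing across the equator in (a)/(b): one must check not merely continuity but that $G$ and the antipodal-reversed copy $\mathfrak{a}^*\overline G$ agree to infinite order along $\mathbb{S}^{n-1}$, and that the glued object remains \emph{strongly convex} (condition (v)) there — a priori the reversal could break positive-definiteness of the fundamental tensor at equatorial points. I would handle this by working in projective coordinates adapted to a point $q$ on the equator, writing both $G$ near $q$ and its antipodal counterpart as projectively flat solutions of Berwald's equations \eqref{Berequeq} with $\mathbf{K}=1$ and matching Cauchy data (the value of the metric and projective factor along the equatorial hyperplane, which is shared), and then appealing to the uniqueness in the local theory (the Cauchy--Kovalevskaya-type uniqueness for \eqref{Berequeq}, as in Shen~\cite{Sh1} and Li~\cite{Li}) to conclude the two germs coincide; strong convexity then transfers from $F$ via $\mathfrak{p}$ on an open neighborhood of the equator and hence holds on all of $\mathbb{S}^n$. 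A secondary technical nuisance is making the sign analysis in part (ii) fully rigorous — ensuring the two $\arctan$ terms genuinely sum to something in $(-\pi,\pi)$ and identifying the exact supremum — but this is a bounded computation with the explicit formula, not a conceptual difficulty.
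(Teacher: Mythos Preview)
Your treatment of (i) and (ii) is close in spirit to the paper's, but the paper does not go through the distance formula for (ii). Instead it solves the geodesic ODE directly (Theorem~\ref{lemgeodesicK} with $\mathbf{K}=1$): every unit-speed geodesic has the form $\gamma(t)=x+\cos^2 c\,[\tan(t+c)-\tan c]\,y$ on a maximal interval contained in $(-\tfrac{\pi}{2}-c,\tfrac{\pi}{2}-c)$, so its length is at most $\pi$; when $\Omega=\mathbb{R}^n$ the interval is exactly of length $\pi$. Your arctan argument gives $d_F<\pi$, but the equality part is shakier than you indicate: with $x_1$ fixed and $x_2\to\infty$ along a ray you only get $\tfrac{\pi}{2}+\arctan(\phi/\psi)<\pi$, so you would need both endpoints to run off along a line, which brings you back to the geodesic-length computation anyway.

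In (iii) there are two genuine problems. First, the gluing is wrong: the paper sets $\mathscr{F}|_{\mathbb{S}^n_-}=\mathfrak{a}^*G$, \emph{not} $\mathfrak{a}^*\overline{G}$. Since $d\mathfrak{a}_w(V)=-V$ in the ambient $\mathbb{R}^{n+1}$, the antipodal pullback already flips tangent vectors; composing with the reverse undoes this and changes the equatorial matching condition from $G(w,V)=G(-w,-V)$ (what actually holds) to $G(w,V)=G(-w,V)$ (which you have no argument for, and which is a different condition for irreversible $G$). Second --- and this is the real gap --- you assume the equatorial Cauchy data ``is shared'' and plan to invoke Berwald-equation uniqueness, but the sharing is precisely what must be proved. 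The paper's key step (Lemma~\ref{spherelemma1}) is a concrete limit computation: for a unit-speed geodesic $\gamma_{x,y}(t)$ in $(\mathbb{R}^n,F)$ with maximal interval of length $\pi$, one shows that $\mathfrak{p}(\gamma_{x,y}(t))$ tends to antipodal equatorial points $\pm w$ with suitably rescaled velocities tending to $\pm V$; constancy of $F$ along $\gamma_{x,y}$ then forces $\mathscr{F}_+(w,V)=\mathscr{F}_+(-w,-V)$. This geodesic-limit analysis is the engine that makes the gluing well defined, and also what later shows each great circle is a closed geodesic of length $2\pi$ (via part (ii), not via any $\Phi_+-\Phi_-$ splitting --- that decomposition is for $\mathbf{K}=-1$, not $\mathbf{K}=1$). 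Without it your matching and smoothness argument is circular: Berwald uniqueness tells you two solutions with the same data coincide, but you have not established that the data coincide.
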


%It is  well-established that $F_1$ defined in \eqref{Rie} is the unique projectively flat Riemannian metric on $\mathbb{R}^n$ with $\mathbf{K}=1$ and the pull-back metric $(\mathfrak{p}^{-1})^*F_1$ coincides precisely with the standard sphere metric. Thus, Theorem \ref{thmK=1globalintro} is natural in the Riemannian case. For the non-Riemannian metrics, however, the assumption that $(\mathfrak{p}^{-1})^*F$  can be extended to $\partial \mathbb{S}^n_+$ is necessary. Indeed, there exist projectively flat Finsler metrics on $\mathbb{R}^n$ with $\mathbf{K} = 1$ for which  this extension fails. An example and further discussion are provided in Remark \ref{K=1remarkconditionextension}.

It is well-known that $F_1$ in \eqref{Rie} is the unique projectively flat Riemannian metric on $\mathbb{R}^n$ with $\mathbf{K}=1$, and its pull-back $(\mathfrak{p}^{-1})^*F_1$ is the standard sphere metric. Thus, Theorem \ref{thmK=1globalintro} is natural for Riemannian metrics. For non-Riemannian ones, however, the assumption that $(\mathfrak{p}^{-1})^*F$ extends to $\partial \mathbb{S}^n_+$ is essential, as there exist projectively flat Finsler metrics with $\mathbf{K} = 1$ for which this fails; see Remark \ref{K=1remarkconditionextension}.

Theorem \ref{thmK=1globalintro}/\eqref{K=1theo3} implies that all geodesics are closed, of length $2\pi$, and mutually intersecting. This configuration falls into the first case of Bryant--Foulon--Ivanov--Matveev--Ziller \cite[Theorem 2]{BFIMZ},
but does not belong to the class of Finsler metrics constructed in Katok \cite{Katok}. For further related studies on closed geodesics in Finsler geometry, we refer to Bangert--Long \cite{BL}, Duan--Long--Wang \cite{DLW}, and Duan--Qi \cite{DQ}, and references therein.

Building upon the aforementioned results, we investigate  Sobolev spaces in the Finslerian setting. While Sobolev spaces over Riemannian manifolds are vector spaces, Krist\'aly--Rudas~\cite{KR} showed that this fails for the Funk metric space defined by \eqref{Funkex1}.
We further prove the following theorem, which
highlights a fundamental analytic distinction  between Finsler and Riemannian geometries.
%underscores
\begin{theorem}\label{thmSobolevIntro}
Let $(\Omega,F,\mathscr{L}^n)$ be an $n$-dimensional forward complete projectively flat Finsler manifold endowed with the Lebesgue measure. If $(\Omega, F)$ has constant flag curvature, then the following statements are equivalent:
\begin{enumerate}[{\rm (i)}]
\item\label{soli1} the reversibility $\lambda_F(\Omega)= +\infty$;

\item\label{soli2}
the Finsler manifold $(\Omega,F)$ is not backward complete;

\item\label{soli3}
the Sobolev space $W^{1,p}_0(\Omega,F,\mathscr{L}^n)$ is nonlinear for some $p\in (1, +\infty)$;

\item\label{soli4}
the Sobolev space $W^{1,p}_0(\Omega,F,\mathscr{L}^n)$ is nonlinear for every $p\in (1, +\infty)$.
\end{enumerate}
\end{theorem}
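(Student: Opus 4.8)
The plan is to prove the chain of equivalences by establishing $(\ref{soli1})\Leftrightarrow(\ref{soli2})$ directly from the structure theorems, then $(\ref{soli2})\Rightarrow(\ref{soli4})\Rightarrow(\ref{soli3})\Rightarrow(\ref{soli2})$, where the middle implications connect backward incompleteness to nonlinearity of the Sobolev space. The equivalence $(\ref{soli1})\Leftrightarrow(\ref{soli2})$ is essentially already contained in Theorems~\ref{thmK=0globalintro}, \ref{thmK=-1globalintro} and~\ref{thmK=1globalintro}: in every case listed there, "$\lambda_F(\Omega)=+\infty$" and "not backward complete" appear together, while "$\lambda_F(\Omega)<+\infty$" pairs with backward completeness (the Minkowski and Hilbert cases). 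Since a forward complete projectively flat manifold of constant flag curvature must fall into one of these enumerated cases, the two conditions are equivalent; I would simply cite the case analysis. The implication $(\ref{soli4})\Rightarrow(\ref{soli3})$ is trivial.

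For $(\ref{soli3})\Rightarrow(\ref{soli2})$, I would argue the contrapositive: if $(\Omega,F)$ is backward complete, then it is forward and backward complete, hence by the enumerated cases it is either a Minkowski space or carries a Hilbert metric, and in particular $\lambda_F(\Omega)<+\infty$. When the reversibility is finite, the symmetrized metric $\widehat{F}(x,y):=\tfrac12\bigl(F(x,y)+F(x,-y)\bigr)$ is a genuine (reversible) Finsler metric comparable to $F$ in the sense that $F(x,y)\le \widehat{F}(x,y)\cdot 2 \le (1+\lambda_F(\Omega))F(x,y)$ up to the natural bounds, so the norms $\|\nabla u\|_F$ and $\|\nabla u\|_{\widehat F}$ on $W^{1,p}_0$ are bi-Lipschitz equivalent; since the reversible metric gives a bona fide norm and hence a vector-space structure, $W^{1,p}_0(\Omega,F,\mathscr L^n)$ is linear for all $p$. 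This disposes of $(\ref{soli3})\Rightarrow(\ref{soli2})$ and symmetrically gives the easy direction of linearity under $(\ref{soli1})$ failing.

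The substantive implication is $(\ref{soli2})\Rightarrow(\ref{soli4})$: backward incompleteness forces nonlinearity of $W^{1,p}_0(\Omega,F,\mathscr L^n)$ for every $p\in(1,\infty)$. Here I would follow the strategy of Krist\'aly--Rudas~\cite{KR} for the Funk metric and adapt it using the explicit distance formula of Theorem~\ref{DisInto} and the structure theorems. The key point is that backward incompleteness (equivalently $\lambda_F(\Omega)=+\infty$) means there is a direction in which $F(x,-y)/F(x,y)$ blows up as $x$ approaches $\partial\Omega$ along a suitable ray; concretely, in the enumerated cases $\Omega$ is bounded and, near a boundary point, $F(x,\cdot)$ is highly asymmetric. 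One then exhibits an explicit function $u_0\in W^{1,p}_0(\Omega,F,\mathscr L^n)$ — for instance a truncated/scaled power of the backward distance to a boundary point, which lies in the space because the forward-complete side controls the relevant integrals — together with its "reflection" $\widetilde u_0$ built from the reversed metric, such that $u_0$ and $\widetilde u_0$ both have finite $\|\cdot\|_{W^{1,p}_0}$-norm but $u_0+\widetilde u_0$ (or a suitable linear combination) does not, because the gradient norm of the sum, computed with the asymmetric $F^*$, picks up the unbounded ratio. The finiteness/infiniteness of these integrals is where the distance asymptotics from Theorem~\ref{DisInto} enter quantitatively: one estimates $F^*(x,du)$ near the bad boundary portion using the formulas for $d_F$ in each curvature case $\mathbf K=0,-1,1$, reducing to a one-dimensional integral comparison of the form $\int \rho^{-\alpha p}\,d\rho$ with the two signs of orientation giving different $\alpha$.

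The main obstacle will be the construction and estimation in $(\ref{soli2})\Rightarrow(\ref{soli4})$: one must produce test functions whose $W^{1,p}_0$-membership is insensitive to orientation while the sum detects the infinite reversibility, and this requires precise two-sided control of $F$ and $F^*$ near the "incomplete" part of $\partial\Omega$ uniformly in $p$. I expect the explicit distance formulas and the description of $\Omega=\{\phi<1\}$ or $\{(\phi+\psi)<1\}$ from Theorems~\ref{thmK=0globalintro}--\ref{thmK=-1globalintro} to make this tractable: they pin down exactly how $F$ degenerates at the boundary, so the required integral estimates become explicit, if somewhat technical, computations. The positive-curvature case $\mathbf K=1$ needs separate handling since $\Omega$ may be all of $\mathbb R^n$ and incompleteness is of a different (bounded-diameter) nature — there one uses Theorem~\ref{thmK=1globalintro} to see $\Omega=\mathbb{R}^n$ with $\lambda_F=+\infty$ means the "boundary" is the equator $\partial\mathbb S^n_+$, and the same reflection argument applies after transporting to $\mathbb{S}^n$ via $\mathfrak p$.
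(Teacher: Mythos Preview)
Your handling of the easy directions is fine and matches the paper: $(\ref{soli1})\Leftrightarrow(\ref{soli2})$ is read off from the case lists in Theorems~\ref{thmK=0globalintro} and~\ref{thmK=-1globalintro}, and $(\ref{soli3})\Rightarrow(\ref{soli2})$ follows from finite reversibility (the paper packages this as Proposition~\ref{sobolevspaceline}).

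There is, however, a genuine error in your treatment of $\mathbf{K}=1$. You propose to handle it separately by transporting to $\mathbb{S}^n$ via $\mathfrak{p}$, but the hypothesis of the theorem is \emph{forward completeness}, and by Bonnet--Myers (Theorem~\ref{thmK=1globalintro}\eqref{K=1theo1}) no forward complete projectively flat $(\Omega,F)$ with $\mathbf{K}=1$ exists. The $\mathbf{K}=1$ case is vacuous; there is nothing to do.

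For the substantive implication $(\ref{soli2})\Rightarrow(\ref{soli4})$ your sketch is too loose and does not match the paper's mechanism. The paper does \emph{not} build two functions $u_0,\widetilde u_0$ whose sum fails to lie in the space; it builds a single function $u$ of the \emph{forward} distance $r(x)=d_F(\mathbf{0},x)$ (namely $u=-[\ln(2+r)]^{-1/n}$ for $\mathbf{K}=0$ and $u_\alpha=-e^{-\alpha r}$ for $\mathbf{K}=-1$), shows $u\in W^{1,p}_0$ via Lemma~\ref{SobolevLemma1}, and then shows $-u\notin W^{1,p}_0$ by proving $\int F^{*p}(-{\dd}u)\,{\dd}x=+\infty$. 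The two technical ingredients you are missing are: (a) a pointwise lower bound $F^*(x,-{\dd}r)\ge \mathcal{C}\,r^2$ (resp.\ $\mathcal{C}\,e^{2r}$) obtained by evaluating the dual norm along the radial direction using the explicit formulas for $F(x,-x)$ and $\langle -x,-{\dd}r\rangle$ (Lemmas~\ref{lemK=0Fstar}, \ref{lemK=-1Fstarall}); and (b) control of the polar density $\hat\sigma_{\mathbf 0}(r,y)$ from below, obtained by computing the $S$-curvature $\mathbf{S}=(n+1)P$ along $\nabla r$ (Lemmas~\ref{K=0nabla}, \ref{K=-1nabla}) and feeding it into the volume comparison Theorem~\ref{bascivolurcompar}. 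Your proposal to use ``backward distance to a boundary point'' and a one-dimensional $\int\rho^{-\alpha p}d\rho$ comparison does not supply either of these, and without the $S$-curvature/volume-comparison step you cannot convert the $F^*$ blow-up into divergence of the integral over $\Omega$.
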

This result shows that the linearity of Sobolev spaces is equivalent to the backward completeness  or the finite reversibility of based Finsler manifolds. By Theorems  \ref{thmK=0globalintro}, \ref{thmK=-1globalintro}, and \ref{thmK=1globalintro},   only two  classes satisfying these conditions: Minkowski spaces for $\mathbf{K}=0$ and Hilbert metric space for $\mathbf{K}=-1$, which together encompass all Riemannian cases.

\medskip

\noindent\textbf{Organization.}
Section~\ref{Preliminaries} develops algebraic properties of positively homogeneous functions and   Finsler geometry preliminaries. Sections~\ref{sectionK=0}--\ref{sectionK=1} constitute the core geometric analysis: Section~\ref{sectionK=0} addresses the global structure and existence of projectively flat Finsler metrics with $\mathbf{K}=0$, proving Theorem \ref{thmK=0globalintro}; Section~\ref{sectionK=-1} provides parallel results for $\mathbf{K}=-1$, establishing Theorems \ref{thmK=-1globalintro} and \ref{uniquenssFinsler}. Notably, in these two sections, we provide a complete description of maximal domains of metrics in the two-dimensional case (Theorem~\ref{K=0conneccomconvex}, Theorem~\ref{thmK=-1convex}) and exhibit new metrics with disconnected domains (Proposition~\ref{K=0Ex1}, Example~\ref{K=-1ExampleAmazing}). Section~\ref{sectionK=1} treats the $\mathbf{K}=1$ case, deriving explicit distance formulas (Theorem \ref{DisInto}) and proving
the completion of such manifold is a sphere (Theorem \ref{thmK=1globalintro}). Section~\ref{SecSobolev} investigates the linear/nonlinear structure of Sobolev spaces over these manifolds, linking it to backward completeness and proving the main characterization Theorem \ref{thmSobolevIntro}. Section~\ref{sectiondistance} broadens the perspective to asymmetric metrics on Euclidean spaces induced by pseudo-norm functions, where the fundamental equivalence Theorem \ref{dandFequ} is established.
Complementary proofs and technical results are collected in the Appendix (Sections \ref{propergenerlengappex0}--\ref{propesoboleve}), completing the detailed arguments for all results presented in the main text.

\medskip
\noindent\textbf{Acknowledgements.} The authors are grateful to Robert L.~Bryant and Patrick Foulon for their guidance regarding relevant references.
The first author is supported by Natural Science Foundation of Zhejiang Province (No. LY23A010012) and Natural Science Foundation of Ningbo (No.~2024J017). The second author
is supported by Natural Science Foundation of China (No.~12471045) and Science and Technology Project of Xinjiang Production and Construction Corps (No.~2023CB008-12).

\vskip 10mm
\section{Preliminaries}\label{Preliminaries}
This section introduces the fundamental definitions and establishes new results essential for proving our main theorems.
We begin by developing the theory of positively homogeneous functions, where we obtain new algebraic properties that form the foundation for our geometric framework.
We then review essential elements from Finsler geometry, establishing notations and conventions.  In the following part, we discuss projectively flat Finsler metrics, and present several new lemmas and theorems that will be instrumental in proving our main results.

For convenience, we introduce some notations and conventions, which are used throughout the paper: $\Omega$  always denotes a domain (i.e., connected open subset) in $\mathbb{R}^n$, which  usually contains the origin $\mathbf{0}$.
The tangent bundle
$T\Omega$ is identified with $\Omega\times \mathbb{R}^n$ .
Set
\[
\Rno:=\mathbb{R}^n\backslash\{\mathbf{0}\} \quad \text{ and  }\quad \Omegao:=\Omega\backslash\{\mathbf{0}\}.
\]
Given a subset $U\subset \mathbb{R}^n$ and a point $x\in \mathbb{R}^n$, the translation of $U$ by $x$ is denoted by
\begin{equation*}%\label{translation}
U + x=\{y + x\,:\,y\in U \}.
\end{equation*}
As usual, $\mathbb{B}^n_r(x)$ denotes the Euclidean open ball in $\mathbb{R}^n$ centered at $x$ with radius $r$.

\subsection{Positively homogeneous functions  }\label{convprofun}
As shown by Li \cite{Li} and Shen \cite{Sh1}, every projectively flat Finsler metric of constant flag curvature is locally determined by two
positively $1$-homogeneous functions. Hence, it is essential to investigate the properties of such functions.

%It will be showed that two (convex in most of the cases) positively $1$-homogeneous functions($\phi = \phi(y)$ and $\psi=\psi(y)$, $y\in \mathbb{R}^n$) are essential in the whole paper. And their sublevel sets  play an important role in our main results. Then in this subsection,
%some geometric properties which will be used in this paper of convex positively $1$-homogeneous functions are studied.

A function $\varphi \colon \mathbb{R}^n\rightarrow \mathbb{R}$ is called {\it positively $\kappa$-homogeneous} for some $\kappa\in \mathbb{Z}$ if for any $y\in \mathbb{R}^n$,
\[
\varphi(\alpha y) = \alpha^\kappa \varphi(y), \quad  \forall \alpha>0,
\]
which implies $\varphi(\mathbf{0})=0$ when $\kappa>0$.
If we further  assume $\varphi|_{\Rno} \in C^1(\Rno)$, then Euler's theorem (cf. Bao--Chern--Shen~\cite[Theorem 1.2.1]{BCS}) yields
\begin{equation}\label{Eulerhter}
y^i \varphi_{y^i}(y)=\kappa\, \varphi(y).
\end{equation}
%where $(y^i)$ denotes the standard Euclidean coordinate system of $\mathbb{R}^n$ and $\varphi_{y^i}$ denotes the partial derivative of $\varphi$ with respect to $y^i$, note that $\varphi_{y^i}$ is itself positively $(\kappa-1)$-homogeneous.
The following definition is due to Busemann--Mayer~\cite[p.\,181]{BM}:
%Following Busemann--Mayer~\cite[p.\,181]{BM}, we introduce the following definition.
\begin{definition}\label{regualrdef}
Let $\varphi \colon \mathbb{R}^n\rightarrow \mathbb{R}$ be a positively $1$-homogeneous function.
\begin{enumerate}[\ \ \ \ (1)]

\item The level set
$\varphi^{-1}(1)=\{y\in \mathbb{R}^n\,:\, \varphi(y)=1\}$
is called  the {\it indicatrix} of $\varphi$.

\item $\varphi$ is called {\it quasi-regular} if
\begin{equation}\label{phiquasiconvex0}
\varphi(y_1+y_2)\leq \varphi(y_1)+\varphi(y_2),\quad \forall y_1,y_2\in \mathbb{R}^n.
\end{equation}

\item A quasi-regular function $\varphi$ is said to be {\it regular} if and only if the equality in \eqref{phiquasiconvex0} holds only for
\[
\alpha_1 y_1=\alpha_2y_2, \quad \alpha_1\geq 0, \quad \alpha_2\geq 0, \quad \alpha_1+\alpha_2>0.
\]

\end{enumerate}
\end{definition}

A quasi-regular function is always continuous due to its convexity.
Moreover, every compact convex set in $\mathbb{R}^n$ corresponds to a quasi-regular function (cf.~H\"ormander \cite[Theorem 2.1.23]{Hl}).
In addition,
the following result holds (cf.~\cite[pp. 178--179]{BM}).
\begin{proposition}[\cite{BM}] \label{quasitostrictlyconvex} Let $\varphi \colon \mathbb{R}^n \rightarrow \mathbb{R}$ be
a positively $1$-homogeneous continuous function with $\varphi|_{\Rno} >0$.
Then the indicatrix of  $\varphi$ is  convex (resp., strictly convex) if and only if $\varphi$ is quasi-regular (resp., regular).
 \end{proposition}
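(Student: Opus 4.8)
\textbf{Proof proposal for Proposition~\ref{quasitostrictlyconvex}.}

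The plan is to pass between the positively $1$-homogeneous function $\varphi$ and the geometry of its sublevel/level sets, exploiting the correspondence $\varphi \leftrightarrow K_\varphi := \{y : \varphi(y)\le 1\}$. The first observation is that, since $\varphi$ is continuous, positively $1$-homogeneous, and strictly positive on $\Rno$, the set $K_\varphi$ is a compact neighborhood of $\mathbf 0$ whose boundary is exactly the indicatrix $\varphi^{-1}(1)$, and $\varphi$ is recovered as the Minkowski gauge (Minkowski functional) of $K_\varphi$: $\varphi(y) = \inf\{t>0 : y/t \in K_\varphi\}$ for $y \ne \mathbf 0$. Compactness and $\mathbf 0 \in \operatorname{int} K_\varphi$ follow from $\varphi|_{\Rno}>0$ together with homogeneity (the indicatrix is bounded away from $\mathbf 0$ and bounded above, by continuity on the unit sphere). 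This reduces the whole statement to the classical fact that the gauge of a star-shaped compact set with $\mathbf 0$ in its interior is subadditive iff the set is convex, and is ``strictly subadditive off rays'' iff the set is strictly convex.

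Next I would prove the two equivalences directly from the definitions. For the convex/quasi-regular equivalence: if $\varphi$ is quasi-regular (subadditive), then for $y_1,y_2 \in K_\varphi$ and $\lambda\in[0,1]$, homogeneity and subadditivity give $\varphi(\lambda y_1 + (1-\lambda)y_2) \le \lambda\varphi(y_1) + (1-\lambda)\varphi(y_2) \le 1$, so $K_\varphi$ is convex, hence so is its boundary (the indicatrix) in the usual sense of bounding a convex body. Conversely, if $K_\varphi$ is convex, then for arbitrary $y_1,y_2\in\Rno$ write $a_i := \varphi(y_i)>0$; the points $y_i/a_i$ lie on the indicatrix $\subset K_\varphi$, and the convex combination with weights $a_i/(a_1+a_2)$ gives $(y_1+y_2)/(a_1+a_2)\in K_\varphi$, i.e. $\varphi(y_1+y_2)\le a_1+a_2$; the cases where some $y_i=\mathbf 0$ are trivial. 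This is the standard gauge argument and should be written out in a few lines.

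For the strict version, I would track the equality case. Assume $\varphi$ regular and suppose $y_1,y_2$ span a $2$-plane with $y_1,y_2$ not positively proportional; then with $a_i=\varphi(y_i)$ the points $u_i := y_i/a_i$ are distinct boundary points of $K_\varphi$, and by regularity $\varphi(y_1+y_2) < a_1+a_2$, i.e. the midpoint-type combination $(y_1+y_2)/(a_1+a_2)$ lies in the interior of $K_\varphi$; since every chord between distinct indicatrix points arises this way (up to positive scaling of its endpoints), the indicatrix contains no nontrivial line segment, so it is strictly convex. Conversely, if the indicatrix is strictly convex and $y_1,y_2$ are not positively proportional, then $u_1\ne u_2$ on the indicatrix, the open segment between them lies in $\operatorname{int}K_\varphi$, hence $\varphi$ evaluated there is $<1$; rescaling gives $\varphi(y_1+y_2)<\varphi(y_1)+\varphi(y_2)$, and one checks that the only way to have equality in \eqref{phiquasiconvex0} is $\alpha_1 y_1 = \alpha_2 y_2$ with $\alpha_i\ge 0$, $\alpha_1+\alpha_2>0$, i.e. regularity. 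Here homogeneity is used to reduce a general chord of $K_\varphi$ (between $\mu_1 u_1$ and $\mu_2 u_2$ with $\mu_i\in(0,1]$) to one between indicatrix points, and to handle degenerate cases $y_i=\mathbf 0$ or $y_1,y_2$ colinear separately.

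The only mildly delicate point — the ``main obstacle'' — is the bookkeeping in the strict case: making precise that ``the indicatrix contains no segment'' is logically equivalent to the stated equality condition in Definition~\ref{regualrdef}(3), including the boundary subcases where one of $\alpha_1,\alpha_2$ vanishes or one of $y_1,y_2$ is $\mathbf 0$. Everything else is the textbook gauge-functional dictionary; since the paper attributes the statement to \cite[pp.\,178--179]{BM}, it suffices to record this dictionary cleanly and verify the equality analysis, rather than to develop anything new.
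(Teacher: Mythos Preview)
The paper does not give its own proof of this proposition: it simply records the statement with a reference to \cite[pp.\,178--179]{BM} and moves on. Your proposal is the standard gauge-functional argument (identify $\varphi$ with the Minkowski functional of $K_\varphi=\{\varphi\le 1\}$, then read off subadditivity from convexity of $K_\varphi$ and track equality for the strict case), which is correct and is exactly the content of the cited pages in Busemann--Mayer; there is nothing further to compare.
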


In the sequel, we investigate the solution $\Phi(x,y)$ to the following equation
\begin{equation}\label{pdeodefque}
\Phi(x, y) = \varphi(y + x \Phi(x, y)),
\end{equation}
where $\varphi$ is a positively $1$-homogeneous function on $\mathbb{R}^n$. This equation is closely related to projectively flat Finsler metrics (cf.~\cite{Li, Sh1}). For instance, the celebrated Funk metric arises as a solution, while the Hilbert metric is obtained by symmetrizing the former.
 Equation \eqref{pdeodefque} can also be regarded as a special case of Zermelo deformation when $\varphi$ is a Minkowski norm, a framework known to have profound connections with flag curvature in Finsler geometry, as explored by Foulon \cite{Foulon}, Huang--Mo \cite{HM},
Bryant--Huang--Mo \cite{BHM}, and Foulon--Matveev \cite{FM}. In this paper, however, we focus on the more general case where $\varphi$ is merely positively $1$-homogeneous or quasi-convex, which naturally involves the underlying manifold's boundary.
\begin{observation}\label{K=-1remarkindicatrix}
 Assume further that $\varphi|_{\Rno} > 0$. For fixed $x\in \{\varphi<1\}$,
any solution  of \eqref{pdeodefque} satisfies $\Phi(x, y)\geq 0$, with equality if and only if  $y = \mathbf{0}$.
Hence, for $y\in \Rno$,
equation \eqref{pdeodefque} can be equivalently rewritten as
 \begin{equation} \label{K=-1FunkEq_equi}
1 = \varphi\Big(\frac{y}{\Phi(x,y)} + x \Big).
\end{equation}
This identity has a clear geometric interpretation: for $y \neq \mathbf{0}$, the ray emanating from $x$ in the direction $y \neq \mathbf{0}$  intersects the indicatrix ${\varphi = 1}$  at the point $x + y/\Phi(x,y)$.
Moreover, if the solution $\Phi(x,y)$ is {\it unique}, then its unit-level set is given by
\begin{equation}\label{intdaixdd}
\{ y \in \mathbb{R}^n\, :\, \Phi(x, y) =1 \} = \{ y \in \mathbb{R}^n\, :\, \varphi(x+y) =1 \}= \{ y \in \mathbb{R}^n\, :\, \varphi(y) =1 \} - x.
\end{equation}
In other words,
the indicatrix of $\Phi(x, \cdot)$ is a translation by $-x$ of the indicatrix of $\varphi$.
\end{observation}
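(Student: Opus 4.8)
The plan is to establish the three claims in order, using only the definition \eqref{pdeodefque} and the positive $1$-homogeneity and positivity of $\varphi$ on $\Rno$. First I would verify the sign claim $\Phi(x,y)\geq 0$. Fix $x$ with $\varphi(x)<1$. If $y=\mathbf 0$, then \eqref{pdeodefque} reads $\Phi(x,\mathbf 0)=\varphi(x\,\Phi(x,\mathbf 0))$; writing $t=\Phi(x,\mathbf 0)$ and using homogeneity, $t=\varphi(tx)$. For $t>0$ this gives $t=t\,\varphi(x)$, forcing $\varphi(x)=1$, a contradiction; for $t<0$ the left side is negative while $\varphi(tx)=\varphi((-t)(-x))=(-t)\varphi(-x)\geq 0$ (here one needs $\varphi\geq 0$ on all of $\mathbb R^n$, which follows from $\varphi|_{\Rno}>0$ and $\varphi(\mathbf 0)=0$), again a contradiction; hence $t=0$. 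For $y\neq\mathbf 0$, suppose $\Phi(x,y)<0$. Then the argument $y+x\Phi(x,y)$ is some vector $w$, and $\Phi(x,y)=\varphi(w)\geq 0$, contradicting $\Phi(x,y)<0$; so $\Phi(x,y)\geq 0$. If moreover $\Phi(x,y)=0$ with $y\neq\mathbf 0$, then \eqref{pdeodefque} gives $0=\varphi(y)$, contradicting $\varphi|_{\Rno}>0$. This proves $\Phi(x,y)\geq 0$ with equality iff $y=\mathbf 0$.

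Next I would derive the equivalent form \eqref{K=-1FunkEq_equi}. For $y\in\Rno$ we have $\Phi(x,y)>0$ by the previous step, so we may divide the identity $\Phi(x,y)=\varphi(y+x\Phi(x,y))$ by $\Phi(x,y)$ and use positive $1$-homogeneity of $\varphi$:
\[
1=\frac{\varphi\big(y+x\Phi(x,y)\big)}{\Phi(x,y)}=\varphi\!\left(\frac{y+x\Phi(x,y)}{\Phi(x,y)}\right)=\varphi\!\left(\frac{y}{\Phi(x,y)}+x\right).
\]
Conversely, multiplying \eqref{K=-1FunkEq_equi} through by $\Phi(x,y)>0$ and applying homogeneity recovers \eqref{pdeodefque}, so the two are equivalent on $\Rno$. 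The geometric reading is then immediate: \eqref{K=-1FunkEq_equi} says the point $x+y/\Phi(x,y)$, which lies on the ray from $x$ in direction $y$ since $\Phi(x,y)>0$, belongs to the indicatrix $\{\varphi=1\}$.

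Finally I would compute the indicatrix of $\Phi(x,\cdot)$ under the uniqueness hypothesis. The middle and right equalities in \eqref{intdaixdd} are just the substitution $z=x+y$ together with the definition of translation of a set, so only the first equality requires proof; equivalently, I must show $\Phi(x,y)=1\iff\varphi(x+y)=1$. If $\Phi(x,y)=1$, then \eqref{pdeodefque} gives directly $1=\varphi(y+x\cdot 1)=\varphi(x+y)$. Conversely, suppose $\varphi(x+y)=1$; I claim $\Phi(x,y)=1$. Indeed, the value $1$ is itself a solution of \eqref{pdeodefque} at $(x,y)$ in the sense that $1=\varphi(y+x\cdot 1)$, and since $\Phi(x,y)$ is by hypothesis the \emph{unique} solution of that scalar equation, it must equal $1$. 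This gives the first equality in \eqref{intdaixdd}, and hence the indicatrix of $\Phi(x,\cdot)$ is the $(-x)$-translate of the indicatrix of $\varphi$.

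The only subtle point — the one I would flag as the main thing to get right rather than a genuine obstacle — is the logical role of uniqueness in the last step: the implication $\varphi(x+y)=1\Rightarrow\Phi(x,y)=1$ genuinely needs that \eqref{pdeodefque} has no solution other than $\Phi(x,y)$ at the given $(x,y)$, since a priori there could be several values $t$ with $t=\varphi(y+tx)$ and we must know the distinguished one is the one equal to $1$. Everything else is a direct manipulation of homogeneity and positivity.
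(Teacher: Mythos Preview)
Your proof is correct and is precisely the argument the paper leaves implicit: the observation is stated without proof in the paper, and your three steps (sign of $\Phi$, rescaling by homogeneity to get \eqref{K=-1FunkEq_equi}, and using uniqueness to identify the unit-level set) are the natural and only reasonable way to fill in the details. Your remark that uniqueness is genuinely needed for the implication $\varphi(x+y)=1\Rightarrow\Phi(x,y)=1$ is exactly right and is the reason the hypothesis appears in the statement.
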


The following lemma is a direct consequence of  Observation \ref{K=-1remarkindicatrix}, which describes the number of solutions by the number of intersections of a ray with the indicatrix..

\begin{lemma}\label{K=-1lemintersectpoints}
Let $\varphi$ be a positively $1$-homogeneous function on $\mathbb{R}^n$  with $\varphi|_{\Rno}>0$.
Given $x\in \{\varphi<1\}$ and $y\in \Rno$, define two sets by
\begin{align*}
S_{x,y}:=\big\{ \zeta\in \mathbb{R}\,:\,\zeta=\varphi(y+x\zeta) \big\},\qquad
I_{x,y}:= \big\{ x+ky\in \varphi^{-1}(1)\,:\, k\in[0,+\infty) \big\}.
\end{align*}
Thus, the cardinalities of $S_{x,y}$ and $I_{x,y}$  are equal.
\end{lemma}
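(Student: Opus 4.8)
The plan is to exhibit an explicit bijection between $S_{x,y}$ and $I_{x,y}$ that is nothing more than the geometric dictionary already recorded in Observation~\ref{K=-1remarkindicatrix}. Fix $x\in\{\varphi<1\}$ and $y\in\Rno$ once and for all. First I would define a map $\Psi\colon S_{x,y}\to I_{x,y}$ as follows: given $\zeta\in S_{x,y}$, i.e.\ $\zeta=\varphi(y+x\zeta)$, I claim $\zeta>0$. Indeed, if $\zeta\le 0$ then $y+x\zeta$ would have to satisfy $\varphi(y+x\zeta)=\zeta\le 0$, which is impossible because $\varphi|_{\Rno}>0$ and $y+x\zeta\neq\mathbf 0$ (if $y+x\zeta=\mathbf 0$ then $\varphi(\mathbf 0)=0=\zeta$, but then $y=-x\zeta=\mathbf 0$, contradicting $y\in\Rno$). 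So every element of $S_{x,y}$ is strictly positive, and we may set $\Psi(\zeta):=x+y/\zeta$. Using positive $1$-homogeneity, $\varphi(x+y/\zeta)=\varphi\bigl((y+x\zeta)/\zeta\bigr)=\tfrac1\zeta\varphi(y+x\zeta)=\tfrac1\zeta\cdot\zeta=1$, so $\Psi(\zeta)\in\varphi^{-1}(1)$; moreover $\Psi(\zeta)=x+ky$ with $k=1/\zeta\in(0,+\infty)\subset[0,+\infty)$, hence $\Psi(\zeta)\in I_{x,y}$.

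Next I would construct the inverse. Given $p\in I_{x,y}$, write $p=x+ky$ with $k\in[0,+\infty)$ and $\varphi(p)=1$. Note $k\neq 0$: if $k=0$ then $p=x$ and $\varphi(x)=1$, contradicting $x\in\{\varphi<1\}$. So $k>0$, and I set $\Theta(p):=1/k$. Then $\varphi\bigl(y+x\cdot\tfrac1k\bigr)=\varphi\bigl((ky+x)/k\bigr)=\tfrac1k\varphi(x+ky)=\tfrac1k$, so $\Theta(p)=\tfrac1k$ satisfies $\Theta(p)=\varphi(y+x\,\Theta(p))$, i.e.\ $\Theta(p)\in S_{x,y}$. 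A direct check shows $\Psi$ and $\Theta$ are mutually inverse: $\Theta(\Psi(\zeta))=\Theta(x+y/\zeta)$, and since $x+y/\zeta=x+ky$ with $k=1/\zeta$ we get $\Theta=1/k=\zeta$; symmetrically $\Psi(\Theta(p))=x+y/(1/k)=x+ky=p$. Here one should be slightly careful that the representation $p=x+ky$ with the relevant $k$ is unique, which holds because $y\neq\mathbf 0$. This establishes a bijection $S_{x,y}\leftrightarrow I_{x,y}$, hence the cardinalities coincide, in all cases (finite or infinite), which is exactly the claim.

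Since the lemma is explicitly stated to be ``a direct consequence of Observation~\ref{K=-1remarkindicatrix}'', there is no serious obstacle; the only points requiring genuine (if minor) care are the sign arguments — showing every solution $\zeta$ is strictly positive and every scaling factor $k$ is strictly positive — which rely crucially on the standing hypotheses $\varphi|_{\Rno}>0$ and $x\in\{\varphi<1\}$, together with the degeneracy bookkeeping at $\mathbf 0$ (handling the cases $y+x\zeta=\mathbf 0$ and $k=0$). Once those are dispatched, the correspondence $\zeta\mapsto x+y/\zeta$ is forced by the rewriting \eqref{K=-1FunkEq_equi}, and the verification that it lands in $I_{x,y}$ and is invertible is a one-line homogeneity computation in each direction. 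I would present the proof as the two paragraphs above, perhaps compressed, emphasizing that the map is the restriction of the geometric picture in Observation~\ref{K=-1remarkindicatrix}: the solutions $\zeta$ are exactly the reciprocals of the parameters $k$ at which the forward ray $t\mapsto x+ty$ meets the indicatrix $\varphi^{-1}(1)$.
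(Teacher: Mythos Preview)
Your proof is correct and is exactly the argument the paper has in mind: the lemma is stated as a direct consequence of Observation~\ref{K=-1remarkindicatrix}, and your bijection $\zeta\mapsto x+y/\zeta$ (with inverse $x+ky\mapsto 1/k$) is precisely the dictionary encoded in~\eqref{K=-1FunkEq_equi}. The sign checks ($\zeta>0$, $k>0$) using $\varphi|_{\Rno}>0$ and $x\in\{\varphi<1\}$ are the only nontrivial points, and you handle them cleanly.
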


In particular, Shen proved the local uniqueness of solutions to \eqref{pdeodefque} (cf. \cite[Lemma 5.1]{Sh1}).
\begin{lemma}[{\cite{Sh1}}] \label{Sh1Lemma5.1}
Given a positively $1$-homogeneous continuous function $\varphi \colon \mathbb{R}^n\rightarrow \mathbb{R}$ with
$\varphi|_{\Rno}\in C^\infty(\Rno)$,
there is a number $\delta>0$ such that the solution $\Phi(x,y)$ to the following equation
 \begin{equation} \label{K=-1FunkEq}
\Phi(x, y) = \varphi(y + x \Phi(x,y)), \quad \forall (x,y)\in \mathbb{B}^n_{\delta}(\mathbf{0}) \times \mathbb{R}^n
\end{equation}
exists uniquely. Moreover, $\Phi(x,y)$ is positively $1$-homogeneous in $y$.
\end{lemma}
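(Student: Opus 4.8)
\textbf{Proof proposal for Lemma \ref{Sh1Lemma5.1}.}

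The plan is to produce $\Phi$ as the fixed point of the map $T_x\colon\zeta\mapsto \varphi(y+x\zeta)$ for $(x,y)$ near the origin and fixed, and to obtain uniqueness and homogeneity from the structure of that fixed point. First I would normalize: since both sides of \eqref{K=-1FunkEq} are positively $1$-homogeneous in $y$, it suffices to solve the equation for $y$ on the Euclidean unit sphere $\mathbb{S}^{n-1}$ and then extend by homogeneity; thus I take $|y|=1$ throughout the construction. For such $y$ and for $x\in\mathbb{B}^n_\delta(\mathbf{0})$ with $\delta$ small, I would show $T_x$ maps a fixed closed interval $J=[-A,A]$ (with $A:=2\max_{|v|\le 2}|\varphi(v)|$, say) into itself: indeed $|y+x\zeta|\le 1+\delta A\le 2$ once $\delta A\le 1$, so $|T_x(\zeta)|=|\varphi(y+x\zeta)|\le A/2<A$. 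Next I would show $T_x$ is a contraction on $J$: by the mean value theorem and $1$-homogeneity of $\varphi$ (hence $0$-homogeneity of $\nabla\varphi$ and boundedness of $\nabla\varphi$ on the compact set $\{1/2\le|v|\le 2\}$ — one checks $|y+x\zeta|\ge 1-\delta A\ge 1/2$), one gets $|T_x(\zeta_1)-T_x(\zeta_2)|\le \delta\,\|\nabla\varphi\|_{\infty,\{1/2\le|v|\le2\}}\,|\zeta_1-\zeta_2|$, which is a genuine contraction once $\delta$ is chosen so that this constant is $<1$. The Banach fixed point theorem then yields a unique $\Phi(x,y)\in J$ with $\Phi(x,y)=\varphi(y+x\Phi(x,y))$.

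Two further points complete the argument. For \emph{uniqueness} among all real solutions (not merely those in $J$), I would note that the contraction estimate above actually holds on all of $\mathbb{R}$ with the same Lipschitz constant $<1$, provided one only needs it near any putative fixed point: more cleanly, any solution $\zeta_0$ satisfies $|\zeta_0|=|\varphi(y+x\zeta_0)|\le \|\varphi\|_{\infty,\mathbb{S}^{n-1}}(1+\delta|\zeta_0|)$, so for $\delta$ small every solution automatically lies in $J$, where uniqueness is already established. For \emph{homogeneity in $y$}: given $\lambda>0$ and $|y|=1$, observe that $\lambda\Phi(x,y)$ satisfies $\lambda\Phi(x,y)=\lambda\varphi(y+x\Phi(x,y))=\varphi(\lambda y+x\,\lambda\Phi(x,y))$ by $1$-homogeneity of $\varphi$, so $\lambda\Phi(x,y)$ is a (the) solution of \eqref{K=-1FunkEq} with second argument $\lambda y$; by uniqueness $\Phi(x,\lambda y)=\lambda\Phi(x,y)$, which is exactly positive $1$-homogeneity in $y$. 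Smoothness of $\Phi$ in $(x,y)$ on $\mathbb{B}^n_\delta(\mathbf{0})\times\Rno$, if needed, follows from the implicit function theorem applied to $G(x,y,\zeta):=\zeta-\varphi(y+x\zeta)$, since $\partial_\zeta G=1-x^k\varphi_{y^k}(y+x\zeta)\ge 1-\delta\|\nabla\varphi\|_\infty>0$ at the solution.

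The main obstacle, and the only place requiring care, is the uniform control of $\nabla\varphi$: since $\varphi$ is only assumed smooth away from the origin, one must keep the argument $y+x\zeta$ bounded away from $\mathbf{0}$, which is precisely why the a priori bound $\Phi(x,y)\in J$ (forcing $|y+x\zeta|\ge 1-\delta A$) has to be derived \emph{before} invoking the contraction estimate. Once the domain restriction to $|y|=1$ and the a priori interval $J$ are in place, the Lipschitz constant is $O(\delta)$ and everything is routine; I do not expect any further difficulty, and the homogeneity extension makes the smallness of $\delta$ (depending only on $\varphi$) harmless.
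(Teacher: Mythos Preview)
Your contraction-mapping argument is correct and complete: the normalization to $|y|=1$, the a priori confinement of any solution to a bounded interval $J$ (keeping the argument $y+x\zeta$ away from the origin), the $O(\delta)$ Lipschitz bound on $T_x$, and the homogeneity-via-uniqueness step all work as stated. Note, however, that the paper does not supply its own proof of this lemma---it is quoted directly from Shen~\cite{Sh1} (his Lemma~5.1)---so there is no in-paper argument to compare against. Your proof is a standard and self-contained justification; Shen's original proof likewise proceeds via the implicit function theorem applied to $G(x,y,\zeta)=\zeta-\varphi(y+x\zeta)$, which is essentially the smoothness step you sketch at the end, so the two approaches coincide in spirit.
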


%To obtain a global version, we need a basic fact, whose proof is trivial.
%\begin{proposition}\label{convexlemma}
%Let $h(t)$  be a  smooth concave function defined on  an interval $I$. If there are $a,b\in I$ with $h(a)=h(b)=0$, then $h(t)\geq 0$ for $t\in (a,b)$.
%\end{proposition}

We now establish a global existence and uniqueness theorem for solutions to  \eqref{pdeodefque}.
\begin{theorem}\label{lemsolexi}
Let $\varphi \colon \mathbb{R}^n \rightarrow \mathbb{R} $ be a (quasi-)regular function with
$0<\varphi|_{\Rno} \in C^\infty(\Rno)$, and define the domain $\Omega := \{ x \in \mathbb{R}^n : \varphi(x) < 1 \}$.
 Then there exists  a unique solution $\Phi(x,y)$ to
  the equation
\begin{equation} \label{MainEq1}
\Phi(x, y) = \varphi(y + x \Phi(x,y)), \quad \forall (x,y)\in \Omega \times \mathbb{R}^n.
\end{equation}
Moreover,
$\Phi$ has the following properties:
\begin{itemize}
\item $\Phi|_{\Omega \times \Rno}$ is a positive smooth function;

\item $\Phi(x, y)$ is a (quasi-)regular function in $y$;

\item $\{ y\in \mathbb{R}^n \, : \, \Phi(x,y) <1 \}=\Omega-x$.
\end{itemize}
\end{theorem}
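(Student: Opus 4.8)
The plan is to reduce the global statement to Shen's local result (Lemma~\ref{Sh1Lemma5.1}) together with the geometric dictionary provided by Observation~\ref{K=-1remarkindicatrix} and Lemma~\ref{K=-1lemintersectpoints}. First I would fix $x\in\Omega$, i.e.\ $\varphi(x)<1$, and analyze the scalar equation $\zeta=\varphi(y+x\zeta)$ in the unknown $\zeta\in\mathbb{R}$. For $y=\mathbf{0}$, positive $1$-homogeneity gives $\zeta=\zeta\,\varphi(x)$, so $\zeta=0$ is the only solution since $\varphi(x)<1$; thus $\Phi(x,\mathbf{0})=0$ is forced. For $y\in\Rno$, Lemma~\ref{K=-1lemintersectpoints} identifies the solution set $S_{x,y}$ with the set $I_{x,y}$ of intersection points of the ray $\{x+ky:k\ge0\}$ with the indicatrix $\varphi^{-1}(1)$. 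Since $\varphi$ is (quasi-)regular, Proposition~\ref{quasitostrictlyconvex} says its indicatrix is convex (resp.\ strictly convex), and since $x$ lies strictly inside the convex body $\{\varphi\le1\}$ (here I should note $\{\varphi<1\}$ is exactly the interior and $\{\varphi=1\}$ the boundary, using $0<\varphi|_{\Rno}$, continuity, and homogeneity), each ray from an interior point meets the boundary of a bounded convex body in exactly one point — and exactly one point also in the unbounded/degenerate directions where $\varphi$ may vanish only at $\mathbf{0}$, which it does not on $\Rno$. Hence $\#S_{x,y}=1$ for every $x\in\Omega$, $y\in\Rno$: this establishes existence and uniqueness of $\Phi(x,y)$ pointwise on all of $\Omega\times\mathbb{R}^n$, and the value is positive for $y\neq\mathbf{0}$ by Observation~\ref{K=-1remarkindicatrix}.

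Next I would upgrade pointwise uniqueness to smoothness and homogeneity. Positive $1$-homogeneity in $y$ is immediate: if $\zeta$ solves $\zeta=\varphi(y+x\zeta)$ then $\alpha\zeta=\varphi(\alpha y+x(\alpha\zeta))$ for $\alpha>0$ by homogeneity of $\varphi$, and uniqueness forces $\Phi(x,\alpha y)=\alpha\Phi(x,y)$. For smoothness on $\Omega\times\Rno$, I would apply the implicit function theorem to $G(x,y,\zeta):=\zeta-\varphi(y+x\zeta)$: one computes $\partial_\zeta G=1-x^k\varphi_{y^k}(y+x\zeta)$, and I must check this is nonzero at a solution. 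By Euler's theorem~\eqref{Eulerhter} applied at the point $v:=y+x\Phi(x,y)$ on the ray, $v^k\varphi_{y^k}(v)=\varphi(v)=\Phi(x,y)$; substituting $v=y+x\Phi$ and using that $y=\Phi(x,y)(v-x)/\Phi$... more directly, $x^k\varphi_{y^k}(v)<1$ follows from strict convexity: $\varphi_{y^k}(v)$ is the (outward) supporting covector of $\{\varphi\le1\}$ at the boundary point $v$, so $\langle\nabla\varphi(v),x\rangle<\langle\nabla\varphi(v),v\rangle=\varphi(v)=1$ precisely because $x$ is an interior point and the supporting hyperplane at $v$ separates $x$ from $\partial\Omega$. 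Thus $\partial_\zeta G\ne0$, the implicit function theorem gives local smoothness, and by uniqueness these local solutions patch to the global smooth $\Phi$ on $\Omega\times\Rno$ (continuity at $y=\mathbf{0}$ follows from $1$-homogeneity and boundedness of $\Phi(x,\cdot)$ on the unit sphere). Consistency with Lemma~\ref{Sh1Lemma5.1} near $\mathbf{0}$ is automatic from uniqueness.

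It remains to prove the two structural properties of $y\mapsto\Phi(x,y)$. For the level-set identity $\{y:\Phi(x,y)<1\}=\Omega-x$: by the uniqueness just established, the rewriting~\eqref{intdaixdd} in Observation~\ref{K=-1remarkindicatrix} gives $\{y:\Phi(x,y)=1\}=\varphi^{-1}(1)-x$, i.e.\ the indicatrix of $\Phi(x,\cdot)$ is the boundary of $\Omega-x$; combined with $\Phi(x,\mathbf{0})=0<1$, continuity, and $1$-homogeneity in $y$ (which makes $\{\Phi(x,\cdot)<1\}$ the star-shaped interior determined by that indicatrix), I get the claimed equality. Finally, that $\Phi(x,\cdot)$ is (quasi-)regular: I would invoke Proposition~\ref{quasitostrictlyconvex} in the reverse direction — $\Phi(x,\cdot)$ is positively $1$-homogeneous, continuous, positive on $\Rno$, and its indicatrix $\varphi^{-1}(1)-x$ is a translate of the convex (resp.\ strictly convex) hypersurface $\varphi^{-1}(1)$, hence convex (resp.\ strictly convex), so $\Phi(x,\cdot)$ is quasi-regular (resp.\ regular). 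The main obstacle I anticipate is the careful geometric bookkeeping in the first paragraph: one must verify that every ray from an interior point of the convex body $\{\varphi\le1\}$ meets the boundary $\{\varphi=1\}$ in exactly one point, handling potentially unbounded $\Omega$ and directions in which $\varphi$ could be small but positive, and cleanly identifying $\{\varphi<1\}$ with the interior and $\{\varphi=1\}$ with the topological boundary — this uses convexity plus $0<\varphi|_{\Rno}$ essentially, and is the crux that makes "(quasi-)regular" exactly the right hypothesis. The smoothness step's nonvanishing of $\partial_\zeta G$ is the second, milder, pitfall, resolved by the supporting-hyperplane argument above.
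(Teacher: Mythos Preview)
Your proposal is correct and complete. The existence/uniqueness step, however, follows a genuinely different route from the paper's. You invoke Lemma~\ref{K=-1lemintersectpoints} to translate the scalar equation $\zeta=\varphi(y+x\zeta)$ into the geometric problem of counting intersections of the ray $\{x+ky:k\ge0\}$ with the indicatrix $\varphi^{-1}(1)$, and then appeal to the standard fact that a ray from an interior point of a bounded convex body meets the boundary exactly once. The paper instead works directly with the scalar function $h(t):=t-\varphi(y+xt)$: quasi-regularity gives $h(t)\ge t(1-\varphi(x))-\varphi(y)$, producing a largest root $t_0>0$, and then concavity of $h$ (i.e.\ $h''\le0$) is used in a short calculus argument to rule out any smaller root. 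Your geometric approach is cleaner conceptually and makes transparent why ``(quasi-)regular'' is the right hypothesis; the paper's analytic approach is more self-contained and yields the concavity of $h$ as a reusable byproduct. For smoothness, the level-set identity, and (quasi-)regularity of $\Phi(x,\cdot)$, your arguments coincide with the paper's: the supporting-hyperplane inequality you describe is exactly the paper's~\eqref{phsixismal11} (note that only convexity of $\varphi$, not strict convexity, is needed there---the strict inequality comes from $x$ being interior), and both invoke~\eqref{intdaixdd} and Proposition~\ref{quasitostrictlyconvex} in the same way. Your worry about unbounded $\Omega$ is unfounded: since $\varphi$ is continuous, positive on $\Rno$, and $1$-homogeneous, compactness of the unit sphere gives $\varphi\ge c|\cdot|$ for some $c>0$, so $\Omega$ is automatically bounded.
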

\begin{proof}  By Observation \ref{K=-1remarkindicatrix}, we have $\Phi(x,\mathbf{0})=0$.
Given $(x,y) \in \Omega \times \Rno$, the (quasi-)regularity and homogeneity of  $\varphi$ imply
\begin{equation*}%\label{defh}
h(t):=h_{x,y}(t):=t -  \varphi(y + x t)\geq t ( 1 - \varphi( x ) ) - \varphi( y ), \quad \forall t\in [0,+\infty).
\end{equation*}
Since $0 \leq \varphi(x) < 1$  and $\varphi(y)> 0$,
there exists $t_0=t_0(x,y) >0$ such that
\begin{equation}\label{topropoerty}
h(t_0) = 0 \quad \text{and} \quad h(t)>0, \quad \forall  t\in (t_0,+\infty).
\end{equation}
Defining $\Phi(x,y) := t_0$ then yields a solution to \eqref{MainEq1}.

We now prove uniqueness. The function $h$ is concave due to the convexity of $\varphi$, which implies
\begin{equation}\label{hnonnegative}
h''(t) =  - \varphi_{y^i y^j}(y + x t)x^i x^j \leq 0,\quad \forall  t\in [0,+\infty).
\end{equation}

First, we show that $h(t)<0$ for $t$ arbitrarily close to $t_0$ from the left. Suppose,  for contradiction,  that $h(t) \geq 0$ on some interval $(t_0-\delta, t_0)$. Combined with \eqref{topropoerty}, this would make $t_0$ a local minimum, implying $h'(t_0) = 0$. However, since $h(t) > 0$ for $t > t_0$, there exists $t_* > t_0$ with $h'(t_*) > 0$. By the mean value theorem, there exists $\xi \in (t_0, t_*)$ such that
\[
h''(\xi) = \frac{h'(t_*) - h'(t_0)}{t_* - t_0} > 0,
\]
contradicting \eqref{hnonnegative}. Hence, there exists $\delta > 0$ such that
\begin{equation}\label{claimroo1}
h(t) < 0 \quad \text{for all } t \in (t_0-\delta, t_0).
\end{equation}

Suppose that $t_1(<t_0)$ is the second largest root of $h=0$.
 Then \eqref{claimroo1} implies $h(t) < 0$ on $(t_1, t_0)$, which is impossible for a concave function with $h(t_1) = h(t_0) = 0$. Therefore, $t_0$ is the unique root, establishing the uniqueness of $\Phi$. The homogeneity $\Phi(x, \alpha y) = \alpha \Phi(x, y)$ for $\alpha > 0$ follows since both sides satisfy \eqref{MainEq1} at $(x, \alpha y)$.

For fixed $x \in \Omega$, the homogeneity of $\varphi$ and \eqref{intdaixdd} imply that
\[
\{ y \in \mathbb{R}^n : \Phi(x,y) < 1 \} = \{ y \in \mathbb{R}^n : \varphi(y) < 1 \} - x = \Omega - x.
\]
The (quasi-)regularity of $\Phi$ in $y$ then follows from Proposition \ref{quasitostrictlyconvex}.

Finally, we prove smoothness. Since $\varphi$ is convex, Taylor's theorem yields
$\varphi(y-w)\geq \varphi(y)-\varphi_{y^i}(y)w^i$ for any $y \neq \mathbf{0}$.
Given  $x\in \Omega$, choosing $w:=y-x$ and applying  Euler's theorem \eqref{Eulerhter} gives
\begin{equation}\label{phsixismal11}
\varphi_{y^i}(y)x^i\leq \varphi(x)<1.
\end{equation}
This, together with the implicit function theorem and \eqref{MainEq1}, establishes the smoothness of $\Phi$.
\end{proof}

The following lemma provides a comparison principle for the solutions.

\begin{lemma}\label{varphi_minus} Let $\varphi$, $\Phi$, and $\Omega$ be as in Theorem \ref{lemsolexi}. Assume that
 $\tilde{\varphi}\colon \mathbb{R}^n\rightarrow \mathbb{R}$ is a continuous positively $1$-homogeneous function
satisfying
\begin{equation}   \label{relationvartildephi}
   - \varepsilon \varphi|_{\Rno}< \tilde{\varphi}|_{\Rno} < \varphi|_{\Rno},
\end{equation}
for some $\varepsilon \geq 0$. Then
there exists a function $\tilde{\Phi} = \tilde{\Phi}(x,y)$ satisfying the equation
\begin{equation} \label{EqtildePhivarph}
 \tilde{\Phi}(x,y) = \tilde{\varphi}(y+ x \tilde{\Phi}(x,y)), \quad \forall (x,y)\in \Omega\times \mathbb{R}^n,
\end{equation}
and the inequality
\begin{equation*}% \label{EqtildePhivarph<Phi}
\tilde{\Phi}(x,y)  < \Phi(x,y), \quad   \forall (x,y)\in   \Omega\times \Rno.
\end{equation*}
%{\color{red} The following is not proved. March 19, 2025. Moreover, the solution $\tilde{\Phi}(x,y)$ to    \eqref{EqtildePhivarph} is unique if additionally $\tilde{\varphi}$  is $C^{\infty}$ on ${\Rno}$ and $\varphi - \tilde{\varphi}$ is convex.
%}
\end{lemma}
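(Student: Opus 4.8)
The plan is to construct $\tilde\Phi$ pointwise by an intermediate value argument applied to the one–variable function $\tilde h_{x,y}(t):=t-\tilde\varphi(y+xt)$, which is continuous in $t$ because $\tilde\varphi$ is continuous. Fix $(x,y)\in\Omega\times\Rno$; the degenerate cases $y=\mathbf 0$ and $x=\mathbf 0$ are immediate, with $\tilde\Phi(x,\mathbf 0):=0$ and $\tilde\Phi(\mathbf 0,y):=\tilde\varphi(y)<\varphi(y)=\Phi(\mathbf 0,y)$. Recalling from the proof of Theorem~\ref{lemsolexi}, in particular \eqref{topropoerty}, that $t_0:=\Phi(x,y)>0$ is the unique root of $h_{x,y}(t):=t-\varphi(y+xt)$, and observing that $y+xt_0\neq\mathbf 0$ (otherwise $t_0=\varphi(y+xt_0)=\varphi(\mathbf 0)=0$), the upper bound $\tilde\varphi|_{\Rno}<\varphi|_{\Rno}$ evaluated at the nonzero vector $y+xt_0$ gives
\[
\tilde h_{x,y}(t_0)=t_0-\tilde\varphi(y+xt_0)>t_0-\varphi(y+xt_0)=h_{x,y}(t_0)=0 .
\]

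The core step is to produce some $t_1<t_0$ with $\tilde h_{x,y}(t_1)\le 0$. If $\tilde\varphi(y)\ge 0$ this holds with $t_1=0$, since $\tilde h_{x,y}(0)=-\tilde\varphi(y)\le 0$. If $\tilde\varphi(y)<0$ we pass to negative parameters: for $s>0$ with $y-sx\neq\mathbf 0$, the lower bound $-\varepsilon\varphi<\tilde\varphi$ together with subadditivity and $1$–homogeneity of $\varphi$ yields
\[
\tilde h_{x,y}(-s)=-s-\tilde\varphi(y-sx)\le -s+\varepsilon\,\varphi(y-sx)\le \varepsilon\,\varphi(y)-s\bigl(1-\varepsilon\,\varphi(-x)\bigr),
\]
so that, provided $\varepsilon\,\varphi(-x)<1$, the right–hand side tends to $-\infty$ and a suitable $t_1=-s<0<t_0$ is found (only the at most one value of $s$ with $y-sx=\mathbf 0$ being excluded). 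Granting $t_1$, continuity of $\tilde h_{x,y}$ and the intermediate value theorem produce a zero in $[t_1,t_0)$, and we set $\tilde\Phi(x,y):=\sup\{t\in[t_1,t_0]:\tilde h_{x,y}(t)=0\}$. Then $\tilde\Phi(x,y)<t_0=\Phi(x,y)$ because $\tilde h_{x,y}>0$ on a neighbourhood of $t_0$, and by construction $\tilde\Phi(x,y)=\tilde\varphi\bigl(y+x\tilde\Phi(x,y)\bigr)$, i.e. \eqref{EqtildePhivarph} holds; running this over all $(x,y)$ gives the function $\tilde\Phi$, which is positively $1$–homogeneous in $y$ (both sides of the equation at $(x,\alpha y)$ being solved by $\alpha\tilde\Phi(x,y)$) and satisfies $\tilde\Phi<\Phi$ on $\Omega\times\Rno$.

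The hard part is precisely the existence of $t_1$ when $\tilde\varphi(y)<0$: one must harvest enough negativity of $\tilde\varphi$ along the ray $t\mapsto y+xt$ to defeat the linear term $t$, and it is exactly here that the \emph{two–sided} control on $\tilde\varphi$ is indispensable — the upper bound $\tilde\varphi<\varphi$ gives Step~1 for free, while the lower bound $-\varepsilon\varphi<\tilde\varphi$ (equivalently, the quantitative control of $\varphi$ in the direction $-x$ compatible with $x\in\Omega=\{\varphi<1\}$, which forces $\varepsilon\varphi(-x)<1$ in the relevant situations, e.g. from the specific algebraic form of $\tilde\varphi$ in the applications) is what pushes $\tilde h_{x,y}$ below zero to the left of $\Phi(x,y)$. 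A secondary technical point is smoothness of $\tilde\Phi$ on $\Omega\times\Rno$, obtained from the implicit function theorem applied to $\tilde h_{x,y}$ at the selected root exactly as in the last paragraph of the proof of Theorem~\ref{lemsolexi}, the needed transversality $\partial_t\tilde h_{x,y}\neq0$ coming again from $\tilde\varphi<\varphi$ and the estimate $\varphi_{y^i}(\cdot)x^i\le\varphi(x)<1$ of \eqref{phsixismal11}.
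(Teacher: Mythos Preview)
Your argument coincides with the paper's in structure: both study $\tilde h(t)=t-\tilde\varphi(y+xt)$, use $\tilde h(t_0)>0$ at $t_0=\Phi(x,y)$ (from $\tilde\varphi<\varphi$ at the nonzero vector $y+xt_0$), split on the sign of $\tilde\varphi(y)$, and apply the intermediate value theorem. For $\tilde\varphi(y)\ge 0$ your treatment and the paper's Cases~(i) and~(iii) are the same and correct.

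The case $\tilde\varphi(y)<0$ has a genuine gap, which you in fact isolate more honestly than the paper. Your bound $\tilde h_{x,y}(-s)\le \varepsilon\varphi(y)-s\bigl(1-\varepsilon\varphi(-x)\bigr)$ is correct, but the needed condition $\varepsilon\varphi(-x)<1$ is \emph{not} implied by the hypotheses: $x\in\Omega$ bounds $\varphi(x)$, not $\varphi(-x)$, and your appeal to ``the applications'' is not a proof of the lemma as stated. (The paper's inequality $\tilde h(t)\le t(1+\varepsilon\varphi(x))+\varepsilon\varphi(y)$ in its Case~(ii) would require $\varphi(y+tx)\le \varphi(y)+t\varphi(x)$ for $t<0$, which is simply false; so the paper's argument here is likewise incomplete.) Indeed the statement fails for large $\varepsilon$: with $\varphi=|\cdot|$ and $\tilde\varphi=-10|\cdot|$ on $\mathbb R^2$, the equation at $(x,y)=(\tfrac12 e_1,e_2)$ reads $\zeta=-10\sqrt{1+\zeta^2/4}$, which has no real solution. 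All actual uses of the lemma in the paper have $\tilde\varphi\ge 0$, so this case is vacuous in practice. Finally, drop the last paragraph on smoothness of $\tilde\Phi$: the lemma asserts no regularity, $\tilde\varphi$ is only assumed continuous, and neither the implicit function theorem nor the estimate~\eqref{phsixismal11} (which differentiates $\varphi$, not $\tilde\varphi$) is available for $\tilde h$.
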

\begin{proof}
Given $(x, y) \in \Omega \times \mathbb{R}^n$, we construct a solution $\tilde{\Phi}(x, y)$ to \eqref{EqtildePhivarph} by studying the function
\[
\tilde{h}(t):= \tilde{h}_{x,y}(t):=t - \tilde{\varphi}(y+ t x).
\]
Each root of $\tilde{h}(t)=0$ corresponds to a solution.
The rest of  the proof proceeds by case analysis on the sign of $\tilde{\varphi}(y)$.

\textbf{Case (i): $\tilde{\varphi}(y) = 0$.}
Then $\tilde{h}(0) = 0$.
Set  $\tilde{\Phi}(x,y): =0 \leq  \Phi(x,y)$, with equality if and only if $y=\mathbf{0}$.

\textbf{Case (ii): $\tilde{\varphi}(y) < 0$.}
We have $y\neq \mathbf{0}$ and $\tilde{h}(0) = -\tilde{\varphi}(y) > 0$. Moreover, \eqref{relationvartildephi} and the quasi-regularity of $\varphi$ imply
\[
\tilde{h}(t) \leq t + \varepsilon \varphi(y + t x) \leq t(1 + \varepsilon \varphi(x)) + \varepsilon \varphi(y),
\]
which gives $\lim_{t \to -\infty} \tilde{h}(t) = -\infty$. Consequently, there exists $\tilde{t}_0 \in (-\infty, 0)$ such that $\tilde{h}(\tilde{t}_0) = 0$. Defining $\tilde{\Phi}(x, y) := \tilde{t}_0$ gives a solution satisfying $\tilde{\Phi}(x, y) < 0 < \Phi(x, y)$.

\textbf{Case (iii): $\tilde{\varphi}(y) > 0$.}
Then $y \neq \mathbf{0}$ and  $\tilde{h}(0) = -\tilde{\varphi}(y) < 0$. In this case, \eqref{relationvartildephi} yields
\begin{align*}
\tilde{h}(t) - h(t) = \varphi(y + t x) - \tilde{\varphi}(y + t x) > 0 \quad \text{for } y + t x \neq \mathbf{0},
\end{align*}
where $h(t) := t - \varphi(y + t x)$. By choosing $t_0=\Phi(x,y)>0$, we have $h(t_0)=0$ and $y+t_0 x\neq\mathbf{0}$. Then
\[
 \tilde{h}(t_0) = \varphi(y+ t_0 x) - \tilde{\varphi}(y+ t_0 x)  > 0.
 \]
Hence, there is  $t_* \in (0, t_0)$ with
$ \tilde{h}(t_*) =0$ and therefore,
 $\tilde{\Phi}(x,y): = t_* < t_0 = \Phi(x,y)$.
\end{proof}

The following corollary collects several properties of the solution $\Phi$ that will be used repeatedly.
\begin{corollary}\label{basicproper}
Let $\varphi \colon \mathbb{R}^n\rightarrow \mathbb{R} $ be a continuous positively $1$-homogeneous  function with $\varphi|_{\Rno}\neq 0$,
 and let $\Omega \subset \mathbb{R}^n$ be a domain containing the origin such that
there exists a unique solution $\Phi(x,y)$ to the equation
\begin{equation}\label{expre11}
\Phi(x,y)=\varphi(y+x\Phi(x,y)), \quad \forall (x,y)\in \Omega\times \mathbb{R}^n.
\end{equation}
Then, $\Phi$ is continuous on $\Omega\times \mathbb{R}^n$ and has the following two properties:
\begin{enumerate}[\ \ \ \ \rm (a)]
\item\label{noezerperto} $y+x\Phi(x,y)=\mathbf{0}$ if and only if $y=\mathbf{0}$;

\item\label{postiperoo} $1+\mu\Phi(\mu y,y)>0$ for any  $\mu\in \mathbb{R}$ and $y\in \mathbb{R}^n$ with $\mu y\in \Omega$.
\end{enumerate}
 In addition, provided that
 $\varphi$ and $\Omega$ satisfy the hypotheses of Theorem \ref{lemsolexi}, then
\begin{equation*}%\label{furPhi}
1+x^k\Phi_{y^k}(x,y)>0,\quad  \forall (x,y)\in \Omega\times \Rno.
\end{equation*}
\end{corollary}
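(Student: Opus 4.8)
The plan is to first extract, from the sole global hypothesis (uniqueness of solutions on all of $\Omega\times\mathbb R^n$), two pointwise bounds on $\varphi$ over $\Omega$, and then read off continuity, property (a) and property (b) in turn; the final inequality will come from implicitly differentiating \eqref{expre11}. The preliminary step is to show that $\varphi(x)<1$ and $\varphi(-x)>-1$ for every $x\in\Omega$. Indeed, specializing \eqref{expre11} to $y=\mathbf 0$ turns the equation into $\zeta=\varphi(x\zeta)$, and by positive $1$-homogeneity every $\zeta\ge 0$ solves it when $\varphi(x)=1$, while every $\zeta\le 0$ solves it when $\varphi(-x)=-1$ --- either possibility contradicts uniqueness of $\Phi(x,\mathbf 0)$. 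Hence $\varphi(x)\ne 1$ and $\varphi(-x)\ne-1$ on $\Omega$, and since $\Omega$ is connected, contains $\mathbf 0$, and $\varphi(\mathbf 0)=0$, continuity of $\varphi$ upgrades this to $\varphi(x)<1$ and $\varphi(-x)>-1$ for all $x\in\Omega$. The same observation with $\zeta=0$ (valid because $\varphi(\mathbf 0)=0$) gives $\Phi(x,\mathbf 0)=0$ for every $x$, and property (a) is then immediate: $y=\mathbf 0$ forces $y+x\Phi(x,y)=\mathbf 0$, while $y+x\Phi(x,y)=\mathbf 0$ forces $\Phi(x,y)=\varphi(\mathbf 0)=0$ and hence $y=\mathbf 0$.

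For continuity I would take $(x_k,y_k)\to(x_0,y_0)$ in $\Omega\times\mathbb R^n$, set $t_k:=\Phi(x_k,y_k)$, and first rule out that $|t_k|\to\infty$ along a subsequence: dividing \eqref{expre11} by $|t_k|$ gives $\sgn(t_k)=\varphi\big(y_k/|t_k|+\sgn(t_k)\,x_k\big)$, and passing to a further subsequence with $\sgn(t_k)\equiv\epsilon\in\{\pm1\}$ yields in the limit $\epsilon=\varphi(\epsilon x_0)$, i.e. $\varphi(x_0)=1$ or $\varphi(-x_0)=-1$, contradicting the preliminary step. Thus $\{t_k\}$ is bounded; any subsequential limit $\ell$ satisfies $\ell=\varphi(y_0+x_0\ell)$ by continuity of $\varphi$, hence $\ell=\Phi(x_0,y_0)$ by uniqueness, so $t_k\to\Phi(x_0,y_0)$.

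For property (b), fix $\mu\in\mathbb R$ and $y\in\mathbb R^n$ with $\mu y\in\Omega$; one may assume $y\ne\mathbf 0$. Applying the preliminary step at $x=\mu y$ --- using $\varphi(\mu y)<1$ when $\mu\ge 0$ and $\varphi(-\mu y)>-1$ when $\mu<0$, together with positive homogeneity --- yields $\mu\,\varphi(y)<1$ in every case. Then $t_\ast:=\varphi(y)/(1-\mu\varphi(y))$ has $1+\mu t_\ast=1/(1-\mu\varphi(y))>0$, so by positive homogeneity $\varphi\big(y+\mu y\,t_\ast\big)=(1+\mu t_\ast)\,\varphi(y)=t_\ast$; hence $t_\ast$ solves \eqref{expre11} at $(\mu y,y)$, and uniqueness forces $\Phi(\mu y,y)=t_\ast$, giving $1+\mu\Phi(\mu y,y)=1/(1-\mu\varphi(y))>0$. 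Finally, under the hypotheses of Theorem \ref{lemsolexi} the solution $\Phi$ is smooth on $\Omega\times\Rno$; writing $w:=y+x\Phi(x,y)$, which is nonzero for $y\ne\mathbf 0$ by (a) so that $\varphi$ is smooth at $w$, I would differentiate \eqref{expre11} in $y^k$ to obtain $\Phi_{y^k}\big(1-x^j\varphi_{y^j}(w)\big)=\varphi_{y^k}(w)$; since $x^j\varphi_{y^j}(w)\le\varphi(x)<1$ by \eqref{phsixismal11}, one solves for $\Phi_{y^k}$ and computes $1+x^k\Phi_{y^k}(x,y)=1/\big(1-x^j\varphi_{y^j}(w)\big)>0$.

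The genuinely delicate point is property (b): since $\Omega$ is not assumed convex --- nor even star-shaped about $\mathbf 0$ --- one cannot join $\mathbf 0$ to $\mu y$ by a segment lying in $\Omega$ and run a naive constant-sign argument for $\mu\mapsto 1+\mu\Phi(\mu y,y)$ along the line $\mathbb R y$. The workaround is to globalize the two pointwise bounds $\varphi(x)<1$, $\varphi(-x)>-1$ first --- this is the only place connectedness of $\Omega$ enters --- and then to produce the relevant root of \eqref{expre11} explicitly and invoke uniqueness a second time. I expect this preliminary globalization to be the crux; everything else (continuity, property (a), and the differentiated identity) is routine once those bounds are available, and the same two bounds are exactly what furnishes the a priori bound on $t_k$ in the continuity argument.
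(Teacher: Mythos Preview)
Your proof is correct and takes a genuinely different route from the paper's. The main innovation is your preliminary step: from the uniqueness hypothesis at $y=\mathbf 0$ you extract the global bounds $\varphi(x)<1$ and $\varphi(-x)>-1$ on all of $\Omega$, using only connectedness of $\Omega$. These bounds then do double duty. For continuity, they give the a~priori bound on $t_k$ directly, so you avoid the paper's case split on the sign of $\varphi$ and its appeal to compactness of $\varphi^{-1}(1)$. For property~(b), they let you write down the explicit solution $\Phi(\mu y,y)=\varphi(y)/(1-\mu\varphi(y))$ and read off $1+\mu\Phi(\mu y,y)=1/(1-\mu\varphi(y))>0$; this is sharper than what the paper proves and sidesteps any connectivity issue.

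The paper instead argues~(b) by first showing $1+\mu\Phi(\mu y,y)\ne 0$ (via the same mechanism as~(a)) and then invoking continuity plus the value $1$ at $\mu=0$. As you observe, this last step needs the parameter set $D=\{(\mu,y):\mu y\in\Omega\}$ to be connected; the paper does not spell this out, but it holds: for $(\mu_0,y_0)\in D$ with $\mu_0\ne 0$, pick any path $\gamma$ in $\Omega$ from $\mu_0 y_0$ to $\mathbf 0$ and lift it as $t\mapsto(\mu_0,\gamma(t)/\mu_0)$, landing in $(\mu_0,\mathbf 0)\in D$. So both arguments are complete, but yours is more self-contained, yields the closed formula for $\Phi(\mu y,y)$, and makes the role of connectedness of $\Omega$ transparent at a single point (your preliminary step) rather than implicit in~(b). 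Property~(a) and the final differentiated inequality are handled the same way in both proofs.
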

\begin{proof}
The proof addresses Property \eqref{noezerperto}, continuity and Property \eqref{postiperoo} in sequence.

The uniqueness of  $\Phi$ implies $\Phi(x,\alpha y)=\alpha\Phi(x,y)$ for any $\alpha>0$, which also indicates $\Phi(x,\mathbf{0})=0$.
Now, suppose $y_o+x_o\Phi(x_o,y_o)=\mathbf{0}$ for some $(x_o,y_o)\in \Omega \times \mathbb{R}^n$. Then \eqref{expre11} gives
\begin{equation*}%\label{oo}
 \Phi(x_o,y_o)=\varphi(y_o+x_o\Phi(x_o,y_o))=\varphi(\mathbf{0})=0.
\end{equation*}
Consequently, $y_o=y_o+x_o\Phi(x_o,y_o)=\mathbf{0}$.
This establishes Property \eqref{noezerperto}.

We now prove continuity.  First, assume $\varphi|_{\Rno} > 0$.
 Then \eqref{K=-1FunkEq_equi} and Property \eqref{noezerperto} imply
\begin{equation}\label{versinta}
\Big(\frac{y}{\Phi(x,y)}+x\Big) \in \varphi^{-1}(1), \quad \forall (x,y)\in \Omega\times \Rno.
\end{equation}
Let $(x_i, y_i) \to (x, y) \in \Omega \times \Rno$ be a convergent sequence. By \eqref{versinta}, the sequence $\left( \frac{y_i}{\Phi(x_i, y_i)} + x_i \right)_i$ lies in the compact set $\varphi^{-1}(1)$,  so the sequence $(\Phi(x_i, y_i))_i$ has a limit point $\zeta$. Passing to a subsequence, we assume $\lim_{i \to +\infty} \Phi(x_i, y_i) = \zeta$. By the continuity of $\varphi$, we obtain
\[
\zeta = \lim_{i \to +\infty} \varphi(y_i + x_i \Phi(x_i, y_i)) = \varphi(y + x \zeta).
\]
The uniqueness of $\Phi$ forces $\zeta = \Phi(x, y)$, proving continuity on $\Omega \times \Rno$. The homogeneity of $\Phi$ in $y$ then extends continuity to $\Omega \times \mathbb{R}^n$.

If $\varphi|_{\Rno} < 0$, consider the equation
\begin{equation}\label{negativecase}
-\Phi(x, y) = (-\varphi)(y - x (-\Phi(x, y))).
\end{equation}
The continuity of $\Phi$ follows by applying the same argument to $-\varphi$.

To prove Property \eqref{postiperoo}, suppose that $1+\mu_o \Phi(\mu_o y_o,y_o)=0$ for some $y_o\in \mathbb{R}^n$ and $\mu_o\in \mathbb{R}$ with $\mu_o y_o\in \Omega$.
 Set $x_o := \mu_o y_o$. An argument analogous to that for Property \eqref{noezerperto} shows $\Phi(x_o, y_o) = 0$, which contradicts $1 + \mu_o \cdot 0 = 0$. Hence, $1 + \mu \Phi(\mu y, y) \neq 0$ for all admissible $\mu$ and $y$. The continuity of $\Phi$ implies that this expression is positive for small $|\mu|$, and hence for all $\mu,y$ with $\mu y \in \Omega$.

Finally, assume that $\varphi$ and $\Omega$ satisfy the hypotheses of Theorem \ref{lemsolexi}.
Set $\xi:=y+x\Phi(x,y)$. Then  \eqref{expre11} yields $\Phi_{y^k}=\varphi_{y^k}(\xi)+\varphi_{y^i}(\xi) x^i\Phi_{y^k}$, which together with \eqref{phsixismal11} furnishes
\[
1+x^k\Phi_{y^k}(x,y)=1+\frac{x^k\varphi_{y^k}(\xi)}{1-x^i\varphi_{y^i}(\xi)}=\frac{1}{1-x^i\varphi_{y^i}(\xi)}>0.
 \]
This completes the proof.
 \end{proof}

This subsection concludes with two propositions revealing key properties of solutions to \eqref{pdeodefque}: their translation invariance and the structure of their domain.

 \begin{proposition}\label{transformforK=-1}
 Let $\varphi \colon \mathbb{R}^n\rightarrow \mathbb{R} $ be a  positively $1$-homogeneous  function with $\varphi|_{\Rno}\neq0$, and let $\Omega \subset \mathbb{R}^n$ be a domain  containing the origin
such that  there exists a unique solution $\Phi(x,y)$ to the equation
 \begin{equation}\label{standPHIqe}
 \Phi(x,y)=\varphi(y+x\Phi(x,y)), \quad \forall (x,y)\in \Omega\times \mathbb{R}^n.
 \end{equation}
For an arbitrary point $\bar{x} \in \Omega$, define $\bar{\varphi}(y) := \Phi(\bar{x}, y)$. Then there exists a unique solution $\bar{\Phi}=\bar{\Phi}(x,y)$ to the following equation
\begin{equation}\label{standPHIqe2}
\bar{\Phi}(x,y)=\bar{\varphi}(y+x\bar{\Phi}(x,y)),\quad \forall (x,y)\in (\Omega-\bar{x})\times \mathbb{R}^n.
\end{equation}
Moreover, the solutions are related by the translation identity
\begin{equation}\label{tranK=-1}
\Phi(x+\bar{x},y)=\bar{\Phi}(x,y), \quad \forall (x,y)\in  (\Omega-\bar{x})\times \mathbb{R}^n.
\end{equation}
\end{proposition}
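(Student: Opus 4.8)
The plan is to show that the displaced function
$\bar{\Phi}(x,y):=\Phi(x+\bar{x},y)$, defined for $(x,y)\in(\Omega-\bar{x})\times\mathbb{R}^n$ (which is meaningful since $x+\bar{x}\in\Omega$ whenever $x\in\Omega-\bar{x}$), is exactly the solution asserted by the proposition; the translation identity \eqref{tranK=-1} is then literally the definition of $\bar{\Phi}$, so the entire content reduces to checking that $\bar{\Phi}$ solves \eqref{standPHIqe2} and that it is the unique function doing so. Throughout, I read the hypothesis ``$\Phi$ is the unique solution of \eqref{standPHIqe}'' in the pointwise sense used elsewhere in the paper (e.g.\ in the proof of Corollary~\ref{basicproper}): for every $(x,y)\in\Omega\times\mathbb{R}^n$ the scalar equation $t=\varphi(y+xt)$ has a single real root, namely $t=\Phi(x,y)$. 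If one insists on the literal reading ``uniqueness as a function'', pointwise uniqueness follows anyway, since altering $\Phi$ at a single point to a second hypothetical root of the scalar equation would again produce a solution-function.

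For existence I would fix $(x,y)\in(\Omega-\bar{x})\times\mathbb{R}^n$, put $\zeta:=\bar{\Phi}(x,y)=\Phi(x+\bar{x},y)$, and apply \eqref{standPHIqe} at the point $(x+\bar{x},y)\in\Omega\times\mathbb{R}^n$ to get $\zeta=\varphi\bigl(y+(x+\bar{x})\zeta\bigr)=\varphi\bigl((y+x\zeta)+\bar{x}\zeta\bigr)$. Writing $w:=y+x\zeta$, this exhibits $\zeta$ as a root of the scalar equation $t=\varphi(w+\bar{x}t)$; since $(\bar{x},w)\in\Omega\times\mathbb{R}^n$, pointwise uniqueness of $\Phi$ gives $\zeta=\Phi(\bar{x},w)=\bar{\varphi}(y+x\zeta)$, i.e.\ $\bar{\Phi}(x,y)=\bar{\varphi}\bigl(y+x\bar{\Phi}(x,y)\bigr)$, which is \eqref{standPHIqe2}. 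For uniqueness I would let $\bar{\Psi}$ be any solution of \eqref{standPHIqe2}, fix $(x,y)$, set $\eta:=\bar{\Psi}(x,y)$ and $v:=y+x\eta$, so that $\eta=\bar{\varphi}(v)=\Phi(\bar{x},v)$; feeding this into \eqref{standPHIqe} at $(\bar{x},v)$ yields $\eta=\varphi(v+\bar{x}\eta)=\varphi\bigl(y+(x+\bar{x})\eta\bigr)$, so $\eta$ is a root of $t=\varphi\bigl(y+(x+\bar{x})t\bigr)$, and pointwise uniqueness of $\Phi$ at $(x+\bar{x},y)$ forces $\eta=\Phi(x+\bar{x},y)=\bar{\Phi}(x,y)$. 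Hence $\bar{\Psi}=\bar{\Phi}$.

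The argument uses no regularity or continuity of $\varphi$; the only point requiring care---the ``main obstacle'' such as it is---is the algebraic regrouping $y+(x+\bar{x})\zeta=(y+x\zeta)+\bar{x}\zeta$ together with the observation that the displaced argument $w=y+x\zeta$ (resp.\ $v=y+x\eta$) and the base point $\bar{x}$ both lie in $\Omega\times\mathbb{R}^n$, so that the hypothesized uniqueness of $\Phi$ genuinely applies at the points invoked. I would also record in passing that $\bar{\varphi}=\Phi(\bar{x},\cdot)$ is positively $1$-homogeneous (immediate from uniqueness of $\Phi$), so that \eqref{standPHIqe2} is indeed an instance of the same type of equation as \eqref{standPHIqe}.
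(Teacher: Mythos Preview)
Your proof is correct and follows the same logical skeleton as the paper's: set $\Delta=\Phi(x+\bar{x},y)$, regroup $y+(x+\bar{x})\Delta=(y+x\Delta)+\bar{x}\Delta$, and invoke pointwise uniqueness of $\Phi$ at $(\bar{x},y+x\Delta)$ for existence, then run the same regrouping backwards for uniqueness. The paper, however, first reduces to the case $\varphi|_{\Rno}>0$ (handling the negative case via the reversed equation) and then passes through the indicatrix reformulation $\varphi(y/\Phi+x)=1$ before arriving at the same algebraic identity you write directly. Your route is strictly more elementary: it avoids both the sign case-split and the indicatrix detour, and it makes transparent that no regularity of $\varphi$ is used---only the algebraic regrouping and the pointwise-uniqueness hypothesis (which, as you correctly observe, is equivalent to functional uniqueness here). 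The paper's formulation buys nothing extra for this proposition; its geometric language becomes relevant elsewhere in Section~\ref{convprofun}, but here your purely algebraic argument is cleaner.
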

\begin{proof}
We assume $\varphi|_{\Rno} > 0$ for convenience; the case $\varphi|_{\Rno} < 0$ is handled by considering \eqref{negativecase}.
 By Observation \ref{K=-1remarkindicatrix}, we have $\Phi(x, y) \geq 0$, with equality if and only if $y = \mathbf{0}$. Hence, the solution $\Phi(x,y)$ uniquely exists to the following equation
\[
\varphi\Big( \frac{y}{\Phi(x,y)}+x  \Big)=1,\quad \forall (x,y)\in \Omega\times \Rno.
\]
Therefore, for  a fixed point $\bar{x}\in \Omega$, the function  $\Delta:=\Phi(x+\bar{x},y)$ is the unique solution to the equation
\begin{equation}\label{lastpsta11}
\varphi\left( \frac{y}{\Delta}+(x+\bar{x})  \right)=1,\quad \forall (x,y)\in (\Omega-\bar{x})\times \Rno.
\end{equation}
The rest of the proof proceeds in two steps.

\textbf{Step 1. Existence.}
We show that $\Delta$ solves \eqref{standPHIqe2}. Given $(x, y) \in (\Omega - \bar{x}) \times \Rno$, define $\bar{y} := y + x \Delta$. By \eqref{lastpsta11}, $\Delta$ satisfies
\begin{equation}\label{lalqua11}
\Delta = \varphi(\bar{y} + \bar{x} \Delta).
\end{equation}
Alternatively, the assumption implies that $\Phi(\bar{x},\bar{y})$ is the unique solution to equation \eqref{lalqua11}, which gives
\[
\Delta = \Phi(\bar{x}, \bar{y}) = \Phi(\bar{x}, y + x \Delta) = \bar{\varphi}(y + x \Delta).
\]
Hence, $\Delta$ is a solution to equation \eqref{standPHIqe2}. This argument remains valid when $y = \mathbf{0}$.

\textbf{Step 2. Uniqueness.}
We now prove uniqueness.
Note that every solution $\bar{\Phi}$ to  equation \eqref{standPHIqe2} satisfies $\bar{\Phi}(x,y)\geq 0$ with equality if and only if $y=\mathbf{0}$. Hence, \eqref{standPHIqe2} furnishes
 \[
 1=\bar{\varphi}\left( \frac{y}{\bar{\Phi}(x,y)}+x  \right),\quad \forall (x,y)\in (\Omega-\bar{x})\times \Rno,
 \]
which together with \eqref{standPHIqe} yields
\begin{align*}
1&=\bar{\varphi}\left( \frac{y}{\bar{\Phi}(x,y)}+x  \right)=\Phi\left( \bar{x}, \frac{y}{\bar{\Phi}(x,y)}+x  \right)\\
&=\varphi\left(     \frac{y}{\bar{\Phi}(x,y)}+x+\bar{x}\,\Phi{\left( \bar{x}, \frac{y}{\bar{\Phi}(x,y)}+x  \right) }     \right)=\varphi\left(    \frac{y}{\bar{\Phi}(x,y)}+x+\bar{x}\right).
\end{align*}
Since $\Delta$ is the unique solution to equation \eqref{lastpsta11}, we obtain $\bar{\Phi}(x,y)=\Delta=\Phi(x+\bar{x},y)$.
 \end{proof}

\begin{proposition}\label{propMainEq}
Let $\varphi \colon \mathbb{R}^n \rightarrow \mathbb{R}$ be a positively $1$-homogeneous function, and let $\Phi=\Phi(x,y)$ be the solution  to the following equation
\begin{equation} \label{MainEq}
\Phi(x, y) = \varphi(y + x \Phi(x,y)).
\end{equation}
Suppose that $\mathcal {D}(\Phi)$ is a connected component of the set
 \[ \{x\in \mathbb{R}^n\,:\, \Phi(x,y) \text{ is well-defined for all $y\in \mathbb{R}^n$}\}\]
such that $\mathcal {D}(\Phi)$ contains the origin.
Then the following hold:
\begin{enumerate}[{\rm (i)}]
\item  \label{MainEqi}
if  $\varphi|_{\Rno} >0$,
then
$\mathcal {D}(\Phi)\subset  \{ x \in \mathbb{R}^n \, : \, 0 \leq \varphi(x) < 1   \}$;

\smallskip

\item  \label{MainEqii}
if $\varphi|_{\Rno} < 0$, then $
\mathcal {D}(\Phi)\subset \{ x \in \mathbb{R}^n\,:\, -1 < \varphi(-x) \leq 0   \}.$
\end{enumerate}
\end{proposition}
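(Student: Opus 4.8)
The plan is to reduce part~(ii) to part~(i) by the sign-reversal substitution $\varphi\mapsto g:=-\varphi$, $\Phi(x,y)\mapsto\Psi(x,y):=-\Phi(-x,y)$ --- the same device underlying equation~\eqref{negativecase} --- so that the entire argument comes down to part~(i). For part~(i) the observation I would exploit is that it suffices to test the functional equation~\eqref{MainEq} in the single radial direction $y=x$: evaluating~\eqref{MainEq} at $(x,x)$ and abbreviating $\zeta:=\Phi(x,x)$, $s:=1+\zeta$, one obtains the purely scalar identity $\zeta=\varphi(s\,x)$, and positive $1$-homogeneity of $\varphi$ together with $\varphi|_{\Rno}>0$ already forces $\varphi(x)<1$.

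To prove~(i), I would first note that $\varphi|_{\Rno}>0$ and $\varphi(\mathbf{0})=0$ give $\varphi\ge 0$ on all of $\mathbb{R}^n$, so only the upper bound $\varphi(x)<1$ needs an argument. Fix $x_{0}\in\mathcal{D}(\Phi)$ and assume, for contradiction, that $\varphi(x_{0})\ge 1$; then $x_{0}\ne\mathbf{0}$, and since $x_{0}\in\mathcal{D}(\Phi)$ the value $\zeta:=\Phi(x_{0},x_{0})$ is defined and satisfies $\zeta=\varphi\big((1+\zeta)x_{0}\big)$. I would then run a trichotomy on the sign of $s:=1+\zeta$. If $s>0$, positive homogeneity gives $\zeta=s\,\varphi(x_{0})\ge s=1+\zeta$, i.e.\ $0\ge 1$. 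If $s=0$, the right-hand side equals $\varphi(\mathbf{0})=0$, contradicting $\zeta=-1$. If $s<0$, then $(1+\zeta)x_{0}=-(-s)x_{0}$ with $-s>0$, so $\zeta=(-s)\,\varphi(-x_{0})>0$ because $\varphi(-x_{0})>0$; but then $s=1+\zeta>1>0$, contradicting $s<0$. Each case is absurd, hence $\varphi(x_{0})<1$, and with $\varphi(x_{0})\ge 0$ this yields $\mathcal{D}(\Phi)\subset\{x:0\le\varphi(x)<1\}$.

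For~(ii), I would set $g:=-\varphi$ and $\Psi(x,y):=-\Phi(-x,y)$, check that $g$ is positively $1$-homogeneous with $g|_{\Rno}>0$, and substitute $\Phi(-x,y)=-\Psi(x,y)$ into~\eqref{MainEq} (equivalently, read~\eqref{negativecase} at $(-x,y)$) to see that $\Psi$ solves $\Psi(x,y)=g\big(y+x\Psi(x,y)\big)$ and that $\Psi(x,y)$ is well-defined precisely when $\Phi(-x,y)$ is. Since $x\mapsto -x$ is a homeomorphism carrying the origin to itself, the connected component through $\mathbf{0}$ of $\{x:\Psi(x,y)\text{ well-defined for all }y\}$ equals $\mathcal{D}(\Psi)=-\mathcal{D}(\Phi)$. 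Applying part~(i) to $(g,\Psi)$ gives $\mathcal{D}(\Psi)\subset\{x:0\le g(x)<1\}$, hence $\mathcal{D}(\Phi)=-\mathcal{D}(\Psi)\subset\{x:0\le g(-x)<1\}=\{x:-1<\varphi(-x)\le 0\}$, which is the assertion of~(ii).

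I expect the only genuinely delicate point to be the trichotomy in~(i): positive $1$-homogeneity applies to positive dilations only, so the cases $1+\zeta>0$ and $1+\zeta<0$ must be kept separate, the latter being rescued by passing from $\varphi(x_{0})$ to $\varphi(-x_{0})$. Apart from this bookkeeping the proof is purely algebraic and uses neither continuity, convexity, nor connectedness of $\mathcal{D}(\Phi)$; those hypotheses are relevant only insofar as they make $\mathcal{D}(\Phi)$ a bona fide domain for subsequent use.
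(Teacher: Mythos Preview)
Your argument is correct and follows essentially the same radial-testing idea as the paper: the paper evaluates \eqref{MainEq} at $y=-x$ to prove (ii) directly (noting that $\Phi(x,-x)\le 0$ forces $1-\Phi(x,-x)>0$, whence $\varphi(-x)=\Phi(x,-x)/(1-\Phi(x,-x))\in(-1,0]$), and remarks that (i) is similar. Your trichotomy in (i) is harmless but unnecessary: since $\varphi\ge 0$ everywhere, $\zeta=\Phi(x_0,x_0)=\varphi(\,\cdot\,)\ge 0$ automatically, so $s=1+\zeta\ge 1>0$ and only the first branch is ever reached; the paper's direct computation $\varphi(x)=\Phi(x,x)/(1+\Phi(x,x))\in[0,1)$ exploits exactly this.
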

\begin{proof}
We prove only \eqref{MainEqii}, as the argument for \eqref{MainEqi} is similar but simpler.
Since $\varphi \leq 0$,   \eqref{MainEq} implies that
\begin{equation*}%\label{eqPhixxMainEqii}
0 \geq \Phi(x,-x)= \varphi( - x + x \Phi(x,-x)) = (1 - \Phi(x,-x)) \varphi( - x),\quad \forall x\in \mathcal {D}(\Phi),
\end{equation*}
which yields
\[ 0 \geq \varphi(-x) = \frac{\Phi(x,-x)}{1 - \Phi(x,-x)} > -1,\quad \forall x\in \mathcal {D}(\Phi). \qedhere \]
%The proof is completed.
\end{proof}

\subsection{Elements from Finsler geometry}\label{elemFinslergeo}

This subsection recalls essential definitions and properties from Finsler geometry; for details, we refer to Bao--Chern--Shen
\cite{BCS},   Ohta \cite{Ohta1} and Shen \cite{ShenSpray,ShenLecture}.

Let $V$ be an $n$-dimensional vector space with $n \geq 2$. A {\it pseudo-norm} on $V$ is a nonnegative function $\psi: V \rightarrow \mathbb{R}$ satisfying:
\begin{enumerate}[{\rm (i)}]
\item\label{minkownor1} $\psi(y)\geq 0$ with equality if and only if $y=\mathbf{0}$;
\item\label{minkownor2} $\psi$ is positively $1$-homogeneous, i.e., $\psi(\alpha y) = \alpha \psi(y)$ for all $\alpha > 0$;
\item  $\psi(y_1+y_2)\leq \psi(y_1)+\psi(y_2)$ for any $y_1,y_2\in V$.
\end{enumerate}
A pseudo-norm $\psi$ is  a {\it weak Minkowski norm} if it is smooth on $V\backslash\{\mathbf{0}\}$ and regular on $V$ (see Definition \ref{regualrdef}/(3)).
It is a {\it Minkowski norm} if, in addition, the Hessian matrix $\left( [\psi^2]_{y^i y^j}(y) \right)$ is positive definite for all $y \in V \backslash\{\mathbf{0}\}$.
The pair $(V, \psi)$ is then called a ({\it weak}) {\it Minkowski space}.

Every pseudo-norm is quasi-regular and every Minkowski norm is regular. Furthermore, the class of weak Minkowski norms is strictly weaker, as there exist examples that are not Minkowski norms (see \cite[Exercise 1.2.7]{BCS}).
A fundamental example of  Minkowski norm on $\mathbb{R}^n$ is the {\it Randers norm}
$\psi(y):=|y|+\langle a,y \rangle$,
where $a$ is a fixed vector with $|a|<1$.

Let $M$ be an $n$-dimensional connected smooth manifold with tangent bundle $TM$.
 A ({\it weak}) {\it Finsler metric} $F = F(x,y)$ on  $M$ is a $C^{\infty}$ function defined on $TM\backslash\{0\}$ such
 that $F(x,\cdot)$ is a (weak) Minkowski  norm on $T_{x}M$ for each $x\in M$. The pair $(M,F)$ is called a ({\it weak}) {\it Finsler manifold}.
 The {\it indicatrix} of $F$ at $x\in M$ is the set
$S_xM:=\{y\in T_xM\,:\, F(x,y)=1 \}$,
 which is strongly convex for a Finsler metric and strictly convex for a weak one.

Henceforth, unless stated otherwise, $F$ denotes a Finsler metric.
The associated \textit{fundamental tensor} $g = (g_{ij}(x, y))$ is defined as
\begin{align}\label{defbasictensor}
    g_{ij}(x, y) := \frac{1}{2} [F^2]_{y^i y^j}(x, y), \quad \forall (x, y) \in TM \backslash \{0\},
\end{align}
which induces a Riemannian metric on $T_x M\backslash \{0\}$. The metric $F$ is called \textit{Riemannian} if $g$ is independent of the direction $y$, and \textit{Minkowskian} if it is independent of the position $x$. As usual, $(g^{ij})$ denotes the inverse  of the matrix $(g_{ij})$.

According to Rademacher \cite{Rade1,Rade}, the {\it reversibility}    is defined by
\begin{equation}\label{def_reversibility}
\lambda_F(M) := \sup_{x \in M} \lambda_{F}(x), \quad \text{where} \quad \lambda_{F}(x) = \sup_{y \in T_xM \backslash \{ 0 \}} \frac{F(x, -y)}{F(x,y)}.
\end{equation}
It follows that $\lambda_F(M) \geq 1,$ with equality if and only if $F$  is {\it reversible}, i.e., $F (x, y) = F(x, -y)$ for all $(x,y)\in TM$.
Furthermore, the function $\lambda_F(x)$ is continuous on $M$.

The \textit{Legendre transformation} $\mathfrak{L}: TM \rightarrow T^*M$ is defined by
\[
\mathfrak{L}(x, y) :=
\begin{cases}
g_y(y, \cdot) = g_{ij}(x, y) y^i {\dd}x^j, & \text{if } y \in T_x M\backslash \{\mathbf{0}\}, \\
0, & \text{if } y = \mathbf{0}.
\end{cases}
\]
Given a smooth function $u: M \rightarrow \mathbb{R}$, its \textit{gradient} is defined as $\nabla u(x) := \mathfrak{L}^{-1}({\dd} u(x))$. For any smooth vector field $X$, provided $\nabla u(x)\neq0$, the following relation holds
\[
\langle X, {\dd}u\rangle=X(u)=g_{\nabla u}(\nabla u,X),
\]
where  $\langle y,\xi\rangle:=\xi(y)$
 denotes the canonical pairing between $T_xM$
and $T^*_xM$.

The  {\it co-metric} (or {\it dual metric}) $F^{*}$ of $F$ on $M$ is defined as
\begin{equation}\label{dualmetric}
F^{*}(x,\xi):=\sup_{y \in T_xM \backslash \{0\}} \frac{\xi(y)}{F(x,y)}, \quad \forall \xi \in T^{*}_xM,
\end{equation}
which is a ``Finsler metric" on the cotangent bundle $T^{*}M$.
It satisfies the inequality
\begin{equation}\label{dualff*}
\langle y,\xi\rangle\leq F(x,y)F^*(x,\xi), \quad  \forall  y\in T_xM,\ \xi\in T^*_xM,
\end{equation}
with equality if and only if $\xi=\alpha \mathfrak{L}(x,y)$ for some $\alpha\geq 0$. Moreover, $F(x,y)=F^*(\mathfrak{L}(x,y))$ for all $y\in T_xM$. %{\color{blue}Furthermore, the reversibilities of $F$ and $F^*$ coincide (cf. Huang et al. \cite[Lemma 2.1]{HKZ}),  i.e.,
%\begin{equation*}%\label{revercondin}
%\lambda_F(x)=\lambda_{F^*}(x), \quad \forall x\in M.
%\end{equation*}
%(unnecessary?)}

A smooth curve $t\mapsto\gamma(t)$ in $(M, F)$ is called a ({\it constant-speed}\,) {\it geodesic} if it satisfies
\begin{equation}\label{geodesequ}
\frac{{\dd}^2 \gamma^i}{{\dd}t^2} + 2 G^i \Big(\gamma, \frac{{\dd} \gamma}{{\dd}t}\Big) = 0,
\end{equation}
where the {\it geodesic coefficients} $ G^i = G^i(x,y)$ are
given by
\begin{equation}\label{goedcooff}
G^i := \frac{1}{4} g^{ij} \left\{  [F^2]_{x^k y^j}  y^k - [F^2]_{x^j}  \right\}.
\end{equation}
%and .
%In the sequel, we always use $\gamma_y(t)$, $t\geq 0,$ to denote the geodesic with initial velocity $y$.

 Given  $y\in  T_xM\backslash\{0\}$ and a plane $  \Pi=\text{span}\{y,v\}\subset T_xM$, the
{\it flag curvature}  is defined as
\[
\mathbf{K} (y;\Pi):=\mathbf{K}(y;v):=\frac{g_{ij}(x,y)\,R^i_{\ k}(x,y)\, v^kv^j}{F^2(x,y)\,g_{ij}(x,y)\,v^iv^j-[g_{ij}(x,y)\,y^iv^j]^2},
\]
where
$
 R^i_{\ k} := 2 G^i_{x^k}   -y^j   G^i_{x^jy^k} +2G^j  G^i_{y^jy^k}  -G^i_{y^j}G_{y^k}^j.
$
A Finsler metric $F$ is of \textit{scalar flag curvature} if the curvature depends only on the position and direction, namely, $\mathbf{K}(y; \Pi) = \mathbf{K}(x, y)$. It is of \textit{constant flag curvature} $k$ if $\mathbf{K} \equiv k$. For a Riemannian metric, the flag curvature specializes to the sectional curvature.

%The metric $F$ is said to have {\it scalar flag curvature} if $\mathbf{K}(y; \Pi) = \mathbf{K}(x, y)$ depends only on $x$ and $y$, not on $\Pi$.
%For Riemannian metrics, flag curvature reduces to sectional curvature.
%Finsler metric $F$ is said to be of {\it
%constant flag curvature} if ${\bf K}\equiv \kappa $ is a constant, in which  case
%\[ R^i_{\ k}= \kappa \left\{F^2\delta^i_k-F \frac{\partial F}{\partial y^k}y^i \right\}.\]
%The {\it Ricci curvature} at $y \in T_xM\backslash\{ 0\}$ is defined by
%\[
%\mathbf{Ric}(x,y):=\frac{R^m_{\ m}(x,y)}{F^2(x,y)}.
%\]
%It is easy to see, when $F$ is of constant flag curvature $\kappa$, the Ricci curvature is given by
%\[
%\mathbf{Ric}(x,y) = (n-1)\kappa.
%\]

%The {\it Ricci curvature} of $y\in T_xM\backslash\{0\}$ is defined by
%\[
%\mathbf{Ric}(y):=\frac{1}{F^2(x,y)}\sum_{i}\mathbf{K}(y;e_i),
%\]
%where $\{e_1,\ldots,e_n\}$ is a $g_y$-orthonormal basis for $T_xM$.

Given a piecewise smooth curve $c : [0, 1] \rightarrow M$, its {\it length} is defined as
 \begin{equation}\label{lengthinducedbyF}
 L_{F}(c) : = \int_0^1 F(c(t), c'(t)) {\dd}t.
 \end{equation}
 This induces a \textit{distance function} $d_F: M \times M \rightarrow [0, +\infty)$ defined by
 \begin{equation}\label{distanceinducedbyF}
d_{F}(x_1, x_2):= \inf L_F(c),
\end{equation}
where the infimum is taken over all piecewise smooth curves
$c:[0,1] \rightarrow M$ with $c(0) =x_1$ and $c(1)=x_2$.
For any $x_1,x_2,x_3\in M$, one has
\begin{itemize}
\item $d_F(x_1,x_2)\geq 0,$ with equality if and only if $x_1=x_2$;
\item $d_F(x_1,x_2)\leq d_F(x_1,x_3)+d_F(x_3,x_2)$.
\end{itemize}
Usually $d_F(x_1, x_2) \neq d_F(x_2, x_1),$ unless $F$ is reversible. In fact, for any two distinct  $x_1,x_2\in M$,
\begin{equation}\label{distandreversi}
	\frac{d_F (x_1, x_2)}{d_F(x_2, x_1)} \leq \lambda_F(M).
\end{equation}
For $R > 0$, the \textit{forward} and \textit{backward metric balls} are defined respectively as
\begin{equation}\label{forward/backwradball}
    B^+_R(x) := \{ z \in M : d_F(x, z) < R \}, \qquad B^-_R(x) := \{ z \in M : d_F(z, x) < R \}.
\end{equation}
They coincide when $F$ is reversible, and are then denoted simply by $B_R(x)$.

A Finsler manifold $(M, F)$ is called {\it forward complete} if every geodesic $t\mapsto \gamma(t)$, $t\in [0,1)$, can be
extended to a geodesic defined on $t\in [0,+\infty)$; similarly, $(M, F)$ is called {\it backward complete} if every geodesic $t\mapsto \gamma(t)$, $t\in (0,1]$, can be extended to a geodesic on $ t\in (-\infty,1]$. These two kinds of completeness are equivalent when $\lambda_F(M) < +\infty$.
And $(M, F)$ is  said to {\it complete} if it is both forward
 and backward complete. A compact Finsler manifold is always complete.

 If $(M,F)$ is either forward or backward complete, then for every two points $x_1,x_2\in M$, there exists a minimal geodesic $\gamma$ from $x_1$ to $x_2$ with $d_F(x_1,x_2)=L_F(\gamma)$. Moreover, the closure of a forward (resp., backward) metric ball with finite radius is compact if $(M,F)$ is forward (resp., backward) complete.

On a forward complete $(M, F)$, the {\it exponential map} $\exp:TM\rightarrow M$ is defined as
\begin{equation}\label{expon}
\exp(x,y):=\exp_x(y):=\gamma_y(1),
\end{equation}
where $\gamma_y(t)$, $t\in [0,+\infty)$ denotes a geodesic with  $\gamma'_y(0)=y\in T_xM$. In particular, $\exp$ is $C^1$ at the zero section of $TM$ and $C^\infty$ away from it.

Given a point $x\in M$,  define the {\it cut value} $i_y$ of $y \in S_x M$  and the {\it injectivity radius} $\mathfrak{i}_x$ at $x$ as
\begin{align}\label{injectradia}
	i_y := \sup \{ t>0: \text{the geodesic }  \gamma_y|_{[0,t]}   \text{ is globally minimizing} \}, \quad
	\mathfrak{i}_x  := \inf_{y \in S_xM} i_y>0.
\end{align}
The {\it injectivity radius of a Finsler manifold} $(M,F)$ is defined as $\mathfrak{i}_M:=\inf_{x\in M}\mathfrak{i}_x$.
This is positive and finite if $M$ is compact, and infinite for Cartan--Hadamard manifolds (i.e., simply connected, forward complete, with non-positive flag curvature).

The {\it reverse Finsler metric} $\overleftarrow{F}$ is defined as $\overleftarrow{F}(x,y):=F(x,-y)$.  One can verify that $(M,F)$ is forward (resp., backward) complete   if and only if $(M,\overleftarrow{F})$ is backward (resp., forward) complete. For this reason, in the sequel, we restrict our attention to forward complete Finsler manifolds.

Now we investigate the measure theory on a Finsler manifold. In the sequel, let $\m$ be a smooth  positive measure  on a forward complete Finsler manifold $(M,F)$ and let $\gamma_y(t)$ denote  a geodesic with $\gamma'_y(0)=y$.
In a local coordinate system ($x^i$), $\dm$ can be expressed as
\begin{equation*}%\label{measure_m}
\dm = \sigma\, {\dd}x^1 \wedge \cdot\cdot\cdot \wedge {\dd}x^n,
\end{equation*}
where $\sigma= \sigma(x)$ denotes the {\it density function} of $\dm$. The {\it distortion} $\tau$ and the {\it S-curvature} $\mathbf{S}$ of $(M,F,\m)$ are defined by
\begin{equation}\label{distsdef}
\tau(x,y):= \ln \frac{\sqrt{\det g_{ij}(x,y)}}{\sigma(x)}, \qquad \mathbf{S}(x,y):=\left.\frac{\dd}{{\dd}t}\right|_{t=0}\tau(\gamma_y(t), {\gamma}'_y(t)), \quad \forall  y\in T_xM \backslash \{0\}.
\end{equation}
According to Shen \cite{ShenLecture}, we have
\begin{equation}\label{Scurvature}
\mathbf{S}(x,y) = G^i_{y^i}(x,y) - y^i [\ln \sigma(x)]_{x^i}.
\end{equation}
%Throughout this paper, we say $\mathbf{S}\geq (n-1)h$ for some constant $h\in \mathbb{R}$ if
%\[
%\mathbf{S}(x,y)\geq (n-1)h\,F(x,y),\quad \forall (x,y)\in TM\backslash\{0\}.
%\]

Let $(r,y)$ be the {\it polar coordinate system} around some point $o\in M$. That is, if $x=(r,y)$, then $x=\exp_o(ry)$ with $ r=r(x)=d_F(o,x)$ and $y=y(x)\in S_oM$  (cf. Zhao and Shen \cite[Section 3]{ZS}). Hence, for any $y\in S_oM$ and $r\in (0,i_y)$,
\begin{equation}\label{geommeaingofr}
\gamma_y(r)=(r,y),\quad \gamma'_y(r)=\nabla r|_{(r,y)}.
\end{equation}
Moreover,
it follows from  \cite[Lemma 3.2.3]{ShenLecture} that $F(\nabla r) = F^{*}({\dd}r) =1$ for  $\m$-a.e. $x\in M$.

%In particular,   \cite[Lemma 3.1]{ZS} yields
%\begin{equation}\label{volumerzero}
%\lim_{r\rightarrow 0^+}\frac{\hat{\sigma}_o(r,y)}{r^{n-1}}=e^{-\tau(y)}.
%\end{equation}

%Denote by $\mathfrak{i}_o$  the injectivity radius of $o$, i.e., $\mathfrak{i}_o:=\inf_{y\in S_oM}i_y$. Note that if $(M,F)$ is a forward complete Cartan-Hadamard manifold (i.e., simply connected and $\mathbf{K}\leq 0$), then $i_o=+\infty$ for every $o\in M$.
Set %$\pi/\sqrt{k}:=+\infty$ if $k\leq 0$
\[
 \mathfrak{s}_k(t):=\left\{
	\begin{array}{lll}
				\ \ \ \ t, && \text{ if }k=0,\\
		\frac{\sinh(\sqrt{-k }t)}{\sqrt{-k}}, && \text{ if }k<0.
	\end{array}
	\right.
\]
In the polar coordinate system $(r,y)$ around $o$, the measure $\m$ decomposes as
\begin{equation*}%\label{volumeexprsso}
\dm|_{(r,y)}=:\hat{\sigma}_o(r,y)\,{\dd}r \wedge {\dd}\nu_o(y),
\end{equation*}
where ${\dd}\nu_o$ denotes the Riemannian volume form of the indicatrix $S_oM$, i.e., $\nu_o(S_oM):=\int_{S_oM}{\dd}\nu_o$
is its Riemannian volume. Thus,
\begin{equation}\label{inffexprexx}
\int_M f \dm=\int_{S_oM} {\dd}\nu_o(y) \int_0^{i_y} f(r,y) \,\hat{\sigma}_o(r,y)\,{\dd}r,\quad \forall  f\in L^1(M,\m).
\end{equation}
Moreover,  $\nu_o(S_oM)$ plays an important role in the Gauss-Bonnet formula; see Bao--Chern \cite{BC} and Shen \cite{ShenGauss}.

The following volume comparison result is  essential to our analysis of Sobolev spaces in Section \ref{SecSobolev} (cf. \cite[Theorem 3.6]{ZS}).
\begin{theorem}[\cite{ZS}]\label{bascivolurcompar} Let $(M,F,\m)$ be an $n$-dimensional forward complete Finsler manifold endowed with a smooth positive measure $\m$. If $\mathbf{K}\leq  k$ for some $k \in \mathbb{R}$, then for every $o\in M$ and $y\in  S_oM$, the function
\begin{equation*} %\label{densesim}
H_y(r):=\frac{\hat{\sigma}_o(r,y)}{ e^{-\tau\big(\gamma_y(r),{\gamma}_y'(r)\big)}  \mathfrak{s}_k^{n-1}(r)},\quad \forall r\in (0,i_y),
\end{equation*}
is monotonically increasing in r and satisfies $\lim_{r \to 0^+} H_y(r) = 1$,
where $(r,y)$ is the polar coordinate system around $o$.
\end{theorem}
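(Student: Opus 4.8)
The plan is to differentiate $\ln H_y$ along the unit-speed radial geodesic $\gamma_y$, identify the resulting derivative with the mean curvature of the forward geodesic spheres centered at $o$, and then estimate that mean curvature from below by a matrix Riccati comparison powered by the hypothesis $\mathbf{K}\le k$. Concretely, set $\mathcal{J}_y(r):=\hat\sigma_o(r,y)\,e^{\tau(\gamma_y(r),\gamma_y'(r))}$, so that $H_y(r)=\mathcal{J}_y(r)/\mathfrak{s}_k^{\,n-1}(r)$. Along $\gamma_y$, the semigroup property of the geodesic flow together with \eqref{distsdef} gives $\partial_r\,\tau(\gamma_y(r),\gamma_y'(r))=\mathbf{S}(\gamma_y(r),\gamma_y'(r))$, and from the polar decomposition $\dm|_{(r,y)}=\hat\sigma_o(r,y)\,{\dd}r\wedge{\dd}\nu_o(y)$ one has the Finslerian evolution identity for the volume element (see \cite{ShenLecture})
\[
\partial_r\ln\hat\sigma_o(r,y)+\mathbf{S}(\gamma_y(r),\gamma_y'(r))=m(r):=\operatorname{tr}_{g_{\nabla r}}\!\big(\mathrm{Hess}\,r\big)\big|_{\gamma_y(r)},
\]
the right-hand side being the mean curvature of the forward geodesic sphere through $\gamma_y(r)$. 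Adding the two identities, $(\ln\mathcal{J}_y)'(r)=m(r)$, and since $(\ln\mathfrak{s}_k^{\,n-1})'(r)=(n-1)\mathfrak{s}_k'(r)/\mathfrak{s}_k(r)$, the monotonicity of $H_y$ reduces to the Laplacian comparison $m(r)\ge (n-1)\mathfrak{s}_k'(r)/\mathfrak{s}_k(r)$ on $(0,i_y)$.

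To obtain this, I would work along $\gamma_y$ with the Chern connection pulled back along the curve, regarding $\mathbf{\Pi}(r):=\mathrm{Hess}\,r|_{\gamma_y(r)}$ as a $g_{\nabla r}$-self-adjoint endomorphism of the $g_{\nabla r}$-orthogonal complement $N(r)$ of $\nabla r=\gamma_y'(r)$, so that $m(r)=\operatorname{tr}\mathbf{\Pi}(r)$. Then $\mathbf{\Pi}$ satisfies the radial Riccati equation $\mathbf{\Pi}'+\mathbf{\Pi}^2+\mathbf{R}_y=0$, where $\mathbf{R}_y(r)$ is the curvature endomorphism of $N(r)$ with flagpole $\gamma_y'(r)$; by the definition of flag curvature and $\mathbf{K}\le k$ one has $\langle\mathbf{R}_y(r)v,v\rangle_{g_{\nabla r}}\le k\,\langle v,v\rangle_{g_{\nabla r}}$ for all $v\in N(r)$. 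Comparing with the model solution $\bar{\mathbf{\Pi}}(r):=\big(\mathfrak{s}_k'(r)/\mathfrak{s}_k(r)\big)\id$ of the scalar Riccati equation with curvature $k$, and using that both $\mathbf{\Pi}(r)$ and $\bar{\mathbf{\Pi}}(r)$ have the expansion $r^{-1}\id+O(r)$ as $r\to0^+$ (so $\mathbf{\Pi}(r)-\bar{\mathbf{\Pi}}(r)\to 0$), a matrix Riccati comparison (cf. \cite{ShenLecture}) yields $\mathbf{\Pi}(r)\ge\bar{\mathbf{\Pi}}(r)$ on $(0,i_y)$; taking traces gives $m(r)\ge (n-1)\mathfrak{s}_k'(r)/\mathfrak{s}_k(r)$, hence $(\ln H_y)'(r)\ge 0$. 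The hard part is precisely this step: because $\mathbf{K}\le k$ is an \emph{upper} curvature bound, a scalar Bochner-type argument for $m(r)$ cannot be closed (the term $\operatorname{tr}(\mathbf{\Pi}^2)$ is not dominated by the trace alone), so one genuinely must compare the full shape operator, and in the Finsler category this forces care in writing the radial curvature equation relative to the reference metric $g_{\nabla r}$ and the pulled-back Chern connection, and in handling the $r^{-1}$ singularity at $r=0$.

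It remains to verify $\lim_{r\to0^+}H_y(r)=1$. As $r\to0^+$ we have $\gamma_y(r)\to o$ and $\gamma_y'(r)\to y$, so by continuity of $\tau$ the factor $e^{\tau(\gamma_y(r),\gamma_y'(r))}\to e^{\tau(o,y)}$, while $\mathfrak{s}_k(r)/r\to1$; thus it suffices to show $\hat\sigma_o(r,y)/r^{n-1}\to e^{-\tau(o,y)}$. This is the first-order behaviour of the exponential map: $\exp_o$ is $C^1$ with differential the identity at $\mathbf{0}\in T_oM$, so $(\exp_o)^*\m$ agrees to leading order with the flat measure of density $\sigma(o)$ on $T_oM$; expressing that measure in Minkowski polar coordinates $(r,y)$ and using that ${\dd}\nu_o$ is the $g_y$-Riemannian volume of the indicatrix $S_oM$ (with $y$ being $g_y$-normal to $S_oM$ at $y$ by Euler's relation) produces precisely the constant $\sigma(o)/\sqrt{\det g_{ij}(o,y)}=e^{-\tau(o,y)}$, as in \cite{ZS}. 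Combining this with the previous paragraph, $H_y$ is monotone non-decreasing on $(0,i_y)$ with $H_y(0^+)=1$, which is the assertion.
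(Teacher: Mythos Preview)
The paper does not prove this theorem at all: it is stated as a preliminary result cited from \cite{ZS} (Theorem 3.6 there) and is used as a black box in Section~\ref{SecSobolev}. There is therefore no proof in the paper to compare against.

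That said, your sketch is the correct and standard route, and it is essentially the argument carried out in \cite{ZS}: reduce the monotonicity of $H_y$ to the Laplacian comparison $m(r)\ge (n-1)\mathfrak{s}_k'(r)/\mathfrak{s}_k(r)$ via the identity $(\ln\mathcal{J}_y)'=m$, and obtain the latter from a matrix Riccati comparison for the shape operator of geodesic spheres, using that $\mathbf{K}\le k$ bounds the curvature endomorphism from above on $N(r)$. Your observation that the \emph{upper} curvature bound forces a genuine matrix (not scalar) comparison is exactly the point, and your treatment of the limit $H_y(0^+)=1$ via the $C^1$-regularity of $\exp_o$ at the origin and the definition of $\tau$ is correct. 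The only places that require care in the Finsler setting---writing the radial Riccati equation with respect to $g_{\nabla r}$ and the pulled-back Chern connection, and matching the $r^{-1}$ asymptotics of $\mathbf{\Pi}$ and $\bar{\mathbf{\Pi}}$---you have flagged appropriately; these are handled in detail in \cite{ZS} and \cite{ShenLecture}.
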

Further comparison results can be found in the works of Shen \cite{ShenAdv},  Burago--Ivanov \cite{BI}, and Wu--Xin \cite{WX}, and references therein.
We conclude this subsection by recalling the
 {\it integral of distortion}
\begin{equation}\label{cocont}
\mathscr{I}_{\m}(x):=\int_{S_xM} e^{-\tau(x,y)}{\dd}\nu_x(y)<+\infty,
\end{equation}
which is useful to characterize the Busemann--Hausdorff measure (cf. Huang--Krist\'aly--Zhao \cite{HKZ}).

\subsection{Projectively flat Finsler metrics}
%Projectively flat Finsler metrics are the regular pseudo-norm functions concerning Hilbert's fourth problem.
%In this subsection, we present the basic properties and some new results of projectively flat Finsler metrics, which are the regular pseudo-norm functions concerning Hilbert's fourth problem.  See Berwald \cite{Be1,Be2}, Funk \cite{Funk1}, Papadopoulos--Troyanov \cite{PT}, Shen \cite{ShenSpray}, etc. for further details.

This subsection covers basic properties and new results on projectively flat Finsler metrics, which are the regular pseudo-norm functions concerning Hilbert's fourth problem.  See Berwald \cite{Be1,Be2}, Funk \cite{Funk1}, Papadopoulos--Troyanov \cite{PT}, and Shen \cite{ShenSpray}  for further details.

A Finsler metric $F$ on a domain $\Omega \subset \mathbb{R}^n$ is
 said to be \textit{projectively flat} if the images of all geodesics are straight lines.
 This condition is equivalent to  the geodesic coefficients  taking the form
\begin{equation}\label{prjoactor}
G^i(x,y) = P(x,y) y^i,
\end{equation}
 where  $P (x,y):=\frac{y^k}{2 F(x,y)} F_{x^k}(x,y)$ is called the {\it projective factor} of $F$.
 Note that $P$ is positively $1$-homogeneous in $y$.
% Clearly, $P(x,y)$ is positively $1$-homogeneous in $y$, i.e.,
% \begin{equation*}%\label{ppo1y}
% P(x,\alpha y)=\alpha P(x,y),\quad \forall  \alpha>0.
% \end{equation*}
For a projectively flat Finsler metric, the flag curvature is a scalar function on $T\Omega \backslash \{0\}$ given by the formula \begin{equation}\label{flagC} \mathbf{K}(x,y) = \frac{P^2(x,y) - y^i P_{x^i}(x,y)}{F^2(x,y)}. \end{equation} This result is due to Berwald \cite{Be2}.
The following characterization of flag curvature via a PDE system was also established by Berwald.
\begin{lemma}[\cite{Be2}]\label{lemBerwald}
Let $F=F(x,y)$ be a Finsler metric on a domain $\Omega \subset \mathbb{R}^n$. Then $F$ is projectively flat if and only if
there is a positively $1$-homogeneous function in $y$, say $P = P(x, y)$, and a
positively $0$-homogeneous function in $y$, say $\mathbf{K} = \mathbf{K}(x, y)$, on $T \Omega$
such that
\begin{align}
F_{x^k} = [P F]_{y^k},\qquad
P_{x^k} = P P_{y^k} - \frac{1}{3 F} [\mathbf{K} F^3]_{y^k}.\label{Berwald2}
\end{align}
In this case, $P$ is the projective factor while $\mathbf{K}$ is the flag curvature of $F$.
\end{lemma}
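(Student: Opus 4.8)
The plan is to prove both implications, drawing on four ingredients already available: Hamel's criterion (the Introduction's equation $F_{x^i}=y^jF_{x^jy^i}$ characterizing projective flatness, see \cite{Hamel}), the formula~\eqref{goedcooff} for the geodesic coefficients, Euler's theorem~\eqref{Eulerhter} for positively homogeneous functions, and the flag curvature formula~\eqref{flagC}. Throughout, $P=y^kF_{x^k}/(2F)$ will play the role of the projective factor, and it is convenient to abbreviate $\Theta:=y^jP_{x^j}$; note that $\Theta$ and $PF$ are positively $2$-homogeneous in $y$, $F_{x^j}$ is $1$-homogeneous, while $F_{y^k}$ and $P_{y^k}$ are $0$-homogeneous, so Euler's relation applies with the appropriate degree to each of them.

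\textbf{Reverse direction.} Assume $P$ and $\mathbf{K}$ satisfy~\eqref{Berwald2}. Contracting the first equation of~\eqref{Berwald2} with $y^k$ and applying Euler's theorem to the $2$-homogeneous function $PF$ gives $y^kF_{x^k}=y^k[PF]_{y^k}=2PF$, so $P=y^kF_{x^k}/(2F)$. Differentiating that same equation in $y^l$ and contracting with $y^k$, Euler's theorem applied to the $1$-homogeneous covector $[PF]_{y^l}$ yields $y^kF_{x^ky^l}=[PF]_{y^l}=F_{x^l}$, which is Hamel's equation; hence by~\eqref{goedcooff} (and the fact that $g_{jl}y^l=FF_{y^j}$) the geodesic coefficients reduce to $G^i=Py^i$, so geodesics are reparametrized straight lines and $F$ is projectively flat with projective factor $P$. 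Finally,~\eqref{flagC} identifies the flag curvature as $(P^2-\Theta)/F^2$, while contracting the second equation of~\eqref{Berwald2} with $y^k$ (Euler on the $3$-homogeneous $\mathbf{K}F^3$) gives $\Theta=P^2-\mathbf{K}F^2$; comparing the two shows that $\mathbf{K}$ is precisely the flag curvature.

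\textbf{Forward direction.} Conversely, suppose $F$ is projectively flat, so $G^i=Py^i$ with $P=y^kF_{x^k}/(2F)$ and Hamel's equation $y^jF_{x^jy^k}=F_{x^k}$ holds. Differentiating $2PF=y^kF_{x^k}$ in $y^k$ and substituting Hamel's equation gives $2[PF]_{y^k}=F_{x^k}+y^jF_{x^jy^k}=2F_{x^k}$, which is the first equation of~\eqref{Berwald2}. Now set $\mathbf{K}:=(P^2-\Theta)/F^2$; by~\eqref{flagC} this is a function of $(x,y)$ and it is $0$-homogeneous in $y$. Expanding $[\mathbf{K}F^3]_{y^k}=[(P^2-\Theta)F]_{y^k}$ using the product rule and $\partial_{y^k}\Theta=P_{x^k}+y^jP_{x^jy^k}$, one checks that the second equation of~\eqref{Berwald2} is equivalent to the single differential identity
\[
2P_{x^k}-y^jP_{x^jy^k}-PP_{y^k}=-\mathbf{K}\,F\,F_{y^k}.
\]
To establish it, differentiate $2PF=y^kF_{x^k}$ with respect to $x^k$ to get $2P_{x^k}F+2PF_{x^k}=y^jF_{x^jx^k}$, and compute $y^jF_{x^jx^k}$ a second way by differentiating the already-proved first Berwald equation $F_{x^k}=P_{y^k}F+PF_{y^k}$ with respect to $x^j$, contracting with $y^j$, and applying Hamel's equation together with Euler's theorem (this produces $y^jF_{x^jx^k}=(y^jP_{x^jy^k})F+3PP_{y^k}F+(\Theta+P^2)F_{y^k}$). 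Equating the two expressions for $y^jF_{x^jx^k}$ and simplifying — the $F_{y^k}$-terms collapse to $(P^2-\Theta)F_{y^k}$ — yields exactly the displayed identity, hence the second equation of~\eqref{Berwald2}; and $\mathbf{K}$ is the flag curvature by construction.

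\textbf{Main obstacle.} The delicate step is the displayed identity: it rests on the repeated and correct use of Euler's relation across quantities of $y$-degrees $0$, $1$, $2$ (for instance $y^jF_{y^ky^j}=0$, the Hamel identity $y^jF_{x^jy^k}=F_{x^k}$, and $y^jP_{x^jy^k}=\partial_{y^k}\Theta-P_{x^k}$), where an arithmetic or degree slip silently propagates into a wrong sign. A reliable sanity check is that contracting the identity — equivalently the second equation of~\eqref{Berwald2} — with $y^k$ collapses both sides to $\mathbf{K}F^2=P^2-\Theta$, which is~\eqref{flagC}. As an alternative to the two-expansion computation, one may instead derive the identity from the explicit Riemann curvature $R^i_{\ k}=(P^2-\Theta)\delta^i_k+(2P_{x^k}-y^jP_{x^jy^k}-PP_{y^k})y^i$ of the spray $G^i=Py^i$ together with the scalar-flag-curvature structure $R^i_{\ k}=\mathbf{K}(F^2\delta^i_k-y^iFF_{y^k})$, comparing the coefficients of $y^i$; but the route above via $y^jF_{x^jx^k}$ is shorter and avoids invoking the symmetry of the lowered curvature.
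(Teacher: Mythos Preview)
The paper does not supply its own proof of this lemma: it is stated with attribution to Berwald~\cite{Be2} and used as input. Your argument is a correct self-contained derivation. The reverse direction is clean; the forward direction correctly reduces the second Berwald equation to the identity $2P_{x^k}-y^jP_{x^jy^k}-PP_{y^k}=-\mathbf{K}FF_{y^k}$, and your two computations of $y^jF_{x^jx^k}$ match, yielding exactly $(\Theta-P^2)F_{y^k}/F$ on the right after substituting $F_{x^k}=P_{y^k}F+PF_{y^k}$ and dividing by $F$. One cosmetic point: in the step ``Differentiating $2PF=y^kF_{x^k}$ in $y^k$'' you are differentiating in a fresh index and then relabelling; writing $\partial_{y^l}$ there would avoid the apparent index clash. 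Your remark that the displayed identity can alternatively be read off from the spray curvature $R^i_{\ k}=(P^2-\Theta)\delta^i_k+\bigl(3P_{x^k}-\Theta_{y^k}-PP_{y^k}\bigr)y^i$ together with $R^i_{\ k}=\mathbf{K}(F^2\delta^i_k-FF_{y^k}y^i)$ is the route closer to Berwald's original presentation, but the direct computation you give is equally valid.
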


It is Berward who first observed a relationship between  flag curvatures and geodesics \cite{Be2}. We reformulate this connection in an alternative manner,
 which will serve as a pivotal tool for investigating  projectively flat metrics in the following sections.
\begin{theorem}\label{lemgeodesicK}
Let $(\Omega, F)$ be a forward complete projectively flat Finsler manifold, where $\Omega\subset \mathbb{R}^n$ is a domain.
Then a (constant-speed) geodesic $\gamma(t)$ starting at a point $x \in \Omega$ with initial velocity $y\in T_x\Omega\backslash\{ \mathbf{0}\}$ can be expressed as
\begin{equation} \label{geodesiceq}
\gamma(t) = f(t)y +x, \quad t\in (-\varepsilon, +\infty),
\end{equation}
for some $\varepsilon > 0$, where the function $f$  satisfies
\begin{equation}\label{geodesicintial}
  f(0) = 0, \quad f'(0)= 1,  \   \text{ and } \  \ f'(t) > 0.
\end{equation}
Moreover, the flag curvature   along $\gamma(t)$ satisfies
\begin{equation}\label{geodesicK}
\mathbf{K}(\gamma(t),\gamma'(t))\,F^2(\gamma(t),\gamma'(t)) = \frac{2 f'''(t) f'(t) -  3 [ f''(t)]^2}{4  [f'(t)]^2}.
\end{equation}
\end{theorem}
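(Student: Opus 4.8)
The plan is to exploit projective flatness in its two equivalent guises: the algebraic form $G^i = P(x,y)y^i$ for the geodesic coefficients, and Berwald's PDE system \eqref{Berwald2}. First I would write out the geodesic equation \eqref{geodesequ} for a projectively flat metric. Substituting $G^i = Py^i$ gives $\gamma^{i\prime\prime} + 2P(\gamma,\gamma')\gamma^{i\prime} = 0$ for every index $i$; since the right-hand side is the same scalar multiple $-2P(\gamma,\gamma')$ of each component $\gamma^{i\prime}$, all components of $\gamma$ satisfy the same scalar linear ODE up to this common factor, so the curve $\gamma(t)$ stays on the affine line through $x$ in direction $y$. Concretely one sets $\gamma(t) = f(t)y + x$ and checks that $f$ must solve $f'' + 2P(\gamma,\gamma')\,f' = 0$ with $f(0)=0$, $f'(0)=1$ (the latter from $\gamma'(0)=y$ and $0$-homogeneity bookkeeping). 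Forward completeness guarantees $\gamma$, hence $f$, extends to $t \in [0,+\infty)$; a short interval $(-\varepsilon,0)$ of backward existence comes from local ODE theory. That $f'(t) > 0$ throughout follows because $f'$ solves a linear first-order ODE $f'' = -2P(\gamma,\gamma')f'$ with $f'(0) = 1 > 0$, so $f'(t) = \exp\!\big(-2\int_0^t P(\gamma(s),\gamma'(s))\,\dd s\big) > 0$; this simultaneously shows $f$ is strictly increasing, which is what makes the parametrization by the line sensible. This establishes \eqref{geodesiceq} and \eqref{geodesicintial}.

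For the curvature identity \eqref{geodesicK}, I would compute $F^2(\gamma(t),\gamma'(t))$ and relate its evolution to $f$. Note $\gamma'(t) = f'(t)y$, so by $2$-homogeneity $F^2(\gamma(t),\gamma'(t)) = [f'(t)]^2 F^2(\gamma(t),y)$; more efficiently, one tracks $F(\gamma(t),\gamma'(t))$ directly. Along a constant-speed geodesic, $F(\gamma(t),\gamma'(t))$ need not be constant here because the parametrization $t$ is \emph{not} arc length in general — indeed the defining feature of projective flatness is precisely the reparametrization $\dd^2 s/\dd t^2 = -2P\,\dd s/\dd t$ making the image linear. The cleanest route is to use the projective-factor definition $P = \tfrac{1}{2F}y^kF_{x^k}$ together with the chain rule: differentiating $F(\gamma(t),\gamma'(t))$ in $t$, using $\gamma'' = -2P\gamma'$ and the $1$-homogeneity relation $F_{y^k}(\gamma,\gamma')\gamma'^k = F(\gamma,\gamma')$, one finds that $w(t):=F(\gamma(t),\gamma'(t))$ satisfies a first-order ODE whose coefficient involves $P(\gamma(t),\gamma'(t))$. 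Since $P(\gamma,\gamma') = P(\gamma,f'y) = f'P(\gamma,y)$ and the ODE $f'' + 2Pf' = 0$ gives $P(\gamma(t),\gamma'(t)) = -f''(t)/(2f'(t))$, one can express everything through $f$. The upshot should be $w(t) = c/\sqrt{f'(t)}$ for a constant $c$, equivalently $F^2(\gamma(t),\gamma'(t)) = c^2/f'(t)$.

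Finally I would feed this into Berwald's curvature formula \eqref{flagC}, $\mathbf{K}F^2 = P^2 - y^iP_{x^i}$, evaluated along $\gamma$. Using $P(\gamma(t),\gamma'(t)) = -f''/(2f')$ and differentiating this relation in $t$ — where $\tfrac{\dd}{\dd t}P(\gamma(t),\gamma'(t)) = P_{x^i}\gamma'^i + P_{y^i}\gamma''^i = P_{x^i}\gamma'^i - 2P\,P_{y^i}\gamma'^i$, and $P_{y^i}\gamma'^i = P(\gamma,\gamma')$ by $1$-homogeneity of $P$ in $y$ — one solves for $\gamma'^iP_{x^i} = y^iP_{x^i}\cdot(\text{homogeneity factor})$ in terms of $f', f'', f'''$. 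Assembling $P^2 - \gamma'^iP_{x^i}$ and accounting for the homogeneity degree mismatch (the left side of \eqref{geodesicK} is evaluated at $\gamma'=f'y$, so $\mathbf{K}$ is $0$-homogeneous and unchanged, while $F^2$ carries the $[f']^2$) yields the claimed $\dfrac{2f'''f' - 3[f'']^2}{4[f']^2}$. The main obstacle is purely bookkeeping: keeping the homogeneity degrees straight when passing between $P(\gamma,y)$ and $P(\gamma,\gamma')=f'P(\gamma,y)$, and similarly for the $x$-derivatives $P_{x^i}$ which are $1$-homogeneous in the $y$-slot, so that the factors of $f'$ combine correctly into the stated rational expression; there is no conceptual difficulty once $f'(t) = \exp(-2\int_0^t P\,\dd s)$ and $F^2(\gamma,\gamma') \propto 1/f'$ are in hand.
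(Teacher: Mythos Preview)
Your overall approach is correct and essentially coincides with the paper's: write $\gamma(t)=f(t)y+x$, record the scalar ODE $f''+2P(\gamma,\gamma')f'=0$ (so $P(\gamma,\gamma')=-f''/(2f')$), differentiate once more in $t$, and combine with Berwald's formula $\mathbf{K}F^2=P^2-\gamma'^iP_{x^i}$ using $P_{y^i}\gamma'^i=P$. The paper organizes the algebra by differentiating the ODE and then taking the linear combination $2f'\times\text{(new)}-(2f'P+3f'')\times\text{(old)}$ to eliminate $P$; your plan of substituting $P=-f''/(2f')$ directly into $-P^2-\tfrac{\dd}{\dd t}P$ is the same computation rearranged, and yields $\dfrac{2f'''f'-3[f'']^2}{4[f']^2}$ immediately. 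Your exponential justification of $f'(t)>0$ is a clean addition the paper leaves implicit.

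There is, however, a genuine conceptual slip in your middle paragraph that you should excise. You write that ``along a constant-speed geodesic, $F(\gamma(t),\gamma'(t))$ need not be constant here'' and conclude $F^2(\gamma,\gamma')=c^2/f'(t)$. This is wrong: the geodesic equation \eqref{geodesequ} is precisely the equation for \emph{constant-speed} geodesics, so $F(\gamma(t),\gamma'(t))\equiv F(x,y)$ is constant; the reparametrization $\dd^2s/\dd t^2=-2P\,\dd s/\dd t$ you cite goes in the other direction (from Finsler arc length $t$ to a Euclidean-affine parameter $s$), not from $t$ to arc length. Fortunately this claim is never used in your final paragraph---the identity $\mathbf{K}F^2=P^2-\gamma'^iP_{x^i}$ already packages $F^2$ on the left, so you never need to know $F^2$ separately---and there is no ``homogeneity degree mismatch'' to account for, since both sides of that identity are $2$-homogeneous in the velocity. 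Simply delete the digression about $w(t)$ and the mismatch remark, and the proof goes through as you outlined.
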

\begin{proof}
For a projectively flat metric, the geodesic equation \eqref{geodesequ} reduces to
\begin{equation} \label{geodesicODE}
\frac{{\dd}^2 \gamma^i}{{\dd}t^2} +2P  \Big(\gamma, \frac{{\dd} \gamma}{{\dd}t}\Big) \frac{{\dd} \gamma^i}{{\dd}t} =0
\end{equation}
by virtue of \eqref{prjoactor}.
Since every geodesic is a straight line, we may assume (after a suitable reparameterization) that the geodesic $\gamma$ with $\gamma(0) = x$ and $\gamma'(0) = y$ takes the form
\begin{equation*}%\label{wai}
\gamma(t) = f(t){y} +{x}, \quad t\in (-\varepsilon, \varepsilon),
\end{equation*}
for some $\varepsilon > 0$ guaranteed by the standard existence theory for ODEs.
The initial conditions \eqref{geodesicintial} follow directly from $\gamma(0) = x$ and $\gamma'(0) = y$.
The forward completeness of $(\Omega, F)$ allows us to extend $\gamma$ to the interval $(-\varepsilon, +\infty)$, which completes the proof of the first statement.

To establish the second statement, substituting \eqref{geodesiceq} into  \eqref{geodesicODE} and using  ${y}=(y^i)\neq\mathbf{0}$ gives
\begin{equation}\label{geodesicBackf''1}
f''(t)  + 2 P (\gamma(t), \gamma'(t) ) f'(t) = 0.
\end{equation}
Differentiating \eqref{geodesicBackf''1} with respect to $t$ and using \eqref{geodesiceq} again yields
\begin{equation}\label{geodesicf'''}
f'''(t) + 2 P_{x^k} (\gamma(t), \gamma'(t) )  y^k  [f'(t)]^2+ 2 P_{y^k} (\gamma(t), \gamma'(t) ) y^k  f'(t)  f''(t) + 2 P (\gamma(t), \gamma'(t) ) f''(t)=0.
\end{equation}
%where $P_{x^k} (\gamma(t), \gamma'(t) )= P_{x^k} (x, y)|_{(\gamma(t), \gamma'(t) )}$ and $P_{y^k} (\gamma(t), \gamma'(t))= P_{y^k} (x, y)|_{(\gamma(t), \gamma'(t) )}$.

On the other hand,
by means of the homogeneity of $P$, $\gamma'(t)=f'(t){y}$, \eqref{Eulerhter}, and  \eqref{flagC},
we have
\begin{align*}
P_{y^k} (\gamma(t), \gamma'(t) ) f'(t) y^k& = P(\gamma(t), \gamma'(t) ),  \\
  P_{x^k} (\gamma(t), \gamma'(t) ) f'(t)y^k & =P^2(\gamma(t), \gamma'(t) ) - \mathbf{K}(\gamma(t), \gamma'(t) )\, F^2(\gamma(t), \gamma'(t) ).
 \end{align*}
Substituting these into \eqref{geodesicf'''} yields
\begin{equation}\label{geodesicf'''1}
f'''(t) + 2 P^2(\gamma(t), \gamma'(t) ) f'(t)+ 4 P (\gamma(t), \gamma'(t) )  f''(t) - 2   f'(t)\, \mathbf{K}(\gamma(t), \gamma'(t) )\, F^2(\gamma(t), \gamma'(t) ) = 0.
\end{equation}
Then $2 f'(t) \times \eqref{geodesicf'''1} - \left( 2   f'(t) P (\gamma(t), \gamma'(t) )+ 3 f''(t) \right) \times \eqref{geodesicBackf''1}$ equals
\begin{equation*}\label{geodesicf'''3}
2 f'''(t) f'(t) -  3 [ f''(t)]^2 - 4   [f'(t)]^2\, \mathbf{K}(\gamma(t), \gamma'(t) )\, F^2(\gamma(t), \gamma'(t) ) =0,
\end{equation*}
which gives \eqref{geodesicK} due to $f'(t)>0$.
\end{proof}

The structure of geodesics emanating from a point encodes the global ``shape" of a projectively flat Finsler manifold.

\begin{lemma} \label{lemfortocom}
Let $(\Omega, F)$ be a  forward complete projectively flat Finsler manifold, where $\Omega\subset\mathbb{R}^n$ is a domain.  Given  $x \in \Omega$ and $y\in S_{x}\Omega$, let
\begin{equation*} %\label{forwardgamma}
 \gamma_{y}(t) = f(t;x,y) y + x, \quad t\in (\delta(x,y),+\infty),
\end{equation*}
be a (unit-speed) geodesic  satisfying   $\gamma_{{y}}'(0)={y}$. Here, $(\delta({x},{y}),+\infty)$ is the  maximal domain of  $\gamma_{{y}}$, i.e.,
\begin{equation*}%\label{domoff}
(\delta({x},{y}),+\infty):=\left\{ t\in \mathbb{R}\,:\, \gamma_{{y}}(t) \text{ is well-defined in } \Omega  \right\}.
\end{equation*}
If $\ds\lim_{t\rightarrow +\infty} f(t;{x},{y}) = +\infty$ for every ${y}\in S_{{x}}\Omega$, then $\Omega=\mathbb{R}^n$ and particularly,
\begin{equation}\label{leftlimoff}
\lim_{t\rightarrow\delta({x},{y})} f(t;{x},{y}) = -\infty,
\end{equation}
\end{lemma}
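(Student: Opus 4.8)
The plan is to prove the two conclusions in succession; the first is elementary, and the substance lies in the second. \emph{Step~1: $\Omega=\mathbb{R}^n$.} Fix $y\in S_x\Omega$. By Theorem~\ref{lemgeodesicK} the map $t\mapsto f(t;x,y)$ is smooth and strictly increasing with $f(0;x,y)=0$, and forward completeness makes it defined on all of $[0,+\infty)$; combined with the hypothesis $\lim_{t\to+\infty}f(t;x,y)=+\infty$ this yields $\{f(t;x,y):t\ge 0\}=[0,+\infty)$, so the entire ray $\{x+sy:s\ge 0\}=\gamma_y\bigl([0,+\infty)\bigr)$ lies in $\Omega$. Since $v/F(x,v)\in S_x\Omega$ for every $v\in\Rno$, it follows that $x+v\in\Omega$ for all $v\in\mathbb{R}^n$, i.e. $\Omega=\mathbb{R}^n$.

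\emph{Step~2: $\lim_{t\to\delta(x,y)}f(t;x,y)=-\infty$.} Fix $y\in S_x\Omega$, write $\delta:=\delta(x,y)\in[-\infty,0)$, and let $L:=\lim_{t\to\delta^{+}}f(t;x,y)\in[-\infty,0)$, which exists by monotonicity; I argue by contradiction, assuming $L$ finite. Then the image of $\gamma_y$ is exactly the half-open segment $R:=\{x+sy:L<s<+\infty\}$, which does \emph{not} contain the point $p:=x+Ly$. Because $\Omega=\mathbb{R}^n$ we have $p\in\Omega$, so we may consider the geodesic issuing from $p$ along $y$: with $\widetilde y:=y/F(p,y)\in S_p\Omega$, let $\sigma$ be the maximal unit-speed geodesic satisfying $\sigma(0)=p$ and $\sigma'(0)=\widetilde y$. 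By forward completeness and Theorem~\ref{lemgeodesicK}, $\sigma(t)=g(t)\widetilde y+p$ on its maximal interval $(\delta(p,\widetilde y),+\infty)$ with $g(0)=0$, $g'>0$ and $\delta(p,\widetilde y)<0$; hence $\sigma(t)=x+\bigl(L+g(t)/F(p,y)\bigr)y\in R$ for every $t>0$.

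The punchline is a comparison of $\sigma$ with $\gamma_y$. Pick any $t_1>0$ and write $q:=\sigma(t_1)=\gamma_y(t_0)$ for the unique $t_0\in(\delta,+\infty)$. Both $\gamma_y'(t_0)$ and $\sigma'(t_1)$ are unit vectors at $q$ pointing along $y$, hence both equal $y/F(q,y)$; by uniqueness of the maximal geodesic with prescribed initial position and velocity (standard ODE uniqueness for the geodesic spray), $\sigma$ and $\gamma_y$ differ only by a translation of the parameter, $\sigma(\cdot)=\gamma_y(\cdot+c_0)$, and matching their maximal domains forces $c_0=\delta-\delta(p,\widetilde y)>\delta$. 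Evaluating at parameter $0$ gives $\gamma_y(c_0)=\sigma(0)=p$ with $c_0\in(\delta,+\infty)$, contradicting $p\notin R=\gamma_y\bigl((\delta,+\infty)\bigr)$. Therefore $L=-\infty$, which is \eqref{leftlimoff}.

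I expect the main obstacle to be exactly this last comparison: one must ensure that ``coincidence up to reparametrization'' places $p$ at a parameter genuinely inside the maximal interval $(\delta,+\infty)$ of $\gamma_y$, which is precisely where the trivial inequality $\delta(p,\widetilde y)<0$ enters. It is also worth stressing that backward completeness is \emph{not} assumed, so one cannot simply argue that ``a bounded geodesic must extend past $\delta$''; that shortcut settles only the case $\delta>-\infty$, whereas the genuinely delicate case is $\delta=-\infty$, where $\gamma_y$ is already a two-sided complete geodesic whose image is nonetheless the bounded segment $R$ — the argument above handles both cases at once.
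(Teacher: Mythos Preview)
Your proof is correct. Step~1 coincides with the paper's argument verbatim. For Step~2 the paper offers only a single sentence (``\eqref{leftlimoff} follows since $\gamma_{y}$ is a straight line in $\Omega=\mathbb{R}^n$''), whereas you supply an actual argument: launch a second unit-speed geodesic $\sigma$ from the hypothetical finite endpoint $p=x+Ly$, match it to $\gamma_y$ by ODE uniqueness at a common interior point, and obtain $p$ in the image of $\gamma_y$, a contradiction. This is the natural way to make the paper's assertion rigorous, and you are right to stress that the case $\delta=-\infty$ is the nontrivial one (a naive ``extend past $\delta$'' argument only covers finite $\delta$). One cosmetic point: your formula $c_0=\delta-\delta(p,\widetilde y)$ is ill-defined when $\delta=-\infty$; it is cleaner to set $c_0:=t_0-t_1$ directly from the matching at $q$, after which $c_0\in(\delta,+\infty)$ is automatic (it is a finite number and $\delta\le -\infty$ or $\delta<c_0$ by the domain comparison), and the contradiction $\gamma_y(c_0)=p\notin R$ follows as you wrote.
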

\begin{proof}
By the assumption,    we have  $\lim_{t\rightarrow +\infty} |\gamma_{{y}}(t)-{x}|= +\infty$ in every direction ${y}\in S_{{x}}\Omega$,
which means
$\Omega = \mathbb{R}^n$. Thus, \eqref{leftlimoff} follows since $\gamma_{{y}}$ is a straight line in  $\Omega=\mathbb{R}^n$.
\end{proof}

The completeness of a  manifold provides a finer property of projectively flat Finsler manifold.

 \begin{proposition}\label{completenessofprojeflat}
Let $(\Omega,F)$ be a  projectively flat Finsler manifold, where $\Omega$ is a bounded domain  in $\mathbb{R}^n$  containing the origin. Denote by $d_{\E}(x,\partial\Omega):=\inf_{z\in \partial\Omega}|z-x|$   the standard Euclidean distance between $x$ and $\partial\Omega$. Then,
\begin{enumerate}[{\rm (i)}]
\item\label{forcompleprf} $(\Omega,F)$ is forward complete if and only if $d_F(\mathbf{0},x)\rightarrow +\infty$ as $d_{\E}(x,\partial\Omega)\rightarrow 0$;

\item\label{backcompleprf} $(\Omega,F)$ is backward complete if and only if $d_F(x,\mathbf{0})\rightarrow +\infty$ as $d_{\E}(x,\partial\Omega)\rightarrow 0$.
\end{enumerate}
\end{proposition}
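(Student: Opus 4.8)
The plan is to exploit the Hopf--Rinow-type characterization of completeness in the Finsler setting: a forward complete manifold has compact closed forward balls, and conversely a manifold in which the metric spheres around a fixed point stay ``uniformly far from escaping'' is forward complete. Since $\Omega$ is a bounded Euclidean domain, the only way a geodesic can fail to extend is by running off to $\partial\Omega$ in finite $F$-length; the content of the proposition is that this happens precisely when the $F$-distance to $\mathbf 0$ stays bounded near some boundary point. I would prove \eqref{forcompleprf}; then \eqref{backcompleprf} follows by applying \eqref{forcompleprf} to the reverse metric $\overleftarrow F$, using that $\overleftarrow F$ is projectively flat on the same $\Omega$ (geodesics of $\overleftarrow F$ are the time-reversals of those of $F$, hence still straight lines), that $d_{\overleftarrow F}(\mathbf 0,x)=d_F(x,\mathbf 0)$, and that $(\Omega,F)$ is backward complete iff $(\Omega,\overleftarrow F)$ is forward complete (as recalled in Section \ref{elemFinslergeo}).

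For the forward direction of \eqref{forcompleprf}: assume $(\Omega,F)$ is forward complete. If $d_F(\mathbf 0,x)\not\to+\infty$ as $d_{\E}(x,\partial\Omega)\to 0$, there is a sequence $x_j\in\Omega$ with $d_{\E}(x_j,\partial\Omega)\to 0$ but $d_F(\mathbf 0,x_j)\le R$ for some $R<+\infty$. Then all $x_j$ lie in the closed forward ball $\overline{B^+_{R}(\mathbf 0)}$, which is compact by forward completeness (as recalled after \eqref{forward/backwradball}). Passing to a subsequence, $x_j\to x_\infty\in\overline{B^+_{R}(\mathbf 0)}\subset\Omega$ (closed forward balls are compact subsets of $\Omega$, hence do not meet $\partial\Omega$), so $d_{\E}(x_\infty,\partial\Omega)>0$, contradicting $d_{\E}(x_j,\partial\Omega)\to 0$. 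Hence $d_F(\mathbf 0,x)\to+\infty$.

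For the converse: assume $d_F(\mathbf 0,x)\to+\infty$ as $d_{\E}(x,\partial\Omega)\to 0$. It suffices to show every unit-speed geodesic $\gamma_y$ with $\gamma_y(0)=\mathbf 0$, $\gamma_y'(0)=y\in S_{\mathbf 0}\Omega$, extends to $[0,+\infty)$; the general case of a geodesic from an arbitrary point $x$ reduces to this by Proposition \ref{transformforK=-1}-type translation invariance of the projectively flat structure, or more directly by noting the same compactness argument works with $\mathbf 0$ replaced by any fixed interior point $x_0$, since $d_F(x_0,x)\ge d_F(\mathbf 0,x)-d_F(\mathbf 0,x_0)$ still blows up. So fix $y$ and let $(\,\cdot\,,T)$ be the maximal forward interval on which $\gamma_y$ is defined in $\Omega$; by Theorem \ref{lemgeodesicK}, $\gamma_y(t)=f(t)y$ with $f'>0$, so $\gamma_y$ is a straight ray and, if $T<+\infty$, then $\gamma_y(t)$ converges as $t\to T^-$ to a point $p\in\partial\Omega$ (the image is a bounded straight segment with an endpoint on $\partial\Omega$, since $\Omega$ is bounded). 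Along the way $d_F(\mathbf 0,\gamma_y(t))\le t< T$, so $d_F(\mathbf 0,\gamma_y(t))$ stays bounded while $d_{\E}(\gamma_y(t),\partial\Omega)\to 0$, contradicting the hypothesis. Therefore $T=+\infty$ and $(\Omega,F)$ is forward complete. The main obstacle is the reduction from geodesics based at $\mathbf 0$ to geodesics based at an arbitrary point: I expect to handle it cleanly via the distance-bound argument $d_F(x_0,x)\to+\infty$ just noted (equivalently, via the projective-flatness translation invariance), rather than re-deriving completeness from scratch at each basepoint.
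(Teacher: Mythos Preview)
Your proof is correct and follows essentially the same route as the paper's: compactness of closed forward balls for necessity, and for sufficiency the observation that a geodesic with finite maximal forward time $T$ must accumulate on $\partial\Omega$, so $d_F$ from the basepoint stays bounded while $d_{\E}(\gamma_y(t),\partial\Omega)\to 0$, with the arbitrary-basepoint case handled via the triangle inequality $t\ge d_F(x_0,\gamma_y(t))\ge d_F(\mathbf 0,\gamma_y(t))-d_F(\mathbf 0,x_0)$ exactly as in the paper. Two small citation fixes: Theorem~\ref{lemgeodesicK} already assumes forward completeness, so invoke projective flatness directly for ``geodesics are straight rays''; and Proposition~\ref{transformforK=-1} concerns solutions of \eqref{pdeodefque} rather than general projectively flat metrics, so your option~(b) is the correct reduction (and is precisely what the paper does).
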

\begin{proof}
We establish \eqref{forcompleprf}, as \eqref{backcompleprf} follows by a symmetric argument employing the reverse Finsler metric $\overleftarrow{F}$.
%We focus on \eqref{forcompleprf} because  \eqref{backcompleprf} can be derived similarly by considering the revere Finsler metric $\overleftarrow{F}$.

We first prove the necessity.  Assume that $(\Omega,F)$ is forward complete. Suppose, for contradiction, that  there exists a sequence $(x_i)\subset \Omega$ such that
\begin{equation}\label{twocondleve}
\lim_{i\rightarrow +\infty}d_{\E}(x_i,\partial\Omega)= 0, \qquad R:=\sup_id_F(\mathbf{0},x_i)<+\infty.
\end{equation}
Owing to the compactness of $\overline{B^+_R({\mathbf{0}})}$, we may assume that
$(x_i)$ converges to  $x_{\infty}\in \overline{B^+_R({\mathbf{0}})}\subset \Omega$  (under the natural topology $\Omega\subset \mathbb{R}^n$). Hence, $x_{\infty}\notin \partial\Omega$, which contradicts \eqref{twocondleve}$_1$ (i.e., the first equality in \eqref{twocondleve}).

We now prove the sufficiency. Suppose $d_F(\mathbf{0},x)\rightarrow +\infty$ as $d_{\E}(x,\partial\Omega)\rightarrow 0$. Given $x_0\in \Omega$ and $y\in S_{x_0}\Omega$, let $l_{x_0,x_1}$ be a straight segment from $x_0$ in the direction $y$, which intersects $\partial\Omega$ at $x_1$. On the other hand, by the ODE theory one can obtain a geodesic $\gamma_y(t)$, $t\in [0,\epsilon)$ with $\gamma'_y(0)=y$.
 Since the image of $\gamma_y([0,\epsilon))$ is a straight segment in $\Omega$, the compactness of $\overline{\Omega}\subset \mathbb{R}^n$ implies that $\gamma_y(t)$ can be extended to $t=\epsilon$ unless $\gamma_y(\epsilon)=x_1\in \partial\Omega$. By considering the geodesic initiating from $\gamma_y(\epsilon)$ in the direction $y$,
an easy induction gives that   $\gamma_y$ can be defined on  the whole $l_{x_0,x_1}\backslash\{x_1\}$.

The assumption implies
$d_F(\mathbf{0},\gamma_y(t))\rightarrow +\infty$ as  $d_{\E}(\gamma_y(t),x_1)\rightarrow 0$,
which combined with
the triangle inequality  yields
\[
t=d_F(x_0,\gamma_y(t))\geq   d_F(\mathbf{0},\gamma_y(t))-d_F(\mathbf{0},x_0)\rightarrow +\infty,
\]
 as  $d_{\E}(\gamma_y(t),x_1)\rightarrow 0$. Then $\gamma_y(t)$ is defined on $[0,+\infty)$, which implies that $(\Omega,F)$ is forward complete.
\end{proof}

Following \eqref{twocondleve}$_1$, we adopt
 the convention ($*$)$_i$  to denote  the $i$-th equality/equation in ($*$) throughout this manuscript.
We now recall the definition of Funk metric (cf. Funk \cite{Funk1}).

\begin{definition}\label{funkdef}
A Finsler metric $\mathsf{F}$ on a bounded domain $\Omega\subset \mathbb{R}^n$ is called a  {\it Funk metric} if
\begin{equation}\label{funkdef}
x + \frac{y}{\mathsf{F}(x,y)}  \in \partial \Omega,
\end{equation}
for  every $x\in \Omega$ and  $ y \in T_x \Omega\backslash\{\mathbf{0}\}$. The pair $(\Omega,\mathsf{F})$ is called a  {\it Funk metric space}.
\end{definition}
We remark that in \eqref{funkdef} both $x$ and $y/\mathsf{F}(x,y)$ are viewed as  vectors in $\mathbb{R}^n$ with their sum emanating from  the origin.  From a local point of view,
a Finsler metric  $\mathsf{F}$ is a Funk metric if and only if
\begin{equation}\label{Funk}
\mathsf{F}_{x^k}=\mathsf{F}\,\mathsf{F}_{y^k}.
\end{equation}
%In particular, if a Finsler metric $F$ satisfies \eqref{Funk} in a small domain $\mathcal {U}$, then $F$ can be extended beyond $\mathcal {U}$ while \eqref{Funk} remains valid. The maximum extension domain is exactly the domain $\Omega$ satisfying \eqref{funkdef}.

The following characterization of Funk metrics is due to Li~\cite{Li} and Shen~\cite{Sh1}.
\begin{theorem}[\cite{Li, Sh1}]\label{funkmetrisufficnecessy}
A   pair $(\Omega,\mathsf{F})$ is a Funk metric space if and only if
there is a (unique) Minkowski norm $\psi$ on $\mathbb{R}^n$ such that
\begin{equation} \label{phiF}
	\partial\Omega=\psi^{-1}(1),\qquad \mathsf{F}(x,y) = \psi{\left(y + x \mathsf{F}(x,y)\right)}.
\end{equation}
In particular, $\Omega$ is a bounded strongly convex domain in $\mathbb{R}^n$ given by
\begin{equation*}%\label{omegedom}
	\Omega=\{x\in \mathbb{R}^n\,:\, \psi(x)<1\}.
\end{equation*}
\end{theorem}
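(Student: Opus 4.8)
The plan is to prove both directions of the equivalence using the PDE characterization \eqref{Funk} and Theorem \ref{lemsolexi}.

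\medskip

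\textbf{Necessity.} Assume $(\Omega,\mathsf{F})$ is a Funk metric space, so \eqref{funkdef} holds. First I would show that $\Omega$ is convex: since $\mathsf{F}(x,\cdot)$ is a Minkowski norm (hence convex) and $x + y/\mathsf{F}(x,y) \in \partial\Omega$ traces out $\partial\Omega$ as $y$ varies over $T_x\Omega\setminus\{\mathbf 0\}$, the boundary of $\Omega$ as seen from each interior point is the translate of a strictly convex indicatrix; this forces $\partial\Omega$ to be strongly convex and $\Omega$ to be bounded (the interior point lies inside, and the defining ray condition is exactly the support-function description). Then I would define $\psi$ directly: set $\psi(y):=\mathsf{F}(\mathbf{0},y)$, which is a Minkowski norm on $\mathbb{R}^n$ since $\mathbf{0}\in\Omega$. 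From \eqref{funkdef} with $x=\mathbf{0}$ we get $y/\psi(y)\in\partial\Omega$, i.e. $\{\psi=1\}=\partial\Omega$, establishing \eqref{phiF}$_1$. For \eqref{phiF}$_2$, I would verify that $\mathsf{F}(x,y)$ solves the equation $\Phi(x,y)=\psi(y+x\Phi(x,y))$: indeed \eqref{funkdef} says $\psi(x + y/\mathsf{F}(x,y))=1$, and multiplying the argument by $\mathsf{F}(x,y)>0$ and using $1$-homogeneity of $\psi$ gives $\psi(x\mathsf{F}(x,y)+y)=\mathsf{F}(x,y)$, which is exactly \eqref{phiF}$_2$. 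The characterization $\Omega=\{\psi<1\}$ then follows because a point $x$ with $\psi(x)<1$ lies strictly inside the convex body bounded by $\{\psi=1\}=\partial\Omega$.

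\medskip

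\textbf{Sufficiency.} Conversely, suppose there is a Minkowski norm $\psi$ with $\partial\Omega=\psi^{-1}(1)$ and $\mathsf{F}(x,y)=\psi(y+x\mathsf{F}(x,y))$. By Theorem \ref{lemsolexi} applied to the regular function $\psi$, there is a unique solution $\Phi(x,y)$ to $\Phi(x,y)=\psi(y+x\Phi(x,y))$ on $\Omega\times\mathbb{R}^n$, it is a positive smooth regular function in $y$ on $\Omega\times\Rno$, and $\{y:\Phi(x,y)<1\}=\Omega-x$; moreover $\Omega=\{\psi<1\}$ is a bounded strongly convex domain by Proposition \ref{quasitostrictlyconvex}. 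Since $\mathsf{F}$ is assumed to satisfy the same equation, uniqueness gives $\mathsf{F}=\Phi$, so $\mathsf{F}$ is a well-defined Finsler metric (one should check the Hessian positivity, which follows from that of $\psi$ via the implicit function theorem and the positivity $1+x^k\Phi_{y^k}>0$ from Corollary \ref{basicproper}). Finally, to see $(\Omega,\mathsf{F})$ is a Funk metric space, rewrite $\mathsf{F}(x,y)=\psi(y+x\mathsf{F}(x,y))$ using Observation \ref{K=-1remarkindicatrix} as $\psi(x+y/\mathsf{F}(x,y))=1$, i.e. $x+y/\mathsf{F}(x,y)\in\psi^{-1}(1)=\partial\Omega$, which is precisely \eqref{funkdef}.

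\medskip

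\textbf{Main obstacle.} The delicate point is the necessity direction, specifically proving rigorously that the geometric Funk condition \eqref{funkdef} forces $\Omega$ to be \emph{bounded and strongly convex} and that the naturally defined $\psi(y):=\mathsf{F}(\mathbf{0},y)$ is a genuine Minkowski norm with indicatrix exactly $\partial\Omega$ (as opposed to merely a local description). The boundedness requires ruling out the possibility that some ray from an interior point never meets $\partial\Omega$, and the strong convexity of $\partial\Omega$ must be extracted from the strong convexity of the indicatrices $S_x\Omega$; I expect this to hinge on the observation that \eqref{funkdef} identifies $\partial\Omega - x$ with the indicatrix $\{y:\mathsf{F}(x,y)=1\}$ for \emph{every} $x\in\Omega$, so $\partial\Omega$ is simultaneously a translate of a family of strongly convex hypersurfaces, which pins down its regularity. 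The sufficiency direction is comparatively routine once Theorem \ref{lemsolexi} is invoked, the only care needed being the verification that $\Phi$ is strongly (not merely weakly) convex in $y$ when $\psi$ is.
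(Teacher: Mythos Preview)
The paper does not prove this theorem; it is stated with attribution to \cite{Li,Sh1} and no proof is given in the text. There is therefore no paper proof to compare your proposal against.

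On its own merits, your outline is sound and follows the natural route. A few remarks. In the necessity direction, note that Definition~\ref{funkdef} already assumes $\Omega$ is bounded, so you need not derive boundedness; and once you observe that for each $x\in\Omega$ the map $y\mapsto x+y/\mathsf{F}(x,y)$ sends the indicatrix $S_x\Omega$ onto a closed strongly convex hypersurface contained in $\partial\Omega$, a Jordan--Brouwer argument gives $\partial\Omega=x+S_x\Omega$ directly, yielding both strong convexity of $\partial\Omega$ and $\Omega=\{\psi<1\}$ in one stroke. In the sufficiency direction, your main residual gap is the one you flag: Theorem~\ref{lemsolexi} only asserts that $\Phi(x,\cdot)$ is \emph{regular} (strictly convex indicatrix), whereas a Finsler metric requires the Hessian $[\Phi^2]_{y^iy^j}$ to be positive definite. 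The positivity $1+x^k\Phi_{y^k}>0$ from Corollary~\ref{basicproper} is an ingredient but not the whole story; the actual computation (differentiating $\Phi=\psi(y+x\Phi)$ twice and controlling the resulting quadratic form via the Cauchy--Schwarz inequality for the positive-definite form $[\psi^2]_{\xi^i\xi^j}$) is of the same flavor as the calculation carried out in the appendix proof of Lemma~\ref{keylemmak=0}, and you should either perform it or cite the relevant step from \cite{Sh1}.
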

Additionally, we have the following completeness result (cf. Krist\'aly--Li--Zhao \cite[Proposition 6.1]{KLZ}).
\begin{proposition}[\cite{KLZ}] \label{funkmetriccompletness}
Every Funk metric space  is forward complete but not backward complete.
\end{proposition}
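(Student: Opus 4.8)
The plan is to integrate the geodesic equation of a Funk metric in closed form and then read off both completeness assertions from the explicit solution, the point being that $\Omega$ is a bounded convex body. First I would identify the projective factor: by the local characterization \eqref{Funk} a Funk metric satisfies $\mathsf F_{x^k}=\mathsf F\,\mathsf F_{y^k}$, so contracting with $y^k$ and invoking Euler's identity \eqref{Eulerhter} gives $y^k\mathsf F_{x^k}=\mathsf F\,y^k\mathsf F_{y^k}=\mathsf F^2$, whence $P=\tfrac12 y^k\mathsf F_{x^k}/\mathsf F=\tfrac12\mathsf F$ and $\mathsf F_{x^k}=[PF]_{y^k}$; thus $\mathsf F$ is projectively flat (Lemma \ref{lemBerwald}) and every constant-speed geodesic obeys $\gamma''+2P(\gamma,\gamma')\gamma'=\gamma''+\mathsf F(\gamma,\gamma')\gamma'=0$. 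Since $\mathsf F(\gamma,\gamma')$ is constant along a geodesic, say $\equiv c>0$, this is the linear ODE $\gamma''+c\gamma'=0$, solved by $\gamma(t)=x+\tfrac1c(1-e^{-ct})v$ with $\gamma(0)=x$, $\gamma'(0)=v$ and $\mathsf F(x,v)=c$. One verifies directly from Theorem \ref{funkmetrisufficnecessy} that this curve truly has $\mathsf F$-speed $c$: the point $\gamma(t)+v/(ce^{ct})=x+v/c$ lies on $\partial\Omega$ for all $t$, which by \eqref{phiF} forces $\mathsf F(\gamma(t),v)=ce^{ct}$, hence $\mathsf F(\gamma(t),\gamma'(t))=e^{-ct}\mathsf F(\gamma(t),v)=c$; by ODE uniqueness it is the geodesic.

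For forward completeness, Theorem \ref{funkmetrisufficnecessy} gives $x+v/c\in\partial\Omega$, and since $\tfrac1c(1-e^{-ct})\in[0,\tfrac1c)$ for $t\in[0,+\infty)$, the geodesic runs along the half-open segment $[x,x+v/c)$, which lies entirely in the open bounded convex set $\Omega$ (one endpoint interior, the other on $\partial\Omega$). Thus $\gamma$ stays in $\Omega$ for every finite $t\ge 0$, so the standard continuation argument shows it is defined on all of $[0,+\infty)$; as every unit tangent vector generates such a geodesic, $(\Omega,\mathsf F)$ is forward complete.

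For backward incompleteness, note that for $t<0$ the scalar $\tfrac1c(1-e^{-ct})$ is negative and tends to $-\infty$ as $t\to-\infty$, so $\gamma(t)$ escapes every bounded region; since $\Omega$ is bounded there is a finite $t_\ast<0$ with $\gamma(t_\ast)\in\partial\Omega$, so $\gamma$ admits no extension to $(-\infty,0]$ inside $\Omega$, i.e. $(\Omega,\mathsf F)$ is not backward complete. Alternatively one may invoke the backward-completeness criterion of Proposition \ref{completenessofprojeflat}: solving $m=\psi((m-1)x)$ and using $\psi(x)<1$ yields $\mathsf F(x,-x)=\psi(-x)/(1+\psi(-x))$, and running the minimizing geodesic from $x$ through $\mathbf 0$ (which exists by the forward completeness just proved and is a straight segment) gives $d_{\mathsf F}(x,\mathbf 0)=\ln\bigl(1+\psi(-x)\bigr)$, which stays bounded as $x\to\partial\Omega$ precisely because $\Omega$ is bounded.

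The main obstacle is the verification in the first paragraph that the explicit curve is genuinely the geodesic — equivalently, that its $\mathsf F$-speed equals the posited constant $c$. This is not automatic; it rests on the compatibility identity extracted from the defining relation \eqref{phiF} of a Funk metric (namely that the ray from $\gamma(t)$ in the direction $v$ always meets $\partial\Omega$ at the fixed point $x+v/c$). Everything past that point is routine bookkeeping with the convexity and boundedness of $\Omega$.
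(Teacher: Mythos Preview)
Your proof is correct. The paper does not supply its own argument for this proposition, citing it instead from \cite{KLZ}, so there is no in-text proof to compare against. Your approach---identifying $P=\tfrac12\mathsf F$ from the defining relation \eqref{Funk}, reducing the geodesic equation to the linear ODE $\gamma''+c\gamma'=0$ (using that constant-speed geodesics preserve $\mathsf F(\gamma,\gamma')$), and reading off both completeness claims from the closed form $\gamma(t)=x+c^{-1}(1-e^{-ct})v$ together with the boundedness and convexity of $\Omega$---is clean and self-contained. The one step requiring care, namely confirming that the candidate curve really has constant $\mathsf F$-speed $c$, is handled correctly: the identity $\gamma(t)+v/(ce^{ct})=x+v/c\in\partial\Omega$ uses the Funk defining relation, and the strong convexity of $\Omega$ from Theorem~\ref{funkmetrisufficnecessy} ensures the forward ray from an interior point meets $\partial\Omega$ exactly once, pinning down $\mathsf F(\gamma(t),v)$. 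Your alternative backward-incompleteness argument via $d_{\mathsf F}(x,\mathbf 0)=\ln(1+\psi(-x))$ and Proposition~\ref{completenessofprojeflat} is also sound.
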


In view of Theorem \ref{lemsolexi} and  Theorem \ref{funkmetrisufficnecessy}, we introduce the following definition.
\begin{definition} Let  $\varphi$ be a weak Minkowski norm on $\mathbb{R}^n$ and set $\Omega:=\{\varphi<1\}$.  The (unique) solution $\Phi(x,y)$ to the following equation
\[
\Phi(x,y)=\varphi{\left(y+x\Phi(x,y)\right)},\quad \forall (x,y)\in \Omega\times \mathbb{R}^n,
\]
is called a {\it weak Funk metric} on $\Omega$.
\end{definition}

\begin{remark}\label{weakFuniswekmonk} Theorem~\ref{lemsolexi} consequently implies that the weak Funk metric $\Phi$ exists uniquely on $\Omega$ and has the property that $\Phi(x, \cdot)$ is a weak Minkowski norm on $T_x \Omega \cong \mathbb{R}^n$ for every $x \in \Omega$.
\end{remark}

\begin{definition}\label{Hildef}
A Finsler metric $\mathsf{H}$ on $\Omega$ is called a {\it Hilbert metric} if there exists a Funk metric $\mathsf{F}$ on $\Omega$ such that
\begin{equation}\label{Hilbertmetric}
\mathsf{H}(x,y)=\frac12\left\{ \mathsf{F}(x,y)+\mathsf{F}(x,-y)   \right\}.
\end{equation}
\end{definition}

From above, equation \eqref{phiF}$_2$ plays an key role in the investigation of  Funk metrics and Hilbert metrics. This is the reason why we study equation \eqref{pdeodefque} in Section \ref{convprofun}.
For more details on Hilbert and Funk geometries, see  Faifman \cite{Fai}, Funk \cite{Funk1} and Papadopoulos--Troyanov \cite{PT}.

We conclude the subsection with  the following result.
\begin{lemma}\label{Regionstrconvex}
Let $(\Omega,F)$ be a forward complete  projectively flat Finsler manifold on a domain $\Omega \subset \mathbb{R}^n$.
Suppose that  a positively $1$-homogeneous function $\varphi:\mathbb{R}^n\rightarrow \mathbb{R}$ satisfies
\[
0<\varphi|_{\Rno}\in  C^\infty(\Rno)\quad \text{ and } \quad \Omega =\{ x \in \mathbb{R}^n \,:\, \varphi(x) < 1 \}.
\]
 Then,
 $\varphi$ is a weak Minkowski norm  and
 $\Omega$ is a bounded strictly convex domain containing the origin with smooth boundary $\partial\Omega=  \varphi^{-1}( 1 )$.
\end{lemma}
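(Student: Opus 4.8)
The plan is to show that $\varphi$ is regular (in the sense of Definition~\ref{regualrdef}/(3)), since once this is known, $\varphi$ is a weak Minkowski norm by definition, and Proposition~\ref{quasitostrictlyconvex} then gives that the indicatrix $\varphi^{-1}(1)$ is strictly convex, whence $\Omega = \{\varphi < 1\}$ is a strictly convex domain; smoothness of $\partial\Omega = \varphi^{-1}(1)$ follows from $\varphi|_{\Rno}\in C^\infty(\Rno)$ together with the fact that $\nabla\varphi$ is nonvanishing on $\Rno$ (Euler's identity \eqref{Eulerhter} gives $y^i\varphi_{y^i}=\varphi>0$, so $d\varphi\neq 0$ on $\varphi^{-1}(1)$, making $1$ a regular value). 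Boundedness of $\Omega$ and the containment $\mathbf{0}\in\Omega$ are immediate once $\varphi$ is a positive $1$-homogeneous function: $\varphi(\mathbf{0})=0<1$, and on the Euclidean unit sphere $\varphi$ attains a positive minimum $m>0$ by compactness, so $\varphi(x)\geq m|x|$ forces $\Omega\subset\mathbb{B}^n_{1/m}(\mathbf{0})$.

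So the real content is the \emph{regularity} (strict subadditivity) of $\varphi$, and this is where the hypothesis of forward completeness enters. First I would use Proposition~\ref{completenessofprojeflat}: since $\Omega$ is bounded and $(\Omega,F)$ is forward complete and projectively flat, $d_F(\mathbf{0},x)\to+\infty$ as $d_{\E}(x,\partial\Omega)\to 0$. Next, invoke Theorem~\ref{lemgeodesicK} and Lemma~\ref{lemfortocom}: for each $y\in S_{\mathbf{0}}\Omega$ the geodesic $\gamma_y(t)=f(t)y$ from the origin is a straight ray with $f'(t)>0$, and forward completeness together with boundedness of $\Omega$ forces $\gamma_y(t)\to\partial\Omega$, i.e. $f(t)y$ exits $\Omega$ as $t\to\infty$, so $\lim_{t\to\infty} f(t)y$ is the unique boundary point of $\Omega$ on the ray $\mathbb{R}_{>0}y$. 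In particular each ray from $\mathbf{0}$ meets $\partial\Omega = \varphi^{-1}(1)$ in exactly one point; since $\varphi$ is $1$-homogeneous and positive, this already shows $\Omega$ is star-shaped about $\mathbf{0}$ and that $\partial\Omega$ is precisely $\varphi^{-1}(1)$ — but star-shapedness alone is far weaker than convexity, so more is needed.

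The key geometric input is that a straight segment is the \emph{unique} minimizing curve between any two of its points (because $F$ is projectively flat, so geodesics are straight lines, and a Finsler metric's minimizers are geodesics). I would argue: given two points $p,q\in\partial\Omega$, consider the chord $[p,q]$; for an interior point $z$ of this chord, the ray from $\mathbf{0}$ through $z$ hits $\partial\Omega$ at $\mathfrak{b}(z)$ with $\varphi(\mathfrak{b}(z))=1$, and one must show $\varphi(z)<1$, i.e. $z\in\Omega$. Suppose not — then some chord of $\Omega$ exits $\Omega$. One then derives a contradiction by tracking geodesic rays from $\mathbf{0}$: along the family of chords parallel to $[p,q]$ sweeping toward the boundary, the forward-completeness estimate $d_F(\mathbf{0},\cdot)\to\infty$ near $\partial\Omega$ combined with the fact that a geodesic ray from $\mathbf{0}$ has \emph{finite} $d_F$-distance to any interior point on it yields an inconsistency; alternatively, and more cleanly, I would use Theorem~\ref{lemsolexi} in reverse: the projective factor $P$ of $F$ satisfies Berwald's equations \eqref{Berwald2}, and the boundary behavior of $F$ (blow-up of $d_F(\mathbf{0},\cdot)$) translates into $\varphi=P(\mathbf{0},\cdot)$ being subadditive. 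Concretely, strict convexity of $\Omega$ is equivalent to strict convexity of $\varphi^{-1}(1)$ as a hypersurface, which by Proposition~\ref{quasitostrictlyconvex} is equivalent to regularity of $\varphi$; and regularity fails only if $\varphi^{-1}(1)$ contains a line segment, which would mean two distinct rays from some interior point run "parallel into the boundary," contradicting the fact (from forward completeness and the straight-line nature of geodesics) that every maximal geodesic ray is a chord meeting $\partial\Omega$ transversally in a single point with the distance blowing up. The main obstacle I anticipate is making this last contradiction rigorous: one must rule out the degenerate possibility that $\partial\Omega$ contains a flat piece, and the cleanest route is probably to show directly that $\varphi$ is convex by using that $\Omega=\{\varphi<1\}$ is the union of the images of all maximal $F$-geodesic rays from $\mathbf{0}$, each of which is a straight segment, and invoking a convexity criterion for sublevel sets of $1$-homogeneous functions that are "filled" by straight geodesics of a complete projective metric — this is precisely the kind of argument that Theorem~\ref{lemgeodesicK}'s ODE \eqref{geodesicK} is designed to support, since it pins down $f$ and hence the reparametrization relating the affine and $F$-arclength parameters, and one checks convexity along each $2$-plane section.
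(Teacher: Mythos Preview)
You have the decisive geometric idea in your proposal---``a straight segment is the unique minimizing curve between any two of its points''---but you bury it and then run the argument in the wrong direction. The paper's proof is essentially two lines and goes the opposite way from yours: first, positive $1$-homogeneity together with $\varphi|_{\Rno}>0$ immediately gives $\varphi(\mathbf{0})=0$, $\lim_{|y|\to\infty}\varphi(y)=+\infty$, hence $\mathbf{0}\in\Omega$, $\Omega$ bounded, and $\partial\Omega=\varphi^{-1}(1)$ smooth (you have this part). Second, the paper asserts that the \emph{strict convexity of $\Omega$} follows directly from forward completeness plus projective flatness, and only \emph{then} applies Proposition~\ref{quasitostrictlyconvex} to conclude that $\varphi$ is regular. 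You attempt to prove regularity of $\varphi$ first and deduce strict convexity afterward, which inverts the natural flow and produces the gap you yourself flag.

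The clean argument you are circling is simply: for $p,q\in\Omega$, forward completeness (Hopf--Rinow) furnishes a minimizing geodesic in $\Omega$ from $p$ to $q$; projective flatness forces its image to be a straight segment, necessarily $[p,q]$; hence $[p,q]\subset\Omega$. None of Proposition~\ref{completenessofprojeflat}, Theorem~\ref{lemgeodesicK}, Lemma~\ref{lemfortocom}, or Theorem~\ref{lemsolexi} is needed---that machinery is built for the constant-curvature classification later in the paper, not for this lemma. More seriously, your proposed fallback of using ``Theorem~\ref{lemsolexi} in reverse'' via the identification $\varphi=P(\mathbf{0},\cdot)$ is not licensed by the hypotheses: the lemma says nothing relating $\varphi$ to the projective factor; the only link between $\varphi$ and the Finsler structure is that $\{\varphi<1\}$ coincides with the domain $\Omega$. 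Drop the analytic attack on subadditivity, argue convexity of $\Omega$ geometrically as above, and then read off regularity of $\varphi$ from Proposition~\ref{quasitostrictlyconvex}.
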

\begin{proof}
By the positive 1-homogeneity of $\varphi$,
we have   $\varphi(\mathbf{0})=0$ and $\lim_{|y|\rightarrow +\infty}\varphi(y)=+\infty$. Thus, $\Omega$ is a bounded domain with smooth boundary
$\partial\Omega=  \varphi^{-1}( 1 )$.
The strict convexity of $\Omega$ in $\mathbb{R}^n$ follows directly by the forward completeness and the projective flatness of $(\Omega,F)$. Now the regularity of $\varphi$ is derived from
 Proposition \ref{quasitostrictlyconvex} immediately.
\end{proof}

It is natural to ask whether $\varphi$ is a Minkowski norm in Lemma \ref{Regionstrconvex}. Unfortunately, this may fail; see \cite[Exercise 1.2.7]{BCS} for an example. In fact,  $\varphi$ is a Minkowski norm if and only if
$\partial\Omega$ is strongly convex, i.e., its  Gaussian curvature is   positive everywhere (cf.~\cite[p.\,25]{PT}).
%On the other hand, Lemma \ref{quasitostrictlyconvex} implies that  $\varphi$ is a weak Minkowski norm. {\color{blue}In view of Theorem \ref{funkmetrisufficnecessy}, we introduce the following definition (weak Funk metric).}

\vskip 10mm
\section{Projectively flat Finsler manifolds with $\mathbf{K}=0$} \label{sectionK=0}
%This section is devoted to projectively flat manifolds with $\mathbf{K}=0$, which is two-fold.
%First, we study their global structure and prove Theorem \ref{thmK=0globalintro}. Second, we investigate the existence of of
%projectively flat Finsler metrics with $\mathbf{K}=0$ on a given domain.

This section addresses projectively flat Finsler manifolds of zero flag curvature. Our treatment has two main goals: first, to study their global structure and prove Theorem~\ref{thmK=0globalintro}; second, to investigate the existence of such metrics on a given domain.

%In the rest of the present paper, we always assume that $\Omega$ is a domain (i.e., simply connected open subset) in $\mathbb{R}^n$ containing the origin.

\subsection{Global characterization of manifolds} \label{subsK=0_1}
This subsection begins by recalling a local characterization theorem due to Shen~\cite[Theorem 1.3]{Sh1} and Li~\cite[Theorem 1.1]{Li} for the sufficiency and necessity, respectively.
\begin{theorem}[\cite{Li,Sh1}] \label{thmK=0}
Let $F=F(x,y)$ be a Finsler metric on a neighborhood $\mathcal {U}$ of the origin $\mathbf{0}\in \mathbb{R}^n$. Then
 $F$ is   projectively flat  with $\mathbf{K}=0$ if and only if there exist  a Minkowski norm $\psi = \psi(y)$ and
a positively $1$-homogeneous function $\phi=\phi(y)$ on $\mathbb{R}^n$, with  $\phi|_{\Rno}\in C^\infty(\Rno)$, such that
\begin{equation}\label{thmK=0Eq}
F = \psi ( y + x P) \left\{ 1 +  P_{y^k} x^k \right\},
\end{equation}
where the projective factor $P = P(x,y)$  satisfies the following equation
\begin{equation} \label{thmK=0P}
P = \phi (y + x P).
\end{equation}
In particular, both $F$ and $P$ are locally uniquely determined near the origin by their initial data:
\begin{equation*}%\label{thmK=0psiphi}
\psi(y) = F(\mathbf{0},y), \qquad     \phi(y) = P(\mathbf{0},y).
\end{equation*}
\end{theorem}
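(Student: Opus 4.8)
The plan is to prove both implications through Berwald's PDE characterization (Lemma~\ref{lemBerwald}), using the key simplification that when $\mathbf{K}\equiv 0$ his system degenerates to
\[
F_{x^k}=[PF]_{y^k},\qquad P_{x^k}=PP_{y^k},
\]
so that $P$ satisfies the same first-order equation as a Funk metric (cf.\ \eqref{Funk}) while $F$, once $P$ is known, obeys a \emph{linear} transport equation. Both will be integrated along the characteristic lines $\sigma_{x,y}\colon s\mapsto\bigl(sx,\ y+(1-s)xP(x,y)\bigr)$, $s\in[0,1]$, which run in $T\mathcal{U}\setminus\{0\}$ from $\bigl(\mathbf 0,\,y+xP(x,y)\bigr)$ to $(x,y)$; the point is that $\xi:=y+xP$ is constant along $\sigma_{x,y}$.

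\textbf{Necessity.} Suppose $F$ is a projectively flat Finsler metric on $\mathcal{U}$ with $\mathbf{K}\equiv 0$, and set $\psi(y):=F(\mathbf 0,y)$ (a Minkowski norm) and $\phi(y):=P(\mathbf 0,y)$ (positively $1$-homogeneous, with $\phi|_{\Rno}\in C^\infty(\Rno)$ since $P$ is smooth on $T\mathcal{U}\setminus\{0\}$). Fix $(x,y)$ with $y\neq\mathbf 0$ and $|x|$ small enough that $\sigma_{x,y}$ stays in $T\mathcal{U}\setminus\{0\}$, write $p:=P(x,y)$. Using $P_{x^k}=PP_{y^k}$, the function $g(s):=P(\sigma_{x,y}(s))$ satisfies $g'(s)=x^kP_{y^k}(\sigma_{x,y}(s))\,(g(s)-p)$ with $g(1)=p$, so $g\equiv p$ by ODE uniqueness; evaluating at $s=0$ yields $P(x,y)=\phi(y+xP(x,y))$, i.e.\ \eqref{thmK=0P} (the case $y=\mathbf 0$ being trivial). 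Differentiating \eqref{thmK=0P} in $y^k$ gives $P_{y^k}=\phi_{y^k}(\xi)/(1-x^j\phi_{y^j}(\xi))$, hence $1+x^kP_{y^k}(x,y)=1/(1-x^j\phi_{y^j}(\xi))>0$. Since $P\equiv p$ along $\sigma_{x,y}$, the value of $\xi$ there is constant; setting $A:=x^k\phi_{y^k}(\xi)$ (so $1-sA>0$ on $[0,1]$) and $G(s):=F(\sigma_{x,y}(s))$, the relation $F_{x^k}=P_{y^k}F+PF_{y^k}$ makes the $PF_{y^k}$-terms cancel and
\[
G'(s)=x^kP_{y^k}(\sigma_{x,y}(s))\,G(s)=\frac{A}{1-sA}\,G(s),\qquad G(0)=\psi(\xi);
\]
integrating gives $F(x,y)=\psi(\xi)/(1-A)=\psi(y+xP)\,\{1+x^kP_{y^k}\}$, which is \eqref{thmK=0Eq}. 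The displayed initial data hold by construction.

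\textbf{Sufficiency.} Conversely, given a Minkowski norm $\psi$ and a positively $1$-homogeneous $\phi$ with $\phi|_{\Rno}\in C^\infty(\Rno)$, Lemma~\ref{Sh1Lemma5.1} yields near $\mathbf 0$ a unique solution $P$ of \eqref{thmK=0P}, positively $1$-homogeneous in $y$; define $F$ by \eqref{thmK=0Eq}. Differentiating \eqref{thmK=0P} gives $P_{x^k}=P\phi_{y^k}(\xi)/(1-x^j\phi_{y^j}(\xi))$ and $P_{y^k}=\phi_{y^k}(\xi)/(1-x^j\phi_{y^j}(\xi))$, hence $P_{x^k}=PP_{y^k}$ at once; substituting $1+x^kP_{y^k}=1/(1-x^j\phi_{y^j}(\xi))$ into \eqref{thmK=0Eq} and differentiating verifies $F_{x^k}=[PF]_{y^k}$ by a longer but routine computation. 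By Lemma~\ref{lemBerwald}, $F$ is projectively flat with projective factor $P$ and flag curvature $\mathbf{K}$ determined by $\tfrac{1}{3F}[\mathbf{K}F^3]_{y^k}=PP_{y^k}-P_{x^k}=0$; as $\mathbf{K}F^3$ is positively $3$-homogeneous in $y$ and independent of $y$, it vanishes identically, so $\mathbf{K}\equiv 0$. Finally, for $|x|$ small $F$ in \eqref{thmK=0Eq} is a $C^\infty$ function on $T\mathcal{U}\setminus\{0\}$ that is a small perturbation of $\psi$, whose fundamental tensor is positive-definite; hence $F$ is a genuine Finsler metric on a possibly smaller neighborhood of $\mathbf 0$, and uniqueness of $F$ and $P$ from the data follows from the uniqueness clause of Lemma~\ref{Sh1Lemma5.1}.

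\textbf{Main difficulty.} The decisive step is recognizing the characteristics $\sigma_{x,y}$ along which $\xi=y+xP$ stays constant: this is what makes both Berwald equations integrable in closed form and produces the exact factor $1+x^kP_{y^k}$. The genuinely delicate point is the last step of the converse — confirming that the explicit $F$ of \eqref{thmK=0Eq} is positive-definite rather than merely a formal candidate — which is precisely why the statement is \emph{local}: positive-definiteness is only guaranteed near $\mathbf 0$ by perturbation, and extending the picture to large $x$ is exactly the global problem addressed in the remainder of the paper.
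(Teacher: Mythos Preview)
The paper does not supply its own proof of this theorem: it is quoted as a known result, with sufficiency attributed to Shen~\cite[Theorem~1.3]{Sh1} and necessity to Li~\cite[Theorem~1.1]{Li}. So there is no in-paper argument to compare against, and your proposal stands or falls on its own merits.

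Your proof is correct. The necessity argument via the characteristic $\sigma_{x,y}(s)=(sx,\,y+(1-s)xP(x,y))$ is clean: once you observe that $g(s)=P(\sigma_{x,y}(s))$ satisfies the linear ODE $g'=x^kP_{y^k}\cdot(g-p)$ with $g(1)=p$, the identity $P=\phi(y+xP)$ drops out, and the constancy of $\xi=y+xP$ along the curve then reduces the $F$-equation to $G'=\tfrac{A}{1-sA}G$, integrating to \eqref{thmK=0Eq}. For the converse, the ``routine computation'' of $F_{x^k}=[PF]_{y^k}$ does go through---the key identity that makes it short is $\xi^j_{x^k}=P\,\xi^j_{y^k}$, which follows from $P_{x^k}=PP_{y^k}$; you might state it explicitly. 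Your appeal to Lemma~\ref{lemBerwald} with $\mathbf{K}=0$ then closes the loop.

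One small logical point: the theorem hypothesises that $F$ is already a Finsler metric on $\mathcal{U}$, so in the sufficiency direction positive-definiteness is part of the assumption rather than something to be verified. Your perturbation remark is correct but addresses the slightly different (and stronger) claim that the formula \eqref{thmK=0Eq} \emph{produces} a Finsler metric from arbitrary data $(\psi,\phi)$---which, as you rightly note, is only true locally and is precisely the global existence question taken up in Section~\ref{SecglobalexK=0}.
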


\begin{remark}
By Lemma \ref{Sh1Lemma5.1},  $P$ and hence $F$ are unique on a sufficiently small ball $\mathbb{B}^n_\delta(\mathbf{0}) \subset \mathcal {U}$.
However, whether this uniqueness extends to the entire $\mathcal{U}$ remains an open problem.
\end{remark}

On the one hand, Theorem \ref{thmK=0} provides a local expression for projectively flat Finsler metrics with $\mathbf{K}=0$.
On the other hand, this local formulation suggests an abundance of such metrics on a small domain. However, by imposing the additional requirement of forward completeness, we discover that there are only two fundamental types.
\begin{theorem}\label{thmK=0global}
Let $(\Omega,F)$ be an $n$-dimensional forward complete  projectively flat Finsler manifold with $\mathbf{K}=0$, where $\Omega\subset \mathbb{R}^n$ is a domain containing the origin.  Define $\psi(y): = F(\mathbf{0},y)$ and  $\phi(y) := P(\mathbf{0},y)$.
Then the following hold:
\begin{enumerate}[{\rm (i)}]
\item\label{K=0Casei} if $(\Omega,F)$ is also backward complete, then $F=\psi$ is a Minkowski norm, $\phi=0$ and $\Omega=\mathbb{R}^n$;

 \smallskip

\item \label{K=0Caseii} if $(\Omega,F)$ is not backward complete, then $\phi$ is a weak Minkowski norm and $\Omega$ is a bounded strictly convex domain in $\mathbb{R}^n$  with
\begin{equation}\label{K=0region}
    \Omega = \{ x \in \mathbb{R}^n \,:\, \phi(x) < 1 \}.
\end{equation}

\end{enumerate}
\end{theorem}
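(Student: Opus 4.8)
<br>

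The plan is to use the geodesic structure from Theorem~\ref{lemgeodesicK} together with the ODE~\eqref{geodesicK}, which for $\mathbf{K}=0$ becomes $2f'''f' - 3(f'')^2 = 0$. Fixing $x=\mathbf{0}$ and $y\in S_{\mathbf 0}\Omega$, the geodesic is $\gamma_y(t)=f(t)y$ with $f(0)=0$, $f'(0)=1$, $f'>0$; writing $g:=(f')^{-1/2}$ one checks the equation linearizes to $g''=0$, so $g(t)=1-ct$ for some constant $c=c(y)$, hence
\[
f'(t)=\frac{1}{(1-ct)^2},\qquad f(t)=\frac{t}{1-ct}.
\]
From \eqref{geodesicBackf''1} one reads off $c(y)=P(\mathbf 0,y)=\phi(y)$, and since $P$ is positively $1$-homogeneous in $y$, $\phi$ is positively $1$-homogeneous. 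First I would establish this explicit form and note that along $\gamma_y$ the maximal forward interval is $[0,\infty)$ (forward completeness), which forces $1-\phi(y)t>0$ for all $t>0$, i.e.\ $\phi(y)\le 0$ is impossible only when... — more precisely, $f$ is defined and $f'>0$ on $[0,\infty)$ iff $\phi(y)\le 0$, and $f$ stays bounded (so $\gamma_y$ has finite Euclidean length) exactly when $\phi(y)<0$; when $\phi(y)\le 0$ we get $\lim_{t\to\infty}f(t)\in(0,\infty]$.

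Next I would split into the two cases. For \eqref{K=0Casei}, backward completeness means each $\gamma_y$ also extends to $(-\infty,0]$; but $f(t)=t/(1-\phi(y)t)$ has a pole at $t=1/\phi(y)$, which lies in $(-\infty,0)$ unless $\phi(y)=0$. Hence $\phi\equiv 0$, so $f(t)=t$, every geodesic through $\mathbf 0$ is the full straight line $\mathbb R y$, forcing $\Omega=\mathbb R^n$; then \eqref{thmK=0P} gives $P\equiv 0$ and \eqref{thmK=0Eq} gives $F(x,y)=\psi(y)$, a Minkowski norm. (One must note $F$ is genuinely Minkowskian, not merely equal to $\psi$ at the origin — this follows because $P\equiv 0$ makes \eqref{thmK=0Eq} reduce to $F=\psi(y)$ globally, using that the representation of Theorem~\ref{thmK=0} propagates along straight lines, or by invoking uniqueness in Theorem~\ref{lemsolexi}.)

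For \eqref{K=0Caseii}, since $(\Omega,F)$ is forward but not backward complete, the previous paragraph shows $\phi\not\equiv 0$; combined with $\phi(y)\le 0$ being incompatible with pole-freeness in the backward direction only for $\phi(y)=0$, I need the observation that forward completeness already forces $\phi\ge 0$ on all of $\Rno$: if $\phi(y_0)<0$ for some $y_0$, then $f(t)=t/(1-\phi(y_0)t)\to 1/(-\phi(y_0))<\infty$ as $t\to\infty$, so the forward geodesic $\gamma_{y_0}$ has finite Euclidean length and its image, a straight ray, must terminate inside $\Omega$ at a point from which the geodesic cannot be extended — contradicting forward completeness unless that endpoint is interior, which a straightforward argument (as in Lemma~\ref{lemfortocom} / Proposition~\ref{completenessofprojeflat}) rules out since the parameter $t$ stays bounded. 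Hence $\phi\ge 0$, and $\phi\not\equiv 0$. Then for each $y$ with $\phi(y)>0$, forward completeness gives $f$ defined on $[0,\infty)$ with $f(t)\uparrow 1/\phi(y)$, so the ray from $\mathbf 0$ in direction $y$ exits $\Omega$ exactly at $y/\phi(y)$; this says $\partial\Omega\cap\{ty:t>0\}=\{y/\phi(y)\}$, i.e.\ $\Omega=\{\phi<1\}$, giving \eqref{K=0region}, and boundedness follows since $\phi>0$ on $\Rno$ (any direction with $\phi=0$ would, by the same computation, give an unbounded geodesic, contradicting "not backward complete" via Proposition~\ref{completenessofprojeflat} once we know $\Omega$ is bounded — here I would instead argue: if $\phi(y_1)=0$ for some $y_1$ then $\gamma_{y_1}(t)=ty_1$ is a full line, so $ty_1\in\Omega$ for all $t$, but then by strict convexity of $\Omega$ — Lemma~\ref{Regionstrconvex} — and $\mathbf 0\in\Omega$, the set $\Omega$ contains a line, hence $\Omega=\mathbb R^n$; but then forward completeness plus $\Omega=\mathbb R^n$ would by Lemma~\ref{lemfortocom} make it also backward complete, contradiction). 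Finally, $\phi|_{\Rno}\in C^\infty(\Rno)$ from Theorem~\ref{thmK=0}, and since $\Omega=\{\phi<1\}$ is the strictly convex domain of the forward complete manifold, Lemma~\ref{Regionstrconvex} applied with $\varphi=\phi$ shows $\phi$ is a weak Minkowski norm and $\partial\Omega=\phi^{-1}(1)$ is smooth and strictly convex.

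The main obstacle is the case analysis establishing $\phi\ge 0$ and ruling out directions with $\phi(y)=0$ in part~\eqref{K=0Caseii}: one must carefully convert "the affine parameter $t$ remains bounded along a forward-complete geodesic" into a contradiction, and separately handle the borderline $\phi(y)=0$ behavior where the geodesic is a full straight line but of infinite length — the right tool is the dichotomy in Lemma~\ref{lemfortocom} (if $f(t)\to+\infty$ in every direction then $\Omega=\mathbb R^n$ and the manifold is complete) together with Proposition~\ref{completenessofprojeflat}. A secondary technical point is justifying that the local representation \eqref{thmK=0Eq}–\eqref{thmK=0P} of Theorem~\ref{thmK=0} holds globally on $\Omega$ (not just near $\mathbf 0$), which I would obtain by propagating along the straight-line geodesics using forward completeness and the uniqueness in Theorem~\ref{lemsolexi}.
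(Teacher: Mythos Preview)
Your strategy---linearize the geodesic ODE via $g=(f')^{-1/2}$, read off the constant from \eqref{geodesicBackf''1}, and split cases---is exactly the paper's. But there is a sign error that unravels Case~\eqref{K=0Caseii}. From \eqref{geodesicBackf''1} at $t=0$ one gets $f''(0)=-2P(\mathbf 0,y)=-2\phi(y)$, while your $f(t)=t/(1-ct)$ has $f''(0)=2c$; hence $c=-\phi(y)$, not $c=\phi(y)$. With the correct sign the geodesic reads $f(t)=t/(1+\phi(y)t)$, and forward completeness on $[0,\infty)$ forces $\phi(y)\ge 0$ (a negative value places the pole at $t=1/|\phi(y)|>0$), the opposite of your ``$\phi\le 0$''. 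Your Case~\eqref{K=0Caseii} paragraph then oscillates between the two incompatible conventions; and the subsidiary claim that ``the parameter $t$ stays bounded'' is simply false---it is $f(t)$, not $t$, that remains bounded when the image has finite Euclidean extent, so a bounded image in no way contradicts forward completeness.

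Beyond the sign, two steps need repair. To rule out $\phi(y_1)=0$ in Case~\eqref{K=0Caseii} you invoke strict convexity of $\Omega$ via Lemma~\ref{Regionstrconvex}, but that lemma already \emph{assumes} $0<\varphi|_{\Rno}$---this is circular. Nor does $\phi(y_1)=0$ by itself place the full line $\{ty_1:t\in\mathbb R\}$ in $\Omega$: forward completeness yields only $t\ge 0$, and Lemma~\ref{lemfortocom} requires $f\to+\infty$ in \emph{every} direction, which you have not shown. The paper closes this gap by the continuity dichotomy~\eqref{claimtherekindscon}: the linear and bounded solution types cannot coexist on the connected manifold, so once one geodesic fails to extend backward, \emph{every} geodesic from $\mathbf 0$ is of the bounded type, giving $\phi>0$ on $\Rno$; only then is Lemma~\ref{Regionstrconvex} invoked. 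In Case~\eqref{K=0Casei}, your appeal to Theorem~\ref{lemsolexi} to globalize $P\equiv 0$ is also misplaced (that theorem needs $\varphi|_{\Rno}>0$); the paper instead quotes Berwald's classical result that a complete projectively flat Finsler manifold with $\mathbf K=0$ is Minkowskian. A self-contained alternative is to run the same ODE argument at \emph{every} base point $x$ to obtain $P(x,\cdot)\equiv 0$, and then use $F_{x^k}=[PF]_{y^k}=0$ from \eqref{Berwald2} to conclude $F\equiv\psi$.
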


%{\color{red} Q: whether  there exists an example such that $\phi$ is weak Minkowski norm but not Minkowski one? }
%{\color{magenta}It has been answers by GTM 200, and it has been discussed in Section 2.2. Li, 2025/05/05.}

\begin{proof}
 Let $\gamma(t)$ be a geodesic starting from a point $x\in \Omega$ with initial velocity $y\neq \mathbf{0}$.
 It follows by Theorem \ref{lemgeodesicK} and $\mathbf{K}=0$ that $ \gamma(t) = f(t)y+ x$, $t\in (-\varepsilon,+\infty)$ with
\begin{equation}\label{K=0intial}
2 f'''(t) f'(t) -  3 [ f''(t)]^2 =0,\quad  f(0)=0, \quad f'(0)=1.
 \end{equation}
Solving   \eqref{K=0intial} yields  the explicit form of the geodesic:
\begin{equation*}%\label{geodesicformk=0}
 \text{either} \quad \gamma(t)=t {y} + {x}\quad   \text{or} \quad  \gamma(t)= \left( \frac{c_{{x},{y}} t}{c_{{x},{y}}+t}\right) {y} + {x},
\end{equation*}
where $c_{{x},{y}}>0$ is a constant depending continuously on   $ {x}$ and ${y}$, as required by forward completeness (i.e., $t\in [0,+\infty)$). Since the expression of $\gamma(t)=\gamma(t;{x},{y})$ depends continuously on ${x}$ and ${y}$,
\begin{equation}\label{claimtherekindscon}
\text{these two types of geodesics  cannot appear simultaneously on $(\Omega,F)$}.
\end{equation}
Thus,
owing to  the forward completeness and \eqref{geodesicBackf''1}, we have
\begin{enumerate}[{\quad\rm (a)}]
\item\label{conda} if $ \gamma(t)={y}t + {x}$, then Lemma \ref{lemfortocom} implies that $\Omega=\mathbb{R}^n$,  $\gamma(t)$ is defined on $(-\infty,+\infty)$ and
\begin{equation}\label{star1}
0=P (\gamma(t), \gamma'(t)) = P(t{y} + {x},{y});
\end{equation}

\item\label{condb} if $\gamma(t)={y} \left( \frac{c_{{x},{y}}t}{c_{{x},{y}}+t}\right) + {x}$, then the maximal domain of $\gamma(t)$ is $(-c_{{x},{y}},+\infty)$, and
\begin{equation}\label{star2}
\frac1{c_{{x},{y}}+t}=P \bigg(\left( \frac{c_{{x},{y}}t}{c_{ {x},{y}}+t}\right){y}+ {x}, \left( \frac{c_{{x},{y}} }{c_{ {x},{y}}+t}\right)^2{y}  \bigg).
\end{equation}
\end{enumerate}

\textbf{Case \eqref{K=0Casei}.}   Suppose $(\Omega,F)$ is backward complete. In view of \eqref{conda} and \eqref{condb}, the only possible geodesic is of the form
$\gamma(t)=ty+x$ for all $t\in (-\infty,+\infty)$, which implies $\Omega = \mathbb{R}^n$.
A classical result of Berwald~\cite{Be1} states that every complete projectively flat Finsler manifold of zero flag curvature is Minkowskian. Therefore,
$F(x,y)=F(\mathbf{0},y)=\psi(y)$.
Furthermore, \eqref{star1} gives $\phi({y})=P(\mathbf{0},{y})=0$.
%In this case, by substituting $f(t)$ back into \eqref{Backf''1}, we have $P (\gamma(t), \gamma'(t)) =0$.

\smallskip

\textbf{Case \eqref{K=0Caseii}.}   Suppose $(\Omega,F)$ is not backward complete.
 Due to \eqref{claimtherekindscon}, the only possible geodesic starting from ${x}:=\mathbf{0}$ with initial velocity ${y}\in S_{\mathbf{0}}\Omega$ is of the form
\begin{equation*}%\label{K=0case2ii}
\gamma(t)=\left( \frac{c_{{y}}t}{c_{{y}}+t} \right){y},
\end{equation*}
where $c_{{y}}:=c_{\mathbf{0},y}>0$ is a constant depending continuously on ${y}$.
Thus,
  \eqref{star2} yields
  \[
  \lim_{t\rightarrow +\infty}\phi(\gamma(t))=\phi(c_{{y}}{y})=P(\mathbf{0},c_{{y}}{y})= 1, \quad \forall {y}\in S_{\mathbf{0}}\Omega,
  \]
  which combined with the homogeneity implies $\phi|_{ \Rno}>0$ and
 \[
\phi(\gamma(t))=\phi\left(\left( \frac{c_{{y}}t}{c_{{y}}+t} \right){y}\right)= \frac{ t}{c_{{y}}+t} \phi(c_y{y})<  \phi(c_{{y}}{y})= 1, \quad \forall {y}\in S_{\mathbf{0}}\Omega.
 \]
Hence, $\Omega = \{ x \in \mathbb{R}^n \,:\, \phi(x) < 1 \}$. Now it follows from Lemma \ref{Regionstrconvex} that $\phi$ is a weak Minkowski norm and $\Omega$ is a bounded
strictly convex domain.
 \end{proof}

%\begin{remark}\label{typetwocannothappen} In this remark, we show \eqref{claimtherekindscon} in another way.

%If \eqref{claimtherekindscon} is not true, we may assume that there exists a sequence $({y_o}_n)\subset S_{{x_o}}\Omega$ such that ${y_o}_n\rightarrow {y_o}_\infty$ and
%\[
%\gamma_n(t):=t{y_o}_n + {x_o},\quad \gamma_\infty(t):=\left( \frac{c_{{x_o},{y_o}_\infty}t}{c_{{x_o},{y_o}_\infty}+t}\right){y_o}_\infty + {x_o}.
%\]
%Owing to \eqref{star1} and \eqref{star2}, we have
%\[
%0=P({x_o},{y_o}_n)\rightarrow P( {x_o},{y_o}_\infty)=\frac{1}{ {c}_{{x_o},{y_o}_\infty}}>0,
%\]
%which yields a contradiction.  Similarly, by considering
%\[
%\gamma_n(t):=\left( \frac{c_{{y_o}_n}t}{c_{{y_o}_n}+t}\right){y_o}_n + {x_o},\quad \gamma_\infty(t):=t{y_o}_\infty + {x_o},
%\]
%one can get a contradiction due to the continuity of $c_{{x_o},{y_o}}$ in ${y_o}\in S_{{x_o}}\Omega$.
%Therefore,   \eqref{claimtherekindscon} is true.

%\end{remark}

%\begin{remark}
%In Case \eqref{K=0Caseii},  if  $\gamma(t)$ is a geodesic, then $\gamma(t)/\lambda$ is not a geodesic for any positive constant $\lambda\neq 1$.  In fact,  suppose that
%\[
%\gamma_1(t)=\left( \frac{c_{{y_o}}t}{c_{{y_o}}+t} \right){y_o},\quad \gamma_2(t)=\left( \frac{c_{{y_o}}t}{c_{{y_o}}+t} \right)\frac{{y_o}}{\lambda}
%\]
%are  two (minimal) geodesics.
%Since they are in the same direction, the forward completeness means
%\[
%c_{{y_o}}{y_o}=\lim_{t\rightarrow \infty}\gamma_1(t)=\lim_{t\rightarrow \infty}\gamma_2(t)=\frac{c_{{y_o}}{y_o}}{\lambda},
%\]
%which means $\lambda=1$.
%\end{remark}

As an application of Theorem \ref{thmK=0global},  we obtain the unique structure of  projectively flat  manifolds with $\mathbf{K}=0$ in the Riemannian setting.
For a non-Riemannian example, see Example \ref{prlxeK=0nonR} below.

\begin{example}[Riemannian case] Let $(\Omega,F)$ be a forward complete projectively flat Finsler manifold with $\mathbf{K}=0$. If $F$ is Riemannian, then $F$ is  reversible,  and hence $(\Omega,F)$ is also backward complete.
It follows by Theorem \ref{thmK=0global}/\eqref{K=0Casei} that  $\Omega = \mathbb{R}^n$ and $F(x,y) = \psi(y)$ is a Minkowski norm induced by an inner product. Consequently,  by a suitable coordinate transformation, we have
$(\Omega ,F)= (\mathbb{R}^n,|\cdot|)$.
\end{example}

%\begin{remark} In view of Remark \ref{necsuffpsi=phi}, under the assumption $\mathbf{K}=0$, another sufficient and necessary condition for $\psi=\phi$ is that a geodesic $\gamma_{{y_o}}(t)$ with $\gamma_{{y_o}}(0)=0$ and $\gamma'_{{y_o}}(0)={y_o}\in S_{\mathbf{0}}\Omega$ has the form
%\begin{equation}\label{geodesciformK=0}
%\gamma_{{y_o}}(t)={y_o}\left( \frac{t}{1+t} \right).
%\end{equation}

%In fact, if \eqref{geodesciformK=0} holds, then  $c_{y_o}=1$. Owing to   \eqref{star2}, we obtain
%\begin{equation}\label{psi=phi1}
%P(\mathbf{0}, {y_o})= 1,\quad \forall {y_o}\in S_{\mathbf{0}}\Omega,
%\end{equation}
%which together with Remark \ref{necsuffpsi=phi} yields the sufficiency. For the necessity, $c_{y_o}=1$ follows from  \eqref{star2} and \eqref{psi=phi1}, which combined with \eqref{geodesicformk=0} furnishes \eqref{geodesciformK=0}.
%\end{remark}

\begin{corollary}\label{reverprojK=0}
Let $(\Omega,F)$ be as in Theorem \ref{thmK=0global}. Then the following hold:
\begin{enumerate}[\rm (i)]
\item\label{K=0reversinfty1} if $(\Omega,F)$ is also backward complete, then $\lambda_F(\Omega) < +\infty$ and  $P=0$;

\item \label{K=0reversinfty2} if $(\Omega,F)$ is not backward complete, then $\lambda_F(\Omega) = +\infty$ and  $P$ is the unique weak Funk metric on $\Omega$.
\end{enumerate}
\end{corollary}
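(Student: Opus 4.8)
\emph{Plan.} Claim \eqref{K=0reversinfty1} should be a direct read-off from Theorem \ref{thmK=0global}\eqref{K=0Casei}: there $F=\psi$ is a Minkowski norm, hence independent of $x$, so $P=\tfrac{1}{2F}y^kF_{x^k}\equiv0$; and $\lambda_F(\Omega)=\sup_{|y|=1}\psi(-y)/\psi(y)<+\infty$ since $\psi$ is continuous and positive on the compact Euclidean unit sphere. Nothing further is needed.

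For \eqref{K=0reversinfty2} the reversibility statement is again formal: by hypothesis $(\Omega,F)$ is forward complete but not backward complete, and since these two completeness notions coincide whenever $\lambda_F(\Omega)<+\infty$, we must have $\lambda_F(\Omega)=+\infty$. The substance is to identify $P$ with the weak Funk metric of $\Omega$. First I would invoke Theorem \ref{thmK=0global}\eqref{K=0Caseii} to know that $\phi:=P(\mathbf{0},\cdot)$ is a weak Minkowski norm with $\Omega=\{\phi<1\}$, so Theorem \ref{lemsolexi} supplies the unique weak Funk metric $\Phi$ on $\Omega$, namely the unique solution of $\Phi(x,y)=\phi(y+x\Phi(x,y))$ on $\Omega\times\mathbb{R}^n$. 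It then remains to show that $P$ satisfies the very same equation globally; uniqueness in Theorem \ref{lemsolexi} will then force $P=\Phi$.

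The plan for that global Funk equation is a one-parameter characteristic argument based on Berwald's identities. Since $F$ is projectively flat with $\mathbf{K}\equiv0$, Lemma \ref{lemBerwald} gives the pointwise identity $P_{x^k}=PP_{y^k}$ throughout $T\Omega\setminus\{0\}$ (the Funk-type PDE). Fix $(x,y)\in\Omega\times\Rno$ with $y$ not a scalar multiple of $x$, put $c:=P(x,y)$, and follow the affine segment $\sigma(s):=\bigl(sx,\ y+(1-s)cx\bigr)$, $s\in[0,1]$, whose $x$-component stays in the convex set $\Omega$ and whose $y$-component stays off $\mathbf{0}$ (a translate of $y$ by a multiple of $x$ vanishes only when $y\parallel x$). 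Setting $g(s):=P(\sigma(s))$, the $x$-variable of $\sigma$ moves with velocity $x$ and the $y$-variable with constant velocity $-cx$, so differentiating and substituting $P_{x^k}=PP_{y^k}$ gives
\[
g'(s)=\bigl(g(s)-c\bigr)\,x^kP_{y^k}(\sigma(s)).
\]
Since $g(1)=P(x,y)=c$, the function $g-c$ solves a linear homogeneous ODE vanishing at $s=1$, hence $g\equiv c$; evaluating at $s=0$ yields $P(x,y)=c=g(0)=P\bigl(\mathbf{0},\,y+xP(x,y)\bigr)=\phi\bigl(y+xP(x,y)\bigr)$. As the excluded pairs $\{(x,\lambda x)\}$ form a closed set of empty interior in $\Omega\times\Rno$, continuity of $P$ and $\phi$ propagates the identity to all of $\Omega\times\Rno$, and trivially to $y=\mathbf{0}$; thus $P=\Phi$.

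The step I expect to demand the most care is precisely this global upgrade of the Funk equation: keeping the characteristic segment inside $\Omega\times\Rno$ (handled by temporarily discarding the directions $y\parallel x$ and recovering them by density) and verifying that $g$ and the coefficient $x^kP_{y^k}(\sigma(s))$ are regular enough for the linear ODE uniqueness. Should the density passage feel unsatisfactory, an alternative is a connectedness argument: the set $\{x\in\Omega:P(x,\cdot)\equiv\Phi(x,\cdot)\}$ is nonempty and closed, and its openness follows by applying Theorem \ref{thmK=0} and Proposition \ref{transformforK=-1} at an arbitrary base point together with the local uniqueness of Lemma \ref{Sh1Lemma5.1}; but the characteristic computation is the shorter route.
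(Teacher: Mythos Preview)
Your proof is correct, but for part \eqref{K=0reversinfty2} you take a longer route than the paper. The paper simply observes that Theorem~\ref{thmK=0} already asserts the identity $P=\phi(y+xP)$ on the entire domain $\mathcal{U}=\Omega$ where $F$ is defined (this is the necessity direction, due to Li), and then invokes Theorem~\ref{lemsolexi} (via Remark~\ref{weakFuniswekmonk}) to conclude that the solution is unique on $\Omega=\{\phi<1\}$; hence $P$ is the weak Funk metric. Your characteristic argument along $\sigma(s)=(sx,\,y+(1-s)cx)$, driven by the Berwald identity $P_{x^k}=PP_{y^k}$ that falls out of Lemma~\ref{lemBerwald} when $\mathbf{K}=0$, is effectively an independent re-proof of that necessity direction for $P$. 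What you gain is a self-contained derivation that does not rely on Li's classification theorem as a black box; what you pay is the extra bookkeeping about keeping $\sigma(s)$ inside $\Omega\times\Rno$ and the density argument for the excluded directions $y\parallel x$. Your alternative connectedness strategy via Proposition~\ref{transformforK=-1} and Lemma~\ref{Sh1Lemma5.1} would also work, but as you note, the characteristic computation is shorter. For part~\eqref{K=0reversinfty1} the two proofs are essentially identical.
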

\begin{proof} \eqref{K=0reversinfty1}
Assume $(\Omega,F)$ is backward complete. By Theorem \ref{thmK=0global}/\eqref{K=0Casei}, $F=\psi$ is a Minkowski norm, and hence $\lambda_F(\Omega)=\sup_{y\in \mathbb{S}^{n-1}}\psi(-y)/\psi(y) < +\infty$. Moreover, $P=0$ follows by $\phi=0$ and \eqref{thmK=0P}.

\eqref{K=0reversinfty2} Suppose $(\Omega,F)$ is not backward complete. Since it is forward complete, $\lambda_F(\Omega)$ is infinite. It follows from Theorem \ref{thmK=0global}/\eqref{K=0Caseii}, \eqref{thmK=0P} and Remark \ref{weakFuniswekmonk} that  $P(x,y)$ is the unique weak Funk metric on $\Omega$.
\end{proof}

\begin{corollary}\label{existsK=0metric} Let $\Omega\subset \mathbb{R}^n$ be a domain containing the origin. Given a Minkowski norm $\psi$  and a positively $1$-homogeneous function $\phi$ on $\mathbb{R}^n$, there exists at most one Finsler metric $F$ on $\Omega$ such that $(\Omega,F)$ is a forward complete projectively flat Finsler manifold with $\mathbf{K}=0$ satisfying the initial conditions
\[
\psi(y)=F(\mathbf{0},y),\quad \phi(y)=P(\mathbf{0},y),\quad \forall  y\in T_{\mathbf{0}}\Omega.
\]
\end{corollary}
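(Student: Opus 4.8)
The plan is to argue by contradiction-free uniqueness: suppose $F_1$ and $F_2$ are two forward complete projectively flat Finsler metrics on $\Omega$ with $\mathbf K=0$ and with the same initial data $\psi(y)=F_i(\mathbf 0,y)$, $\phi(y)=P_i(\mathbf 0,y)$ (where $P_i$ is the projective factor of $F_i$), and show $F_1\equiv F_2$. I would split according to whether $\phi$ vanishes. If $\phi\equiv 0$, then by Theorem~\ref{thmK=0global}/\eqref{K=0Caseii} a \emph{non}-backward complete such manifold has $\phi$ equal to a weak Minkowski norm, hence positive on $\Rno$; so $\phi\equiv 0$ forces each $(\Omega,F_i)$ to be backward complete, and Theorem~\ref{thmK=0global}/\eqref{K=0Casei} gives $F_i=\psi$ and $\Omega=\mathbb R^n$, whence $F_1=F_2=\psi$. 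The content is the case $\phi\not\equiv 0$.

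In that case the same dichotomy forces each $(\Omega,F_i)$ to fail to be backward complete, so Theorem~\ref{thmK=0global}/\eqref{K=0Caseii} applies: $\phi$ is a weak Minkowski norm (in particular regular, and smooth and positive on $\Rno$) and $\Omega=\{x:\phi(x)<1\}$. By Theorem~\ref{thmK=0} (equation~\eqref{thmK=0P}) each $P_i$ solves $P_i(x,y)=\phi\bigl(y+xP_i(x,y)\bigr)$ on $\Omega\times\mathbb R^n$; since $\phi$ and $\Omega$ now satisfy the hypotheses of Theorem~\ref{lemsolexi}, this equation has a \emph{unique} solution on $\Omega\times\mathbb R^n$, and therefore $P_1\equiv P_2=:P$, with $P$ smooth on $\Omega\times\Rno$.

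It remains to upgrade $P_1=P_2$ to $F_1=F_2$, which is the crux. The plan is an open–closed argument: set $A:=\{z\in\Omega: F_1(z,\cdot)\equiv F_2(z,\cdot)\ \text{on}\ T_z\Omega\}$, which is closed in $\Omega$ by continuity. It is nonempty because, by Theorem~\ref{thmK=0}, on a neighbourhood of $\mathbf 0$ one has $F_i=\psi(y+xP_i)\{1+(P_i)_{y^k}x^k\}$, and these two expressions coincide once $P_1=P_2=P$, so a whole neighbourhood of $\mathbf 0$ lies in $A$. It is open: given $z_0\in A$, I would translate the origin to $z_0$ — an affine change preserving projective flatness, the value $\mathbf K=0$, and the projective factor, so the translated factor of each $F_i$ at $z_0$ is still $P_i(z_0,\cdot)=P(z_0,\cdot)$. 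Then near $z_0$ both $F_1,F_2$ are projectively flat with $\mathbf K=0$ and share the same initial data at $z_0$, namely the Minkowski norm $F_i(z_0,\cdot)$ and the function $P(z_0,\cdot)$; the local uniqueness in Theorem~\ref{thmK=0}, made precise by Lemma~\ref{Sh1Lemma5.1} (uniqueness of $P$, hence of $F$, on a small ball), gives $F_1=F_2$ near $z_0$. Hence $A$ is open, closed and nonempty in the connected set $\Omega$, so $A=\Omega$ and $F_1\equiv F_2$.

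The hard part is precisely this last step. Naively propagating $F$ from $\mathbf 0$ along geodesics only recovers the metric radially: since $(F_i)_{x^k}=[P F_i]_{y^k}$, contracting with $y^k$ gives $\tfrac{d}{ds}F_i(sy,y)=2P(sy,y)F_i(sy,y)$, which pins down $F_i(sy,y)$ but nothing transverse. So one genuinely needs the \emph{local} rigidity of Theorem~\ref{thmK=0} at every base point, which is what the global uniqueness of $P$ (Theorem~\ref{lemsolexi}) feeds. (An alternative to the openness step: when $\mathbf K=0$ one has $P_{x^k}=PP_{y^k}$ by the second identity in~\eqref{Berwald2}, so the vector fields $X_k:=\partial_{x^k}-P\,\partial_{y^k}$ on $\Omega\times\Rno$ pairwise commute; $w:=F_1-F_2$ then satisfies $X_kw=P_{y^k}w$ and vanishes near $\mathbf 0$, hence vanishes on the leaf of the induced $n$-dimensional foliation through each point — but one must still check every leaf is a global graph over $\Omega$ avoiding the zero section, so the connectedness route is cleaner.)
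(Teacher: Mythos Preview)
Your proof is correct, and the case split together with the global identification $P_1=P_2$ via Theorem~\ref{lemsolexi} matches the paper exactly. The difference is in the final step. The paper finishes in one line: applying Theorem~\ref{thmK=0} with $\mathcal U=\Omega$, the representation \eqref{thmK=0Eq} holds on \emph{all} of $\Omega$, not just near the origin (the ``locally uniquely determined'' caveat in that theorem concerns only the a priori uniqueness of solutions to \eqref{thmK=0P}, not the domain of validity of \eqref{thmK=0Eq}). Since the projective factor $P_i$ is determined by $F_i$, and both $P_i$ solve \eqref{thmK=0P} on $\Omega$, Theorem~\ref{lemsolexi} gives $P_1=P_2$ globally, and then \eqref{thmK=0Eq} gives $F_1=F_2$ on $\Omega$ immediately. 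Your open--closed argument (and the commuting-vector-field alternative) is sound but superfluous here; it would be the right tool only if \eqref{thmK=0Eq} were genuinely known just locally.
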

\begin{proof} According to Theorem \ref{thmK=0global}, the existence of $F$ implies one of the following cases occurs:
\begin{enumerate}[\rm \ \ \ \  (a)]
\item\label{F=01} $\phi=0$, $F=\psi$ is a Minkowski norm and $\Omega=\mathbb{R}^n$;

\item\label{F=02} $\phi$ is a weak Minkowski norm  and $\Omega=\{\phi<1\}$.
\end{enumerate}

Since the uniqueness of Case \eqref{F=01} is obvious, we focus on Case \eqref{F=02}. Suppose that there exist two Finsler metrics $F$ and $\widetilde{F}$ with the stated properties. Then the associated projective factors $P$ and $\widetilde{P}$ satisfy equation \eqref{thmK=0P}.  Theorem \ref{lemsolexi} yields $P=\widetilde{P}$, which together with \eqref{thmK=0Eq} establishes $F=\widetilde{F}$.
\end{proof}

We remark that
 even when  $\psi$ and $\phi$ are both Minkowski norms,  there may exist no projective flat Finsler metrics on $\Omega=\{\phi<1\}$ with $\mathbf{K}=0$.
  For a detailed discussion, see Corollary \ref{K=0Ex1} below.

In what follows, we turn to study the distance functions  {\it without} assuming completeness.
This approach yields a direct answer to Hilbert's fourth problem.
\begin{theorem} \label{K=0lemdx_0xdxx_0}Let $(\Omega,F)$ be a projectively flat Finsler manifold  with $\mathbf{K}=0$, where $\Omega\subset \mathbb{R}^n$ is a  strictly convex domain containing the origin.
Then  for any $x_1,x_2\in \Omega$,
\begin{equation}\label{K=0eqdx_0xdxx_0}
d_F(x_1,x_2) =  \frac{F(x_1, x_2-x_1)}{1-P(x_1, x_2-x_1)}.
\end{equation}
In particular, defining $\psi(y):=F(\mathbf{0},y)$ and $\phi(y):= P(\mathbf{0},y) $, this implies
\begin{equation}\label{coreqd0xdx0}
d_F(\mathbf{0},x) =  \frac{\psi(x)}{1-\phi(x)}, \qquad d_F(x, \mathbf{0})= \frac{\psi(-x)}{1+ \phi(-x)}.
\end{equation}
\end{theorem}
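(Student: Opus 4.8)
The plan is to compute $d_F(x_1,x_2)$ by exploiting projective flatness: since geodesics are straight lines, the minimizing curve from $x_1$ to $x_2$ is (a reparametrization of) the segment $t\mapsto x_1 + t(x_2-x_1)$, so the distance is an honest one-dimensional integral that we can evaluate once we know the speed function along the segment. First I would fix $y := x_2 - x_1$ and consider the geodesic $\gamma(t) = f(t)\,y + x_1$ with $\gamma'(0) = y$ given by Theorem~\ref{lemgeodesicK}; by the $\mathbf{K}=0$ computation already carried out in the proof of Theorem~\ref{thmK=0global} (equation~\eqref{K=0intial} and its solution), $f$ is either $f(t)=t$ or $f(t) = \frac{c t}{c+t}$ for a constant $c = c_{x_1,y} > 0$. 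In both cases $\gamma$ reaches $x_2$ when $f(t^*) = 1$, i.e.\ at $t^* = 1$ in the linear case and $t^* = \frac{c}{c-1}$ (requiring $c>1$, which strict convexity of $\Omega$ together with $x_2\in\Omega$ will guarantee) in the other. Since $\gamma$ is a constant-speed geodesic with $F(\gamma'(0)) = F(x_1,y)$, and reparametrizing by arclength shows $d_F(x_1,x_2) = \int_0^{t^*} F(\gamma(t),\gamma'(t))\,dt$ — but more efficiently, because the image is minimizing, $d_F(x_1,x_2)$ equals the $F$-length of the segment, which I will instead extract from the initial speed and the geometry of $f$.

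The cleaner route is to relate everything to $P$ at the single point $(x_1, y)$. From \eqref{geodesicBackf''1} we have $f''(t) + 2P(\gamma(t),\gamma'(t)) f'(t) = 0$; combined with the homogeneity identity $P(\gamma(t),\gamma'(t)) = f'(t)\, P(x_1 + f(t)y, y)$-type relations and the explicit form of $f$, one reads off $c$ in terms of $P(x_1,y)$: indeed setting $t\to 0$ in the analogue of \eqref{star2} gives $\frac1c = P(x_1,y)$, so $c = 1/P(x_1,y)$ (and the linear case is the degenerate $P(x_1,y)=0$, $c=\infty$). Then $t^* = \frac{c}{c-1} = \frac{1}{1 - P(x_1,y)}$. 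Since $F(\gamma(t),\gamma'(t))\,dt$ is the length element and $\gamma'(0) = y$, I would verify that along this straight segment the $F$-length is $F(x_1,y)$ times $t^*$ in the appropriate normalization — more carefully, reparametrize $\gamma$ to unit speed: the unit-speed geodesic through $x_1$ in direction $y/F(x_1,y)$ reaches $x_2$ after $F$-distance exactly $d_F(x_1,x_2)$, and tracking the relation between the parameter $t$ and arclength via $f$ (using $f(t^*)=1$ and $f'(0)=1$) yields $d_F(x_1,x_2) = \dfrac{F(x_1,x_2-x_1)}{1 - P(x_1,x_2-x_1)}$. The specialization \eqref{coreqd0xdx0} is then immediate by setting $x_1 = \mathbf 0$, $x_2 = x$ (giving the first formula), and $x_1 = x$, $x_2 = \mathbf 0$, in which case $x_2 - x_1 = -x$, and using $F(\mathbf 0,\cdot) = \psi$, $P(\mathbf 0,\cdot) = \phi$ — here one must also invoke the translation/uniqueness machinery (Proposition~\ref{transformforK=-1} and Theorem~\ref{lemsolexi}, or directly Theorem~\ref{thmK=0global}) to know $F(x,-x)$ and $P(x,-x)$ are controlled, but for the specialization at $x_1 = x$ we only evaluate $F$ and $P$ at the point $x$ in direction $-x$, so no extra input is needed beyond the general formula \eqref{K=0eqdx_0xdxx_0} itself.

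The main obstacle I anticipate is the bookkeeping between the geodesic parameter $t$, the affine parameter along the line, and the $F$-arclength: one must be careful that the "constant-speed geodesic" $\gamma(t) = f(t)y + x_1$ is not unit-speed (its speed is $F(\gamma(t),\gamma'(t))$, which varies), so the naive guess $d_F = t^* \cdot F(x_1,y)$ must be justified rather than assumed. The correct justification is that $d_F(x_1,x_2)$ equals the $F$-length of the image segment, and this length can be computed either by the substitution $s = f(t)$, reducing to $\int_0^1 F(x_1 + sy, y)\,ds$, and then using the ODE \eqref{Funk}-analogue for $P$ to integrate explicitly; or, more slickly, by noting that the minimizing unit-speed geodesic from $x_1$ reaching $x_2$ does so at parameter value equal to $d_F(x_1,x_2)$, and matching this with the explicit $f$ via the chain rule. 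A secondary subtlety is confirming $c > 1$ (equivalently $P(x_1, x_2 - x_1) < 1$) whenever $x_2 \in \Omega$, which follows because the geodesic ray must exit $\Omega$ only after passing $x_2$, i.e.\ $f(t^*) = 1$ must occur within the maximal interval — strict convexity of $\Omega$ and $x_1, x_2 \in \Omega$ give exactly this.
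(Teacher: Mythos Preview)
Your argument for the main formula \eqref{K=0eqdx_0xdxx_0} follows the same route as the paper, with one cosmetic difference: the paper works with the \emph{unit-speed} geodesic $\gamma(t) = f(t)\,\frac{x_2-x_1}{F(x_1,x_2-x_1)} + x_1$, so that $d_F(x_1,x_2)$ is literally the parameter value at which $\gamma$ hits $x_2$, and the bookkeeping you flag as the ``main obstacle'' disappears. In that normalization one reads off $1/c = P(x_1,y)/F(x_1,y)$ (your \eqref{star2} at $t=0$, which is exactly the paper's \eqref{ciswhatK=0}), and solving $\gamma(d_F)=x_2$ gives the formula directly. Your version with $\gamma'(0)=y$ (so $c=1/P(x_1,y)$, $t^*=1/(1-P(x_1,y))$, and $d_F = F(x_1,y)\cdot t^*$ because the constant speed is $F(x_1,y)$) is equally valid but, as you note, invites exactly the parameter confusion you then have to talk yourself out of.

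There is, however, a genuine gap in your derivation of \eqref{coreqd0xdx0}$_2$. Plugging $x_1=x$, $x_2=\mathbf{0}$ into \eqref{K=0eqdx_0xdxx_0} yields $d_F(x,\mathbf{0}) = \frac{F(x,-x)}{1-P(x,-x)}$, with base point $x$, \emph{not} $\mathbf{0}$; the identities $F(\mathbf{0},\cdot)=\psi$ and $P(\mathbf{0},\cdot)=\phi$ are therefore irrelevant here, and your last sentence (``no extra input is needed beyond the general formula'') is simply wrong. Reducing $F(x,-x)$ and $P(x,-x)$ to expressions in $\psi,\phi$ would require the global structure formulas of Theorem~\ref{thmK=0}, which are only guaranteed near $\mathbf{0}$ (or under forward completeness, not assumed in the present theorem). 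The paper sidesteps this by a clean trick: the reverse metric $\overleftarrow{F}(x,y):=F(x,-y)$ is again projectively flat with $\mathbf{K}=0$, and its projective factor is $\overleftarrow{P}(x,y)=-P(x,-y)$. Since $d_F(x,\mathbf{0})=d_{\overleftarrow{F}}(\mathbf{0},x)$, applying \eqref{coreqd0xdx0}$_1$ to $\overleftarrow{F}$ (whose data at $\mathbf{0}$ are $\psi(-\,\cdot)$ and $-\phi(-\,\cdot)$) gives $\frac{\psi(-x)}{1+\phi(-x)}$ immediately.
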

\begin{proof}
Equation \eqref{K=0eqdx_0xdxx_0} is immediate for $x_1 = x_2$.

Given a distinct point $x_2 \in \Omega \backslash \{x_1\}$, the convexity of $\Omega$ implies the existence of a unit-speed geodesic $\gamma(t)$ connecting $x_1$ to $x_2$. According to the proof of Theorem~\ref{thmK=0global}, $\gamma(t)$ admits one of two forms:
\begin{enumerate}[\rm (i)]
\item \label{K=0disi}
$\ds\gamma(t)   = t \frac{x_2-x_1}{F(x_1, x_2-x_1)}+x_1$;

\smallskip

\item \label{K=0disii}
$\ds\gamma(t) =  \Big( \frac{c  t}{c + t}\Big) \frac{x_2-x_1}{F(x_1, x_2-x_1)} +x_1$,  where  $c=c(x_1, x_2)>0$  is a constant dependent on $x_1,x_2$.
\end{enumerate}
The proof of \eqref{K=0eqdx_0xdxx_0} now splits into two cases corresponding to the forms of $\gamma(t)$.

{\bf Case 1.}  Suppose  $\gamma(t)$ is of the first form \eqref{K=0disi}. Then   $P(x,y)=0$  by \eqref{geodesicBackf''1}. The unit speed of $\gamma$ implies
\[
x_2=\gamma(d_F(x_1,x_2))= d_F(x_1,x_2) \frac{x_2-x_1}{F(x_1, x_2-x_1)}+x_1,
\]
which together with $P=0$ yields
\begin{align*}
d_F(x_1,x_2) = F(x_1, x_2-x_1)=\frac{F(x_1, x_2-x_1)}{1-P(x_1, x_2-x_1)}.
\end{align*}

{\bf Case 2.}   Suppose $\gamma(t)$ is of the second form \eqref{K=0disii}. Substituting $t=0$ into \eqref{star2} (noting $c_{x,y}=c$) gives
\begin{equation}\label{ciswhatK=0}
\frac1{c} = P \left(x_1, \frac{x_2-x_1}{F(x_1, x_2-x_1)}\right) = \frac{P(x_1,  x_2-x_1)}{F(x_1, x_2-x_1) }.
\end{equation}
On the other hand,  $\gamma(d_F(x_1,x_2))=x_2$ implies
\[
x_2 =\left( \frac{c  d_F(x_1,x_2)}{c + d_F(x_1,x_2)}\right) \frac{x_2-x_1}{F(x_1, x_2-x_1)} +x_1,
\]
which combined with \eqref{ciswhatK=0} again yields \eqref{K=0eqdx_0xdxx_0}.

It remains to prove \eqref{coreqd0xdx0}. The first formula \eqref{coreqd0xdx0}$_1$ follows directly from \eqref{K=0eqdx_0xdxx_0}.
To establish \eqref{coreqd0xdx0}$_2$, consider the reverse Finsler metric $\overleftarrow{F}(x,y):=F(x,-y)$. A straightforward verification shows that $(\Omega,\overleftarrow{F})$ is also a projectively flat Finsler manifold  with $\mathbf{K}=0$, and its projective factor satisfies $\overleftarrow{P}(x,y)=-P(x,-y)$.
Hence, by noting
 $\overleftarrow{F}(\mathbf{0},y)=\psi(-y)$ and $\overleftarrow{P}(\mathbf{0},y)= -\phi(-y)$,  the formula \eqref{coreqd0xdx0}$_1$ yields
 \begin{align*}
 d_F(x,\mathbf{0})=d_{\overleftarrow{F}}(\mathbf{0},x)= \frac{\psi(-x)}{1+ \phi(-x)},
 \end{align*}
 which completes the proof of \eqref{coreqd0xdx0}$_2$ and thus the theorem.
\end{proof}

%\begin{corollary}\label{lemd0xdx0}
%Let $(\Omega,F)$ be a projectively flat Finsler manifold  with $\mathbf{K}=0$, where $\Omega\subset \mathbb{R}^n$ is a convex domain containing the origin.
%Thus, for any $x\in \Omega$,
%\begin{equation}\label{coreqd0xdx0}
%d_F(\mathbf{0},x) =  \frac{\psi(x)}{1-\phi(x)}, \quad d_F(x, \mathbf{0})= \frac{\psi(-x)}{1+ \phi(-x)},
%\end{equation}
%where $\psi(y): = F(\mathbf{0},y)$ and $\phi(y): = P(\mathbf{0},y)$.
%\end{corollary}
%\begin{proof}
%Clearly, \eqref{coreqd0xdx0}$_1$ follows directly from \eqref{K=0eqdx_0xdxx_0} and the definitions of $\psi,\phi$.

 %To prove   \eqref{coreqd0xdx0}$_2$, we consider the reverse Finsler metric $\overleftarrow{F}(x,y):=F(x,-y)$. It is not hard to check that $(\Omega,\overleftarrow{F})$ is also a projectively flat Finsler manifold  with $\mathbf{K}=0$ and particularly, $\overleftarrow{F}$ is determined by
 %$\overleftarrow{\psi} (y):=\psi(-y)$ and $\overleftarrow{\phi} (y):= {\color{magenta}-} \phi(-y)$.({\color{magenta} Here should be a minus sign. April 11, Li.}) Thus  \eqref{coreqd0xdx0}$_1$ yields
 %\begin{align*}
 %d_F(x,\mathbf{0})=d_{\overleftarrow{F}}(\mathbf{0},x)=\frac{\overleftarrow{\psi}(x)}{1-\overleftarrow{\phi}(x)}= \frac{\psi(-x)}{1+ \phi(-x)},
 %\end{align*}
 %which concludes the proof.
%\end{proof}

The following result reveals a connection between the completeness  and the domain structure of a projectively flat Finsler manifold with $\mathbf{K}=0$.

\begin{corollary}\label{Phi0uniquess}
Let $(\Omega,F)$ be a projectively flat Finsler manifold with $\mathbf{K}=0$, where $\Omega\subset \mathbb{R}^n$ is a bounded domain containing the origin.  Then $(\Omega,F)$ is forward complete if and only if $\Omega=\{y \in \mathbb{R}^n : \phi(y) < 1\}$, where $\phi(y): = P(\mathbf{0},y)$.
\end{corollary}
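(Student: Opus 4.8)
The plan is to prove the two implications separately, with Proposition~\ref{completenessofprojeflat}/\eqref{forcompleprf} as the working criterion for forward completeness. For the necessity, suppose $(\Omega,F)$ is forward complete. Since $\Omega$ is bounded, $\Omega\neq\mathbb{R}^n$, so the contrapositive of Theorem~\ref{thmK=0global}/\eqref{K=0Casei} shows $(\Omega,F)$ is not backward complete; hence Theorem~\ref{thmK=0global}/\eqref{K=0Caseii} applies and yields $\Omega=\{x\in\mathbb{R}^n:\phi(x)<1\}$. This direction is immediate.

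For the sufficiency, assume $\Omega=\{\phi<1\}$. I would first note that boundedness forces $\phi|_{\Rno}>0$: otherwise $\phi(y_0)\le 0$ for some $y_0\neq\mathbf{0}$ and, by $1$-homogeneity, the whole ray $\{ty_0:t>0\}$ would lie in $\{\phi<1\}=\Omega$; with $\phi(\mathbf{0})=0$ this gives $\phi\ge 0$, $\phi^{-1}(0)=\{\mathbf{0}\}$, so $\partial\Omega=\phi^{-1}(1)$ is compact, stays a positive Euclidean distance from $\mathbf{0}$, and $\Omega$ is star-shaped about $\mathbf{0}$. Next I would compute $d_F(\mathbf{0},x)$. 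By projective flatness the radial segment $c(s)=sx$, $s\in[0,1]$, carries a geodesic; solving the $\mathbf{K}=0$ geodesic ODE \eqref{K=0intial} along this ray, and using $P(sx,x)=\phi(x)/(1-s\phi(x))$ (which follows from the relation $P=\phi(y+xP)$ of Theorem~\ref{thmK=0} and $1$-homogeneity) together with the local form $F=\psi(y+xP)\{1+x^kP_{y^k}\}$, one obtains $F(sx,x)=\psi(x)/(1-s\phi(x))^2$, hence $L_F(c)=\psi(x)/(1-\phi(x))$ and $d_F(\mathbf{0},x)\le\psi(x)/(1-\phi(x))$. For the matching lower bound I would check that $u(x):=\psi(x)/(1-\phi(x))$ solves the eikonal equation $F^{*}(x,{\dd}u_x)=1$ on $\Omega\setminus\{\mathbf{0}\}$ (a direct computation from the same local form; $u$ already attains equality in the radial direction, since ${\dd}u_x(x)=F(x,x)$), so that the duality inequality \eqref{dualff*} gives $u(x)=\int_0^1{\dd}u_{c(t)}(c'(t))\,{\dd}t\le L_F(c)$ for every curve $c$ from $\mathbf{0}$ to $x$, whence $d_F(\mathbf{0},x)\ge u(x)$. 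Alternatively one may first establish that $\Omega=\{\phi<1\}$ is strictly convex — using that $F$ is a genuine Finsler metric on all of $\Omega$, that $P>0$ on $\Omega\times\Rno$, and that every ray from $\mathbf{0}$ meets $\partial\Omega$ exactly once — and then invoke \eqref{coreqd0xdx0} directly. Either way, $d_F(\mathbf{0},x)=\psi(x)/(1-\phi(x))$; since $\phi(x)\to 1$ as $d_{\E}(x,\partial\Omega)\to 0$ (compactness of $\overline{\Omega}$ and continuity of $\phi$) while $\psi(x)$ stays bounded below by a positive constant near $\partial\Omega$, we get $d_F(\mathbf{0},x)\to+\infty$, and Proposition~\ref{completenessofprojeflat}/\eqref{forcompleprf} finishes.

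The main obstacle is the lower bound $d_F(\mathbf{0},x)\gtrsim(1-\phi(x))^{-1}$ in the sufficiency direction. The naive radial curve only yields the upper estimate $d_F(\mathbf{0},x)\le\psi(x)/(1-\phi(x))$, which does not force blow-up; to obtain the lower bound one must show the radial geodesic is minimizing, equivalently verify the eikonal identity for $\psi/(1-\phi)$ or prove strict convexity of $\{\phi<1\}$, and it is exactly at this point that the hypothesis ``$F$ is a projectively flat Finsler metric on all of $\Omega$'' — as opposed to merely near $\mathbf{0}$, where Theorem~\ref{thmK=0} operates — has to be used.
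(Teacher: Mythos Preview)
Your approach is correct and matches the paper's strategy: Theorem~\ref{thmK=0global} for the forward direction, and the distance formula $d_F(\mathbf{0},x)=\psi(x)/(1-\phi(x))$ together with Proposition~\ref{completenessofprojeflat} for the converse. The paper, however, does not re-derive that formula or discuss the lower bound: it simply cites \eqref{coreqd0xdx0} from Theorem~\ref{K=0lemdx_0xdxx_0} (proved just before) and moves on, so your eikonal/convexity discussion, while sound, is work the paper has already absorbed. Your concern about that citation --- Theorem~\ref{K=0lemdx_0xdxx_0} assumes $\Omega$ strictly convex, which is not verified here --- is a fair reading of the hypotheses; in fact the radial instance of \eqref{coreqd0xdx0} needs only that $\{\phi<1\}$ be star-shaped about $\mathbf{0}$ (automatic from $1$-homogeneity), since then the second-form geodesic $\gamma_y(t)=\tfrac{c_y t}{c_y+t}\,y$ from the proof of Theorem~\ref{thmK=0global} stays in $\Omega$ for all $t\ge 0$, $\exp_{\mathbf{0}}$ is a diffeomorphism onto $\Omega$ (no conjugate points, $\mathbf{K}=0$), and the Gauss lemma gives the lower bound. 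One small technical point in your write-up: you invoke $P=\phi(y+xP)$ along the entire radial segment, but Theorem~\ref{thmK=0} supplies this only in a neighbourhood of $\mathbf{0}$; the safer route is to read off $P(sx,x)$ directly from \eqref{star2} in the geodesic analysis, which needs no global structural formula for $P$.
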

\begin{proof} The forward implication ``$\Rightarrow$" is an immediate consequence of Theorem~\ref{thmK=0global}. For the converse ``$\Leftarrow$", since $\phi$ is positively $1$-homogeneous and $\Omega=\{\phi<1\}$ is bounded,  it follows that $\phi>0$ on $\Rno$. Consequently, $\phi(x)\rightarrow 1$ as the standard Euclidean distance $d_{\E}(x,\partial\Omega)\rightarrow 0$. Therefore, \eqref{coreqd0xdx0} implies
$d_F(\mathbf{0},x) \rightarrow +\infty$ as   $d_{\E}(x,\partial\Omega)\rightarrow 0$.
The forward completeness of $(\Omega, F)$ follows from Proposition~\ref{completenessofprojeflat}.
\end{proof}

We now prove Theorem~\ref{thmK=0globalintro}, which is the main characterization theorem for $\mathbf{K}=0$ stated in the Introduction.
\begin{proof}[Proof of Theorem~\ref{thmK=0globalintro}] The forward implication ``$\Rightarrow$'' follows directly from Theorems~\ref{thmK=0}, \ref{thmK=0global} and
Corollary~\ref{reverprojK=0}.
For the converse ``$\Leftarrow$'',  if $\mathbb{R}^n=\{\phi<1\}$ and $\phi\geq 0$, then the positive homogeneity of $\phi$ forces $\phi \equiv 0$.
Condition \eqref{K=0basciciont}, together with Theorem \ref{thmK=0} and Corollary \ref{Phi0uniquess}, then implies that $(\Omega,F)$ is a forward complete projectively flat Finsler manifold with $\mathbf{K}=0$.
The remaining conclusions are immediate consequences of Theorem~\ref{thmK=0global} and Corollary~\ref{reverprojK=0}.
\end{proof}

\begin{remark}\label{K=0condi1ne}
Therefore, Condition \eqref{K=0basciciont} in Theorem~\ref{thmK=0globalintro} is already a necessary and sufficient condition for a Finsler manifold  $(\Omega,F)$ to
be forward complete, projectively flat and of vanishing flag curvature $\mathbf{K} = 0$.
\end{remark}

To conclude this subsection, we construct an explicit example of a projectively flat non-Riemannian Finsler manifold with $\mathbf{K} = 0$, thus verifying the foregoing results.

\begin{example}\label{prlxeK=0nonR}
Let $a = (a_1, 0) \in \mathbb{R}^2$ with $a_1 \in [0, \frac{2\sqrt{2}}{3})$, and define  two Minkowski norms on $\mathbb{R}^n$ by
\[
\psi(y):= |y|+ \langle a, y\rangle=:\phi(y).
\]
Denote by    $P=P(x, y)$  the (unique)
Funk metric on $\Omega := \{ y \in \mathbb{R}^n \,:\,\phi(y) < 1\}$, i.e.,
\begin{equation*}%\label{Funkthe}
P=\phi (y+xP).
\end{equation*}
Using the construction in \eqref{thmK=0Eq}, define a function $F=F(x,y)$ on $\Omega\times \mathbb{R}^n$ by
\begin{align*}% \label{K=0generalBer}
F(x,y):=\psi(y+xP) \left\{1+P_{y^k}x^k\right\}= P \left\{1 +  P_{y^k}x^k\right\}.
\end{align*}
A direct calculation yields
\begin{align*}%\label{specialfunk-interpolation}
P(x,y)
	=\frac{\sqrt{\mathcal {A}_a(x,y)} + \mathcal {C}_a(x,y)+ \langle x,y\rangle}{\mathcal {B}_a(x)},
\qquad
1 +  P_{y^k}x^k =   \frac{ \mathcal {C}_a(x,y) + |x|^2 \langle a,y \rangle}{\sqrt{\mathcal {A}_a(x,y)}\mathcal {B}_a(x)}+ \frac{1-\langle a, x \rangle}{\mathcal {B}_a(x)},
\end{align*}
where
\begin{align*}
\mathcal {A}_a(x,y)&:= \left( (1-\langle a, x \rangle)^2-|x|^2 \right)(|y |^2- \langle a, y \rangle^2) + \left(\langle x,y \rangle +(1-\langle a, x \rangle) \langle a, y \rangle \right)^2,\\
\mathcal {B}_a(x)&:=(1-\langle a,x  \rangle)^2-|x|^2,\qquad \mathcal {C}_a(x,y):= (1-\langle a,x  \rangle)\langle a, y\rangle.
% \mathcal {D}_a(x,y)&:=  (1-\langle a, x\rangle)\langle x,y \rangle + |x|^2 \langle a,y \rangle.
 \end{align*}
A straightforward (though somewhat lengthy) verification confirms that $F$ is a projectively flat Finsler metric  on $\Omega$ with $\mathbf{K}=0$, and that $P$ is  the projective factor.
Furthermore,
\begin{align*}
d_F(\mathbf{0},x)
=   \frac{|x| +  \langle a, x \rangle}{1-|x| - \langle a, x \rangle}=\frac{\psi(x)}{1-\phi(x)},
\qquad d_F(x, \mathbf{0})
 =  \frac{|x| -  \langle a, x \rangle}{1+|x|- \langle a, x \rangle}=\frac{\psi(-x)}{1+ \phi(-x)}.
\end{align*}
From these, we see that $d_F(\mathbf{0},x) \rightarrow +\infty$ as $\phi(x) \rightarrow 1$, while $d_F(x, \mathbf{0})< 1$ for all $x\in \Omega$.
 Proposition~\ref{completenessofprojeflat} therefore  implies
that $(\Omega,F)$ is forward complete but not backward complete. Moreover, it follows from \eqref{distandreversi} that $\lambda_F(\Omega)=+\infty$.

In the special case $a = 0$, the metric reduces to Berwald's metric (cf.~\cite{Be1,Be2}):
\begin{equation*}
\mathsf{B}(x,y)= \frac{ (\sqrt{ (1-|x|^2) |y|^2 + \langle x, y \rangle^2} +\langle x, y \rangle )^2}{ (1-|x|^2)^2 \sqrt{ (1-|x|^2) |y|^2 + \langle x, y \rangle^2} }.
\end{equation*}
A qualitatively different behavior occurs when $a_1 \in [\frac{2\sqrt{2}}{3}, 1)$; see Proposition~\ref{K=0Ex1} in the next section.
%We also remark  that the peculiar differences will appear when $a_1\in [{2\sqrt{2}}/{3},1)$; see Proposition \ref{K=0Ex1} in the next section.
\end{example}

\subsection{Global existence of metrics}\label{SecglobalexK=0}
In view of   Theorems \ref{thmK=0globalintro}, every forward complete projectively flat Finsler manifold $(\Omega,F)$ satisfying $\mathbf{K}=0$ is determined by a Minkowski norm $\psi$ together with either a zero function or a weak Minkowski norm $\phi$ such that $\Omega=\{\phi<1\}$.
This naturally leads to the following inverse problem:

\medskip
\textit{Given such a pair $(\psi, \phi)$, is the associated function $F$ always a Finsler metric on $\Omega = \{\phi < 1\}$?}
\medskip

\noindent
An affirmative answer would render superfluous the initial assumption that $(\Omega, F)$ is a Finsler manifold.
%An affirmative answer would allow us to omit the initial assumption that $(\Omega, F)$ is a Finsler manifold.
 While this assertion holds trivially when $\phi=0$ (yielding $F=\psi$), intriguing phenomena emerge when $\phi$ is a weak Minkowski norm.

We first introduce some notation used throughout this subsection.
%Before proceeding further, let us  introduce some notation, which will be used throughout this subsection.
\begin{notation}\label{basciassmup} Let $\psi$ be a Minkowski norm and $\phi$   a weak Minkowski norm on $\mathbb{R}^n$, and set
\[
\Omega:=\{x\in \mathbb{R}^n\,:\,\phi(x)<1 \}.
\]
Define a function $F:\Omega\times \mathbb{R}^n\rightarrow \mathbb{R}$ by
\begin{equation}\label{defiFK=0}
F(x,y) := \psi {\left( y + x P(x,y)\right)} \,\left\{ 1 +  x^k P_{y^k}(x,y) \right\},
\end{equation}
where  $P = P(x,y)$ is the unique solution (the weak Funk metric on $\Omega$) to the following equation
\begin{equation}\label{equationP}
P = \phi (y + x P),\quad \forall (x,y)\in \Omega\times \mathbb{R}^n.
\end{equation}
Define a set
\[
\PD := \PD(F):=\left\{x\in \Omega\,:\, \text{$F(x,\cdot)$ is a Minkowski norm at $x$}\right\}.
\]
\end{notation}

\begin{remark}
Theorem~\ref{lemsolexi} and Corollary~\ref{basicproper} imply that $F$ is a well-defined function on $\Omega\times \mathbb{R}^n$ and satisfies
\begin{equation}\label{postivitylemma}
0<F|_{\Omega\times \Rno}\in C^\infty\left(\Rno\right), \qquad F(x,\alpha y)=\alpha F(x,y) \text{ for any $\alpha>0$}.
\end{equation}
Thus, $F$ is a Finsler metric on $\PD$, and
\begin{equation}\label{Fposidome}
\PD=\left\{x\in \Omega\,:\, \left([F^2]_{y^iy^j}(x,y)\right) \text{ is positive definite for any }y\in \mathbb{S}^{n-1}\right\}.
\end{equation}
%Thus, answering Question \eqref{reK=0que} is equivalent to determining  the shape of $\PD$ in $\Omega$.
Moreover, if the matrix $([F^2]_{y^i y^j}(x, y))$ is positive definite, then by \cite[(1.2.9)]{BCS} we have
\begin{equation}\label{smedefF}
[F]_{y^iy^j}(x,y)\theta^i\theta^j\geq 0, \quad \forall \theta\in \mathbb{R}^n,
\end{equation}
with equality if and only if   $ \theta= \alpha y$ for some $\alpha \in  \mathbb{R}$.
\end{remark}

%Note that the positive definiteness of  $\left(\frac12[F^2]_{y^iy^j}(x,y)\right)$ also influences forward completeness.

\begin{proposition}\label{Finmeanscomplete}
Under Notation \ref{basciassmup}, the pair $(\Omega,F)$ is a forward complete projectively flat Finsler manifold with $\mathbf{K}=0$ if and only if   $F$ is a Finsler metric on $\Omega$, i.e., $\PD=\Omega$.
\end{proposition}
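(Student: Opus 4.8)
The plan is to prove the two implications of this equivalence. The forward direction is essentially trivial: if $(\Omega,F)$ is a forward complete projectively flat Finsler manifold, then by definition $F$ is a Finsler metric on all of $\Omega$, hence $\PD=\Omega$. So the content lies entirely in the converse.

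For the converse, assume $\PD=\Omega$, i.e., $F$ as defined by \eqref{defiFK=0} is a genuine Finsler metric on $\Omega$. First I would observe that $F$ satisfies the local PDE characterization of Theorem~\ref{thmK=0}: indeed \eqref{defiFK=0} and \eqref{equationP} are exactly \eqref{thmK=0Eq} and \eqref{thmK=0P} with the Minkowski norm $\psi$ and the positively $1$-homogeneous function $\phi$. Since these equations hold on all of $\Omega\times\mathbb{R}^n$ (not merely near the origin), $F$ is projectively flat on $\Omega$ with projective factor $P$ and flag curvature $\mathbf{K}=0$; one can verify $F(\mathbf{0},y)=\psi(y)$ and $P(\mathbf{0},y)=\phi(y)$ directly from \eqref{equationP} evaluated at $x=\mathbf{0}$. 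It then remains to establish forward completeness. Here I would invoke Corollary~\ref{Phi0uniquess}: since $\Omega=\{\phi<1\}$ is bounded (because $\phi$ is a weak Minkowski norm, hence $\phi>0$ on $\Rno$ and the sublevel set is bounded), it suffices to show $\Omega$ is exactly the set $\{y:\phi(y)<1\}$ with $\phi=P(\mathbf{0},\cdot)$, which holds by construction. Thus $(\Omega,F)$ is forward complete.

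An alternative, more self-contained route for the completeness step would bypass Corollary~\ref{Phi0uniquess} and use Proposition~\ref{completenessofprojeflat} directly: one shows $d_F(\mathbf{0},x)\to+\infty$ as $d_{\E}(x,\partial\Omega)\to 0$. By Theorem~\ref{K=0lemdx_0xdxx_0} (whose hypotheses are met once $F$ is known to be a projectively flat Finsler metric of zero flag curvature on the strictly convex domain $\Omega$), we have $d_F(\mathbf{0},x)=\psi(x)/(1-\phi(x))$, and since $\phi(x)\to 1$ as $x\to\partial\Omega=\{\phi=1\}$ while $\psi$ stays bounded away from $0$ on the compact boundary, the distance blows up. Either way the conclusion follows.

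The main obstacle is not any single hard estimate but rather the bookkeeping needed to confirm that the \emph{global} validity of \eqref{defiFK=0}--\eqref{equationP} on $\Omega\times\mathbb{R}^n$ genuinely upgrades the \emph{local} classification of Theorem~\ref{thmK=0} to a global statement. Concretely, one must check that projective flatness and the curvature identity $\mathbf{K}=0$ are local conditions verified pointwise from the PDE system \eqref{Berwald2}, so that their validity on each coordinate neighborhood inside $\Omega$ yields the global assertion; and one must ensure the strict convexity of $\Omega$ (needed to apply Theorem~\ref{K=0lemdx_0xdxx_0}) — but this is guaranteed because $\phi$ is a weak Minkowski norm, hence regular, so by Proposition~\ref{quasitostrictlyconvex} its indicatrix $\partial\Omega=\phi^{-1}(1)$ is strictly convex. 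Once these points are in place the argument is short.
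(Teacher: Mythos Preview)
Your proposal is correct and takes essentially the same approach as the paper, which simply cites Corollary~\ref{Phi0uniquess} as an immediate consequence. You have spelled out the implicit step of verifying (via Theorem~\ref{thmK=0}) that the globally defined $F$ is indeed projectively flat with $\mathbf{K}=0$ before invoking that corollary, and your alternative route through Theorem~\ref{K=0lemdx_0xdxx_0} and Proposition~\ref{completenessofprojeflat} merely unwinds the proof of Corollary~\ref{Phi0uniquess} itself.
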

\begin{proof}
This is an immediate consequence of Corollary~\ref{Phi0uniquess}.
\end{proof}

Proposition  \ref{Finmeanscomplete} establishes the equivalence between the positive-definiteness of the ``fundamental tensor''  and the forward completeness of the manifold.
This equivalence would seem to imply that the function
$F$ defined in \eqref{defiFK=0} must always be a Finsler metric. Surprisingly, this is not the case, as the following proposition demonstrates, even when both
  $\psi$ and $\phi$ are Minkowski norms.

\begin{proposition}\label{K=0Ex1}
 Let  $a=(a_1,0)\in \mathbb{R}^2$ with $a_1\in [0,1)$, and define two Minkowski norms on $\mathbb{R}^2$ by
 \[
 \psi(y):= |y|+ \langle a, y\rangle=:\phi(y).
 \]
  Then, under   Notation \ref{basciassmup},
 the domain $\PD$ exhibits three distinct configurations within $\Omega$ (see Figure~1):
\begin{enumerate}[ \ \ \ \ \rm (1)]
\item  \label{K=0RandersnormCase1} if $a_1\in [0, \frac{2\sqrt{2}}{3})$, then  $\PD= \Omega$;

\smallskip

\item  \label{K=0RandersnormCase2} if $ a_1 = \frac{2\sqrt{2}}{3}$, then $\PD=  \Omega \, \backslash  \big\{ x_1= - \frac{3\sqrt{2}}{2}   \big\}$;

\smallskip

\item  \label{K=0RandersnormCase34}
if   $a_1\in (\frac{2\sqrt{2}}{3}, 1)$, then
\begin{align*}
\PD = \Omega\,  \bigcap \, \left \{ (x_1, x_2) \, : \, 8(1- a_1^2) \left( x_1 + \frac{a_1}{4(1-a_1^2)} \right)^2 - (9 a_1^2-8) x_2^2 > \frac{9 a_1^2 -8}{2(1-a_1^2)} \right\}.
\end{align*}
\end{enumerate}
%The pair $(\mathscr{O}, F)$ is forward complete when $a_1\in \left[0, \frac{2\sqrt{2}}{3}\right)$, while it is not forward complete for $a_1\in \left[\frac{2\sqrt{2}}{3}, 1\right)$.

%Obviously, in Case \eqref{K=0RandersnormCase2} \& \eqref{K=0RandersnormCase3}, $F$ is not forward complete by Theorem \ref{Finmeanscomplete}. It looks like that with the change of $a$, the domain of $F$ starts with the entire ellipse interior, then seems to be split in half by a lightning bolt, and gradually separates into two parts with distance in the middle.

\begin{figure}[h]
\begin{tikzpicture}[scale=1]
   \filldraw[fill = gray!50,fill opacity=0.5, draw = black][dash pattern={on 4.5pt off 4.5pt}] (0,0) ellipse (2.25  and 1.5 );
   %first case a_1=\sqrt{5}/3
% Text for first case
\draw (0,-2) node [inner sep=0.6pt]    {$ (1) \  a_1 = \frac{\sqrt{5}}{3}$};

   \begin{scope} %second case a_1=2*\sqrt{2}/3
   \clip   (17.8*0.3,0) ellipse (9*0.3+ 0.5  and 3*0.3+0.5 );
   \filldraw[ fill = gray!50,fill opacity=0.5, draw = black][dash pattern={on 4.5pt off 4.5pt}] (17.8*0.3,0) ellipse (9*0.3  and 3*0.3 );
   \draw[ draw =white] plot ({(17.8 + 6.36)*0.3},{\x});
    \draw[fill opacity=0.5, draw = black][dash pattern={on 4.5pt off 4.5pt}] plot ({(17.8 + 6.36)*0.3},{\x});
    \end{scope}
% Text for second case
\draw (17.8*0.3,-2) node [inner sep=0.6pt]    {$(2) \  a_1 = \frac{2 \sqrt{2}}{3}$};

    \begin{scope} %third case a_1=\sqrt{17/18}
   \clip   (11.5,0) ellipse (18*0.17  and 7.242*0.17 );
    \filldraw[fill = gray!50,fill opacity=0.5, domain = -1.2 : 1.2, draw=white] plot ( {11.5 + 17.49*0.157 + 0.17*(1.5*1.414*sec(\x r) - 2.915)/0.667 } ,{ 0.17* 3*tan(\x r)} );
    \filldraw[fill = gray!50,fill opacity=0.5, domain = -1.5 : 1.5, draw=white] plot ( {11.5 + 17.49*0.157 + 0.17*(-1.5*1.414*sec(\x r) - 2.915)/0.667 } ,{ -0.17* 3*tan(\x r)} );
    \end{scope}
     \draw[ draw = black][dash pattern={on 4.5pt off 4.5pt}] (11.5,0) ellipse (18*0.17  and 7.242*0.17 );
     \draw[ domain = -1.1 : 1.1][dash pattern={on 4.5pt off 4.5pt}] plot ( {11.5 + 17.49*0.157 + 0.17*(1.5*1.414*sec(\x r) - 2.915)/0.667 } ,{ 0.17* 3*tan(\x r)} );
     \draw[ domain = -1.2 : 1.2][dash pattern={on 4.5pt off 4.5pt}] plot ( {11.5 + 17.49*0.157 + 0.17*(-1.5*1.414*sec(\x r) - 2.915)/0.667 } ,{ -0.17* 3*tan(\x r)} );
% Text for third case
\draw (11.5,-2) node [inner sep=0.6pt]    {$(3) \  a_1 = \frac{ \sqrt{34}}{6}$};

\end{tikzpicture}
\caption{ \small Evolution of $\PD$ (shaded) within $\Omega$.}
\label{K=0fig}
\end{figure}
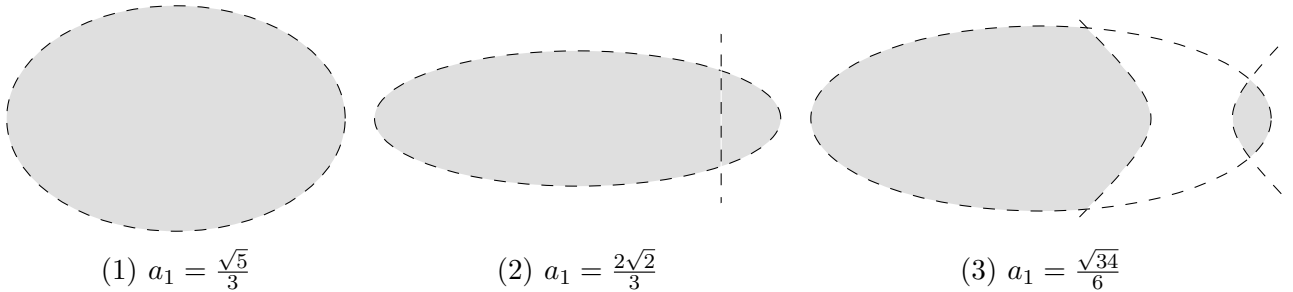
\end{proposition}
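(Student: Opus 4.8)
The plan is to reduce membership in $\PD$ to the positivity, at every direction $y$, of a single explicit scalar (the determinant of the fundamental tensor), and then to analyse that scalar from the closed form of $F$ recorded in Example~\ref{prlxeK=0nonR}. First I would use the identity $g_{ij}(x,y)\,y^iy^j=F^2(x,y)$, which follows from \eqref{Eulerhter} applied to $F^2$. Since $F^2>0$ on $\Omega\times\Rno$, at each $(x,y)$ the symmetric $2\times 2$ matrix $(g_{ij}(x,y))=\tfrac12([F^2]_{y^iy^j}(x,y))$ is positive on the vector $y$, hence it is positive definite exactly when its determinant is positive. Combined with \eqref{Fposidome} this gives
\begin{equation*}
\PD=\left\{\,x\in\Omega\ :\ \det\!\big(g_{ij}(x,y)\big)>0\ \text{ for all }y\in\mathbb{S}^1\,\right\}.
\end{equation*}
So everything comes down to computing $\det(g_{ij}(x,y))$ for the metric $F$ of Example~\ref{prlxeK=0nonR} and deciding when it stays positive over all directions. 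Two simplifications make this feasible: first, a short computation rewrites $F$ as $F(x,y)=P(x,y)\big(P(x,y)u-v\big)/\sqrt{\mathcal A_a(x,y)}$ with $u:=1-\langle a,x\rangle>0$ and $v:=\langle a,y\rangle$, so that $F$ is built from the single quadratic form $\mathcal A_a(x,\cdot)$ (positive definite in $y$ for $x\in\Omega$) together with the Randers norm $P(x,\cdot)=\big(\sqrt{\mathcal A_a}+\mathcal C_a+\langle x,\cdot\rangle\big)/\mathcal B_a$ (so $\sqrt{\mathcal A_a}$ enters only through $P$, via the identity $P\,\mathcal B_a=\sqrt{\mathcal A_a}+\mathcal C_a+\langle x,\cdot\rangle$); second, for $n=2$ one has $\det(g_{ij}(x,y))=-F^3(x,y)\,F_{y^1y^2}(x,y)/(y^1y^2)$ whenever $y^1y^2\neq0$ (it extends smoothly across the axes), which turns a determinant into one mixed second derivative.

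Carrying out this computation, one separates off the obviously positive factors (powers of $F$, $\mathcal B_a(x)$, and $\mathcal A_a(x,y)$, all positive on $\Omega\times\Rno$), so that $\det(g_{ij}(x,y))$ has the sign of an explicit remainder $\Theta_a(x,y)$ that is positively $0$-homogeneous in $y$; thus the sign depends only on the direction of $y$. Using the reflection symmetry $(x_1,x_2)\mapsto(x_1,-x_2)$, $(y_1,y_2)\mapsto(y_1,-y_2)$ — which leaves $\psi$, $\phi$ (hence $F$ and $\PD$) invariant — one may restrict to $x_2\geq0$, and for fixed $x$ minimise $\Theta_a(x,\cdot)$ over $\mathbb{S}^1$ by elementary calculus. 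The minimal value, after simplification, equals a positive multiple of
\begin{equation*}
8(1-a_1^2)\Big(x_1+\frac{a_1}{4(1-a_1^2)}\Big)^2-(9a_1^2-8)\,x_2^2-\frac{9a_1^2-8}{2(1-a_1^2)} .
\end{equation*}
Hence $x\in\PD$ if and only if $x\in\Omega$ and this quantity is strictly positive.

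The three cases are then read off from the sign of $9a_1^2-8$. If $a_1\in[0,\tfrac{2\sqrt2}{3})$, then $-\tfrac{9a_1^2-8}{2(1-a_1^2)}>0$, the term $-(9a_1^2-8)x_2^2\ge0$, and the remaining term is a nonnegative square, so the expression is positive on all of $\Omega$ and $\PD=\Omega$. If $a_1=\tfrac{2\sqrt2}{3}$, then $9a_1^2-8=0$ and $\tfrac{a_1}{4(1-a_1^2)}=\tfrac{3\sqrt2}{2}$, so the condition collapses to $x_1\neq-\tfrac{3\sqrt2}{2}$, giving $\PD=\Omega\backslash\{x_1=-\tfrac{3\sqrt2}{2}\}$. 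Finally, if $a_1\in(\tfrac{2\sqrt2}{3},1)$, one obtains exactly the displayed inequality of the statement, whose complement inside $\Omega$ is the region lying between the two branches of a hyperbola and hence separates $\Omega$ into two components (Figure~1).

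The main obstacle is the middle step: obtaining the explicit sign-determining remainder $\Theta_a$ and performing the minimization over $y$. Because $F$ contains the square root $\sqrt{\mathcal A_a}$, a naive differentiation is heavy; tracking the powers of $\sqrt{\mathcal A_a}$ through the compact form $F=P(Pu-v)/\sqrt{\mathcal A_a}$, and using that $P(x,\cdot)$ is Randers together with $P\,\mathcal B_a=\sqrt{\mathcal A_a}+\mathcal C_a+\langle x,\cdot\rangle$, keeps the algebra under control. The remaining points — positivity of the factors pulled out of $\det(g_{ij})$, legitimacy of the reduction to one variable and to $x_2\geq0$, and the three case distinctions — are routine.
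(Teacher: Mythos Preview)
Your approach is correct in outline and genuinely different from the paper's. The paper does not minimise over $y$ at all: it computes $F_{y^iy^j}(x,y)\,x^ix^j$ instead of $\det g_{ij}$ (the two have the same sign for $x\nparallel y$), factors it as $[\mathcal S_a]_{y^iy^j}x^ix^j$ times an explicit scalar, and observes that the vanishing of the latter is exactly the quadratic condition \eqref{K=0RandersnormEq1}. It then invokes the structural results Proposition~\ref{K=0conneccomconvex} and Theorem~\ref{n=2k=0deger}: at a smooth boundary point $\breve x\in\partial\PD\cap\Omega$ the degenerate direction must be tangent, and the additional derivative condition~\eqref{lemK=0lem3} supplies a \emph{second} equation in $(\breve x,\breve y)$. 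Eliminating $\breve y$ from the two equations gives directly the hyperbola~\eqref{K=0Randersnormhyper}; finally one checks by hand which of the resulting components lie in $\PD$. So the paper trades your minimisation for an appeal to Lemma~\ref{keylemmak=0}, Proposition~\ref{K=0conneccomconvex} and Theorem~\ref{n=2k=0deger}, illustrating the general machinery; your route is self-contained and avoids those structural lemmas entirely.

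One caveat in your write-up: the sentence ``the minimal value, after simplification, equals a positive multiple of \ldots'' is not literally correct. After extracting positive factors your $\Theta_a(x,\cdot)$ will be (a positive multiple of) the \emph{quadratic form} $Q_a(x,y)$ appearing on the left of \eqref{K=0RandersnormEq1}; its minimum on $\mathbb S^1$ is the smallest eigenvalue, which involves a square root and is \emph{not} a polynomial multiple of the displayed expression. What is true is that $\min_{y\in\mathbb S^1}Q_a(x,y)>0$ if and only if the $2\times2$ determinant $AC-B^2$ of $Q_a$ is positive (since $Q_a(x,x)=|x|^2>0$ forces $\lambda_{\max}>0$), and $4(AC-B^2)=-\Delta_{\eqref{K=0RandersMainEq1_1v2}}$ is precisely the displayed quantity. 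So replace the minimisation claim by ``$\Theta_a(x,\cdot)>0$ on $\mathbb S^1$ iff its discriminant as a binary quadratic form is negative, i.e.\ iff the displayed expression is positive''; the remaining case analysis is then exactly as you wrote it. Incidentally, your compact identity $1+P_{y^k}x^k=(Pu-v)/\sqrt{\mathcal A_a}$ is correct and is a clean way to organise the differentiation.
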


To the best of our knowledge, this striking phenomenon has not been previously documented in literature on Finsler geometry.
While a direct verification of Proposition~\ref{K=0Ex1} is possible, it is computationally cumbersome. We therefore provide a more insightful proof in Appendix~\ref{propergenerlengappex0}.
%As an application of Proposition \ref{K=0conneccomconvex}, we explain the reasons behind the peculiar differences observed in Examples \ref{prlxeK=0nonR}\,\&\,\ref{twodidipratK=0}. The proof of the following result is postponed until  Appendix \ref{propergenerlengappex0}.

This example motivates a fundamental question:

\medskip
\begin{minipage}{0.9\textwidth}
\itshape Under what conditions does the function $F$ associated with a pair $(\psi, \phi)$ fail to be a Finsler metric on the entire $\Omega$?
\end{minipage}
\medskip

 \noindent To answer this, we prove the following key lemma in Appendix~\ref{propergenerlengappex0}.
\begin{lemma}\label{keylemmak=0}
Under Notation~\ref{basciassmup}, the following hold:
\begin{enumerate}[\rm (a)]
\item \label{keylemmak=0a}
for any $y\in \Rno$ and any $\mu\in \mathbb{R}$ such that $\mu y\in \Omega$, the matrix $(  [F^2]_{y^i y^j}(\mu y,y) )$ is positive definite;

\item \label{keylemmak=0b}
 for a fixed point $\bar{x}\in \Omega$ and for all $(x,y)\in \Omega\times \mathbb{R}^n$, we have
\begin{equation}\label{newFK=0ex}
F(x,y)=\bar{\psi}(y + (x-\bar{x}) \bar{P}(x-\bar{x},y)) \Big\{ 1 + (x^k-\bar{x}^k) \bar{P}_{y^k}(x-\bar{x},y)  \Big\},
\end{equation}
where $\bar{\psi}(y) := F(\bar{x},y)$,
and $\bar{\phi},\bar{P}$ are the unique solutions to the following equations
\begin{equation}\label{twoequations}
\bar{\phi}(y) = \phi(y+ \bar{x} \bar{\phi}(y)), \quad \bar{P}(x, y) = \bar{\phi}(y + x \bar{P}(x,y)), \quad \forall y\in \mathbb{R}^n,\ x\in \Omega-\bar{x}.
\end{equation}
\end{enumerate}
\end{lemma}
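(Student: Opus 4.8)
\textbf{Proof proposal for Lemma \ref{keylemmak=0}.}

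The plan is to prove part (b) first, since it is essentially a bookkeeping consequence of the translation invariance already established for solutions of the defining equation \eqref{pdeodefque}, and then to use part (b) to reduce part (a) to a computation at the base point.

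For part (b), I would start from Proposition~\ref{transformforK=-1}. Applying it with $\bar\varphi(y):=\Phi(\bar x,y)=P(\bar x,y)$ shows that the function $\bar P(x,y):=P(x+\bar x,y)$, for $(x,y)\in(\Omega-\bar x)\times\mathbb{R}^n$, is the unique solution of the second equation in \eqref{twoequations}, where $\bar\phi(y):=P(\bar x,y)$ is the unique solution of the first equation in \eqref{twoequations} (this latter fact is itself the special case $x=\mathbf{0}$ of Proposition~\ref{transformforK=-1}, noting $\bar\phi=\bar P(\mathbf 0,\cdot)$). Next I must rewrite the defining formula \eqref{defiFK=0} for $F$ in terms of data centred at $\bar x$. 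Here the key identity is
\[
y+xP(x,y)=\big(y+(x-\bar x)\bar P(x-\bar x,y)\big)+\bar x\,\bar P(x-\bar x,y),
\]
which follows because $P(x,y)=\bar P(x-\bar x,y)$ and $xP=(x-\bar x)P+\bar xP$; one then recognises, via \eqref{thmK=0Eq}--\eqref{thmK=0P} applied at the base point $\bar x$, that $\psi(y+xP(x,y))\{1+x^kP_{y^k}\}$ reorganises into $\bar\psi(y+(x-\bar x)\bar P)\{1+(x^k-\bar x^k)\bar P_{y^k}\}$, where $\bar\psi(y)=F(\bar x,y)$. Concretely, differentiating $P(x,y)=\bar P(x-\bar x,y)$ in $y$ gives $P_{y^k}(x,y)=\bar P_{y^k}(x-\bar x,y)$, and a short chain-rule computation (using that $\bar\psi(z)=\psi(z+\bar x\,\phi'(z))\{\cdots\}$ is precisely $F(\bar x,z)$ by Theorem~\ref{thmK=0}) yields \eqref{newFK=0ex}. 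This step is routine but the algebra must be arranged carefully so that the multiplicative factor $1+x^kP_{y^k}$ splits correctly; I would do it by first verifying \eqref{newFK=0ex} on the ray $x=\mathbf 0$ shifted appropriately and then invoking uniqueness of the construction.

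For part (a), fix $y\in\Rno$ and $\mu\in\mathbb{R}$ with $\mu y\in\Omega$, and set $\bar x:=\mu y$. Applying part (b) with this choice of $\bar x$ and evaluating \eqref{newFK=0ex} at $x=\bar x=\mu y$, the displacement $x-\bar x$ vanishes, so $F(\mu y,z)=\bar\psi(z)$ for all $z$ — but what I actually need is the Hessian of $F^2$ in its second argument at the point $(\mu y,y)$, not merely the value, so I cannot simply evaluate at $x=\bar x$. Instead the idea is: by part (b), near $x=\bar x=\mu y$ the metric $F$ has the \emph{same form} as a metric built from the Minkowski norm $\bar\psi$ and weak Minkowski norm $\bar\phi$ but now centred at the origin (i.e.\ with displacement $x-\bar x$), and at the centre of such a construction the fundamental tensor is exactly that of $\bar\psi$. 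More precisely, when the displacement vanishes, \eqref{newFK=0ex} gives $F(\bar x+w,z)=\bar\psi(z+w\bar\phi(z))\{1+w^k\bar\phi_{y^k}(z)\}+O(|w|^2\text{-type corrections in the }w\text{-dependence})$; taking $w\to 0$, the second-order $y$-derivatives of $F^2(\bar x,\cdot)$ coincide with those of $\bar\psi^2$, which is a genuine Minkowski norm (Theorem~\ref{funkmetrisufficnecessy} applied with base data $\psi$ shows $\bar\psi=F(\bar x,\cdot)$, and here one must check $\bar\psi$ really is a Minkowski norm — this is where the hypothesis enters, cf.\ that $\psi$ is a Minkowski norm and the structure of \eqref{thmK=0Eq}). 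Hence $\big([F^2]_{y^iy^j}(\mu y,y)\big)=\big([\bar\psi^2]_{y^iy^j}(y)\big)$, which is positive definite. The cleanest way to see $\bar\psi=F(\bar x,\cdot)$ is a Minkowski norm is to note that along the straight geodesic through $\mathbf 0$ and $\bar x=\mu y$ the manifold is (locally) isometric to a neighbourhood in a Minkowski-type model, so strong convexity of the indicatrix transports; alternatively one appeals directly to Theorem~\ref{thmK=0} with base point $\bar x$ in place of $\mathbf 0$, whose conclusion guarantees $F(\bar x,\cdot)$ is a Minkowski norm.

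The main obstacle I anticipate is making the last reduction rigorous: part (b) gives an \emph{algebraic} identity valid on all of $\Omega\times\mathbb{R}^n$, but to extract the fundamental tensor of $F$ at $(\mu y,y)$ I need to differentiate \eqref{newFK=0ex} twice in $y$ and show the displacement terms contribute nothing to $[F^2]_{y^iy^j}$ \emph{when $x=\bar x$}. This is not automatic because $\bar P_{y^k}(x-\bar x,y)$ and its $y$-derivatives do not vanish at $x=\bar x$ — rather, they are multiplied by the displacement $(x^k-\bar x^k)$, so in the expression for $F(\bar x,\cdot)$ itself (displacement zero) they drop out, and one must confirm this persists after differentiating in $y$ only (the displacement is held at zero, so every surviving term carries the factor $(x^k-\bar x^k)=0$). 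Once that is checked, the positive-definiteness is inherited verbatim from $\bar\psi$, and the lemma follows.
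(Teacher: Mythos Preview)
Your approach to part (b) is essentially the same as the paper's: both use Proposition~\ref{transformforK=-1} to identify $\bar P(x-\bar x,y)=P(x,y)$ and then unwind the defining formula \eqref{defiFK=0}. The paper carries out the algebra explicitly by first establishing a derivative identity for $P_{y^i}(\bar x,\cdot)$ evaluated at the shifted variable and then substituting into the expression $\bar\psi=F(\bar x,\cdot)$; your sketch points in the same direction, though the details need to be filled in.

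Your argument for part (a), however, is circular. You set $\bar x=\mu y$ and observe that $[F^2]_{y^iy^j}(\mu y,y)=[\bar\psi^2]_{y^iy^j}(y)$ with $\bar\psi:=F(\mu y,\cdot)$ --- but this identity is a tautology (it is the definition of $\bar\psi$), and to conclude positive-definiteness you then need to know that $\bar\psi$ has positive-definite Hessian at $y$, which is precisely what you are trying to prove. Your suggested remedies do not escape the circle: Theorem~\ref{thmK=0} \emph{assumes} $F$ is already a Finsler metric near the base point, and the ``strong convexity transports along the geodesic'' idea presupposes a Finsler structure in order to speak of geodesics at all. Under Notation~\ref{basciassmup} the only point at which the fundamental tensor of $F$ is known to be positive definite is $x=\mathbf{0}$, because $F(\mathbf{0},\cdot)=\psi$ is a Minkowski norm by hypothesis; this is exactly what must be leveraged.

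The paper's proof of (a) is a direct computation, independent of (b). The key algebraic fact is that when $x=\mu y$, the shifted variable $\xi:=y+xP(x,y)$ equals $(1+\mu P)y$, a positive scalar multiple of $y$ by Corollary~\ref{basicproper}. Homogeneity relations ($P_{y^k}x^k=\mu P$, $P_{y^ky^i}x^k=0$, etc.) then collapse $[F^2]_{y^iy^j}(\mu y,y)$ to an explicit quadratic form in $[\psi^2]_{\xi^i\xi^j}$, $[\psi^2]_{\xi^i}$, and $P_{y^i}$, and positive-definiteness follows from Cauchy--Schwarz applied to the positive-definite form $[\psi^2]_{\xi^i\xi^j}$ --- using that $\psi$, not $\bar\psi$, is a Minkowski norm. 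Part (b) is not invoked at all in the paper's proof of (a).
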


\begin{remark}
Suppose in Notation \ref{basciassmup} that $\psi$ is only a positively $1$-homogeneous function satisfying $0<\psi|_{\Rno}\in C^\infty(\Rno)$. Then, by the same argument as in part (a) of Lemma~\ref{keylemmak=0} and the fact $F(\mathbf{0},\cdot)=\psi$, one can show that for any $y\in \Rno$,
\begin{align*}
\left(  [\psi^2]_{y^i y^j}(y) \right) \text{ is positive definite} \ \ & \Longleftrightarrow \ \
\left(  [F^2]_{y^i y^j}(\mu y,y) \right) \text{ is positive definite for all $\mu\in \mathbb{R}$ with $\mu y\in \Omega$}.
 \end{align*}
\end{remark}

%According to {\color{red}\cite{***}}({\color{magenta} What's this book? I have no idea. Please complete it. Li, 2025/05/06.}), we introduce the following definition.
%\begin{definition}
%Let $\mathscr{A}\subset \mathbb{R}^n$ be a set whose connected components are convex domains. A point $x\in \partial \mathscr{A}$ is called {\it smooth} if there exists a connected component $C\in \mathscr{A}$ such that $C$ has a unique supporting hyperplane $H$ (i.e., tangent plane) at $x\in \partial C$, in which case a vector $y\in T_x\mathbb{R}^n$ is said to be {\it tangent to} $\partial \mathscr{A}$ at $x$ if $y\in H$.
%\end{definition}
 %Since the boundary of a convex domain can be locally viewed as the graph of some convex function and $\mathscr{A}$ has  at most countably  many connected components, the following result is obvious by Rademacher's theorem (or the Alexandrov theorem).
The next lemma follows directly from
 Rademacher's Theorem (cf. Evans \cite[Theorem 6]{Eva}) and Grisvard \cite[Corollary 1.2.2.3]{Gri}.
\begin{lemma}\label{lipschismooofboundry}
Let $U$ be a bounded convex open set in $\mathbb{R}^n$. Then the collection of nonsmooth points of the boundary $\partial U$ is a $\mathcal{H}^{n-1}$-negligible set, where $\mathcal{H}^{n-1}$ denotes the $(n-1)$-Hausdorff measure. In particular, if $n=2$, then any nonempty connected part of $\partial U$ has positive $1$-Hausdorff measure.
\end{lemma}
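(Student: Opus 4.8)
The plan is to localise the problem and reduce it to Rademacher's theorem via the standard graph representation of a convex boundary. By \cite[Corollary~1.2.2.3]{Gri}, the bounded convex open set $U$ has Lipschitz boundary, so every point $p\in\partial U$ has a neighbourhood $W_p$ in which, after an orthogonal change of coordinates, $\partial U\cap W_p$ is the graph $\{(x',g_p(x'))\,:\,x'\in V_p\}$ of a convex — hence locally Lipschitz — function $g_p\colon V_p\to\mathbb{R}$ on an open set $V_p\subset\mathbb{R}^{n-1}$, with $U\cap W_p$ lying below the graph. The key elementary fact about convex bodies that I would invoke is that $p=(x_0',g_p(x_0'))$ is a \emph{smooth} point of $\partial U$ (i.e.\ $\partial U$ admits a unique supporting hyperplane, equivalently a unique outer unit normal, at $p$) if and only if $g_p$ is differentiable at $x_0'$; the ``only if'' is immediate, and the ``if'' follows because for a convex $g_p$ the affine function determined by a gradient at a differentiability point already supports the graph from below.

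Next I would apply Rademacher's theorem: the Lipschitz function $g_p$ is differentiable $\mathcal{L}^{n-1}$-a.e.\ on $V_p$, so the set $N_p\subset V_p$ of its non-differentiability points has $\mathcal{L}^{n-1}(N_p)=0$. The graph parametrisation $\iota_p\colon x'\mapsto(x',g_p(x'))$ is Lipschitz, and a Lipschitz map from a subset of $\mathbb{R}^{n-1}$ carries $\mathcal{L}^{n-1}$-null sets to $\mathcal{H}^{n-1}$-null sets; hence $\mathcal{H}^{n-1}(\iota_p(N_p))=0$, and by the equivalence recalled above $\iota_p(N_p)$ is exactly the set of nonsmooth points of $\partial U$ lying in $W_p$. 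Since $U$ is bounded, $\partial U$ is compact and admits a finite subcover by such neighbourhoods $W_{p_1},\dots,W_{p_m}$; therefore the set of all nonsmooth points of $\partial U$ is contained in the finite union $\bigcup_{j=1}^m\iota_{p_j}(N_{p_j})$ of $\mathcal{H}^{n-1}$-null sets, and is thus $\mathcal{H}^{n-1}$-negligible.

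For the planar statement, which is not a formal consequence of the first part, I would argue directly and elementarily. Let $\Gamma\subset\partial U$ be a connected set containing two distinct points $p\neq q$ (this is the intended meaning of a nonempty connected ``part''). Let $\ell$ be the line through $p$ and $q$ and $\pi\colon\mathbb{R}^2\to\ell$ the orthogonal projection, which is $1$-Lipschitz. Then $\pi(\Gamma)$ is a connected subset of $\ell$ containing the two distinct points $p$ and $q$, hence a segment of length at least $|p-q|$; since $\pi$ is $1$-Lipschitz, $\mathcal{H}^1(\Gamma)\ge\mathcal{H}^1(\pi(\Gamma))\ge|p-q|>0$.

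The only points requiring genuine care — though both are entirely standard — are the identification of the nonsmooth boundary points of $U$ with the non-differentiability points of the local convex graph functions, and the transfer of an $\mathcal{L}^{n-1}$-null set in the parameter domain to an $\mathcal{H}^{n-1}$-null set on $\partial U$ through the Lipschitz parametrisation, together with the finite-cover bookkeeping. These are precisely what make the lemma an immediate consequence of Rademacher's theorem and Grisvard's result; everything else is routine.
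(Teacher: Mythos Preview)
Your proposal is correct and is precisely the intended argument: the paper does not give a written proof but simply states that the lemma ``follows directly from Rademacher's Theorem (cf.\ Evans \cite[Theorem 6]{Eva}) and Grisvard \cite[Corollary 1.2.2.3]{Gri}'', and your local-graph plus Rademacher argument (with the compactness cover and the separate projection argument for $n=2$) is exactly how one unpacks that citation.
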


The following result describes the behavior of  $F$  on a convex component of $\mathscr{O}$.

\begin{proposition}\label{K=0conneccomconvex}
Under Notation \ref{basciassmup}, let $U$ be a convex connected component of $\PD$. Then the following hold:
\begin{enumerate}[\rm (a)]
%\item\label{conneccompconvex1} every connected component of $\PD$  is  a convex domain in $\mathbb{R}^n$;

\item\label{conneccompconvex3} for any smooth point  $x\in \partial U\cap \Omega$ and any vector $y$  not tangent to $\partial U$ at $x$, the matrix $([F^2]_{y^iy^j}(x,y))$ is positive definite;

\item\label{conneccompconvex2} for  any smooth point  $x\in \partial U\cap \Omega$, there exists a nonzero vector $y$ tangent to $\partial U$ at $x$ such that $([F^2]_{y^iy^j}(x,y))$ has a zero eigenvalue. Moreover, the corresponding eigenvector $\theta = (\theta^i)$ is not parallel to $y$ and satisfies
\begin{align} \label{lemK=0lem2}
F_{y^i}(x,y)  \theta^i =0,  \qquad  F_{y^i y^j}(x,y)   \theta^i \theta^j  =0.
 \end{align}
If, in addition, the vector $x$ is not tangent to $\partial U$ at the point $x$, then for all sufficiently small  $|t|\neq0$,
\begin{equation}\label{lemK=0lem3}
F_{y^i y^j}({x},y+ t {x})  \theta^i \theta^j>0.
\end{equation}
\end{enumerate}
\end{proposition}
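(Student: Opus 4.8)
\textbf{Proof proposal for Proposition \ref{K=0conneccomconvex}.}

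The plan is to exploit the key structural fact that $\PD$ is invariant under the translation machinery of Lemma \ref{keylemmak=0}(b): after translating $\bar{x}$ to the origin, the metric $F$ retains exactly the same algebraic form as in Notation \ref{basciassmup}, with $\psi$ replaced by $\bar\psi=F(\bar x,\cdot)$ and $\phi$ replaced by $\bar\phi$. Combined with Lemma \ref{keylemmak=0}(a), this says that along any ray through a point $\bar x\in U$, the fundamental tensor is positive definite at $\bar x$ in the direction of that ray. The strategy for both parts is therefore to reduce statements about a boundary point $x\in\partial U\cap\Omega$ to statements about directions through $x$, and then use continuity of $([F^2]_{y^iy^j})$ together with the convexity of $U$ and Lemma \ref{lipschismooofboundry}.

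For part (a): fix a smooth boundary point $x\in\partial U\cap\Omega$ and a vector $y$ not tangent to $\partial U$ at $x$. Because $U$ is convex and $x\in\partial U$ is smooth, the ray $t\mapsto x+ty$ (for the appropriate sign of $t$) enters $U$ immediately, so there are points $x_t:=x+ty\in U$ for small $t>0$. Viewing $x$ as lying on the line through $x_t$ in direction $y$, Lemma \ref{keylemmak=0}(a) applied with base point $x_t$ (using the translation identity \eqref{newFK=0ex}) shows $([F^2]_{y^iy^j}(x_t,y))$ is positive definite for each such $t$, in fact positive definite along the whole segment; then by continuity $([F^2]_{y^iy^j}(x,y))\ge 0$. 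To upgrade non-negativity to strict positive-definiteness, I would use \eqref{smedefF}: a zero eigenvalue would force the eigenvector to be parallel to $y$, but Lemma \ref{keylemmak=0}(a)/\eqref{postivitylemma} combined with the chain rule applied to $t\mapsto F(x+ty,y)$ near $t=0$ (which we just showed is smooth and positive) rules this out — more precisely, one differentiates the relation obtained from $F$'s homogeneity and the defining equation \eqref{equationP} to see the $y$-direction itself is never degenerate. So $([F^2]_{y^iy^j}(x,y))$ has no kernel, hence is positive definite.

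For part (b): since $x\in\partial U\cap\Omega$ but $x\notin U=$ a connected component of $\PD$, and $\PD$ is exactly the set where $F(x,\cdot)$ is a Minkowski norm \eqref{Fposidome}, the tensor $([F^2]_{y^iy^j}(x,\cdot))$ is positive semidefinite but not positive definite for at least one direction; by part (a) any degenerate direction must be tangent to $\partial U$ at $x$. This gives the existence of a nonzero tangent $y$ with a zero eigenvalue; the eigenvector $\theta$ cannot be parallel to $y$ by \eqref{smedefF} again (parallel would mean positive definiteness fails nowhere), and \eqref{lemK=0lem2} is the standard translation of "$\theta$ is a null eigenvector of the Hessian of $F^2$" into conditions on $F$ and $F_{y^iy^j}$ via the identity $[F^2]_{y^iy^j}=2FF_{y^iy^j}+2F_{y^i}F_{y^j}$ together with $F(x,y)>0$. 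For the final claim \eqref{lemK=0lem3}: assume further $x$ is not tangent to $\partial U$ at $x$. Consider the one-parameter family $t\mapsto q(t):=F_{y^iy^j}(x,y+tx)\theta^i\theta^j$, which is smooth near $t=0$ with $q(0)=0$ by \eqref{lemK=0lem2}. The point $x+tx=(1+t)x$ lies on the ray through $x$ from the origin; for small $t\ne 0$ of the appropriate sign $(1+t)x\in U$ — this is where non-tangency of $x$ is used, guaranteeing the ray through the origin crosses $\partial U$ transversally at $x$ — and at such points $([F^2]_{y^iy^j}((1+t)x,\cdot))$ is positive definite, forcing $q(t)>0$ there. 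To extend "$q(t)>0$ for some $t$" to "$q(t)>0$ for all small $t\ne0$", I would show $t=0$ is a strict local minimum of $q$: since $q\ge 0$ on a punctured neighborhood from one side and $q(0)=0$, we get $q'(0)=0$; a second-derivative computation using the explicit form \eqref{newFK=0ex} of $F$ around base point $x$ (so that $P$, $P_{y^k}$ and their derivatives are evaluated at the origin, where they are controlled) then yields $q''(0)>0$, giving \eqref{lemK=0lem3} on a full punctured neighborhood of $0$.

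The main obstacle I anticipate is the last step — establishing $q''(0)>0$ rather than merely $q''(0)\ge 0$. Semidefiniteness of $q$ near $0$ is cheap, but strict positivity on both sides of $t=0$ requires genuinely using the structure of $F$: one must differentiate \eqref{newFK=0ex} twice in the combined $(x,y)$-variables along the curve $t\mapsto((1+t)x, y+tx)$, keep track of the cross terms between the $\bar\psi(\,\cdot\,)$ factor and the $\{1+(x^k-\bar x^k)\bar P_{y^k}\}$ factor, and show the resulting quadratic form in $\theta$ is strictly positive given that $\theta$ is not parallel to $y$. I expect this to hinge on the positive-definiteness input from Lemma \ref{keylemmak=0}(a) (which controls the $\bar\psi$-Hessian in the relevant directions) plus the convexity of $\bar\phi$ controlling the $\bar P$-terms; isolating the non-tangency of $x$ as precisely the condition that makes the relevant direction non-null for the perturbed base point is the conceptual crux.
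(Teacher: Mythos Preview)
Your approach to (a) and the first parts of (b) is essentially the paper's, modulo exposition: you correctly translate to a base point $\bar x\in U$ on the line through $x$ in direction $y$ and invoke Lemma~\ref{keylemmak=0}(a) via the translation identity~\eqref{newFK=0ex}. Note that once you have written $x=\bar x+\mu y$ with $\bar x\in U$, Lemma~\ref{keylemmak=0}(a) already gives positive definiteness of $([F^2]_{y^iy^j}(x,y))$ \emph{directly} --- your subsequent ``continuity, then upgrade'' step is redundant and the upgrading argument you sketch is not needed.

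The genuine gap is in your argument for \eqref{lemK=0lem3}. You introduce $q(t)=F_{y^iy^j}(x,y+tx)\theta^i\theta^j$, but then confuse variation in the \emph{direction} variable with variation in the \emph{base point}: the quantity $q(t)$ lives at the fixed boundary point $x$, whereas your sentence about $(1+t)x\in U$ and positive definiteness of $([F^2]_{y^iy^j}((1+t)x,\cdot))$ moves the base point instead. That gives no control on $q(t)$, and the proposed second-derivative computation $q''(0)>0$ is both unsupported and unnecessary.

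The paper's argument is a one-line application of the part~(a) you have already proved. Since $y$ is tangent to $\partial U$ at $x$ and the vector $x$ is assumed \emph{not} tangent, the perturbed direction $y+tx$ is not tangent to $\partial U$ at $x$ for all sufficiently small $|t|\neq 0$. By part~(a), the matrix $([F^2]_{y^iy^j}(x,y+tx))$ is therefore positive definite. Now \eqref{smedefF} at the direction $y+tx$ gives $F_{y^iy^j}(x,y+tx)\theta^i\theta^j\geq 0$ with equality only if $\theta$ is parallel to $y+tx$; since $\theta$ is not parallel to $y$, this parallelism can occur for at most one value of $t$, and shrinking the neighbourhood of $0$ if necessary yields \eqref{lemK=0lem3}. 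No derivative of $q$ is required.
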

\begin{proof}
%It follows by Lemma \ref{lipschismooofboundry} and the assumption that $\mathcal{H}^{n-1}$-a.e. $x\in \partial U$ is smooth.
Let $x_o$ be a smooth point of $\partial U\cap \Omega$. Since $\partial U\subset \partial {\PD}$, it follows from \eqref{Fposidome} that
there exists $y_o\in T_{x_o}\mathbb{R}^n\backslash\{\mathbf{0}\}$ such that the  positive semi-definite  matrix $([F^2]_{y^iy^j}(x_o,y_o))$ has a zero eigenvalue.

We now show that $y_o$ is tangent to $\partial U$ at $x_o$.
If not, the convexity of  ${U}$ yields a point $\bar{x}\in {U}$ such that
 \begin{equation}\label{newxbary_o}
  x_o-\bar{x}=\mu y_o, \quad  \text{for some } \mu\neq0.
  \end{equation}
Define $\bar{\psi}(y) := F(\bar{x}, y)$ and $\bar{\phi}(y) := P(\bar{x}, y)$. Then $\bar{\psi}$ is a Minkowski norm while $\bar{\phi}$ is a weak Minkowski norm
(see Remark~\ref{weakFuniswekmonk}). Moreover, it follows from \eqref{equationP} and Theorem \ref{lemsolexi} that $\bar{\phi}$ is the unique solution to the equation
$\bar{\phi}(y) = \phi(y+ \bar{x} \bar{\phi}(y))$. Now define
\begin{equation*}
\bar{F}(x,y) := \bar{\psi}\left(y + x \bar{P}(x,y)\right) \Big\{ 1+   x^k\bar{P}_{y^k}(x,y) \Big\},\quad x\in \{\bar{\phi}<1\},
\end{equation*}
where $\bar{P}=\bar{P}(x,y)$ is the unique solution to $\bar{P} = \bar{\phi}(y + x \bar{P})$. Theorem \ref{lemsolexi} yields $\{\bar{\phi}<1\}=\Omega-\bar{x}$.
Applying Lemma \ref{keylemmak=0}/\eqref{keylemmak=0a} and \eqref{newxbary_o} to $\bar{F}$, the matrix
\[
\left( [ \bar{F}^2 ]_{y^i y^j}  (x_o-\bar{x},y_o)\right) =\left( [ \bar{F}^2 ]_{y^i y^j} (\mu y_o,y_o) \right)
\]
is positive definite. On the other hand, Lemma \ref{keylemmak=0}/\eqref{keylemmak=0b} implies that  $F(x_o,y) = \bar{F}(x_o-\bar{x},y)$, and hence %the matrix
\[
\left( [  {F}^2 ]_{y^i y^j} (x_o,y_o) \right) =\left( [ \bar{F}^2 ]_{y^i y^j} (x_o-\bar{x},y_o) \right)
\]
is positive definite. This contradicts the positive
semi-definiteness of $\left( [  {F}^2 ]_{y^i y^j}  (x_o,y_o) \right)$. Then $y_o$ is tangent to $\PD$ at $x_o$, which therefore establishes \eqref{conneccompconvex3}.

It remains to prove \eqref{lemK=0lem2} and \eqref{lemK=0lem3}. Since $([F^2]_{y^iy^j}(x,y))$ is positive definite for all $(x,y)\in TU$,
its continuity and \eqref{smedefF} imply that
 \begin{align} \label{K=0globlepsi_o_ijnew1lem}
 \left( [F]_{y^i  y^j}(x_o,y_o) \right) \text{ is positive semi-definite}.
\end{align}
Let $\theta = (\theta^i)$ be a nonzero vector such that
\begin{equation}\label{brevepsideg}
[ F^2 ]_{y^i y^j}(x_o,y_o) \theta^i \theta^j =0.
\end{equation}
Then $\theta$ cannot be parallel to ${y_o}$ due to the homogeneity and positivity of $F^2$ (see \eqref{postivitylemma}). Therefore, \eqref{brevepsideg}
 and \eqref{K=0globlepsi_o_ijnew1lem} together yield
\begin{align} \label{k=0globlepsi_o_ithetanew1lem}
F_{y^i}(x_o,{y_o}) \theta^i =0, \qquad F_{y^i y^j}(x_o,{y_o}) \theta^i \theta^j =0.
\end{align}
Furthermore, if ${x_o}$ is not tangent to $\partial U$ at $x_o$,
then ${y_o}+ t {x_o}$ is also not tangent to $\partial U$ at $x_o$ for all sufficient small $|t|>0$. In this case,
Proposition \ref{K=0conneccomconvex}/\eqref{conneccompconvex3} combined with \eqref{smedefF} gives $F_{y^iy^j}(x_o,y_o+tx_o)\theta^i\theta^j>0$.
\end{proof}

In the two-dimensional case, the following theorem provides a complete characterization of the geometric properties of $\PD$, thereby furnishing an alternative proof for
Proposition~\ref{K=0Ex1} (see Appendix~\ref{propergenerlengappex0}).
\begin{theorem}\label{n=2k=0deger}
Under Notation \ref{basciassmup} with the dimension $n=2$, the following hold:
\begin{enumerate}[\rm (i)]
\item\label{K=0Dim2conneccompconvex1} every connected component of $\PD$  is  a convex domain in $\mathbb{R}^2$;

\item\label{K=0Dim2conneccompconvex3} for $\mathcal{H}^{1}$-a.e.  $x\in \partial {\PD}\cap \Omega$, if the matrix $([F^2]_{y^iy^j}(x,y))$ is degenerated for some $y\in \mathbb{R}^2_\circ$,   then $y$ is tangent to $\partial{\PD}$ at $x$. In particular, such a pair $(x,y)$ exists whenever $\partial {\PD}\cap \Omega$ is nonempty.
\end{enumerate}
\end{theorem}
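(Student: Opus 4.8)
\textbf{Proof strategy for Theorem~\ref{n=2k=0deger}.}

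The plan is to derive both parts from the structural results already at hand—Proposition~\ref{K=0conneccomconvex}, Lemma~\ref{keylemmak=0}, and the geometric picture of translated Funk metrics from Proposition~\ref{transformforK=-1} applied to $P$.

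\textbf{Part \eqref{K=0Dim2conneccompconvex1}: convexity of components of $\PD$.} First I would show that each connected component $U$ of $\PD$ is convex. The key point is a "segment-closedness" argument: given two points $\bar{x}_0,\bar{x}_1\in U$, I want to show the whole segment $[\bar{x}_0,\bar{x}_1]$ lies in $\PD$. Fix $\bar{x}:=\bar{x}_0$ and apply Lemma~\ref{keylemmak=0}/\eqref{keylemmak=0b}: for every $x$ on the line through $\bar{x}_0$ and $\bar{x}_1$ we have $x-\bar{x}=\mu y$ for a fixed direction $y$ and varying $\mu$, so Lemma~\ref{keylemmak=0}/\eqref{keylemmak=0a} applied to $\bar F$ shows $([\bar F^2]_{y^iy^j}(x-\bar{x},y))$ is positive definite along that whole line, hence (transferring back via \eqref{keylemmak=0b}) $([F^2]_{y^iy^j}(x,\cdot))$ is positive definite \emph{in the direction $y$} at every such $x$. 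This alone does not give positive-definiteness in all directions, so the argument must be upgraded: I would combine it with the fact that $\PD$ is open and that the set of $x$ where positive-definiteness fails in \emph{some} direction is closed in $\Omega$. Concretely, suppose a component $U$ were non-convex; then (using $n=2$) its boundary $\partial U\cap\Omega$ would contain a smooth point $x_o$ at which the inward tangent line to $\partial U$ separates $U$ from a point $\bar x\in U$ with $x_o-\bar x=\mu y_o$, $y_o$ transverse to $\partial U$—but this is precisely the configuration ruled out in the proof of Proposition~\ref{K=0conneccomconvex}/\eqref{conneccompconvex3}. Making this rigorous requires showing that a non-convex open connected set in $\mathbb{R}^2$ whose complement-within-$\Omega$ is "locally one-sided" at a.e.\ boundary point must have such a transverse chord; I would do this by taking two points of $U$ whose connecting segment exits $U$, and looking at the first exit point, which is a boundary point admitting a supporting configuration with the chord transverse to $\partial U$.

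\textbf{Part \eqref{K=0Dim2conneccompconvex3}: degeneracy directions are tangential.} By part \eqref{K=0Dim2conneccompconvex1}, every component $U$ of $\PD$ is convex, so Lemma~\ref{lipschismooofboundry} applies: $\mathcal H^1$-almost every point of $\partial U\cap\Omega$ is a smooth point of $\partial U$. At each such smooth point $x$, Proposition~\ref{K=0conneccomconvex}/\eqref{conneccompconvex3} says $([F^2]_{y^iy^j}(x,y))$ is positive definite whenever $y$ is \emph{not} tangent to $\partial U$ at $x$; contrapositively, if $([F^2]_{y^iy^j}(x,y))$ is degenerate then $y$ must be tangent to $\partial U$. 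Since $\partial\PD\cap\Omega=\bigcup_U(\partial U\cap\Omega)$ over the (at most countably many) components $U$, and the exceptional set in each $U$ is $\mathcal H^1$-null, the union is $\mathcal H^1$-null, giving the a.e.\ statement. For the existence claim: if $\partial\PD\cap\Omega\neq\emptyset$, pick any component $U$ with $\partial U\cap\Omega\neq\emptyset$; by Lemma~\ref{lipschismooofboundry} (the $n=2$ clause) this set has positive $\mathcal H^1$-measure, hence contains a smooth point $x$, and Proposition~\ref{K=0conneccomconvex}/\eqref{conneccompconvex2} furnishes a tangent vector $y$ at $x$ with $([F^2]_{y^iy^j}(x,y))$ degenerate, as desired.

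\textbf{Main obstacle.} The delicate step is part \eqref{K=0Dim2conneccompconvex1}: turning the "directional" positive-definiteness coming from Lemma~\ref{keylemmak=0} into genuine convexity of the component. The issue is that Lemma~\ref{keylemmak=0}/\eqref{keylemmak=0a} only controls the fundamental tensor in the single direction $y$ pointing along a chord from a base point, not in transverse directions, so one cannot naively conclude that a segment between two points of $\PD$ stays in $\PD$. The right fix is the first-exit-point argument sketched above, which reduces non-convexity to exactly the transverse-chord configuration that Proposition~\ref{K=0conneccomconvex}/\eqref{conneccompconvex3} forbids; verifying that this reduction is valid in $\mathbb{R}^2$ (and that the exit point can be taken to be a smooth, or at least a supporting, boundary point, invoking Lemma~\ref{lipschismooofboundry} to avoid the measure-zero corner set) is where the real work lies.
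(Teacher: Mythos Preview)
Your treatment of part~\eqref{K=0Dim2conneccompconvex3} is correct and matches the paper: once each component is known to be convex, Lemma~\ref{lipschismooofboundry} supplies smooth boundary points $\mathcal H^1$-a.e., and Proposition~\ref{K=0conneccomconvex} handles those.

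The gap is in part~\eqref{K=0Dim2conneccompconvex1}. You correctly note that Lemma~\ref{keylemmak=0}/\eqref{keylemmak=0a} only yields positive-definiteness of $([F^2]_{y^iy^j}(x_o,y))$ in the single direction $y$ along a chord from a base point in $U$, and that this must be upgraded. But the upgrade you sketch does not close. At your first-exit point $x_o$, producing a transverse chord to some $\bar x\in U$ only gives positive-definiteness in that one transverse direction; this is perfectly compatible with $x_o\in\partial\PD$, since the degenerate direction at $x_o$ could simply be the tangent direction to $\partial U$---which is, after all, exactly what part~\eqref{K=0Dim2conneccompconvex3} eventually asserts. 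So no contradiction arises, and bringing in smooth boundary points via Lemma~\ref{lipschismooofboundry} does not help: you still cannot rule out degeneracy in the tangent direction. Note also that you cannot invoke Proposition~\ref{K=0conneccomconvex} as a black box here, since its hypothesis is precisely the convexity of $U$ you are trying to prove; you may only reuse its transfer mechanism, and that gives nothing beyond the directional statement.

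What the paper does instead is a short topological step, specific to $n=2$, that delivers \emph{all} directions at once. Pick any $x_o$ on the chord $[x_1,x_2]$ with $x_o\notin U$ (no smoothness of $\partial U$ needed). Since $U$ is open and path-connected, there is a path $\gamma\subset U$ from $x_1$ to $x_2$; because $x_o\notin U$, the direction map $t\mapsto(\gamma(t)-x_o)/|\gamma(t)-x_o|\in S^1$ is continuous and joins two antipodal points, so its image is an arc of length at least $\pi$. Hence for \emph{every} nonzero $y_o$, the line through $x_o$ parallel to $y_o$ meets $\gamma\subset U$ at some $\bar x$, and the Lemma~\ref{keylemmak=0} transfer then gives positive-definiteness at $(x_o,y_o)$. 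Since $y_o$ was arbitrary, $x_o\in\PD$, contradicting $x_o\in\partial U\subset\Omega\setminus\PD$. This ``the path must wind around $x_o$'' observation is the missing ingredient your first-exit-point scheme lacks.
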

\begin{proof} We prove only part \eqref{K=0Dim2conneccompconvex1}, as part \eqref{K=0Dim2conneccompconvex3} follows directly from  Lemma~\ref{lipschismooofboundry} and
Proposition~\ref{K=0conneccomconvex}.

Suppose, for contradiction, that some connected  component
 ${U}\subset \PD $ is not convex.
Then there exist   $x_1,x_2\in {U}$ such that
the line segment $l_{x_1 x_2}$ connecting them is not entirely contained in  ${U}$. Consequently, there is a point $x_o \in l_{x_1 x_2}\cap \partial{U}$ with
$x_o \notin  \PD$.

For any nonzero vector $y_o \in T_{x_o}\mathbb{R}^2$, the non-convexity of $U$ implies the existence of a point $\bar{x} \in U$ and a scalar $\mu \neq 0$ such that $x_o - \bar{x} = \mu y_o$.
Define $\bar{\psi}(y) := F(\bar{x}, y)$ and $\bar{\phi}(y) := P(\bar{x}, y)$. The same argument as in the proof of Proposition~\ref{K=0conneccomconvex} then shows  the positive definiteness of  $( [  {F}^2 ]_{y^i y^j} (x_o,y_o) ) $.
Thus, the arbitrariness of $y_o$ yields
 $x_o\in \PD$,  which leads to a contradiction. Therefore, every connected component of $\PD$  is  convex.
\end{proof}

%The following interesting example presents various shapes of  $\PD$ in the Randers setting.

%The following result has independent interest, which presents an interesting expression of $F$ initiated  from an arbitrary point $\bar{x}\in \Omega$ instead of the origin. The proof follows by Theorem \ref{transformforK=0} directly.

\vskip 10mm
\section{Projectively flat Finsler manifolds with $\mathbf{K} = -1$}\label{sectionK=-1}

This section studies projectively flat Finsler manifolds with $\mathbf{K} = -1$. The first subsection establishes a global characterization of these manifolds, culminating in the proofs of Theorems~\ref{uniquenssFinsler} and \ref{thmK=-1globalintro}. The second subsection addresses the global existence of such metrics on a given domain.
%In particular, the Funk metric plays a crucial role.

\subsection{Global characterization of manifolds} \label{subsK=-1_1}
%In this subsection, we character globally projectively flat Finsler manifolds with $\mathbf{K}=-1$. %in which case \eqref{flagC} and \eqref{Berwald2} read
% \begin{align}
%&P^2 - P_{x^k} y^k = - F^2, \label{PK=-1}\\
%&P_{x^k} = P P_{y^k} + F F_{y^k}. \label{PK=-1Eq2}
%&\Theta_{x^k} = \Theta \Theta_{y^k}.\label{Funk}
%\end{align}
The starting point for our global analysis is the following local structure theorem. The sufficiency and necessity are due to Shen \cite[Theorem 1.2]{Sh1} and Li \cite[Theorem 1.2]{Li}, respectively, while the reversible case was treated in Cheng--Li \cite[Theorems 1.1 and 3.4]{ChengLi}.
\begin{theorem}[\cite{ChengLi,Li,Sh1}]\label{localstrucK}
Let $F=F(x,y)$ be a Finsler metric on a neighborhood $\mathcal {U}$ of the origin $\mathbf{0}\in \mathbb{R}^n$.  Thus,
$F$  is    projectively flat  with $\mathbf{K}=-1$
 if and only  if there exist  a  Minkowski norm $\psi=\psi(y)$  and a
positively $1$-homogeneous function $\phi =\phi(y)$  on $\mathbb{R}^n$, with $\phi|_{\Rno}\in C^\infty(\Rno)$,
 such that
\begin{equation} \label{thmK=-1eq}
F  = \frac{1}{2}\big\{ \Phi_{+} - \Phi_{-} \big\}, \qquad P  = \frac{1}{2}\big\{ \Phi_{+} + \Phi_{-} \big\},
\end{equation}
where $P=P(x,y)$ is the projective factor and $\Phi_\pm=\Phi_\pm(x,y)$ are the solutions to
\begin{equation} \label{Phiplusminus}
\Phi_{\pm} = (\phi \pm \psi)(y + x \Phi_{\pm}).
\end{equation}
In particular,  $F$ and $P$ are uniquely determined near the origin by their initial data:
\[
\psi(y):=F(\mathbf{0},y),\qquad  \phi(y):= P(\mathbf{0},y).
\]
Moreover,  $F$ is reversible if and only if $\psi$ is reversible and $\phi$ is odd, i.e.,
\[
 \psi(\alpha y)=|\alpha|\psi(y), \qquad \phi(\alpha y)=\alpha \phi(y), \quad \forall \alpha\in \mathbb{R},
\]  in which case $F$ is a Hilbert metric.
\end{theorem}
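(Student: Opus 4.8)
The strategy is to diagonalize Berwald's PDE characterization (Lemma~\ref{lemBerwald}) in the case $\mathbf K=-1$ by a linear change of unknowns, reducing it to two \emph{independent} Funk-type equations whose solutions are governed by \eqref{pdeodefque}. Concretely, for $\mathbf K=-1$ the Berwald system \eqref{Berwald2} reads $F_{x^k}=P_{y^k}F+PF_{y^k}$ and $P_{x^k}=PP_{y^k}+FF_{y^k}$ (using $-\tfrac{1}{3F}[\mathbf K F^3]_{y^k}=FF_{y^k}$ when $\mathbf K=-1$). Setting $\Phi_\pm:=P\pm F$ and adding, respectively subtracting, these two equations, the terms regroup into the decoupled identities
\[
\Phi_{\pm,x^k}=\Phi_\pm\,\Phi_{\pm,y^k}.
\]
First I would carry out this short algebraic manipulation, noting that it is fully reversible, so that it serves both directions of the equivalence at once.

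Second I would invoke the solution structure of the Funk equation $\Phi_{x^k}=\Phi\Phi_{y^k}$: a function solving it near $\mathbf 0$ is, by the method of characteristics, reconstructed from its restriction $h:=\Phi(\mathbf 0,\cdot)$ through the implicit relation $\Phi=h(y+x\Phi)$, which is exactly \eqref{pdeodefque}; its local existence, uniqueness, positive $1$-homogeneity in $y$ and smoothness are supplied by Lemma~\ref{Sh1Lemma5.1} together with the implicit-function argument used in the proof of Theorem~\ref{lemsolexi}, while conversely differentiating $\Phi=h(y+x\Phi)$ recovers the Funk equation. Applying this with $h=\Phi_\pm(\mathbf 0,\cdot)=P(\mathbf 0,\cdot)\pm F(\mathbf 0,\cdot)=\phi\pm\psi$ produces \eqref{Phiplusminus}, and then $F=\tfrac12(\Phi_+-\Phi_-)$, $P=\tfrac12(\Phi_++\Phi_-)$ by the very definition of $\Phi_\pm$; this is the necessity direction. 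For sufficiency one runs the implications backwards, the only extra point being to confirm that the reconstructed $F$ is a genuine Finsler metric near $\mathbf 0$: since $F(\mathbf 0,y)=\tfrac12\big((\phi+\psi)-(\phi-\psi)\big)(y)=\psi(y)$ is a Minkowski norm, $F>0$ on $\Rno$ and $\tfrac12[F^2]_{y^iy^j}$ is positive definite at $x=\mathbf 0$, hence on a possibly smaller neighborhood by continuity and openness of the positive-definite cone, after which Lemma~\ref{lemBerwald} yields projective flatness with $\mathbf K=-1$.

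For the reversibility statement I would use uniqueness of the local data. The reverse metric $\overleftarrow F(x,y):=F(x,-y)$ is again projectively flat with $\mathbf K=-1$ (straight lines and flag curvature are insensitive to $y\mapsto -y$), with projective factor $-P(x,-y)$, hence with local data $\overleftarrow\psi(y)=\psi(-y)$ and $\overleftarrow\phi(y)=-\phi(-y)$. By the uniqueness just established, $F=\overleftarrow F$ near $\mathbf 0$ if and only if $\psi(-y)=\psi(y)$ and $\phi(-y)=-\phi(y)$, which combined with positive $1$-homogeneity is precisely $\psi(\alpha y)=|\alpha|\psi(y)$ and $\phi(\alpha y)=\alpha\phi(y)$ for all $\alpha\in\mathbb R$. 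In this case I would then verify that $(x,y)\mapsto-\Phi_+(x,-y)$ solves the $\Phi_-$-equation (using $(\phi+\psi)(-z)=-(\phi-\psi)(z)$), so uniqueness forces $\Phi_-(x,y)=-\Phi_+(x,-y)$ and hence $F(x,y)=\tfrac12\big(\Phi_+(x,y)+\Phi_+(x,-y)\big)$, the symmetrization of the Funk-type metric $\Phi_+$; by Definition~\ref{Hildef} this exhibits $F$ as a Hilbert metric.

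The main obstacle is the analytic bookkeeping rather than the algebra: one must check that $\phi\pm\psi$ — which is merely positively $1$-homogeneous and smooth off the origin, not a norm — still falls within the scope of Lemma~\ref{Sh1Lemma5.1}, that the $\Phi_\pm$ are smooth on the sets where they are needed, that the various ``near $\mathbf 0$'' neighborhoods can be shrunk consistently, and, in the sufficiency direction, that positive definiteness of the fundamental tensor is genuinely propagated from the single point $\mathbf 0$ to an open neighborhood before Lemma~\ref{lemBerwald} can be applied.
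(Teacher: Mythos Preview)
The paper does not give its own proof here: the result is cited from Shen~\cite{Sh1} (sufficiency), Li~\cite{Li} (necessity), and Cheng--Li~\cite{ChengLi} (the reversible characterization). Your proposal reconstructs exactly the standard argument from those references: the substitution $\Phi_\pm=P\pm F$ decouples Berwald's system~\eqref{Berwald2} at $\mathbf K=-1$ into two independent Funk-type PDEs $(\Phi_\pm)_{x^k}=\Phi_\pm(\Phi_\pm)_{y^k}$, which by characteristics together with Lemma~\ref{Sh1Lemma5.1} are equivalent to~\eqref{Phiplusminus}; the reversibility biconditional then drops out of uniqueness of initial data, and the identity $\Phi_-(x,y)=-\Phi_+(x,-y)$ yields the symmetrization formula. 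Your list of analytic obstacles is accurate, and Lemma~\ref{Sh1Lemma5.1} indeed applies to $\phi\pm\psi$ since it only asks for positive $1$-homogeneity and smoothness off the origin.

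One point deserves care in the last step. To invoke Definition~\ref{Hildef} you need $\Phi_+$ to be a \emph{Funk metric}, hence a Finsler metric, but you have only shown it satisfies the Funk PDE~\eqref{Funk}. Neither positivity nor strong convexity of $\Phi_+=P+F$ follows formally from the Finsler property of $F$: for instance $\psi(y)=|y|$ and $\phi(y)=2y^1$ are even and odd respectively and produce a genuine reversible projectively flat Finsler metric with $\mathbf K=-1$ near $\mathbf 0$ (since $F(\mathbf 0,\cdot)=|\cdot|$), yet $\Phi_+(\mathbf 0,y)=|y|+2y^1$ is negative in some directions. So the assertion that $F$ is a Hilbert metric in the strict sense of Definition~\ref{Hildef} does not follow from your symmetrization formula alone; it relies on the further analysis in~\cite{ChengLi} (or, in the global forward-complete setting, on the inequality $-F<P<F$ obtained in Theorem~\ref{globalK=-1}\eqref{globalK=-1(i)} from the geodesic parameterization).
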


\begin{remark}\label{diffundeK=-1}
By Lemma \ref{Sh1Lemma5.1}, the solutions $\Phi_\pm$ to equation \eqref{Phiplusminus} are unique in a sufficiently small ball $\mathbb{B}^n_\delta(\mathbf{0})\subset \mathcal {U}$, and hence so are $F$ and $P$.
However, it remains an open question whether this uniqueness holds on the entire $\mathcal{U}$.
\end{remark}

The following result is also needed in the sequel.

\begin{lemma}\label{lempropFunk}
Let $(\Omega,F)$   be a  forward complete projectively flat  Finsler manifold with $\mathbf{K} =-1$, where $\Omega\subset \mathbb{R}^n$ is a domain. If  $P = \mu F$ for some constant $\mu>0$,  then
$(\Omega,\frac{1 + \mu^2}{\mu} F)$ is a Funk metric space. In particularly, $(\Omega,F)$ is not backward complete.
\end{lemma}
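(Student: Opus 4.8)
The plan is to exploit the structural formulas \eqref{thmK=-1eq}--\eqref{Phiplusminus} from Theorem~\ref{localstrucK} together with the comparison principle of Lemma~\ref{varphi_minus}. Under the hypothesis $P=\mu F$ with $\mu>0$, the two equations $\Phi_+ = (\phi+\psi)(y+x\Phi_+)$ and $\Phi_- = (\phi-\psi)(y+x\Phi_-)$ combine via \eqref{thmK=-1eq} to give $\Phi_+ = P+F = (\mu+1)F$ and $\Phi_- = P-F = (\mu-1)F$. First I would substitute these into \eqref{Phiplusminus}. From $\Phi_+ = (\mu+1)F$ and $\Phi_+ = (\phi+\psi)(y+x\Phi_+)$ we get $(\mu+1)F(x,y) = (\phi+\psi)\big(y + x(\mu+1)F(x,y)\big)$, and since $\phi+\psi = P(\mathbf0,\cdot)+F(\mathbf0,\cdot)$ we should recognize $\phi+\psi = (\mu+1)\psi$ when evaluated correctly — indeed, at $x=\mathbf 0$ the relation $P=\mu F$ forces $\phi = \mu\psi$, so $\phi+\psi = (\mu+1)\psi$ and $\phi-\psi = (\mu-1)\psi$. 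Plugging back, $\Phi_+$ satisfies $\Phi_+ = (\mu+1)\psi(y+x\Phi_+)$, i.e. $\frac{1}{\mu+1}\Phi_+ = \psi\big(y + x(\mu+1)\cdot\frac{1}{\mu+1}\Phi_+\big)$, which after the rescaling $G := \frac{\mu+1}{\mu}\Phi_+ \cdot \frac{\mu}{(\mu+1)^2}$... — more cleanly: set $G := \frac{1+\mu^2}{\mu}F$. I would verify directly that $G$ satisfies the Funk equation $G(x,y) = \tilde\psi(y + xG(x,y))$ for an appropriate Minkowski norm $\tilde\psi$, by combining the two scalar identities for $\Phi_\pm$; the arithmetic $\frac{(\mu+1)^2 - (\mu-1)^2}{?}$ type manipulation should produce exactly the coefficient $\frac{1+\mu^2}{\mu}$.

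The key steps, in order: (1) evaluate $P=\mu F$ at $x=\mathbf 0$ to get $\phi = \mu\psi$, hence $\phi\pm\psi = (\mu\pm 1)\psi$ (taking care with the sign when $\mu<1$, where $\phi-\psi$ is negative — here Lemma~\ref{varphi_minus} and the handling of $\varphi|_{\Rno}<0$ in Theorem~\ref{lemsolexi}/Corollary~\ref{basicproper} are relevant); (2) insert into \eqref{Phiplusminus} to obtain scalar functional equations for $\Phi_\pm$ purely in terms of $\psi$; (3) use \eqref{thmK=-1eq} to express $F = \frac12(\Phi_+-\Phi_-)$ and derive, by eliminating $\Phi_\pm$, a single functional equation $\frac{1+\mu^2}{\mu}F(x,y) = \psi\big(y + x\cdot\frac{1+\mu^2}{\mu}F(x,y)\big)$ — this is the Funk equation \eqref{phiF}$_2$ with Minkowski norm $\psi$; (4) invoke Theorem~\ref{funkmetrisufficnecessy} to conclude $(\Omega, \frac{1+\mu^2}{\mu}F)$ is a Funk metric space with $\Omega = \{\psi<1\}$; (5) apply Proposition~\ref{funkmetriccompletness} to deduce it is not backward complete, and note that backward completeness is invariant under multiplication by a positive constant, so $(\Omega,F)$ is not backward complete either.

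The main obstacle I expect is step (3): carefully eliminating $\Phi_+$ and $\Phi_-$ to arrive at the clean single equation with the precise constant $\frac{1+\mu^2}{\mu}$. One must track that $\Phi_+ = (\mu+1)F$ solves $t = (\mu+1)\psi(y+xt)$ while $\Phi_- = (\mu-1)F$ solves $t = (\mu-1)\psi(y+xt)$, and show these are consistent only when $F$ itself satisfies the rescaled Funk equation; this requires using the homogeneity of $\psi$ to pull the constants inside the argument and matching the two expressions for $y + x\Phi_\pm$. A secondary subtlety is the case $\mu = 1$, where $\Phi_- \equiv 0$ and $\phi = \psi$, which must be checked to still yield $2F$ as the Funk metric (consistent with Theorem~\ref{thmK=-1globalintro}\eqref{K=-1(2)2}); and for $\mu \neq 1$ one should confirm that $\tilde\psi = \psi$ is genuinely a Minkowski norm (it is, being $F(\mathbf 0,\cdot)$), so Theorem~\ref{funkmetrisufficnecessy} applies verbatim.
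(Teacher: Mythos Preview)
Your route through the local structure theorem is substantially more involved than the paper's, which argues in three lines directly from Berwald's second equation \eqref{Berwald2}$_2$: with $\mathbf{K}=-1$ this reads $P_{x^k}=PP_{y^k}+FF_{y^k}$, and substituting $P=\mu F$ gives $\mu F_{x^k}=(\mu^2+1)FF_{y^k}$, i.e.\ $\Theta_{x^k}=\Theta\Theta_{y^k}$ for $\Theta:=\frac{1+\mu^2}{\mu}F$. That is the Funk differential equation \eqref{Funk}, so $\Theta$ is a Funk metric, and Proposition~\ref{funkmetriccompletness} gives the incompleteness. No structure theorem, no functional equations, no elimination.

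Your step~(3) also contains a concrete error: the target equation $\frac{1+\mu^2}{\mu}F=\psi\big(y+x\cdot\frac{1+\mu^2}{\mu}F\big)$ fails already at $x=\mathbf 0$, where it would force $\frac{1+\mu^2}{\mu}\psi=\psi$. The Minkowski norm for $\Theta=\frac{1+\mu^2}{\mu}F$ would have to be $\frac{1+\mu^2}{\mu}\psi$, not $\psi$. More to the point, what the $\Phi_+$ equation actually delivers is $(\mu+1)F=(\mu+1)\psi\big(y+x(\mu+1)F\big)$, exhibiting $(\mu{+}1)F$---not $\frac{1+\mu^2}{\mu}F$---as a Funk metric; there is no legitimate algebraic combination of the two functional equations for $\Phi_\pm$ that produces the constant $\frac{1+\mu^2}{\mu}$. (As an aside: pairing \eqref{Berwald2}$_1$ with \eqref{Berwald2}$_2$ under $P=\mu F$ forces $2\mu=\frac{1+\mu^2}{\mu}$, hence $\mu=1$, so the hypothesis is only satisfiable for $\mu=1$, where $\mu+1=\frac{1+\mu^2}{\mu}=2$ and all constants agree. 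But your argument does not surface this, and for general $\mu$ it cannot match the stated constant.) With the corrected constant $\mu+1$, your approach would still recover the ``not backward complete'' conclusion, but not the first sentence of the lemma as written.
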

\begin{proof}
Since $\mathbf{K}=-1$, equation \eqref{Berwald2}$_2$ gives
\begin{equation*}
P_{x^k} = P P_{y^k} + F F_{y^k}. %\label{PK=-1Eq2}
\end{equation*}
Combining this with the assumption $P = \mu F$ yields $\mu F_{x^k} = (1 + \mu^2) F F_{y^k}$, which is equivalent to $\Theta_{x^k} = \Theta \Theta_{y^k}$, where $\Theta := \frac{1 + \mu^2}{\mu} F$.
Therefore,
$\Theta$ is a Funk metric by \eqref{Funk}.
Finally, since $F$ and $\Theta$ differ only by a constant scaling factor, they share the same geodesics and hence have equivalent completeness properties.
\end{proof}

\begin{theorem}\label{globalK=-1}
Let $(\Omega,F)$ be an $n$-dimensional forward complete projectively flat Finsler manifold with $\mathbf{K} =-1$, where $\Omega\subset \mathbb{R}^n$ is a domain  containing the origin.  Define
\begin{equation}\label{defphsiK=-1}
\psi(y):=F(\mathbf{0},y), \qquad \phi(y):= P(\mathbf{0},y).
\end{equation}
Then $\phi + \psi$ is a weak Minkowski norm, and $\Omega$ is a bounded strictly convex domain given by
\begin{equation}\label{K=-1region}
 \Omega = \{ x \in \mathbb{R}^n \,:\, \phi(x) + \psi(x) < 1 \}.
\end{equation}
Furthermore, the following hold:
\begin{enumerate}[{\rm (i)}]
\item  \label{globalK=-1(i)} if $(\Omega,F)$ is also backward complete, then $F$ is a Hilbert metric and for all $(x,y)\in \Omega\times \mathbb{R}^n$,
\begin{equation}\label{PFK=-1complete}
-F(x,y)\leq P(x,y) \leq F(x,y) \text{ with equality if and only if $y=\mathbf{0}$};
\end{equation}

\item \label{globalK=-1(ii)}
if $(\Omega, F)$ is not backward complete, then exactly one of the following cases holds for all $(x,y)\in \Omega\times \mathbb{R}^n$:

\begin{enumerate}[{\rm (1)}]
\item  \label{PgreatFK=-1}
$F(x,y)\leq  P(x,y)$, with equality if and only if $y=\mathbf{0}$;

\item \label{K=-1(2)} $\ds
F(x,y)= P(x,y)$ and particularly,
 $(\Omega,2F)$ is  a Funk metric space;

\item \label{PlessFK=-1}
$\ds -F(x,y)\leq P(x,y)\leq   F(x,y)$, with equality if and only if $y=\mathbf{0}$.
\end{enumerate}
\end{enumerate}
\end{theorem}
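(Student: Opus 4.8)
The plan is to rephrase everything in terms of the pair $\Phi_{\pm}:=P\pm F$, using Theorem~\ref{lemgeodesicK} and the curvature identity \eqref{geodesicK} on the one hand, and Berwald's equations \eqref{Berwald2} on the other. Fix $x\in\Omega$, $y\neq\mathbf 0$, and let $\gamma(t)=f(t)y+x$ be the geodesic with $\gamma'(0)=y$; it has constant speed $F(x,y)=:c$. With $\mathbf K=-1$, \eqref{geodesicK} reads $2f'''f'-3(f'')^2=-4c^2(f')^2$, and the substitution $h:=(f')^{-1/2}$ turns this into the linear equation $h''=c^2h$, while \eqref{geodesicBackf''1} gives $h(0)=1$ and $h'(0)=P(x,y)$. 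Hence $h(t)=\cosh(ct)+\tfrac{P(x,y)}{c}\sinh(ct)$ and, integrating $f'=h^{-2}$,
\[
f(t)=\frac{\sinh(ct)}{c\cosh(ct)+P(x,y)\sinh(ct)}.
\]
Since $f'=h^{-2}>0$ wherever defined, the geodesic extends to all $t\ge 0$ iff $h$ has no zero on $[0,\infty)$, and a direct sign analysis of $h$ shows this is equivalent to $P(x,y)\ge -F(x,y)$. Therefore forward completeness of $(\Omega,F)$ is equivalent to $\Phi_{+}\ge 0$ on $\Omega\times\mathbb R^n$.

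Differentiating \eqref{Berwald2} with $\mathbf K=-1$ (so that $F_{x^k}=P_{y^k}F+PF_{y^k}$ and $P_{x^k}=PP_{y^k}+FF_{y^k}$) gives two identities I would record at once: both $\Phi_{\pm}$ satisfy the Funk-type equation $\Phi_{x^k}=\Phi\,\Phi_{y^k}$, and along any geodesic $\tfrac{\mathrm d}{\mathrm dt}\Phi_{\pm}(\gamma,\gamma')=-\Phi_{+}\Phi_{-}$. In particular each of $\Phi_{+},\Phi_{-}$ keeps a fixed sign along a given geodesic and vanishes identically along any geodesic issuing from one of its zeros. The crux — and the step I expect to be hardest — is to upgrade $\Phi_{+}\ge 0$ to $\Phi_{+}>0$ on $\Omega\times\Rno$, equivalently to show that $\Omega$ is bounded. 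Indeed, if $\Phi_{+}(x_0,y_0)=0$ with $y_0\neq\mathbf 0$, the ODE above forces $\Phi_{+}\equiv 0$ along the geodesic from $x_0$ in direction $y_0$; but then $h(t)=e^{-ct}$, so $f(t)=(e^{2ct}-1)/(2c)\to+\infty$, and this geodesic is an unbounded straight ray contained in $\Omega$. Since forward completeness and projective flatness already make $\Omega$ convex (any two points are joined by a minimizing, hence straight, geodesic), $\Omega$ would then contain a half-infinite Euclidean cylinder around this ray; I would obtain a contradiction from the incompatibility of such a cylinder with constant curvature $-1$ — two of the parallel straight lines inside it are geodesic rays that stay at bounded Euclidean distance, contrary to the exponential divergence of geodesics in negative curvature, which one makes precise via convexity of Busemann functions together with continuity of $d_F$ and compactness of closed forward balls. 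Granting $\Phi_{+}>0$: every point of $\Omega$ lies on a straight minimizing geodesic from $\mathbf 0$, which in direction $y$ terminates at $y/(\phi+\psi)(y)\in\partial\Omega$ (by the formula for $f$ and $\Phi_{+}(\mathbf 0,\cdot)=\phi+\psi$), so $\Omega=\{\phi+\psi<1\}$; Lemma~\ref{Regionstrconvex} then shows $\phi+\psi$ is a weak Minkowski norm and $\Omega$ is bounded and strictly convex with $\partial\Omega=\{\phi+\psi=1\}$.

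With $\Phi_{+}>0$ established one has $\min_{\psi=1}\phi>-1$, so Lemma~\ref{varphi_minus} yields the global existence and uniqueness of $\Phi_{-}$ as the solution of \eqref{Phiplusminus} on $\Omega\times\mathbb R^n$; in particular $\Phi_{-}(x,y)=(\phi-\psi)(y+x\Phi_{-}(x,y))$, which already shows that if $\phi-\psi$ has a fixed sign (or is $\equiv 0$) then so does $\Phi_{-}$. Using the along-geodesic ODE $\tfrac{\mathrm d}{\mathrm dt}\Phi_{-}=-\Phi_{+}\Phi_{-}$, the connectedness of the sphere bundle $\{(x,y)\in T\Omega:F(x,y)=1\}$, and the hypothesis that $F$ is a genuine Finsler metric — which, through the structure \eqref{thmK=-1eq}, forces $\phi-\psi$ to have a fixed sign or to vanish — one concludes that exactly one of the following holds on $\Omega\times\Rno$: $\Phi_{-}>0$, $\Phi_{-}\equiv 0$, $\Phi_{-}<0$. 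These give $F\le P$, $F=P$, and $-F\le P\le F$ respectively (always with $-F\le P$ coming from $\Phi_{+}>0$, and with equality only at $y=\mathbf 0$); and in the middle case $P=F$ makes $(\Omega,2F)$ a Funk metric space by Lemma~\ref{lempropFunk}. This is case (ii).

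Finally, suppose $(\Omega,F)$ is also backward complete. Then $\overleftarrow F$ is forward complete, projectively flat with $\mathbf K=-1$, with $\overleftarrow F(\mathbf 0,y)=\psi(-y)$ and $\overleftarrow P(\mathbf 0,y)=-\phi(-y)$. Applying the part of the theorem already proved to $\overleftarrow F$ gives $\overleftarrow\Phi_{+}=F(x,-y)-P(x,-y)>0$ on $\Omega\times\Rno$, i.e. $P<F$; together with $P>-F$ this is precisely $-F\le P\le F$ with equality only at $y=\mathbf 0$. Moreover the domain of $\overleftarrow F$ is again $\Omega$, so $\{y:\psi(-y)-\phi(-y)<1\}=\{y:\psi(y)+\phi(y)<1\}$, and as both sides are unit balls of weak Minkowski norms, $\psi(-y)-\phi(-y)=\psi(y)+\phi(y)$ for all $y$; replacing $y$ by $-y$ and combining shows $\phi$ is odd and $\psi$ is reversible. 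By Theorem~\ref{localstrucK}, $F$ is then a Hilbert metric, which completes case (i).
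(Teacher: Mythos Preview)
Your route differs from the paper's and has attractive features: the closed form $f(t)=\sinh(ct)/(c\cosh(ct)+P\sinh(ct))$ via the substitution $h=(f')^{-1/2}$; the Funk equations $(\Phi_\pm)_{x^k}=\Phi_\pm(\Phi_\pm)_{y^k}$ for $\Phi_\pm:=P\pm F$, whence $\tfrac{d}{dt}\Phi_\pm(\gamma,\gamma')=-\Phi_+\Phi_-$ along geodesics; and, for part~(i), applying the already-proved half to $\overleftarrow F$ and matching the two descriptions of $\Omega$ to force $\phi$ odd and $\psi$ even. This last device is cleaner than the paper's direct manipulation of the parameter $c_{x,y}$ of the reversed geodesic.

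The crucial step $\Phi_+>0$ (equivalently, $\Omega$ bounded) has a genuine gap. Your sketch appeals to ``exponential divergence'' of two parallel rays at bounded Euclidean distance, but bounded Euclidean distance gives no Finsler control here: with $\Phi_+(\cdot,y_0)\equiv 0$ one finds $F(sy_0,y_0)=1/(2s\psi(y_0)+1)\to 0$, so the metric degenerates along the ray and no comparison with a constant-curvature model is available; you also do not control the relative parametrizations of the two rays, without which a Jacobi-field or Busemann-function argument cannot start. The paper's mechanism is different: it lists the three explicit forms of $f$ and argues, by continuity of $f(t;x,y)$ in $(x,y)$, that only one form occurs globally; the unbounded form $(e^{2t}-1)/2$ is then excluded by Lemma~\ref{lemfortocom}.

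The trichotomy for $\Phi_-$ is likewise incomplete. The claim that ``$F$ being a genuine Finsler metric forces $\phi-\psi$ to have a fixed sign or to vanish'' is not supported by Theorem~\ref{localstrucK}, and the along-geodesic ODE propagates the zero set of $\Phi_-$ only in $x$ for fixed $y$, not across directions $y$. The paper obtains the trichotomy from the same rigidity: once every geodesic is of the ``third form'', $c_{x,y}=\Phi_+/\Phi_-$ is a continuous \emph{real}-valued function on the connected sphere bundle with range in the disconnected set $(-\infty,0)\cup(1,\infty)$, hence confined to one component. (Aside: Lemma~\ref{varphi_minus} gives only existence, not uniqueness; this is harmless, since $\Phi_-=P-F$ is already globally defined and satisfies the functional equation by virtue of its Funk PDE.)
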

\begin{proof} Let $\gamma(t)$ be a unit-speed geodesic from $x\in \Omega$ with initial velocity $y\in S_{x}\Omega$.
By Theorem \ref{lemgeodesicK} and  $\mathbf{K}=-1$, we have $\gamma(t)=f(t){y}+{x}$, where $f(t)$ satisfies the initial condition \eqref{geodesicintial} and the ODE
\begin{equation} \label{OdeK=-1}
2 \left( \frac{f''(t)}{f'(t)}  \right)' - \left( \frac{f''(t)}{f'(t)}  \right)^2 = -4.
\end{equation}
The solution to equation \eqref{OdeK=-1} with \eqref{geodesicintial} is given by one of the following forms:
\begin{align}\label{fspecialK=-1}
f(t) = \frac{e^{2 t} -1}2, \quad
f(t) =  \frac{1-e^{-2 t} }2, \quad
f(t) =  \frac{(c_{x,y}-1) ( e^{2 t}-1)}{2(c_{x,y} e^{2 t} -1) },
\end{align}
where $c_{x,y}$ is a constant depending continuously on ${x}$ and ${y}$, with either $c_{x,y}<0$ or $c_{x,y}>1$, as required by forward completeness.
%These parameter ranges are necessary for the forward completeness of the metric.
Since $f(t) = f(t; x, y)$ depends continuously on $x$ and $y$, these three types of geodesics cannot coexist on $(\Omega, F)$. Consequently,
 all geodesics must be of a single type from \eqref{fspecialK=-1}.

We now rule out the first form, \eqref{fspecialK=-1}$_1$. Note that
\[
\lim_{t\rightarrow +\infty}  \frac{e^{2 t} -1}2 = + \infty \quad \text{while} \quad  \inf_{t\in (-\infty,0)}\frac{e^{2 t} -1}2=\lim_{t\rightarrow-\infty}  \frac{e^{2 t} -1}2 = -\frac12,
\]
which contradicts Lemma~\ref{lemfortocom}. Therefore, $f(t)$ must be given by either \eqref{fspecialK=-1}$_2$ or \eqref{fspecialK=-1}$_3$.
%It should be noted that in either case $\Omega$ is a bounded strictly convex domain in $\mathbb{R}^n$. In fact, the strongly convexity follows by the forward completeness and projective flatness of $(\Omega,F)$. Moreover, by choosing ${x}=\mathbf{0}$ and setting $R:=\max_{{y}\in S_{\mathbf{0}}\Omega } |{y}|<+\infty$, we have
We analyze these two cases separately:

\medskip\noindent
{\bf Case (a).} Suppose that the geodesic $\gamma(t)$ is given by
\[
\gamma(t)=f(t)y+x=\frac{1-e^{-2 t} }2{y}+{x},\quad t\in [0,+\infty).
\]
Then \eqref{geodesicBackf''1} implies
$%\label{K=-1fFunkP}
P(\gamma(t), \gamma'(t)) =  1 = F(\gamma(t), \gamma'(t)).
$
Evaluating at $t = 0$ gives
\begin{equation}\label{f=pcond}
P({x},{y})=F({x},{y}).
\end{equation}
By Lemma~\ref{lempropFunk}, it follows that $(\Omega, 2F)$ is a Funk metric space and $(\Omega, F)$ is not backward complete.
Theorem~\ref{funkmetrisufficnecessy} then implies that $\Omega$ is a bounded strictly convex domain of the form
\[
\Omega=\{ \xi\in \mathbb{R}^n\,:\, 2F(\mathbf{0},\xi)<1 \}=\{ \xi\in \mathbb{R}^n\,:\, \psi(\xi)+\phi(\xi)<1 \}.
\]
This establishes part \eqref{globalK=-1(ii)}/(2).

\medskip\noindent
 {\bf Case (b).} Now suppose that the geodesic has the form
\begin{equation}\label{geodesiccase2k=-1}
\gamma(t)=f(t)y+x=\frac{(c_{x,y}-1) ( e^{2 t}-1)}{2(c_{x,y} e^{2 t} -1) }{y}+{x},\quad t\in [0, +\infty),
\end{equation}
with $c_{x,y}<0$ or $c_{x,y}>1$.
 % $f(t)$ is given by \eqref{fK=-1}.
%A short computation furnishes
%\begin{equation}\label{f'f''K=-1}
% f'(t) = \frac{2 c c_1 c_2(c_2 -1)e^{2ct} }{(c_2 e^{2ct} -1)^2}, \quad  f''(t) = \frac{4 c^2 c_1 c_2(1 - c_2)e^{2ct} (c_2 e^{2ct} +1)}{(c_2 e^{2ct} -1)^3}.
%\end{equation}
%Clearly, we have
%\begin{equation}\label{F0K=-1}
%F(\gamma(0), \gamma'(0)) = F({x}, {y}) =  1.
%\end{equation}
From \eqref{geodesicBackf''1} and \eqref{geodesiccase2k=-1}, we have
\begin{equation}\label{P0K=-1}
P({x},{y})=P(\gamma(0), \gamma'(0))= \left.\frac{c_{x,y}e^{2t}+1}{c_{x,y}e^{2t}-1}\right|_{t=0}= \frac{c_{x,y}+1}{c_{x,y}-1}.
\end{equation}
Combining this with \eqref{defphsiK=-1} and choosing $x=\mathbf{0}$ yields
\begin{equation}\label{bsiscK=-1psishi}
\phi(y)=P(\mathbf{0},{y})= \frac{c_{\mathbf{0},y}+1}{c_{\mathbf{0},y}-1}, \qquad \psi(y)=F(\mathbf{0},{y})=1.
\end{equation}
Since $\frac{c_{\mathbf{0},y}-1}{2c_{\mathbf{0},y}}>0$ for $c_{\mathbf{0},y}<0$ or $c_{\mathbf{0},y}>1$, equation \eqref{bsiscK=-1psishi} and homogeneity imply
\begin{equation}\label{boundardyphsi}
\lim_{t\rightarrow +\infty}(\phi+ \psi)(\gamma(t))=(\phi+ \psi)\left( \frac{c_{\mathbf{0},y}-1}{2c_{\mathbf{0},y}}{y} \right)
= P \left(\mathbf{0},\frac{c_{\mathbf{0},y}-1}{2c_{\mathbf{0},y}}{y}\right)+F\left(\mathbf{0},\frac{c_{\mathbf{0},y}-1}{2c_{\mathbf{0},y}}{y}\right)=1.
\end{equation}
The projective flatness indicates $\Omega=\{\phi+\psi<1 \}$.
Lemma~\ref{Regionstrconvex} then shows that $\phi + \psi$ is a weak Minkowski norm and $\Omega$ is a bounded strictly convex domain.

Since $c_{x,y}$ depends continuously on $x$ and $y$, the cases $c_{x,y} < 0$ and $c_{x,y} > 1$ cannot occur simultaneously. We therefore consider two subcases.
%By the assumption that the metric is forward complete, then \eqref{fK=-1} implies that $c_2$ can not be in $(0, 1)$.

\medskip\noindent
{\bf Subcase (b1).} Assume $ c_{x,y}>1$ for all ${x}\in \Omega$ and ${y}\in S_{{x}}\Omega$.  Then \eqref{P0K=-1} gives
\begin{equation}\label{xoaopfvalue}
P({x},{y})=\frac{c_{x,y}+1}{c_{x,y}-1} > 1 = F({x}, {y}).
\end{equation}
By homogeneity, this holds on $\Omega\times \Rno$.
Moreover, \eqref{geodesiccase2k=-1} shows that
$\gamma(t)$ can not be extended to $(-\infty,1]$, so $(\Omega,F)$ is not backward complete. This proves part \eqref{globalK=-1(ii)}/(1).

%Since the indicatrix $S_{x}\Omega$ is compact, there exists a constant $R>0$ such that
%\begin{equation}\label{somegR}
%R:=\max_{{y}\in S_{{x}}\Omega}|{y}|<+\infty.
%\end{equation}
%Hence, by \eqref{geodesiccase2k=-1} we have
%\begin{equation}\label{K=-1c2>1c1}
%|\gamma(t)-{x}|=\left|  \frac{(c-1) ( e^{2 t}-1)}{2(c e^{2 t} -1) }{y} \right|\leq \frac{c-1}{2c}|{y}|\leq \frac{R}2,\quad \forall  {y}\in S_{{x}}\Omega,
%\end{equation}
%which together with the forward completeness and the projective flatness means that $\Omega$ is a bounded strictly convex domain in $\mathbb{R}^n$.

%By choosing  ${x} = \mathbf{0}$ and using \eqref{geodesiccase2k=-1}, we get
%\begin{equation}\label{limtpartialomega}
%\lim_{t\rightarrow+\infty}\gamma(t)=\frac{c-1}{2c}{y}\in \partial\Omega.
%\end{equation}

\medskip\noindent
{\bf Subcase (b2).} Assume $c_{x,y}<0$ for all   ${x}\in \Omega$ and ${y}\in S_{{x}}\Omega$.
Given that $F(x, y) = 1$ by the unit-speed condition, \eqref{P0K=-1} yields
 \[
-F({x},{y})< P({x},{y})<F({x},{y}).
 \]
By homogeneity, the above inequality holds  on $ \Omega\times \Rno$, proving part \eqref{globalK=-1(ii)}/(3).

Finally, to prove part \eqref{globalK=-1(i)}, assume $(\Omega, F)$ is also backward complete. It remains to show that $F$ is a Hilbert metric.
The   completeness together with \eqref{geodesiccase2k=-1} implies
\begin{equation}\label{-inftboudnk=-1}
\frac{c_{x,y}-1}{2c_{x,y}}{y}+{x}=\lim_{t \rightarrow +\infty}\gamma(t) \in \partial\Omega,\qquad \frac{c_{x,y}-1}{2}{y}+{x}
=\lim_{t \rightarrow -\infty}\gamma(t) \in \partial\Omega.
\end{equation}

Now consider the reversed initial velocity $\tilde{y} := -y / F(x, -y) \in S_x \Omega$. By \eqref{geodesiccase2k=-1}, the geodesic $\tilde{\gamma}$ with this initial data
is given by
\[
\tilde{\gamma}(t)=\frac{(c_{x,\tilde{y}}-1) ( e^{2 t}-1)}{2(c_{x,\tilde{y}} e^{2 t} -1) }\tilde{{y}}+{x},
\]
with parameter $c_{x,\tilde{y}} < 0$, since a positive value is incompatible with completeness.

Since the images of $\tilde{\gamma}$ and $\gamma$ coincide,  equation  \eqref{-inftboudnk=-1} implies
\begin{align*}
\frac{c_{x,y}-1}{2c_{x,y}}{y}+{x}&=\gamma(+\infty) =\tilde{\gamma}(-\infty)=\frac{c_{x,\tilde{y}}-1}{2}\tilde{{y}}+{x}=-\frac{c_{x,\tilde{y}}-1}{2F({x},-{y})} {{y}}+{x},\\
\frac{c_{x,y}-1}{2}{y}+{x}&=\gamma(-\infty)=\tilde{\gamma}(+\infty)=\frac{c_{x,\tilde{y}}-1}{2c_{x,\tilde{y}}}\tilde{y}+x=-\frac{c_{x,\tilde{y}}-1}{2 c_{x,\tilde{y}}F(x,-y)}y+x,
\end{align*}
which furnish
\[
c_{x,\tilde{y}}=c_{x,y}^{-1},\quad F({x},-{y})=1=F({x},{y}),\quad \forall {y}\in S_{x}\Omega.
\]
Hence, $F$ is reversible  and therefore a Hilbert metric by Theorem \ref{localstrucK}.
\end{proof}

The following result demonstrates that each case in Theorem~\ref{globalK=-1} is non-vacuous.
\begin{proposition}\label{ExampleK=-1amazing}
Define two positively $1$-homogeneous functions on $\mathbb{R}^n$ by
 $\psi(y) := |y| $  and $\phi(y):= c |y|$, where $c>0$ is a constant.
 For $(x, y) \in \Omega \times \mathbb{R}^n$, let $\Phi_{\pm}(x, y)$ be the unique solutions to the equations
\[ \Phi_{+}(x,y) = (\phi+\psi)(y+ x \Phi_{+}(x,y) ), \qquad \Phi_{-}(x,y) = (\phi-\psi)(y+ x \Phi_{-}(x,y) ), \]
which are explicitly given by
\begin{align*}
\Phi_{+}(x,y) & =  \frac{(c+1)\sqrt{\left( 1- (c+1)^2 |x|^2 \right) |y|^2 + (c+1)^2\langle x,y \rangle^2}  + (c+1)^2 \langle x,y \rangle }{1 - (c+1)^2 |x|^2}, \\
\Phi_{-}(x,y) &= \frac{(c-1) \sqrt{ \left( 1- (c-1)^2 |x|^2 \right) |y|^2 + (c-1)^2\langle x,y \rangle^2}  + (c-1)^2 \langle x,y \rangle }{1 - (c-1)^2 |x|^2}.
\end{align*}
Set
\begin{equation} \label{suffK=-1new1Randersnorm}
F(x,y): = \frac{1}{2} \big\{ \Phi_{+}(x,y) - \Phi_{-}(x,y) \big\},\qquad
 \Omega:= \big\{ x\in \mathbb{R}^n   \ :  \ \phi(x)+\psi(x)  < 1 \big\}.
\end{equation}
Then $(\Omega,F)$ is a forward complete projectively flat Finsler manifold with $\mathbf{K}= - 1$. In particular,
%the following hold:
\begin{enumerate}[\rm (a)]
\item\label{c=0} if $c=0$, then $F$ is a Hilbert metric and $-F(x,y)\leq P(x,y)\leq   F(x,y)$, with equality if and only if $y=\mathbf{0}$;
\item\label{c=1} if $c=1$, then $2F$ is a Funk metric with $F(x,y)\equiv P(x,y)$;
\item\label{c<1} if $c\in (1,+\infty)$, then  $F(x,y)\leq P(x,y)$, with equality if and only if $y=\mathbf{0}$;
\item\label{c>1} if $c\in (0,1)$, then  $-F(x,y)\leq P(x,y)\leq   F(x,y)$, with equality if and only if $y=\mathbf{0}$.
\end{enumerate}
\end{proposition}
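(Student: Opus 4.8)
The plan is to verify directly that the explicit data $\psi(y)=|y|$, $\phi(y)=c|y|$ fall under the hypotheses of Theorem~\ref{thmK=-1globalintro} (equivalently Theorem~\ref{globalK=-1}), and then to read off each of the four sub-cases from the sign of $c-1$ relative to the classification there. First I would check that $\phi+\psi=(c+1)|\cdot|$ is indeed a weak Minkowski norm (in fact a genuine Minkowski norm, being a positive multiple of the Euclidean norm), so that $\Omega=\{(c+1)|x|<1\}=\mathbb{B}^n_{1/(c+1)}(\mathbf 0)$ is a bounded strictly convex domain containing the origin, and that by Theorem~\ref{lemsolexi} the equations $\Phi_\pm=(\phi\pm\psi)(y+x\Phi_\pm)$ have unique solutions on $\Omega\times\mathbb{R}^n$. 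The stated closed forms for $\Phi_\pm$ are obtained by solving the scalar quadratic $\Phi=(c\pm1)|y+x\Phi|$, i.e. $\Phi^2=(c\pm1)^2(|y|^2+2\Phi\langle x,y\rangle+\Phi^2|x|^2)$; taking the root that is nonnegative for $x\in\Omega$ (using Observation~\ref{K=-1remarkindicatrix}) gives exactly the displayed expressions, and one checks $1-(c\pm1)^2|x|^2>0$ on $\Omega$ for $\Phi_+$, while for $\Phi_-$ one notes $(c-1)^2<(c+1)^2$ so the denominator is also positive. Then $F=\tfrac12(\Phi_+-\Phi_-)$ and $P=\tfrac12(\Phi_++\Phi_-)$, and by Theorem~\ref{localstrucK} this $F$ is a projectively flat Finsler metric with $\mathbf K=-1$ near the origin; since $\phi+\psi$ is a weak Minkowski norm with $\Omega=\{\phi+\psi<1\}$, Condition (i) of Theorem~\ref{thmK=-1globalintro} holds, so $(\Omega,F)$ is a forward complete projectively flat Finsler manifold of constant flag curvature $-1$ on all of $\Omega$.

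For the four sub-cases I would simply match $\psi=|\cdot|$ (reversible) and $\phi=c|\cdot|$ against the dichotomy in Theorem~\ref{thmK=-1globalintro}\eqref{K=-1additoncondtion}: (a) $c=0$ gives $\phi=0$, which is odd with $-\psi<\phi<\psi$ on $\Rno$, so case (1) applies, $F$ is a Hilbert metric with $-F\le P\le F$; moreover here $\Phi_-=-\Phi_+$ by inspection (replace $c-1=-1$ by $-(c+1)=-1$), so $P=\tfrac12(\Phi_++\Phi_-)=\tfrac12(\Phi_+(x,y)-\Phi_+(x,-y))$ after using $\Phi_+(x,-y)=-\Phi_-(x,y)$, confirming reversibility. (b) $c=1$ gives $\psi=\phi$, which is case (2): $\Phi_-\equiv0$ since $\phi-\psi=0$, hence $F=\tfrac12\Phi_+=P$, and $2F=\Phi_+$ is the Funk metric of $\Omega$ by Theorem~\ref{funkmetrisufficnecessy}. (c) $c>1$ gives $\psi<\phi$ on $\Rno$, which is case (3): $F\le P$ with equality iff $y=\mathbf 0$; this can also be seen directly since $\Phi_-(x,y)=(c-1)|y+x\Phi_-|\ge0$ forces $P-F=\Phi_-\ge0$. (d) $c\in(0,1)$ makes $\psi$ reversible but $\phi$ not odd (it is even, not odd, unless $c=0$) while still $-\psi<\phi<\psi$ fails?—careful: $\phi=c|\cdot|$ satisfies $0<\phi<\psi$, not $-\psi<\phi<\psi$ in the ``odd'' sense; the point is $\phi$ is \emph{not} odd, so we are in case (4), giving $-F\le P\le F$ with equality iff $y=\mathbf 0$.

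The main obstacle is purely computational rather than conceptual: one must confirm the explicit quadratic-formula expressions for $\Phi_\pm$ genuinely solve \eqref{Phiplusminus} and pick out the correct branch, and—more delicately—verify the claimed pointwise inequalities between $P$ and $F$ in cases (a), (c), (d) directly from the formulas, checking the equality cases occur only at $y=\mathbf 0$. For (c) this is transparent ($P-F=\Phi_-=(c-1)|y+x\Phi_-|$, which vanishes only when $y+x\Phi_-=\mathbf 0$, i.e. $y=\mathbf 0$ by Corollary~\ref{basicproper}\eqref{noezerperto}). For (a) and (d), one has $|P|\le F$ iff $|\Phi_++\Phi_-|\le \Phi_+-\Phi_-$ iff $\Phi_-\le0$ and $\Phi_+\ge0$; the sign $\Phi_+\ge0$ is Observation~\ref{K=-1remarkindicatrix} applied to $\phi+\psi$, while $\Phi_-\le0$ needs $\phi-\psi\le0$, i.e. $c\le1$, which is exactly the hypothesis, and strict inequality off $y=\mathbf 0$ follows since $\phi-\psi<0$ on $\Rno$ when $c<1$ (and $\Phi_-\equiv0$ only when $c=1$, handled in (b)). Thus all the inequality claims reduce to the elementary observations already packaged in Observation~\ref{K=-1remarkindicatrix}, Corollary~\ref{basicproper}, and the comparison Lemma~\ref{varphi_minus}, and the bulk of the write-up is bookkeeping of signs.
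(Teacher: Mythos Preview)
Your argument has a genuine gap at the step where you invoke Theorem~\ref{thmK=-1globalintro}. That theorem begins with the standing hypothesis ``Let $(\Omega,F)$ be a Finsler manifold''; it does not \emph{produce} a Finsler metric from the data $(\psi,\phi)$, it only characterizes when an already-given Finsler metric is forward complete, projectively flat, and has $\mathbf K=-1$. You verify via Theorem~\ref{localstrucK} that $F$ is a Finsler metric \emph{near the origin}, but you never check that the fundamental tensor $([F^2]_{y^iy^j}(x,y))$ is positive definite for every $x\in\Omega$. Satisfying Condition~(i) of Theorem~\ref{thmK=-1globalintro} is not enough by itself: this is exactly the warning issued after Theorem~\ref{uniquenssFinsler} in the introduction, and Example~\ref{K=-1ExampleAmazing} exhibits Randers-type data satisfying all the algebraic hypotheses for which $F$ nevertheless fails to be a Finsler metric on the whole of $\Omega$. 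So the sentence ``Condition (i) of Theorem~\ref{thmK=-1globalintro} holds, so $(\Omega,F)$ is a forward complete projectively flat Finsler manifold \ldots\ on all of $\Omega$'' is circular.

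The paper's proof addresses precisely this missing step, and it is the only nontrivial part. It exploits the spherical symmetry of $F$ (both $\psi$ and $\phi$ are multiples of $|\cdot|$, so $F(\mathbf Qx,\mathbf Qy)=F(x,y)$ for $\mathbf Q\in O(n)$): if the set $\PD=\{x:\text{$F(x,\cdot)$ is a Minkowski norm}\}$ were a proper subset of $\Omega$, rotational invariance would force the connected component $U$ containing $\mathbf 0$ to be a Euclidean ball $\mathbb B^n_\delta(\mathbf 0)$. Proposition~\ref{thmK=-1convex} then constrains the degenerate direction $\breve y$ at a boundary point $\breve x$ to be tangent to $\partial U$ (so $\langle\breve x,\breve y\rangle=0$) and supplies a null eigenvector $\theta$ satisfying the three relations~\eqref{K=-1amazinggijthetaithetaj}. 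An explicit computation with the closed forms for $\Phi_\pm$ shows these force $\theta=\mathbf 0$, a contradiction; hence $\PD=\Omega$ and Proposition~\ref{K=-1positive=complete} gives forward completeness. Your sign analysis for the sub-cases (a)--(d) is correct and matches the paper's concluding paragraph, but that is the easy bookkeeping; to complete the proof you must supply either the symmetry-plus-Proposition~\ref{thmK=-1convex} argument or a direct (and, as the paper warns, ``extremely challenging'') verification that $\det(g_{ij})>0$ on all of $\Omega\times\Rno$.
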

\begin{remark}
In Case \eqref{c>1} of Proposition~\ref{ExampleK=-1amazing}, the metric $F$ is not reversible and hence cannot be a Hilbert metric. Therefore, the inequality $-F\leq P\leq   F$ can be realized in both Hilbert and non-Hilbert settings.
\end{remark}
Although Proposition~\ref{ExampleK=-1amazing} can be verified by direct computation, such an approach is extremely challenging. Instead, we will provide a simpler proof using Proposition~\ref{thmK=-1convex} below.

Based on Theorem \ref{globalK=-1}, the following result characterizes the Hilbert metric from various perspectives.
\begin{corollary}\label{corK=-1S}
Let $(\Omega,F)$ and $\psi,\phi$ be as in Theorem \ref{globalK=-1}. Thus the following statements are equivalent:
\begin{enumerate}[\rm (i)]
\item\label{backwcomopK=-1} $(\Omega,F)$ is backward complete;

\item\label{reverFK=-1} $F$ is reversible (i.e., a Hilbert metric), i.e., $\lambda_F(\Omega)=1$;

\item\label{reversK=-1co} $\ds\Omega=\{ x \in \mathbb{R}^n \,:\, -1< \phi(-x) - \psi(-x)\leq 0 \}$.

\end{enumerate}
\end{corollary}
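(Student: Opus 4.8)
The plan is to prove the cycle of implications $\eqref{backwcomopK=-1}\Rightarrow\eqref{reverFK=-1}\Rightarrow\eqref{reversK=-1co}\Rightarrow\eqref{backwcomopK=-1}$, leaning heavily on Theorem \ref{globalK=-1} which has already done the structural work. For $\eqref{backwcomopK=-1}\Rightarrow\eqref{reverFK=-1}$: this is essentially part \eqref{globalK=-1(i)} of Theorem \ref{globalK=-1}, which states directly that backward completeness forces $F$ to be a Hilbert metric; reversibility ($\lambda_F(\Omega)=1$) is just the definition of a Hilbert metric being reversible, so this step is immediate. For $\eqref{reverFK=-1}\Rightarrow\eqref{backwcomopK=-1}$: if $F$ is reversible then $\lambda_F(\Omega)<+\infty$, and by the general fact recorded in Section \ref{elemFinslergeo} (forward and backward completeness coincide when $\lambda_F(M)<+\infty$), forward completeness of $(\Omega,F)$ upgrades to backward completeness. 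So $\eqref{backwcomopK=-1}\Leftrightarrow\eqref{reverFK=-1}$ is cheap.

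The substantive point is the equivalence with \eqref{reversK=-1co}. For $\eqref{reverFK=-1}\Rightarrow\eqref{reversK=-1co}$: by Theorem \ref{localstrucK}, reversibility of $F$ means $\psi$ is reversible and $\phi$ is odd. Then $\phi(-x)-\psi(-x) = -\phi(x)-\psi(x) = -(\phi(x)+\psi(x))$. By \eqref{K=-1region} in Theorem \ref{globalK=-1}, $\Omega=\{x:\phi(x)+\psi(x)<1\}$; since $\phi+\psi$ is a weak Minkowski norm it is non-negative, so on $\Omega$ we have $0\le \phi(x)+\psi(x)<1$, i.e. $-1<-(\phi(x)+\psi(x))\le 0$, which by the oddness/reversibility rewrites as $-1<\phi(-x)-\psi(-x)\le 0$. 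Conversely, replacing $x$ by $-x$ (and using that $x\in\Omega$ iff $-x\in\Omega$, valid since $\Omega$ is symmetric in the reversible case), one gets exactly the set description in \eqref{reversK=-1co}. For $\eqref{reversK=-1co}\Rightarrow\eqref{reverFK=-1}$, the idea is to compare the two descriptions of $\Omega$: we always have $\Omega=\{\phi+\psi<1\}$ from Theorem \ref{globalK=-1}, and now also $\Omega=\{x: \phi(-x)-\psi(-x)>-1\}$ (the upper bound $\le 0$ being automatic since $-\psi\le 0 \le$... wait, one must check $\phi(-x)-\psi(-x)\le 0$ is forced — this follows because cases \eqref{PgreatFK=-1} and \eqref{K=-1(2)} of Theorem \ref{globalK=-1} would give a different, non-symmetric domain, so one argues by elimination: if $(\Omega,F)$ were in case \eqref{globalK=-1(ii)}, the domain $\{\phi+\psi<1\}$ would not coincide with a set of the form $\{\phi(-\cdot)-\psi(-\cdot)>-1\}$ unless $\phi$ is odd and $\psi$ reversible). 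Matching $\{\phi(x)+\psi(x)<1\}=\{-\phi(-x)+\psi(-x)<1\}$ as subsets of $\mathbb{R}^n$, using the positive $1$-homogeneity of both sides and that they are sublevel sets of positively $1$-homogeneous functions that are positive on $\Rno$, forces $\phi(x)+\psi(x)=\psi(-x)-\phi(-x)$ for all $x$; combined with the triangle-inequality/convexity constraints on $\psi$ and the growth of $\phi$, this yields $\psi(x)=\psi(-x)$ and $\phi(x)=-\phi(-x)$, i.e. $\psi$ reversible and $\phi$ odd, whence $F$ is a Hilbert metric by Theorem \ref{localstrucK}.

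The main obstacle I anticipate is the last implication $\eqref{reversK=-1co}\Rightarrow\eqref{reverFK=-1}$: extracting "$\psi$ reversible and $\phi$ odd" purely from the set-equality of two homogeneous sublevel sets requires care, because a priori $\{f<1\}=\{g<1\}$ for positively $1$-homogeneous $f,g$ positive on $\Rno$ forces $f=g$ (their common $1$-level set is the boundary, and homogeneity propagates this to all of $\Rno$) — this is a clean lemma but one must verify that $\psi(-\cdot)-\phi(-\cdot)$ is indeed positive on $\Rno$ to apply it, which is where Theorem \ref{globalK=-1}\eqref{globalK=-1(i)}'s inequality $-F\le P\le F$ (equivalently $|\phi|\le\psi$, hence $\psi-\phi>0$ and $\psi+\phi>0$ on $\Rno$ in the Hilbert case) would naturally have to be invoked or re-derived. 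Once that positivity is in hand, the identity $\phi+\psi=\psi(-\cdot)-\phi(-\cdot)$ follows, and splitting into even and odd parts finishes the argument.
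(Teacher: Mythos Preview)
Your plan is sound and will go through. The equivalences $(\mathrm{i})\Leftrightarrow(\mathrm{ii})$ and the implication $(\mathrm{ii})\Rightarrow(\mathrm{iii})$ match the paper's argument essentially verbatim. For $(\mathrm{iii})\Rightarrow(\mathrm{ii})$ you take a genuinely different and cleaner route than the paper. Both arguments begin by using the trichotomy of Theorem~\ref{globalK=-1} at the origin to eliminate the cases $\phi\ge\psi$ on $\Rno$ (which force the set in $(\mathrm{iii})$ to be $\{\mathbf{0}\}$ or all of $\mathbb{R}^n$, neither equal to the bounded open $\Omega$), leaving only $-\psi<\phi<\psi$ on $\Rno$; this case-elimination is how you should obtain the positivity of $g(x):=\psi(-x)-\phi(-x)$, not by invoking Theorem~\ref{globalK=-1}\eqref{globalK=-1(i)}, which would be circular. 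From this point the paper reverts to the explicit geodesic parametrization $\gamma(t)=\frac{(c_y-1)(e^{2t}-1)}{2(c_ye^{2t}-1)}y$ from the proof of Theorem~\ref{globalK=-1}, matches the boundary endpoints in the directions $y$ and $\tilde y=-y/\psi(-y)$, and solves for $c_{\tilde y}=c_y^{-1}$ to read off $\psi(-y)=\psi(y)$ and $\phi(-y)=-\phi(y)$. Your route stays at the level of homogeneous functions: with $f=\phi+\psi$ and $g$ both positive and positively $1$-homogeneous on $\Rno$, the set equality $\{f<1\}=\{g<1\}$ forces $f=g$ (each ray from the origin meets the common boundary exactly once), and then $\phi(x)+\phi(-x)=\psi(-x)-\psi(x)$ equates an even function with an odd one, so both vanish---no convexity or triangle-inequality input is needed for this last step. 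The paper's geodesic computation buys nothing additional here; your argument is shorter and avoids reopening the ODE analysis.
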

\begin{proof}
Theorem~\ref{globalK=-1} immediately gives the equivalence \eqref{backwcomopK=-1} $\Leftrightarrow$ \eqref{reverFK=-1}.
It remains to prove \eqref{reverFK=-1} $\Leftrightarrow$ \eqref{reversK=-1co}. For notational convenience, set
\[
\Omega_-:=\big\{ x \in \mathbb{R}^n \,:\, -1< \phi(-x) - \psi(-x)\leq 0 \big\}.
\]

\noindent
\textbf{\eqref{reverFK=-1}$\Rightarrow$\eqref{reversK=-1co}:}
If $F$ is reversible (i.e., a Hilbert metric), then by Theorem~\ref{localstrucK}, $\psi$ is reversible (even) and $\phi$ is odd. Consequently, $\phi(-x) - \psi(-x) = -(\phi(x) + \psi(x))$, and thus $\Omega_- = \Omega$ by \eqref{K=-1region}.

\noindent
\textbf{\eqref{reverFK=-1}$\Leftarrow$\eqref{reversK=-1co}:}
Assume $\Omega = \Omega_-$. Then for every $x \in \partial\Omega$, we have
\begin{equation}\label{K=-1Fstarphipsi-x}
 \phi(-x) - \psi(-x) = -1, \qquad \phi(x) + \psi(x) =1.
\end{equation}
%Note that $\phi(-x) - \psi(-x)$ cannot be zero, otherwise $\phi=\psi$ and hence, $\Omega_-=\mathbb{R}^n\supsetneq \Omega$, which is a contradiction.
By Theorem~\ref{globalK=-1} and \eqref{defphsiK=-1}, the pair $(\psi, \phi)$ must satisfy one of the following:
\begin{enumerate}[\rm \ \ \ \ (a)]
\item\label{PFK=-11} $\phi(y)\geq \psi(y)$ for all $y$, with equality only at $y = \mathbf{0}$;

\item\label{PFK=-12} $\phi(y)=\psi(y)$ for all $y\in \mathbb{R}^n$;

\item\label{PFK=-13} $-\psi(y)\leq \phi(y)\leq \psi(y)$ for all $y$, with equality only at $y = \mathbf{0}$.
\end{enumerate}
Case \eqref{PFK=-11} implies $\Omega_-=\{\mathbf{0}\}\subsetneq \Omega$, while Case \eqref{PFK=-12} gives $\Omega_-=\mathbb{R}^n\supsetneq \Omega$.
Both contradict $\Omega = \Omega_-$.
Therefore, only Case \eqref{PFK=-13} is  possible, which corresponds to Case  (3) in Theorem~\ref{globalK=-1}/\eqref{globalK=-1(ii)}.

Now, let $\gamma(t)$ be the unit-speed geodesic from the origin $\mathbf{0}$ with initial velocity $y \in S_{\mathbf{0}}\Omega$. By the proof of Theorem~\ref{globalK=-1}, it has the form
\[
\gamma(t)=\frac{(c_y-1) ( e^{2 t}-1)}{2(c_y e^{2 t} -1) }{y},\qquad c_y:=c_{\mathbf{0},{y}}<0.
\]
Since $\frac{c_y-1}{2c_y}{y}=\lim_{t\rightarrow +\infty}\gamma(t)\in \partial\Omega$,
it follows by \eqref{K=-1Fstarphipsi-x} and \eqref{defphsiK=-1} that
\[
\phi\left( -\frac{c_y-1}{2c_y}{y}  \right)-\psi\left(  -\frac{c_y-1}{2c_y}{y} \right)=-1,\quad \phi\left( \frac{c_y-1}{2c_y}{y}  \right)+\psi\left(  \frac{c_y-1}{2c_y}{y} \right)=1, \quad \psi({y})=1.
\]
By homogeneity, these  relations yield
\begin{equation}\label{basiccc1}
\phi\left( - {y}  \right)-\psi\left(  - {y} \right)=-\frac{2c_y}{c_y-1},\qquad \phi({y})=\frac{c_y+1}{c_y-1}.
\end{equation}

On the other hand, we replace $y$ with $\tilde{{y}}:=-{y}/\psi(-{y})\in S_{\mathbf{0}}\Omega$ in \eqref{basiccc1}  and obtain
\begin{equation}\label{basiccc2}
 \phi({y})-1=-\frac{2c_{\tilde{y}}}{c_{\tilde{y}}-1}\psi(-{y}),  \qquad \phi(-{{y}})=\frac{c_{\tilde{y}}+1}{c_{\tilde{y}}-1}\psi(-{y}).
\end{equation}
From \eqref{basiccc1} and \eqref{basiccc2}, it is easy to see
\[
c_{\tilde{y}}=c_y^{-1},\quad  \psi(-{y})=1=\psi({y}),\quad  \phi(-{y})=-\phi({y}).
\]
Hence, $\psi$ is even and $\phi$ is odd, so $F$ is a reversible Hilbert metric by Theorem~\ref{localstrucK}.
\end{proof}

The forward completeness of the manifold guarantees the global uniqueness of solutions to equation \eqref{Phiplusminus}, in contrast to the local uniqueness discussed in Remark~\ref{diffundeK=-1}.

\begin{proposition}\label{uniqunesslemmaK=-1}
Let $(\Omega,F)$ and $\psi,\phi$ be  as in Theorem \ref{globalK=-1}.
Then for any $(x,y)\in \Omega\times \mathbb{R}^n$, the equations
\begin{equation} \label{K=-1phi-psisolution}
\Phi_{+}(x,y) = (\phi + \psi)(y + x \Phi_{+}(x,y)),\qquad \Phi_{-}(x,y) = (\phi - \psi)(y + x \Phi_{-}(x,y))
\end{equation}
admit unique solutions $\Phi_{+}(x,y)$, $\Phi_{-}(x,y)$.
Moreover,
\begin{equation}\label{uniquessPhipm1111}
\Phi_\pm(x,y)=P(x,y)\pm F(x,y), \quad \forall (x,y)\in \Omega\times \mathbb{R}^n.
\end{equation}

%where $\Omega$ is given by \eqref{K=-1region}, i.e.
%\begin{equation*}
% \Omega = \{ x \in \mathbb{R}^n \,:\, \phi(x) + \psi(x) < 1 \}.
%\end{equation*}
\end{proposition}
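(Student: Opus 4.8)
The plan is to leverage the four-case classification in Theorem~\ref{globalK=-1}, reducing the entire statement to the $\Phi_+$-equation (which falls directly under Theorem~\ref{lemsolexi}) plus two essentially degenerate situations. Since Theorem~\ref{globalK=-1} already tells us that $\phi+\psi$ is a weak Minkowski norm with $\Omega=\{\phi+\psi<1\}$, Theorem~\ref{lemsolexi} yields at once a unique solution $\Phi_+$ of $\Phi_+(x,y)=(\phi+\psi)(y+x\Phi_+(x,y))$ on $\Omega\times\mathbb{R}^n$. What remains is to produce the identity $\Phi_\pm=P\pm F$ and to settle uniqueness for $\Phi_-$.

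To see that $P\pm F$ actually solves \eqref{K=-1phi-psisolution}, I would start from Berwald's equations \eqref{Berwald2} specialized to $\mathbf{K}=-1$, namely $F_{x^k}=P_{y^k}F+PF_{y^k}$ and $P_{x^k}=PP_{y^k}+FF_{y^k}$; adding and subtracting gives the Funk-type identity $(P\pm F)_{x^k}=(P\pm F)\,(P\pm F)_{y^k}$ on $\Omega\times\Rno$, which is exactly the form \eqref{Funk}. Fix $(x_0,y_0)\in\Omega\times\Rno$, set $a_0:=(P\pm F)(x_0,y_0)$, and follow the curve $c(t):=\big((1+t)x_0,\ y_0-t\,a_0\,x_0\big)$ for $t\in[-1,0]$; its first component traverses the segment from $\mathbf{0}$ to $x_0$ inside the convex domain $\Omega$. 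Writing $g(t):=(P\pm F)(c(t))-a_0$, the PDE produces a homogeneous linear ODE $g'(t)=\big(x_0^k(P\pm F)_{y^k}(c(t))\big)\,g(t)$ with $g(0)=0$, whence $g\equiv 0$; a short argument (using that $P\pm F$ extends continuously through $y=\mathbf{0}$, where it vanishes, while $a_0\neq 0$ unless $y_0=\mathbf{0}$) shows the second component of $c(t)$ never vanishes, so the ODE is valid on all of $[-1,0]$. Evaluating at $t=-1$, where $c(-1)=(\mathbf{0},\,y_0+a_0x_0)$, gives $a_0=(\phi\pm\psi)(y_0+x_0a_0)$, i.e.\ the functional equation; the degenerate case $y_0=\mathbf{0}$ is immediate. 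In particular $P+F$ solves the $\Phi_+$-equation, so $\Phi_+=P+F$ by the uniqueness above.

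For $\Phi_-$ I would argue case by case along Theorem~\ref{globalK=-1}. If $\phi=\psi$, the equation forces $\Phi_-\equiv 0$, which also equals $P-F$. In the Hilbert case (Theorem~\ref{globalK=-1}/\eqref{globalK=-1(i)}), where $\psi$ is even and $\phi$ odd, any solution $\widetilde\Phi_-$ of $\widetilde\Phi_-(x,y)=(\phi-\psi)(y+x\widetilde\Phi_-(x,y))$ produces, via $y\mapsto-y$ together with the parity relation $(\phi-\psi)(-z)=-(\phi+\psi)(z)$, a solution $(x,y)\mapsto-\widetilde\Phi_-(x,-y)$ of the $\Phi_+$-equation on $\Omega\times\mathbb{R}^n$; uniqueness of $\Phi_+$ then forces $\widetilde\Phi_-(x,y)=-\Phi_+(x,-y)=(P-F)(x,y)$. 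In the two remaining cases $\phi-\psi$ has a constant sign on $\Rno$, and after rewriting the equation (replacing $x$ by $-x$ and, when $\phi-\psi<0$, replacing $\phi-\psi$ by $\psi-\phi$) I would invoke Lemma~\ref{K=-1lemintersectpoints}, which turns ``number of solutions'' into ``number of intersections of a ray with the indicatrix $(\phi-\psi)^{-1}(1)$''. This number is $1$: one intersection exists because $P-F$ is a solution, and a second one is excluded by the explicit form of the geodesics in the proof of Theorem~\ref{globalK=-1}, namely that the straight line through $x$ in direction $y$ meets $(\phi+\psi)^{-1}(1)=\partial\Omega$ at parameter $1/(P+F)(x,y)$ and $(\phi-\psi)^{-1}(1)$ at $1/(P-F)(x,y)$, with strict convexity of $\Omega$ controlling the former and monotonicity of the geodesic ODE's solution $f$ controlling the latter. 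Finally, \eqref{uniquessPhipm1111} is just the combination $\Phi_\pm=P\pm F$.

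The delicate point is this last exclusion of a second root. Because $\phi-\psi$ is only positively $1$-homogeneous—not convex—the level set $(\phi-\psi)^{-1}(1)$ need not be a convex hypersurface, so Theorem~\ref{lemsolexi} and Proposition~\ref{quasitostrictlyconvex} do not apply directly. The leverage I expect to use is twofold: first, $\pm(P-F)$ obeys the Funk-type PDE \eqref{Funk}; second, the root $\zeta=(P-F)(x,y)$ of $t\mapsto t-(\phi-\psi)(y+xt)$ is \emph{simple}, since differentiating the (already established) identity $P-F=(\phi-\psi)(y+x(P-F))$ and contracting with $y^k$ via Euler's theorem \eqref{Eulerhter} shows that the implicit-function denominator $1-(\phi-\psi)_{y^i}(y+x(P-F))\,x^i$ equals $1$. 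Simplicity makes the solution locally unique and smooth, and propagating this along the segments $s\mapsto sx$ from the origin—where every solution reduces to $\phi-\psi$—should upgrade local uniqueness to global uniqueness on $\Omega\times\mathbb{R}^n$.
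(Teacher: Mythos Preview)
Your argument is sound through the $\Phi_+$ equation, the case $\phi=\psi$, and the Hilbert case (the parity trick reducing $\Phi_-$ to $\Phi_+$ via $(\phi-\psi)(-z)=-(\phi+\psi)(z)$ is elegant and not in the paper). Your derivation that $P\pm F$ satisfies the functional equation by integrating the Funk-type PDE along segments is also correct.

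The gap is in the two non-Hilbert, non-Funk cases, where $\phi-\psi$ has a fixed nonzero sign on $\Rno$. Your simplicity claim is false: differentiating $P-F=(\phi-\psi)(y+x(P-F))$ in $y^k$ and contracting with $y^k$ via Euler's theorem gives only the tautology $(P-F)\bigl(1-(\phi-\psi)_{y^i}(\xi)x^i\bigr)=(\phi-\psi)_{y^k}(\xi)y^k$, not that the bracket equals $1$. If instead you contract with $x^k$ you find
\[
1-(\phi-\psi)_{y^i}(\xi)x^i \;=\; \frac{1}{\,1+x^k(P-F)_{y^k}(x,y)\,},
\]
which is generically $\neq 1$ and whose nonvanishing you have not established (Corollary~\ref{basicproper} does this only under convexity of $\varphi$, which $\phi-\psi$ lacks). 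Without simplicity, two roots of $t\mapsto t-(\phi-\psi)(y+xt)$ can coalesce and re-split as $x$ varies, so your propagation along segments from the origin does not close.

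The paper handles these cases by a different, genuinely geometric idea. It proves (Step~1) that for \emph{every} basepoint $\bar{x}\in\Omega$ and every $y\neq\mathbf{0}$, the ray from $\bar{x}$ in direction $y$ meets the level set $(\bar\phi-\bar\psi)^{-1}(1)$ exactly once, where $\bar\phi:=P(\bar{x},\cdot)$ and $\bar\psi:=F(\bar{x},\cdot)$. The point is that forward completeness (through the explicit geodesic parameter $c_{y}$ in \eqref{geodesiccase2k=-1}) pins down this intersection as $c_y s_0 y$, with $s_0$ the exit parameter to $\partial\Omega$; uniqueness then follows from homogeneity. In Step~2 the paper runs a continuation argument: if $\mathscr{D}\subsetneq\Omega$ is the maximal uniqueness domain, pick $\bar{x}\in\partial\mathscr{D}\cap\Omega$, and compare the indicatrix identities \eqref{strixinx} at a nearby $\hat{x}\in\mathscr{D}$ obtained from the two basepoints $\mathbf{0}$ and $\bar{x}$ via Proposition~\ref{transformforK=-1}, reaching a contradiction with Step~1. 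The essential input you are missing is this ray--indicatrix uniqueness \emph{at every basepoint}, which is where forward completeness actually enters.
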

\begin{proof}
If the solutions $\Phi_{\pm}(x,y)$ are unique, then \eqref{uniquessPhipm1111} follows directly from Theorem~\ref{localstrucK}.
Furthermore, since $\phi+\psi$ is a weak Minkowski norm,
Theorem~\ref{lemsolexi} immediately implies the uniqueness of $\Phi_+$.
Thus, it suffices to prove the uniqueness of $\Phi_{-}$.

By Theorem~\ref{globalK=-1} and \eqref{defphsiK=-1}, the functions $\psi$ and $\phi$ must satisfy one of the following:
\begin{enumerate}[\rm \ \ \ \  (a)]
\item\label{phi_-uniq1} $\phi(y) - \psi(y) \geq 0$ for all $y$, with equality if and only if $y=\mathbf{0}$;

\smallskip

\item\label{phi_-uniq2} $-2\psi(y) \leq \phi(y) - \psi(y) \leq  0$ for all $y$, with equality if and only if $y=\mathbf{0}$;

\smallskip

\item\label{phi_-uniq3} $\phi(y) - \psi(y)=0$ for all $y\in \mathbb{R}^n$.

\end{enumerate}
In Case \eqref{phi_-uniq3} the only solution to \eqref{K=-1phi-psisolution}$_2$ is $\Phi_{-} \equiv 0$.
It remains to establish uniqueness in Cases~\eqref{phi_-uniq1} and~\eqref{phi_-uniq2}.

\medskip\noindent
\textbf{Case  \eqref{phi_-uniq1}.} Note that it is
exactly Case (1) in Theorem \ref{globalK=-1}/\eqref{globalK=-1(ii)}.
The proof is divided into two steps.

{\bf Step 1}.  In this step, we show that for every $\bar{x} \in \Omega$ and $y\in T_{\bar{x}}\mathbb{R}^n\backslash\{\mathbf{0}\}$,
 there exists a unique  $s_1 > 0$ such that
$(\bar{\phi} - \bar{\psi})(s_1 y ) = 1$,
where $\bar{\phi}(y) := P(\bar{x}, y)$ and $\bar{\psi}(y) := F(\bar{x}, y)$.

First consider the case $\bar{x} = \mathbf{0}$.
For a given $y\in \Rno$, let $l_{\mathbf{0} y}$ denote the ray from the origin $\mathbf{0}$ in the direction $y$, i.e.,
\[
 l_{\mathbf{0} y}(s) := s y, \quad s\in [0,+\infty).
 \]
By the convexity of $\Omega$, this ray $ l_{\mathbf{0} y}$ intersects $\partial\Omega$ at a unique point $s_0 y$ for some $s_0>0$. Equation \eqref{K=-1region} then
gives
\begin{equation}\label{s0y1}
\phi(s_0y)+\psi(s_0y)=1.
\end{equation}
From the proof of Theorem~\ref{globalK=-1} (see \eqref{geodesiccase2k=-1}), the ray $l_{\mathbf{0} y}(s)$ for $s\in [0,s_0)$ coincides the image of the geodesic
 \[
 \gamma(t)=\frac{(c_y-1) ( e^{2 t}-1)}{2(c_y e^{2 t} -1) }\frac{y}{\psi(y)},\quad t\in [0,+\infty),
 \]
 where $c_y:=c_{\mathbf{0},y}>1$. Thus, the boundary point is given by
\begin{equation}\label{boundary}
s_0y=\lim_{t\rightarrow +\infty}\gamma(t)=\frac{c_y-1}{2 c_y}\frac{y}{\psi(y)}\in \partial\Omega.
\end{equation}
Using homogeneity  and \eqref{s0y1}, we find that
\[
\phi(c_ys_0 y) - \psi(c_ys_0 y) =c_y\left[ \phi(s_0 y) - \psi(s_0 y)  \right]=c_y\left[  1-2\psi(s_0y) \right]=
c_y\left[ 1 - 2\psi\left(\frac{c_y-1}{2c_y}\frac{y}{\psi(y)}\right) \right]= 1,
\]
so $c_y s_0y$ is an intersection point of $l_{\mathbf{0} y}$ and $(\phi-\psi)^{-1}(1)$. The uniqueness of the intersection point follows by the homogeneity of $(\phi-\psi)$.

For a general point $\bar{x}\in \Omega$, define $\bar{\phi}(y) := P(\bar{x}, y)$ and $\bar{\psi}(y): = F(\bar{x}, y)$.
By  translating $\bar{x}$ to the origin in $\Omega$, the functions $\bar{\psi},\bar{\phi}$ become the initial data of $F,P$ at this new origin.
The same argument as above then yields a unique  $s_1 > 0$ satisfying $(\bar{\phi} - \bar{\psi})(s_1 y ) = 1$.

\smallskip

{\bf Step 2}.  In this step, we show that the solution $\Phi_{-}(x,y)$ to equation \eqref{K=-1phi-psisolution}$_2$ exists uniquely.

Since $\phi+\psi$ is a weak Minkowski norm,
Lemma~\ref{varphi_minus} (with $\varphi := \phi + \psi$ and $\tilde{\varphi} := \phi - \psi$) guarantees the existence of a solution.
It remains to show the uniqueness.

%Owing to Lemma \ref{Sh1Lemma5.1}, there is a small ball $\mathbb{B}^n_\delta(\mathbf{0})$ such that
 %$\Phi_{-}(x,y)$  is unique for any $(x, y) \in \mathbb{B}^n_\delta(\mathbf{0}) \times \mathbb{R}^n$.
In view of Lemma \ref{Sh1Lemma5.1}, let $\mathscr{D}$ be the
largest connected subset of $\Omega$ containing the origin  such that for every $(x, y) \in  \mathscr{D}\times \mathbb{R}^n$, the solution $\Phi_{-}(x,y)$  is unique.
Then a translation shows that $\mathscr{D}$ is an open set. Moreover, we have
\[
\mathscr{D}\subset \Omega=\{\phi+\psi<1\}\subset \{\phi-\psi<1\}.
\]
Since $(\phi-\psi)|_{\Rno}>0$,   Observation  \ref{K=-1remarkindicatrix} implies that $\Phi_-(x,y)\geq 0$, with equality only when $y=\mathbf{0}$, and
\begin{equation}\label{strixinx}
 \{ y\in\mathbb{R}^n\, :\,  \Phi_{-}(x,y) = 1 \} = \{ y\in\mathbb{R}^n\, :\,  (\phi - \psi)( y ) = 1 \} - x, \quad \forall x\in \mathscr{D}.
 \end{equation}

Suppose, for contradiction,  that  $\mathscr{D} \subsetneq \Omega $. Then
there exists a point $\bar{x} \in \partial\mathscr{D}\cap \Omega$ and some $\bar{y}\in \Rno$ for which the solution $\Phi_-(\bar{x},\bar{y})$  is not unique.
%Thus,  it follows by Theorem \ref{globalK=-1}/\eqref{globalK=-1(ii)}(1) that $\bar{\phi}(y)-\bar{\psi}(y)\geq 0$ with equality if and only if $y=\mathbf{0}$.
 Lemma \ref{K=-1lemintersectpoints} then implies that the ray $\{\bar{x}+s\bar{y}\,:\,s\geq 0\}$ intersects $(\phi - \psi)^{-1}(1)$  more than once.
 However, Step~1 shows that the  ray $\{s\bar{y}\,:\,s\geq 0\}$
intersects  $(\bar{\phi} - \bar{\psi})^{-1}(1)$ exactly once, where
$\bar{\phi}(y):=P(\bar{x},y)$ and $\bar{\psi}(y):=F(\bar{x},y)$.
Hence, translation by $\bar{x}$ establishes
 \begin{equation} \label{K=1secContra}
 (\phi - \psi)^{-1}(1) \neq  (\bar{\phi} - \bar{\psi})^{-1}(1)+\bar{x}.
\end{equation}

Treating $\bar{x}$  as a ``new origin'', Theorem~\ref{globalK=-1} implies that $ \bar{\phi}+\bar{\psi}$ is a weak Minkowski norm and $\Omega-\bar{x}=\{\bar{\phi}+\bar{\psi}<1 \}$. The existence and uniqueness of the solution $\bar{\Phi}_+(x,y)$ to the equation
\begin{equation*} %\label{K=-1barphi-psiunique}
\bar{\Phi}_{+}(x,y)=(\bar{\phi}
+ \bar{\psi})\big( y+(x-\bar{x}) \bar{\Phi}_{+}(x,y) \big), \quad \forall (x,y)\in \Omega\times \mathbb{R}^n,
\end{equation*}
then guaranteed by Theorem~\ref{lemsolexi}.
Similarly, Lemma \ref{Sh1Lemma5.1} and a translation argument yield a ball $\mathbb{B}^n_{\delta}(\bar{x})$ on which the solution $\bar{\Phi}_{-}(x,y)$ to
the corresponding equation for $\bar{\phi} - \bar{\psi}$ is unique.
%the following equation:
%\begin{align*}
%\bar{\Phi}_{-}(x,y)=(\bar{\phi} - \bar{\psi})\big( y+(x-\bar{x}) \bar{\Phi}_{-}(x,y) \big),\quad \forall  (x, y) \in \mathbb{B}^n_{\delta}(\bar{x})\times \mathbb{R}^n.
%\end{align*}

Now choose a point $\hat{x} \in \mathbb{B}^n_{\delta}(\bar{x})\cap \mathscr{D}\neq\emptyset $. Then  both  $\bar{\Phi}_\pm(\hat{x},y)$ are uniquely determined as solutions to
\begin{equation} \label{K=-1barphi-psiunique}
\bar{\Phi}_{\pm}(\hat{x},y)=(\bar{\phi}
\pm \bar{\psi})\left( y+(\hat{x}-\bar{x}) \bar{\Phi}_{\pm}(\hat{x},y) \right), \quad \forall y\in \mathbb{R}^n.
\end{equation}
 Since $\bar{\phi},\bar{\psi}$ are the initial data at $\bar{x}$,
 Theorem \ref{localstrucK} combined with a translation implies that
 \[
\bar{\Phi}_-(\hat{x},y)=P(\hat{x},y) - F(\hat{x},y).
\]
On the other hand, since $\hat{x}\in \mathscr{D}$, the solutions ${\Phi}_{\pm}(\hat{x},y)$ are also unique, and Theorem \ref{localstrucK} gives
${\Phi}_-(\hat{x},y)=P(\hat{x},y) - F(\hat{x},y)$. Hence,  actually
$\Phi_-(\hat{x},y)=\bar{\Phi}_-(\hat{x},y)$. The uniqueness of $\bar{\Phi}_-(\hat{x},y)$ in \eqref{K=-1barphi-psiunique} then yields
\begin{align}
\{ y\in \mathbb{R}^n\, :\,  \Phi_{-}(\hat{x},y) = 1 \}  =  \{ y\in \mathbb{R}^n\, :\, (\bar{\phi} - \bar{\psi})(y+\hat{x}-\bar{x}) = 1 \},
\end{align}
which together with \eqref{strixinx} implies
\[
 \{ y\in \mathbb{R}^n\, :\, (\phi - \psi)(y) = 1 \} - \hat{x} =  \{ y\in \mathbb{R}^n\, :\, (\bar{\phi} - \bar{\psi})(y) = 1 \} - \hat{x} + \bar{x}.
\]
However, this contradicts \eqref{K=1secContra}.
Therefore, $\mathscr{D}=\Omega$, and the uniqueness of $\Phi_{-}(x, y)$ follows.

\medskip\noindent
\textbf{Case  \eqref{phi_-uniq2}.}
This case is handled analogously to Case~\eqref{phi_-uniq1}, mutatis mutandis for the condition $-2\psi(y) \le \phi(y) - \psi(y) \le 0$ and the analysis of the level sets ${\phi - \psi = -1}$. The details are omitted.
%In this case, one needs to consider the situation when   $ \phi  - \psi =-1$ or $ \bar{\phi}  - \bar{\psi} =-1$. We omit the proof because it is
%similar to that of Case \eqref{phi_-uniq1}.
\end{proof}

The following corollary is an immediate consequence of Proposition~\ref{uniqunesslemmaK=-1}.
\begin{corollary}\label{K=-1sufP>=F}
Let $\Omega\subset \mathbb{R}^n$ be a domain containing the origin. Given a Minkowski norm $\psi$  and a positively $1$-homogeneous function $\phi$ on $\mathbb{R}^n$,
 there exists at most one Finsler metric $F$ on $\Omega$ such that $(\Omega,F)$ is a forward complete projectively flat Finsler manifold with $\mathbf{K}=-1$ and
\[
\psi(y)=F(\mathbf{0},y),\qquad \phi(y)=P(\mathbf{0},y).
\]
\end{corollary}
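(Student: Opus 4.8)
The plan is to deduce Corollary~\ref{K=-1sufP>=F} directly from Proposition~\ref{uniqunesslemmaK=-1} together with the local structure Theorem~\ref{localstrucK}. Suppose $F$ and $\widetilde F$ are two Finsler metrics on $\Omega$ such that both $(\Omega,F)$ and $(\Omega,\widetilde F)$ are forward complete projectively flat Finsler manifolds with $\mathbf K=-1$ and sharing the same initial data $\psi(y)=F(\mathbf 0,y)=\widetilde F(\mathbf 0,y)$ and $\phi(y)=P(\mathbf 0,y)=\widetilde P(\mathbf 0,y)$, where $P,\widetilde P$ denote the respective projective factors. First I would invoke Proposition~\ref{uniqunesslemmaK=-1} applied to $(\Omega,F)$: it guarantees that for every $(x,y)\in\Omega\times\mathbb R^n$ the two equations in \eqref{K=-1phi-psisolution} have \emph{unique} solutions $\Phi_\pm(x,y)$, and moreover $\Phi_\pm(x,y)=P(x,y)\pm F(x,y)$. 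Applying the same proposition to $(\Omega,\widetilde F)$ — which is legitimate because $\widetilde F$ has the same initial data $\psi,\phi$, so the functions $\phi\pm\psi$ appearing in \eqref{K=-1phi-psisolution} are literally the same — yields unique solutions $\widetilde\Phi_\pm(x,y)$ to the identical equations, with $\widetilde\Phi_\pm(x,y)=\widetilde P(x,y)\pm\widetilde F(x,y)$.

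The key point is then that the defining equations for $\Phi_\pm$ and for $\widetilde\Phi_\pm$ are the very same equations, namely $\Phi(x,y)=(\phi\pm\psi)(y+x\Phi(x,y))$, posed on the same domain $\Omega\times\mathbb R^n$; their uniqueness — established independently from each metric by Proposition~\ref{uniqunesslemmaK=-1} — forces $\Phi_\pm(x,y)=\widetilde\Phi_\pm(x,y)$ for all $(x,y)\in\Omega\times\mathbb R^n$. Consequently
\[
F(x,y)=\tfrac12\{\Phi_+(x,y)-\Phi_-(x,y)\}=\tfrac12\{\widetilde\Phi_+(x,y)-\widetilde\Phi_-(x,y)\}=\widetilde F(x,y),
\]
and likewise $P=\widetilde P$. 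This proves $F=\widetilde F$ on all of $\Omega$, which is exactly the asserted uniqueness.

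One subtlety I would be careful about: Proposition~\ref{uniqunesslemmaK=-1} is stated for ``$(\Omega,F)$ and $\psi,\phi$ as in Theorem~\ref{globalK=-1}'', so I must first observe that any forward complete projectively flat Finsler manifold $(\Omega,F)$ with $\mathbf K=-1$ whose initial data are $\psi,\phi$ does satisfy the hypotheses of Theorem~\ref{globalK=-1} — in particular, that the \emph{same} $\Omega$ is forced, namely $\Omega=\{\phi+\psi<1\}$ by \eqref{K=-1region}. Since both $F$ and $\widetilde F$ are assumed to live on the given domain $\Omega$ and to have the same $\psi,\phi$, this identification of $\Omega$ is automatic and consistent. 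The only genuine content is the global uniqueness of solutions to \eqref{Phiplusminus}, and that is precisely what Proposition~\ref{uniqunesslemmaK=-1} supplies; so the ``main obstacle'' has already been overcome there (its Case~\eqref{phi_-uniq1}, Step~2 argument via the largest connected set $\mathscr D$ on which uniqueness holds), and the corollary is a short formal consequence. I would keep the written proof to a few lines, citing Proposition~\ref{uniqunesslemmaK=-1} and Theorem~\ref{localstrucK} and noting that the equations \eqref{Phiplusminus} depend only on the shared data $\psi,\phi$ and the shared domain $\Omega$.
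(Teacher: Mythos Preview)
Your proposal is correct and matches the paper's approach exactly: the paper states that Corollary~\ref{K=-1sufP>=F} is ``an immediate consequence of Proposition~\ref{uniqunesslemmaK=-1}'' without further detail, and your argument spells out precisely this deduction---the shared initial data $\psi,\phi$ force the same defining equations \eqref{K=-1phi-psisolution}, whose global uniqueness (supplied by Proposition~\ref{uniqunesslemmaK=-1}) then forces $\Phi_\pm=\widetilde\Phi_\pm$ and hence $F=\widetilde F$. Your careful remark that Theorem~\ref{globalK=-1} pins down $\Omega=\{\phi+\psi<1\}$ for both metrics is a valid consistency check, though not strictly needed since the corollary already assumes both metrics live on the same given $\Omega$.
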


We now prove Theorem~\ref{uniquenssFinsler}, which is the uniqueness result stated in the Introduction.
\begin{proof}[Proof of Theorem \ref{uniquenssFinsler}]
By the Bonnet--Myers theorem (cf. \cite[Theorem 7.7.1]{BCS}),
 every projectively flat Finsler manifold $(\Omega,F)$ of constant positive flag curvature cannot be forward complete because $\Omega$ is noncompact.  Therefore, it suffices to consider the case of  non-positive constant flag curvature, which follows directly from
 Corollaries~\ref{existsK=0metric} and~\ref{K=-1sufP>=F}.
 \end{proof}

We also derive an explicit formula for the distance function on a (not necessarily complete) projectively flat Finsler manifold with $\mathbf{K} = -1$.

\begin{theorem}  \label{lemK=-1dx_0xdx_0}
Let $(\Omega,F)$ be a projectively flat Finsler manifold with $\mathbf{K}=-1$,  where $\Omega\subset \mathbb{R}^n$ is a strictly convex domain containing the origin.
Then for any $x_1,x_2\in \Omega$, %the distance between them is
\begin{equation}\label{eqK=-1dx_0xdxx_0}
d_F(x_1,x_2) =  \frac12\ln \left(  \frac{1-P(x_1, x_2-x_1)+F(x_1, x_2-x_1)}{1-P(x_1, x_2-x_1)-F(x_1, x_2-x_1)} \right).
\end{equation}
In particular, with $\psi(y):=F(\mathbf{0},y)$ and $\phi(y):= P(\mathbf{0},y) $, this implies
\begin{equation}\label{coreqK=-1dx_0xdxx_01122}
d_F(\mathbf{0},x) =  \frac{1}{2} \ln \left( \frac{1 -  \phi(x) + \psi(x)}{1 - \phi(x) - \psi(x)}\right),
\qquad d_F(x,\mathbf{0})=\frac12 \ln \left( \frac{1+\phi(-x)+\psi(-x)}{1+\phi(-x)-\psi(-x)} \right).
\end{equation}
\end{theorem}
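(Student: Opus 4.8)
The plan is to follow the template of the proof of Theorem~\ref{K=0lemdx_0xdxx_0}. Abbreviate $a := F(x_1, x_2-x_1) > 0$ and $b := P(x_1, x_2-x_1)$, and set $y := (x_2 - x_1)/a$, so that $F(x_1,y) = 1$ and, by $1$-homogeneity of $P$ in $y$, $P(x_1,y) = b/a$. Since $\Omega$ is strictly convex, the Euclidean segment joining $x_1$ to $x_2$ lies in $\Omega$ and, by projective flatness, is the image of a geodesic; because straight lines are shortest paths for a projectively flat metric, this geodesic realizes $d_F(x_1,x_2)$. Parametrizing it by arc length and invoking Theorem~\ref{lemgeodesicK}, write $\gamma(t) = f(t)\,y + x_1$, where $f(0) = 0$, $f'(0) = 1$, $f' > 0$, and --- this is the only place $\mathbf{K}=-1$ enters --- $f$ solves the ODE \eqref{OdeK=-1}. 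First I would record that every solution of \eqref{OdeK=-1} with these initial conditions can be written uniformly as
\[
f(t) = \frac{(g+1)\,(e^{2t}-1)}{2\,(g\,e^{2t}+1)}, \qquad g \in \mathbb{R}\cup\{\infty\},
\]
the values $g = 0$ and $g = \infty$ recovering \eqref{fspecialK=-1}$_1$ and \eqref{fspecialK=-1}$_2$, and general finite $g$ recovering \eqref{fspecialK=-1}$_3$; this spares any case distinction.

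Next I would pin down $g$ and then read off $d_F$. Evaluating \eqref{geodesicBackf''1} at $t = 0$ gives $P(x_1,y) = -\tfrac12 f''(0)$, and differentiating the displayed formula for $f$ yields $f''(0) = 2(1-g)/(g+1)$, hence $P(x_1,y) = (g-1)/(g+1)$. Equating this with $b/a$ determines $g = (a+b)/(a-b)$, with $g = \infty$ precisely when $a = b$. On the other hand, unit speed together with $x_2 = \gamma(d_F(x_1,x_2))$ forces $f\big(d_F(x_1,x_2)\big) = a$. Substituting $g = (a+b)/(a-b)$ into the equation $f(d) = a$ turns it into a linear equation for $E := e^{2d}$, whose solution is, after routine cancellation, $E = (1 - b + a)/(1 - b - a)$; taking logarithms gives \eqref{eqK=-1dx_0xdxx_0}. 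The degenerate case $a = b$ follows by letting $g \to \infty$, and positivity of $1 - b - a$ is automatic since $d_F(x_1,x_2) > 0$ forces $E > 1$.

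For the ``in particular'' statement I would specialize the formula. Putting $x_1 = \mathbf{0}$, $x_2 = x$ gives $a = \psi(x)$, $b = \phi(x)$, which is \eqref{coreqK=-1dx_0xdxx_01122}$_1$. To obtain \eqref{coreqK=-1dx_0xdxx_01122}$_2$, I apply the first identity to the reverse metric $\overleftarrow{F}(x,y) := F(x,-y)$: as in the proof of Theorem~\ref{K=0lemdx_0xdxx_0}, $(\Omega,\overleftarrow{F})$ is again projectively flat with $\mathbf{K} = -1$ and projective factor $\overleftarrow{P}(x,y) = -P(x,-y)$, so $d_F(x,\mathbf{0}) = d_{\overleftarrow{F}}(\mathbf{0}, x)$ while $\overleftarrow{F}(\mathbf{0},x) = \psi(-x)$ and $\overleftarrow{P}(\mathbf{0}, x) = -\phi(-x)$; substitution yields the claimed expression.

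I expect the only genuinely subtle point to be the first one: justifying, without a completeness hypothesis, that the straight segment $[x_1,x_2]$ carries the geodesic realizing $d_F(x_1,x_2)$ --- but this is handled exactly as in Theorem~\ref{K=0lemdx_0xdxx_0}, using that projective flatness makes straight lines shortest. Everything afterward is elementary ODE integration and algebra. As a cleaner alternative that avoids writing the shapes \eqref{fspecialK=-1} at all, one may note that along the unit-speed geodesic the function $p(t) := P(\gamma(t),\gamma'(t))$ obeys the Riccati equation $p' = 1 - p^2$ --- a quick computation from \eqref{flagC} with $F \equiv 1$ and $\gamma'' = -2p\,\gamma'$ --- together with $f'' = -2p\,f'$; integrating these two relations in one stroke reproduces the displayed $f$ and the same final identity, uniformly in all cases.
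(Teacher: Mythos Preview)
Your proof is correct and follows essentially the same route as the paper's: solve the geodesic ODE \eqref{OdeK=-1}, determine the free parameter from $P(x_1,y)=-\tfrac12 f''(0)$, and then read off $d_F$ from $f(d)=F(x_1,x_2-x_1)$. The only difference is cosmetic---where the paper splits into the three cases \eqref{fspecialK=-1}$_{1,2,3}$ and checks the formula in each, you package them into the single family $f(t)=\frac{(g+1)(e^{2t}-1)}{2(ge^{2t}+1)}$ with $g\in\mathbb{R}\cup\{\infty\}$ (equivalently $g=-c$ in the paper's notation), which lets the algebra run once; the ``in particular'' part via the reverse metric $\overleftarrow{F}$ is exactly how the paper does it as well.
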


\begin{proof}
The identity \eqref{coreqK=-1dx_0xdxx_01122} follows directly from \eqref{eqK=-1dx_0xdxx_0} by the same argument as in the proof of Theorem~\ref{K=0lemdx_0xdxx_0}. We therefore focus on proving \eqref{eqK=-1dx_0xdxx_0}.

The case $x_1 = x_2$ is trivial. Now fix $x_2 \in \Omega \backslash \{x_1\}$. According to the proof of Theorem~\ref{globalK=-1}, the unit-speed geodesic $\gamma(t)$ from $x_1$ to $x_2$  falls into one of the following three types:
\begin{enumerate}[\rm (i)]
\item \label{K=-1disi}
$\ds\gamma(t)   =\frac{1-e^{-2 t} }2 \frac{x_2-x_1}{F(x_1, x_2-x_1)} + x_1$;

\smallskip

\item \label{K=-1disiii}
$\ds\gamma(t)   =\frac{e^{2t}-1 }2 \frac{x_2-x_1}{F(x_1, x_2-x_1)} + x_1$;

\smallskip
\item \label{K=-1disii}
$\ds
\gamma(t) = \frac{(c-1) ( e^{2 t}-1)}{2(c e^{2 t} -1) } \frac{x_2-x_1}{F(x_1, x_2-x_1)} + x_1$,  where  $c=c(x_1,x_2)$ is a constant with  $c<0$ or $c>1$.
\end{enumerate}
Since $(\Omega,F)$ may not be forward complete, Type \eqref{K=-1disiii} cannot be ruled out. We proceed by considering each case separately.

\medskip\noindent
{\bf Case 1.} Suppose $\gamma$ is of Type  \eqref{K=-1disi}. Then \eqref{f=pcond} implies
\begin{equation}\label{pfequlxx}
P(x_1, x_2-x_1)=F(x_1, x_2-x_1).
\end{equation}
Since $\gamma$ has unit speed, we have
\[
x_2=\gamma(d_F(x_1,x_2))=\frac{1-e^{-2 d_F(x_1,x_2)} }2 \frac{x_2-x_1}{F(x_1, x_2-x_1)} + x_1,
\]
which together with \eqref{pfequlxx} yields
\begin{align*}
d_F(x_1,x_2)&=-\frac12\ln \big(1-2 F(x_1, x_2-x_1)\big)
=\frac12\ln \left(  \frac{1-P(x_1, x_2-x_1)+F(x_1, x_2-x_1)}{1-P(x_1, x_2-x_1)-F(x_1, x_2-x_1)} \right).
\end{align*}

\medskip\noindent
{\bf Case 2.} If $\gamma$ is of Type \eqref{K=-1disiii}, then \eqref{geodesiceq} and
 \eqref{geodesicBackf''1} give
\begin{equation*}
P(\gamma(t), \gamma'(t)) =  -1 = -F(\gamma(t), \gamma'(t)).
\end{equation*}
Evaluating at $t = 0$ yields $P(x_1, x_2 - x_1) = -F(x_1, x_2 - x_1)$. The distance formula then follows by a computation analogous to Case~1.

\medskip\noindent
{\bf Case 3.}
If $\gamma$ is of Type~\eqref{K=-1disii}, then \eqref{P0K=-1} with $x = x_1$ and $y = (x_2 - x_1)/F(x_1, x_2 - x_1)$ gives
%Provided $\gamma$ is of Type \eqref{K=-1disii}, then \eqref{P0K=-1} together with $x=x_1$ and $y= \frac{x_2-x_1}{F(x_1, x_2-x_1)}$ yields
\begin{equation*}%\label{K=-1dx_0xP}
\frac{P(x_1,x_2-x_1)}{F(x_1,x_2-x_1)}=P\left(x_1,\frac{x_2-x_1}{F(x_1, x_2-x_1)}\right)=\frac{c+1}{c-1},
\end{equation*}
which determines the constant $c$ as
\begin{equation}\label{ciswhat11}
c=\frac{P(x_1, x_2-x_1)+F(x_1, x_2-x_1)}{P(x_1, x_2-x_1)-F(x_1, x_2-x_1)}.
\end{equation}
On the other hand, since $\gamma(d_F(x_1,x_2))=x_2$, the expression \eqref{K=-1disii} for $\gamma$  implies
\[
d_F(x_1,x_2)=\frac12 \ln\left( \frac{1-c+2F(x_1, x_2-x_1)}{1-c+2c F(x_1, x_2-x_1)}  \right).
\]
Combining this with \eqref{ciswhat11} establishes \eqref{eqK=-1dx_0xdxx_0}.
\end{proof}

%The proof of the following corollary follows the same method as that of Corollary~\ref{Phi0uniquess}.
The following corollary can be proved analogously to Corollary~\ref{Phi0uniquess}.

\begin{corollary}\label{Phi-uniquess}
Let $(\Omega,F)$ be a projectively flat Finsler manifold with $\mathbf{K}=-1$, where $\Omega\subset \mathbb{R}^n$ is a bounded domain containing the origin.  Then $(\Omega,F)$ is forward complete if and only if $\Omega=\{\phi+\psi<1\}$, where $\psi(y): = F(\mathbf{0},y)$ and $\phi(y): = P(\mathbf{0},y)$.
\end{corollary}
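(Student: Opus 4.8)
The plan is to mirror the proof of Corollary~\ref{Phi0uniquess}, the two ingredients being the distance formula of Theorem~\ref{lemK=-1dx_0xdx_0} and Proposition~\ref{completenessofprojeflat}.

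The forward implication is immediate: if $(\Omega,F)$ is forward complete, then Theorem~\ref{globalK=-1} applies and states that $\phi+\psi$ is a weak Minkowski norm with $\Omega=\{x\in\mathbb{R}^n:\phi(x)+\psi(x)<1\}$.

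For the converse, assume $\Omega=\{\phi+\psi<1\}$. Recall $\psi=F(\mathbf{0},\cdot)$ is a Minkowski norm and $\phi=P(\mathbf{0},\cdot)$ is positively $1$-homogeneous with $\phi|_{\Rno}\in C^\infty(\Rno)$ by Theorem~\ref{localstrucK}; hence $g:=\phi+\psi$ is continuous and positively $1$-homogeneous, and, $\Omega$ being bounded, satisfies $g|_{\Rno}>0$. Consequently $\partial\Omega=\{g=1\}$, the origin is interior to $\Omega$, and $\Omega$ is star-shaped about $\mathbf{0}$ (since $g(tx)=tg(x)$). Because the straight segment from $\mathbf{0}$ to any $x\in\Omega$ therefore lies in $\Omega$, the computation in the proof of Theorem~\ref{lemK=-1dx_0xdx_0} for $d_F(\mathbf{0},\cdot)$ goes through and yields the first identity in \eqref{coreqK=-1dx_0xdxx_01122},
\[
d_F(\mathbf{0},x)=\frac{1}{2}\ln\!\left(\frac{1-\phi(x)+\psi(x)}{1-\phi(x)-\psi(x)}\right),\qquad x\in\Omega.
\]
Now let $d_{\E}(x,\partial\Omega)\to 0$. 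Continuity of $g$ forces $g(x)\to 1$, so the denominator $1-\phi(x)-\psi(x)=1-g(x)\to 0^{+}$; meanwhile for $x\in\partial\Omega$ we have $1-\phi(x)+\psi(x)=\bigl(1-g(x)\bigr)+2\psi(x)=2\psi(x)>0$ (the origin being interior), so by continuity and compactness of $\partial\Omega$ the numerator is bounded below by a positive constant in a neighborhood of $\partial\Omega$ inside $\Omega$. Hence $d_F(\mathbf{0},x)\to+\infty$ as $d_{\E}(x,\partial\Omega)\to 0$, and Proposition~\ref{completenessofprojeflat} yields the forward completeness of $(\Omega,F)$.

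The one delicate point, which I would check most carefully, is the appeal to the distance formula: Theorem~\ref{lemK=-1dx_0xdx_0} is stated for strictly convex $\Omega$, whereas a priori only boundedness of $\Omega=\{\phi+\psi<1\}$ is available. This causes no difficulty here, since the derivation of \eqref{coreqK=-1dx_0xdxx_01122}$_1$ uses only the geodesic issuing from $\mathbf{0}$ in the direction of $x$ — the straight segment $[\mathbf{0},x]\subset\Omega$ by star-shapedness — together with the ODE classification \eqref{fspecialK=-1} of such geodesics; convexity of the whole domain is needed only for the general two-point formula \eqref{eqK=-1dx_0xdxx_0}. (Equivalently, star-shapedness and $\mathbf{K}=-1\le 0$ force these segments to be minimizing, so the formula is valid.) Everything else is the routine asymptotic bookkeeping displayed above.
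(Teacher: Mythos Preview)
Your proof is correct and follows exactly the approach the paper indicates: the paper simply states that Corollary~\ref{Phi-uniquess} ``can be proved analogously to Corollary~\ref{Phi0uniquess},'' and your argument is precisely that analogue, invoking Theorem~\ref{globalK=-1} for the forward direction and the distance formula \eqref{coreqK=-1dx_0xdxx_01122}$_1$ together with Proposition~\ref{completenessofprojeflat} for the converse. Your final paragraph addressing the strict-convexity hypothesis of Theorem~\ref{lemK=-1dx_0xdx_0} is a reasonable point of care that the paper leaves implicit already in the $\mathbf{K}=0$ case.
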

%\begin{proof} The assumption of $\Omega$ implies that $\phi-\psi\leq \phi+\psi<1$ in $\Omega$ and particularly,
%\[
%\phi(x)+\psi(x)\rightarrow 1, \quad \text{as }|x-\partial\Omega|\rightarrow 0.
%\]
%Owing to Corollary \ref{lemK=-1d0xdx0new}, the distance from $\mathbf{0}$ is given by
%\begin{equation*}
%d_F(\mathbf{0},x) =  \frac{1}{2} \ln \left( \frac{1 -  \phi(x) + \psi(x)}{1 - \phi(x) - \psi(x)}\right)\rightarrow +\infty, \quad \text{as }  |x-\partial\Omega|\rightarrow 0,
%\end{equation*}
%which combined with Proposition \ref{completenessofprojeflat} yields the forward completeness of $(\Omega,F)$. Thus, the uniqueness of $\Phi_-$ follows by Proposition \ref{uniqunesslemmaK=-1}.
%\end{proof}
We now prove Theorem~\ref{thmK=-1globalintro}, which is the main characterization theorem for $\mathbf{K}=-1$ stated in the Introduction.
\begin{proof}[Proof of Theorem \ref{thmK=-1globalintro}]
The forward implication ``$\Rightarrow$'' is a direct consequence of Theorem~\ref{localstrucK}, \ref{globalK=-1} and Proposition~\ref{uniqunesslemmaK=-1}.

For the converse ``$\Leftarrow$'', Condition~\eqref{K=-1bascicondition1} together with Theorem~\ref{localstrucK} and Corollary~\ref{Phi-uniquess} imply that $(\Omega,F)$ is a forward complete projectively flat Finsler manifold with $\mathbf{K}=-1$.
 The remaining statements then follow immediately from Theorems~\ref{localstrucK} and~\ref{globalK=-1}.
\end{proof}

\begin{remark}\label{K=-1condi1ne}
Therefore, Condition~\eqref{K=-1bascicondition1} in Theorem~\ref{thmK=-1globalintro} provides a complete characterization of forward complete, projectively flat Finsler manifolds of constant flag curvature $\mathbf{K} = -1$.
\end{remark}

\subsection{Global existence of metrics}\label{subsK=-1_1}
By Theorem \ref{thmK=-1globalintro}, any forward complete projectively flat Finsler manifold
($\Omega$, $F$) with $\mathbf{K} = -1$ is determined by a Minkowski norm $\psi$ and a  positively $1$-homogeneous function $\phi$ such that
$\psi+ \phi$ is a weak Minkowski norm and $\Omega  = \{  \psi+\phi < 1 \}$.
This naturally leads to the following inverse problem:

\medskip
\textit{Given such a pair $(\psi, \phi)$, is the associated function $F$ always a Finsler metric on $\Omega = \{\psi + \phi < 1\}$?}
\medskip

\noindent
By Theorem \ref{funkmetrisufficnecessy}, the answer is affirmative when $\phi=\psi$, in which case
$F = \Theta/2$ for the unique Funk metric $\Theta$ on $\Omega$, with its symmetrization yielding the Hilbert metric.
% A Hilbert metric also exists globally on $\Omega$ as the symmetrization of $\Theta$.
 For other pairs $(\psi, \phi)$, however, phenomena analogous to the $\mathbf{K} = 0$ case occur.
 We now introduce the notation for this question.
%{\color{magenta}
%The  Lemma \ref{uniqunesslemmaK=-1} tells that
%the forward completeness of the metric $F$ implies
%the uniqueness of the solution to  \eqref{K=-1phi-psisolution} is necessary
%when we consider the choice of $\phi$ and $\psi$ to compose a projectively flat Finsler metric
%with $\mathbf{K} =-1$.
%}

\begin{notation}\label{basciassmupK=-1} Let $\psi$ be a Minkowski norm and $\phi$  a positively $1$-homogeneous function on $\mathbb{R}^n$ such that $\psi+\phi$ is a weak Minkowski norm and  one of the following holds:
\begin{enumerate}[\rm \ \ \ \  (i)]
\item\label{K=-1phi_-uniq1con} $\phi(y) \geq \psi(y) $ for all $y$, with equality only at $y=\mathbf{0}$;

\item\label{K=-1phi_-uniq2con} $-\psi(y) \leq  \phi(y) \leq \psi(y)$ for all $y$, with equality  only at $y=\mathbf{0}$.

%\smallskip

%\item\label{K=-1phi_-uniq3con} {\color{blue}$\phi(y) = \psi(y)$ for all $y\in\mathbb{R}^n$.(Do we need to delete since the answer is affirmative)}

\end{enumerate}
Define the domain
\begin{equation}\label{K=-1positive=completeOmega}
  \Omega : = \{ x \in \mathbb{R}^n \,:\, \phi(x) + \psi(y) < 1 \},
\end{equation}
and the functions $F,P:\Omega\times \mathbb{R}^n\rightarrow \mathbb{R}$  by
\begin{equation}\label{thmK=-1eq2}
F(x,y) := \frac{1}{2}\big\{ \Phi_{+} - \Phi_{-} \big\}, \qquad P(x,y): = \frac{1}{2}\big\{ \Phi_{+} + \Phi_{-} \big\},
\end{equation}
where  $\Phi_\pm=\Phi_\pm(x,y)$ are the unique solutions to
\begin{equation} \label{Phiplusminus233}
\Phi_{\pm} = (\phi \pm \psi)(y + x \Phi_{\pm}), \quad \forall (x,y)\in \Omega\times \mathbb{R}^n.
\end{equation}
Define the set on which $F$ is a Finsler metric as
\[
\PD:=\PD(F):=\{x\in \Omega\,:\, \text{$F(x,\cdot)$ is a Minkowski norm at $x$}\}.
\]
\end{notation}

\begin{remark}
The case $\phi=\psi$ is omitted since $F$ is always a Finsler metric on $\Omega$.
Note that in both cases above, we have
\begin{equation}\label{FpositiveK=-1}
\text{$F(x,y)\geq 0$ with equality if and only if $y=\mathbf{0}$.}
\end{equation}
Indeed, by letting $\varphi:=\phi+\psi$, Theorem \ref{lemsolexi} implies   $\Phi_+(x,y)\geq 0$, with equality only when $y=\mathbf{0}$. For Condition~\eqref{K=-1phi_-uniq2con},  equation \eqref{Phiplusminus233} shows  $\Phi_-(x,y)\leq 0$, so \eqref{FpositiveK=-1} follows immediately.
Under Condition~\eqref{K=-1phi_-uniq1con}, by letting $\tilde{\varphi} := \phi - \psi$, Lemma~\ref{varphi_minus} implies $\Phi_+(x, y) \ge \Phi_-(x, y)$, with equality only when $y = \mathbf{0}$, which again yields \eqref{FpositiveK=-1}.
\end{remark}

\begin{proposition} \label{K=-1positive=complete}
Under  Notation \ref{basciassmupK=-1}, the pair $(\Omega,F)$ is a forward complete projectively flat Finsler manifold with $\mathbf{K}=-1$ if and only if   $F$ is a Finsler metric on all of $\Omega$, i.e., $\PD = \Omega$.
\end{proposition}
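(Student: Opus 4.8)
The strategy is to prove the two implications separately, with the forward direction being essentially immediate and the reverse direction being the substantive part. For the forward implication, suppose $(\Omega, F)$ is a forward complete projectively flat Finsler manifold with $\mathbf{K} = -1$. Then by definition $F(x, \cdot)$ is a Minkowski norm at every $x \in \Omega$, so $\mathscr{O} = \Omega$ trivially. (One should note here that $F$ as defined in \eqref{thmK=-1eq2} is indeed the unique such metric: by Theorem~\ref{localstrucK} the initial data at the origin are $\psi(y) = F(\mathbf{0}, y)$ and $\phi(y) = P(\mathbf{0}, y)$, and Corollary~\ref{K=-1sufP>=F} gives uniqueness, so the abstractly posited forward complete metric must coincide with our $F$.)

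For the reverse implication, assume $F$ is a Finsler metric on all of $\Omega$, i.e.\ $\mathscr{O} = \Omega$. I would first verify that $F$ is projectively flat with $\mathbf{K} = -1$: this follows from Theorem~\ref{localstrucK} (or rather its computational content / Lemma~\ref{lemBerwald}), since the defining relations \eqref{thmK=-1eq2}--\eqref{Phiplusminus233} are exactly Berwald's local equations \eqref{Berwald2} with this curvature — one checks that $F_{x^k} = [PF]_{y^k}$ and $P_{x^k} = PP_{y^k} + FF_{y^k}$ hold wherever $\Phi_\pm$ are smooth, which is throughout $\Omega \times \Rno$ by Theorem~\ref{lemsolexi} applied to $\varphi = \phi + \psi$ and Lemma~\ref{varphi_minus} applied to $\tilde\varphi = \phi - \psi$. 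It remains to establish forward completeness. Since $\Omega = \{\phi + \psi < 1\}$ is bounded (by the positive $1$-homogeneity of the weak Minkowski norm $\phi + \psi$), Proposition~\ref{completenessofprojeflat}\eqref{forcompleprf} reduces forward completeness to showing $d_F(\mathbf{0}, x) \to +\infty$ as $d_{\E}(x, \partial\Omega) \to 0$.

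To get this distance blow-up I would invoke the distance formula from Theorem~\ref{lemK=-1dx_0xdx_0}: since $F$ is now a genuine Finsler metric on the strictly convex domain $\Omega$ (strict convexity of $\Omega$ follows from the regularity of the weak Minkowski norm $\phi + \psi$ via Proposition~\ref{quasitostrictlyconvex}), formula \eqref{coreqK=-1dx_0xdxx_01122} gives
\[
d_F(\mathbf{0}, x) = \frac{1}{2}\ln\left(\frac{1 - \phi(x) + \psi(x)}{1 - \phi(x) - \psi(x)}\right).
\]
As $d_{\E}(x, \partial\Omega) \to 0$ we have $(\phi + \psi)(x) \to 1$, so the denominator $1 - \phi(x) - \psi(x) \to 0^+$ while the numerator stays bounded away from $0$ (it equals $1 - \phi(x) - \psi(x) + 2\psi(x) \geq 2\psi(x)$, and $\psi(x)$ is bounded below on a neighborhood of $\partial\Omega$ since $\mathbf{0} \notin \partial\Omega$ and $\psi$ is positive on $\Rno$). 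Hence $d_F(\mathbf{0}, x) \to +\infty$, and Proposition~\ref{completenessofprojeflat} yields forward completeness, completing the proof. The main obstacle — and the reason this proposition is not entirely trivial — is ensuring that Theorem~\ref{lemK=-1dx_0xdx_0} genuinely applies: that theorem requires $F$ to be a Finsler metric on a strictly convex domain, which is precisely the hypothesis $\mathscr{O} = \Omega$ we are assuming, together with the convexity input from the weak Minkowski norm structure of $\phi + \psi$. One must also be slightly careful that the distance formula was derived without assuming completeness (which the statement of Theorem~\ref{lemK=-1dx_0xdx_0} confirms), so there is no circularity.
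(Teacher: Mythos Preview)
Your proof is correct and follows essentially the same route as the paper: the paper's proof simply cites Corollary~\ref{Phi-uniquess} for the sufficiency, and that corollary (proved analogously to Corollary~\ref{Phi0uniquess}) unrolls precisely to the argument you give --- apply the distance formula \eqref{coreqK=-1dx_0xdxx_01122} from Theorem~\ref{lemK=-1dx_0xdx_0} to see that $d_F(\mathbf{0},x)\to+\infty$ as $x\to\partial\Omega$, then invoke Proposition~\ref{completenessofprojeflat}. Your parenthetical about uniqueness in the forward direction is unnecessary (the statement already concerns the specific $F$ of Notation~\ref{basciassmupK=-1}, so ``$F$ is a Finsler metric on $\Omega$'' is exactly $\mathscr{O}=\Omega$), but it does no harm.
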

\begin{proof}
The necessity is immediate from the definitions, and the sufficiency follows  from Corollary~\ref{Phi-uniquess}.
\end{proof}

%Although the solutions $\Phi_-$ may be not unique, we have the following result, which follows by Theorem \ref{K=-1positive=complete} and Corollary \ref{Phi-uniquess} directly.
%\begin{corollary}
%Under Notation \ref{basciassmupK=-1}, if $F$ is a Finsler metric on $\Omega$ (i.e., $\PD=\Omega$), then the solution $\Phi_-$ to Equation \eqref{Phiplusminus233} exists uniquely.
%\end{corollary}

Proposition~\ref{K=-1positive=complete} establishes that a projectively flat Finsler metric with $\mathbf{K} = -1$ exists on the entire  $\Omega$ if and only if the manifold is forward complete. However, analogous to the case $\mathbf{K} = 0$, the function $F$ defined in Notation~\ref{basciassmupK=-1} may still fail to be a Finsler metric.
Indeed, the following example demonstrates  that the fundamental tensor can indeed fail to be positive definite globally,
%that the failure of global positive-definiteness is a genuine phenomenon,
even for natural perturbations of the Euclidean metric such as Randers-type data.

\begin{example} \label{K=-1ExampleAmazing}
Let $a= (a_1, 0)\in \mathbb{R}^2$ with $a_1\in [0,1)$, and let $c>1$ be a constant.
Define two Minkowski norms (i.e., Randers norms) on $\mathbb{R}^2$ by
\[
\psi(y):= |y|+ \langle a, y\rangle,\qquad \phi(y):= c \psi(y).
\]
 Let
 $\Phi_{\pm}(x,y)$  be  the unique solutions to  the  equations
\[
\Phi_{+}(x,y) = (\phi+\psi)(y+ x \Phi_{+}(x,y) ), \qquad   \Phi_{-}(x,y) = (\phi-\psi)(y+ x \Phi_{-}(x,y) ).
\]
A direct calculation yields explicit expressions:
\begin{align*}
\Phi_{+}(x,y)& = \frac{\sqrt{\mathcal {A}_a((c+1)x,(c+1)y)}  + \mathcal {C}_a((c+1)x,(c+1)y)+ (c+1)^2\langle x,y\rangle}{\mathcal {B}_a((c+1)x)},\\
\Phi_{-}(x,y) &= \frac{\sqrt{\mathcal {A}_a((c-1)x,(c-1)y)}  + \mathcal {C}_a((c-1)x,(c-1)y)+ (c-1)^2\langle x,y\rangle}{\mathcal {B}_a((c-1)x)},
\end{align*}
where the functions $\mathcal{A}_a, \mathcal{B}_a, \mathcal{C}_a$ are given by
\begin{align}
\mathcal{A}_a(x,y)&:= \left( (1-\langle a, x \rangle)^2-|x|^2 \right)(|y |^2- \langle a, y \rangle^2) + \left(\langle x,y \rangle + (1-\langle a, x \rangle) \langle a, y \rangle \right)^2, \label{K=-1Ex2A} \\
\mathcal {B}_a(x)&:=(1-\langle a,x  \rangle)^2-|x|^2,\qquad \mathcal {C}_a(x,y):= (1-\langle a,x  \rangle)\langle a, y\rangle.
\notag
 \end{align}
Define
\begin{align} \label{K=-1Ex2}
F(x,y):=\frac{1}{2}\big\{ \Phi_{+}(x,y) -\Phi_{-}(x,y)\big\} ,\quad \Omega := \big\{ x =(x^1, x^2) \in \mathbb{R}^2 \,:\, \phi(x)+ \psi(x)  < 1\big\}.
\end{align}
The set $\Omega$ is an open elliptical disk. However, the domain $\PD$ can exhibit three distinct configurations depending on the parameter $a_1$. Define the critical value
\begin{align*}
\Lambda := \frac{1}{2}\left( (c+1)^{\frac{2}{3}} -(c-1)^{\frac{2}{3}} \right)\sqrt{ (c+1)^{\frac{2}{3}} + (c-1)^{\frac{2}{3}} }.
% \sqrt{ \frac{\left[ (c-1)^{\frac{2}{3}}(c+1)^2 -(c+1)^{\frac{2}{3}}(c-1)^2 \right] \left( (c+1)^{\frac{2}{3}} -(c-1)^{\frac{2}{3}} \right)}{ \left[ (c-1)^{\frac{2}{3}}(c+1) -(c+1)^{\frac{2}{3}}(c-1) \right]^2 + \left[ (c-1)^{\frac{2}{3}}(c+1)^2 -(c+1)^{\frac{2}{3}}(c-1)^2 \right] \left( (c+1)^{\frac{2}{3}} -(c-1)^{\frac{2}{3}} \right) } } \\
\end{align*}
It can be verified that $\Lambda<1$.
Then the following statements hold:
\begin{enumerate}[\rm (i)]

\item if $a_1\in [0,\Lambda)$,
then $\PD=\Omega$; that is, $F$ is positive definite on the whole $\Omega$;

\item if  $a_1=\Lambda$,
then $\PD$ is obtained by removing from $\Omega$ a line segment parallel to the $x^2$-axis;
%then $\PD$ is obtained by removing from $\Omega$ a line segment $L$ parallel to the $x^2$-axis, where
%\[
%L : = \Omega \bigcap \left\{(x_1, 0) \,: \, x_1 = - \frac{\Lambda}{\left( 1- \Lambda^2\right) \left( (c+1)^{\frac{2}{3}} + (c-1)^{\frac{2}{3}} \right) \left( (c+1)^{\frac{1}{3}} + (c-1)^{\frac{1}{3}} \right)  } \right\};
%\]
\item if $a_1 \in (\Lambda,1)$,
then $\PD$ consists of  two disjoint convex components
 of $\Omega$.
\end{enumerate}
\end{example}

\begin{remark}
The statements above can be verified using the geometric approach developed in the proof of Proposition~\ref{K=0Ex1}, rather than by direct computation. See also Proposition~\ref{thmK=-1convex} and Theorem~\ref{K=-1Dim2theorem} below.
\end{remark}

This example underscores the importance of investigating the structure of the  domain $\PD$.
To proceed, we require the following result, whose proof is deferred to  Appendix \ref{propergenerlengappex0}.
\begin{lemma} \label{lemK=-1suffeqposilem}
Let $\bar{\psi}$ and $\bar{\phi}$ be positively $1$-homogeneous functions on $\mathbb{R}^n$ such that
\[
0<\bar{\psi}|_{\Rno}\in C^\infty(\Rno),\qquad \bar{\phi}|_{\Rno}\in C^\infty(\Rno),
\]
and $(\bar{\phi}\pm \bar{\psi})|_{\Rno}$ is nowhere vanishing.
Suppose that $\bar{\Phi}_{\pm}(x,y)$ are the unique solutions to the equations
\begin{align}\label{lemK=-1Phipm}
 \bar{\Phi}_{\pm}(x,y) = (\bar{\phi}\pm\bar{\psi})\big(y+ x \bar{\Phi}_{\pm}(x,y)\big),  \quad \forall (x,y)\in \Omega\times \mathbb{R}^n,
 \end{align}
where $\Omega\subset \mathbb{R}^n$ is a domain containing the origin.
Then for every
 $y\in \Rno$, the following equivalence holds:
\begin{align*}% \label{lemK=-1suffeqposi}
\left(  [\bar{\psi}^2]_{y^i y^j}(y) \right) \text{ is positive definite} \ \ & \Longleftrightarrow  \ \
\left(  [\bar{F}^2]_{y^i y^j}(\mu y,y) \right) \text{ is positive definite for all $\mu \in \mathbb{R}$ with $\mu y\in \Omega$},
 \end{align*}
where
$\bar{F}(x,y): = \frac{1}{2} \big\{ \bar{\Phi}_{+} (x,y) - \bar{\Phi}_{-} (x,y)\big\}$.
\end{lemma}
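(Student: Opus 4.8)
\textbf{Proof plan for Lemma \ref{lemK=-1suffeqposilem}.}
The plan is to reduce the statement to the already-established case $\mathbf{K}=0$ by a scaling-and-translation device, exactly as in the analogous Lemma \ref{keylemmak=0}. First I would fix $y\in\Rno$ and a scalar $\mu$ with $\mu y\in\Omega$, and set $\bar{x}:=\mu y$. The key observation is that along the ray $t\mapsto ty$ the two defining equations \eqref{lemK=-1Phipm} decouple nicely: since $x=ty$ is parallel to $y$, the map $t\mapsto\bar{\Phi}_\pm(ty,y)$ is governed by the single-variable equation $\zeta=(\bar\phi\pm\bar\psi)(y+ty\zeta)=(1+t\zeta)\,(\bar\phi\pm\bar\psi)(y)$ (using positive $1$-homogeneity), so one obtains explicit closed forms $\bar\Phi_\pm(ty,y)=\frac{(\bar\phi\pm\bar\psi)(y)}{1-t(\bar\phi\pm\bar\psi)(y)}$ wherever the denominators are positive, which they are by Corollary \ref{basicproper}/\eqref{postiperoo}. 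Hence $\bar F(ty,y)=\frac12\{\bar\Phi_+(ty,y)-\bar\Phi_-(ty,y)\}$ is an explicit rational expression in $t$ and in the values $\bar\psi(y),\bar\phi(y)$.

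The heart of the argument is to differentiate twice in $y$ at the point $(\bar x,y)=(\mu y,y)$ and compare with the Hessian of $\bar\psi^2$ at $y$. Here the cleanest route is to invoke the translation structure: define $\bar{\bar\psi}(w):=\bar F(\bar x,w)$ and $\bar{\bar\phi}(w):=\tfrac12\{\bar\Phi_+(\bar x,w)+\bar\Phi_-(\bar x,w)\}$, and by Proposition \ref{transformforK=-1} (applied to $\varphi:=\bar\phi+\bar\psi$ and to $\varphi:=\bar\phi-\bar\psi$, together with \eqref{thmK=-1eq}-type algebra) the solutions $\bar\Phi_\pm$ recentered at $\bar x$ satisfy
\[
\bar\Phi_{\pm}(x+\bar x,w)=(\bar{\bar\phi}\pm\bar{\bar\psi})\big(w+(x)\,\bar\Phi_\pm(x+\bar x,w)\big),
\]
so that $\bar F(\cdot+\bar x,\cdot)$ is precisely the $\mathbf{K}=-1$ metric built from the initial data $(\bar{\bar\psi},\bar{\bar\phi})$ at the new origin. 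Evaluating the explicit formula from the previous paragraph at $t=0$ gives $\bar{\bar\psi}(y)=\bar F(\bar x,y)$, and a short computation shows $([\bar F^2]_{y^iy^j}(\bar x,y))$ is positive definite if and only if $([\bar{\bar\psi}^2]_{y^iy^j}(y))$ is — this is tautological since $\bar{\bar\psi}(\cdot)=\bar F(\bar x,\cdot)$. So the real content is the equivalence $([\bar\psi^2]_{y^iy^j}(y))>0\iff([\bar{\bar\psi}^2]_{y^iy^j}(y))>0$, i.e. positive definiteness of the Hessian of $\bar F(\bar x,\cdot)^2$ at $y$ is detected by the Hessian of $\bar\psi^2$ at the same direction $y$, \emph{for all admissible $\mu$ simultaneously}.

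To establish that last equivalence I would differentiate the identity $\bar\Phi_\pm(x,y)=(\bar\phi\pm\bar\psi)(y+x\bar\Phi_\pm)$ twice with respect to $y$, solve for the second $y$-derivatives of $\bar\Phi_\pm$ in terms of the first and second derivatives of $\bar\phi\pm\bar\psi$ at $\xi_\pm:=y+x\bar\Phi_\pm$, and then restrict to $x=\mu y$. Because $\xi_\pm=y+\mu y\,\bar\Phi_\pm(\mu y,y)=(1+\mu\bar\Phi_\pm)\,y$ is a positive multiple of $y$ (again by Corollary \ref{basicproper}/\eqref{postiperoo}), the homogeneity of $\bar\phi\pm\bar\psi$ lets one express every derivative appearing at $\xi_\pm$ in terms of the corresponding derivative at $y$ times an explicit positive power of $(1+\mu\bar\Phi_\pm)$. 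Assembling the pieces, $[\bar F^2]_{y^iy^j}(\mu y,y)\,\theta^i\theta^j$ becomes a strictly positive scalar multiple of $[\bar\psi^2]_{y^iy^j}(y)\,\theta^i\theta^j$ plus terms proportional to $\theta$-components along $y$ (which contribute the remaining positive-definite direction), so the full quadratic form is positive definite precisely when $([\bar\psi^2]_{y^iy^j}(y))$ is. The main obstacle I anticipate is the bookkeeping in this last step: one must carefully track the contraction with $x=\mu y$ in the formulas for $[\bar\Phi_\pm]_{y^iy^j}$ and verify that the cross terms reorganize into a genuine positive-definite correction (the ``$y$-direction'' eigenvalue coming from $1+x^k[\bar\Phi_\pm]_{y^k}>0$) rather than spoiling the sign — this is the Finslerian analogue of the identity $1+x^k\Phi_{y^k}=\tfrac1{1-x^i\varphi_{y^i}(\xi)}>0$ from Corollary \ref{basicproper}, and it is exactly where the regularity hypothesis $([\bar\psi^2]_{y^iy^j})>0$ enters essentially rather than cosmetically.
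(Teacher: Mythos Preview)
Your third paragraph is exactly the paper's approach: differentiate \eqref{lemK=-1Phipm} twice in $y$, restrict to $x=\mu y$, and use homogeneity together with $1+\mu\bar\Phi_\pm(\mu y,y)>0$ from Corollary~\ref{basicproper} to rewrite everything at $y$. The second paragraph (recentering via Proposition~\ref{transformforK=-1}) is, as you yourself note, a tautology and plays no role; drop it.

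The one place your anticipation is off is the shape of the final identity. What the computation actually gives is
\[
[\bar F^2]_{y^iy^j}(\mu y,y)\,\theta^i\theta^j
=\frac{2\,\bar\psi(y)\,\bar\psi_{y^iy^j}(y)\,\theta^i\theta^j}{(1-\mu\varphi_+)(1-\mu\varphi_-)}
+\frac{2\big[(1-\mu\bar\phi)\,\bar\psi_{y^i}\theta^i+\mu\,\bar\psi\,\bar\phi_{y^i}\theta^i\big]^2}{(1-\mu\varphi_+)^2(1-\mu\varphi_-)^2},
\]
with $\varphi_\pm=\bar\phi\pm\bar\psi$. The leading term is a positive multiple of $\bar\psi_{y^iy^j}\theta^i\theta^j$, \emph{not} of $[\bar\psi^2]_{y^iy^j}\theta^i\theta^j$; it is only positive \emph{semi}-definite (it vanishes for $\theta\parallel y$ by Euler's relation), and the remainder is a perfect square in the first derivatives of both $\bar\psi$ and $\bar\phi$, not merely ``$\theta$-components along $y$''. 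Consequently the positivity argument needs a short case split: for $\theta\parallel y$ the square term alone gives $>0$, while for $\theta\not\parallel y$ the positive-definiteness of $([\bar\psi^2]_{y^iy^j})$ forces $\bar\psi_{y^iy^j}\theta^i\theta^j>0$. Your bookkeeping concern is well placed, but once you carry it through, this is the structure you will find; the ``$\Leftarrow$'' direction is then immediate by taking $\mu=0$.
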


\begin{proposition}\label{thmK=-1convex}
Under  Notation \ref{basciassmupK=-1},  let $U$ be a convex connected component of $\PD$. Then the following hold:
\begin{enumerate}[{\rm (a)}]

\item \label{suffK=-1new1lemCasei} for any smooth point  $x\in \partial U\cap \Omega$ and any vector $y$  not tangent to $\partial U$ at $x$, the matrix $([F^2]_{y^iy^j}(x,y))$ is positive definite;

\item  \label{suffK=-1new1lemCaseii}
for any smooth point  $x\in \partial U\cap \Omega$, there exists a nonzero vector $y$ tangent to $\partial U$ at $x$ such that $([F^2]_{y^iy^j}(x,y))$ has a zero eigenvalue.  Moreover, the corresponding eigenvector  $\theta = (\theta^i)$ is not parallel to $y$ and satisfies
\begin{align} \label{lemK=-1sufflem2(ii)1}
 F_{y^i}({x},y)  \theta^i =0, \qquad F_{y^i y^j}({x},y)   \theta^i \theta^j  =0.
\end{align}
If, in addition, the vector $x$ is not tangent to $\partial U$ at the point $x$, then    for all sufficiently small  $|t|\neq0$,
\[
F_{y^i y^j}({x},y+ t {x})  \theta^i \theta^j>0.
\]
\end{enumerate}
\end{proposition}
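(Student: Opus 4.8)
\textbf{Proof plan for Proposition~\ref{thmK=-1convex}.}
The plan is to mirror the strategy used in Proposition~\ref{K=0conneccomconvex}, replacing the role of Lemma~\ref{keylemmak=0} by Lemma~\ref{lemK=-1suffeqposilem}, which is the $\mathbf{K}=-1$ analogue of the key positivity statement along rays. The essential new ingredient that makes the argument run is the translation-invariance of the solutions $\Phi_\pm$: for a fixed basepoint $\bar x\in\Omega$, setting $\bar\psi(y):=F(\bar x,y)$ and $\bar\phi(y):=P(\bar x,y)$, the pair $(\bar\phi,\bar\psi)$ serves as new initial data at $\bar x$, and by Proposition~\ref{uniqunesslemmaK=-1} together with Proposition~\ref{transformforK=-1} the shifted solutions $\bar\Phi_\pm(x,y)$ solving $\bar\Phi_\pm(x,y)=(\bar\phi\pm\bar\psi)(y+(x-\bar x)\bar\Phi_\pm(x,y))$ reproduce $F$ exactly: $F(x,y)=\tfrac12\{\bar\Phi_+(x,y)-\bar\Phi_-(x,y)\}$ for all $(x,y)\in\Omega\times\mathbb{R}^n$. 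This identity is what lets us compare the Hessian of $F^2$ at a boundary point $x_o$ of $U$ against the Hessian of the shifted metric $\bar F$ at $x_o-\bar x$.

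For part~\eqref{suffK=-1new1lemCasei}, I would argue by contradiction. Let $x_o$ be a smooth point of $\partial U\cap\Omega$; since $\partial U\subset\partial\PD$, by the characterization \eqref{Fposidome} there is a nonzero $y_o\in T_{x_o}\mathbb{R}^n$ for which $([F^2]_{y^iy^j}(x_o,y_o))$ has a zero eigenvalue. Suppose $y_o$ were \emph{not} tangent to $\partial U$ at $x_o$. By convexity of $U$ there is a point $\bar x\in U$ with $x_o-\bar x=\mu y_o$ for some $\mu\neq0$. Put $\bar\psi(y):=F(\bar x,y)$, $\bar\phi(y):=P(\bar x,y)$; since $\bar x\in U\subset\PD$, $\bar\psi$ is a Minkowski norm, so $([\bar\psi^2]_{y^iy^j}(y_o))$ is positive definite. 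Lemma~\ref{lemK=-1suffeqposilem} then forces $([\bar F^2]_{y^iy^j}(\mu y_o,y_o))$ to be positive definite, where $\bar F=\tfrac12\{\bar\Phi_+-\bar\Phi_-\}$. But by the translation identity above, $\bar F(x_o-\bar x,y_o)$-data agree with $F(x_o,y_o)$-data, i.e. $([\bar F^2]_{y^iy^j}(x_o-\bar x,y_o))=([F^2]_{y^iy^j}(x_o,y_o))$, contradicting the zero eigenvalue. Hence $y_o$ must be tangent to $\partial U$ at $x_o$, which is exactly the contrapositive of \eqref{suffK=-1new1lemCasei}.

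For part~\eqref{suffK=-1new1lemCaseii}, the existence of a nonzero tangent $y$ with a degenerate Hessian is forced: again $x_o\in\partial U\subset\partial\PD$ yields some $y_o$ with $([F^2]_{y^iy^j}(x_o,y_o))$ singular, and by part~\eqref{suffK=-1new1lemCasei} any such $y_o$ is tangent to $\partial U$. For the structure of the eigenvector, I would invoke \eqref{smedefF}: since $([F^2]_{y^iy^j}(x,y))$ is positive definite throughout $TU$, taking limits and using \eqref{smedefF} gives that $([F]_{y^iy^j}(x_o,y_o))$ is positive semidefinite; an eigenvector $\theta$ of $([F^2]_{y^iy^j}(x_o,y_o))$ with eigenvalue $0$ cannot be parallel to $y_o$ by homogeneity and positivity of $F^2$ (the $y_o$-direction is always in the positive cone), and then the identity $[F^2]_{y^iy^j}=2FF_{y^iy^j}+2F_{y^i}F_{y^j}$ combined with positive semidefiniteness of $(F_{y^iy^j})$ forces both $F_{y^i}(x_o,y_o)\theta^i=0$ and $F_{y^iy^j}(x_o,y_o)\theta^i\theta^j=0$, giving \eqref{lemK=-1sufflem2(ii)1}. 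Finally, if $x_o$ is not tangent to $\partial U$ at $x_o$, then for small $|t|\neq0$ the vector $y_o+tx_o$ is also not tangent to $\partial U$ at $x_o$, so part~\eqref{suffK=-1new1lemCasei} together with \eqref{smedefF} (where equality holds only for vectors parallel to $y_o+tx_o$, and $\theta$ is not among them) gives $F_{y^iy^j}(x_o,y_o+tx_o)\theta^i\theta^j>0$.

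The main obstacle I anticipate is verifying that the translation identity $F(x,y)=\tfrac12\{\bar\Phi_+(x,y)-\bar\Phi_-(x,y)\}$ genuinely holds globally on $\Omega\times\mathbb{R}^n$ and not merely near $\bar x$: this requires combining the global uniqueness from Proposition~\ref{uniqunesslemmaK=-1} (so that $\Phi_\pm$ are the unique solutions) with Proposition~\ref{transformforK=-1} applied to each of $\Phi_+$ and $\Phi_-$ separately, taking care that the hypotheses of Proposition~\ref{transformforK=-1} (namely that $\phi\pm\psi$ is nonvanishing on $\Rno$, which holds under both Conditions \eqref{K=-1phi_-uniq1con} and \eqref{K=-1phi_-uniq2con} in Notation~\ref{basciassmupK=-1}) are in force, and then using Theorem~\ref{localstrucK} at the basepoint $\bar x$ to identify the shifted data with $\bar F$ and $\bar P$. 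Everything else is a direct transcription of the $\mathbf{K}=0$ argument.
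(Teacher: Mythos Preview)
Your plan matches the paper's proof almost exactly: the paper also mirrors Proposition~\ref{K=0conneccomconvex}, picks $\bar x\in U$ with $x_o-\bar x=\mu y_o$ by convexity, sets $\bar\psi=F(\bar x,\cdot)$ and $\bar\phi=P(\bar x,\cdot)$, applies Proposition~\ref{transformforK=-1} to each of $\Phi_+$ and $\Phi_-$ to get $\bar F(x-\bar x,y)=F(x,y)$, and then invokes Lemma~\ref{lemK=-1suffeqposilem} for the contradiction; part~\eqref{suffK=-1new1lemCaseii} is handled identically to the $\mathbf{K}=0$ case.

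One small correction to your obstacle paragraph: you should not invoke Proposition~\ref{uniqunesslemmaK=-1} for the global uniqueness of $\Phi_\pm$, since that proposition assumes $(\Omega,F)$ is a forward complete Finsler manifold, which is exactly what fails when $\PD\subsetneq\Omega$. The uniqueness of $\Phi_\pm$ on $\Omega$ is instead built into Notation~\ref{basciassmupK=-1} itself (``the unique solutions''), and the paper uses this directly to feed Proposition~\ref{transformforK=-1}. Also, the nonvanishing hypothesis you need is for $\bar\phi\pm\bar\psi$ on $\Rno$, not for $\phi\pm\psi$; the paper notes this follows because $\bar\phi\pm\bar\psi=\Phi_\pm(\bar x,\cdot)$, which by Conditions~\eqref{K=-1phi_-uniq1con} and~\eqref{K=-1phi_-uniq2con} in Notation~\ref{basciassmupK=-1} (via Corollary~\ref{basicproper}/\eqref{noezerperto}) vanish only at $y=\mathbf{0}$.
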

\begin{proof}
Let $x_o$ be a smooth point of $\partial U\cap \Omega$.  Since $\partial U \subset \partial {\PD}$,
there exists $y_o\in T_{x_o}\mathbb{R}^n\backslash\{\mathbf{0}\}$ such that the  positive semi-definite  matrix $([F^2]_{y^iy^j}(x_o,y_o))$ has a zero eigenvalue.

We now show that $y_o$ is tangent to $\partial U$ at $x_o$.
If not, the convexity of  ${U}$ yields a point $\bar{x}\in {U}$ such that
 \begin{equation}\label{newxbary_o22}
  x_o-\bar{x}=\mu y_o, \quad  \text{for some } \mu\neq0.
  \end{equation}
Define $\bar{\psi}(y) := F(\bar{x}, y)$ and $\bar{\phi}(y) := P(\bar{x}, y)$. Then $\bar{\psi}$ is a Minkowski norm and $\bar{\phi}$ is a positively $1$-homogeneous function.
It follows from \eqref{thmK=-1eq2} and \eqref{Phiplusminus233} that $\bar{\psi},\bar{\phi}$ are the solutions to the equations
\begin{align*}
 (\bar{\phi}+\bar{\psi})(y) = (\phi+\psi)(y+ \bar{x} (\bar{\phi}+\bar{\psi})(y)),
 \qquad (\bar{\phi}-\bar{\psi})(y) = (\phi-\psi)(y+ \bar{x} (\bar{\phi}-\bar{\psi})(y)), \quad \forall y\in\mathbb{R}^n.
 \end{align*}
By Conditions \eqref{K=-1phi_-uniq1con} and \eqref{K=-1phi_-uniq2con} in Notation \ref{basciassmupK=-1},
it follows that $(\bar{\phi}\pm\bar{\psi})(y)=0$ if and only if $y=\mathbf{0}$. Moreover,
Proposition \ref{transformforK=-1} and Notation \ref{basciassmupK=-1} guarantee that
 for any $(x,y)\in  (\Omega-\bar{x})\times \mathbb{R}^n$, the equations
 \begin{equation*}%\label{newK=-1suffbarPhi}
\bar{\Phi}_+ (x,y)= (\bar{\phi} + \bar{\psi})\big(y + \bar{x} \bar{\Phi}_+(x,y)\big), \quad  \bar{\Phi}_-(x,y) = \big(\bar{\phi} - \bar{\psi})(y + \bar{x} \bar{\Phi}_-(x,y)\big),
\end{equation*}
have unique solutions  $ \bar{\Phi}_\pm(x,y)$.
For any $x\in \Omega-\bar{x}$,    relation \eqref{tranK=-1} implies that
\begin{equation}\label{twoFbarF22}
\bar{F}(x,y): = \frac{1}{2} \Big\{ \bar{\Phi}_{+}(x,y) - \bar{\Phi}_{-}(x,y) \Big\} =\frac{1}{2} \Big\{ {\Phi}_{+}(x+\bar{x},y) - {\Phi}_{-}(x+\bar{x},y) \Big\}=F(x+\bar{x},y).
\end{equation}
Since $\bar{F}(\mathbf{0},y)=\bar{\psi}(y)$ is a Minkowski norm, it follows by Lemma \ref{lemK=-1suffeqposilem} that  $([\bar{F}^2]_{y^iy^j}(\mu y_o,y_o))$ is positive definite. Thus, \eqref{twoFbarF22} together with \eqref{newxbary_o22} implies that
\begin{equation}\label{therrethy1}
([{F}^2]_{y^iy^j}(x_o,y_o))=([\bar{F}^2]_{y^iy^j}(x_o-\bar{x},y_o))=([\bar{F}^2]_{y^iy^j}(\mu y_o,y_o))
\end{equation}
is positive definite,  which contradicts the positive
semi-definiteness of $\left( [  {F}^2 ]_{y^i y^j}  (x_o,y_o) \right)$. Hence, $y_o$ is tangent to $\PD$ at $x_o$, and therefore Statement \eqref{suffK=-1new1lemCasei} is established.

The rest of the proof follows from the same argument as in Proposition~\ref{K=0conneccomconvex}, and we omit the details.
\end{proof}

%\begin{proof}
%Statements \eqref{suffK=-1new1lemCasei} \& \eqref{suffK=-1new1lemCaseii} follow by a similar argument in the proof of Theorem \ref{K=0conneccomconvex} by replacing the setting of $\mathbf{K}=0$ with the one of $\mathbf{K}=-1$.
%\end{proof}

The following theorem provides a complete characterization of the geometry of $\PD$
in the two-dimensional case. Its proof follows the same strategy as that of Theorem~\ref{n=2k=0deger}, using Proposition~\ref{thmK=-1convex}.
%Proceeding as in the proof of Theorem \ref{n=2k=0deger} and using Proposition \ref{thmK=-1convex}, one can easily show
% the following result, which gives a complete characterization of the geometric properties of $\PD$.
\begin{theorem}\label{K=-1Dim2theorem}
Under  Notation \ref{basciassmupK=-1} with the dimension $n=2$, the following hold:
\begin{enumerate}[\rm (i)]
\item\label{K=-1Dim2conneccompconvex1} every connected component of $\PD$  is  a convex domain in $\mathbb{R}^2$;

\item\label{K=-1Dim2conneccompconvex3} for $\mathcal{H}^{1}$-a.e.  $x\in \partial {\PD}\cap \Omega$, if the matrix $([F^2]_{y^iy^j}(x,y))$ is degenerated at some $y\in \mathbb{R}^2_\circ$,   then $y$ is tangent to $\partial{\PD}$ at $x$. In particular, such a pair $(x,y)$ exists whenever $\partial {\PD}\cap \Omega$ is nonempty.
\end{enumerate}
\end{theorem}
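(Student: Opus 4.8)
\textbf{Proof proposal for Theorem~\ref{K=-1Dim2theorem}.}

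The plan is to mirror the structure of the proof of Theorem~\ref{n=2k=0deger}, replacing its appeal to Proposition~\ref{K=0conneccomconvex} with the corresponding statement Proposition~\ref{thmK=-1convex}, and replacing Lemma~\ref{keylemmak=0} with Lemma~\ref{lemK=-1suffeqposilem} together with the translation identity \eqref{tranK=-1}. The two parts are essentially independent: part~\eqref{K=-1Dim2conneccompconvex3} is a soft consequence of Lemma~\ref{lipschismooofboundry} and Proposition~\ref{thmK=-1convex}/\eqref{suffK=-1new1lemCasei}, while part~\eqref{K=-1Dim2conneccompconvex1} requires a genuine (if short) argument by contradiction.

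For part~\eqref{K=-1Dim2conneccompconvex1}, I would argue exactly as in Theorem~\ref{n=2k=0deger}/\eqref{K=0Dim2conneccompconvex1}. Suppose some connected component $U \subset \PD$ fails to be convex. Then there are $x_1, x_2 \in U$ whose connecting segment $l_{x_1 x_2}$ is not contained in $U$, hence meets $\partial U$ at some point $x_o$ with $x_o \notin \PD$; in particular the positive semi-definite matrix $([F^2]_{y^iy^j}(x_o,y_o))$ is degenerate for \emph{some} $y_o \neq \mathbf{0}$. Now fix an arbitrary nonzero $y_o \in T_{x_o}\mathbb{R}^2$; in dimension $2$ the non-convexity of $U$ forces the existence of $\bar{x} \in U$ and $\mu \neq 0$ with $x_o - \bar{x} = \mu y_o$ (the line through $x_o$ in direction $y_o$ crosses $U$). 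Set $\bar\psi(y):=F(\bar x,y)$, $\bar\phi(y):=P(\bar x,y)$. By \eqref{thmK=-1eq2}, \eqref{Phiplusminus233} and Proposition~\ref{transformforK=-1}, $\bar\psi$ is a Minkowski norm, $\bar\phi$ is positively $1$-homogeneous with $(\bar\phi\pm\bar\psi)|_{\Rno}$ nowhere vanishing (by Conditions~\eqref{K=-1phi_-uniq1con}/\eqref{K=-1phi_-uniq2con} of Notation~\ref{basciassmupK=-1}), and $\bar F(x,y):=\frac12\{\bar\Phi_+(x,y)-\bar\Phi_-(x,y)\}$ satisfies $\bar F(x,y)=F(x+\bar x,y)$ for $x\in\Omega-\bar x$, as in \eqref{twoFbarF22}. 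Since $\bar F(\mathbf{0},\cdot)=\bar\psi$ is a Minkowski norm, Lemma~\ref{lemK=-1suffeqposilem} gives that $([\bar F^2]_{y^iy^j}(\mu y_o,y_o))$ is positive definite, whence $([F^2]_{y^iy^j}(x_o,y_o))=([\bar F^2]_{y^iy^j}(x_o-\bar x,y_o))$ is positive definite. As $y_o\in T_{x_o}\mathbb{R}^2\setminus\{\mathbf{0}\}$ was arbitrary, $x_o\in\PD$, contradicting $x_o\notin\PD$. Hence every connected component of $\PD$ is convex.

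For part~\eqref{K=-1Dim2conneccompconvex3}, by part~\eqref{K=-1Dim2conneccompconvex1} each component of $\PD$ is a bounded convex open set, so by Lemma~\ref{lipschismooofboundry} its boundary is smooth $\mathcal H^1$-a.e.; thus $\mathcal H^1$-a.e.\ point of $\partial\PD\cap\Omega$ is a smooth boundary point of the component it bounds. At such a point $x$, Proposition~\ref{thmK=-1convex}/\eqref{suffK=-1new1lemCasei} asserts $([F^2]_{y^iy^j}(x,y))$ is positive definite for every $y$ not tangent to $\partial\PD$ at $x$; contrapositively, any degenerating direction $y\in\mathbb{R}^2_\circ$ must be tangent to $\partial\PD$ at $x$. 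The existence of such a pair $(x,y)$ whenever $\partial\PD\cap\Omega\neq\emptyset$ is exactly the content of Proposition~\ref{thmK=-1convex}/\eqref{suffK=-1new1lemCaseii} applied at one smooth point $x\in\partial\PD\cap\Omega$ (non-smooth points being $\mathcal H^1$-negligible, a smooth one exists). This completes the proof.

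\textbf{Anticipated obstacle.} The only nontrivial point is verifying that in the non-convexity argument one really can choose, for \emph{every} direction $y_o$, a point $\bar x\in U$ on the line through $x_o$ in direction $y_o$ with $x_o-\bar x=\mu y_o$, $\mu\neq 0$ — i.e.\ that the full line through $x_o$ in any direction meets $U$. This is where dimension $n=2$ is used: $x_o$ lies on the segment $l_{x_1x_2}$ with endpoints in $U$, and in the plane any line through an interior-boundary point of the segment, other than the segment's own line, crosses the "nearby" part of $U$ transversally; the degenerate case where $y_o$ is parallel to $l_{x_1x_2}$ itself must be handled by perturbing $x_1,x_2$ slightly within $U$ (using that $U$ is open) so that $x_o$ no longer lies on the chosen line's translate — or, more cleanly, by noting that the argument only needs \emph{one} admissible $\bar x$ per $y_o$ and the set of "bad" directions is a single line, which can be avoided by replacing the chord. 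I expect this to be the step requiring the most care, though it is already implicitly handled in the $\mathbf{K}=0$ analogue (Theorem~\ref{n=2k=0deger}), so the same wording carries over verbatim.
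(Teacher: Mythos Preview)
Your proposal is correct and follows precisely the paper's own approach: the paper states that Theorem~\ref{K=-1Dim2theorem} ``follows the same strategy as that of Theorem~\ref{n=2k=0deger}, using Proposition~\ref{thmK=-1convex},'' and your argument does exactly this, invoking Lemma~\ref{lemK=-1suffeqposilem} and the translation identity \eqref{twoFbarF22} in place of Lemma~\ref{keylemmak=0}. Your anticipated obstacle is easier than you suggest: when $y_o$ is parallel to $x_2-x_1$ you may simply take $\bar{x}=x_1$ (so $\mu\neq 0$ since $x_o\notin U\ni x_1$), while for $y_o$ transversal to $x_2-x_1$ the line through $x_o$ in direction $y_o$ separates $x_1$ from $x_2$ in $\mathbb{R}^2$, hence must meet the connected set $U$ --- this is the clean reason the argument is specific to $n=2$.
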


Proposition~\ref{thmK=-1convex} and Theorem~\ref{K=-1Dim2theorem} provide an effective criterion for determining
 whether the function $F$ defined in Notation \ref{basciassmupK=-1} is a Finsler metric on $\Omega$.
As an application, we now give a conceptual proof of Proposition~\ref{ExampleK=-1amazing}
that avoids direct computation.

%we will show Proposition \ref{ExampleK=-1amazing} in a simple way instead of direct calculating. %that  $(\Omega,F)$  is a forward complete projectively flat Finsler manifold with $\mathbf{K}=-1$ providing that  $\psi$ is the Euclidean  norm  and $\phi =  c \psi$ for $c>0$ (i.e., ).

\begin{proof}[Proof of Proposition \ref{ExampleK=-1amazing}]
We prove only case \eqref{c>1}, as the other cases are similar.
A local verification shows that
  $F$ is a projectively flat Finsler metric with $\mathbf{K} =-1$ near the origin.
  Let $U$ denote the connected component of $\PD$ containing the origin. We claim $U=\Omega$.

Suppose, for contradiction, that $U \subsetneq \Omega$.
Let $\breve{x}\in \Omega$ be a point in $\partial U$ that is closest to the origin, i.e., $|\breve{x}|=\inf_{x\in \partial U}|x|=:\delta>0$;
such a point exists since $\Omega$ is a Euclidean ball centered at the origin.
Then $\mathbb{B}^n_{\delta}(\mathbf{0})\subset U$, and there exists a vector $\breve{y}\in T_{\breve{x}}\mathbb{R}^n\backslash\{\mathbf{0}\}$  such that the matrix $([F^2]_{y^iy^j}(\breve{x}, \breve{y}))$ is not positive definite.
Hence, there exists a vector $\breve{\theta} = (\breve{\theta}^i) \neq \mathbf{0}$, not parallel to $\breve{y}$, satisfying
\begin{equation}\label{K=-11Euclideanxytheta}
 [F^2]_{y^i y^j}(\breve{x}, \breve{y}) \breve{\theta}^i \breve{\theta}^j \leq 0.
 \end{equation}

Since $F$  is spherical symmetric (i.e.,
$F(\mathbf{Q}x, \mathbf{Q}y) =F(x, y)$ for any $\mathbf{Q}\in O(n)$), the inequality \eqref{K=-11Euclideanxytheta} holds
 for all orthogonal transformations of the triple $(\breve{x}, \breve{y}, \breve{\theta})$.
This implies that $F$ fails to be a Finsler metric outside $\mathbb{B}^n_{\delta}(\mathbf{0})$, so
 $U=\mathbb{B}^n_{\delta}(\mathbf{0}) \Subset  \Omega$ and $\breve{x}$  is a smooth boundary point of $U$. By Proposition~\ref{thmK=-1convex}/\eqref{suffK=-1new1lemCasei}, the vector $\breve{y}$ must be tangent to the Euclidean ball $U$ at $\breve{x}$, and therefore
 \begin{align}
\langle \breve{x}, \breve{y} \rangle =0. \label{K=-1Ex_xy0}
\end{align}

 Moreover,
since the vector $\breve{x}$ is not tangent to $\partial{U}$ at $\breve{x}$, Proposition~\ref{thmK=-1convex}/\eqref{suffK=-1new1lemCaseii} implies the
existence of a nonzero vector $\theta = (\theta^i)$, not parallel to $\breve{y}$, such that
\begin{equation} \label{K=-1amazinggijthetaithetaj}
 F_{y^i}(\breve{x}, \breve{y}) \theta^i =0, \quad  F_{y^i y^j}(\breve{x}, \breve{y}) \theta^i \theta^j =0, \quad  \left.\frac{\dd }{{\dd} t}\right|_{t=0}F_{y^i y^j}(\breve{x},\breve{y}+ t \breve{x})  \theta^i \theta^j  =0.
 \end{equation}
We now show that these conditions lead to a contradiction.
%In the sequel, we prove that
%these will lead to a contradiction  and hence $ U=\Omega$.

A direct calculation gives
\begin{align}\label{firsPhi_+}
[\Phi_{+}]_{y^i}(x,y) =
\frac{(c+1)}{1 - (c+1)^2 |x|^2} \left\{ \frac{\left( 1- (c+1)^2 |x|^2 \right) y^i + (c+1)^2 \langle x,y \rangle x^i}{\sqrt{\left( 1- (c+1)^2 |x|^2 \right) |y|^2 + (c+1)^2\langle x,y \rangle^2}}  + (c+1)  x^i \right\},
\end{align}
which together with \eqref{K=-1Ex_xy0} yields
\begin{align*}
[\Phi_{+}]_{y^i}(\breve{x},\breve{y}) \theta^i =
\frac{(c+1)\langle \breve{y},\theta \rangle}{|\breve{y}|\sqrt{\left( 1- (c+1)^2 |\breve{x}|^2 \right)}} + \frac{(c+1)^2\langle \breve{x},\theta \rangle }{1 - (c+1)^2 |\breve{x}|^2}.
\end{align*}
Replacing $c+1$ by $c-1$ gives
\begin{align*}
[\Phi_{-}]_{y^i}(\breve{x},\breve{y}) \theta^i =
\frac{(c-1)\langle \breve{y},\theta \rangle}{|\breve{y}|\sqrt{\left( 1- (c-1)^2 |\breve{x}|^2 \right)}} + \frac{(c-1)^2\langle \breve{x},\theta \rangle }{1 - (c-1)^2 |\breve{x}|^2}.
\end{align*}
Combining these  with \eqref{suffK=-1new1Randersnorm}$_1$ and \eqref{K=-1amazinggijthetaithetaj}$_1$ leads to
\begin{equation}\label{K=-1ExFithetai}
\begin{split}
0=2 F_{y^i}(\breve{x},\breve{y}) \theta^i
=& \left\{ \frac{c+1}{\sqrt{\left( 1- (c+1)^2 |\breve{x}|^2 \right)}} -\frac{c-1}{\sqrt{\left( 1- (c-1)^2 |\breve{x}|^2 \right)}} \right\}
\frac{\langle \breve{y},\theta \rangle}{|\breve{y}|} \\
&+ \left\{ \frac{(c+1)^2 }{1 - (c+1)^2 |\breve{x}|^2} -  \frac{(c-1)^2 }{1 - (c-1)^2 |\breve{x}|^2}\right\} \langle \breve{x},\theta \rangle.
\end{split}
\end{equation}

Next, using \eqref{K=-1Ex_xy0} and \eqref{firsPhi_+}, we compute
\begin{equation} \label{K=-1Ex_Phi+ij}
\begin{split}
& \quad\  [\Phi_{+}]_{y^i y^j}(\breve{x},  \breve{y} + t \breve{x}) \theta^i \theta^j  \\
& =  \frac{(c+1)}{1 - (c+1)^2 |\breve{x}|^2}\Bigg\{ \frac{\left( 1- (c+1)^2 |\breve{x}|^2 \right)|\theta|^2 + (c+1)^2 \langle \breve{x},\theta \rangle^2}{ \sqrt{\left( 1- (c+1)^2 |\breve{x}|^2 \right)|\breve{y}|^2 + t^2|\breve{x}|^2 }} \\
 &
 \quad \quad  \quad \quad  \quad \quad
  \quad \quad  \quad \quad  \quad \quad
 -   \frac{  [\left( 1- (c+1)^2 |\breve{x}|^2 \right) \langle \breve{y},\theta \rangle + t \langle \breve{x},\theta \rangle]^2 }{ [ \left( 1- (c+1)^2 |\breve{x}|^2 \right)|\breve{y}|^2 + t^2|\breve{x}|^2 ]^{\frac{3}{2}}  }
\Bigg\},
\end{split}
\end{equation}
from which we obtain
\begin{align} \label{K=-1Ex_dtPhi+}
\left.\frac{\dd }{{\dd} t}\right|_{t=0}\Big\{ [\Phi_{+}]_{y^i y^j}(\breve{x},  \breve{y} + t \breve{x}) \theta^i \theta^j \Big\}
 = - \frac{2(c+1)\langle \breve{x},\theta \rangle \langle \breve{y},\theta \rangle}{( 1 - (c+1)^2 |\breve{x}|^2)^{\frac{3}{2}}  |y|^3 } .
\end{align}
Replacing $c+1$ by $c-1$ and using \eqref{suffK=-1new1Randersnorm} gives the analogous expression for $\Phi_{-}$, and hence
%By replacing $c+1$ with $c-1$ and using \eqref{suffK=-1new1Randersnorm}$_1$, one obtains the second derivatives of $ \Phi_{-}$  and
\begin{align*}
\left.\frac{\dd }{{\dd} t}\right|_{t=0}\Big\{ F_{y^i y^j}(\breve{x},\breve{y} + t \breve{x}) \theta^i \theta^j \Big\}
 = \left\{ \frac{(c-1)}{( 1 - (c-1)^2 |\breve{x}|^2)^{\frac{3}{2}}} - \frac{(c+1)}{( 1 - (c+1)^2 |\breve{x}|^2)^{\frac{3}{2}}} \right\} \frac{\langle \breve{x},\theta \rangle \langle \breve{y},\theta \rangle}{ |y|^3 }.
\end{align*}
Together with \eqref{K=-1amazinggijthetaithetaj} and \eqref{K=-1ExFithetai}, this implies
\begin{align}\label{K=-1Ex_theta}
\langle \breve{x},\theta \rangle =0, \qquad \langle \breve{y},\theta \rangle =0.
\end{align}

Substituting \eqref{K=-1Ex_theta} into \eqref{K=-1Ex_Phi+ij} and  evaluating at $t=0$ yields
\begin{align*}
 [\Phi_{+}]_{y^i y^j}(\breve{x},\breve{y})   \theta^i \theta^j
 =  \frac{(c+1)|\theta|^2}{|\breve{y}|\sqrt{\left( 1- (c+1)^2 |\breve{x}|^2 \right)}}. \notag
\end{align*}
Again replacing $c+1$ by $c-1$ gives
\begin{align*}
 [\Phi_{-}]_{y^i y^j}(\breve{x},\breve{y}) \theta^i \theta^j
 =  \frac{(c-1)|\theta|^2}{|\breve{y}|\sqrt{\left( 1- (c-1)^2 |\breve{x}|^2 \right)}}. \notag
\end{align*}
From the definition of $F$ and \eqref{K=-1amazinggijthetaithetaj}, it follows that
%Then the above two expressions together with \eqref{suffK=-1new1Randersnorm}$_1$ and \eqref{K=-1amazinggijthetaithetaj}$_2$ furnish
\begin{align*}
 0=2F_{y^i y^j}(\breve{x},\breve{y}) \theta^i \theta^j
 = \left\{ \frac{(c+1)}{\sqrt{\left( 1- (c+1)^2 |\breve{x}|^2 \right)}} - \frac{(c-1)}{\sqrt{\left( 1- (c-1)^2 |\breve{x}|^2 \right)}}  \right\} \frac{|\theta|^2}{|\breve{y}|},
\end{align*}
which forces  $\theta = \mathbf{0}$, contradicting the assumption that $\theta \neq \mathbf{0}$.
Thus, no such point $\breve{x}$ exists, so $U = \Omega =\PD$.

The forward completeness now follows by Proposition~\ref{K=-1positive=complete}.
Furthermore, since $c > 1$, equation \eqref{thmK=-1eq2} gives $-F \le P \le F$. Moreover, because $\phi$ is not an odd function, Theorem~\ref{localstrucK} shows that $F$ is not a Hilbert metric, whence Theorem~\ref{globalK=-1} implies that $F$ is not backward complete.
\end{proof}

%Proposition \ref{ExampleK=-1amazing} provides a bunch of functions $F$ defined in Notation \ref{basciassmupK=-1} that satisfy $\PD=\Omega$. Thus,
%it is natural to ask whether it is true for all such functions $F$? Unfortunately, similar to the case when $\mathbf{K}=0$, the answer is negative even in $\mathbb{R}^2$.
 %{\color{blue}The following counterexample shows that this cannot hold even in .}

%In the following example, we will see that if   $\phi =  c \psi$, ($c>1$) and $\psi$ is a Randers norm  in the expression of $F$ for $\mathbf{K}=-1$,
%then the domain of $F$ is dependent on the definition of the Randers norm. Without loss of generality, we can set the Randers norm as
%\[  \phi(y) =  \psi(y) = |y| + \langle a, y \rangle, \quad |a|<1. \]

\vskip 10mm
\section{Projectively flat Finsler manifolds with   $\mathbf{K}=1$}\label{sectionK=1}

%\begin{remark}We have also
%\begin{equation}\label{scondreverse}
%\overleftarrow{\mathbf{S}}\left(\overleftarrow{\gamma}'_{\frac{y}{\overleftarrow{F}(y)}}(t)\right)= f(t).
%\end{equation}
%However, it follows by \eqref{sherere1} and \eqref{sherere2} that the reverse of $\overleftarrow{\gamma}_{\frac{y}{\overleftarrow{F}(y)}}(t)$ for $y\in S_{o_2}M$ is a geodesic $\gamma_z(t)$ for $z\in S_{o_1}M$ with $\gamma'_z(\pi)=-\frac{y}{\overleftarrow{F}(y)}$. Hence, \eqref{scondreverse} is exactly
% $\mathbf{S}(\gamma_y'(t))=f(t)$ for any $y\in S_{o_1}M$.
%\end{remark}
This section is devoted to the study of projectively flat Finsler manifolds with $\mathbf{K} = 1$. It is divided into two subsections. The first subsection derives an explicit formula for the distance function, which completes the proof of Theorem~\ref{DisInto} and leads to a maximum diameter theorem. The second subsection proves that the metric completion of such a manifold is a sphere, thus establishing Theorem~\ref{thmK=1globalintro}.

\subsection{Distance and diameter}

%The distance function on such a space is as follows.

\begin{theorem}  \label{lemK=1dx_0xdx_0}
Let $(\Omega,F)$ be a projectively flat Finsler manifold with $\mathbf{K}=1$, where $\Omega\subset \mathbb{R}^n$ is a strictly convex domain containing the origin.
Then for any distinct points $x_1, x_2\in \Omega$, %the distance between them is given by
\begin{align}\label{eqK=1dx_0xdxx_0}
d_F(x_1,x_2) =  \arctan{   \frac{F^2(x_1, x_2-x_1)+ P^2(x_1, x_2-x_1)-P(x_1, x_2-x_1)}{F(x_1, x_2-x_1)}     }
 + \arctan{ \frac{P(x_1, x_2-x_1)}{ F(x_1, x_2-x_1) }}.
\end{align}
In particular, with $\psi(y):=F(\mathbf{0},y)$ and $\phi(y):= P(\mathbf{0},y) $, this implies
\begin{align}
d_F(\mathbf{0},x) &=   \arctan{   \frac{\psi^2(x)+ \phi^2(x)-\phi(x)}{\psi(x)}     }
 + \arctan{ \frac{\phi(x)}{ \psi(x) }}, \label{coreqK=1d0x}\\
 d_F(x,\mathbf{0}) &=   \arctan{   \frac{\psi^2(-x)+ \phi^2(-x)+\phi(-x)}{\psi(-x)}     }
 - \arctan{ \frac{\phi(-x)}{ \psi(-x) }}.\label{coreqK=1dx0}
\end{align}
\end{theorem}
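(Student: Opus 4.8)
The plan is to follow the same pattern used for $\mathbf{K}=0$ in Theorem~\ref{K=0lemdx_0xdxx_0} and for $\mathbf{K}=-1$ in Theorem~\ref{lemK=-1dx_0xdx_0}: reduce the computation of $d_F(x_1,x_2)$ to the analysis of the unit-speed geodesic joining the two points, use the strict convexity of $\Omega$ to guarantee such a geodesic exists, and then exploit the explicit ODE for the geodesic parametrization coming from Theorem~\ref{lemgeodesicK}. First I would treat $x_1=x_2$ as trivial and fix distinct $x_1,x_2\in\Omega$. Since $(\Omega,F)$ is projectively flat with $\mathbf K=1$, Theorem~\ref{lemgeodesicK} gives the geodesic from $x_1$ in direction $y:=(x_2-x_1)/F(x_1,x_2-x_1)$ in the form $\gamma(t)=f(t)\,y+x_1$, where $f$ satisfies $f(0)=0$, $f'(0)=1$, $f'>0$, and, with $\mathbf K\equiv 1$, the curvature relation \eqref{geodesicK} becomes the ODE
\[
2f'''(t)f'(t)-3[f''(t)]^2 = 4[f'(t)]^2,
\]
which is the $k=+1$ analogue of \eqref{OdeK=-1}, i.e. $2(f''/f')' - (f''/f')^2 = 4$. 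Solving this Riccati-type equation for $u:=f''/f'$ one gets $u' = \tfrac12 u^2 + 2$, so $u(t) = 2\tan(t+c_0)$ for an integration constant, and integrating once more produces $f$ up to two constants; imposing $f(0)=0,\ f'(0)=1$ pins them down. The upshot, exactly parallel to \eqref{fspecialK=-1}, is an explicit one-parameter family
\[
f(t) = \frac{\sin(2t)}{2\big(\cos(2t)+c_{x_1,y}\sin(2t)\big)} \quad\text{(together with the degenerate member }f(t)=\tfrac12\tan t\text{)},
\]
where $c_{x_1,y}$ depends continuously on $x_1$ and $y$; which member occurs is governed by \eqref{geodesicBackf''1}, namely $f''(0)+2P(x_1,y)f'(0)=0$, so $c_{x_1,y}$ is determined by $P(x_1,y)$.

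\textbf{Key steps.} Next I would compute the distance as $d_F(x_1,x_2)=L$, the unique $L>0$ with $\gamma(L)=x_2$, i.e. $f(L)\,y + x_1 = x_2$, equivalently $f(L) = F(x_1,x_2-x_1)=:F_{12}$. From \eqref{geodesicBackf''1} evaluated at $t=0$ one extracts the constant: writing $P_{12}:=P(x_1,x_2-x_1)$, homogeneity of $P$ in $y$ gives $P(x_1,y)=P_{12}/F_{12}$, and matching $f''(0)/f'(0)=-2P(x_1,y)$ against the explicit $f$ yields $c_{x_1,y}$ in terms of $P_{12}/F_{12}$ (precisely $c_{x_1,y}=P_{12}/F_{12}$ after normalizing, analogous to \eqref{ciswhatK=0} and \eqref{ciswhat11}). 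Then the scalar equation $f(L)=F_{12}$ becomes $\sin(2L) = 2F_{12}\big(\cos(2L)+(P_{12}/F_{12})\sin(2L)\big)$, a single trigonometric equation in $L$. Solving it — most cleanly by the half-angle substitution $\tau=\tan L$, turning it into a quadratic in $\tau$, or by recognizing $\sin(2L),\cos(2L)$ in terms of $\tan L$ — and then using the addition formula for $\arctan$, should collapse the answer to
\[
L = \arctan\!\frac{F_{12}^2+P_{12}^2-P_{12}}{F_{12}} + \arctan\!\frac{P_{12}}{F_{12}},
\]
which is \eqref{eqK=1dx_0xdxx_0}. As in the other two cases, the degenerate branch $f(t)=\tfrac12\tan t$ corresponds to $P_{12}=\pm F_{12}$-type boundary values and must be checked to agree with the stated formula by continuity (or by a direct substitution, noting $\arctan$ is continuous). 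Finally, \eqref{coreqK=1d0x} is the specialization $x_1=\mathbf 0$, $x_2=x$, using $F(\mathbf 0,x)=\psi(x)$, $P(\mathbf 0,x)=\phi(x)$; and \eqref{coreqK=1dx0} follows from \eqref{coreqK=1d0x} applied to the reverse metric $\overleftarrow F(x,y):=F(x,-y)$, which is again projectively flat with $\mathbf K=1$ and projective factor $\overleftarrow P(x,y)=-P(x,-y)$, so $\overleftarrow F(\mathbf 0,y)=\psi(-y)$ and $\overleftarrow P(\mathbf 0,y)=-\phi(-y)$, and one simplifies using $\arctan(-s)=-\arctan s$ — exactly the device used at the end of the proof of Theorem~\ref{K=0lemdx_0xdxx_0}.

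\textbf{Main obstacle.} The conceptual steps are routine given Theorem~\ref{lemgeodesicK}; the real work is the trigonometric bookkeeping. The hard part will be (i) integrating the $k=+1$ ODE cleanly and fixing the constants so that the geodesic family is written in a form where the constant transparently equals $P_{12}/F_{12}$, and (ii) carrying out the $\arctan$-addition simplification of the solution of $\sin(2L)=2F_{12}\cos(2L)+2P_{12}\sin(2L)$ into the exact two-term form in the statement, including getting the branch of $\arctan$ right so that $L$ comes out in $(0,\pi)$ (this is where the diameter bound $\pi$ of Theorem~\ref{thmK=1globalintro}\eqref{K=1theo2} is visible). A secondary subtlety, shared with the $\mathbf K=0,-1$ proofs, is the case distinction among the members of the geodesic family and the verification that the degenerate branch yields the same closed form; I would dispose of this by a continuity argument in $(x_1,x_2)$ rather than separate computation. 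No step requires completeness of $(\Omega,F)$ — only strict convexity of $\Omega$, which supplies a connecting geodesic — so the statement holds in the stated generality.
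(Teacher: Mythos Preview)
Your strategy matches the paper's exactly: integrate the $\mathbf K=1$ geodesic ODE from Theorem~\ref{lemgeodesicK}, pin down the integration constant via \eqref{geodesicBackf''1}, solve $f(L)=F_{12}$, and handle \eqref{coreqK=1dx0} with the reverse metric. However, your explicit solution $f(t)=\frac{\sin(2t)}{2(\cos(2t)+c\sin(2t))}$ is wrong---it satisfies $2(f''/f')'-(f''/f')^2=16$, not $4$; the correct one-parameter family has $\sin t,\cos t$ rather than $\sin(2t),\cos(2t)$. More importantly, the paper sidesteps both this slip and all of the trigonometric bookkeeping you anticipate by writing the solution as
\[
f(t)=\cos^2 c\,\bigl[\tan(t+c)-\tan c\bigr],\qquad c\in(-\tfrac{\pi}{2},\tfrac{\pi}{2}).
\]
In this parametrization there is a \emph{single} family (no degenerate branch, no case split), \eqref{geodesicBackf''1} at $t=0$ gives $\tan c=-P_{12}/F_{12}$ immediately, and $f(L)=F_{12}$ reads $\tan(L+c)-\tan c=F_{12}\sec^2 c$, i.e.\ $\tan(L+c)=(F_{12}^2+P_{12}^2-P_{12})/F_{12}$. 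The two-term $\arctan$ formula then drops out as $L=(L+c)-c$ with no half-angle substitution or addition formula needed, and the branch is automatic since $c\in(-\tfrac{\pi}{2},\tfrac{\pi}{2})$ and $L+c\in(c,\tfrac{\pi}{2})$.
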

\begin{proof}
Let $\gamma(t)$ be a unit-speed geodesic from $x\in \Omega$ with initial velocity $y \in S_{x}\Omega$. By Theorem \ref{lemgeodesicK} and $\mathbf{K}=1$,  we have $\gamma(t)=f(t)y+x$, where $f(t)$ satisfies  the initial condition \eqref{geodesicintial} and the ODE
\[
2 \left( \frac{f''(t)}{f'(t)}  \right)' - \left( \frac{f''(t)}{f'(t)}  \right)^2 = 4.
\]
Solving this equation yields
\begin{equation}\label{fK=1}
f(t) =  \cos^2c  \left[ \tan(  t + c) - \tan c \right],
\end{equation}
where   $c =c(x,y)  \in (-\frac{\pi}{2}, \frac{\pi}{2}) $ is a constant.

Now, for two distinct points $x_1,x_2\in \Omega$,
  the unit-speed geodesic $\gamma(t)$ from $x_1$ to $x_2$ is given by
 \begin{align}\label{K=1dis1}
 \ds\gamma(t)   =\cos^2c  \left[ \tan(  t + c) - \tan c \right] \frac{x_2-x_1}{F(x_1, x_2-x_1)} + x_1.
 \end{align}
Moreover, from \eqref{geodesicBackf''1} evaluated at $t=0$, we obtain
\begin{align} \label{K=1pfequlxx}
\tan c=  - \frac{P(x_1, x_2-x_1)}{F(x_1, x_2-x_1)}.
\end{align}

Since $\gamma$ has unit speed, equation \eqref{K=1dis1} implies that
\[
x_2=\gamma(d_F(x_1,x_2))=\cos^2c  \left[ \tan(  d_F(x_1,x_2) + c) - \tan c \right] \frac{x_2-x_1}{F(x_1, x_2-x_1)} + x_1,
\]
which yields
\[
\tan(  d_F(x_1,x_2) + c) - \tan c = \frac{F(x_1, x_2-x_1)}{\cos^2 c}.
\]
Combining this with \eqref{K=1pfequlxx} establishes
\eqref{eqK=1dx_0xdxx_0} and \eqref{coreqK=1d0x}.
The identity \eqref{coreqK=1dx0} is then derived by considering the reverse metric $\overleftarrow{F}$.
\end{proof}

\begin{remark}
By \eqref{eqK=1dx_0xdxx_0} and homogeneity, it is easy to verify that
\[
\lim_{x_1\rightarrow x_2}d_F(x_1,x_2)=0=\lim_{x_2\rightarrow x_1}d_F(x_1,x_2).
\]
\end{remark}

\begin{proof}[Proof of Theorem \ref{DisInto}] The theorem is now evident from Theorems \ref{K=0lemdx_0xdxx_0}, \ref{lemK=-1dx_0xdx_0} and \ref{lemK=1dx_0xdx_0}.
\end{proof}

The {\it diameter} of a Finsler manifold $(M,F)$ is defined by
\[
\diam_F(M):=\sup\big\{ d_F(x_1,x_2)\,:\, x_1,x_2\in M \big\}.
\]
By the classical results of Cheng~\cite{Ch} and Toponogov~\cite{To}, the diameter of a complete Riemannian manifold with $\mathbf{K}=1$ is at most $\pi$, with equality if and only if
the manifold is isometric to the unit Euclidean sphere.  In the setting of projectively flat Finsler manifolds, we
prove the following  maximum diameter theorem.

\begin{theorem}\label{thmK=1global}
Let $(\Omega,F)$ be a projectively flat  Finsler manifold with   $\mathbf{K} = 1$, where $\Omega\subset \mathbb{R}^n$ is a convex domain containing the origin.
Then the following hold:
\begin{enumerate}[\rm (i)]
\item\label{diam1spher1} $\diam_F(\Omega)\leq \pi$;

\item\label{diam1spher2} $\Omega=\mathbb{R}^n$ if and only if  every straight line in $\Omega$ has length $\pi$; in this case, $\diam_F(M)=\pi$.
\end{enumerate}
\end{theorem}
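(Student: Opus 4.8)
\textbf{Proof plan for Theorem \ref{thmK=1global}.}
The plan is to exploit the explicit distance formula \eqref{eqK=1dx_0xdxx_0} from Theorem \ref{lemK=1dx_0xdx_0}, together with the geodesic description \eqref{fK=1}. For part \eqref{diam1spher1}, first I would fix two distinct points $x_1,x_2\in\Omega$ and abbreviate $\mathsf{f}:=F(x_1,x_2-x_1)>0$ and $\mathsf{p}:=P(x_1,x_2-x_1)$. Substituting into \eqref{eqK=1dx_0xdxx_0}, the distance is a sum of two arctangents; using the addition formula $\arctan A+\arctan B = \arctan\frac{A+B}{1-AB}$ (valid modulo $\pi$) one checks that $AB = \frac{(\mathsf{f}^2+\mathsf{p}^2-\mathsf{p})\mathsf{p}}{\mathsf{f}^2}$ and $1-AB = \frac{\mathsf{f}^2+\mathsf{p}-\mathsf{p}^2+\mathsf{p}^2\mathsf{f}^2\cdot(\ldots)}{\mathsf{f}^2}$ — the point is that each summand lies in $(-\tfrac\pi2,\tfrac\pi2)$, so their sum lies in $(-\pi,\pi)$; combined with $d_F(x_1,x_2)\geq 0$ this already gives $d_F(x_1,x_2)<\pi$, hence $\diam_F(\Omega)\le\pi$. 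I would be careful to note that strict positivity of $\mathsf f$ forces both arctangent arguments to be finite, so no degenerate $\pm\tfrac\pi2$ value occurs.

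For part \eqref{diam1spher2}, the forward direction is the substantive one. Assume $\Omega=\mathbb{R}^n$. Take any straight line; parametrize it as the image of a unit-speed geodesic $\gamma(t)=f(t)y+x$ with $f$ as in \eqref{fK=1}, i.e. $f(t)=\cos^2 c\,[\tan(t+c)-\tan c]$. Since $\Omega=\mathbb R^n$ is all of Euclidean space, the geodesic is defined and stays a straight line for all admissible $t$; the maximal interval of definition of $\gamma$ as a map into $\mathbb R^n$ is where $\tan(t+c)$ is finite, namely $t\in(-\tfrac\pi2-c,\ \tfrac\pi2-c)$, an interval of length exactly $\pi$. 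As $t$ runs over this interval, $f(t)$ runs monotonically over all of $\mathbb R$ (since $\tan$ is a bijection from $(-\tfrac\pi2,\tfrac\pi2)$ onto $\mathbb R$), so $\gamma$ traces out the entire straight line, and because $\gamma$ is unit-speed the line has length $\pi$. For the converse, if $\Omega\subsetneq\mathbb R^n$ then some straight segment in $\Omega$ abuts $\partial\Omega$, so as a geodesic it is not extendible in one direction to the full parameter interval of length $\pi$; using the monotonicity of $f$ again, the corresponding segment has length strictly less than $\pi$. Finally $\diam_F(\mathbb R^n)=\pi$ follows from part \eqref{diam1spher1} together with the fact that a maximal line has length $\pi$ and, by \eqref{eqK=1dx_0xdxx_0}, the distance between its endpoints' limits approaches $\pi$ (one should phrase this as: for any $\varepsilon>0$ there are points along such a line at distance $>\pi-\varepsilon$, since $d_F$ restricted to the line is just arclength along it).

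A technical point deserving care: the formula \eqref{eqK=1dx_0xdxx_0} and the geodesic form \eqref{fK=1} were derived under the standing hypothesis that $(\Omega,F)$ is projectively flat with $\mathbf K=1$ and $\Omega$ strictly convex; the present statement only assumes $\Omega$ convex. I would handle this by noting that for a projectively flat metric with $\mathbf K=1$ the geodesic ODE solution \eqref{fK=1} is purely local and valid on any domain, and strict convexity was only used in Theorem \ref{lemK=1dx_0xdx_0} to guarantee that a connecting geodesic exists and its image lies in $\Omega$; since here $\Omega$ is convex and the straight segment $[x_1,x_2]\subset\Omega$, the same segment serves as the geodesic image, so \eqref{eqK=1dx_0xdxx_0} still holds verbatim — I would record this observation explicitly at the start of the proof.

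The main obstacle will be the arctangent bookkeeping in part \eqref{diam1spher1}: one must verify that the sum of the two arctangents in \eqref{eqK=1dx_0xdxx_0} genuinely lands in $[0,\pi)$ rather than, say, in $(-\pi,0)$ or wrapping around — this requires tracking the sign of $1-AB$ and relating the branch of the sum to the underlying monotone function $t\mapsto d_F(\gamma(0),\gamma(t))$ which increases from $0$. The cleanest route is probably to bypass the addition formula entirely and argue directly: along the connecting geodesic, $d_F$ equals the arclength parameter $t$, and $t$ ranges over a subinterval of the length-$\pi$ maximal interval of \eqref{fK=1}; hence $d_F(x_1,x_2)<\pi$ is immediate, with equality impossible since the endpoints are interior points of $\Omega$ and $f$ is a proper bijection only on the open interval. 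This reformulation makes both parts of the theorem fall out of the single monotonicity fact about $f(t)=\cos^2c\,[\tan(t+c)-\tan c]$.
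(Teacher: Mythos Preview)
Your proposal is correct, and the ``cleanest route'' you settle on in the final paragraph is exactly the paper's proof: the paper bypasses the distance formula entirely, writes the unit-speed geodesic as $\gamma_{x,y}(t)=x+\cos^2c\,[\tan(t+c)-\tan c]\,y$, observes that its maximal existence interval $(a,b)$ is contained in $(-\tfrac\pi2-c,\tfrac\pi2-c)$ (hence length $\leq\pi$), and reads off both parts from the fact that $f$ is a monotone bijection from the full open interval onto $\mathbb{R}$. Your initial arctangent approach for part~\eqref{diam1spher1} would also work---each summand in \eqref{eqK=1dx_0xdxx_0} lies strictly in $(-\tfrac\pi2,\tfrac\pi2)$, so their sum lies in $(-\pi,\pi)$ and nonnegativity gives $d_F<\pi$---but as you anticipate, it requires extending Theorem~\ref{lemK=1dx_0xdx_0} from strictly convex to merely convex $\Omega$; the paper sidesteps this by never invoking the distance formula in the present proof.
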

\begin{proof}
Let $\gamma_{x,y}(t)$ be the unit-speed geodesic with $\gamma_{x,y}(0)=x$ and $\gamma_{x,y}'(0)=y \in S_{x}\Omega$.
By equation \eqref{fK=1}, we have
\begin{equation}\label{equgammxyK=1}
\gamma_{x,y}(t)=x+f(t)y  =x+\cos^2c  \left[ \tan(  t + c) - \tan c \right]y.
\end{equation}
Let $I = (a, b)$ be the maximal interval of existence for $\gamma_{x,y}$.
 Since $0 \in I$, it follows that $(a,b)\subset (-\frac{\pi}{2 } - c,\frac{\pi}{2 } - c)$,
 and hence the length of $\gamma_{x,y}$ satisfies
 \[
 L_F(\gamma_{x,y})\leq b-a\leq  \left(\frac{\pi}2-c\right)-\left(-\frac{\pi}{2 } - c\right)=\pi.
 \]
 As $x$ and $y$ are arbitrary, this proves part \eqref{diam1spher1}.

Moreover, a direct argument based on \eqref{equgammxyK=1} shows that $\Omega=\mathbb{R}^n$ is equivalent to
 \[
 \lim_{t\rightarrow a^+ }|\gamma_{x,y}(t)-x|= \lim_{t\rightarrow a^+}|f(t)||y|=+\infty,\qquad \lim_{t\rightarrow b^- }|\gamma_{x,y}(t)-x|= \lim_{t\rightarrow b^- }|f(t)||y|=+\infty,
 \]
 for any $(x,y)\in S\Omega$,
which implies  $a=-\frac{\pi}{2 } - c$, $b=\frac{\pi}{2 } - c$ and
 $L_F(\gamma_{x,y})=  \pi$. This establishes part \eqref{diam1spher2}.
\end{proof}

%\begin{corollary}\label{ScurvaK=1}
%Let $(\mathbb{R}^n,F, \mathscr{L}^n)$ be an  $n$-dimensional projectively flat  Finsler manifold with   $\mathbf{K} = 1$ endowed by the Lebesgue measure. Then the $S$-curvature is given by
%\begin{equation*}%\label{K=1Scurvature}
%\mathbf{S}(x,y) = -(n+1)\tan c, \quad \forall (x,y)\in S\mathbb{R}^n,
%\end{equation*}
%where $c = c(x,y)\in (-\pi/2,\pi/2)$  is a constant given in \eqref{fK=1}.
%\end{corollary}
%\begin{proof} Let $\gamma(t)=f(t)y+x$ be the unit speed geodesic from $x$ with initial velocity  $y\in S_x\mathbb{R}^n$.
% Thus,   \eqref{fK=1}   together with \eqref{geodesicBackf''1} yields
%\begin{equation*}%\label{PexK=1}
% P(\gamma(t), \gamma'(t)) = -\tan(t + c).
% \end{equation*}
%Since $G^i  = P y^i$ and the density function of the Lebesgue measure is $\sigma(x) = 1$,  \eqref{Scurvature} implies
%\[ \mathbf{S}(\gamma(t), \gamma'(t)) = (n+1) P(\gamma(t), \gamma'(t))= -(n+1)\tan(t + c), \]
%which concludes the proof by letting $t=0$.
%\end{proof}

%It follows by the Bonnet-Myers theorem (cf. \cite[Theorem 7.7.1]{BCS}) that every forward complete Finsler manifold with $\mathbf{K}=1$ is compact with diameter not larger than $\pi$. Thus, each projectively flat manifold $(\mathbb{R}^n,F)$ with  $\mathbf{K}=1$  cannot be forward complete but it has the maximum diameter $\pi$ due to Theorem \ref{thmK=1global}.
%In the next subsection, we will show that the completion of such a manifold is a sphere,  provided a natural necessary condition is satisfied.

By the Bonnet--Myers theorem, any forward complete Finsler manifold with $\mathbf{K} = 1$ is compact and has diameter $\le \pi$.
Hence, a projectively flat Finsler manifold $(\mathbb{R}^n, F)$ with $\mathbf{K} = 1$ cannot be forward complete.
Theorem~\ref{thmK=1global}  yields the maximal diameter $\pi$, and we next show its completion is a sphere under a natural condition.
%Nevertheless, Theorem~\ref{thmK=1global} shows that such a manifold still attains the maximum possible diameter $\pi$.
%In the next subsection, we prove that under a natural necessary condition, the completion of such a manifold is a sphere.

\subsection{Completion of    manifolds}

Let  $\mathfrak{p}: \mathbb{R}^n\rightarrow \mathbb{S}^n_+\subset \mathbb{R}^{n+1}$ denote a diffeomorphism  defined by
\begin{equation}\label{projeticpm}
\mathfrak{p}(x):=\left( \frac{x}{\sqrt{1+|x|^2}}, \frac{  1}{\sqrt{1+|x|^2}}  \right)= \frac{x^\alpha e_\alpha}{\sqrt{1+|x|^2}}+\frac{  e_{n+1}}{\sqrt{1+|x|^2}}.
\end{equation}
This map identifies $\mathbb{R}^n$ with the open upper hemisphere $\mathbb{S}^n_+ := \{ w \in \mathbb{S}^n : w^{n+1} > 0 \}$. Hereafter,  $\{ e_i\}_{i=1}^{n+1}$ is the standard orthonormal basis of $\mathbb{R}^{n+1}$,
 where $\{ e_\alpha \}_{\alpha=1}^n$ forms the standard orthonormal basis of $\mathbb{R}^n$.

 %such that $\{ e_\alpha \}_{\alpha=1}^n$ is the standard orthonormal basis of $\mathbb{R}^n$.

%By the pull-back metric, one can define a projectively flat metric on $\mathbb{S}^n_+$ by $(\mathfrak{p}^{-1})^*F$.

In the following, we identify the sphere $\mathbb{S}^n$ with its standard embedding in $\mathbb{R}^{n+1}$. Under this identification, a tangent vector $V \in T_{w}\mathbb{S}^n$ is represented by a vector in $\mathbb{R}^{n+1}$ orthogonal to $w$. This representation allows us to parallel translate $V$ to the antipodal point $-w$, which is denoted by $(-w, V)$. In addition, $\partial\mathbb{S}^n_+$ denotes the {\it equator} of $\mathbb{S}^n$.

\begin{lemma}\label{spherelemma1}
Let $w=w^ie_i\in  \partial \mathbb{S}^n_+$ and $V=V^ie_i\in T_w\mathbb{S}^n\subset \mathbb{R}^{n+1}$ with $V^{n+1} <0$. Then there exists $(x,y)\in \mathbb{R}^n\times \mathbb{R}^n$  such that the constant-speed  geodesic $\gamma_{x,y}(t)$ with $\gamma_{x,y}(0)=x$ and $\gamma_{x,y}'(0)=y$ satisfies
\begin{equation}\label{tangecrivle22}
\lim_{t\rightarrow \pm \frac{\pi}{2 F(x,y)}-\frac{c}{F(x,y)}}\mathfrak{p}(\gamma_{x,y}(t))=\pm w, \qquad \lim_{t\rightarrow
\pm \frac{\pi}{2 F(x,y)}-\frac{c}{F(x,y)}}\mathfrak{p}_{*\gamma_{x,y}(t)}\bigg(    \left(\frac{ \cos c}{F(x,y)}\right)^2  \gamma'_{x,y}(t) \bigg)=\pm V,
\end{equation}
\begin{multicols}{2}
\noindent where $c=c(x,y/F(x,y))$ is defined in \eqref{fK=1} and  ${\Big(}{-}\frac{\pi+2c}{2F(x,y)},\frac{\pi-2c}{2F(x,y)}\Big)$ is the maximal domain of $\gamma_{x,y}(t)$.
\vskip 2mm
Moreover, the set of all $(x, y)$ satisfying \eqref{tangecrivle22} forms a one-dimensional linear space given by
\begin{equation}\label{solutonxy}
 (x,y)=\left(\frac{V^\alpha e_\alpha}{  V^{n+1} }-\lambda \frac{w^\alpha e_\alpha}{  V^{n+1} }, \ -\frac{w^\alpha e_\alpha}{  V^{n+1} } \right),
\end{equation}
for $\lambda\in \mathbb{R}$.
In particular, all such geodesics $\gamma_{x,y}$ have the same image, which is a straight line.
\begin{center}
\tikzset{every picture/.style={line width=0.75pt}} %set default line width to 0.75pt

\begin{tikzpicture}[x=0.75pt,y=0.75pt,yscale=-1,xscale=1, scale=0.9]
%uncomment if require: \path (0,300); %set diagram left start at 0, and has height of 300

%Shape: Arc [id:dp542744109093602]
\draw  [draw opacity=0][dash pattern={on 4.5pt off 4.5pt}] (83.64,96.97) .. controls (98.09,75.08) and (122.15,60.82) .. (149.29,61.01) .. controls (174.21,61.19) and (196.37,73.51) .. (210.77,92.63) -- (148.7,144.9) -- cycle ;
\draw  [dash pattern={on 4.5pt off 4.5pt}] (83.64,96.97) .. controls (98.09,75.08) and (122.15,60.82) .. (149.29,61.01) .. controls (174.21,61.19) and (196.37,73.51) .. (210.77,92.63) ;
%Shape: Arc [id:dp9203037813751882]
\draw  [draw opacity=0] (228.04,140.04) .. controls (224.8,154.52) and (190.77,165.94) .. (149.26,166) .. controls (105.72,166.06) and (70.38,153.59) .. (70.15,138.12) -- (149.22,137.87) -- cycle ; \draw   (228.04,140.04) .. controls (224.8,154.52) and (190.77,165.94) .. (149.26,166) .. controls (105.72,166.06) and (70.38,153.59) .. (70.15,138.12) ;
%Shape: Arc [id:dp4950732113918015]
\draw  [draw opacity=0][dash pattern={on 4.5pt off 4.5pt}] (227.64,136.66) .. controls (219.26,123.06) and (187.76,112.98) .. (150.21,113.04) .. controls (114.32,113.09) and (83.99,122.37) .. (74.09,135.08) -- (150.25,143.99) -- cycle ;
\draw  [dash pattern={on 4.5pt off 4.5pt}] (227.64,136.66) .. controls (219.26,123.06) and (187.76,112.98) .. (150.21,113.04) .. controls (114.32,113.09) and (83.99,122.37) .. (74.09,135.08) ;
%Shape: Parallelogram [id:dp042585542003674925]
\draw   (82.41,15) -- (282.82,15) -- (204.74,97) -- (4.33,97) -- cycle ;
%Shape: Arc [id:dp028387541558828833]
\draw  [draw opacity=0] (124.89,165.95) .. controls (98.62,135.72) and (85.82,104.04) .. (93.07,86.8) -- (167.74,149.3) -- cycle ; \draw   (124.89,165.95) .. controls (98.62,135.72) and (85.82,104.04) .. (93.07,86.8) ;
%Shape: Arc [id:dp20109523459866385]
\draw  [draw opacity=0][dash pattern={on 4.5pt off 4.5pt}] (171.45,115.16) .. controls (140.55,85.03) and (110.8,68.81) .. (100.47,77.92) .. controls (99.21,79.04) and (98.29,80.48) .. (97.69,82.22) -- (166.23,152.47) -- cycle ;
\draw  [dash pattern={on 4.5pt off 4.5pt}] (171.45,115.16) .. controls (140.55,85.03) and (110.8,68.81) .. (100.47,77.92) .. controls (99.21,79.04) and (98.29,80.48) .. (97.69,82.22) ;
%Straight Lines [id:da04843703537006938]
\draw    (61.33,78) -- (105.33,33) ;
\draw [shift={(83.33,55.5)}, rotate = 314.36] [color={rgb, 255:red, 0; green, 0; blue, 0 }  ][fill={rgb, 255:red, 0; green, 0; blue, 0 }  ][line width=0.75]      (0, 0) circle [x radius= 1.34, y radius= 1.34]   ;
%Straight Lines [id:da733367928083251]
\draw    (125.33,166) -- (146.04,190.47) ;
\draw [shift={(147.33,192)}, rotate = 229.76] [color={rgb, 255:red, 0; green, 0; blue, 0 }  ][line width=0.75]    (10.93,-3.29) .. controls (6.95,-1.4) and (3.31,-0.3) .. (0,0) .. controls (3.31,0.3) and (6.95,1.4) .. (10.93,3.29)   ;
%Straight Lines [id:da1308746032414565]
\draw    (81.83,56.5) -- (62.71,76.55) ;
\draw [shift={(61.33,78)}, rotate = 313.64] [color={rgb, 255:red, 0; green, 0; blue, 0 }  ][line width=0.75]    (10.93,-3.29) .. controls (6.95,-1.4) and (3.31,-0.3) .. (0,0) .. controls (3.31,0.3) and (6.95,1.4) .. (10.93,3.29)   ;
%Straight Lines [id:da43694356538236434]
\draw  [dash pattern={on 4.5pt off 4.5pt}]  (43.83,96) -- (122.83,15) ;
%Shape: Arc [id:dp2593878569481731]
\draw  [draw opacity=0] (70.15,138.12) .. controls (71.03,122.85) and (75.78,108.7) .. (83.35,96.72) -- (149.3,143.28) -- cycle ;
\draw   (70.15,138.12) .. controls (71.03,122.85) and (75.78,108.7) .. (83.35,96.72) ;
%Shape: Arc [id:dp41908563194246584]
\draw  [draw opacity=0] (210.31,92.02) .. controls (221.04,106) and (227.6,123.71) .. (227.99,143) -- (148.7,144.9) -- cycle ;
\draw   (210.31,92.02) .. controls (221.04,106) and (227.6,123.71) .. (227.99,143) ;
%Straight Lines [id:da9906941085390841]
\draw  [dash pattern={on 4.5pt off 4.5pt}]  (171.45,115.16) -- (124.89,165.95) ;
\draw [shift={(124.89,165.95)}, rotate = 132.51] [color={rgb, 255:red, 0; green, 0; blue, 0 }  ][fill={rgb, 255:red, 0; green, 0; blue, 0 }  ][line width=0.75]      (0, 0) circle [x radius= 1.34, y radius= 1.34]   ;
\draw [shift={(148.17,140.55)}, rotate = 132.51] [color={rgb, 255:red, 0; green, 0; blue, 0 }  ][fill={rgb, 255:red, 0; green, 0; blue, 0 }  ][line width=0.75]      (0, 0) circle [x radius= 1.34, y radius= 1.34]   ;

% Text Node
\draw (152,185) node [anchor=north west][inner sep=0.75pt]    {$V$};
% Text Node
\draw (70,40) node [anchor=north west][inner sep=0.75pt]    {$x$};
% Text Node
\draw (47,60) node [anchor=north west][inner sep=0.75pt]    {$y$};
% Text Node
\draw (55,165) node [anchor=north west][inner sep=0.75pt]    {\qquad\quad $w$};
% Text Node
\draw (150,135) node [anchor=north west][inner sep=0.75pt]    {$O$};
\end{tikzpicture}
\caption{\small The geometric interpretation of Lemma \ref{spherelemma1}.}
\label{K=1fig1}
\end{center}
\end{multicols}

\end{lemma}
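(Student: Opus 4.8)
The plan is to use the explicit description of $\mathbf{K}=1$ geodesics from Theorem~\ref{lemK=1dx_0xdx_0} together with the fact that the map $\mathfrak{p}$ sends straight lines in $\mathbb{R}^n$ to great circle arcs on $\mathbb{S}^n_+$. First I would fix $w=w^ie_i\in\partial\mathbb{S}^n_+$ (so $w^{n+1}=0$) and $V=V^ie_i\in T_w\mathbb{S}^n$ with $V^{n+1}<0$, and guess the geodesic: take $y=-w^\alpha e_\alpha/V^{n+1}\in\mathbb{R}^n$ and $x$ on the affine line through $V^\alpha e_\alpha/V^{n+1}$ in the direction $y$, i.e.\ $x=(V^\alpha e_\alpha-\lambda w^\alpha e_\alpha)/V^{n+1}$ for arbitrary $\lambda\in\mathbb{R}$. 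The straight line $\{x+sy : s\in\mathbb{R}\}$ is then independent of $\lambda$, which immediately gives the last sentence of the statement once the limit computations are done, since by Theorem~\ref{thmK=1global} (via \eqref{fK=1}) every $\mathbf{K}=1$ geodesic traverses a maximal straight segment on the time interval $\big({-}\frac{\pi+2c}{2F(x,y)},\frac{\pi-2c}{2F(x,y)}\big)$ after reparametrization, so two geodesics with the same image differ only by this reparametrization.

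Next I would carry out the limit computations. Write $u(t):=f(t)$ as in \eqref{fK=1}, so that along the unit-speed geodesic $\bar\gamma(\tau)$ we have $\bar\gamma(\tau)=x+\cos^2 c\,[\tan(\tau+c)-\tan c]\,y/F(x,y)$; rescaling time by $\tau=F(x,y)t$ gives $\gamma_{x,y}(t)=x+\cos^2c\,[\tan(F(x,y)t+c)-\tan c]\,y$. As $t\to\big(\frac{\pi}{2F(x,y)}-\frac{c}{F(x,y)}\big)^-$ we have $\tan(F(x,y)t+c)\to+\infty$, so $|\gamma_{x,y}(t)|\to\infty$ along the ray $\mathbb{R}_{>0}\cdot y$ (up to the fixed shift $x-\cos^2c\tan c\,y$). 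Applying $\mathfrak{p}$ and using \eqref{projeticpm}, $\mathfrak{p}(\gamma_{x,y}(t))\to \big(\tfrac{y}{|y|},0\big)$; substituting $y=-w^\alpha e_\alpha/V^{n+1}$ and $|y|=|w^\alpha e_\alpha|/|V^{n+1}|=1/|V^{n+1}|$ (since $w\in\partial\mathbb{S}^n_+$ is a unit vector with $w^{n+1}=0$) yields $\mathfrak{p}(\gamma_{x,y}(t))\to w$. For the other endpoint $t\to\big({-}\frac{\pi}{2F(x,y)}-\frac{c}{F(x,y)}\big)^+$ the tangent $\tan(F(x,y)t+c)\to-\infty$, so the limit is $-w$. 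The velocity statements follow by differentiating $\mathfrak{p}\circ\gamma_{x,y}$: one has $\mathfrak{p}_*(v)$ at a point $z$ equal to $\frac{1}{\sqrt{1+|z|^2}}\big(v-\frac{\langle z,v\rangle}{1+|z|^2}(z,1)\big)$ (writing $(z,1)\in\mathbb{R}^{n+1}$); plugging in $z=\gamma_{x,y}(t)$, $v=(\cos c/F(x,y))^2\gamma'_{x,y}(t)=\cos^4c\cdot\sec^2(F(x,y)t+c)\,y/F(x,y)$, and taking the limit, the $\sec^2$ blow-up exactly cancels the $1/(1+|z|^2)\sim \cos^4(F(x,y)t+c)/(|y|^2\cos^4c)$ decay, producing a finite vector in $T_{\pm w}\mathbb{S}^n$; a direct identification with $\pm V$ uses that $V^{n+1}<0$ fixes the sign and the orthogonality $\langle w,V\rangle=0$. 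Finally, the description \eqref{solutonxy} of the solution set: I would show conversely that any $(x,y)$ satisfying \eqref{tangecrivle22} must have $\mathfrak{p}(\gamma_{x,y})$ limiting to $w$, forcing $y\parallel w^\alpha e_\alpha$ and the normalization $y=-w^\alpha e_\alpha/V^{n+1}$ from the velocity condition; then the constraint on $x$ comes from matching $V^\alpha e_\alpha$, leaving exactly the one-parameter family along the line direction $y$.

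The main obstacle I anticipate is the velocity limit in \eqref{tangecrivle22}: one must track two competing singular factors --- the $\sec^2$ growth of $\gamma'_{x,y}(t)$ and the $\frac{1}{1+|\gamma_{x,y}(t)|^2}$-type decay in the differential $\mathfrak{p}_*$ --- and show the product converges to precisely $\pm V$ with the correct sign and normalization, rather than to some scalar multiple. Getting the bookkeeping of the $\cos^2 c$, $\sec^2(F t+c)$, and $1/F(x,y)$ factors right, and confirming that the $(z,1)$-component drops out in the limit so the answer lands tangent to $\mathbb{S}^n$ at $\pm w$, is the delicate part; everything else is either the already-established geodesic ODE solution \eqref{fK=1} or elementary properties of the stereographic-type map $\mathfrak{p}$. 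I would organize the computation by first normalizing so that $F(x,y)=1$ (replacing $y$ by $y/F(x,y)$ changes nothing in \eqref{tangecrivle22} after the corresponding time rescaling), which removes most of the clutter, and only at the end restore the general $F(x,y)$.
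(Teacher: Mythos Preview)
Your proposal is correct and follows essentially the same route as the paper: both compute $\mathfrak{p}(\gamma_{x,y})$ and $\mathfrak{p}_{*}(\gamma'_{x,y})$ explicitly from the formula $\gamma_{x,y}(t)=x+h(t)y$ coming from \eqref{fK=1}, take the endpoint limits to obtain $(y/|y|,0)=w$ and $\big((\langle x,y\rangle y-|y|^2 x)/|y|^3,\,-1/|y|\big)=V$, and then identify the one-parameter family of $(x,y)$. The only presentational difference is that the paper first derives these limits for arbitrary $(x,y)$ and then solves the resulting linear system $(\delta_{\beta\iota}y^\iota y^\alpha-|y|^2\delta^\alpha_\beta)x^\beta=|y|^3V^\alpha$ (showing it has rank $n-1$ with kernel $\mathbb{R}y$), whereas you guess \eqref{solutonxy} and verify; your converse sketch would benefit from making that rank argument explicit, and note that your asymptotic $1/(1+|z|^2)\sim\cos^4(Ft+c)/(\cos^4c\,|y|^2)$ should read $\cos^2(Ft+c)/(\cos^4c\,|y|^2)$---the extra cancellation comes from the $(1+|z|^2)^{3/2}$ in the subtracted term of $\mathfrak{p}_*$, not from a single $1/(1+|z|^2)$ factor.
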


\begin{proof}
By \eqref{expon} and \eqref{fK=1}, the geodesic
 $\gamma_{x,y}(t)$ admits the representation
\begin{align*}
\gamma_{x,y}(t)&= \, \exp_x(ty)=\exp_x\left(F(x,y)t\frac{y}{F(x,y)}\right)=x+f(F(x,y)t)\frac{y}{F(x,y)}\\
& = \,x+ y\frac{\cos^2c  \left[ \tan( F(x,y) t + c) - \tan c \right]}{F(x,y)}=:x+h(t)y,
\end{align*}
where $c=c(x,y/F(x,y))$ is a constant in $(-\pi/2,\pi/2)$ and particularly, $  {\Big(}{-}\frac{\pi+2c}{2F(x,y)},\frac{\pi-2c}{2F(x,y)}\Big)$ is the maximal domain of $\gamma_{x,y}(t)$.
From \eqref{projeticpm}, we obtain
\begin{equation}\label{expssionPlim}
\mathfrak{p}(\gamma_{x,y}(t))=\left( \frac{x+h(t)y}{\sqrt{1+|x+h(t)y|^2}}, \frac{1}{\sqrt{1+|x+h(t)y|^2}}   \right),
\end{equation}
and consequently,
\begin{align}\label{expressionP}
&\mathfrak{p}_{*\gamma_{x,y}(t)}( \gamma'_{x,y}(t)   )\notag\\
=& \left( \frac{(1+|x+h(t)y|^2)h'(t)y-\langle x+h(t)y,h'(t)y   \rangle(x+h(t)y) }{(1+|x+h(t)y|^2)^{3/2}}, - \frac{ \langle x+h(t)y,h'(t)y    \rangle  }{(1+|x+h(t)y|^2)^{3/2}}   \right).
\end{align}

To find $(x, y)$ satisfying \eqref{tangecrivle22}, we solve the limiting equations:
\begin{align}
\lim_{t\rightarrow   \frac{\pi-2c}{2F(x,y)}}\mathfrak{p}(\gamma_{x,y}(t))&= \left( \frac{y}{|y|},0 \right)=  w,\label{tangecrivle1}\\
 \lim_{t\rightarrow \frac{\pi-2c}{2F(x,y)}}\mathfrak{p}_{*\gamma_{x,y}(t)}\bigg( \left(\frac{ \cos c}{F(x,y)}\right)^2 \gamma'_{x,y}(t)  \bigg)&= \left( \frac{\langle x,y\rangle y-|y|^2x}{|y|^3 }, \frac{-1}{|y| } \right)=  V.\label{tangecrivle2}
\end{align}
Writing $w = (w^\alpha, 0)$ and $V = (V^\alpha, V^{n+1})$, equations \eqref{tangecrivle1} and \eqref{tangecrivle2} imply
\[
V^\alpha e_\alpha=\frac{\langle x,y\rangle y-|y|^2x}{|y|^3},\quad V^{n+1}=-\frac{1}{|y| }<0, \quad w\perp V \  \Leftrightarrow \  y\perp V^\alpha e_\alpha.
\]
From \eqref{tangecrivle1}, we deduce
\begin{equation}\label{expression y}
 y=y^\alpha e_\alpha=|y|w^\alpha e_\alpha=-\frac{w^\alpha e_\alpha}{V^{n+1}}, \qquad  |y|=-\frac{1}{   V^{n+1} }\neq 0.
\end{equation}
Equation \eqref{tangecrivle2} yields $\langle x,y\rangle y^\alpha-|y|^2x^\alpha=|y|^3  V^\alpha$, which forms a linear system in $x$:
\begin{equation}\label{linerasys}
A^\alpha_\beta x^\beta= |y|^3 V^\alpha, \quad 1\leq \alpha,\beta\leq n,
\end{equation}
where $A:=(A^\alpha_\beta):=(\delta_{\beta\iota}y^\iota y^\alpha-|y|^2\delta^\alpha_\beta)$.
A direct computation shows that $\text{rank}(A)=n-1$ and $A y=0$, so the solution space $\mathcal {S}$ of \eqref{linerasys} is one-dimensional
and parallel to $y$.
%represented as a straight line in the direction of $y$.
Since $y \perp V^\alpha e_\alpha$, the vector $\tilde{x} := -|y| V^\alpha e_\alpha$ satisfies \eqref{linerasys}. Combined with \eqref{expression y}, we obtain
 \[
 x\in \mathcal {S}=\Big\{ \tilde{x}+\lambda y\,: \, \lambda\in \mathbb{R} \Big\}=\left\{ \frac{(V^\alpha-\lambda w^\alpha)e_\alpha}{V^{n+1}} \,:\, \lambda\in \mathbb{R} \right\}.
 \]
which establishes \eqref{solutonxy}.
For any such $(x, y)$, the image of $\gamma_{x,y}$ coincides with $\mathcal{S}$. Finally, \eqref{tangecrivle22} follows from \eqref{expssionPlim}--\eqref{tangecrivle2}.
\end{proof}

\begin{remark}
Geometrically, for each $w \in \partial \mathbb{S}^n_+$ and $V \in T_w\mathbb{S}^n$ with $V^{n+1} < 0$, there exists a ``unique'' straight line $\gamma_{x,y}$ whose projection $\mathfrak{p}(\gamma_{x,y})$ is the upper half of a great circle, and whose tangent vectors at $w$ and $-w$ are $V$ and $-V$, respectively. See Figure~\ref{K=1fig1} for an illustration.
\end{remark}

The maximum diameter theorem in the Riemannian setting  states that a complete Riemannian manifold with  sectional curvature $1$ and diameter $\pi$ must be a sphere. In view of Theorem \ref{thmK=1global}, a natural question arises:

\medskip
\textit{ Is the completion of a projectively flat Finsler manifold $(\mathbb{R}^n,F)$ with $\mathbf{K}=1$ also a sphere? }
\medskip

\noindent The answer is affirmative under a natural necessary condition.

\begin{multicols}{2}
To establish this, we recall the  standard spherical coordinates $(\zeta^i)=(\varphi, \theta^1, \theta^2, ..., \theta^{n-1})$    on $\mathbb{S}^n$.
For $\mathfrak{p}(x)=(\varphi, \theta^1, \theta^2, ..., \theta^{n-1}) \in \mathbb{S}^{n}_+$, we have
  $\sqrt{1+|x|^2}  = \sec \varphi$ and
%\begin{align}\label{K=1spherical_x}
%\begin{cases}
%x^1\circ \mathfrak{p}^{-1}&= \tan\varphi \cos\theta^{1}, \\
% &\vdots\\
%x^i\circ \mathfrak{p}^{-1}&=  \tan \varphi  \cos \theta^i   \prod_{\alpha=1}^{i-1}\sin \theta^\alpha,
%                           2\leq  i\leq n-1,\\
% &\vdots\\
%x^{n}\circ \mathfrak{p}^{-1}&= \tan\varphi \sin\theta^{n-1} \prod_{\alpha=1}^{n-2}\sin\theta^{\alpha},
%\end{cases}
%\end{align}

\begin{align}\label{K=1spherical_x}
\begin{cases}
x^1 \circ \mathfrak{p}^{-1} = \tan\varphi \cos\theta^{1}, \\
\quad\vdots \\
x^i \circ \mathfrak{p}^{-1} = \tan\varphi \cos\theta^i \prod_{\alpha=1}^{i-1}\sin\theta^\alpha, \\
\phantom{x^i \circ \mathfrak{p}^{-1} = {}}\mathllap{\scriptstyle 2 \leq i \leq n-1,} \\
\quad\vdots \\
x^{n} \circ \mathfrak{p}^{-1} = \tan\varphi \sin\theta^{n-1} \prod_{\alpha=1}^{n-2}\sin\theta^{\alpha},
\end{cases}
\end{align}
where  $\theta^{s}\in(0, \pi)$ for $s=1,\cdots,n-2$, $\theta^{n-1}\in[0, \pi)$, and $\varphi \in (-\frac{\pi}{2}, \frac{\pi}{2} )$.
\begin{center}
\tikzset{every picture/.style={line width=0.75pt}} %set default line width to 0.75pt
\begin{tikzpicture}[x=0.75pt,y=0.75pt,yscale=-1,xscale=1, scale=0.8]
%uncomment if require: \path (0,366); %set diagram left start at 0, and has height of 366

%Shape: Parallelogram [id:dp9510196646617195]
\draw   (228.15,115.84) -- (479.95,115.14) -- (423.12,158.99) -- (171.31,159.7) -- cycle ;
%Shape: Arc [id:dp3191579866787917]
\draw  [draw opacity=0] (363.87,161.2) .. controls (384.95,177.11) and (398.68,202.01) .. (399.08,230.18) .. controls (399.77,279.05) and (360.06,319.24) .. (310.38,319.94) .. controls (260.71,320.64) and (219.87,281.59) .. (219.18,232.73) .. controls (218.79,205.3) and (231.14,180.6) .. (250.82,164.09) -- (309.13,231.45) -- cycle ; \draw   (363.87,161.2) .. controls (384.95,177.11) and (398.68,202.01) .. (399.08,230.18) .. controls (399.77,279.05) and (360.06,319.24) .. (310.38,319.94) .. controls (260.71,320.64) and (219.87,281.59) .. (219.18,232.73) .. controls (218.79,205.3) and (231.14,180.6) .. (250.82,164.09) ;
%Shape: Arc [id:dp04608485148782937]
\draw  [draw opacity=0][dash pattern={on 4.5pt off 4.5pt}] (249.43,165.69) .. controls (265.43,152.04) and (286.44,143.79) .. (309.42,143.86) .. controls (327.88,143.92) and (345.03,149.34) .. (359.3,158.6) -- (309.13,231.45) -- cycle ; \draw  [dash pattern={on 4.5pt off 4.5pt}] (249.43,165.69) .. controls (265.43,152.04) and (286.44,143.79) .. (309.42,143.86) .. controls (327.88,143.92) and (345.03,149.34) .. (359.3,158.6) ;
%Shape: Arc [id:dp4947011294185444]
\draw  [draw opacity=0] (399.42,234.83) .. controls (396.98,243.13) and (358.14,249.95) .. (310.24,250.28) .. controls (260.48,250.62) and (219.79,243.81) .. (219.35,235.08) .. controls (219.29,233.93) and (219.94,232.8) .. (221.22,231.71) -- (309.44,234.47) -- cycle ; \draw   (399.42,234.83) .. controls (396.98,243.13) and (358.14,249.95) .. (310.24,250.28) .. controls (260.48,250.62) and (219.79,243.81) .. (219.35,235.08) .. controls (219.29,233.93) and (219.94,232.8) .. (221.22,231.71) ;
%Shape: Arc [id:dp8632921079899702]
\draw  [draw opacity=0][dash pattern={on 4.5pt off 4.5pt}] (219.66,233.95) .. controls (222,225.34) and (260.79,218.32) .. (308.65,218.07) .. controls (358.3,217.82) and (398.9,224.96) .. (399.33,234.01) .. controls (399.39,235.16) and (398.79,236.29) .. (397.61,237.38) -- (309.44,234.47) -- cycle ; \draw  [dash pattern={on 4.5pt off 4.5pt}] (219.66,233.95) .. controls (222,225.34) and (260.79,218.32) .. (308.65,218.07) .. controls (358.3,217.82) and (398.9,224.96) .. (399.33,234.01) .. controls (399.39,235.16) and (398.79,236.29) .. (397.61,237.38) ;
%Shape: Arc [id:dp18255744478738767]
\draw  [draw opacity=0] (372.41,169.9) .. controls (367.98,174.36) and (340.14,177.7) .. (306.13,177.6) .. controls (278.57,177.52) and (254.65,175.2) .. (243.79,171.93) -- (305.3,168.33) -- cycle ; \draw   (372.41,169.9) .. controls (367.98,174.36) and (340.14,177.7) .. (306.13,177.6) .. controls (278.57,177.52) and (254.65,175.2) .. (243.79,171.93) ;
%Shape: Arc [id:dp6146218154031688]
\draw  [draw opacity=0][dash pattern={on 4.5pt off 4.5pt}] (246.52,167.45) .. controls (258.2,164.71) and (281.18,163.01) .. (307.67,163.28) .. controls (338.06,163.6) and (363.99,166.42) .. (373.63,170.08) -- (307.97,172.43) -- cycle ; \draw  [dash pattern={on 4.5pt off 4.5pt}] (246.52,167.45) .. controls (258.2,164.71) and (281.18,163.01) .. (307.67,163.28) .. controls (338.06,163.6) and (363.99,166.42) .. (373.63,170.08) ;
%Straight Lines [id:da19303300767263032]
\draw  [dash pattern={on 4.5pt off 4.5pt}]  (373.63,170.08) -- (307.5,234.8) ;
\draw [shift={(307.5,234.8)}, rotate = 135.62] [color={rgb, 255:red, 0; green, 0; blue, 0 }  ][fill={rgb, 255:red, 0; green, 0; blue, 0 }  ][line width=0.75]      (0, 0) circle [x radius= 1.34, y radius= 1.34]   ;
%Straight Lines [id:da3241135853525505]
\draw    (386.28,158.17) -- (373.63,170.08) ;
\draw [shift={(373.63,170.08)}, rotate = 136.74] [color={rgb, 255:red, 0; green, 0; blue, 0 }  ][fill={rgb, 255:red, 0; green, 0; blue, 0 }  ][line width=0.75]      (0, 0) circle [x radius= 1.34, y radius= 1.34]   ;
%Straight Lines [id:da47126517194776074]
\draw  [dash pattern={on 4.5pt off 4.5pt}]  (398.33,146) -- (386.28,158.17) ;
\draw [shift={(398.33,146)}, rotate = 134.72] [color={rgb, 255:red, 0; green, 0; blue, 0 }  ][fill={rgb, 255:red, 0; green, 0; blue, 0 }  ][line width=0.75]      (0, 0) circle [x radius= 1.34, y radius= 1.34]   ;
%Straight Lines [id:da6335461561755762]
\draw  [dash pattern={on 4.5pt off 4.5pt}]  (307.41,135.16) -- (307.42,233.17) ;
%Straight Lines [id:da595036549437427]
\draw    (307.34,82) -- (307.41,143.16) ;
\draw [shift={(307.41,143.16)}, rotate = 89.93] [color={rgb, 255:red, 0; green, 0; blue, 0 }  ][fill={rgb, 255:red, 0; green, 0; blue, 0 }  ][line width=0.75]      (0, 0) circle [x radius= 1.34, y radius= 1.34]   ;
\draw [shift={(307.33,80)}, rotate = 89.93] [color={rgb, 255:red, 0; green, 0; blue, 0 }  ][line width=0.75]    (4.37,-1.32) .. controls (2.78,-0.56) and (1.32,-0.12) .. (0,0) .. controls (1.32,0.12) and (2.78,0.56) .. (4.37,1.32)   ;
%Shape: Arc [id:dp18739709790387038]
\draw  [draw opacity=0] (306.3,210.53) .. controls (310.05,207.91) and (314.29,207.03) .. (317.99,208.53) .. controls (321.36,209.89) and (323.67,212.98) .. (324.71,216.91) -- (310.82,226.21) -- cycle ; \draw   (306.3,210.53) .. controls (310.05,207.91) and (314.29,207.03) .. (317.99,208.53) .. controls (321.36,209.89) and (323.67,212.98) .. (324.71,216.91) ;

% Text Node
\draw (314,188) node [anchor=north west][inner sep=0.75pt]   [align=left] {$\varphi$};
% Text Node
\draw (400,125) node [anchor=north west][inner sep=0.75pt]   [align=left] {($x$, $1$)};

\draw (380,160) node [anchor=north west][inner sep=0.75pt]   [align=left] {$\mathfrak{p}(x)$};
% Text Node
\draw (234,196) node [anchor=north west][inner sep=0.75pt]   [align=left] {$\mathbb{S}^n$};
% Text Node
\draw (310.82,226.21) node [anchor=north west][inner sep=0.75pt]   [align=left] {O};
% Text Node
\draw (200,140) node [anchor=north west][inner sep=0.75pt]   [align=left] {$\mathbb{R}^n$};
% Text Node
\draw (314,73) node [anchor=north west][inner sep=0.75pt]   [align=left] {$x^{n+1}$};
\end{tikzpicture}
\caption{\small The map $\mathfrak{p}(x)=(\varphi, \theta^1, \theta^2, ..., \theta^{n-1})$. }
\end{center}
\end{multicols}

For a point in $\mathbb{S}^{n}_+$ ($n\geq 3$) with  $\theta^s=0$ or $\pi$ for some $1\leq s\leq n-2$, its spherical coordinates are not unique.
We exclude such degenerate cases in what follows to simplify the presentation. We also require the following observation, whose proof is deferred to Appendix \ref{propergenerlengappexK=-1}.

\begin{observation}\label{informobvers2} The following statements hold:
\begin{enumerate}[\rm (1)]
\item\label{ob21}  the image of any  straight line in $\mathbb{R}^n$ under $\mathfrak{p}$ is the upper half of some great circle in $\mathbb{S}^n$;

\item \label{ob22}
 any sequence of great circles in $\mathbb{S}^n$ (parametrized  on the same interval)
 admits a uniformly convergent subsequence with respect to the standard sphere metric, and the limit is again a great circle.
% must have a  uniformly convergent subsequence (w.r.t. the standard sphere metric), which  converges to  a great circle as well.
\end{enumerate}

\end{observation}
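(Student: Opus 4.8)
\textbf{Plan of proof for Observation \ref{informobvers2}.}

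For part \eqref{ob21}, the plan is to combine the explicit geodesic formula from Theorem \ref{lemgeodesicK} with the defining map $\mathfrak{p}$. Concretely, any straight line in $\mathbb{R}^n$ can be parametrized as $t\mapsto x+ f(t) y$ with $f$ as in \eqref{fK=1}, so its image under $\mathfrak{p}$ is the curve \eqref{expssionPlim}. I would show directly that this curve lies on a two-dimensional linear subspace $W\subset\mathbb{R}^{n+1}$: indeed $\mathfrak{p}(x+f(t)y)$ is, up to the scalar factor $(1+|x+f(t)y|^2)^{-1/2}$, a linear combination of the fixed vectors $(x,1)$ and $(y,0)$ in $\mathbb{R}^{n+1}$. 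Since $\mathbb{S}^n\cap W$ is a great circle, the image is contained in a great circle; and because $\mathfrak{p}$ maps $\mathbb{R}^n$ diffeomorphically onto the open upper hemisphere $\mathbb{S}^n_+=\{w^{n+1}>0\}$, the image is exactly the open upper half of that great circle (the portion with last coordinate positive), with endpoints on the equator $\partial\mathbb{S}^n_+$. This is essentially the content of Lemma \ref{spherelemma1} read geometrically, so the argument there can be invoked.

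For part \eqref{ob22}, the plan is a standard compactness argument on the Stiefel-type parameter space of great circles. A great circle in $\mathbb{S}^n$ parametrized by arc length is determined by a pair $(p,q)$ of orthonormal vectors in $\mathbb{R}^{n+1}$ via $s\mapsto \cos(s)\,p+\sin(s)\,q$ (possibly after an affine reparametrization of the common interval $I$, absorbing a phase shift $s\mapsto a s+b$ with $a\in\{\pm1\}$ or, if the circles are not unit-speed, a common scaling and translation). The set of such pairs $(p,q)$ lies in the compact set $\mathbb{S}^n\times\mathbb{S}^n$, cut out by the closed condition $\langle p,q\rangle=0$; likewise the reparametrization data $(a,b)$ range over a compact set once the interval $I$ is fixed and the speeds are uniformly bounded (which holds since the intrinsic length of a great circle is $2\pi$). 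Hence a given sequence of great circles corresponds to a sequence in a compact space, from which I extract a convergent subsequence $(p_k,q_k,a_k,b_k)\to(p_\infty,q_\infty,a_\infty,b_\infty)$; orthonormality passes to the limit, so $s\mapsto\cos(a_\infty s+b_\infty)p_\infty+\sin(a_\infty s+b_\infty)q_\infty$ is again a great circle, and the convergence is uniform on $I$ because the parametrizing map $(p,q,a,b,s)\mapsto\cos(as+b)p+\sin(as+b)q$ is continuous and $I$ is compact. Uniform convergence of the curves in $\mathbb{R}^{n+1}$ is equivalent to uniform convergence with respect to the standard (round) sphere distance, since that distance is bi-Lipschitz comparable to the chordal Euclidean distance on $\mathbb{S}^n$.

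The only genuinely delicate point is part \eqref{ob22}, and specifically the handling of the parametrizations: one must be careful that ``parametrized on the same interval'' does not force the great circles to be traversed at the same speed or phase, so the limit a priori could degenerate (e.g.\ to a constant). The resolution is the uniform bound on speeds coming from the fixed total length $2\pi$ of a great circle together with the fixed domain interval; this keeps $(a_k)$ in a compact set bounded away from nowhere problematic, and $(b_k)$ can be reduced mod $2\pi$ to a compact interval. Once these reductions are in place the argument is routine compactness, so I expect part \eqref{ob22} to be the main (but modest) obstacle, while part \eqref{ob21} is immediate from the formulas already derived. The detailed verification of the bi-Lipschitz comparison and the bookkeeping of reparametrizations is exactly what is postponed to Appendix \ref{propergenerlengappexK=-1}.
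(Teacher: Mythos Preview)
Your proposal is correct. For part~\eqref{ob21} your argument coincides with the paper's: both reduce to the formula \eqref{expssionPlim}, observing that $\mathfrak{p}(x+f(t)y)$ is a positive scalar multiple of the linear combination $(x,1)+f(t)\,(y,0)$ and hence lies in the $2$-plane $\mathrm{span}\{(x,1),(y,0)\}$. (Your invocation of Theorem~\ref{lemgeodesicK} and \eqref{fK=1} is harmless but unnecessary: the image of a line does not depend on its parametrization, so the Euclidean parameter $t\mapsto x+ty$ suffices.)

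For part~\eqref{ob22} you take a genuinely different route from the paper. The paper first applies the Arzel\`a--Ascoli theorem to extract a uniformly convergent subsequence of curves, and then separately extracts a convergent subsequence of the unit normal vectors $\mathbf{n}_{\alpha_k}$ to the defining $2$-planes; passing to the limit in $\langle\mathbf{n}_{\alpha_k},\sigma_{\alpha_k}(t)\rangle=0$ shows the limit curve lies in a plane through the origin, hence on a great circle. Your approach instead parametrizes the space of oriented great circles (with phase) by orthonormal frames $(p,q)$ in the compact Stiefel manifold $V_2(\mathbb{R}^{n+1})$, extracts a convergent subsequence there, and reads off both the uniform convergence and the great-circle limit from the continuity of $(p,q,s)\mapsto\cos(s)p+\sin(s)q$. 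This is more elementary (no Arzel\`a--Ascoli needed as a black box) and has the additional advantage that it directly yields a nondegenerate limit parametrization, sidestepping the question---left implicit in the paper's proof---of why the limit curve is the \emph{full} great circle rather than a subarc or a point. Your discussion of the reparametrization data $(a,b)$ is somewhat overcautious: in the paper's setting the great circles are unit-speed on $[0,2\pi]$, so $a=\pm1$ and $b\in[0,2\pi)$ already lie in a compact set.
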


%The following theorem means that the completeness of the  projectively flat Finsler manifold $(\mathbb{R}^n,F)$ with $\mathbf{K}=1$ is an $n$-sphere.

\begin{theorem}\label{plancetospereK=1}
Let $F$ be a projectively flat Finsler metric  on $\mathbb{R}^n$ with $\mathbf{K}=1$. If the pull-back metric $(\mathfrak{p}^{-1})^*F$ on $\mathbb{S}^n_+$ can be extended to a Finsler metric on $\overline{\mathbb{S}^n_+}$, there exists a locally projectively flat Finsler metric $\mathscr{F}$ on $\mathbb{S}^n$  such that:
\begin{enumerate}[\rm (i)]

\item\label{sphere1} $(\mathbb{S}^n,\mathscr{F})$ is a complete Finsler manifold with $\mathbf{K}=1$ and $\mathscr{F}|_{\mathbb{S}^n_+}=\mathfrak{p}^*F$;

\item\label{sphere2} $\mathscr{F}(w,V)=\mathscr{F}(-w,-V)$ for all $w\in \mathbb{S}^n$ and $V\in T\mathbb{S}^n\subset \mathbb{R}^{n+1}$;

 \item\label{sphere3}  every great circle is a closed geodesic of length $2\pi$, and  the injectivity radius is $\pi$;

 \item\label{sphere4}  every geodesic in  $(\mathbb{S}^n,\mathscr{F})$ is contained in some great circle.

  \end{enumerate}
\end{theorem}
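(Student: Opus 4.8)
The plan is to construct $\mathscr{F}$ by gluing $\mathfrak{p}^*F$ on the closed upper hemisphere $\overline{\mathbb{S}^n_+}$ to a copy of itself on the lower hemisphere via the antipodal map, and then verify that the resulting object is a genuine Finsler metric with the stated properties. First I would define, for $w\in \mathbb{S}^n_-$ (the open lower hemisphere) and $V\in T_w\mathbb{S}^n$, the value $\mathscr{F}(w,V):=(\mathfrak{p}^*F)(-w,-V)$, where $-w\in \mathbb{S}^n_+$ and $(-w,-V)$ denotes the parallel transport of $V$ to the antipode as in the paragraph preceding Lemma~\ref{spherelemma1}; on $\overline{\mathbb{S}^n_+}$ set $\mathscr{F}:=\mathfrak{p}^*F$ (using the hypothesized smooth extension to the equator). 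By construction $\mathscr{F}$ is smooth and strongly convex away from the equator and restricts to a Minkowski norm on each tangent space off $\partial\mathbb{S}^n_+$. This immediately gives \eqref{sphere1}'s assertion $\mathscr{F}|_{\mathbb{S}^n_+}=\mathfrak{p}^*F$ and the symmetry \eqref{sphere2} is built in on $\mathbb{S}^n\setminus\partial\mathbb{S}^n_+$, so the real content is: (a) smoothness and strong convexity \emph{across} the equator; (b) local projective flatness and $\mathbf{K}=1$ everywhere; (c) completeness and the geodesic/injectivity-radius statements \eqref{sphere3}, \eqref{sphere4}.

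For (a), the key tool is Lemma~\ref{spherelemma1}: it says that for every $w\in \partial\mathbb{S}^n_+$ and every $V\in T_w\mathbb{S}^n$ with $V^{n+1}<0$ there is a straight line in $\mathbb{R}^n$ whose image under $\mathfrak{p}$ is a half great circle through $w$ and $-w$ with tangent vector $V$ at $w$ and $-V$ at $-w$. Differentiating the $C^1$-at-the-zero-section and $C^\infty$-away property of the exponential map (recorded around \eqref{expon}) along this family, parametrized by the linear parameter $\lambda$ in \eqref{solutonxy}, together with the fact that $F$ is projectively flat, forces the one-sided jets of $\mathfrak{p}^*F$ from $\mathbb{S}^n_+$ and of the antipodally-reflected $\mathfrak{p}^*F$ from $\mathbb{S}^n_-$ to agree to all orders at $w$: indeed the geodesic flow of a projectively flat metric is reparametrized linear motion, so passing a geodesic through the equator from the $+$ side and continuing it as the antipodal geodesic on the $-$ side is consistent with \eqref{geodesicBackf''1}--\eqref{fK=1} precisely because the solution $f$ in \eqref{fK=1} extends through $t=\pm\frac{\pi}{2F}-\frac{c}{F}$ to the antipodal branch. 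This is the step I expect to be the main obstacle: one must show the glued metric is $C^\infty$ (not merely $C^0$ or $C^1$) across $\partial\mathbb{S}^n_+$ and that the fundamental tensor stays positive definite there, which requires tracking how the full $\infty$-jet of $F$ at spatial infinity (encoded by the extension to $\overline{\mathbb{S}^n_+}$) matches the reflected jet; the spherical-coordinate formulas \eqref{K=1spherical_x} and the explicit geodesic expression in Lemma~\ref{spherelemma1} are the computational backbone here, and the $V^{n+1}<0$ versus $V^{n+1}>0$ dichotomy must be handled by symmetry.

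Once smoothness and strong convexity across the equator are established, statement (b) is cheap: local projective flatness holds on $\mathbb{S}^n_+$ because $F$ is projectively flat on $\mathbb{R}^n$ and $\mathfrak{p}$ carries straight lines to half great circles (Observation~\ref{informobvers2}\eqref{ob21}), holds on $\mathbb{S}^n_-$ by the antipodal symmetry, and holds in a neighborhood of each equatorial point because the straight lines through that point extend across as antipodal lines, so in suitable local coordinates near the equator geodesics are again straight; that $\mathbf{K}=1$ everywhere then follows by continuity from $\mathbf{K}\equiv 1$ on the dense open set $\mathbb{S}^n\setminus\partial\mathbb{S}^n_+$. For (c): by Hopf--Rinow (Bao--Chern--Shen) a compact Finsler manifold is complete, and $\mathbb{S}^n$ is compact, giving completeness; then every geodesic of $\mathscr{F}$, being locally straight in the above coordinates and projecting (on each hemisphere) to a reparametrized straight line, must lie on a great circle—this is \eqref{sphere4}—and running \eqref{fK=1} over its full maximal parameter interval, which on the sphere closes up, shows each great circle is a closed geodesic; computing its $\mathscr{F}$-length via $L_F$ and the substitution $t\mapsto$ spherical angle, as in the proof of Theorem~\ref{thmK=1global}\eqref{diam1spher1}, gives length $2\pi$. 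Finally the diameter is $\pi$ by Theorem~\ref{thmK=1global} applied hemisphere-by-hemisphere together with the antipodal symmetry, and since a closed geodesic of length $2\pi$ through a point returns to it only at parameter $2\pi$ while being minimizing up to parameter $\pi$, the injectivity radius equals $\pi$; the last bullet of the Introduction's Theorem~\ref{thmK=1globalintro} (every geodesic of length $2\pi$ is a great circle) is then just the converse direction, immediate from \eqref{sphere4}. The one genuinely technical point to nail down carefully, besides the $C^\infty$-gluing, is that the family in Lemma~\ref{spherelemma1} sweeps out \emph{all} tangent directions at an equatorial point (the $\lambda$-line plus the $V\leftrightarrow -V$ reflection covers $T_w\mathbb{S}^n$ minus the equatorial directions, and those are handled as a limiting case by Observation~\ref{informobvers2}\eqref{ob22}), so that the jet-matching argument is not vacuous in any direction.
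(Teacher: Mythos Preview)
Your overall strategy coincides with the paper's: construct $\mathscr{F}$ by gluing two copies of $(\overline{\mathbb{S}^n_+},\mathscr{F}_+)$ via the antipodal identification on the equator, invoke Lemma~\ref{spherelemma1} for the matching, and handle equatorial directions as limits via Observation~\ref{informobvers2}. A few points differ.

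For the equatorial matching $\mathscr{F}_+(w,V)=\mathscr{F}_+(-w,-V)$, the paper's argument is shorter than your jet-matching proposal: the geodesic $\gamma_{x,y}$ of Lemma~\ref{spherelemma1} has \emph{constant speed}, so $\mathscr{F}_+\big(\mathfrak{p}(\gamma_{x,y}(t)),\,\mathfrak{p}_*((\cos c/F(x,y))^2\gamma_{x,y}'(t))\big)$ equals the constant $\cos^2 c/F(x,y)$ for all $t$, and passing to the two endpoint limits via \eqref{tangecrivle22} gives the equality directly. The cases $V^{n+1}>0$ and $V^{n+1}=0$ follow by swapping $(w,V)\leftrightarrow(-w,-V)$ and by continuity. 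The paper then declares the gluing a Finsler metric on $\mathbb{S}^n$; it does not carry out a separate $C^\infty$ jet analysis, so your concern about higher-order smoothness across the equator is one the paper treats tersely.

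For showing great circles are $\mathscr{F}$-geodesics, your ``suitable local coordinates near the equator in which geodesics are straight'' is where the paper is more concrete. Rather than produce such coordinates, the paper rewrites the antipodal symmetry in spherical coordinates (via Lemma~\ref{differenobversion}) as $\mathscr{F}(\varphi,\theta,V^i)=\mathscr{F}(\varphi-\sgn(\varphi)\pi,\theta,V^i)$ and deduces the corresponding identity $G^j(w,V)=G^j(-w,-V)$ for the geodesic coefficients. This is exactly what is needed to verify that the two half-great-circles $\mathfrak{p}(\gamma_{x,y})$ (one on each hemisphere) satisfy the geodesic equation \eqref{geodesequ} through the equator. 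Great circles and geodesics lying \emph{entirely} in $\partial\mathbb{S}^n_+$ are then treated in a separate final step by approximation: for circles, take a sequence of non-equatorial great circles converging to the given one and pass to the limit using the compactness of $S\mathbb{S}^n$ and the continuity of $\exp$; for geodesics, use Observation~\ref{informobvers2}\eqref{ob22}. The injectivity-radius claim is obtained by showing $i_V=\pi$ directly when $V^{n+1}\neq 0$ and invoking continuity of $i_V$ for equatorial $V$, rather than via the diameter argument you sketch.
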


\begin{proof} The proof proceeds in three steps.

\smallskip
\noindent
{\bf Step 1.} In this step, we construct the required metric $\mathscr{F}$ on $\mathbb{S}^n$.

\smallskip

By assumption,
the pull-back metric $\mathscr{F}_+:=(\mathfrak{p}^{-1})^*F$ extends to $\overline{\mathbb{S}^n_+}$.
We first prove that for any antipodal points $\pm w \in \partial \mathbb{S}^n_+$ and any $V \in T_w\mathbb{S}^n \subset \mathbb{R}^{n+1}$,
\begin{equation}\label{pmyvaluem}
\mathscr{F}_+(w, V) = \mathscr{F}_+(-w, -V).
\end{equation}

First, suppose $V^{n+1} \neq 0$. If $V^{n+1} < 0$, choose a geodesic $\gamma_{x,y}$ as in Lemma~\ref{spherelemma1}. Since $\gamma_{x,y}$ has constant speed, for any $t \in \left( -\frac{\pi + 2c}{2F(x,y)}, \frac{\pi - 2c}{2F(x,y)} \right)$ we have
\[
\mathscr{F}_+\left( \mathfrak{p}(\gamma_{x,y}(t)), \mathfrak{p}_{*\gamma_{x,y}(t)} \left( \left( \frac{\cos c}{F(x,y)} \right)^2 \gamma'_{x,y}(t) \right) \right) = \left( \frac{\cos c}{F(x,y)} \right)^2 F(\gamma_{x,y}(t), \gamma'_{x,y}(t)) = \frac{\cos^2 c}{F(x,y)},
\]
and taking the limit as $t$ approaches the endpoints yields \eqref{pmyvaluem} via \eqref{tangecrivle22}.

If $V^{n+1} > 0$, set $\widetilde{V} := -V$ and $\tilde{w} := -w$. Then $\widetilde{V}^{n+1} < 0$, and the same argument gives $\mathscr{F}_+(\tilde{w}, \widetilde{V}) = \mathscr{F}_+(-\tilde{w}, -\widetilde{V})$, which is equivalent to \eqref{pmyvaluem}.

Now suppose $V^{n+1} = 0$. Choose a sequence $(V_\alpha)_\alpha \subset T_w\mathbb{S}^n$ converging to $V$ with $V_\alpha^{n+1} \neq 0$. By the continuity of $\mathscr{F}_+$, we have
\[
\mathscr{F}_+(w, V) = \lim_{\alpha \to +\infty} \mathscr{F}_+(w, V_\alpha) = \lim_{\alpha \to +\infty} \mathscr{F}_+(-w, -V_\alpha) = \mathscr{F}_+(-w, -V).
\]
\begin{multicols}{2}
\noindent
This establishes \eqref{pmyvaluem} in all cases.

We now construct $\mathscr{F}$ on $\mathbb{S}^n$. Take two copies $\overline{\mathbb{S}^n_{+\ell}}$, $\ell = 1, 2$, each equipped with the metric $\mathscr{F}_+$, and glue them along their boundaries by identifying $w \in \partial\mathbb{S}^{n}_{+1}$ with $-w \in \partial\mathbb{S}^{n}_{+2}$ (see Figure~\ref{K=1fig2}). By \eqref{pmyvaluem}, this yields a well-defined locally projectively flat Finsler metric $\mathscr{F}$ on $\mathbb{S}^{n}$. Since $\mathbb{S}^n$ is compact, $(\mathbb{S}^n, \mathscr{F})$ is complete. It follows directly that $\mathbf{K} = 1$ and $\mathfrak{p} : (\mathbb{R}^n, F) \to (\mathbb{S}^n_+, \mathscr{F})$ is an isometry. Moreover, the construction implies
\begin{equation}\label{globalmetric}
\mathscr{F}(w, V) = \mathscr{F}(-w, -V), \quad \forall (w, V) \in T\mathbb{S}^{n},
\end{equation}
which proves Statements \eqref{sphere1} and \eqref{sphere2}.
\vskip 2mm
Let $(\zeta^i) = (\varphi, \theta) = (\varphi, \theta^1, \ldots, \theta^{n-1})$  be the standard spherical coordinates on $\mathbb{S}^n$, and let $(\zeta^i, V^j)$ be the induced coordinates on $T\mathbb{S}^n$, where $V = V^j \frac{\partial}{\partial \zeta^j}$.

\begin{center}
\tikzset{every picture/.style={line width=0.75pt}} %set default line width to 0.75pt

\begin{tikzpicture}[x=0.75pt,y=0.75pt,yscale=-1,xscale=1, scale=0.8]
%uncomment if require: \path (0,366); %set diagram left start at 0, and has height of 366

%Shape: Ellipse [id:dp9363742092272556]
\draw   (205.33,232.5) .. controls (205.33,214.27) and (249.21,199.5) .. (303.33,199.5) .. controls (357.46,199.5) and (401.33,214.27) .. (401.33,232.5) .. controls (401.33,250.73) and (357.46,265.5) .. (303.33,265.5) .. controls (249.21,265.5) and (205.33,250.73) .. (205.33,232.5) -- cycle ;
%Shape: Arc [id:dp15277213824460656]
\draw  [draw opacity=0] (202.37,132.02) .. controls (203.28,76.84) and (245.82,32.06) .. (298.82,31.3) .. controls (352.08,30.54) and (396.04,74.5) .. (398.15,130) -- (300.29,134) -- cycle ; \draw   (202.37,132.02) .. controls (203.28,76.84) and (245.82,32.06) .. (298.82,31.3) .. controls (352.08,30.54) and (396.04,74.5) .. (398.15,130) ;
%Shape: Arc [id:dp9158441022351089]
\draw  [draw opacity=0][dash pattern={on 4.5pt off 4.5pt}] (202.36,134) .. controls (204.09,114.77) and (246.94,98.82) .. (299.78,98.07) .. controls (352.2,97.32) and (395.22,111.79) .. (398.14,130.74) -- (300.3,134) -- cycle ; \draw  [dash pattern={on 4.5pt off 4.5pt}] (202.36,134) .. controls (204.09,114.77) and (246.94,98.82) .. (299.78,98.07) .. controls (352.2,97.32) and (395.22,111.79) .. (398.14,130.74) ;
%Shape: Arc [id:dp705319740136066]
\draw  [draw opacity=0] (202.41,131.14) .. controls (204.86,148.46) and (248.04,161.69) .. (300.73,160.94) .. controls (354.23,160.17) and (397.52,145.27) .. (398.22,127.53) -- (300.26,128.33) -- cycle ; \draw   (202.41,131.14) .. controls (204.86,148.46) and (248.04,161.69) .. (300.73,160.94) .. controls (354.23,160.17) and (397.52,145.27) .. (398.22,127.53) ;
%Straight Lines [id:da6055764289394656]
\draw  [dash pattern={on 4.5pt off 4.5pt}]  (267.76,161.33) -- (327.76,98.33) ;
\draw [shift={(297.76,129.83)}, rotate = 313.6] [color={rgb, 255:red, 0; green, 0; blue, 0 }  ][fill={rgb, 255:red, 0; green, 0; blue, 0 }  ][line width=0.75]      (0, 0) circle [x radius= 1.34, y radius= 1.34]   ;
%Curve Lines [id:da23577434455382118]
\draw    (255.33,43) .. controls (226.67,60) and (268.33,158) .. (267.76,161.33) ;
%Curve Lines [id:da18709753863205902]
\draw  [dash pattern={on 4.5pt off 4.5pt}]  (255.33,43) .. controls (280.33,31) and (328.33,97) .. (327.76,98.33) ;
%Shape: Arc [id:dp10795091254367839]
\draw  [draw opacity=0] (205.51,237.28) .. controls (208.03,290.73) and (251.79,332.92) .. (304.76,332.16) .. controls (357.99,331.4) and (400.68,287.56) .. (401.26,233.7) -- (303.33,232.5) -- cycle ; \draw   (205.51,237.28) .. controls (208.03,290.73) and (251.79,332.92) .. (304.76,332.16) .. controls (357.99,331.4) and (400.68,287.56) .. (401.26,233.7) ;
%Straight Lines [id:da49132917844015456]
\draw    (280,263.5) -- (333.33,201) ;
\draw [shift={(306.67,232.25)}, rotate = 310.48] [color={rgb, 255:red, 0; green, 0; blue, 0 }  ][fill={rgb, 255:red, 0; green, 0; blue, 0 }  ][line width=0.75]      (0, 0) circle [x radius= 1.34, y radius= 1.34]   ;
%Curve Lines [id:da9960039215687968]
\draw    (346,321.5) .. controls (317.33,338.5) and (280.57,260.17) .. (280,263.5) ;
%Curve Lines [id:da2057581249184317]
\draw  [dash pattern={on 4.5pt off 4.5pt}]  (351,318.5) .. controls (376,306.5) and (333.9,199.67) .. (333.33,201) ;
%Straight Lines [id:da4681883293619862]
\draw    (327.76,98.33) -- (312.15,76.14) ;
\draw [shift={(311,74.5)}, rotate = 54.88] [color={rgb, 255:red, 0; green, 0; blue, 0 }  ][line width=0.75]    (10.93,-3.29) .. controls (6.95,-1.4) and (3.31,-0.3) .. (0,0) .. controls (3.31,0.3) and (6.95,1.4) .. (10.93,3.29)   ;
%Straight Lines [id:da11893120521305334]
\draw    (267.76,161.33) -- (278.24,186.65) ;
\draw [shift={(279,188.5)}, rotate = 247.53] [color={rgb, 255:red, 0; green, 0; blue, 0 }  ][line width=0.75]    (10.93,-3.29) .. controls (6.95,-1.4) and (3.31,-0.3) .. (0,0) .. controls (3.31,0.3) and (6.95,1.4) .. (10.93,3.29)   ;
%Straight Lines [id:da16397958037019333]
\draw    (333.33,201) -- (323.83,180.32) ;
\draw [shift={(323,178.5)}, rotate = 65.33] [color={rgb, 255:red, 0; green, 0; blue, 0 }  ][line width=0.75]    (10.93,-3.29) .. controls (6.95,-1.4) and (3.31,-0.3) .. (0,0) .. controls (3.31,0.3) and (6.95,1.4) .. (10.93,3.29)   ;
%Straight Lines [id:da8744210788188915]
\draw    (280,263.5) -- (294.92,286.82) ;
\draw [shift={(296,288.5)}, rotate = 237.38] [color={rgb, 255:red, 0; green, 0; blue, 0 }  ][line width=0.75]    (10.93,-3.29) .. controls (6.95,-1.4) and (3.31,-0.3) .. (0,0) .. controls (3.31,0.3) and (6.95,1.4) .. (10.93,3.29)   ;
%Curve Lines [id:da159000457629634]
\draw    (449,168.5) .. controls (410.4,165.61) and (400.67,158.99) .. (385.66,149.54) ;
\draw [shift={(384,148.5)}, rotate = 32.01] [color={rgb, 255:red, 0; green, 0; blue, 0 }  ][line width=0.75]    (10.93,-3.29) .. controls (6.95,-1.4) and (3.31,-0.3) .. (0,0) .. controls (3.31,0.3) and (6.95,1.4) .. (10.93,3.29)   ;
%Curve Lines [id:da35595046216876547]
\draw    (438,198.5) .. controls (426.18,198.5) and (419.21,193.65) .. (392.25,213.57) ;
\draw [shift={(391,214.5)}, rotate = 323.13] [color={rgb, 255:red, 0; green, 0; blue, 0 }  ][line width=0.75]    (10.93,-3.29) .. controls (6.95,-1.4) and (3.31,-0.3) .. (0,0) .. controls (3.31,0.3) and (6.95,1.4) .. (10.93,3.29)   ;

% Text Node
\draw (299,125) node [anchor=north west][inner sep=0.75pt]  [font=\footnotesize]  {$O$};
% Text Node
\draw (289,225) node [anchor=north west][inner sep=0.75pt]  [font=\footnotesize]  {$O$};
% Text Node
\draw (251,158) node [anchor=north west][inner sep=0.75pt]    {$w$};
% Text Node
\draw (280,165) node [anchor=north west][inner sep=0.75pt]    {$\mathnormal{V}$};
% Text Node
\draw (325,85) node [anchor=north west][inner sep=0.75pt]    {$-w$};
% Text Node
\draw (297,276) node [anchor=north west][inner sep=0.75pt]    {$-\mathnormal{V}$};
% Text Node
\draw (257,245) node [anchor=north west][inner sep=0.75pt]    {$-w$};
% Text Node
\draw (336,190) node [anchor=north west][inner sep=0.75pt]    {$w$};
% Text Node
\draw (308,60) node [anchor=north west][inner sep=0.75pt]    {$-\mathnormal{V}$};
% Text Node
\draw (328,170) node [anchor=north west][inner sep=0.75pt]    {$\mathnormal{V}$};
% Text Node
\draw (372,23) node [anchor=north west][inner sep=0.75pt]    {$\overline{\mathbb{S}_{+1}^{n}}$};
% Text Node
\draw (389,290) node [anchor=north west][inner sep=0.75pt]    {$\overline{\mathbb{S}_{+2}^{n}}$};
% Text Node
\draw (442,139) node [anchor=north west][inner sep=0.75pt]    {$\partial\mathbb{S}_{+1}^{n}$};
% Text Node
\draw (440,188.5) node [anchor=north west][inner sep=0.75pt]    {$\partial\mathbb{S}_{+2}^{n}$};

\end{tikzpicture}
\caption{\small Gluing   $\overline{\mathbb{S}^n_{+1}}$ and $\overline{\mathbb{S}^n_{+2}}$.}
\label{K=1fig2}
\end{center}
\end{multicols}

For $w = (\varphi, \theta)$ and $V|_w = V^i \frac{\partial}{\partial \zeta^i}|_w$, Lemma~\ref{differenobversion} then gives
\[
-w = (\varphi - \sgn(\varphi) \pi, \theta), \quad -V|_{-w} = V^i \left. \frac{\partial}{\partial \zeta^i} \right|_{-w}.
\]
In these coordinates, \eqref{globalmetric} becomes
\[
\mathscr{F}(\varphi, \theta, V^i) = \mathscr{F}(\varphi - \sgn(\varphi) \pi, \theta, V^i),
\]
and together with \eqref{goedcooff} this implies
\begin{align}
G^j(w, V)  = G^j(\varphi, \theta, V^i)
= G^j(\varphi - \sgn(\varphi) \pi, \theta, V^i)
= G^j(-w, -V). \label{geodesicconstant}
\end{align}

\smallskip
\noindent
{\bf Step 2.} By the construction in Step~1, we identify $\mathbb{S}^n_+=\mathbb{S}^n_{+1}$ and refer to $\partial\mathbb{S}^n_+=\partial\mathbb{S}^n_{+1}$ as the {\it equator}  in what follows. In this step,  we prove the following statements:
\begin{enumerate}[\rm (a)]
\item\label{subecir1} if a great circle $\sigma$ is not entirely contained in the equator,  then it is a closed geodesic of  length $2\pi$;

\item\label{subecir2} if a geodesic $\gamma$ is not entirely contained in the equator, then
it lies on some great circle;

\item\label{subecir3} the injectivity radius is $\pi$ and  the distance between a pair of antipodal points is $\pi$.

\end{enumerate}

\smallskip

By \eqref{fK=1}, every straight line in $(\mathbb{R}^n,F)$ can  be parameterized as a unit-speed geodesic
\[
\gamma(t)=x+f(t)y=x+ y\cos^2c  \left[ \tan(  t + c) - \tan c \right], \quad t\in \left(-\frac{\pi}2-c,\frac{\pi}2-c\right)
\]
for some $x\in \mathbb{R}^n$ and $y\in S_x\mathbb{R}^n$, where $c=c(x,y) \in (-\pi/2,\pi/2)$.
A direct calculation (see \eqref{tangecrivle1} and \eqref{tangecrivle2}) gives
\begin{align}
&\lim_{t\rightarrow \pm \frac{\pi}{2}-c}\mathfrak{p}(\gamma(t))=\pm\left( \frac{y}{|y|},0 \right)=:\pm w,\label{tangecrivle33}\\
 &\lim_{t\rightarrow \pm \frac{\pi}{2}-c}\mathfrak{p}_{*\gamma(t)}(\gamma'(t))=\frac{\pm 1}{\cos^2 c} \left( \frac{\langle x,y\rangle y-|y|^2x}{|y|^3}, \frac{-1}{|y| } \right)=:\pm V.\label{tangecrivle44}
\end{align}
If a great circle $\sigma$ is not contained in $\partial \mathbb{S}^n_+$,  then $\sigma$  can be represented as the union of two  copies of some $\mathfrak{p}(\gamma(t))$ as above (see Figure \ref{K=1fig2}).
By \eqref{geodesicconstant}--\eqref{tangecrivle44} and \eqref{geodesequ}, this union is still a geodesic, and hence $\sigma$ is a closed geodesic.
Moreover, Theorem \ref{thmK=1global}/\eqref{diam1spher2} implies that $\sigma$ has length $2\pi$. This proves \eqref{subecir1}.

For \eqref{subecir2},
let  $\gamma:I\rightarrow \mathbb{S}^n$ be a geodesic not entirely contained in $\partial\mathbb{S}^n_+$. Without loss of generality, there exists a  nonempty subinterval $I_1\subset I$ such that  $\gamma:I_1\rightarrow \mathbb{S}^n_+$ is in the upper half sphere.
By the preceding construction, $\gamma|_{I_1}$ is the image under $\mathfrak{p}$ of some segment in $\mathbb{R}^n$.
Observation \ref{informobvers2}/\eqref{ob21} then implies that $\gamma|_{I_1}$ lies on some great circle. Similarly, the part of $\gamma$ in the lower hemisphere also lies on some great circle.
By smoothness, these two parts must belong to the same great circle, so $\gamma$ itself lies on some great circle.
This establishes \eqref{subecir2}.

It remains to show  \eqref{subecir3}.
From the above, every great semicircle avoiding the equator is a geodesic of length $\pi$, and every geodesic not entirely contained in the equator with length less than $\pi$ is minimizing.
Define the unit sphere bundle
\[
S\mathbb{S}^n:=\cup_{w\in \mathbb{S}^n} S_w\mathbb{S}^n=\{(w,V)\in T\mathbb{S}^n\,:\, \mathscr{F}(w,V)=1 \}.
\]
For any $(w,V)\in S\mathbb{S}^n$ with $V^{n+1}\neq 0$,
 the injectivity radius satisfies $i_V=i_{(w,V)}=\pi$. % if either $w\notin \partial \mathbb{S}^n_+$ or $w\in \partial \mathbb{S}^n_+$.
 For the remaining case where  $(w,V)\in S\mathbb{S}^n$ with  $w\in \partial \mathbb{S}^n_+$ and $V^{n+1}= 0$, choose a sequence $(V_\alpha)_\alpha\subset S_w\mathbb{S}^n$ converging to $V$. The continuity of the injectivity radius (cf.~\cite[Proposition 8.4.1]{BCS}) then yields $i_V=\lim_{\alpha\rightarrow +\infty}i_{V_\alpha}=\pi$. Hence, the injectivity radius is $\pi$ everywhere.
 Consequently, $d_\mathscr{F}(w, -w) = \pi$ for all $w \in \mathbb{S}^n$, since there always exists a geodesic (a great semicircle) not entirely contained in $\partial \mathbb{S}^n_+$ from $w$ to $-w$ of length $\pi$.

\smallskip
\noindent
{\bf Step 3.} In this step,  we complete the proof.

We first prove Statement \eqref{sphere3}.
By \eqref{subecir1} and \eqref{subecir3} in Step~2, it suffices to show that any great circle $\sigma$ entirely contained in $\partial\mathbb{S}^n_+$ is a geodesic of length $2\pi$.
From \eqref{subecir3}, we have $L_\mathscr{F}(\sigma)\geq 2\pi$.
Conversely, by  reparameterization and \eqref{subecir1}, we can choose a sequence of  great circles (unit speed geodesics) $(\sigma_{\alpha}(t))_{\alpha}$, $t\in [0,2\pi]$, such that  each $\sigma_{\alpha}$ is not entirely contained in $\partial\mathbb{S}^n_+$ and $(\sigma_{\alpha}(t))_{\alpha}$ converges pointwise to $\sigma(t)$ for each $t$.
%According to Krist\'aly--Zhao \cite[Theorem 2.14, Theorem 2.23]{KZ}, by passing a subsequence, we may assume that $(\sigma_{\alpha})_{\alpha}$ is uniformly convergent to $\sigma$. On the other hand,
Since each $\sigma_{\alpha}(t)$ can be written as $\exp_{w_\alpha}(tV_\alpha)$  with $(w_\alpha,V_\alpha)\in S\mathbb{S}^n$, the compactness of $S\mathbb{S}^n$ allows us to assume, passing to a subsequence, that $(w_\alpha,V_\alpha)\rightarrow (w,V)$.
The continuity of  $\exp$ then implies $\sigma(t)=\exp_w(tV)$ for $t\in [0,2\pi]$, so $\sigma(t)$ is   a unit-speed closed geodesic of length $2\pi$.

We now prove Statement \eqref{sphere4}. By \eqref{subecir2} in Step 2,
it remains to consider a geodesic $\gamma:I\rightarrow \mathbb{S}^n$ entirely contained in the equator. Without loss of generality,  assume $\gamma(t)=\exp_w(tV)$ for $t\in [0,1]$ with $V\in S_w\mathbb{S}^n$ and $V^{n+1}=0$.
%Thus, there always exists a sequence  $(V_\alpha)_\alpha\subset S_w\mathbb{S}^n$ convergent to $V$ with $V^{n+1}_\alpha\neq 0$.
Choose a sequence $(V_\alpha) \subset S_w\mathbb{S}^n$ converging to $V$ with $V_\alpha^{n+1} \neq 0$.
By  \eqref{subecir2}, each geodesic $\gamma_\alpha(t):=\exp_w(tV_\alpha)$ for $t\in [0,1]$ lies on some great circle $\sigma_\alpha(t)=\exp_w(tV_\alpha)$ for $t\in [0,2\pi]$.
By Observation~\ref{informobvers2}/\eqref{ob22}, after passing to a subsequence, we may assume that $(\sigma_\alpha)_\alpha$  converges uniformly to some great circle $\sigma(t)$ on $t\in[0,2\pi]$. Therefore,
\[
\gamma(t)=\lim_{\alpha\rightarrow +\infty}\gamma_\alpha(t)=\lim_{\alpha\rightarrow +\infty}\sigma_\alpha(t)=\sigma(t), \quad \forall\, t\in [0,1],
\]
so $\gamma$ lies on the great circle $\sigma$.
\end{proof}
%\begin{remark}
%According to \cite[Section 2.4]{KZ}, a forward complete Finsler manifold $(M,F)$ satisfies $L_F(\gamma)=L_{d_F}(\gamma)$ for a locally Lipschitz curve $\gamma:[0,1]\rightarrow M$, where  $L_{d_F}$ is the length induced by $d_F$ (see \eqref{lengthstru1}). Hence, Equation \eqref{lengthmin}  implies that $\gamma$ is a unit speed geodesic with length $2\pi$.
%\end{remark}

\begin{remark}
%{\color{blue}We also need to check that whether $\mathscr{F}_+$ is not degenerated in such directions, which has a close relation to the next subsection. Intuitively, according to Theorem \ref{thmK=1convex} below, the degenerated direction $V$ should satisfy $V^{n+1}=0$. It will be prefect if we can show it or construct an example. }{\color{magenta} So, how? Li, 2025/07/10.}

If $F$ is  Riemannian, the pull-back metric $\mathscr{F}_+:=(\mathfrak{p}^{-1})^*F$ always extends to the equator $\partial\mathbb{S}^n_+$ without requiring the explicit form of $F$. In fact, since  parallel transportation is a linearly isometry in the Riemannian setting,   Jacobi field theory implies that
for every $(w,V)\in T\mathbb{S}^n\backslash\{0\}$ with $w\in \partial\mathbb{S}^n_+$,  there exists a unique triple $(r,y,X)\in (0,\pi/2)\times S_N\mathbb{S}^n_+ \times T_N\mathbb{S}^n_+$ such that
\begin{align}\label{resctrIEM}
(w,V)=(\exp_N)_{*ry}(rX),\quad
\mathscr{F}_+((\exp_N)_{*ry}(rX))=\| r \langle y,X\rangle y+\sin r \left(  X-\langle y, X \rangle y \right)  \|,
\end{align}
where $N$ is the north pole of $\mathbb{S}^n_+$, $\|y\|:=\mathscr{F}_+(N,y)$ is an Euclidean norm and $\langle\cdot,\cdot\rangle$ is its inner product.
One can then verify that \eqref{resctrIEM} extends continuously to $r = \pi/2$, and the resulting $\mathscr{F}_+$ remains Riemannian. See also Example~\ref{Riemsphere} for an alternative approach via direct computation.

%Thus, it is easy to check that \eqref{resctrIEM} can be extended to $r=\pi/2$, in which case $\mathscr{F}_+$ is still a Riemannian metric.  See also Example \ref{Riemsphere} for an alternative approach via direct computation.

%By a simple modification, the above argument  works for Berwald metrics (cf. \cite[Chapter 10]{BCS}). However, a Berwald metric with positive flag curvature must be Riemannian. %It is unclear whether the extension to the equator remains natural for general Finsler metrics.

For non-Riemannian $F$, however, the extension of $\mathscr{F}_+$ to the equator $\partial\mathbb{S}^n_+$ is generally difficult. In fact, according to Bryant~\cite{Bry} and Shen~\cite{Sh1},  if
$F$ is additionally assumed to be reversible, then $\mathscr{F}_+$ can be extended to $\partial\mathbb{S}^n_+$ if and only if $F$ is Riemannian.
 In the irreversible case, there exists a projectively flat Finsler metric on $\mathbb{R}^n$ with $\mathbf{K}=1$ but its pull-back metric cannot extend to the equator; see Remark \ref{K=1remarkconditionextension}. Therefore, the extension hypothesis in Theorem \ref{plancetospereK=1} is necessary and natural as well.
\end{remark}

We now prove Theorem~\ref{thmK=1globalintro}, which is the main diameter bound and completion theorem for $\mathbf{K}=1$ stated in the Introduction.

\begin{proof}[Proof of Theorem \ref{thmK=1globalintro}] Conclusion \eqref{K=1theo1} follows immediately from the Bonnet--Myers theorem (cf.~\cite[Theorem 7.7.1]{BCS}) combined with the reverse metric, while Conclusions \eqref{K=1theo2} and \eqref{K=1theo3} are direct consequences from Theorems~\ref{thmK=1global} and \ref{plancetospereK=1}.
\end{proof}

We conclude this subsection  with concrete examples illustrating Theorem \ref{plancetospereK=1}.
Using \eqref{K=1spherical_x}, a direct computation yields
\begin{align}
|x|^2 & =  {\sec^2}{\varphi} -1,  \qquad
\sum^{n}_{i=1} x^i {\dd} x^i  =   {\sec^2}\varphi \tan\varphi \,  {\dd}\varphi, \label{spherex}\\
\sum^{n}_{i=1} ({\dd }x^i)^2 & =
{\sec^4}\varphi\, ({\dd}\varphi)^2 + {\tan^2}\varphi\, ({\dd}\theta^1)^2 +   {\tan^2}\varphi\sum_{i=2}^{n-1}  \Big(  \prod_{\alpha=1}^{i-1}{\sin^2} \theta^\alpha \Big)  ({\dd}\theta^{i})^2. \label{spheredx}
\end{align}
\begin{example}[Riemannian case]\label{Riemsphere}
The classical Riemannian metric on $\mathbb{R}^n$ with $\mathbf{K}=1$ is given by
\begin{align*}%\label{K=1F_alpha0}
F(x,y) = \frac{\sqrt{ (1+|x|^2)|y|^2 - \langle x,y \rangle^2 }}{1+|x|^2},\quad \forall y\in T_x\mathbb{R}^n.
\end{align*}
Using \eqref{K=1spherical_x}, \eqref{spherex}, and \eqref{spheredx}, the pull-back metric on $\mathbb{S}^n_+$ is
\begin{align*}
\mathscr{F}_+:=(\mathfrak{p}^{-1})^* F & =
 \sqrt{({\dd}\varphi)^2 + {\sin^2}\varphi \left[ ({\dd}\theta^1)^2 + \sum_{i=2}^{n-1}\Big( \prod_{\alpha=1}^{i-1}\sin^2\theta^\alpha  \Big)({\dd}\theta^i)^2 \right]}\\
 &=\sqrt{({\dd}\varphi)^2+ {\sin^2}\varphi\, g_{\mathbb{S}^{n-1}}}=\sqrt{g_{\mathbb{S}^n}}.
\end{align*}
Thus, $\mathscr{F}_+$ is exactly the restriction of the standard Riemannian metric $g_{\mathbb{S}^n}$ to the upper hemisphere and hence  can be extended to the equator naturally.
This can also be verified directly by setting $\varphi = \pm \pi/2$, which yields $\mathscr{F}_+ = \sqrt{(\mathrm{d}\varphi)^2 + g_{\mathbb{S}^{n-1}}}$ and confirms that the fundamental tensor remains positive definite across the equator, ensuring a smooth extension.
The resulting glued metric $\mathscr{F}$  is exactly $g_{\mathbb{S}^n}$, which satisfies all the properties in Theorem \ref{plancetospereK=1}.
\end{example}

\begin{example}
[{Bryant's metric\cite{Bry,Bry2,Sh1}}]\label{K=1exampleBryant}
Given $\alpha\in (0,\pi/4)$ and $n\geq 2$, define  $F:\mathbb{R}^n\times \mathbb{R}^n\rightarrow \mathbb{R}_+$ by
\begin{equation}\label{Bryant}
 F(x,y)= \mathsf{Im}\left[ \frac{-\langle x,y\rangle + i \sqrt{(e^{2i\alpha} + |x|^2)|y|^2 - \langle x,y\rangle^2 }}{e^{2i\alpha} + |x|^2}\right]=\sqrt{\frac{\sqrt{\mathcal {A}}+\mathcal
{B}}{2\mathcal {D}}+\left(\frac{\mathcal {C}}{\mathcal
{D}}\right)^2}+\frac{\mathcal {C}}{\mathcal {D}},
\end{equation}
where
\begin{align}
& \mathcal {A}:=(|y|^2\cos(2\alpha)+|x|^2|y|^2-\langle
x,y\rangle^2 )^2+(|y|^2\sin(2\alpha))^2, \label{BryA} \\
& \mathcal {B}:=|y|^2\cos(2\alpha)+|x|^2|y|^2-\langle x,y\rangle^2, \quad \mathcal {C}:=\langle x,y\rangle \sin(2\alpha), \quad  \mathcal{D}:=|x|^4+2|x|^2\cos(2\alpha)+1. \notag
\end{align}
Then $F$ is a projectively flat Finsler metric on $\mathbb{R}^n$ with $\mathbf{K} = 1$.
Using \eqref{spherex} and \eqref{spheredx}, the pull-back metric on $\mathbb{S}^n_+$ is given by
\begin{align*}%\label{FbackonSn}
\mathscr{F}_+:=(\mathfrak{p}^{-1})^* F = \sqrt{\frac{\sqrt{\widetilde{\mathcal {A}}}+\widetilde{\mathcal
{B}}}{2\widetilde{\mathcal {D}}}+\left(\frac{\widetilde{\mathcal {C}}}{\widetilde{\mathcal
{D}}}\right)^2}+\frac{\widetilde{\mathcal {C}}}{\widetilde{\mathcal {D}}},
\end{align*}
where
\begin{align*}
\widetilde{\mathcal {A}}: & ={\sec^8} {\varphi} \left[ ({\dd}\varphi)^2 + {\sin^2}\varphi \,\left( \cos(2\alpha) {\sin^2}\varphi + {\cos^2}\varphi \right)  g_{\mathbb{S}^{n-1}} \right]^2
 + {\tan^8}\varphi\, {\sin^2}(2\alpha) g^2_{\mathbb{S}^{n-1}},
 \\
 \widetilde{\mathcal {B}} :&= \cos(2\alpha)\, {\sec^4}\varphi ({\dd}\varphi)^2  +  {\tan^2}\varphi \, \left(\cos(2\alpha) + {\tan^2}\varphi \right)g_{\mathbb{S}^{n-1}},\quad \\
 \widetilde{\mathcal {C}} : &=  \sin(2\alpha) \sin\varphi\, {\sec^3}\varphi\, {\dd}\varphi,
 \qquad
 \widetilde{\mathcal {D}} := 2 \cos(2\alpha) {\tan^2}\varphi + {\tan^4}\varphi +1.
\end{align*}
On the equator ($\varphi=\pm\frac{\pi}{2}$), the limit is
\begin{align} \label{K=1equatormetric}
\lim_{\varphi \rightarrow \pm\frac{\pi}{2}} \mathscr{F}_+ :=  \sqrt{ \frac{ \cos(2\alpha)  ({\dd}\varphi)^2  +  g_{\mathbb{S}^{n-1}} + \sqrt{ \left[ ({\dd}\varphi)^2 + \cos(2\alpha)\,  g_{\mathbb{S}^{n-1}} \right]^2 +  {\sin^2}(2\alpha) g^2_{\mathbb{S}^{n-1}} } }{2} },
\end{align}
which is a spherically symmetric Minkowski norm.
Since $(\mathfrak{p}^{-1})^* F$ can be extended to a Finsler metric on $\overline{\mathbb{S}^n_+}$, we obtain a Finsler metric $\mathscr{F}$ on $\mathbb{S}^n$ by gluing. This metric coincides with Bryant's original construction~\cite{Bry} and satisfies all properties in Theorem~\ref{plancetospereK=1}.
Moreover, one can  verify directly that the geodesics are precisely the great circles, all of length $2\pi$.
\end{example}

\begin{remark}\label{K=1remarkconditionextension}
To illustrate the necessity of the extension condition in Theorem~\ref{plancetospereK=1}, we examine the two-dimensional case, which is computationally more transparent.
When $\alpha = \pi/4$, the function $F$ defined in \eqref{Bryant} is a Finsler metric on $\mathbb{R}^2$. However, the fundamental tensor of the pull-back $(\mathfrak{p}^{-1})^* F$ fails to remain positive definite at the equator.
In fact, at $\varphi = \pm\pi/2$, equation \eqref{K=1equatormetric} gives
\begin{align*} %\label{K=1equatormetricalphapi4}
\breve{F}:= \lim_{\varphi \rightarrow \pm\frac{\pi}{2}}  \mathscr{F}_+|_{\alpha=\frac{\pi}4} = \sqrt{\frac{({\dd}\theta)^2+\sqrt{({\dd}\varphi)^4+({\dd}\theta)^4}}2}.
\end{align*}
For a vector $y = y^1 \frac{\partial}{\partial \varphi} + y^2 \frac{\partial}{\partial \theta}$, this becomes
\[ \breve{F}( y ) = \sqrt{\frac{(y^1)^2+\sqrt{(y^1)^4+(y^2)^4}}2}, \]
whose fundamental tensor $g_{ij}$ can be given by a direct computation, i.e.,
\begin{equation*}
(g_{ij})  =
\begin{bmatrix}
\frac{1}{2} +\frac{3 (y^1)^2}{2 \sqrt{(y^1)^4+(y^2)^4} } - \frac{(y^1)^6}{ \left(\sqrt{(y^1)^4+(y^2)^4} \right)^3}
&  - \frac{(y^1)^3 (y^2)^3}{ \left(\sqrt{(y^1)^4+(y^2)^4} \right)^3 }   \\[15pt]
- \frac{(y^1)^3 (y^2)^3}{ \left(\sqrt{(y^1)^4+(y^2)^4} \right)^3 }
 &   \frac{3(y^2)^2}{2 \sqrt{(y^1)^4+(y^2)^4} } - \frac{(y^2)^6}{ \left(\sqrt{(y^1)^4+(y^2)^4} \right)^3}
\end{bmatrix}.
\end{equation*}
Then it is degenerated when $y^2 =0$, since
\begin{equation*}
(g_{ij})|_{y^2 =0}  =
\begin{bmatrix}
1
&  0  \\
0
 &   0
\end{bmatrix}.
\end{equation*}
This shows that the extension condition in Theorem~\ref{plancetospereK=1} is indeed necessary.
\end{remark}

%\begin{remark}According to Bryant \cite{Bry} and Shen \cite{Sh1}, the only reversible projectively flat Finsler metric with $\mathbf{K} = 1$ on $\mathbb{S}^2$ is the standard Riemannian metric.
%\end{remark}

%{\color{blue}According to Theorem \ref{plancetospereK=1}, the {\it forward completeness metric $F$} in the case $\mathbf{K}=1$ is the metric satisfying that $(\mathfrak{p}^{-1})^*F$ can be extended to the equator. Factually,  we should character such metrics.}

\vskip 10mm
\section{Sobolev spaces over projectively flat Finsler manifolds} \label{SecSobolev}

Sobolev spaces over Euclidean space and Riemannian manifolds are well known to be vector spaces. In Finsler geometry, however, this linearity property may fail, as demonstrated by the Funk metric (cf.~Krist\'aly--Rudas~\cite{KR}). For projectively flat Finsler manifolds of constant flag curvature, we show that the linear structure of Sobolev spaces is intrinsically tied to the backward completeness of the manifold.

This section is organized as follows. In the first subsection, we establish basic properties of Sobolev spaces in the Finsler setting. The next two subsections analyze the cases $\mathbf{K} = 0$ and $\mathbf{K} = -1$, respectively. We then combine these results to prove Theorem~\ref{thmSobolevIntro}.

%\subsection{Measure and polar coordinate system} To study the Sobolev spaces over Finsler manifolds, we review  essential definitions
%and properties concerning measures and polar coordinate system, maintaining consistent notations with Section  \ref{elemFinslergeo}. Refer to \cite{Ohta1,ShenLecture,ZS} for further details.

\subsection{ Basic properties }\label{Sobolevsapce}
In this subsection, we recall  properties of Sobolev spaces in the Finsler setting, maintaining
notation consistent with Section~\ref{Preliminaries}; see  \cite{KLZ,KR} for more details.

A triple $(M, F, \m)$ is called a {\it Finsler metric measure manifold} (abbreviated $\FMMM$) if $(M, F)$ is a forward complete Finsler manifold endowed with a smooth positive measure $\m$.

\begin{definition}\label{soboloevespace}Let $(M,F,\m)$ be  an  $\FMMM$.
For $u\in C^\infty_0(M)$ and $p \in [1,+\infty)$, define a pseudo-norm as
\begin{equation}\label{W1p}
\|u\|_{W^{1,p}_{\m}}:=\|u\|_{L^p_{\m}}+\|{\dd} u\|_{L^p_{\m}}:=\left(\int_M |u|^p \dm \right)^{1/p}+\left(\int_M F^{*p}({\dd} u)  \dm  \right)^{1/p}.
\end{equation}
The  {\it  Sobolev space} $W^{1,p}_0(M,F,\m)$ is defined as the closure of $C^\infty_0(M)$ with respect to the {\it backward topology} induced by $\|\cdot\|_{W^{1,p}_{\m}}$, i.e.,
\[
u\in W^{1,p}_0(M,F,\m) \quad \Longleftrightarrow \quad \exists \, (u_k)\subset C^\infty_0(M) \text{ \ with }\lim_{k\rightarrow +\infty}\|u-u_k\|_{W^{1,p}_{\m}}= 0.
\]
\end{definition}

\begin{remark}\label{sobonormprop}
By the triangle inequality for the co-metric $F^*$ defined in \eqref{dualmetric}, one can verify that for any $u,v\in  W^{1,p}_0(M,F,\m)$:
 \begin{itemize}
\item $\|u\|_{W^{1,p}_{\m}}\geq 0$ with equality if and only if $u=0$;

\item $\|\alpha\,u\|_{W^{1,p}_{\m}}= \alpha \|u\|_{W^{1,p}_{\m}}$ for any $\alpha\geq 0$;

\item $\|u+v\|_{W^{1,p}_{\m}}\leq \|u\|_{W^{1,p}_{\m}}+\|v\|_{W^{1,p}_{\m}}$.

\end{itemize}
%Moreover, one can define an asymmetric metric  $\md$ on $C^\infty_0(M)$ as
%$\md(u,v):=\|v-u\|_{W^{1,p}_{\m}}$.
%Thus, $W^{1,p}_0(M,F,\m)$ is exactly the backward completeness of the asymmetric metric space $(C^\infty_0(M),\md)$.
Although $W^{1,p}_0(M,F,\m)$ is a standard Sobolev space in the Riemannian setting, it
may fail to be  linear in the Finsler setting; see \cite{KR} for examples.
\end{remark}

A necessity condition for the nonlinearity of $W^{1,p}_0(M,F,\m)$ is given by the following result (cf.~\cite[Proposition 3.1]{KLZ}).
\begin{proposition}[\cite{KLZ}]\label{sobolevspaceline}
Let $(M,F,\m)$ be an $\FMMM$.
If $W^{1,p}_0(M,F,\m)$ is not a vector space for some $p\in [1, +\infty)$, then $\lambda_F(M)= +\infty$.
 \end{proposition}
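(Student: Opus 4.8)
\textbf{Proof plan for Proposition~\ref{sobolevspaceline}.}
The statement is the contrapositive of the expected implication: if $\lambda_F(M)<+\infty$, then $W^{1,p}_0(M,F,\m)$ is a vector space. So the plan is to \emph{assume} $\lambda_F(M)=:\Lambda<+\infty$ and show that $W^{1,p}_0(M,F,\m)$ is closed under addition and under multiplication by $-1$ (scalar multiplication by nonnegative constants is already noted in Remark~\ref{sobonormprop}), which together with the $\mathbb{R}_{\geq 0}$-homogeneity makes it a genuine linear subspace. The key point is that finite reversibility makes the co-metric $F^*$ \emph{comparable to a reversible norm}: from \eqref{dualff*} and the definition \eqref{dualmetric} of $F^*$, one checks that
\[
\Lambda^{-1}\,F^*(x,-\xi)\ \leq\ F^*(x,\xi)\ \leq\ \Lambda\,F^*(x,-\xi),\qquad\forall\,(x,\xi)\in T^*M,
\]
so in particular $F^*({\dd}(-u))=F^*(-{\dd}u)\leq \Lambda\,F^*({\dd}u)$ pointwise, whence $\|{-}u\|_{W^{1,p}_{\m}}\leq \Lambda\,\|u\|_{W^{1,p}_{\m}}$ for every smooth compactly supported $u$, and by density the same bound holds on all of $W^{1,p}_0$. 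This immediately gives that $-u\in W^{1,p}_0(M,F,\m)$ whenever $u$ is: if $u_k\to u$ in the backward topology, then $-u_k\in C^\infty_0(M)$ and $\|(-u)-(-u_k)\|_{W^{1,p}_{\m}}\leq\Lambda\,\|u-u_k\|_{W^{1,p}_{\m}}\to 0$.

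For closure under addition, first observe the subadditivity $F^*(x,\xi_1+\xi_2)\leq F^*(x,\xi_1)+F^*(x,\xi_2)$ (this is just the triangle inequality for the norm $F^*(x,\cdot)$ on $T^*_xM$, which holds because $F^*$ is a ``Finsler metric'' on $T^*M$ as recorded after \eqref{dualmetric}), hence by the Minkowski inequality in $L^p_\m$,
\[
\|u+v\|_{W^{1,p}_{\m}}\ \leq\ \|u\|_{W^{1,p}_{\m}}+\|v\|_{W^{1,p}_{\m}},
\]
which is exactly the third bullet of Remark~\ref{sobonormprop}. Now for $u,v\in W^{1,p}_0(M,F,\m)$ choose $(u_k),(v_k)\subset C^\infty_0(M)$ with $\|u-u_k\|_{W^{1,p}_{\m}}\to 0$ and $\|v-v_k\|_{W^{1,p}_{\m}}\to 0$. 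Then $u_k+v_k\in C^\infty_0(M)$, and applying the subadditivity to the difference,
\[
\|(u+v)-(u_k+v_k)\|_{W^{1,p}_{\m}}\ \leq\ \|u-u_k\|_{W^{1,p}_{\m}}+\|v-v_k\|_{W^{1,p}_{\m}}\ \longrightarrow\ 0,
\]
so $u+v\in W^{1,p}_0(M,F,\m)$. Combined with the previous paragraph and $\mathbb{R}_{\geq0}$-homogeneity, $W^{1,p}_0(M,F,\m)$ is a vector space, proving the contrapositive.

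There is really only one subtlety, and it is where I would spend the most care: justifying that the pointwise bound $F^*(-\xi)\leq\Lambda F^*(\xi)$ actually follows from $\lambda_F(M)<+\infty$. This is a purely linear-algebraic statement at each point $x$ relating the dual norm of $F(x,\cdot)$ to that of $\overleftarrow{F}(x,\cdot)=F(x,-\cdot)$: one uses that $F(x,-y)\leq \lambda_F(x)F(x,y)$ for all $y$ and the variational definition \eqref{dualmetric}, together with continuity of $x\mapsto\lambda_F(x)$ and $\sup_x\lambda_F(x)=\Lambda<+\infty$, to pass to a uniform constant. Everything else is the triangle inequality plus the elementary density argument, so the only ``hard part'' is this reversibility transfer from $F$ to $F^*$, which is standard but worth stating cleanly; the rest of the proof is a routine approximation argument.
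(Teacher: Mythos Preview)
Your argument is correct. The paper does not prove this proposition itself; it is quoted without proof from \cite[Proposition~3.1]{KLZ}, so there is no in-paper proof to compare against. Your contrapositive approach---showing that finite reversibility $\Lambda<+\infty$ forces the pointwise bound $F^*(x,-\xi)\leq \Lambda\,F^*(x,\xi)$, and then using this together with the subadditivity of $F^*$ (Remark~\ref{sobonormprop}) to run the obvious approximation argument for closure under negation and addition---is the standard route and is exactly what one expects the cited proof to do. The only nontrivial step is indeed the reversibility transfer $F\mapsto F^*$, which you handle correctly via the variational definition~\eqref{dualmetric}: since $\lambda_F(x)=\sup_y F(x,-y)/F(x,y)=\sup_y F(x,y)/F(x,-y)$ (by the substitution $y\mapsto -y$), one has $F(x,-z)\geq \Lambda^{-1}F(x,z)$, and hence $F^*(x,-\xi)=\sup_{z}\xi(z)/F(x,-z)\leq \Lambda\,F^*(x,\xi)$.
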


We also require the following lemma, proved in Appendix~\ref{propesoboleve}.

%Moreover, we have the following result, whose proof is given in Appendix \ref{propesoboleve}.

\begin{lemma}\label{SobolevLemma1}
Let $(M,F,\m)$ be an $n$-dimensional noncompact $\FMMM$ with total measure $\m(M) < +\infty$. Given a point $o\in M$, set $r(x):=d_F(o,x)$.
%Suppose that there exist a point $o\in M$ and a number $r_0\in (0,\mathfrak{i}_o)$ such that
%\begin{equation*}%\label{condiRicSweak}
%\mathbf{Ric} \geq -(n-1)k^2, \quad \mathbf{S}(\nabla r)  \geq (n-1) h, \quad  \text{$\m$-a.e. in } M\backslash  B^+_{r_0}(o),
%\end{equation*}
%for some constants $h\geq k\geq 0$.
 Then for any $\alpha>0$, the functions
 \[
 u_\alpha(x):=-e^{-\alpha r(x)},\qquad u(x): = - \Big[\ln(2+r(x))\Big]^{-\frac{1}{n}}
 \]
 belong  to $ W^{1,p}_0(M,F,\m)$ for every $p\in [1, +\infty)$.
\end{lemma}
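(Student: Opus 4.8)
The plan is to show that each of the two functions lies in $W^{1,p}_0(M,F,\m)$ by first verifying it has finite $W^{1,p}_{\m}$-pseudo-norm, and then exhibiting an approximating sequence from $C^\infty_0(M)$ converging in the backward topology. The key structural input is that $\m(M)<+\infty$, which immediately handles the $L^p_\m$ part of the norm for any bounded function, so the entire difficulty is concentrated in estimating $\|\dd u\|_{L^p_\m}$ and $\|\dd u_\alpha\|_{L^p_\m}$, i.e., integrals of $F^{*p}(\dd r)$ weighted by a function of $r$. Here I would invoke the fact recalled in Section~\ref{elemFinslergeo} that $F^*(\dd r)=1$ for $\m$-a.e.\ $x$, so that $F^{*p}(\dd(\text{function of }r)) = |(\text{function})'(r)|^p$ $\m$-a.e. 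Thus for $u_\alpha$ one gets $F^{*p}(\dd u_\alpha)=\alpha^p e^{-p\alpha r}$, and for $u$ one gets $F^{*p}(\dd u)= \left[\tfrac1n (\ln(2+r))^{-\frac1n-1}(2+r)^{-1}\right]^p$.

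Next I would integrate these in the polar coordinate system $(r,y)$ around $o$, using the decomposition \eqref{inffexprexx}:
\[
\int_M F^{*p}(\dd u)\,\dm=\int_{S_oM}\dd\nu_o(y)\int_0^{i_y} \Big[\tfrac1n(\ln(2+r))^{-\frac1n-1}(2+r)^{-1}\Big]^p \hat{\sigma}_o(r,y)\,\dd r,
\]
and similarly for $u_\alpha$. The point is that $\int_0^{i_y}\hat\sigma_o(r,y)\,\dd r$ is the $\m$-measure of the ray, and since $\m(M)<+\infty$ we have control on $\hat\sigma_o$ in an integrated sense. For $u_\alpha$, the weight $\alpha^p e^{-p\alpha r}$ is bounded, so $\|\dd u_\alpha\|_{L^p_\m}^p\le \alpha^p\,\m(M)<+\infty$ trivially; the borderline function $u$ is the delicate one because its gradient weight decays only like $(\ln r)^{-\frac1n-1}r^{-1}$, which is not globally bounded near $r=0$ but is integrable there, and decays at infinity — here I would split the $r$-integral into $[0,1]$ and $[1,\infty)$, bound the first piece by a constant times the measure of the unit ball, and for the second piece use $(2+r)^{-p}\le (2+r)^{-1}$ when $p\ge1$ together with $\hat\sigma_o\,\dd r$ being a finite measure, plus the fact that $(\ln(2+r))^{-\frac1n-1}$ is bounded on $[1,\infty)$. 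A Fubini/monotone convergence argument over $S_oM$ (whose $\nu_o$-measure is finite, cf.\ \eqref{cocont} and the discussion around it) then gives finiteness of the full integral.

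For the approximation step, I would take a standard cutoff: let $\chi_k\in C^\infty_0(M)$ with $0\le\chi_k\le1$, $\chi_k\equiv1$ on $\overline{B^+_k(o)}$, supported in $\overline{B^+_{2k}(o)}$, and $F^*(\dd\chi_k)\le C/k$ — such functions exist by the forward completeness and the compactness of forward balls. Setting $u^{(k)}:=\chi_k\cdot u$ (and likewise for $u_\alpha$), one checks $u^{(k)}\to u$ in $L^p_\m$ by dominated convergence (using $\m(M)<+\infty$ and boundedness of $u$), and $\dd u^{(k)}=\chi_k\,\dd u+u\,\dd\chi_k$, so $\|\dd u^{(k)}-\dd u\|_{L^p_\m}\le \|(1-\chi_k)\dd u\|_{L^p_\m}+\|u\,\dd\chi_k\|_{L^p_\m}$; the first term $\to0$ by dominated convergence against the finite integral just established, and the second is bounded by $\tfrac{C}{k}\|u\|_{\infty}\,\m(\overline{B^+_{2k}(o)}\setminus \overline{B^+_k(o)})^{1/p}\le \tfrac{C}{k}\|u\|_\infty\,\m(M)^{1/p}\to0$. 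Since $C^\infty_0(M)$ functions can be obtained from $u^{(k)}$ by a further standard mollification (or one arranges $u^{(k)}$ smooth from the start by noting $r$ is smooth away from the cut locus and handling the cut locus with the cutoff), this yields $u,u_\alpha\in W^{1,p}_0(M,F,\m)$.

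\textbf{Main obstacle.} The principal technical point is the borderline integrability of $F^{*p}(\dd u)=\big[\tfrac1n(\ln(2+r))^{-\frac1n-1}(2+r)^{-1}\big]^p$ against $\hat\sigma_o(r,y)\,\dd r$: unlike the exponential case, one cannot bound the weight by a constant, and one must genuinely use both that $\m(M)<\infty$ (to control $\int\hat\sigma_o\,\dd r$) and the precise exponent $\tfrac1n$ in the logarithm (ensuring $(\ln(2+r))^{-\frac1n-1}$ stays bounded on $[1,\infty)$ and the near-origin singularity is integrable); the choice of exponent $-\tfrac1n$ is exactly calibrated so that $u\in L^\infty$ while $\dd u$ has the right decay. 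The second, milder difficulty is making the cutoff argument rigorous across the cut locus of $o$, where $r$ is only Lipschitz; this is handled by the measure-zero nature of the cut locus together with the Lipschitz bound $F^*(\dd r)=1$ $\m$-a.e., which suffices for the $L^p$ estimates.
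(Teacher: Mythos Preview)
Your verification that $\|u_\alpha\|_{W^{1,p}_\m},\|u\|_{W^{1,p}_\m}<\infty$ is correct, though your ``main obstacle'' is illusory: since the argument of the logarithm is $2+r\ge 2$, the derivative $u_r=\tfrac1n(2+r)^{-1}[\ln(2+r)]^{-1-1/n}$ is \emph{uniformly bounded} on $[0,\infty)$ (by its value $\tfrac{1}{2n}(\ln 2)^{-1-1/n}$ at $r=0$), so $\int_M F^{*p}(\dd u)\,\dm\le \|u_r\|_\infty^p\,\m(M)$ exactly as for $u_\alpha$; no borderline analysis is needed.

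The genuine gap is the approximation step. Cutoffs $\chi_k$ with $F^*(\dd\chi_k)\le C/k$ need not exist on an irreversible Finsler manifold---and this lemma is invoked precisely when $\lambda_F(\Omega)=+\infty$ (Theorems~\ref{thmK=0Sobolevall}, \ref{thmK=-1Sobolevall}). For any $\chi_k$ as you describe and any unit-speed geodesic from $x\notin B^+_{2k}(o)$ to $z\in B^+_k(o)$, one has $1=\chi_k(z)-\chi_k(x)\le\int_0^{d_F(x,z)} F^*(\dd\chi_k)\,{\dd}t$, hence $\sup F^*(\dd\chi_k)\ge 1/d_F(x,z)$; when $(\Omega,F)$ is not backward complete, such $d_F(x,z)$ can be made arbitrarily small (cf.\ \eqref{coreqd0xdx0}, \eqref{coreqK=-1dx_0xdxx_01122}). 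Equivalently, for the natural choice $\chi_k=\rho(r/k)$ with $\rho'\le 0$ one gets $F^*(\dd\chi_k)=k^{-1}|\rho'|\,F^*(-\dd r)$, and Lemmas~\ref{lemK=0Fstar}, \ref{lemK=-1Fstarall} show $F^*(-\dd r)\gtrsim r^2$ (resp.\ $e^{2r}$). Your term $\|u\,\dd\chi_k\|_{L^p_\m}$ then need not vanish. The paper instead truncates via $u_{\alpha,\varepsilon,\delta}:=\min\{0,\,u_{\alpha,\varepsilon}+\delta\}$ (after a preliminary flattening near $o$): the point is that $\dd(u_{\alpha,\varepsilon}-u_{\alpha,\varepsilon,\delta})$ equals $\dd u_{\alpha,\varepsilon}$ on $\{r\ge R_\delta\}$ and vanishes elsewhere, so its $F^*$ involves only $F^*(\dd r)=1$, never $F^*(-\dd r)$; dominated convergence then gives the $W^{1,p}_\m$-convergence, and the resulting compactly supported Lipschitz function is smoothed via Lemma~\ref{lipsconverppax}.
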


%\begin{remark}
%It is noticeable that Lemma \ref{SobolevLemma1} remains valid  without the assumptions of non-compactness of $M$ or finiteness of $\m(M)$. However, the proof requires substantial additional arguments; refer  to  Krist\'aly-Li-Zhao \cite[Sections 3--4]{KLZ} for further details.
%remains valid even if the assumptions of the non-compactness of $M$ and the finiteness of $\m(M)$ are eliminated, but the proof will be much technique and long. See Krist\'aly-Li-Zhao \cite[Sections 3,4]{KLZ} for further details.
%\end{remark}

In what follows, we investigate  Sobolev spaces over {\it forward complete} projectively flat Finsler manifolds $(\Omega,F)$ of constant flag curvature,
where
\[
  \text{$\Omega\subset \mathbb{R}^n$ is a domain  containing the origin $\mathbf{0}$.}
  \]
By Theorem~\ref{thmK=1globalintro}, no such manifold exists with $\mathbf{K} = 1$.
 Hence, we focus on the cases when $\mathbf{K}=0,-1$. For convenience, define
\begin{equation}\label{rreverrdefs}
r(x):=d_F(\mathbf{0}, x),\quad  \forall x\in \Omega.
\end{equation}
See Theorems~\ref{K=0lemdx_0xdxx_0} and~\ref{lemK=-1dx_0xdx_0}  for the
 expressions of $r$  when $\mathbf{K}=0,-1$ respectively. In both cases, $r$ is smooth in $\Omegao:=\Omega\backslash\{\mathbf{0}\}$ because $(\Omega, F)$ is a Cartan--Hadamard manifold (see \eqref{geommeaingofr}$_2$).

\subsection{ Sobolev spaces for $\mathbf{K}=0$}
This subsection studies  Sobolev spaces over projectively flat manifolds with $\mathbf{K}=0$.

\begin{lemma}\label{lemK=0Fstar} Let $(\Omega,F)$ be an $n$-dimensional forward complete but not backward complete projectively flat Finsler manifold  with $\mathbf{K}=0$. Then there
 exists a constant $\mathcal{C} > 0$ such that
\begin{equation*}%\label{K=0Fstar}
F^*(x, - {\dd}r) \geq \mathcal{C} r^2, \quad \forall x\in \Omegao.
\end{equation*}
\end{lemma}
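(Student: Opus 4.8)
The plan is to reduce the estimate to a bound on the single tangent vector $-\nabla r|_x$, combining the explicit distance formula of Theorem~\ref{K=0lemdx_0xdxx_0} with the duality inequality \eqref{dualff*}. Since $(\Omega,F)$ is forward but not backward complete, projectively flat with $\mathbf{K}=0$, Theorem~\ref{thmK=0global}/\eqref{K=0Caseii} gives that $\phi:=P(\mathbf{0},\cdot)$ is a weak Minkowski norm, $\Omega=\{\phi<1\}$ is bounded (so $\overline{\Omega}$ is compact), and $\psi:=F(\mathbf{0},\cdot)$ is a Minkowski norm. Because $\Omega$ is convex, $(\Omega,F)$ is Cartan--Hadamard, hence $r$ is smooth on $\Omegao$ and, for each $x\in\Omegao$, $\nabla r|_x=\gamma'(r(x))$ for the unit-speed minimal geodesic $\gamma$ from $\mathbf{0}$ to $x$; in particular $F(x,\nabla r|_x)=F^*(x,dr)=1$ and (by the equality case of \eqref{dualff*}) $\langle\nabla r|_x,dr_x\rangle=1$.

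The core step is to identify $\nabla r|_x$ exactly. Since geodesics are straight lines, the image of $\gamma$ is the segment $[\mathbf{0},x]$, so $\nabla r|_x=\lambda x$ for some $\lambda>0$. Applying Euler's theorem to the positively $1$-homogeneous $\psi,\phi$ in the formula $r(x)=\psi(x)/(1-\phi(x))$ of Theorem~\ref{K=0lemdx_0xdxx_0} gives $x^{k}\partial_{x^{k}}r(x)=\psi(x)/(1-\phi(x))^{2}=r(x)^{2}/\psi(x)$, which together with $1=\langle\nabla r|_x,dr_x\rangle=\lambda\,x^{k}\partial_{x^{k}}r(x)$ forces $\lambda=\psi(x)/r(x)^{2}$. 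Thus
\[
\nabla r|_x=\frac{\psi(x)}{r(x)^{2}}\,x,\qquad F(x,-\nabla r|_x)=\frac{\psi(x)}{r(x)^{2}}\,F(x,-x),
\]
by positive $1$-homogeneity of $F$; this is precisely where the quadratic growth comes from, since $-\nabla r|_x$ has $F$-length of order $r^{-2}$. Now \eqref{dualff*} yields $F^*(x,-dr)\ge\langle-\nabla r|_x,-dr_x\rangle/F(x,-\nabla r|_x)=1/F(x,-\nabla r|_x)=r(x)^{2}/\big(\psi(x)F(x,-x)\big)$.

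It remains to bound $\psi(x)F(x,-x)$ uniformly on $\Omegao$. Applying the distance formula to the pair $(x,\mathbf{0})$ gives $d_F(x,\mathbf{0})=F(x,-x)/(1-P(x,-x))$. By Observation~\ref{K=-1remarkindicatrix} (using $\phi|_{\Rno}>0$) one has $P(x,-x)>0$, and by Corollary~\ref{basicproper}/\eqref{postiperoo} (with $\mu=-1$, $y=-x$, so $\mu y=x\in\Omega$) one has $1-P(x,-x)>0$; hence $F(x,-x)=(1-P(x,-x))\,d_F(x,\mathbf{0})<d_F(x,\mathbf{0})=\psi(-x)/(1+\phi(-x))\le\psi(-x)$, using \eqref{coreqd0xdx0} and $\phi\ge0$. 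Therefore $\psi(x)F(x,-x)\le\psi(x)\psi(-x)\le M:=\big(\sup_{\overline{\Omega}}\psi\big)\big(\sup_{\overline{\Omega}}\psi(-\,\cdot\,)\big)<\infty$, and we obtain $F^*(x,-dr)\ge r(x)^{2}/M$; one takes $\mathcal{C}=1/M$.

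I do not expect a serious obstacle. The two places that need care are (i) the identity $\nabla r|_x=\tfrac{\psi(x)}{r(x)^{2}}x$, for which both the straight-line property of geodesics and Euler's theorem are essential, and (ii) the verification that $P(x,-x)\in(0,1)$, which is exactly what converts the (uniformly bounded, since $\Omega$ is bounded and $\phi\ge0$) quantity $d_F(x,\mathbf{0})$ into an upper bound for $F(x,-x)$; everything else is a direct application of results already established in the excerpt.
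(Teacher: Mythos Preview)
Your argument is correct and follows essentially the same strategy as the paper: test the dual norm against $-x$ (equivalently $-\nabla r|_x$, since these are positively proportional), compute $\langle -x,-dr\rangle=r^2/\psi(x)$ via Euler's theorem, and bound $\psi(x)F(x,-x)$ uniformly on the bounded domain $\Omega$.

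The execution differs in two minor but pleasant ways. First, you bound $F(x,-x)$ indirectly through the distance formula $F(x,-x)=(1-P(x,-x))\,d_F(x,\mathbf{0})<d_F(x,\mathbf{0})\le\psi(-x)$, whereas the paper computes $F(x,-x)=\psi(-x)/(1+\phi(-x))^2$ exactly from the structural formula $F=\psi(y+xP)(1+x^kP_{y^k})$; your route avoids differentiating $P$ and reuses Theorem~\ref{K=0lemdx_0xdxx_0} as a black box. Second, you close with compactness of $\overline{\Omega}$ to bound $\psi(x)\psi(-x)$, while the paper uses homogeneity of $\psi,\phi$ to extract an explicit constant $1/\rho^3$. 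Both shortcuts are legitimate; the paper's exact value of $F(x,-x)$ does slightly more than is needed here but is not used elsewhere, so nothing is lost. Your identification $\nabla r|_x=\tfrac{\psi(x)}{r^2}x$ is in fact proved by the paper as a separate statement (Lemma~\ref{K=0nabla}\eqref{nabla_r}) immediately after this lemma, so you have folded that in.
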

\begin{proof}
By Theorem \ref{thmK=0globalintro}/\eqref{K=0additoncomp}(2), we have
\begin{equation} \label{K=0FstarF}
F = \psi ( y + x P)\,\left\{ 1 +  P_{y^k} x^k \right\},\qquad \Omega=\{\phi<1\},
\end{equation}
where  $\psi(y)=F(\mathbf{0},y)$ is a Minkowski norm, $\phi(y)=P(\mathbf{0},y)$ is a weak Minkowski norm, and the projective factor  $P=P(x,y)$ is a weak Funk metric satisfying
\begin{equation} \label{thmK=0P7}
P = \phi (y + x P).
\end{equation}

We first prove that
\begin{equation}\label{baiscinequaho}
P(x, -x) \leq 1, \quad x\in \Omegao.
\end{equation}
Suppose, for contraction, that $P(x, -x) > 1$ for some $x\in\Omegao$. Taking $y = -x$ in \eqref{thmK=0P7} and using the homogeneity of $\phi$, we obtain
\[ P(x, -x) = \phi(-x + x P(x, -x)) = (P(x, -x) - 1) \phi(x). \]
Since $0 < \phi(x) < 1$ by \eqref{K=0FstarF}$_2$, this implies
\begin{equation*}%\label{K=0Pcontra}
1<P(x,-x) = \frac{- \phi(x)}{1-\phi(x)} < 0,
\end{equation*}
a contradiction. Hence, \eqref{baiscinequaho} holds.
Now, taking $y = -x$ in \eqref{thmK=0P7} gives
\[ P(x, -x) = \phi(-x + x P(x, -x)) = (1 - P(x, -x)) \phi( - x), \]
so that
\[ P(x, -x) = \frac{\phi(-x)}{1+ \phi(-x)}. \]
Substituting this into \eqref{K=0FstarF}$_1$ with $y=-x$, we obtain
\begin{align}
F (x, -x) & = \psi ( -x + x P(x,-x)) \big\{ 1 +  P_{y^k}(x, -x) x^k \big\} = (1 - P(x, -x))\psi(-x)\big\{ 1 - P_{y^k}(x, -x)(-x^k) \big\}
\notag \\
& = (1 - P(x, -x))^2 \psi(-x) = \frac{\psi(-x)}{(1+ \phi(-x))^2}. \label{K=0Fx-x}
\end{align}

Since $r(x):=d_F(\mathbf{0}, x)$ is smooth on $\Omegao$,
equations \eqref{coreqd0xdx0}$_1$ and \eqref{Eulerhter} yield
%\begin{equation*}
%r(x) =  \frac{\psi(x)}{1-\phi(x)},
%\end{equation*}
%which implies
\begin{equation}\label{K=0minusdrx}
\langle- x , - {\dd}r \rangle = \left\langle x^i \frac{\partial }{\partial x^i},  \frac{\psi_{x^j}(x) - \psi_{x^j}(x)\phi(x)+\psi(x)\phi_{x^j}(x)}{(1- \phi(x))^2} {\dd}x^j  \right\rangle =  \frac{\psi(x)}{(1 - \phi(x))^2} = \frac{r^2(x)}{\psi(x)}.
\end{equation}
From \eqref{dualmetric}, \eqref{K=0Fx-x}, and \eqref{K=0minusdrx}, it follows that for any $x\in \Omegao$,
\begin{align}\label{F^*consta}
F^*(x,-{\dd}r)=\sup_{y \in T_x\Omega \setminus \{0\}} \frac{ \langle y , - {\dd}r \rangle}{ F(x,y)} \geq \frac{ \langle- x , - {\dd}r \rangle}{ F(x,-x)} = \frac{(1 + \phi(-x))^2}{\psi(x) \psi(-x)} r^2(x).
\end{align}
On the other hand,
 the homogeneity of $\phi$ and $\psi$ imply the existence of a finite number $\rho>1$ such that
\begin{equation}\label{homeophsiphi}
 \frac{\phi(-x)}{\phi(x)}\leq \rho, \quad  \rho^{-1}\leq \frac{ \psi(x)}{\phi(x)}\leq \rho, \quad \forall x\in \Rno,
\end{equation}
which combined with \eqref{K=0FstarF}$_2$ gives
\begin{equation*}%\label{relationconrollphipsiK=0}
 \frac{(1 + \phi(-x))^2}{\psi(x) \psi(-x)}  \geq   \frac{(1 + \phi(-x))^2}{\rho^2 \phi(x) \phi(-x)}   \geq  \frac{1}{\rho^3 }=:\mathcal {C}.
\end{equation*}
Together with \eqref{F^*consta}, this completes the proof.
\end{proof}

\begin{lemma}\label{K=0nabla}
Let $(\Omega,F)$ be as in Lemma \ref{lemK=0Fstar}, and define $\psi(y): = F(\mathbf{0},y)$ and  $\phi(y) := P(\mathbf{0},y)$. Then the following hold:
\begin{enumerate}[\rm (i)]
\item \label{nabla_r}
the gradient of $r(x)$  is given by
\begin{equation}\label{eqnabla_r}
\nabla r(x) = \frac{  \psi(x) }{(\psi(x) + r(x) \phi(x))^2} x,\quad  \forall x\in \Omegao;
\end{equation}
\item \label{K=0_S}
the S-curvature $\mathbf{S}$ with respect to the Lebesgue measure $\mathscr{L}^n$ satisfies
$\mathbf{S} =  (n+1) P\geq 0$. In particular,
\begin{equation}\label{Socntk=0}
0 < \mathbf{S}(x,\nabla r(x)) = (n+1) \frac{\phi(x)}{r(x)} \leq \frac{n+1}{\varrho +  r(x)}, \quad \forall x\in \Omegao,
\end{equation}
where $\varrho: =\inf_{y\in \Rno}\frac{\psi(y)}{\phi(y)} > 0$.
\end{enumerate}
\end{lemma}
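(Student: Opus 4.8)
\textbf{Proof plan for Lemma \ref{K=0nabla}.}

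The plan is to exploit the explicit distance formula $r(x) = \psi(x)/(1-\phi(x))$ from \eqref{coreqd0xdx0}$_1$ together with the identity $F(\nabla r) = F^*({\dd}r) = 1$ and the fact that $\nabla r = \mathfrak{L}^{-1}({\dd}r)$ is the Legendre dual of ${\dd}r$. For part \eqref{nabla_r}, first I would observe that along the radial geodesic from $\mathbf{0}$ the distance is attained by a straight segment, so by the geometric meaning \eqref{geommeaingofr} the geodesic $\gamma(t)$ with $\gamma(r(x)) = x$ has velocity $\gamma'(r(x)) = \nabla r|_x$, and this velocity is a positive multiple of $x$ (the segment points radially outward). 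Thus $\nabla r(x) = \beta(x)\, x$ for some $\beta(x) > 0$, and $\beta$ is pinned down by the unit-speed condition $F(x, \nabla r(x)) = 1$, i.e. $\beta(x)\, F(x,x) = 1$. It then remains to compute $F(x,x)$. Using the reverse-metric computation already carried out in the proof of Lemma \ref{lemK=0Fstar} — specifically \eqref{K=0Fx-x} applied with the roles adjusted, or rather by repeating that computation with $y = x$ instead of $y = -x$ — one gets, via $P(x,x) = \phi(x + xP(x,x))$ and homogeneity, that $P(x,x) = \phi(x)/(1-\phi(x))$, hence $1 + P_{y^k}(x,x)x^k = 1 - P(x,x)\cdot(-1)\cdots$; carefully, $P(x, \cdot)$ is $1$-homogeneous so $P_{y^k}(x,x)x^k = P(x,x)$ and $1 + P_{y^k}(x,x)x^k = 1/(1-\phi(x))$, while $\psi(x + xP(x,x)) = (1+P(x,x))\psi(x) = \psi(x)/(1-\phi(x))$, giving $F(x,x) = \psi(x)/(1-\phi(x))^2 = (\psi(x) + r(x)\phi(x))^2/\psi(x)$ after substituting $1-\phi(x) = \psi(x)/(\psi(x)+r(x)\phi(x))$, which is exactly \eqref{eqnabla_r}. (Alternatively one can just differentiate $r = \psi/(1-\phi)$ directly, compute ${\dd}r$, and apply $\mathfrak{L}^{-1}$; but the geodesic argument is cleaner since it avoids inverting the fundamental tensor.)

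For part \eqref{K=0_S}, I would start from the formula \eqref{Scurvature}, $\mathbf{S}(x,y) = G^i_{y^i}(x,y) - y^i[\ln\sigma(x)]_{x^i}$, with $\sigma \equiv 1$ for the Lebesgue measure, so $\mathbf{S} = G^i_{y^i}$. Since $F$ is projectively flat, $G^i = P y^i$ by \eqref{prjoactor}, hence $G^i_{y^i} = P_{y^i}y^i + P\,\delta^i_i = P + nP = (n+1)P$ using Euler's theorem \eqref{Eulerhter} ($P$ being $1$-homogeneous in $y$). This gives $\mathbf{S} = (n+1)P$, and $\mathbf{S} \geq 0$ follows from $P \geq 0$, which holds by Theorem \ref{thmK=0globalintro}\eqref{K=0basciciont} (or Observation \ref{K=-1remarkindicatrix} applied to the weak Funk metric $P$). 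Then I would evaluate at $y = \nabla r(x)$: since $\nabla r(x)$ is a positive multiple of $x$ and $P(x,\cdot)$ is positively $1$-homogeneous, $P(x, \nabla r(x)) = \beta(x) P(x,x) = \beta(x)\phi(x)/(1-\phi(x))$; but from the unit-speed relation one also has $P(x,\nabla r(x)) = P(x,x)/F(x,x)$, and a direct manipulation using $F(x,x) = \psi(x)/(1-\phi(x))^2$ and $P(x,x) = \phi(x)/(1-\phi(x))$ gives $P(x,\nabla r(x)) = \phi(x)(1-\phi(x))/\psi(x) = \phi(x)/r(x)$. Hence $\mathbf{S}(x,\nabla r(x)) = (n+1)\phi(x)/r(x)$, which is positive on $\Omegao$ since $\phi|_{\Rno} > 0$. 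Finally, the upper bound in \eqref{Socntk=0}: writing $r = \psi/(1-\phi)$, we have $\phi/r = \phi(1-\phi)/\psi$, and since $\psi/\phi \geq \varrho$ on $\Rno$ (so $\phi \leq \psi/\varrho$) and $1-\phi < 1$, we get $\phi/r = \phi(1-\phi)/\psi \leq (1-\phi)/\varrho$; on the other hand $r = \psi/(1-\phi) \geq \varrho\phi/(1-\phi)$, so... I would instead argue: $\phi/r = \phi(1-\phi)/\psi \leq (\psi/\varrho)(1-\phi)/\psi = (1-\phi)/\varrho$, and also $r\cdot(\phi/r) = \phi \leq \psi/\varrho = r(1-\phi)/\varrho$, hence $\phi/r \le (1-\phi)/\varrho$; combining with $r(1-\phi) = \psi \ge \varrho \phi$ we obtain $(\varrho + r)\cdot \phi/r \le (\varrho + r)(1-\phi)/\varrho = (1-\phi) + r(1-\phi)/\varrho \le 1 + \psi/(\varrho\cdot 1)\cdot\ldots$; the clean route is $\phi/r = \phi(1-\phi)/\psi$ and $\varrho + r = \varrho + \psi/(1-\phi) = (\varrho(1-\phi)+\psi)/(1-\phi) \ge (\varrho(1-\phi) + \varrho\phi)/(1-\phi) = \varrho/(1-\phi)$, so $1/(\varrho+r) \le (1-\phi)/\varrho$, while $\phi/r = \phi(1-\phi)/\psi \le \phi(1-\phi)/(\varrho\phi) = (1-\phi)/\varrho$; this only shows both sides are $\le (1-\phi)/\varrho$, which is not enough, so I would sharpen: $\phi/r = \phi(1-\phi)/\psi$ and we want $\le 1/(\varrho + r)$, i.e. $\phi(1-\phi)(\varrho + r) \le \psi$, i.e. $\phi(1-\phi)\varrho + \phi(1-\phi)r \le \psi$; since $\phi(1-\phi)r = \phi\psi$ (as $r(1-\phi)=\psi$) this becomes $\varrho\phi(1-\phi) + \phi\psi \le \psi$, i.e. $\varrho\phi(1-\phi) \le \psi(1-\phi)$, i.e. $\varrho\phi \le \psi$, which is exactly the definition of $\varrho$.

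The main obstacle I anticipate is entirely bookkeeping rather than conceptual: keeping the homogeneity relations $P(x,\alpha y) = \alpha P(x,y)$, $P_{y^k}(x,y)y^k = P(x,y)$ straight while simultaneously substituting $y = x$ (so that $P_{y^k}(x,x)x^k = P(x,x)$, a coincidence of the two slots that is easy to mishandle), and correctly reconciling the two expressions for $P(x,\nabla r(x))$ — one from $1$-homogeneity in the second slot and one from the unit-speed normalization — so that the final tidy form $\phi(x)/r(x)$ emerges. Once the algebraic identity $\varrho\phi \le \psi$ is invoked at the end, the chain of inequalities for the upper bound in \eqref{Socntk=0} closes. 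I would present parts \eqref{nabla_r} and \eqref{K=0_S} in that order, deriving $F(x,x)$ and $P(x,x)$ as a shared preliminary computation.
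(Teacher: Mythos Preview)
Your proposal is correct and follows essentially the same approach as the paper. The only noteworthy difference is in part~\eqref{nabla_r}: the paper parametrizes the radial geodesic explicitly as $\gamma_y(t)=ty/(\psi(y)+t\phi(y))$ (by solving $r=h\psi(y)/(1-h\phi(y))$ for $h$) and then simply reads off $\nabla r(x)=\gamma_y'(r)$, whereas you determine the scale factor via the normalization $F(x,\nabla r(x))=1$ and compute $F(x,x)=\psi(x)/(1-\phi(x))^2$ from the projectively flat structure; these are equivalent maneuvers, and for the upper bound in~\eqref{Socntk=0} the paper's one-line route is to rewrite $\phi=(r-\psi)/r\le (r-\varrho\phi)/r$ and solve, which is the same inequality $\varrho\phi\le\psi$ you eventually reach.
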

\begin{proof} Let $(r,y)$ denote  the polar coordinate system of $(\Omega,F)$ around $\mathbf{0}$.

\smallskip

\textbf{\eqref{nabla_r}.}
Given $x\in \Omegao$, let $(r,y)=x$ be its polar coordinates and let $\gamma_y(t)$ be the unit-speed  geodesic with $\gamma'_y(0)=y\in S_{\mathbf{0}}\Omega$. Then $\gamma_y(r)=x$, and $x$ is parallel to $y$ since $F$ is projectively flat.
Hence, we may write $x^i = h(r,y) y^i$ for some nonnegative function $h(r,y)$.
Substituting this into  \eqref{coreqd0xdx0}$_1$ yields
\[
 r  = \frac{\psi(x)}{1- \phi(x)}=\frac{h(r,y) \psi(y)}{1- h(r,y) \phi(y)},
 \]
 which implies
 \[
 h(r,y) = \frac{r}{ \psi(y) +r \phi(y)}, \quad x^i=\frac{r}{\psi(y) + r \phi(y)} y^i, \quad \gamma_y(t)=\frac{t y}{\psi(y) + t  \phi(y)}.
 \]
By \eqref{geommeaingofr}$_2$ and the homogeneity of $\phi$ and $\psi$, we obtain
\[
\nabla r(x) = \gamma_y'(r)=\left.\frac{{\dd}\gamma_y^i}{{\dd}t}\right|_{t=r} \frac{\partial}{\partial x^i} = \frac{\psi(y)}{(\psi(y) +r(x) \phi(y))^2} y^i  \frac{\partial}{\partial x^i}= \frac{\psi(x)}{(\psi(x) +r(x) \phi(x))^2} x^i  \frac{\partial}{\partial x^i}.
\]

{\bf \eqref{K=0_S}.}
 Since the density function $\sigma(x)$ of the Lebesgue measure $\mathscr{L}^n$ is  constant,
\eqref{Scurvature} and \eqref{prjoactor}  yield
\begin{equation}\label{K=osp1}
\mathbf{S}(x,y) = \frac{\partial (P(x,y) y^m)}{\partial y^m} = (n+1)P(x,y).
\end{equation}
%In particular, the positivity of $\mathbf{S}$ follows by Corollary \ref{reverprojK=0}/\eqref{K=0reversinfty2}.
Given $x\in \Omegao$, let $(r,y)=x$ be its polar coordinates.
By \eqref{thmK=0P7} and \eqref{eqnabla_r}, we have
\[
P(x, \nabla r(x)) = \phi\left( \frac{  \psi(x) }{(\psi(x) + r(x) \phi(x))^2} x + x P(x,  \nabla r(x))\right) = \left[ \frac{  \psi(x) }{(\psi(x) + r(x) \phi(x))^2} + P(x,  \nabla r(x)) \right] \phi(x),
\]
which combined with \eqref{coreqd0xdx0}$_1$ implies
\begin{equation}\label{K=0Pnablar}
P(x,  \nabla r(x)) = \frac{  \psi(x)\phi(x) }{(\psi(x) + r(x) \phi(x))^2 (1 -\phi(x))} = \frac{\phi(x)(1-\phi(x))}{\psi(x)} = \frac{\phi(x)}{r(x)}>0.
\end{equation}
On the other hand, \eqref{homeophsiphi}$_2$ yields  $\varrho :=\inf_{y\in \Rno}\frac{\psi(y)}{\phi(y)} > 0$, which together  with
\eqref{coreqd0xdx0}$_1$ gives
\[
  \phi(x) = \frac{r(x) - \psi(x)}{r(x)} \leq \frac{r(x) - \varrho \phi(x)}{r(x)}.
\]
This inequality implies $\phi(x) \leq \frac{r(x)}{\varrho + r(x)}$, which combined with \eqref{K=osp1} and \eqref{K=0Pnablar} establishes \eqref{Socntk=0}.
\end{proof}

%In view of Definition \ref{soboloevespace}, we have the following result.

%\begin{lemma}\label{K=0SobolevLemma1}
%Let $(\Omega,F)$ be as in Lemma \ref{lemK=0Fstar} and let $\mathscr{L}^n$ be the Lebesgue measure.   %Let  $\psi, \phi$ be as in Theorem \ref{thmK=0} with $\psi=\phi$.
%Thus
%\begin{equation}\label{K=0distox}
%u(x): = - \Big[\ln(2+r(x))\Big]^{-\frac{1}{n}}
%\end{equation}
% belongs to $W^{1,p}_0(\Omega,F,\mathscr{L}^n)$ for every $p  \in [1, +\infty)$.
%\end{lemma}

The main result of this subsection is the following theorem.
\begin{theorem}\label{thmK=0Sobolevall}
Let $(\Omega,F)$ be an $n$-dimensional  forward complete projectively flat Finsler manifold with $\mathbf{K}=0$.
Then for any  $p \in (1, +\infty)$,
\begin{enumerate}[\rm (i)]
\item\label{linerK=01} if $(\Omega,F)$ is backward complete, then $W^{1,p}_0(\Omega,F,\mathscr{L}^n)$  is a vector space;

\item\label{linerK=02} if $(\Omega,F)$ is not backward complete, then $W^{1,p}_0(\Omega,F,\mathscr{L}^n)$  is not a vector space .
\end{enumerate}
 \end{theorem}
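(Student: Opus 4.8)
\textbf{Proof plan for Theorem \ref{thmK=0Sobolevall}.}
The two cases are handled by entirely different mechanisms, so I would split the argument at the outset.

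For part \eqref{linerK=01}, when $(\Omega,F)$ is backward complete, Theorem~\ref{thmK=0global}/\eqref{K=0Casei} gives $F=\psi$ a Minkowski norm on $\Omega=\mathbb{R}^n$; in particular $\lambda_F(\Omega)<+\infty$. The plan is to show that a finite reversibility constant forces the pseudo-norm $\|\cdot\|_{W^{1,p}_{\m}}$ to be comparable to its symmetrization. Concretely, since $F^{*}(x,-\xi)\leq \lambda_F(\Omega)\,F^{*}(x,\xi)$ for all $\xi$, the functional $u\mapsto \|{\dd}u\|_{L^p_\m}$ satisfies a two-sided bound against the reversible quantity $u\mapsto\big(\int F^{*p}({\dd}u)+F^{*p}(-{\dd}u)\,\dm\big)^{1/p}$, which is a genuine norm (subadditive and absolutely homogeneous). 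Hence $W^{1,p}_0(\Omega,F,\m)$ coincides as a set with the corresponding space for this symmetrized norm, and the latter is a vector space. I would phrase this cleanly: if $u,v\in W^{1,p}_0$ then so is $u-v$, because $\|-v\|_{W^{1,p}_{\m}}\leq\lambda_F(\Omega)\|v\|_{W^{1,p}_{\m}}$ controls the approximating sequences' convergence in the backward topology. (One may alternatively cite that for $\lambda_F(\Omega)<+\infty$ the forward and backward topologies are equivalent, reducing to the Minkowski/Euclidean case where linearity is classical.)

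For part \eqref{linerK=02}, when $(\Omega,F)$ is forward complete but not backward complete, Theorem~\ref{thmK=0global}/\eqref{K=0Caseii} tells us $\Omega=\{\phi<1\}$ is a bounded convex domain, so $\mathscr{L}^n(\Omega)<+\infty$ and $(\Omega,F)$ is a noncompact $\FMMM$ of finite total measure. The strategy is to exhibit a function $u\in W^{1,p}_0(\Omega,F,\mathscr{L}^n)$ whose negative $-u$ is \emph{not} in $W^{1,p}_0$, which immediately breaks linearity. The natural candidate is $u(x):=-[\ln(2+r(x))]^{-1/n}$ with $r(x)=d_F(\mathbf{0},x)$; by Lemma~\ref{SobolevLemma1} this lies in every $W^{1,p}_0$. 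For $-u$, the key is Lemma~\ref{lemK=0Fstar}, which gives $F^*(x,-{\dd}r)\geq\mathcal{C}r^2$: since $-u = [\ln(2+r)]^{-1/n}$ increases \emph{in the forward direction from $\mathbf{0}$}, its differential pairs badly with the co-metric, and I would estimate $F^{*p}({\dd}(-u))$ from below, using $d({\dd}(-u)) = \tfrac1n[\ln(2+r)]^{-1/n-1}(2+r)^{-1}\,{\dd}r$ together with the lower bound on $F^*(x,-{\dd}r)$, to show $\int_\Omega F^{*p}({\dd}(-u))\,\dmu = +\infty$ once $r\to\infty$ fast enough near $\partial\Omega$ (forward completeness guarantees $r(x)\to+\infty$ as $d_{\mathsf{E}}(x,\partial\Omega)\to0$, via Proposition~\ref{completenessofprojeflat}). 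To make the integral diverge I would change variables to the polar coordinate system $(r,y)$ around $\mathbf{0}$ and use the volume lower bound from Theorem~\ref{bascivolurcompar} (with $k=0$): the density $\hat\sigma_{\mathbf{0}}(r,y)$ is bounded below by $e^{-\tau}r^{n-1}$, and combined with the S-curvature control $\mathbf{S}(x,\nabla r)\leq \tfrac{n+1}{\varrho+r}$ from Lemma~\ref{K=0nabla}, $e^{-\tau}$ does not decay too fast, so $\int_0^{i_y}F^{*p}({\dd}(-u))\hat\sigma\,{\dd}r$ diverges. Then no sequence $(u_k)\subset C^\infty_0(\Omega)$ can converge to $-u$ in the backward topology, since $\|{\dd}u_k-{\dd}(-u)\|_{L^p_{\mathscr{L}^n}}$ cannot be small while $\|{\dd}(-u)\|_{L^p_{\mathscr{L}^n}}=+\infty$ — more precisely, $-u\notin W^{1,p}_0$ because membership requires finite $W^{1,p}_{\m}$-pseudo-norm.

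The main obstacle is the divergence estimate in part \eqref{linerK=02}: one must quantify how fast $r(x)\to+\infty$ near $\partial\Omega$ and how $\hat\sigma_{\mathbf{0}}(r,y)$ behaves for large $r$, so that $\int_\Omega F^{*p}({\dd}(-u))\,\dmu$ genuinely diverges for the chosen $u$ and \emph{every} $p\in(1,+\infty)$. The explicit distance formula \eqref{coreqd0xdx0} and the comparison $F^*(x,-{\dd}r)\geq\mathcal{C}r^2$ are the crucial inputs that force the forward differential of $r$ to have large co-norm, overpowering the polynomial decay of $[\ln(2+r)]^{-1/n-1}(2+r)^{-1}$; the logarithmic normalization in $u$ is precisely tuned so that $u$ itself remains integrable (giving $u\in W^{1,p}_0$) while $-u$ fails. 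I would present the divergence as a one-dimensional integral comparison along geodesic rays, invoking Theorem~\ref{bascivolurcompar} to pass from the $(r,y)$-integral back to an honest lower bound, and conclude.
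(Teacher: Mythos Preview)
Your proposal is correct and follows essentially the same approach as the paper. For part~\eqref{linerK=01} the paper simply invokes Proposition~\ref{sobolevspaceline} (the contrapositive of your reversibility argument), and for part~\eqref{linerK=02} the paper uses exactly your test function $u=-[\ln(2+r)]^{-1/n}$, the co-metric bound from Lemma~\ref{lemK=0Fstar}, the $S$-curvature estimate from Lemma~\ref{K=0nabla}, and the volume comparison Theorem~\ref{bascivolurcompar} in polar coordinates to force $\int_\Omega F^{*p}(-{\dd}u)\,{\dd}x=+\infty$; the only detail you left implicit is the elementary observation $[\ln(2+r)]^{-p(1+1/n)}\geq (2+r)^{-\alpha p(1+1/n)}$ for large $r$ and small $\alpha>0$, which converts the logarithmic decay into a power allowing the divergence check $p-\alpha p(1+\tfrac1n)-1>0$ for $p>1$.
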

\begin{proof}
{\bf \eqref{linerK=01}.}
 Since $(\Omega,F)$ is complete, Theorem~\ref{thmK=0globalintro}/\eqref{K=0additoncomp}(1) implies $\lambda_F(\Omega) < +\infty$.
 Proposition~\ref{sobolevspaceline} then ensures that $W^{1,p}_0(\Omega,F,\mathscr{L}^n)$ is a vector space.

{\bf \eqref{linerK=02}. }
Let $(r,y)$ denote the polar coordinate system around $\mathbf{0} \in \Omega$, and let $\gamma_y(t)$ be a unit-speed geodesic with $\gamma_y'(0)=y\in S_{\mathbf{0}}\Omega$. Since $(\Omega,F)$ is a Cartan--Hadamard manifold, we have $i_y = +\infty$ and hence
\[
r(\gamma_y(t))=d_F(\mathbf{0},\gamma_y(t))=t,\quad \forall t\in [0,+\infty).
\]
Equations
\eqref{distsdef}$_2$, \eqref{geommeaingofr}, and \eqref{Socntk=0} then yield
\begin{align*}
\tau(\gamma_y(r),{\gamma}_y'(r))-\tau(\mathbf{0},y)=\int^r_0 \mathbf{S}(\gamma_y(t),{\gamma}_y'(t)){\dd}t \leq \int^r_0 \frac{n+1}{\varrho +  t}{\dd}t=  (n+1)\ln\left(\frac{\varrho + r}{\varrho}\right),
\end{align*}
where $\varrho$ is the positive constant from Lemma~\ref{K=0nabla}.
 Theorem~\ref{bascivolurcompar} (with $k=0$) then gives
\begin{equation}\label{K=0sigmaall}
\hat{\sigma}_o(r,y)  \geq    e^{-\tau\big(\gamma_y(r),{\gamma}_y'(r)\big)}  r^{n-1}\geq e^{-\tau(\mathbf{0},y)} \frac{\varrho^{n+1}r^{n-1}}{(\varrho + r)^{n+1}},\quad \forall r\in (0,+\infty),\ \forall y\in S_{\mathbf{0}}\Omega.
\end{equation}

Define
$u(x): = - [\ln(2+r(x))]^{-\frac{1}{n}}$. By Theorem \ref{thmK=0globalintro}/\eqref{K=0additoncomp}(2) and Lemma \ref{SobolevLemma1}, we have   $u\in W^{1,p}_0(\Omega,F,\mathscr{L}^n)$. To complete the proof, we show that $-u\notin W^{1,p}_0(\Omega,F,\mathscr{L}^n)$, i.e., $F^*(-{\dd} u)\notin L^p(\Omega,\mathscr{L}^n)$.
 Lemma \ref{lemK=0Fstar}  provides a constant $\mathcal{C}>0$ such that
 \begin{align}\label{K=0Fstar>}
 F^*(  -{\dd} u) = \frac{\partial u}{\partial r} F^*( -{\dd} r) \geq  \frac{\mathcal{C} r^2}{n(2+r)} \big[\ln(2+r)\big]^{-(1+\frac{1}{n})} \quad  \text{ in } \Omegao.
 \end{align}
For
$p>1$, choose $\alpha \in (0, 1)$ such that
\begin{equation}\label{K=0palphscondion}
p-\alpha p\left(1+\frac{1}{n}\right)-1>0.
\end{equation}
There exists  $R_\alpha\geq \max\{2,\varrho\}$ such that
for all $r \in (R_\alpha, +\infty)$,
\begin{equation}\label{K=0estimateall}
\left[ \ln \left( 2+r \right)   \right]^{-p(1+\frac{1}{n})} \geq  \left( 2+r \right)^{-\alpha p(1+\frac1n)}.
\end{equation}

Now it follows by \eqref{inffexprexx},   \eqref{cocont}, \eqref{K=0sigmaall}--\eqref{K=0estimateall}  that
\begin{align*}
 \int_\Omega F^{*p}( -{\dd} u) {\dd} x
 &\geq \left(\int_{S_{\mathbf{0}}\Omega}e^{-\tau(\mathbf{0},y)}{\dd}\nu_{\mathbf{0}}(y) \right)\left( \int^{+\infty}_0 \frac{\mathcal{C}^p \varrho^{n+1} r^{n-1+2p}}{n^p(\varrho + r)^{n+1}(2+r)^p}  [\ln(2+r)]^{-p(1+\frac{1}{n})} {\dd}r \right)\\
 &\geq
  \frac{ \mathcal{C}^p\varrho^{n+1}\mathscr{I}_{\mathscr{L}^n}(\mathbf{0})}{n^p}  \int^{+\infty}_{R_\alpha} \frac{  r^{n-1+2p} \left( 2+r \right)^{-\alpha p(1+\frac{1}{n})-p}}{(\varrho + r)^{n+1}}  {\dd}r\\
  & \geq
 \frac{ \mathcal{C}^p\varrho^{n+1}\mathscr{I}_{\mathscr{L}^n}(\mathbf{0})}{ 2^{\alpha p(1+\frac1n)+p} n^p}  \int^{+\infty}_{R_\alpha} \frac{  r^{n-1+p -\alpha p(1+\frac{1}{n})} }{ (2r)^{n+1}}  {\dd}r
 =+\infty,
\end{align*}
which shows  $F^*(-{\dd} u)\notin L^p(\Omega,\mathscr{L}^n)$.
\end{proof}

\subsection{ Sobolev spaces for $\mathbf{K}=-1$}

In this subsection, we study  Sobolev spaces over projectively flat Finsler manifolds with $\mathbf{K}=-1$ and prove Theorem~\ref{thmSobolevIntro}.

\begin{lemma}\label{lemK=-1Fstarall}
Let $(\Omega,F)$ be a forward complete but not backward complete projectively flat Finsler manifold with $\mathbf{K}=-1$. Then there exist  a constant $\mathcal {C}>0$ and a non-empty open subset $\mathcal {V}\subset \Omegao$ such that
\[
F^*(x, - {\dd}r) \geq\mathcal{C} e^{2 r},\quad \forall  x\in \mathcal {V}.
\]
\end{lemma}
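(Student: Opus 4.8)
The plan is to follow the strategy of Lemma~\ref{lemK=0Fstar}: estimate $F^{*}(x,-{\dd}r)$ from below by testing the covector $-{\dd}r$ against the vector $-x\in T_x\Omega$, and then control the numerator and the denominator $F(x,-x)$ separately. First I would fix the structure. Since $(\Omega,F)$ is forward complete, projectively flat with $\mathbf{K}=-1$ but not backward complete, Theorem~\ref{thmK=-1globalintro} shows that $\Omega=\{\phi+\psi<1\}$ is a bounded strictly convex domain, where $\psi(y):=F(\mathbf{0},y)$ is a Minkowski norm and $\phi(y):=P(\mathbf{0},y)$; moreover, by Theorem~\ref{lemK=-1dx_0xdx_0},
\[
r(x)=\frac{1}{2}\ln\frac{1-\phi(x)+\psi(x)}{1-\phi(x)-\psi(x)},\qquad d_F(x,\mathbf{0})=\frac{1}{2}\ln\frac{1+\phi(-x)+\psi(-x)}{1+\phi(-x)-\psi(-x)},\qquad x\in\Omegao.
\]
The open set $\mathcal{V}$ would be produced as follows: backward incompleteness together with Proposition~\ref{completenessofprojeflat} yields a sequence $x_i\in\Omega$ with $d_{\E}(x_i,\partial\Omega)\to 0$ and $d_F(x_i,\mathbf{0})$ bounded; passing to a subsequence (compactness of $\overline{\Omega}$) gives $x_i\to x_*\in\partial\Omega$, and boundedness of $d_F(x_i,\mathbf{0})$ forces $1+\phi(-x_*)-\psi(-x_*)>0$, since otherwise the displayed ratio would diverge. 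By continuity of $\phi,\psi$ one may then choose a bounded open set $\mathcal{V}\subset\Omega$ with $x_*\in\overline{\mathcal{V}}$ and $\mathbf{0}\notin\overline{\mathcal{V}}$ on which $1+\phi(-x)-\psi(-x)\ge\varepsilon>0$, hence $d_F(x,\mathbf{0})\le D$ for a constant $D$, while $\psi(x)$ is bounded below and $1-\phi(x)+\psi(x)$ is bounded above by positive constants.

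Second, for $x\in\mathcal{V}$ I would estimate the numerator. By the duality inequality \eqref{dualmetric}, $F^{*}(x,-{\dd}r)\ge\langle-x,-{\dd}r\rangle/F(x,-x)$, and a direct identification gives $\langle-x,-{\dd}r\rangle=x^{i}r_{x^{i}}(x)$. Differentiating the distance formula for $r$ and applying Euler's theorem~\eqref{Eulerhter} to the positively $1$-homogeneous functions $\phi$ and $\psi$ yields
\[
x^{i}r_{x^{i}}(x)=\frac{\psi(x)}{(1-\phi(x))^{2}-\psi(x)^{2}}=\frac{\psi(x)\,e^{2r(x)}}{(1-\phi(x)+\psi(x))^{2}},
\]
where the last equality uses $e^{2r}=(1-\phi+\psi)/(1-\phi-\psi)$; on $\Omega$ both denominators are positive, as follows from inspecting the three admissible cases (2)--(4) of Theorem~\ref{thmK=-1globalintro}.

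Third, I would bound $F(x,-x)$ from above on $\mathcal{V}$. Because $F$ is projectively flat and $(\Omega,F)$ is forward complete, the minimal geodesic from $x$ to $\mathbf{0}$ is the straight segment $[\,x,\mathbf{0}\,]$ traversed in the direction $-x$; writing it in unit speed as $\eta(s)=x+g(s)\,(-x/F(x,-x))$ with $g(0)=0$, $g'(0)=1$, the requirement $\eta(d_F(x,\mathbf{0}))=\mathbf{0}$ gives $F(x,-x)=g(d_F(x,\mathbf{0}))$. By Theorem~\ref{lemgeodesicK} with $\mathbf{K}=-1$, the function $g$ is one of the three functions in \eqref{fspecialK=-1}; each is increasing in $s$, and for fixed $s$ is monotone in the parameter $c_{x,y}$ over its admissible range, which gives the uniform bound $g(s)\le\tfrac12(e^{2s}-1)$. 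Combined with $d_F(x,\mathbf{0})\le D$ this produces $F(x,-x)\le\tfrac12(e^{2D}-1)=:C_D$. Assembling the three steps,
\[
F^{*}(x,-{\dd}r)\ \ge\ \frac{\psi(x)\,e^{2r(x)}}{(1-\phi(x)+\psi(x))^{2}\,F(x,-x)}\ \ge\ \mathcal{C}\,e^{2r(x)},\qquad x\in\mathcal{V},
\]
where $\mathcal{C}>0$ depends only on the lower bound of $\psi$, the upper bound of $1-\phi+\psi$ on $\mathcal{V}$, and on $C_D$.

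The main obstacle, compared with the $\mathbf{K}=0$ situation, is that here backward incompleteness is a \emph{boundary} phenomenon that need not be uniform over $\partial\Omega$: the estimate genuinely fails where $d_F(\cdot,\mathbf{0})$ blows up (which can happen near part of $\partial\Omega$ precisely in case~\eqref{PlessFK=-12} of Theorem~\ref{thmK=-1globalintro}, while for Funk-type data $d_F(\cdot,\mathbf{0})$ stays bounded on all of $\Omega$). Thus one cannot expect the inequality on all of $\Omegao$, and the crux is to isolate the correct non-empty open set $\mathcal{V}$ and, on it, to obtain a \emph{uniform} upper bound for $F(x,-x)$ — which is exactly what the geodesic analysis of the third step delivers. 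A secondary technical point is verifying the monotonicity of the solutions in \eqref{fspecialK=-1} in the parameter $c_{x,y}$, which is what makes the bound on $F(x,-x)$ depend on $D$ alone.
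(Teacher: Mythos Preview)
Your argument is correct and follows a genuinely different route from the paper's.  Both proofs test $-{\dd}r$ against $-x$ and obtain the same closed form for $\langle -x,-{\dd}r\rangle$; the divergence is in how $F(x,-x)$ is handled.  The paper solves the functional equations $\Phi_\pm(x,-x)=(\phi\pm\psi)(-x+x\Phi_\pm(x,-x))$ directly to obtain the closed formula $F(x,-x)=\psi(-x)/[(1+\phi(-x)-\psi(-x))(1+\phi(-x)+\psi(-x))]$, and then splits into the two non--backward--complete regimes of Theorem~\ref{globalK=-1}: when $P\ge F$ one gets the bound on all of $\Omegao$ with constant $\lambda_\psi^{-1}$, and only in the case $-F<P<F$ does one need to restrict to a neighbourhood $\mathcal V$, produced by comparing $\Omega$ with the auxiliary domain $\Omega_-$ via Corollary~\ref{corK=-1S}.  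Your alternative bypasses the explicit computation of $F(x,-x)$ entirely: you read off $F(x,-x)=g(d_F(x,\mathbf{0}))$ from the unit--speed parametrisation of the segment $[x,\mathbf{0}]$, and the elementary inequality $g(s)\le\tfrac12(e^{2s}-1)$ (valid for all solutions in~\eqref{fspecialK=-1}, obtained by monotonicity in the parameter $c$) turns this into a uniform bound once $d_F(\cdot,\mathbf{0})\le D$ on $\mathcal V$.  Your construction of $\mathcal V$ from Proposition~\ref{completenessofprojeflat} is also more direct than the paper's $\Omega\subsetneq\Omega_-$ argument and works uniformly, without the case split.  The trade--off is that the paper's explicit formula yields sharper information (in particular the global estimate in the $P\ge F$ case), while your approach is cleaner and more conceptual, needing only the geodesic ODE rather than the $\Phi_\pm$--machinery.
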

\begin{proof} Set $\phi(y):=P(\mathbf{0},y)$ and $\psi(y):=F(\mathbf{0},y)$.
By  Theorem~\ref{thmK=-1globalintro}, we have
\begin{equation}\label{bask=-1equat}
(\phi\pm \psi)|_{\Omega}<1  , \quad F = \frac{1}{2}(\Phi_{+} - \Phi_{-}), \quad P = \frac{1}{2}(\Phi_{+} + \Phi_{-}),
\end{equation}
where  $\Phi_\pm$ are the unique solutions to
\begin{equation}\label{K=-1FstarPhi_pm}
\Phi_{\pm} = (\phi \pm \psi)(y + x \Phi_{\pm}).
\end{equation}

Using \eqref{K=-1FstarPhi_pm} and \eqref{bask=-1equat}$_1$, and proceeding as in the proof of  \eqref{baiscinequaho}, one can show that
\[
1 - \Phi_{\pm}(x, -x) > 0,\quad \forall x\in \Omegao.
\]
Taking $y = -x$ in \eqref{K=-1FstarPhi_pm} yields
\begin{align*}
\Phi_{+}(x, -x) &=(\phi +  \psi)(-x + x \Phi_{+}(x,-x))= [1 - \Phi_{+}(x, -x)] (\phi +  \psi)(-x), \\
 \Phi_{-}(x, -x) &=(\phi -  \psi)(-x + x \Phi_{-}(x,-x))= [1 - \Phi_{-}(x, -x)] (\phi -  \psi)(-x),
\end{align*}
which combined with \eqref{bask=-1equat}$_{2,3}$ give
\begin{align*} %\label{K=-1Phipm-x}
 \frac{(\phi +  \psi)(-x)}{1 + (\phi +  \psi)(-x)}&= \Phi_{+}(x, -x) =P(x,-x)+F(x,-x),\\
\frac{(\phi -  \psi)(-x)}{1+ (\phi - \psi)(-x)}&= \Phi_{-}(x, -x) =P(x,-x)- F(x,-x) .
\end{align*}
Therefore, we obtain
\begin{equation} \label{K=-1FstarF-x}
F(x, -x) = \frac{\psi(-x)}{(1+ \phi(-x) - \psi(-x))(1+ \phi(-x) + \psi(-x))}.
\end{equation}

On the other hand,
a direct calculation  analogous to \eqref{K=0minusdrx}, combined with \eqref{coreqK=-1dx_0xdxx_01122}$_1$, gives
\begin{align}
 \langle -x,- {\dd}r \rangle
  = \frac{\psi(x)}{(1 - \phi(x) - \psi(x))(1 - \phi(x) + \psi(x))}. \label{K=-1-dr-x}
\end{align}
Thus, it follows from \eqref{dualmetric}, \eqref{K=-1FstarF-x}, and \eqref{K=-1-dr-x} that
  for any $x\in \Omegao$,
\begin{align}
F^*(x,-{\dd}r)&=\sup_{y \in T_x\Omega \setminus \{0\}} \frac{ \langle y,- {\dd}r \rangle}{ F(x,y)}   \notag\\
&\geq  \frac{ \langle -x,- {\dd}r \rangle}{ F(x,-x)}
=  \frac{\psi(x)(1+ \phi(-x) - \psi(-x))(1+ \phi(-x) + \psi(-x))}{\psi(-x)(1- \phi(x) - \psi(x))(1- \phi(x) + \psi(x))}. \label{K=-1Fstar-dr}
\end{align}

According to cases (2)--(4) in Theorem~\ref{thmK=-1globalintro}/\eqref{K=-1additoncondtion},
the  remainder of the proof is divided into two cases.

\smallskip

\textbf{Case 1.} Suppose $P \geq F$ on $T\Omega\backslash\{0\}$. In this case, we  prove
\begin{equation}\label{K=-1Fstar(i)}
F^*(x, - {\dd}r) \geq  \frac{e^{2 r}}{\lambda_\psi},\quad \forall x\in \Omegao,
\end{equation}
where  $\lambda_\psi:=\sup_{y\in \mathbb{S}^{n-1}}\frac{\psi(-y)}{\psi(y)}\in [1,+\infty)$ is the reversibility of  $\psi$.

In fact, the assumption
$P\geq F$ implies $\phi(x)\geq \psi(x)$  for any $x \in \Rno$. Hence, we obtain
\[
\phi(-x) - \psi(-x)\geq 0\geq  - \phi(x) + \psi(x), \quad  \phi(-x) + \psi(-x)> 0 \geq -  \phi(x) + \psi(x).
\]
Combined this with \eqref{K=-1Fstar-dr}, \eqref{bask=-1equat}$_1$ (i.e., $1-\phi>\psi\geq 0$), and  \eqref{coreqK=-1dx_0xdxx_01122}$_1$, we get
\begin{align*}
F^*(x, - {\dd}r)\geq  \frac{(1 - \phi(x) + \psi(x))(1-  \phi(x) + \psi(x))}{\lambda_\psi (1- \phi(x) - \psi(x))(1- \phi(x) + \psi(x))}
=\frac{e^{2r}}{\lambda_\psi}. %\label{K=-1Fstardual}
\end{align*}

\smallskip

\textbf{Case 2.}
Suppose $-F< P< F$ on $T\Omega\backslash\{0\}$. In this case, we show that
there exist  a non-empty open domain $\mathcal {V} \subset \Omegao$ and a constant $C>0$ such that
\begin{equation}\label{cae2k=-1sol}
F^*(x, - {\dd}r) \geq C e^{2 r},\quad \forall  x\in \mathcal {V}.
\end{equation}

First,  the assumption together with homogeneity implies
\begin{equation}\label{phi-psi}
  \phi(x)-\psi(x)=P(\mathbf{0},x) -  F(\mathbf{0},x) < 0, \qquad \phi(-x) + \psi(-x)\geq 0, \quad \forall x\in \Omegao.
\end{equation}

On the other hand, since $\Phi_-$ is well-defined on $\Omega$,
  Proposition \ref{propMainEq}/\eqref{MainEqii} (with $\varphi:=\phi-\psi$) implies
\begin{equation*}
\Omega\subset \mathcal {D}(\Phi_-)  \subset  \{ x \in \mathbb{R}^n \,:\, -1 < \phi(-x) - \psi(-x) \leq 0 \}=: \Omega_{-}.
\end{equation*}
According to Corollary~\ref{corK=-1S}, we must have $\Omega\subsetneq  \Omega_-$.
Hence, there exists a point $x_0\in \partial\Omega \cap \Omega_-$ because $\Omega_-$ is star-like. Since both $\Omega$ and $\Omega_-$ are open in $\mathbb{R}^n$, one can choose a small Euclidean open ball $\mathbb{B}^n_\epsilon(x_0)$ such that
$\mathbf{0}\notin \mathbb{B}^n_\epsilon(x_0)\Subset \Omega_-$.
Now set $\mathcal {V}:=\Omega\cap \mathbb{B}^n_\epsilon(x_0)$. This construction of $\mathcal {V}$, together with  \eqref{phi-psi}$_1$, implies
\begin{equation}\label{munumer1}
\mu_1:=\inf_{x\in \mathcal {V}}\left[ \phi(-x)-\psi(-x)\right]\in (-1,0],\qquad \mu_2:=\sup_{x\in \mathcal {V}}\left[- \phi(x) + \psi(x)\right]\in [0,+\infty).
\end{equation}

Thus, for any $x\in \mathcal {V}\subset \Omega$, from \eqref{K=-1Fstar-dr}, \eqref{phi-psi}$_2$, \eqref{munumer1}, and \eqref{coreqK=-1dx_0xdxx_01122},  we have
\begin{align*}
F(x,-{\dd}r) & \geq  \frac{ (1+ \mu_1) }{\lambda_\psi(1+\mu_2)(1- \phi(x) - \psi(x))}\geq   \frac{(1+\mu_1)(1- \phi(x) + \psi(x))}{\lambda_\psi(1+\mu_2)^2 (1- \phi(x) - \psi(x))} = \frac{(1+\mu_1)}{ \lambda_\psi(1+\mu_2)^2}e^{2r},
\end{align*}
which  is exactly \eqref{cae2k=-1sol}.

Therefore, the proof is completed by \eqref{K=-1Fstar(i)} and \eqref{cae2k=-1sol}.
\end{proof}

\begin{remark}\label{Fderk=1-exprs}
By the construction of $\mathcal {V}$, there exist a sufficiently large $\delta>0$ and a  nonempty open subset set  $\mathcal {O}\subset S_{\mathbf{0}}\Omega$ such that
\[
\gamma_y(t)=\exp_{\mathbf{0}}(ty)\subset \mathcal {V},\quad \forall  t\in [\delta,+\infty),\ \forall y\in \mathcal {O}.
\]
Hence, there exists a constant $\mathcal {C}>0$ such that
\[
F^*(x, - {\dd}r) \geq\mathcal{C} e^{2 r},\quad \forall  x\in \left\{\gamma_y(t)\,:\, t\in [\delta,+\infty),\ y\in \mathcal {O}  \right\}.
\]
Note that $\mathcal {V}$ is   unbounded   in $(\Omega,F)$ while it is bounded in $(\mathbb{R}^n,|\cdot|)$.
\end{remark}

\begin{lemma} \label{K=-1nabla}
Let $(\Omega,F)$ be as in Lemma \ref{lemK=-1Fstarall} and define $\psi(y):=F(\mathbf{0},y)$ and $\phi:=P(\mathbf{0},y)$. Then the following hold:
\begin{enumerate}[\rm (i)]
\item \label{K=-1nabla_r}
the gradient of $r(x)$ is given by
\begin{equation}\label{K=-1eqnabla_r}
\nabla r(x) = \frac{(1-\phi(x) + \psi(x))(1 - \phi(x) - \psi(x))}{\psi(x)} x,\quad \forall  x\in \Omegao;
\end{equation}
\item \label{K=-1_S}
the $S$-curvature $\mathbf{S}$ of the Lebesgue measure $\mathscr{L}^n$ is given by
\begin{equation} \label{K=-1_SP}
\mathbf{S} =  (n+1) P,
\end{equation}
which satisfies for any $x\in \Omegao$,
\begin{equation*}% \label{K=-1SP}
 {\mathbf{S} (x,\nabla r(x))} = (n+1)\left\{  \frac{e^{2r(x)} -1}{e^{2r(x)} + 1} + \left( \frac{e^{2r(x)} + 1}{e^{2r(x)} - 1} - \frac{e^{2r(x)} -1}{e^{2r(x)} + 1} \right) \phi(x) \right\}.
\end{equation*}
In particular,
\begin{equation}\label{K=-1Sestimate}
 \lim_{r(x) \rightarrow +\infty} \mathbf{S}(x, \nabla r(x)  ) = n+1.
\end{equation}
\end{enumerate}
\end{lemma}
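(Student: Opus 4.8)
The plan is to establish parts \eqref{K=-1nabla_r} and \eqref{K=-1_S} in sequence, following closely the strategy used for the $\mathbf{K}=0$ case in Lemma~\ref{K=0nabla}, but with the distance formula \eqref{coreqK=-1dx_0xdxx_01122}$_1$ replacing \eqref{coreqd0xdx0}$_1$.

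For part \eqref{K=-1nabla_r}, I would fix $x\in\Omegao$, take polar coordinates $(r,y)=x$ around $\mathbf{0}$, and let $\gamma_y(t)=\exp_{\mathbf{0}}(ty)$ be the unit-speed geodesic with $\gamma_y'(0)=y\in S_{\mathbf{0}}\Omega$. Since $F$ is projectively flat, $\gamma_y(t)$ is a straight line through the origin, so $x^i=h(r,y)y^i$ for some nonnegative function $h$. Substituting into the distance formula $r=\frac12\ln\big(\frac{1-\phi(x)+\psi(x)}{1-\phi(x)-\psi(x)}\big)$ together with the homogeneity of $\phi,\psi$ gives an equation in $h$; solving it yields $h(r,y)$ explicitly (one finds $h = \frac{e^{2r}-1}{(e^{2r}+1)\psi(y)+(e^{2r}-1)\phi(y)}$ or an equivalent algebraic form), and hence $\gamma_y(t)$ explicitly. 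Then $\nabla r(x)=\gamma_y'(r)$ by \eqref{geommeaingofr}$_2$; differentiating the explicit expression for $\gamma_y(t)$ at $t=r$ and re-homogenizing from $y$ back to $x$ produces \eqref{K=-1eqnabla_r}. The algebra here is routine but needs care to convert between the $e^{2r}$-form and the $(1-\phi\pm\psi)$-form of the prefactor; this is the step most prone to sign errors.

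For part \eqref{K=-1_S}, the identity \eqref{K=-1_SP} is immediate: since the density of $\mathscr{L}^n$ is constant, \eqref{Scurvature} and \eqref{prjoactor} give $\mathbf{S}(x,y)=\partial_{y^m}(P(x,y)y^m)=(n+1)P(x,y)$, exactly as in \eqref{K=osp1}. To evaluate $\mathbf{S}(x,\nabla r(x))=(n+1)P(x,\nabla r(x))$, I would plug $y=\nabla r(x)$ from \eqref{K=-1eqnabla_r} into the defining relation $P=\frac12(\Phi_++\Phi_-)$ with $\Phi_\pm=(\phi\pm\psi)(y+x\Phi_\pm)$, using the homogeneity of $\phi\pm\psi$ to reduce to a linear equation in $P(x,\nabla r(x))$ — this mirrors the computation of \eqref{K=0Pnablar}. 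Combining with \eqref{coreqK=-1dx_0xdxx_01122}$_1$ to re-express $\phi(x),\psi(x)$ in terms of $r(x)$ should collapse the expression to the stated closed form involving $\frac{e^{2r}-1}{e^{2r}+1}$ and $\frac{e^{2r}+1}{e^{2r}-1}$. The limit \eqref{K=-1Sestimate} then follows since both $\frac{e^{2r}-1}{e^{2r}+1}\to1$ and $\frac{e^{2r}+1}{e^{2r}-1}\to1$ as $r\to+\infty$, so the coefficient of $\phi(x)$ tends to $0$ while $\phi(x)$ stays bounded (indeed $\phi+\psi<1$ on $\Omega$), leaving $\mathbf{S}(x,\nabla r(x))\to n+1$.

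The main obstacle I anticipate is purely computational bookkeeping in part \eqref{K=-1nabla_r}: inverting the logarithmic distance relation to get $h(r,y)$ in a usable form, and then matching the derivative $\gamma_y'(r)$ against the two equivalent algebraic presentations of the prefactor in \eqref{K=-1eqnabla_r}. There is no conceptual difficulty — every tool (projective flatness forcing straight-line geodesics, $\nabla r=\gamma_y'(r)$, Euler's theorem for homogeneity, and the already-derived distance formula) is in place — but one must be vigilant about the role of $e^{2r}$ versus $e^{-2r}$ and about which of the Cases (2)--(4) of Theorem~\ref{thmK=-1globalintro} one is in; fortunately, since the formulas \eqref{K=-1eqnabla_r}, \eqref{K=-1_SP} and the final limit hold uniformly across all non-backward-complete cases, no case split is actually needed in the statement, only in verifying that the various quantities (e.g.\ $1-\phi(x)\pm\psi(x)$) have the right sign along geodesics, which is guaranteed by $(\phi\pm\psi)|_\Omega<1$ from Theorem~\ref{thmK=-1globalintro}.
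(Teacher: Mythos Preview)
Your proposal is correct and follows essentially the same route as the paper's proof: the paper explicitly states that \eqref{K=-1eqnabla_r} and \eqref{K=-1_SP} follow from \eqref{coreqK=-1dx_0xdxx_01122}$_1$ by an argument analogous to Lemma~\ref{K=0nabla}/\eqref{nabla_r}, which is precisely your plan, and for $\mathbf{S}(x,\nabla r(x))$ the paper likewise writes $\nabla r(x)=\ell(x)x$, feeds it into the $\Phi_\pm$ equations (via $P+F=\Phi_+$ together with $F(x,\nabla r)=1$, which is equivalent to your direct use of $P=\tfrac12(\Phi_++\Phi_-)$), and then substitutes $\psi(x)=\tfrac{e^{2r}-1}{e^{2r}+1}(1-\phi(x))$. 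One small sharpening: for the limit \eqref{K=-1Sestimate} the boundedness of $\phi$ on $\Omega$ follows from $\Omega$ being bounded and $\phi$ being positively $1$-homogeneous, not merely from $\phi+\psi<1$ (which gives only an upper bound).
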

\begin{proof}
Both \eqref{K=-1eqnabla_r} and  \eqref{K=-1_SP} follow from \eqref{coreqK=-1dx_0xdxx_01122}$_1$ and an argument analogous to the  proof of
Lemma~\ref{K=0nabla}/\eqref{nabla_r}. We now prove the expression for $\mathbf{S} (x,\nabla r(x))$ in Statement \eqref{K=-1_S}.

In view of \eqref{K=-1eqnabla_r},  we  set $\nabla r(x) =: \ell(x) x$ for simplicity, where
\begin{equation}\label{hexpression}
\ell(x)=\frac{(1-\phi(x) + \psi(x))(1 - \phi(x) - \psi(x))}{\psi(x)} >0.
\end{equation}
Since $P+F\geq 0$ (see Theorem \ref{thmK=-1globalintro}), by \eqref{bask=-1equat} and \eqref{K=-1FstarPhi_pm}, we obtain
\begin{align*}
P(x, \nabla r(x)) + F(x, \nabla r(x)) &=(\phi+\psi)\Big( \nabla r(x) + x \big(P(x,  \nabla r(x))+ F(x,\nabla r(x))\big) \Big)\\
&= (\phi+\psi)\Big( \ell(x) x + x \big(P(x,  \nabla r(x))+ F(x,\nabla r(x))\big) \Big) \\
&= \big( \ell(x) + P(x,  \nabla r(x))+ F(x,\nabla r(x)) \big) (\phi+\psi)(x).
\end{align*}
Combining this with \eqref{hexpression} and the identity
 $F(x, \nabla r(x)) = 1$ yields
\begin{align}\label{K=-1Pnablar}
P(x,  \nabla r(x)) = \frac{\phi(x) + \psi^2(x) - \phi^2(x)}{\psi(x)}.
\end{align}

On the other hand,
  a direct computation using \eqref{coreqK=-1dx_0xdxx_01122}$_1$ gives
\begin{equation*} %\label{K=-1phipsir}
\psi(x) = \frac{e^{2r} -1}{e^{2r} + 1}(1-\phi(x)).
\end{equation*}
Substituting this into \eqref{K=-1Pnablar}, we have
\begin{align}\label{K=-1Pnablar1}
P(x,  \nabla r(x)) = \frac{e^{2r} -1}{e^{2r} + 1} + \left( \frac{e^{2r} + 1}{e^{2r} - 1} - \frac{e^{2r} -1}{e^{2r} + 1} \right) \phi(x).
\end{align}

%Since $\mathbf{S} =  (n+1) P$ by \eqref{K=-1_SP}, it follows that $\mathbf{S} (\nabla r(x)) = (n+1) P(x, \nabla r(x))$.
By  Theorem~\ref{thmK=-1globalintro}, $\Omega\subset \mathbb{R}^n$ is a bounded domain, so $\sup_{x\in \Omega} |\phi(x)| < +\infty$. Therefore, \eqref{K=-1Pnablar1} and  \eqref{K=-1_SP}  imply
$\lim_{r(x) \rightarrow  +\infty} \mathbf{S} (x,\nabla r(x)) = n+1$, which
 completes the proof of \eqref{K=-1Sestimate} and hence the lemma.
\end{proof}

\begin{theorem}\label{thmK=-1Sobolevall}
Let $(\Omega,F)$ be an $n$-dimensional  forward complete projectively flat Finsler manifold with $\mathbf{K}=-1$.
Then for any  $p \in (1, +\infty)$, the following statements hold:
\begin{enumerate}[\rm (i)]
\item\label{linerK=-11} if $(\Omega,F)$ is backward complete, then $W^{1,p}_0(\Omega,F,\mathscr{L}^n)$  is a vector space;

\item\label{linerK=-12} if $(\Omega,F)$ is not backward complete, then $W^{1,p}_0(\Omega,F,\mathscr{L}^n)$  is not a vector space.
\end{enumerate}
 \end{theorem}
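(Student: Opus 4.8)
For part (i), I plan to argue directly from the reversibility dichotomy of the classification. Theorem~\ref{thmK=-1globalintro} tells us that a backward complete $(\Omega,F)$ must fall into case (1) of its condition (ii), so $\lambda_F(\Omega)=1<+\infty$. The contrapositive of Proposition~\ref{sobolevspaceline} then immediately gives that $W^{1,p}_0(\Omega,F,\mathscr{L}^n)$ is a vector space for every $p\in(1,+\infty)$.

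For part (ii), the strategy follows the blueprint of Theorem~\ref{thmK=0Sobolevall}(ii): produce a single function $u\in W^{1,p}_0(\Omega,F,\mathscr{L}^n)$ whose negative $-u$ does not belong to the space, so the space is not closed under multiplication by $-1$. By Theorems~\ref{thmK=-1globalintro} and~\ref{globalK=-1}, $\Omega$ is a bounded strictly convex domain, hence $\mathscr{L}^n(\Omega)<+\infty$; since $(\Omega,F)$ is not backward complete it cannot be compact, so $(\Omega,F,\mathscr{L}^n)$ is a noncompact $\FMMM$ of finite total measure and Lemma~\ref{SobolevLemma1} applies. I will take $u:=-[\ln(2+r)]^{-1/n}$ with $r(x):=d_F(\mathbf{0},x)$, so that $u\in W^{1,p}_0(\Omega,F,\mathscr{L}^n)$ for all $p$. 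Because $(\Omega,F)$ is a Cartan--Hadamard manifold, $r$ is smooth on $\Omegao$ and $i_y=+\infty$, and along a unit-speed geodesic $\gamma_y$ from $\mathbf{0}$ one has $r(\gamma_y(t))=t$ and $\gamma_y'(t)=\nabla r|_{\gamma_y(t)}$. Since ${\dd}u$ is a positive multiple of ${\dd}r$ on $\Omegao$, one has $F^{*}(-{\dd}u)=\tfrac{\partial u}{\partial r}\,F^{*}(-{\dd}r)$ with $\tfrac{\partial u}{\partial r}=\tfrac{1}{n(2+r)}[\ln(2+r)]^{-1-1/n}$, and it remains to show $F^{*}(-{\dd}u)\notin L^p(\Omega,\mathscr{L}^n)$.

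The key estimate assembles three ingredients. First, Lemma~\ref{lemK=-1Fstarall} together with Remark~\ref{Fderk=1-exprs} provides a constant $\mathcal{C}>0$, a threshold $\delta>0$ and a nonempty open set $\mathcal{O}\subset S_{\mathbf{0}}\Omega$ with $F^{*}(\gamma_y(t),-{\dd}r)\ge\mathcal{C}e^{2t}$ for all $t\ge\delta$ and $y\in\mathcal{O}$; fixing a nonempty open $\mathcal{O}'$ with $\overline{\mathcal{O}'}\subset\mathcal{O}$, one has $\nu_{\mathbf{0}}(\mathcal{O}')>0$. Second, the volume comparison Theorem~\ref{bascivolurcompar} with $k=-1$ gives $\hat{\sigma}_{\mathbf{0}}(r,y)\ge e^{-\tau(\gamma_y(r),\gamma_y'(r))}\sinh^{n-1}(r)$; writing $\tau(\gamma_y(r),\gamma_y'(r))=\tau(\gamma_y(\delta),\gamma_y'(\delta))+\int_\delta^r\mathbf{S}(\gamma_y(t),\gamma_y'(t))\,{\dd}t$ and using $\lim_{r(x)\to+\infty}\mathbf{S}(x,\nabla r(x))=n+1$ from Lemma~\ref{K=-1nabla} (so that for a fixed $\epsilon>0$, $\mathbf{S}(\gamma_y(t),\gamma_y'(t))\le n+1+\epsilon$ once $t\ge R_\epsilon$, uniformly in $y$) together with continuity of $\tau$ and $\mathbf{S}$ on the compact set $\{(\gamma_y(t),\gamma_y'(t)):t\in[\delta,R_\epsilon],\ y\in\overline{\mathcal{O}'}\}$, one obtains $\hat{\sigma}_{\mathbf{0}}(r,y)\ge C_\epsilon\,e^{-(2+\epsilon)r}$ for $r$ large and $y\in\overline{\mathcal{O}'}$. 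Third, inserting these into the polar-coordinate formula \eqref{inffexprexx} and integrating only over $t\ge\delta$, $y\in\mathcal{O}'$ yields
\[
\int_\Omega F^{*p}(-{\dd}u)\,{\dd}\mathscr{L}^n\ \ge\ C\int_{\delta}^{+\infty}(2+r)^{-p}\big[\ln(2+r)\big]^{-p(1+\frac1n)}\,e^{(2p-2-\epsilon)r}\,{\dd}r ;
\]
choosing $\epsilon:=p-1>0$ makes the exponent $2p-2-\epsilon=p-1>0$, so the integrand diverges and the integral is infinite. Hence $-u\notin W^{1,p}_0(\Omega,F,\mathscr{L}^n)$, and since $u$ lies in it, the space is not a vector space.

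I expect the main obstacle to be making the lower bound $\hat{\sigma}_{\mathbf{0}}(r,y)\ge C_\epsilon e^{-(2+\epsilon)r}$ genuinely uniform in $y$: Lemma~\ref{K=-1nabla} controls $\mathbf{S}(x,\nabla r(x))$ only pointwise as $r(x)\to+\infty$, and one must pass from this to a uniform bound on $\int_\delta^r\mathbf{S}(\gamma_y(t),\gamma_y'(t))\,{\dd}t$ and on $\tau(\gamma_y(\delta),\gamma_y'(\delta))$ over the angular set $\overline{\mathcal{O}'}$, so that the estimate factors out of the $\nu_{\mathbf{0}}$-integral. A secondary point is verifying that the geodesic cone $\{\gamma_y(t):t\ge\delta,\ y\in\mathcal{O}\}$ furnished by Remark~\ref{Fderk=1-exprs} simultaneously carries the $e^{2t}$ lower bound for $F^{*}(-{\dd}r)$ and has positive angular measure; once these are established, the remaining computations are routine and parallel the $\mathbf{K}=0$ argument of Theorem~\ref{thmK=0Sobolevall}.
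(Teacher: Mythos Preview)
Your proposal is correct and follows the same architecture as the paper's proof: part~(i) via finite reversibility and Proposition~\ref{sobolevspaceline}, part~(ii) by exhibiting a function in $W^{1,p}_0$ whose negative is not, using Lemma~\ref{lemK=-1Fstarall} and Remark~\ref{Fderk=1-exprs} for the lower bound $F^*(-{\dd}r)\ge\mathcal{C}e^{2r}$ on a geodesic cone, together with the volume comparison Theorem~\ref{bascivolurcompar} controlled by the $S$-curvature of Lemma~\ref{K=-1nabla}.

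The one tactical difference is the test function. The paper takes $u_\alpha=-e^{-\alpha r}$ with $\alpha\in(0,2-2/p)$, which is calibrated to the exponential scale of the problem and pairs directly with the \emph{explicit} pointwise bound $\mathbf{S}(x,\nabla r(x))<(n+1)\bigl(\tfrac{2e^{2r}}{e^{2r}-1}-1\bigr)$ derived from the formula in Lemma~\ref{K=-1nabla}; this inequality depends only on $r$, integrates exactly, and yields $\hat\sigma_{\mathbf 0}(r,y)\gtrsim e^{-2r}$ uniformly over all of $S_{\mathbf 0}\Omega$ without any $\epsilon$-buffer or compactness step. You instead reuse the logarithmic function $-[\ln(2+r)]^{-1/n}$ from the $\mathbf K=0$ argument and invoke only the limit $\mathbf S\to n+1$, which forces you to introduce $\epsilon$ and restrict to a compact angular set $\overline{\mathcal O'}$ for uniformity. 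Both routes work; the paper's choice simply dissolves the uniformity issue you flag as the main obstacle, at the cost of a $p$-dependent test function.
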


\begin{proof}  By Theorem~\ref{thmK=-1globalintro}/\eqref{K=-1additoncondtion}(1), $F$ is reversible  when $(\Omega,F)$ is complete. Hence,  Statement~\eqref{linerK=-11} follows directly from Proposition~\ref{sobolevspaceline}.

We now prove Statement~\eqref{linerK=-12}.
 Given $p\in (1,+\infty)$,
define
$u_{\alpha}(x):=-e^{-\alpha r(x)}$, where $\alpha\in (0, 2- {2}/{p} )$ is a constant. Theorem~\ref{thmK=-1globalintro}/\eqref{K=-1bascicondition1}  implies $\mathscr{L}^n(\Omega)< +\infty$. Combining this with
Lemma~\ref{SobolevLemma1} yields $u_\alpha \in W^{1,p}_0(\Omega,F,\mathscr{L}^n)$. To complete the
proof, it suffices to show
\begin{equation}\label{w1puibfty}
\|-u_\alpha \|^p_{W^{1,p}_{\mathscr{L}^n}} = +\infty.
\end{equation}

To establish \eqref{w1puibfty}, first note that \eqref{bask=-1equat}$_1$ gives
\[
\phi(x)\leq \phi(x)+\psi(x)<1,\quad \forall  x\in \Omegao,
\]
Together with Lemma~\ref{K=-1nabla}/\eqref{K=-1_S}, this implies
\begin{align}
\mathbf{S}(x,\nabla r(x))&=    (n+1)\left\{  \frac{e^{2r} -1}{e^{2r} + 1} + \left( \frac{e^{2r} + 1}{e^{2r} - 1} - \frac{e^{2r} -1}{e^{2r} + 1} \right) \phi(x) \right\}<  (n+1)\left( \frac{2e^{2r} }{e^{2r} - 1} -1 \right).\label{K=-1_Snabla2}
\end{align}

Let $(r,y)$ denote the polar coordinate system around $\mathbf{0}$. Note that $\mathfrak{i}_\mathbf{0}=+\infty$ since $(\Omega,F)$ is a Cartan--Hadamard manifold.
By  Remark~\ref{Fderk=1-exprs},
 there exist  constants $\mathcal {C},\delta>0$ and a subset $\mathcal {O} \subset S_{\mathbf{0}}\Omega$ such that
\begin{equation}\label{K=-1Fstar<}
F^*(x, - {\dd}r) \geq\mathcal{C} e^{2 r},\quad \forall  x\in \big\{ (r,y):\, r\in [\delta,+\infty),\ y\in \mathcal {O}  \big\}=:\mathcal {V}\subset \Omega.
\end{equation}
Now set
\[
C_1:=C_1(\delta):=  -(n+1)\left[\ln(e^{2\delta} -1)-\delta\right],\qquad C_2:=C_2({\delta}):=\sup_{y\in S_\mathbf{0}\Omega} {\tau(\gamma_y(\delta),{\gamma}_y'(\delta))}< +\infty.
\]
For $r\in [\delta,+\infty)$,
it follows from \eqref{distsdef}$_2$, \eqref{geommeaingofr}$_2$  and \eqref{K=-1_Snabla2}  that for any $y\in S_{\mathbf{0}}\Omega$,
\begin{align*}
\tau\big(\gamma_y(r),{\gamma}_y'(r)\big)-C_2&\leq \tau\big(\gamma_y(r),{\gamma}_y'(r)\big)-\tau\big(\gamma_y(\delta),{\gamma}_y'(\delta)\big) =\int^r_\delta \mathbf{S}\big(\gamma_y(s),{\gamma}_y'(s)\big){\dd}s\\
&\leq \int^r_\delta (n+1)\left( \frac{2e^{2s} }{e^{2s} - 1} -1 \right){\dd}s
=
 (n+1)\left[ \ln(e^{2r} - 1) -r\right]+C_1. %\label{K=-1Sobotau}
\end{align*}
Combining this with  Theorem~\ref{bascivolurcompar} (for $k=-1$) yields
\begin{align}
\hat{\sigma}_{\mathbf{0}}(r,y) & \geq  e^{-\tau\big(\gamma_y(r),{\gamma}_y'(r)\big)}  \mathfrak{s}_{-1}^{n-1}(r)\geq  e^{-(C_1+C_2)} \frac{(e^{r} - e^{-r})^{n-1}e^{(n+1)r} }{2^{n-1} (e^{2r} -1)^{n+1}}\notag\\
&=   \frac{e^{2r}}{2^{n-1}e^{C_1+C_2} (e^{2r}-1)^2}\geq \frac{e^{-2r}}{2^{n-1}e^{C_1+C_2} },  \label{K=-1sigmaall}
\end{align}
for all $r\in [\delta, +\infty)$ and $y\in S_{\mathbf{0}}\Omega$.
Finally, from \eqref{inffexprexx}, \eqref{K=-1Fstar<}, and \eqref{K=-1sigmaall}, we obtain
\begin{align*}
 \int_\Omega F^{*p}(x, -{\dd} u_\alpha) {\dd} x &\geq  \int_{\mathcal {V}} F^{*p}(x, -{\dd} u_\alpha) {\dd} x
 =\int_{y\in \mathcal {O}}\left( \int^{+\infty}_{\delta} F^{*p}(x, -{\dd} u_\alpha) \,\hat{\sigma}_{\mathbf{0}}(r,y){\dd}r  \right){\dd}\nu_{\mathbf{0}}(y)\\
 &=\int_{y\in \mathcal {O}}\left( \int^{+\infty}_{\delta} \left(\frac{\partial u_\alpha}{\partial r}\right)^p F^{*p}(x, -{\dd} r)\, \hat{\sigma}_{\mathbf{0}}(r,y){\dd}r  \right){\dd}\nu_{\mathbf{0}}(y)\\
 &\geq \nu_{\mathbf{0}}({\mathcal {O}}) \,   \int^{+\infty}_{\delta} (\alpha e^{-\alpha r} )^p (\mathcal{C} e^{2 r})^p \frac{e^{-2r}}{2^{n-1}e^{C_1+C_2} }{\dd}r    = +\infty,
\end{align*}
where the last equality follows by $\alpha\in (0, 2- {2}/{p} )$. Therefore, \eqref{w1puibfty} holds, which completes the proof.
\end{proof}

We conclude this subsection by proving Theorem \ref{thmSobolevIntro}.

\begin{proof}[Proof of Theorem \ref{thmSobolevIntro}]
The equivalence \eqref{soli1} $\Leftrightarrow$ \eqref{soli2} follows from Theorems \ref{thmK=0globalintro} and \ref{thmK=-1globalintro}. The equivalences \eqref{soli2} $\Leftrightarrow$ \eqref{soli3} and \eqref{soli2} $\Leftrightarrow$ \eqref{soli4}, in turn, follow directly from Theorems \ref{thmK=0Sobolevall} and \ref{thmK=-1Sobolevall}.
\end{proof}

\vskip 10mm
\section{Asymmetric metrics on Euclidean spaces}\label{sectiondistance}
For projectively flat Finsler metrics of constant flag curvature, Funk \cite{Funk1} established a relation between the Finslerian distance function and the Euclidean distance in the two-dimensional case. Berwald \cite{Be2} later extended this result to higher dimensions.

In this section, we investigate a broader class of structures: asymmetric metrics on Euclidean spaces induced by pseudo-norm functions.
For related work--some employing different terminology--we refer to \cite{BBI,BM,KZ,RMS,Ohta1,ShenLecture}.

Let $\Omega \subset \mathbb{R}^n$ be a convex domain.
A continuous function $d:\Omega\times \Omega$ is called an {\it  asymmetric metric} on $\Omega$ if it satisfies the following conditions for all $x_1,x_2,x_3\in \Omega$:
\begin{enumerate}[\rm (i)]
\item  $d(x_1,x_2)\geq 0$, with equality if and only if $x_1=x_2$;

\item  $d(x_1,x_3)\leq d(x_1,x_2)+d(x_2,x_3)$.

\end{enumerate}
An asymmetric metric $d$ is said to be {\it smooth} if it is smooth on $(\Omega\times \Omega)\backslash \Delta$, where
\[
\Delta:=\{(x,y)\in \Omega\times \Omega\,:\, x=y \}.
\]
Note that $d$ cannot be smooth on the entire $\Omega\times \Omega$, even when induced by a Riemannian metric; a basic example is the Euclidean distance $d_{\E}(x_1,x_2):=|x_2-x_1|$.

Given a   continuous curve $\gamma:[0,1]\rightarrow \Omega$, the {\it length} of $\gamma$ induced by $d$ is defined as
\begin{equation}\label{lengthstru1}
 L_d(\gamma) :=\sup\left\{ \sum_{i=1}^N d\big( \gamma(t_{i-1}),\gamma(t_i) \big)
 \,\middle|\, N \in \mathbb{N},\ a=t_0<\cdots <t_N=b \right\}.
\end{equation}
We say that
 $d$ is  {\it projectively flat} on $\Omega$ if  the minimizing curve between two points is
a straight line segment, that is, $$d(x_1,x_2)=L_d(\gamma),$$
where $\gamma$ is the straight segment from $x_1$ to $x_2$.

A continuous
function $F:T\Omega\rightarrow [0,+\infty)$ is called a {\it pseudo-norm function} if $\|\cdot\|_x:=F(x,\cdot)$ is a pseudo-norm on $T_x\Omega$ for every $x\in \Omega$ (see Section \ref{elemFinslergeo}). A pseudo-norm function $F$ is said to be {\it smooth} if $F\in C^\infty(T\Omega\backslash\{0\})$.

The {\it length} $L_F(c)$ of a piecewise smooth curve $c$ and the induced {\it distance} $d_F$ are defined analogously to the Finsler case (see \eqref{lengthinducedbyF} and \eqref{distanceinducedbyF}).
Then $d_F$ is an asymmetric metric on $\Omega$ satisfying the {\it Busemann--Mayer theorem} (cf.~Bao et al. \cite[pp.\,160--161]{BCS}, Busemann--Mayer \cite[Theorem 4.3, p.\,186]{BM}), i.e.,
 \begin{equation*}%\label{BUMY}
 F(x,y)=\lim_{t\rightarrow 0^+}\frac{d_F(\gamma(0),\gamma(t))}{t}=\lim_{t\rightarrow 0^-}\frac{d_F(\gamma(t),\gamma(0))}{-t},
 \end{equation*}
 for any smooth curve $\gamma:(-\epsilon,\epsilon)\rightarrow \Omega$ with $\gamma'(0)=y\in T_x\Omega$.  Moreover, it follows by \cite[Theorem 4.2, p.\,186]{BM} that $L_F(\gamma)=L_{d_F}(\gamma)$
 for every  piecewise smooth curve $\gamma:[0,1]\rightarrow \Omega$. Hence, we say that $F$ is {\it projectively flat} if $d_F$ is so, which coincides with the definition in the Finsler setting.

However, not every asymmetric metric $d$ on $\Omega$ is induced by a  pseudo-norm function.
\begin{example}\label{asymetrexample}
On $\mathbb{R}^n$, define a function $d$ by
\[
d(x_1,x_2):=|x_2-x_1|+ \langle a ,x_2-x_1\rangle+\sqrt{|x_2-x_1|},
\]
where $a\in \mathbb{R}^n$ is a fixed vector with $|a|<1$.
One may verify that $d$ is a smooth asymmetric metric. However, it
cannot be induced by a pseudo-norm function, since it fails to satisfy the Busemann--Mayer theorem. Indeed, $\lim_{t\rightarrow 0^+}\frac{d(\mathbf{0}, yt)}{t}=+\infty$ for any $y\in \Rno:=\mathbb{R}^n\backslash\{\mathbf{0} \}$.
\end{example}

Following  \cite{BM},  we  study the distance $d_F$ induced by a  pseudo-norm function $F$ under the {\it weakest}  Lipschitz condition: for every $x\in \Omega$, there exist a neighbourhood $U(x)$ of $x$  and a constant $C=C(x)>0$ such that
\begin{equation}\label{weaklIP22}
\left|F(x_1,y)-F(x_2,y)\right|\leq C |y||x_1-x_2|, \quad \forall  x_1,x_2\in U(x), \ \forall y\in \mathbb{R}^n.
\end{equation}
%{\color{blue}This condition holds for every weak Finsler metric. Moreover, we establish the following equivalence.}

\begin{theorem}\label{dandFequ} Let $d$ be a
  projectively flat asymmetric metric on a convex domain $\Omega\subset \mathbb{R}^n$. Then
   $d$ is induced by a pseudo-norm function $F$ satisfying \eqref{weaklIP22} if and only if,   for every $x\in \Omega$, there exist positive constants $r=r(x)$, $c=c(x)$, and $C=C(x)$ such that
\begin{align}\label{baseequ}
 c|x_2-x_1|\leq d(x_1,x_2)\leq C|x_2-x_1|, \qquad \left|d(x_1,x_1+y)-d(x_2,x_2+y) \right|\leq  C |y| |x_2-x_1|,
 \end{align}
for all $x_1,x_2,x_1+y,x_2+y\in  {\mathbb{B}^n_r(x)}$. In this case, $F$ is also projectively flat.
Moreover,  $d$ is smooth if and only if  $F$ is smooth.
\end{theorem}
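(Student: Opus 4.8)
\textbf{Proof plan for Theorem \ref{dandFequ}.}

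The plan is to prove the equivalence in both directions, then address the smoothness claim. For the forward direction ($F$ induces $d$, $F$ satisfies \eqref{weaklIP22} $\Rightarrow$ \eqref{baseequ}): fix $x\in\Omega$. Since $F(z,\cdot)$ is a pseudo-norm for each $z$, continuity of $F$ on the compact set $\overline{\mathbb{B}^n_r(x)}\times\mathbb{S}^{n-1}$ (for $r$ small enough that this closed ball lies in $\Omega$) gives constants $c,C>0$ with $c|v|\le F(z,v)\le C|v|$ for all $z\in\overline{\mathbb{B}^n_r(x)}$, $v\in\mathbb{R}^n$. Because $d=d_F$ is projectively flat, for $x_1,x_2\in\mathbb{B}^n_r(x)$ we have $d(x_1,x_2)=L_F(\ell)=\int_0^1 F(\ell(t),\dot\ell(t))\,dt$ along the straight segment $\ell(t)=x_1+t(x_2-x_1)$, whose image stays in the convex set $\mathbb{B}^n_r(x)$; the two-sided bound on $F$ immediately yields \eqref{baseequ}$_1$. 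For \eqref{baseequ}$_2$, write $d(x_1,x_1+y)=\int_0^1 F(x_1+ty,y)\,dt$ and $d(x_2,x_2+y)=\int_0^1 F(x_2+ty,y)\,dt$; subtracting and applying the Lipschitz bound \eqref{weaklIP22} pointwise (with $x_1+ty$, $x_2+ty$ both in a suitable $U(x)$, after possibly shrinking $r$) gives $|d(x_1,x_1+y)-d(x_2,x_2+y)|\le\int_0^1 C|y|\,|x_1-x_2|\,dt=C|y||x_2-x_1|$, as required. Projective flatness of $F$ is part of the hypothesis, so nothing further is needed there.

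For the converse direction ($d$ projectively flat, \eqref{baseequ} holds $\Rightarrow$ $d=d_F$ for a pseudo-norm $F$ satisfying \eqref{weaklIP22}): this is the substantive part and follows the Busemann--Mayer strategy. Define $F(x,y):=\lim_{t\to0^+}\frac{d(x,x+ty)}{t}$; one must show this limit exists. Fix $x$ and $y$; by projective flatness, $d(x,x+ty)=\int_0^t F_0(x+sy)\,ds$ formally — but we don't yet know the integrand exists, so instead argue directly: the function $t\mapsto d(x,x+ty)$ is superadditive in $t\ge0$ along the ray (by the triangle inequality, since concatenating the two subsegments of a straight segment is again the straight segment, giving in fact additivity: $d(x,x+ty)=d(x,x+sy)+d(x+sy,x+ty)$ for $0\le s\le t$ by projective flatness). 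From the additivity $d(x,x+(s+t)y)=d(x,x+sy)+d(x+sy,x+(s+t)y)$ together with the Lipschitz estimate \eqref{baseequ}$_2$ comparing $d(x+sy,x+sy+ty)$ with $d(x,x+ty)$, one shows $g(t):=d(x,x+ty)$ satisfies $|g(s+t)-g(s)-g(t)|\le C|y|\,|sy|\,t = C'st$, so $g(t)/t$ is Cauchy as $t\to0^+$ and the limit $F(x,y)$ exists; the sandwich \eqref{baseequ}$_1$ gives $c|y|\le F(x,y)\le C|y|$. Positive $1$-homogeneity of $F(x,\cdot)$ is immediate from the definition (reparametrize $t$), and the triangle inequality $F(x,y_1+y_2)\le F(x,y_1)+F(x,y_2)$ follows by comparing $d(x,x+t(y_1+y_2))$ with $d(x,x+ty_1)+d(x+ty_1,x+ty_1+ty_2)$ and using \eqref{baseequ}$_2$ to replace the second term's base point by $x$ up to an $O(t^2)$ error, then dividing by $t$ and letting $t\to0^+$. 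Thus $F$ is a pseudo-norm function; the Lipschitz property \eqref{weaklIP22} for $F$ is read off from \eqref{baseequ}$_2$ by dividing by $t$ and passing to the limit $t\to0^+$. It then remains to verify $d=d_F$: the inequality $d(x_1,x_2)\le d_F(x_1,x_2)=\inf_c\int F(c,\dot c)$ is standard (for any curve $c$, $d(x_1,x_2)\le L_d(c)$, and $L_d(c)=L_F(c)$ by the Busemann--Mayer length identity quoted in the text, applied once $F$ is known to be a pseudo-norm function), while the reverse inequality uses projective flatness: $d(x_1,x_2)=\int_0^1 F(\ell(t),\dot\ell(t))\,dt\ge d_F(x_1,x_2)$ along the straight segment $\ell$, where the integral representation of $d$ along a segment comes from the additivity of $d$ over subsegments plus the now-established differentiability $F(\ell(t),\dot\ell)=\lim_{h\to0^+}d(\ell(t),\ell(t+h))/h$ (a fundamental-theorem-of-calculus argument, using \eqref{baseequ} for uniform control). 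Projective flatness of $F$ is then automatic since $d=d_F$ and $d$ is projectively flat.

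For the smoothness equivalence: if $F$ is smooth, then $d=d_F$ is smooth off the diagonal by the smooth dependence of solutions of the geodesic equation on initial data (geodesics are straight lines here, parametrized via the projective factor as in the discussion after \eqref{Berequeq}), or more elementarily from the explicit integral $d(x_1,x_2)=\int_0^1 F(x_1+t(x_2-x_1),x_2-x_1)\,dt$, which is a smooth function of $(x_1,x_2)$ on $(\Omega\times\Omega)\setminus\Delta$ since the integrand is smooth there (the argument $x_2-x_1$ stays away from $\mathbf 0$). Conversely, if $d$ is smooth, then $F(x,y)=\lim_{t\to0^+}d(x,x+ty)/t$; to get smoothness of $F$ away from the zero section, note $F(x,y)=\partial_t\big|_{t=0^+}d(x,x+ty)$ and use the integral identity again in reverse: from $d$ smooth off $\Delta$ one shows $d(x,x+ty)=t\int_0^1 F(x+sty,y)\,ds$, differentiate in $t$ and in $x$; alternatively and cleanly, $F(x,y)=\partial_u\big|_{u=0}d(x,x+uy)$ equals, by the fundamental theorem of calculus applied to the smooth function $u\mapsto d(x,x+uy)$ on $u>0$ and its one-sided derivative at $0$, a limit of smooth functions with locally uniform $C^k$ bounds (from \eqref{baseequ} differentiated), hence is smooth on $T\Omega\setminus\{0\}$.

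\textbf{Main obstacle.} The crux is the converse direction: establishing that the limit defining $F(x,y)$ exists and that the resulting $F$ is genuinely a pseudo-norm function — in particular the triangle inequality in $y$ — and then closing the loop $d=d_F$. The delicate point is controlling the position-dependence errors (the $O(t^2)$ terms) uniformly, which is exactly what the two conditions in \eqref{baseequ} are designed to supply; marshalling them correctly, and invoking the Busemann--Mayer length identity $L_d=L_F$ only after $F$ has been shown to be a pseudo-norm function, is where care is needed. The projective-flatness hypothesis does a lot of work here by converting the abstract length functional $L_d$ into an honest integral along straight segments.
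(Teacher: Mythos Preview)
Your proposal is essentially correct, and in the converse direction it is more direct than the paper's argument. The paper does not establish $F(x,y)=\lim_{t\to0^+}d(x,x+ty)/t$ at every point from the outset; instead it invokes the metric-derivative result of Rossi--Mielke--Savar\'e to get existence of $|\gamma'|(t)$ only for $\mathscr L^1$-a.e.\ $t$ along each absolutely continuous curve, shows this value depends only on $(\gamma(t),\gamma'(t))$, constructs $F$ first on a dense subset of $T\Omega$, extends it continuously via a Cauchy estimate (the same \eqref{baseequ}$_2$ yields $|F(x_l,y_l)-F(x_m,y_m)|\le C|y_l|\,|x_m-x_l|+C|y_m-y_l|$), verifies the pseudo-norm axioms on the extension, and only then appeals to Busemann--Mayer to identify the extension with the pointwise limit everywhere. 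Your route---using projective flatness to get exact additivity of $g(t)=d(x,x+ty)$ along the ray, then \eqref{baseequ}$_2$ to obtain $|g(s+t)-g(s)-g(t)|\le C'st$ and hence convergence of $g(t)/t$---bypasses the a.e.\ step and the extension procedure entirely and is more elementary; the paper's approach, on the other hand, is framed for general curves and would adapt beyond the projectively flat setting.

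One place to tighten: your argument that $d$ smooth $\Rightarrow$ $F$ smooth is the weakest part. The phrase ``a limit of smooth functions with locally uniform $C^k$ bounds (from \eqref{baseequ} differentiated)'' does not work as written, since \eqref{baseequ} gives only Lipschitz control and $d$ is not smooth across the diagonal, so you cannot simply differentiate at $u=0$. The paper's device is clean and worth adopting: from $d(x,x+ty)=\int_0^t F(x+sy,y)\,ds$, differentiate in $t$ for $t>0$ to get $F(x+ty,y)=\partial_t\, d(x,x+ty)$, which is smooth in $(x,t,y)$ on $\Omega\times(0,\infty)\times\mathbb S^{n-1}$ because $d$ is smooth off $\Delta$; then observe that $(x,t,y)\mapsto(x+ty,ty)$ is a diffeomorphism onto $T\Omega\setminus\{0\}$, whence $F$ is smooth there.
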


\begin{proof}
{\bf The ``$\Rightarrow$'' part.} Suppose that $d$ is induced by a  pseudo-norm function $F$ satisfying \eqref{weaklIP22}. By the assumption, $F$ is projectively flat.
Consider the function
\[
\varphi(x,y):=\frac{F(x,y)}{|y|}, \quad \forall (x,y)\in T\Omega\backslash\{0\}\cong\Omega\times \Rno.
\]
Then $\varphi$ can be viewed as a continuous function on $\Omega\times \mathbb{S}^{n-1}$. Thus, for any $x\in
\Omega$, there exist positive constants $r=r(x)$, $c=c(x)$, and $C=C(x)$ such that
\begin{equation}\label{cnomr}
c\,|y|\leq F(z,y)\leq C\, |y|, \quad \forall  (z,y)\in {\mathbb{B}^n_r(x)}\times \mathbb{R}^n.
\end{equation}
Combined with the projective flatness of $d$, this inequality immediately implies
\eqref{baseequ}$_1$.

To establish \eqref{baseequ}$_2$, choose a smaller $r = r(x) > 0$ and a larger $C = C(x)$ if necessary, so that \eqref{weaklIP22} holds on $\mathbb{B}^n_r(x) \subset U(x) \cap \Omega$.
Now, for any $x_1,x_2,x_1+y,x_2+y\in  {\mathbb{B}^n_r(x)}$,   set  $s:=|y|$ and $\bar{y}:=y/|y|\in \mathbb{S}^{n-1}$.
Since $d$ is  projectively flat and ${\mathbb{B}^n_r(x)}$ is convex, by \eqref{weaklIP22} we have
\begin{align*}
&\left| d(x_1,x_1+y)-d(x_2,x_2+y)  \right|=\left| d(x_1,x_1+s\bar{y})-d(x_2,x_2+s\bar{y})  \right|\\
\leq &\int^s_0 \left| F(x_1+t\bar{y},\bar{y})- F(x_2+t\bar{y},\bar{y})  \right|{\dd}t\leq C|x_1-x_2| s=C |x_1-x_2||y|,
\end{align*}
which is  \eqref{baseequ}$_2$.
Furthermore, if $F$ is smooth, then the projective flatness implies that $d$ is also smooth.

\smallskip

{\bf The ``$\Leftarrow$" part.} Now suppose  \eqref{baseequ} holds. Then for any piecewise smooth curve $\gamma:[0,1]\rightarrow \Omega$ with $\gamma(0)=x_1$ and $\gamma(1)=x_2$, the compactness of $\gamma([0,1])$ yields a constant $\mathfrak{C}=\mathfrak{C}(\gamma)>0$ such that
\begin{equation}\label{ss}
d(\gamma(s),\gamma(t))\leq \mathfrak{C}\,(t-s), \quad \forall [s,t]\subset [0,1].
\end{equation}
Thus, $\gamma$ is an absolutely continuous curve in the sense of Rossi--Mielke--Savar\'e \cite[p.\,106]{RMS}. By \cite[Proposition 2.2]{RMS}, the limit
\begin{equation}\label{exrgammazero}
|\gamma'|(t):=\lim_{h\rightarrow 0^+}\frac{d(\gamma(t),\gamma(t+h))}{h}=\lim_{h\rightarrow 0^+}\frac{d(\gamma(t-h),\gamma(t))}{h}
\end{equation}
exists for $\mathscr{L}^1$-a.e. $t\in (0,1)$.
We now show that this limit is independent of   $\gamma$ (only depending on the point $x:=\gamma(t)$ and the vector $y:=\gamma'(t)$).
Let $\gamma_i : (-\varepsilon, \varepsilon) \to \mathbb{B}^n_r(x)$, $i = 1, 2$, be two smooth curves such that $\gamma_i(0) = x$, $\gamma_i'(0) = y$, and  $|\gamma_i'|(0)$ exist.
It suffices to show
\begin{equation}\label{Fconstur}
|\gamma'_1|(0)=|\gamma'_2|(0).
\end{equation}
Since $\gamma_i$ is differentiable at $t=0$, we have
$\gamma_i(t)=x+ty+o_i(t)$.
For $t>0$, the triangle inequality of $d$ combined with \eqref{baseequ}$_1$ yields
\[
\frac{|d(x,\gamma_1(t))-d(x,\gamma_2(t))|}{t}\leq C \frac{|\gamma_2(t)-\gamma_1(t)|}{t}=C \frac{|o_1(t)-o_2(t)|}{t},
\]
which establishes \eqref{Fconstur}.

Therefore, for any  piecewise smooth curve $\gamma(t)$, $t\in (-\varepsilon,\varepsilon)$ such that $|\gamma'|(0)$ exists with $\gamma(0)=x$ and $\gamma'(0)=y$, we define
\begin{equation}\label{perFnorm}
F(x,y):=\lim_{t\rightarrow 0^+}\frac{d(\gamma(0),\gamma(t))}{t}=\lim_{t\rightarrow 0^+}\frac{d(\gamma(-t),\gamma(0))}{t}.
\end{equation}
A standard argument (cf.~Burago--Burago--Ivanov \cite[Theorem 2.7.4, Theorem 2.7.6]{BBI}) then shows that
\begin{equation}\label{lengthequals}
L_{d}(\gamma)= L_F(\gamma),
\end{equation}
 for every piecewise smooth curve $\gamma:[0,1]\rightarrow \Omega$.
 Thus, the projective flatness of $d$ together with \eqref{lengthequals} implies that $d$ is induced by $F$, i.e.,
 \begin{equation*}%\label{defj}
 d(x_1,x_2)=\inf L_F(\gamma),
 \end{equation*}
 where  the infimum is over all piecewise smooth curves $\gamma:[0,1]\rightarrow \Omega$ with $\gamma(0)=x_1$ and $\gamma(1)=x_2$.

We claim that $F$ is actually defined on a dense set of $T\Omega\cong \Omega\times \mathbb{R}^n$. In fact, for any $(x,y)\in T\Omega$,  choose a sequence of straight segments $\gamma_m(t)=x+ty_m$ with $t\in [1/2^{m+1},1/2^m]$ with $y_m\rightarrow y$ as $m\rightarrow +\infty$.
For each $m$, by \eqref{exrgammazero}, we find that $F(\gamma_m(t),\gamma_m'(t))=|\gamma'_m|(t)$ exists for $\mathscr{L}$-a.e. $t\in (1/2^{m+1},1/2^m)$. Hence,  choose a $t_m\in (1/2^{m+1},1/2^m)$ such that $F(\gamma_m(t_m),\gamma_m'(t_m))$ is defined. Thus, the claim is true due to
\[
(\gamma_m(t_m),\gamma_m'(t_m))=(x+t_my_m,y_m)\rightarrow (x,y), \quad \text{ as }m\rightarrow +\infty.
\]

Next, we extend $F$ continuously to the whole $T\Omega$. Given $(x,y)\in T\Omega$, choose a sequence $((x_m,y_m))_m$ such that each $F(x_m,y_m)$ is defined and $(x_m,y_m)\rightarrow (x,y)$ as $m\rightarrow +\infty$.
For a sufficiently small $t>0$, the triangle inequality of $d$ combined with \eqref{baseequ} yields
\begin{align*}
&\quad \,\, |d(x_l,x_l+ty_l)-d(x_m,x_m+ty_m)|\notag\\
&\leq |d(x_l,x_l+ty_l)-d(x_m,x_m+ty_l)|+|d(x_m,x_m+ty_l)-d(x_m,x_m+ty_m)|\notag\\
&\leq  Ct|y_l||x_m-x_l|+\max\left\{ d(x_m+ty_m,x_m+ty_l), \ d(x_m+ty_l,x_m+ty_m)\right \}\notag\\
&\leq  Ct|y_l||x_m-x_l|+C t|y_m-y_l|.%\label{stronginequlaity}
\end{align*}
This inequality together with \eqref{perFnorm} gives
\begin{equation}\label{Flips}
|F(x_l,y_l)-F(x_m,y_m)|\leq C |y_l||x_m-x_l|+C|y_m-y_l|.
\end{equation}
Then $(F(x_m,y_m))_m$ is a Cauchy sequence and hence,  the limit $\lim_{m\rightarrow +\infty}F(x_m,y_m)$ exists.
 This limit is independent of the choice of of $((x_m,y_m))_m$.
 We  therefore define a continuous extension $\overline{F}$ on $T\Omega$ by
\[
\overline{F}(x,y):=\lim_{m\rightarrow +\infty}F(x_m,y_m).
\]

We now verify that $\overline{F}$ is a pseudo-norm function. Since $\overline{F}$ is continuous, it suffices to verify
\begin{enumerate}[\rm (i)]
\item\label{overF1} $\overline{F}(x,y)\geq 0$, with equality if and only $y=\mathbf{0}$;

\item\label{overF2} $\overline{F}(x,\lambda y)=\lambda \overline{F}(x,y)$ for any $\lambda> 0$;

\item\label{overF3} $\overline{F}(x, y_1+y_2)\leq \overline{F}(x,y_1)+\overline{F}(x,y_2)$.
\end{enumerate}

To prove \eqref{overF1}, let $((x_m, y_m))_m$ be a sequence as above. Then \eqref{baseequ}$_1$ implies
 $c t |y_m|\leq d(x_m,x_m+ty_m)$, and hence $c|y_m|\leq F(x_m,y_m)$, which establishes the positivity in \eqref{overF1}.
  The identity $\overline{F}(x, \mathbf{0}) = 0$ follows directly from the definition.

For \eqref{overF2}, observe that
 \begin{equation*}%\label{postivehom}
F(x_m,\lambda y_m)=\lim_{t\rightarrow 0^+}\frac{d(x_m,x_m+t (\lambda  y_m))}{t}=\lim_{t\rightarrow 0^+}\frac{d(x_m,x_m+(\lambda t) y_m)}{\lambda t}\lambda =\lambda F(x_m,y_m).
\end{equation*}
Passing to the limit as $m \to +\infty$ gives $\overline{F}(x, \lambda y) = \lambda \overline{F}(x, y)$.

To prove \eqref{overF3}, let $((x_m,y_{\alpha,m}))_m$, $\alpha=1,2$ be two sequences  such that each $F(x_m,y_{\alpha,m})$ is defined and $(x_m,y_{\alpha,m})\rightarrow (x,y_\alpha)$ as $m\rightarrow +\infty$.
Then the triangle inequality of $d$ combined with \eqref{baseequ}$_2$ implies
\begin{align*}
&F(x_m,y_{1,m}+y_{2,m})=\lim_{t\rightarrow 0^+}\frac{d(x_m,x_m+t(y_{1,m}+y_{2,m}))}{t}\\
\leq& \lim_{t\rightarrow 0^+}\frac{d(x_m,x_m+ty_{1,m})}{t}+\lim_{t\rightarrow 0^+}\frac{d(x_m+ty_{1,m},x_m+t(y_{1,m}+y_{2,m}))}{t}\\
=& \lim_{t\rightarrow 0^+}\frac{d(x_m,x_m+ty_{1,m})}{t}+\lim_{t\rightarrow 0^+}\frac{d(x_m,x_m+ty_{2,m})}{t}=F(x_m,y_{1,m})+F(x_m,y_{2,m}).
\end{align*}
Taking the limit as $m \to +\infty$ yields \eqref{overF3}.

Note that \eqref{lengthequals} remains valid for $\overline{F}$. Since $\overline{F}$ is quasi-regular in $y$,
the Busemann--Mayer theorem \cite[ Theorem 4.3, p.\,186]{BM} implies that
\[
\overline{F}(x,y)=\lim_{t\rightarrow 0^+}\frac{d(x,x+ty)}{t},\quad \forall (x,y)\in T\Omega.
\]
Comparing with \eqref{perFnorm}, we conclude that $F = \overline{F}$ on $T\Omega$.
Moreover, \eqref{weaklIP22} follows directly from \eqref{Flips}.
The projective flatness of $F$ follows directly from that of $d$.

It remains to show that $F$ is smooth if $d$ is smooth. Without loss of generality, we may assume $\Omega=\mathbb{R}^n$ for simplicity.
By the projective flatness, we have
\begin{equation}\label{basicqueaitl}
d(x,x+ty)=\int^t_0 F(x+sy,y){\dd}s.
\end{equation}
Note that $d(x,x+ty)$ is smooth for $(x,t,y)\in \mathbb{R}^n\times (0,+\infty)\times \mathbb{S}^{n-1}=:\mathscr{R}$. Then   \eqref{basicqueaitl} gives
\[
F(x+ty,y)=\frac{\partial}{\partial t}d(x,x+ty)\in C^\infty(\mathscr{R}).
\]
Hence, $F(x+ty,ty)=t F(x+ty,y)$ is also smooth in $\mathscr{R}$.
On the other hand, the map
\[
(x,t,y)\in \mathscr{R}\longmapsto (x+ty,ty)\in \mathbb{R}^n\times \Rno
\]
is a diffeomorphism,  establishing the smoothness of  $F$  on $ \mathbb{R}^n\times \Rno$.
\end{proof}

%Obviously, Theorem \ref{deqf} is a direct consequence of Theorem \ref{dandFequ}.

\begin{remark}
From  a point of analytic view, \eqref{baseequ}$_1$ provides the bounds of $F(y)$ while \eqref{baseequ}$_2$ gives a upper bound of the Euclidean gradient $|\nabla_{\E} F(\cdot,y)|\leq C|y|$.
These bounds are not consequences of the mere smoothness of $d$. In this sense, the hypotheses \eqref{baseequ} cannot be weakened.

Furthermore, $F$ becomes a weak Finsler metric (resp., Finsler metric) if its indicatrices  are all strictly convex  (resp., strongly convex).
 These convexity properties, however, cannot be inferred solely from the corresponding properties of the distance function $d$.
\end{remark}

%\noindent
%\textbf{Acknowledgement.}
%Krist\'aly and Z. Shen

\appendix
\vskip 8mm
\section{Complementary results for $\mathbf{K}\leq 0$
%	Section \ref{Forelengthsectonstad}
}\label{propergenerlengappex0}
%To solve these problems, we need the following result,

In this section, we complete the proofs of Lemma~\ref{keylemmak=0}, Proposition~\ref{K=0Ex1}, and Lemma~\ref{lemK=-1suffeqposilem}.

\begin{proof}[Proof of Lemma \ref{keylemmak=0}]
{\bf \eqref{keylemmak=0a}.}
 We begin by computing the Hessian $[F^2]_{y^i y^j}$. For simplicity, given $y\in \Rno$, set
\begin{align}\label{lemK=0suffxi}
\xi : = (\xi^k) := (y^k + x^k P(x,y) )=(y^k+x^kP).
\end{align}
A direct calculation using \eqref{defiFK=0} yields
\begin{align}
[ F^2 ]_{y^i y^j} (x,y)& =  [ \psi ^2(\xi) ]_{y^i y^j} ( 1+  P_{y^k} x^k )^2 +   \psi^2(\xi) [(1+ P_{y^k} x^k )^2]_{y^i y^j}+  [ \psi^2(\xi)]_{y^i}[(1+ P_{y^k} x^k )^2]_{y^j}  \notag\\
& \quad +   [ \psi^2(\xi)]_{y^j}[(1+ P_{y^k} x^k )^2]_{y^i},\label{lemK=0sufffundamental}
\end{align}
where
\begin{align}
[ \psi^2(\xi) ]_{y^i y^j}(x,y)
 & =  [ \psi^2(\xi) ]_{\xi^i \xi^j} + [ \psi^2(\xi) ]_{\xi^i \xi^l} x^l P_{y^j} + [ \psi^2(\xi) ]_{\xi^j \xi^l} x^l P_{y^i}
+ [ \psi^2(\xi) ]_{\xi^k \xi^l} x^k x^l P_{y^i} P_{y^j}\notag\\
& \quad + [ \psi^2(\xi) ]_{\xi^l} x^l P_{y^i y^j}. \label{lemK=0suffbarpsi0}
\end{align}

Now specialize to the case  $x = \mu y$. By Corollary~\ref{basicproper}, we have $\xi\neq0$ and $1+ \mu P(\mu y,y)>0$.
From \eqref{lemK=0suffxi}, we obtain
\begin{equation}\label{lemK=0suffbarxxi}
x^k = \mu y^k = \frac{\mu}{1+ \mu P} \xi^k.
\end{equation}
The homogeneity of $P$ together with \eqref{Eulerhter} implies
\begin{equation*}
P_{y^k}  x^k = \mu P , \quad
P_{y^k y^i}  x^k =0, \quad  P_{y^k y^i y^j} x^k = - \mu P_{y^i y^j}.
\end{equation*}
Substituting these relations along with \eqref{lemK=0suffbarxxi} into \eqref{lemK=0sufffundamental}--\eqref{lemK=0suffbarpsi0}, and using the homogeneity of $\psi$, we obtain after simplification:
\begin{align*}
[ F^2 ]_{y^i y^j}(\mu y,y)
& =   [ \psi^2(\xi) ]_{y^i y^j} ( 1+   \mu P )^2 - 2\mu \psi^2(\xi) (1+   \mu P ) P_{y^i y^j}, \\
[ \psi^2(\xi) ]_{y^i y^j} & = [ \psi^2(\xi) ]_{\xi^i \xi^j} + \frac{\mu}{1+ \mu P} \left([ \psi^2(\xi) ]_{\xi^i} P_{y^j} +  [ \psi^2(\xi) ]_{\xi^j} P_{y^i}\right)
+\frac{2 \mu^2  \psi^2(\xi)}{(1+ \mu P)^2}  P_{y^i} P_{y^j}+ \frac{2 \mu \psi^2(\xi) }{1+ \mu P} P_{y^i y^j},
\end{align*}
Combining these expressions yields
\begin{equation}\label{lemK=0sufffundamentalxbar1}
[ F^2 ]_{y^i y^j}(\mu y,y)
=    {(1+ \mu P)^2} [ \psi^2(\xi) ]_{\xi^i \xi^j} +  {\mu(1+ \mu P)}  ([ \psi^2(\xi) ]_{\xi^i} P_{y^j} +  [ \psi^2(\xi) ]_{\xi^j} P_{y^i}) + 2\mu^2 \psi^2(\xi)  P_{y^i} P_{y^j}  .
\end{equation}

Note that the Hessian $\left(  [\psi^2]_{y^i y^j}(y) \right)$ is  positive definite for any $y\neq \mathbf{0}$. For $\mu\in \mathbb{R}$ with $x:=\mu y\in \Omega$, let $\xi$ be as in \eqref{lemK=0suffxi}, which is a positive scalar multiple of $y$ by \eqref{lemK=0suffbarxxi}.
The positive  definiteness of
$\left(  [\psi^2]_{y^i y^j}(y) \right)=\left( [ \psi^2(\xi) ]_{\xi^i \xi^j}\right)$, together with \eqref{lemK=0suffbarxxi} and the homogeneity, implies the following inequality via the Cauchy--Schwarz argument:
\begin{equation}\label{lemK=0suffSchwartz}
\left[ \psi^2(\xi) \right]_{\xi^i \xi^j} \theta^i \theta^j \geq \frac{ \left( [ \psi^2(\xi) ]_{\xi^i \xi^j} \theta^i \xi^j \right)^2}{ [ \psi^2(\xi) ]_{\xi^i \xi^j} \xi^i \xi^j } %= \frac{1}{2 \psi^2(\xi)} \Big( [ \psi^2(\xi) ]_{\xi^i \xi^j} \theta^i \xi^j \Big)^2
= \frac{\left( [ \psi^2(\xi) ]_{\xi^i} \theta^i  \right)^2}{\psi^2(\xi)} ,
\quad \forall  \theta \in \Rno,
\end{equation}
with equality if and only if $\theta$ is parallel to $\xi$.

If $\theta = \kappa y$ for some $\kappa \neq 0$, then by homogeneity we have
\[
[ F^2 ]_{y^i y^j}(\mu y,y) \theta^i \theta^j=\kappa^2 F^2(\mu y,y) > 0.
\]

If $\theta$ is not parallel to $\xi$, then the inequality in \eqref{lemK=0suffSchwartz} is strict. Combining this with \eqref{lemK=0sufffundamentalxbar1}, we obtain
\begin{align*} %\label{lemK=0suffg_ij_1}
[ F^2 ]_{y^i y^j} (\mu y,y)\theta^i \theta^j>
\left(   \frac{ (1+ \mu P)}{\psi(\xi)}  [ \psi^2(\xi) ]_{\xi^i} \theta^i   + \mu \psi(\xi)  P_{y^i}\theta^i \right)^2  \geq 0.
\end{align*}
Therefore, the Hessian $[F^2]_{y^i y^j}(\mu y, y)$ is positive definite for all such $\mu$ and $y$, which completes the proof of
Statement~\eqref{keylemmak=0a}.

\medskip

{\bf \eqref{keylemmak=0b}.}
 Given $(x,y)\in \Omega\times \mathbb{R}^n$,
 it follows from \eqref{equationP}, \eqref{twoequations}$_1$, and Proposition~\ref{transformforK=-1} that
\begin{align}
&P(\bar{x},y)=\bar{\phi}(y),\qquad \bar{P}(x-\bar{x},y)=P(x,y).\label{barphiP}
\end{align}

We first establish the identity
\begin{equation}
P_{y^i}\big(\bar{x}, y + (x-\bar{x}) P(x,y)\big) = \frac{P_{y^i}(x,y) }{ 1+ (x^k -\bar{x}^k) \bar{P}_{y^k}(x-\bar{x},y)}.   \label{barphiP2}
\end{equation}
To prove this, observe that \eqref{twoequations}$_2$ gives
\begin{equation} \label{Funkintialindependent1}
\bar{{P}}(x-\bar{x},y) = \bar{\phi}(y + (x-\bar{x}) \bar{{P}}(x-\bar{x},y)).
\end{equation}
Set $\xi := y + (x-\bar{x}) \bar{{P}}(x-\bar{x},y)$.
Differentiating \eqref{Funkintialindependent1} with respect to $y^i$ yields
\[
\bar{{P}}_{y^i}(x-\bar{x},y) = \bar{\phi}_{y^i}(\xi) + (x^l-\bar{x}^l)\bar{\phi}_{y^l}(\xi) \bar{{P}}_{y^i}(x-\bar{x},y).
\]
Using \eqref{barphiP}$_2$, this becomes
\begin{equation}\label{prephi}
\bar{\phi}_{y^i}(\xi) = \bar{{P}}_{y^i}(x-\bar{x},y) [1- (x^l-\bar{x}^l)\bar{\phi}_{y^l}(\xi)]= {P}_{y^i}(x,y) [1- (x^l-\bar{x}^l)\bar{\phi}_{y^l}(\xi)].
 \end{equation}
Solving $(x^l - \bar{x}^l) \bar{\phi}_{y^l}(\xi)$ from \eqref{prephi}, we obtain
\[
(x^l-\bar{x}^l)\bar{\phi}_{y^l}(\xi) =\frac{(x^i-\bar{x}^i) {P}_{y^i}(x, y)}{ 1+ (x^k-\bar{x}^k) {P}_{y^k}(x, y)}.
 \]
Substituting this back into \eqref{prephi} gives
\begin{equation*} %\label{Funkintialindependentphii}
\bar{\phi}_{y^i}(\xi) = \frac{{P}_{y^i}(x, y)}{ 1+ (x^k-\bar{x}^k) {P}_{y^k}(x, y)}.
\end{equation*}
In view of \eqref{barphiP}$_1$, this is exactly \eqref{barphiP2}.

Next, we examine the transformation of $\bar{\psi}$. By \eqref{defiFK=0}, we have
\begin{equation*}% \label{K=0intialindependentpsibar1}
\bar{\psi}(y) = F(\bar{x},y) = \psi(y + \bar{x} P(\bar{x},y)) \Big\{ 1 + \bar{x}^k P_{y^k}(\bar{x},y) \Big\}.
\end{equation*}
Replace $y$ by $\xi = y + (x - \bar{x}) \bar{P}(x - \bar{x}, y)$ in this equality.  Using \eqref{barphiP}$_1$  and \eqref{twoequations}$_2$, we compute:
\begin{align*}
 & \quad \ \bar{\psi}(y + (x-\bar{x}) \bar{P}(x-\bar{x},y)) \notag \\
& = \psi(y + (x-\bar{x}) \bar{P}(x-\bar{x},y) + \bar{x} P(\bar{x},y + (x-\bar{x}) \bar{P}(x-\bar{x},y))) \Big\{ 1 + \bar{x}^kP_{y^k}(\bar{x},\xi) \Big\}
\notag\\
& = \psi(y + (x-\bar{x}) \bar{P}(x-\bar{x},y) + \bar{x} \bar{\phi}(y + (x-\bar{x}) \bar{P}(x-\bar{x},y))) \Big\{ 1 +  \bar{x}^kP_{y^k}(\bar{x},\xi)  \Big\}
\notag\\
& = \psi(y + (x-\bar{x}) \bar{P}(x-\bar{x},y) + \bar{x}  \bar{P}(x-\bar{x},y) ) \Big\{ 1 + \bar{x}^k P_{y^k}(\bar{x},\xi)  \Big\}
\notag\\
& = \psi(y + x \bar{P}(x-\bar{x},y) ) \Big\{ 1 + \bar{x}^k P_{y^k}(\bar{x},\xi) \Big\}.
\end{align*}
Using \eqref{barphiP}$_2$ and \eqref{barphiP2}, this becomes
\[
\bar{\psi}(y + (x-\bar{x}) \bar{P}(x-\bar{x},y)) = \psi(y + x P(x,y))  \frac{1+ x^k P_{y^k}(x, y)  }{ 1+ (x^k-\bar{x}^k) \bar{P}_{y^k}(x-\bar{x}, y)}.
\]
The relation \eqref{newFK=0ex} now follows immediately from \eqref{defiFK=0}.
\end{proof}

To prove Proposition \ref{K=0Ex1}, we need the following lemma.
\begin{lemma}\label{f_ijvivj}
Let $f:\mathbb{R}^2\rightarrow \mathbb{R}$ be a positively $1$-homogeneous function such that $f|_{\mathbb{R}^2_\circ} \in C^{\infty}(\mathbb{R}^2_\circ)$, and let $y,v\in \mathbb{R}^2_\circ$ be two non-parallel vectors. Then the following hold:
  \begin{enumerate}[\rm (a)]
  \item $f_{y^i y^j}(y) v^i v^j =0$ if and only if $ f_{y^i y^j}(y) \theta^i \theta^j = 0$  for all $\theta\in \mathbb{R}^2_\circ$;
  \item $f_{y^i y^j}(y) v^i v^j >0$ if and only if $ f_{y^i y^j}(y) \theta^i \theta^j > 0$  for some $\theta\in \mathbb{R}^2_\circ$.
  \end{enumerate}
\end{lemma}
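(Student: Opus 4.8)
\textbf{Proof plan for Lemma \ref{f_ijvivj}.}
The key observation is that for a positively $1$-homogeneous function $f$ on $\mathbb{R}^2$ that is $C^\infty$ away from the origin, Euler's theorem \eqref{Eulerhter} gives $y^i f_{y^i}(y) = f(y)$, and differentiating once more yields $y^j f_{y^i y^j}(y) = 0$. Thus for each fixed $y \in \mathbb{R}^2_\circ$, the vector $y$ itself lies in the kernel of the symmetric bilinear form $Q_y(\cdot,\cdot) := f_{y^i y^j}(y)\,\cdot^i\,\cdot^j$ on $\mathbb{R}^2$. Since $\dim \mathbb{R}^2 = 2$, this means $Q_y$ has rank at most $1$: in a basis $\{y, w\}$ with $w$ any vector not parallel to $y$, the matrix of $Q_y$ is $\begin{pmatrix} 0 & 0 \\ 0 & q \end{pmatrix}$ for the single scalar $q = Q_y(w,w)$.

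The plan is to exploit this rank-$\le 1$ structure. First I would record the identity $y^j f_{y^i y^j}(y) = 0$ from Euler's theorem. Then, given a vector $v$ not parallel to $y$, I write an arbitrary $\theta \in \mathbb{R}^2_\circ$ as $\theta = \alpha y + \beta v$ for scalars $\alpha, \beta$ (possible since $\{y,v\}$ is a basis of $\mathbb{R}^2$). Expanding $Q_y(\theta,\theta) = \alpha^2 Q_y(y,y) + 2\alpha\beta\, Q_y(y,v) + \beta^2 Q_y(v,v)$ and using $Q_y(y,y) = Q_y(y,v) = 0$ (both consequences of $y^j f_{y^i y^j}(y) = 0$), we obtain the clean relation
\[
f_{y^i y^j}(y)\,\theta^i\theta^j = \beta^2\, f_{y^i y^j}(y)\,v^i v^j.
\]
From this single formula both parts follow at once. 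For (a): if $Q_y(v,v) = 0$ then $Q_y(\theta,\theta) = 0$ for every $\theta$; conversely, if $Q_y(\theta,\theta) = 0$ for all $\theta$ then in particular $Q_y(v,v) = 0$. For (b): if $Q_y(v,v) > 0$, then taking $\theta = v$ (where $\beta = 1$) gives a $\theta \in \mathbb{R}^2_\circ$ with $Q_y(\theta,\theta) > 0$; conversely, if $Q_y(\theta,\theta) > 0$ for some $\theta$, the formula forces $\beta \neq 0$ and $Q_y(v,v) > 0$.

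There is essentially no obstacle here — the only thing to be slightly careful about is that in part (b), when $\theta$ is parallel to $y$ (i.e. $\beta = 0$) one has $Q_y(\theta,\theta) = 0$, so the "some $\theta$" in the hypothesis of the converse direction must have $\beta \neq 0$, which is automatic once $Q_y(\theta,\theta) > 0$. I should also note that the non-parallelism of $y$ and $v$ is used only to guarantee $\{y,v\}$ is a basis; the statement would be vacuous or trivial otherwise. The whole argument is a two-line linear-algebra computation once Euler's identity is differentiated, so I would present it compactly without further fuss.
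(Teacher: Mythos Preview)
Your proposal is correct and follows essentially the same approach as the paper: both differentiate Euler's identity to get $y^j f_{y^i y^j}(y)=0$, write $\theta = c_1 y + c_2 v$ in the basis $\{y,v\}$, and reduce to the single identity $f_{y^i y^j}(y)\,\theta^i\theta^j = c_2^2\, f_{y^i y^j}(y)\,v^i v^j$, from which (a) and (b) are immediate.
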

\begin{proof} Note  $f_{y^i y^j}(y) y^i =0 $ due to \eqref{Eulerhter}. Thus, for  $\theta = c_1 y + c_2 v$, we have
\begin{align*}
f_{y^i y^j}(y) \theta^i \theta^j  & = f_{y^i y^j}(y) (c_1 y^i + c_2 v^i) (c_1 y^j + c_2 v^j)  = c_2^2 f_{y^i y^j}(y) v^i v^j,
\end{align*}
which establishes the lemma.
\end{proof}

\begin{proof}[Proof of Proposition \ref{K=0Ex1}]
We begin by computing the derivatives of $F$. A direct calculation using \eqref{defiFK=0} yields
\begin{align}
F_{y^i y^j} = P_{y^i y^j} (1 + P_{y^k} x^k) + P_{y^i} P_{y^j y^k} x^k + P_{y^j} P_{y^i y^k} x^k + P P_{y^i y^j y^k} x^k. \label{K=0RandersnormFi}
\end{align}
Following the expressions in Example~\ref{prlxeK=0nonR}, we decompose the projective factor as
\begin{align}
P(x,y) = \mathcal{S}_a  + \mathcal{T}_a, \label{deopP}
\end{align}
where
\[
\mathcal{S}_a  := \frac{\sqrt{\mathcal{A}_a(x,y)}}{\mathcal{B}_a(x)}, \qquad
\mathcal{T}_a  := \frac{\mathcal{C}_a(x,y) + \langle x, y \rangle}{\mathcal{B}_a(x)},
\]
with $\mathcal{A}_a(x,y)$, $\mathcal{B}_a(x)$, and $\mathcal{C}_a(x,y)$ as defined in Example~\ref{prlxeK=0nonR}.
Note that $[\mathcal{T}_a]_{y^i y^j} = 0$, while the derivatives of $\mathcal{S}_a$ are given by:
\begin{align}
[\mathcal{S}_a]_{y^i} &= \frac{[\mathcal{A}_a(x,y)]_{y^i}}{2 \mathcal{B}_a(x) \sqrt{\mathcal{A}_a(x,y)}}, \label{K=0RandersnormSi} \\
[\mathcal{S}_a]_{y^i y^j y^k} &= - \frac{\mathcal{B}_a(x)}{\sqrt{\mathcal{A}_a(x,y)}} \left( [\mathcal{S}_a]_{y^i y^j} [\mathcal{S}_a]_{y^k} + [\mathcal{S}_a]_{y^j y^k} [\mathcal{S}_a]_{y^i} + [\mathcal{S}_a]_{y^i y^k} [\mathcal{S}_a]_{y^j} \right). \label{K=0RandersnormSijk}
\end{align}
Contracting \eqref{K=0RandersnormFi} with $x^i$ and $x^j$, and using \eqref{deopP}--\eqref{K=0RandersnormSijk}, we obtain:
\begin{align}
F_{y^i y^j} x^i x^j
&= P_{y^i y^j} x^i x^j (1 + 3 P_{y^k} x^k) + P P_{y^i y^j y^k} x^i x^j x^k \notag \\
&= [\mathcal{S}_a]_{y^i y^j} x^i x^j \left( 1 + 3 [\mathcal{T}_a]_{y^k} x^k - \frac{3 \mathcal{T}_a \mathcal{B}_a}{\sqrt{\mathcal{A}_a}} [\mathcal{S}_a]_{y^k} x^k \right). \label{K=0RandersnormFijxixj}
\end{align}

Since $\mathcal{S}_a$ is induced by a Riemannian metric, we have $[\mathcal{S}_a]_{y^i y^j} x^i x^j > 0$ when $x$ and $y$ are not parallel. Thus, \eqref{K=0RandersnormFijxixj} implies
\begin{align}
F_{y^i y^j}(x,y) x^i x^j = 0 \quad \Longleftrightarrow \quad 1 + 3 [\mathcal{T}_a]_{y^k} x^k - \frac{3 \mathcal{T}_a \mathcal{B}_a}{\sqrt{\mathcal{A}_a}} [\mathcal{S}_a]_{y^k} x^k = 0. \label{K=0Randersnormequiva}
\end{align}
A direct computation uisng \eqref{K=0Randersnormequiva} yields, for non-parallel vectors $x$ and $y$:
\begin{align}
& F_{y^i y^j}(x,y) x^i x^j = 0 \quad \Longleftrightarrow \notag \\
& (1 + \langle a,x \rangle - 2\langle a,x \rangle^2 + 2|x|^2)|y|^2 - 2|x|^2 \langle a,y \rangle^2 + (4\langle a,x \rangle - 1)\langle a,y \rangle \langle x,y \rangle - 2\langle x,y \rangle^2 = 0. \label{K=0RandersnormEq1}
\end{align}

We now study the structure of $\mathscr{O}$. By Theorem~\ref{n=2k=0deger}/\eqref{K=0Dim2conneccompconvex1}, every connected component of $\PD$ is convex.

Let $\breve{x} \in \partial{\PD} \cap \Omega$ be a smooth boundary point. We claim that $\breve{x}$ cannot be tangent to $\partial\mathscr{O}$ at $\breve{x}$; otherwise, Proposition~\ref{K=0conneccomconvex}/\eqref{conneccompconvex2} would imply that $[F^2]_{y^i y^j}(\breve{x}, \mu \breve{x})$ is not positive definite for some $\mu \neq 0$, contradicting Lemma~\ref{keylemmak=0}/\eqref{keylemmak=0a}.

By this claim and Proposition~\ref{K=0conneccomconvex}/\eqref{conneccompconvex2}, there exist non-parallel vectors $\breve{y} \in T_{\breve{x}}\partial{\PD} \backslash \{\mathbf{0}\}$ and $\theta = (\theta^i)$ such that
\[
F_{y^i y^j}(\breve{x}, \breve{y}) \theta^i \theta^j = 0, \qquad F_{y^i y^j}(\breve{x}, \breve{y} + t \breve{x}) \theta^i \theta^j > 0 \quad \text{for small } |t| > 0.
\]
Since $n = 2$, $\theta$ is a linear combination of $\breve{x}$ and $\breve{y}$. Lemma~\ref{f_ijvivj} then implies
\begin{equation}
F_{y^i y^j}(\breve{x}, \breve{y}) \breve{x}^i \breve{x}^j = 0, \qquad F_{y^i y^j}(\breve{x}, \breve{y} + t \breve{x}) \breve{x}^i \breve{x}^j > 0 \quad \text{for small } |t| > 0, \label{FBRWXDER}
\end{equation}
and consequently,
\begin{equation}
\left.\frac{{\dd}}{{\dd}t}\right|_{t=0} F_{y^i y^j}(\breve{x}, \breve{y} + t \breve{x}) \breve{x}^i \breve{x}^j = 0. \label{defi11}
\end{equation}

Using \eqref{K=0RandersnormEq1}, we can rewrite \eqref{FBRWXDER}$_1$ and \eqref{defi11} as:
\begin{align}
\left\{1 + 2(1 - a_1^2) \breve{x}_2^2\right\} \breve{y}_1^2 - \breve{x}_2 \left\{4(1 - a_1^2)\breve{x}_1 + a_1\right\} \breve{y}_1 \breve{y}_2  + \left\{2(1 - a_1^2)\breve{x}_1^2 + a_1 \breve{x}_1 + 1\right\} \breve{y}_2^2 &= 0, \label{K=0RandersMainEq1_1} \\
(2\breve{x}_1 - a_1 \breve{x}_2^2) \breve{y}_1 + \breve{x}_2 (a_1 \breve{x}_1 + 2) \breve{y}_2 &= 0, \label{K=0RandersMainEq2_1}
\end{align}
where $\breve{x} = (\breve{x}_1, \breve{x}_2)$ and $\breve{y} = (\breve{y}_1, \breve{y}_2)$.

We claim that $\breve{y}_2 \neq 0$. In fact, if $\breve{y}_2 = 0$, then by \eqref{K=0RandersMainEq1_1},
\[ \left\{ 1 + 2 (1 - a_1^2) \breve{x}_2^2 \right\} \breve{y}_1^2 =0,  \]
which combined with $a_1\in [0,1)$
implies $\breve{y}_1 =0$ and hence, $\breve{y}=\mathbf{0}$, which is a contradiction. Thus, $\breve{y}_2 \neq 0$.
Therefore,
 \eqref{K=0RandersMainEq1_1} can be rewritten as
\begin{align}\label{K=0RandersMainEq1_1v2}
\left\{ 1 + 2 (1 - a_1^2) \breve{x}_2^2 \right\} \left( \frac{\breve{y}_1}{\breve{y}_2} \right)^2 - \breve{x}_2 &\left\{  4(1- a_1^2)\breve{x}_1 + a_1  \right\}\frac{\breve{y}_1}{\breve{y}_2}
+   2(1 - a_1^2)\breve{x}_1^2 + a_1 \breve{x}_1 + 1 =0.
\end{align}
The discriminant $\Delta_{\eqref{K=0RandersMainEq1_1v2}}$ of the above quadratic equation of $\frac{\breve{y}_1}{\breve{y}_2}$ is given by
\begin{align}
\Delta_{\eqref{K=0RandersMainEq1_1v2}} & =  \breve{x}^2_2   \left\{  4(1- a_1^2)\breve{x}_1 + a_1  \right\}^2-4\left\{ 1 + 2 (1 - a_1^2) \breve{x}_2^2 \right\} \left\{ 2(1 - a_1^2)\breve{x}_1^2 + a_1 \breve{x}_1 + 1\right\}
\notag\\
&= - ( a_1  \breve{x}_1 +2)^2 - (8- 9 a_1^2) (\breve{x}_1^2+ \breve{x}_2^2).
\label{K=0exampdiscriminant}
\end{align}

We now proceed by case analysis based on the value of $a_1$.

\noindent\textbf{Case 1: $a_1 \in \left[0, \frac{2\sqrt{2}}{3}\right)$.}
In this case,
 $\Delta_{\eqref{K=0RandersMainEq1_1v2}} <0$, so no solution $\breve{y}$ exists. By Theorem~\ref{n=2k=0deger}/\eqref{K=0Dim2conneccompconvex3}, we conclude that $\PD = \Omega$, establishing Statement \eqref{K=0RandersnormCase1}.

\noindent\textbf{Case 2: $a_1 = \frac{2\sqrt{2}}{3}$.}
In this case,
$
\Delta_{\eqref{K=0RandersMainEq1_1v2}} = -4\left(\frac{\sqrt{2}}{3} \breve{x}_1 + 1\right)^2 \leq 0,
$
with equality only when $\breve{x}_1 = -\frac{3\sqrt{2}}{2}$. Then \eqref{K=0RandersMainEq1_1v2} forces $\breve{y}_1 = 0$, and both \eqref{K=0RandersMainEq1_1} and \eqref{K=0RandersMainEq2_1} hold for any $\breve{x}_2 \in \mathbb{R}$ and $\breve{y}_2 \neq 0$. Thus, $\partial{\PD}$ is the vertical line $\Omega \cap \{x_1 = -3\sqrt{2}/2\}$, which divides the open elliptical disk $\Omega$ into two open parts.

Since $F$ is a Finsler metric near the origin $\mathbf{0}$, the right part
$\left\{(x_1, x_2) \in \Omega : x_1 > -{3\sqrt{2}}/{2}\right\} \subset \PD.$
We now show that the left part $\Omega_L:=\left\{(x_1, x_2) \in \Omega : x_1 < -{3\sqrt{2}}/{2}\right\} $ is also contained in $\PD$. It suffices to verify that $F(\bar{x}, \cdot)$ is a Minkowski norm at some point $\bar{x} = (\bar{x}_1, 0) \in \Omega_L$. By \eqref{postivitylemma} and homogeneity, we need only check that
$F_{y^i y^j}(\bar{x}, \bar{y}) \bar{x}^i \bar{x}^j > 0 $
for any $\bar{y} = (\bar{y}_1, \bar{y}_2)$ not parallel to $\bar{x}$. Following the derivations from \eqref{K=0RandersnormFijxixj} to \eqref{K=0RandersnormEq1},
 it is equivalent to prove
\begin{align}\label{K=0RandersnormCaseiiiIneq}
 (1 + \langle a,\bar{x}  \rangle -2 \langle a,\bar{x}  \rangle^2+2 |\bar{x}|^2) |\bar{y}|^2 -2 |\bar{x}|^2 \langle a, \bar{y}\rangle^2 +  (4\langle a,\bar{x}  \rangle-1) \langle a, \bar{y}\rangle \langle \bar{x}, \bar{y}\rangle - 2\langle \bar{x}, \bar{y}\rangle^2 > 0.
\end{align}
It is exactly
$\bar{y}_1^2 + \left(\tfrac{\sqrt{2}}{3} \bar{x}_1 + 1\right)^2 \bar{y}_2^2 > 0,$
which holds since $\bar{y} \neq \mathbf{0}$ and $\bar{x}_1 < -3\sqrt{2}/2$. Hence $\Omega_L \subset \PD$, proving Statement \eqref{K=0RandersnormCase2}.

\noindent\textbf{Case 3: $a_1 \in \left(\frac{2\sqrt{2}}{3}, 1\right)$.}
We first show $a_1 \breve{x}_1 + 2 \neq 0$. If instead $a_1 \breve{x}_1 + 2 = 0$, then $\breve{x}_1 = -2/a_1$, and \eqref{K=0RandersMainEq2_1} gives $\breve{y}_1 = 0$. Substituting it into \eqref{K=0RandersMainEq1_1} yields
\begin{equation}
2(1 - a_1^2) \breve{x}_1^2 + a_1 \breve{x}_1 + 1 = 0, \label{K=0RandersnormCasei}
\end{equation}
which together with $\breve{x}_1 = -2/a_1$ implies $a_1 = 2\sqrt{2}/3 \notin (2\sqrt{2}/3, 1)$, a contradiction.

Now consider $\breve{x}_2 = 0$. Since $\breve{x} \neq \mathbf{0}$, \eqref{K=0RandersMainEq2_1} gives  $\breve{y}_1 = 0$. Then \eqref{K=0RandersMainEq1_1} again reduces to \eqref{K=0RandersnormCasei}, yielding
\begin{equation}
\breve{x} = \left( \frac{-a_1 \pm \sqrt{9a_1^2 - 8}}{4(1 - a_1^2)},\ 0 \right). \label{K=0examp_twopoints}
\end{equation}

For $\breve{x}_2 \neq 0$, \eqref{K=0RandersMainEq2_1} combined with $y\neq \mathbf{0}$ implies
\[
\breve{y}_2 = -\frac{(2\breve{x}_1 - a_1 \breve{x}_2^2) \breve{y}_1}{\breve{x}_2 (a_1 \breve{x}_1 + 2)},\qquad \breve{y}_1 \neq 0.
\]
Substituting  into \eqref{K=0RandersMainEq1_1} yields
\[
\left[8(1 - a_1^2) \breve{x}_1^2 + 4a_1 \breve{x}_1 + (8 - 9a_1^2) \breve{x}_2^2 + 4\right] \frac{\breve{x}_1^2 + \breve{x}_2^2}{\breve{x}_2^2 (a_1 \breve{x}_1 + 2)^2} \breve{y}_1^2 = 0.
\]
Since $\breve{x} \neq \mathbf{0}$ and $\breve{y}_1 \neq 0$, we obtain
\begin{equation}
8(1 - a_1^2) \left( \breve{x}_1 + \frac{a_1}{4(1 - a_1^2)} \right)^2 + (8 - 9a_1^2) \breve{x}_2^2 = \frac{9a_1^2 - 8}{2(1 - a_1^2)}. \label{K=0Randersnormhyper}
\end{equation}

For $a_1 \in (2\sqrt{2}/3, 1)$, equation \eqref{K=0Randersnormhyper} defines a hyperbola, which contains the points \eqref{K=0examp_twopoints}. This hyperbola divides $\Omega = \{|x| + \langle a, x \rangle < 1\}$ into three open parts. The middle part is non-convex and hence cannot lie in $\PD$ by Theorem~\ref{n=2k=0deger}/\eqref{K=0Dim2conneccompconvex1}. The right part contains the origin $\mathbf{0} \in \mathscr{O}$ and thus belongs to $\PD$.

It remains to check the left part. Take $\bar{x} = (\bar{x}_1, 0)$ in the left part, i.e.,
\begin{equation}
\bar{x}_1 + \frac{a_1}{4(1 - a_1^2)} < -\frac{\sqrt{9a_1^2 - 8}}{4(1 - a_1^2)}. \label{K=0Randersnormtildex}
\end{equation}
As in Case 2, it suffices to verify   \eqref{K=0RandersnormCaseiiiIneq}   for any $\bar{y} = (\bar{y}_1, \bar{y}_2)$ not parallel to $\bar{x}$. The left-hand side of \eqref{K=0RandersnormCaseiiiIneq} is exactly
\[
\bar{y}_1^2 + \left[ 2(1 - a_1^2) \left( \bar{x}_1 + \frac{a_1}{4(1 - a_1^2)} \right)^2 + \frac{8 - 9a_1^2}{8(1 - a_1^2)} \right] \bar{y}_2^2,
\]
which is positive by \eqref{K=0Randersnormtildex}. This establishes Statement \eqref{K=0RandersnormCase34}.

In both Cases 2 and 3, the smooth points of $\partial{\PD} \cap \Omega$ form a continuous (actually smooth) curve. Hence every point in $\partial{\PD} \cap \Omega$ is smooth.
\end{proof}

%\section{An additional result  for $\mathbf{K}=-1$}\label{propergenerlengappex-1}

\begin{proof}[Proof of Lemma \ref{lemK=-1suffeqposilem}] The ``$\Leftarrow$'' part is immediate since $\bar{F}(\mathbf{0},y)=\bar{\psi}(y)$.
To prove the ``$\Rightarrow$'' part, we first
calculate $[\bar{F}^2]_{y^iy^j}$.
For simplicity, define
\begin{align}
{\varphi}_{+} := \bar{\phi}+ \bar{\psi},& \qquad  \varphi_{-}  := \bar{\phi} -  \bar{\psi}, \label{phcipm}\\
\xi  = (\xi^k) := (y^k + x^k \bar{\Phi}_+(x,y) ), & \qquad \eta  = (\eta^k) := (y^k + x^k \bar{\Phi}_-(x,y) ).  \label{lemK=-1suffxieta}
\end{align}
A direct computation yields the Hessian of $\bar{F}^2$:
\begin{equation}\label{lemK=-1sufffundamental}
\begin{split}
[ \bar{F}^2 ]_{y^i y^j}(x,y) & = \frac{1}{2}\left [ \bar{\Phi}_{+}(x,y)-  \bar{\Phi}_{-}(x,y) \right]\left[ \bar{\Phi}_{+}(x,y) -  \bar{\Phi}_{-}(x,y) \right]_{y^i y^j} \\
& \quad + \frac{1}{2}\left[ \bar{\Phi}_{+}(x,y) -  \bar{\Phi}_{-}(x,y) \right]_{y^i}  \left[ \bar{\Phi}_{+}(x,y) -  \bar{\Phi}_{-}(x,y) \right]_{y^j}.
\end{split}
\end{equation}
We now compute the derivatives of $\bar{\Phi}_{\pm}(x,y)$. From \eqref{lemK=-1Phipm}, we obtain
\begin{align*}
[\bar{\Phi}_{+}(x,y)]_{y^i} = [\varphi_{+}(\xi)]_{\xi^i} + [\varphi_{+}(\xi)]_{\xi^l} {x}^l [\bar{\Phi}_{+}(x,y)]_{y^i},\qquad
[\bar{\Phi}_{-}(x,y)]_{y^i} = [\varphi_{-}(\eta)]_{\eta^i} + [\varphi_{-}(\eta)]_{\eta^l}  {x}^l [\bar{\Phi}_{-}(x,y)]_{y^i},
\end{align*}
which imply
\begin{align}
[\bar{\Phi}_{+}(x,y)]_{y^i} &= \frac{[\varphi_{+}(\xi)]_{\xi^i}}{1- [\varphi_{+}(\xi)]_{\xi^l}  {x}^l},
\qquad
[\bar{\Phi}_{-}(x,y)]_{y^i} = \frac{[\varphi_{-}(\eta)]_{\eta^i}}{1- [\varphi_{-}(\eta)]_{\eta^l}  {x}^l}.
\label{lemK=-1suffPhiyi}
%[\bar{\Phi}^2_{+}(x,y)]_{y^i} &= \frac{ [\varphi^2_{+}(\xi)]_{\xi^i}}{1- [\varphi_{+}(\xi)]_{\xi^l}  {x}^l},
%\quad
%[\bar{\Phi}^2_{-}(x,y)]_{y^i} = \frac{ [\varphi^2_{-}(\eta)]_{\eta^i}}{1- [\varphi_{-}(\eta)]_{\eta^l}  {x}^l}. \notag
\end{align}
Differentiating these expressions with respect to $y^j$ gives the second derivatives:
\begin{equation}
\begin{split}
[\bar{\Phi}_{+}(x,y)]_{y^i y^j} & =  \frac{1}{\left(1- [\varphi_{+}(\xi)]_{\xi^l} x^l \right)^2} \Big\{ \left( [\varphi_{+}(\xi)]_{\xi^i \xi^j} +[\varphi_{+}(\xi)]_{\xi^i \xi^l}x^l [\bar{\Phi}_{+}(x,y)]_{y^j} \right) \left( 1- [\varphi_{+}(\xi)]_{\xi^l} x^l \right)
\\
& \quad + [\varphi_{+}(\xi)]_{\xi^i}\left( [\varphi_{+}(\xi)]_{\xi^j \xi^l} x^l + [\varphi_{+}(\xi)]_{\xi^k \xi^l} x^k x^l [\bar{\Phi}_{+}(x,y)]_{y^j} \right) \Big\} ,\label{lemK=-1sufPhiyiyj_1}
\end{split}
\end{equation}
\begin{equation}
\begin{split}
[\bar{\Phi}_{-}(x,y)]_{y^i y^j} & = \frac{1}{\left(1- [\varphi_{-}(\eta)]_{\eta^l} x^l \right)^2} \Big\{ \Big( [\varphi_{-}(\eta)]_{\eta^i \eta^j} +[\varphi_{-}(\eta)]_{\eta^i \eta^l}x^l [\bar{\Phi}_{-}(x,y)]_{y^j} \Big) \Big(1- [\varphi_{-}(\eta)]_{\eta^l} x^l \Big)
 \\
& \quad + [\varphi_{-}(\eta)]_{\eta^i} \Big( [\varphi_{-}(\eta)]_{\eta^j \eta^l} x^l + [\varphi_{-}(\eta)]_{\eta^k \eta^l} x^k x^l [\bar{\Phi}_{-}(x,y)]_{y^j} \Big) \Big\}. \label{lemK=-1sufPhiyiyj_2}
\end{split}
\end{equation}

Now specialize to the case $x=\mu y$. Then \eqref{lemK=-1suffxieta} implies
\begin{equation}\label{lemK=-1suffbarxxi}
x = \mu y = \frac{\mu}{1+ \mu \bar{\Phi}_{+}(\mu y,y)} \xi^k  = \frac{\mu}{1+ \mu \bar{\Phi}_{-}(\mu y,y)} \eta^k.
\end{equation}
By Corollary~\ref{basicproper}, we have
\begin{equation}\label{lemK=-1suff1+muPhi>0}
1+ \mu \bar{\Phi}_{+}(\mu y,y) > 0, \qquad 1+ \mu \bar{\Phi}_{-}(\mu y,y) > 0,
\end{equation}
which together with \eqref{lemK=-1Phipm} yields
\begin{align}
\varphi_{+}(\xi)|_{x=\mu y} &  = \left(1 + \mu  \bar{\Phi}_{+}(\mu y,y) \right) \varphi_{+}( y )= \bar{\Phi}_{+} (\mu y, y),
\label{lemK=-1suffPhi+muy1} \\
\varphi_{-}(\eta)|_{x=\mu y} & = \left(1 + \mu  \bar{\Phi}_{-}(\mu y,y) \right) \varphi_{-}(y)=\bar{\Phi}_{-} (\mu y, y)  .
\label{lemK=-1suffPhi-muy1}
\end{align}
Thus, $\varphi_{+}(\xi)|_{x=\mu y}$ and $\varphi_{+}( y )$ (resp., $\varphi_{-}(\eta)|_{x=\mu y}$ and $\varphi_{-}( y )$) share the same sign.
Solving \eqref{lemK=-1suffPhi+muy1} and \eqref{lemK=-1suffPhi-muy1} yields
\begin{equation}\label{lemK=-1suffvarphi+-}
\varphi_{+}(\xi)|_{x=\mu y}= \bar{\Phi}_{+} (\mu y, y)=\frac{\varphi_{+}(y)}{1 - \mu \varphi_{+}(y)}, \qquad
\varphi_{-}(\eta)|_{x=\mu y} =\bar{\Phi}_{-} (\mu y, y)=  \frac{ \varphi_{-}(y)}{1 - \mu \varphi_{-}(y)}.
\end{equation}
Substituting these expressions into \eqref{lemK=-1suff1+muPhi>0} implies
\begin{align}\label{lemK=-1suff1-muvarphi>0}
1 - \mu \varphi_{+}(y) > 0, \qquad 1 - \mu \varphi_{-}(y)>0.
\end{align}

Using the homogeneity of $\varphi_{\pm}$ and Euler's relation \eqref{Eulerhter}, along with \eqref{lemK=-1suffbarxxi} and \eqref{lemK=-1suffvarphi+-}, we compute:
%The homogeneity of $\varphi_{\pm}$  together with \eqref{Eulerhter}, \eqref{lemK=-1suffbarxxi} and \eqref{lemK=-1suffvarphi+-} implies
\begin{align*}
[\varphi_{+}(\xi)]_{\xi^k} x^k |_{x=\mu y}= \left.\frac{\mu \varphi_{+}(\xi)}{1+ \mu \varphi_{+}(\xi)}\right|_{x=\mu y} = \mu \varphi_{+}(y) , &\qquad
[\varphi_{-}(\eta)]_{\eta^k} x^k |_{x=\mu y}= \left.\frac{\mu \varphi_{-}(\eta)}{1+ \mu \varphi_{-}(\eta) }\right|_{x=\mu y} =\mu \varphi_{-}(y), \notag\\
[\varphi_{+}(\xi)]_{\xi^k \xi^l} x^k |_{x=\mu y}= 0, &\qquad [\varphi_{-}(\eta)]_{\eta^k \eta^l} x^k |_{x=\mu y}= 0.
\end{align*}
Since $[\varphi_{+}(\xi)]_{\xi^i}$ and $[\varphi_{-}(\eta)]_{\eta^i}$ are positively $0$-homogeneous, while
$[\varphi_{+}(\xi)]_{\xi^i \xi^j}$ and $[\varphi_{-}(\eta)]_{\eta^i \eta^j}$ are positively $(-1)$-homogeneous,
we apply the homogeneity relations along with \eqref{lemK=-1suffbarxxi}, \eqref{lemK=-1suff1+muPhi>0}, and \eqref{lemK=-1suffvarphi+-} to obtain:
\begin{align*}
[\varphi_{+}(\xi)]_{\xi^i}|_{x=\mu y}=[\varphi_{+}(y)]_{y^i},&\qquad [\varphi_{-}(\eta)]_{\eta^i}|_{x=\mu y}=[\varphi_{-}(y)]_{y^i},\\
[\varphi_{+}(\xi)]_{\xi^i \xi^j}|_{x =\mu y}   = \left. \frac{1}{1 + \mu \varphi_{+}(\xi)} \right|_{x =\mu y} [\varphi_{+}(y)]_{y^i y^j},
&\qquad
[\varphi_{-}(\eta)]_{\eta^i \eta^j}|_{x =\mu y}   = \left. \frac{1}{1 + \mu \varphi_{-}(\eta)} \right|_{x =\mu y} [\varphi_{-}(y)]_{y^i y^j}.
\end{align*}

Substituting the derived relations into \eqref{lemK=-1suffPhiyi}--\eqref{lemK=-1sufPhiyiyj_2} and using \eqref{lemK=-1suffvarphi+-}, we obtain after simplification:
\begin{align}
[\bar{\Phi}_{+}]_{y^i}(\mu y,y) =
\frac{ [\varphi_{+}(y)]_{y^i}}{1- \mu \varphi_{+}(y)}, &\qquad
[\bar{\Phi}_{-}]_{y^i}(\mu y,y) = \frac{ [\varphi_{-}(y)]_{y^i}}{1- \mu \varphi_{-}(y)},
\label{lemK=-1suffPhi+-1_2}\\
[\bar{\Phi}_{+}]_{y^i y^j} (\mu y,y) = [\varphi_{+}(y)]_{y^i y^j},
&\qquad
[\bar{\Phi}_{-}]_{y^i y^j} (\mu y,y) = [\varphi_{-}(y)]_{y^i y^j}.
\label{lemK=-1suffPhi+-22}
\end{align}
Combining these expressions with \eqref{lemK=-1suffPhi+muy1}--\eqref{lemK=-1suffvarphi+-}, \eqref{lemK=-1suffPhi+-1_2}--\eqref{lemK=-1suffPhi+-22}, and \eqref{phcipm}, we have
\[
[\bar{\Phi}_{+}-\bar{\Phi}_{-} ](\mu y,y)
= \frac{2\bar{\psi}(y)}{(1 - \mu \varphi_{+}(y))(1 - \mu \varphi_{-}(y))},
\]
\[ [\bar{\Phi}_{+}-\bar{\Phi}_{-} ]_{y^i}(\mu y,y)
=\frac{2 (1-\mu \bar{\phi}(y)) \bar{\psi}_{y^i}(y) + 2\mu\bar{\psi}(y)\bar{\phi}_{y^i}(y)  }{(1-\mu \varphi_{+}(y))(1-\mu \varphi_{-}(y))},
\qquad
[\bar{\Phi}_{+}-\bar{\Phi}_{-} ]_{y^i y^j} (\mu y,y)
 = 2 \bar{\psi}_{y^i y^j}(y). \]
Substituting these into \eqref{lemK=-1sufffundamental} at $x = \mu y$ yields the fundamental identity:
\begin{align}\label{K=-1suffFund1}
[ \bar{F}^2 ]_{y^i y^j}(\mu y,y) \theta^i \theta^j = \frac{ 2\bar{\psi}(y) \bar{\psi}_{y^i y^j}(y) \theta^i \theta^j}{(1 - \mu \varphi_{+}(y))(1 - \mu \varphi_{-}(y))} + 2 \frac{ \left[ (1-\mu \bar{\phi}(y)) \bar{\psi}(y)_{y^i}\theta^i + \mu \bar{\psi}(y)\bar{\phi}_{y^i}(y) \theta^i\right]^2  }{(1-\mu \varphi_{+}(y))^2 \left(1-\mu \varphi_{-}(y)\right)^2},
\end{align}
for any $\theta = (\theta^i) \in \Rno$.

We now complete the proof of   the ``$\Rightarrow$" part using \eqref{K=-1suffFund1}. Suppose that $([\bar{\psi}^2]_{y^iy^j}(y))$ is positive definite for some $y\in \Rno$.
If $\theta$ is parallel  to $y$, i.e.,  $\theta=\lambda y$, then \eqref{K=-1suffFund1} and $\bar{\psi}(y)>0$ imply
\[
[ \bar{F}^2 ]_{y^i y^j}(\mu y,y) \theta^i \theta^j=   \frac{ 2\lambda^2 \bar{\psi}^2(y)  }{(1-\mu \varphi_{+}(y))^2 \left(1-\mu \varphi_{-}(y)\right)^2} > 0.
 \]
 Alternatively, provided $\theta$ is not parallel to $y$, the positive definiteness of
$\left( [ \bar{\psi}^2 ]_{y^i y^j}(y) \right)$   implies
$   \bar{\psi}_{y^i y^j} (y)  \theta^i \theta^j > 0$ (cf.\,\cite[(1.2.9)]{BCS}).
Thus $[ \bar{F}^2 ]_{y^i y^j} \theta^i \theta^j > 0$ follows by \eqref{K=-1suffFund1} and $\bar{\psi}(y) > 0$,
 which completes the proof.
\end{proof}

\vskip 8mm
\section{Additional results for $\mathbf{K}=1$}\label{propergenerlengappexK=-1}
%	Section \ref{Forelengthsectonstad}

In this section, we provide the proofs of Observation~\ref{informobvers2} and Lemmas~\ref{standardfromRntoSn} and \ref{differenobversion}.
\begin{proof}[Proof of Observation~\ref{informobvers2}]
Statement \eqref{ob21} follows directly from \eqref{expssionPlim}. We now prove  Statement~\eqref{ob22}.

Let $\sigma_\alpha:[0,2\pi]\rightarrow \mathbb{S}^{n}$, $\alpha\in \mathbb{N}$, be a sequence of great circles.
By the Arzela--Ascoli theorem (cf.~Burago et al. \cite[Theorem 2.5.14]{BBI}), there exists a subsequence $(\sigma_{\alpha_k})_k$  convergent uniformly to a closed curve $\sigma:[0,2\pi]\rightarrow \mathbb{S}^n$.
Each $\sigma_{\alpha_k}$ is the intersection of $\mathbb{S}^n$ with a plane $\pi_{\alpha_k}\subset\mathbb{R}^{n+1}$ through the origin. Thus, for each $k$ and all $t \in [0, 2\pi]$, we have
\begin{equation}\label{curveinplane}
\langle\mathbf{n}_{\alpha_k} ,\sigma_{\alpha_k}(t)\rangle=0,\qquad |\sigma_{\alpha_k}(t)|=1,
\end{equation}
where $\mathbf{n}_{\alpha_k}$ is the Euclidean unit normal vector to $\pi_{\alpha_k}$, and $\sigma_{\alpha_k}(t)$ is viewed as a vector in $\mathbb{R}^{n+1}$. Passing to a further subsequence if necessary, we may assume that $\mathbf{n}_{\alpha_k}$ converges to some unit vector $\mathbf{n}$.
Hence, \eqref{curveinplane} implies that the limit curve $\sigma(t)$ lies in some plane $\pi$ through the origin  with normal vector $\mathbf{n}$.
Thus, $\sigma$ has to be a great circle.
\end{proof}

In the remainder of this subsection,  let $(x^i)$ denote the standard Euclidean coordinates on $\mathbb{R}^n$ and $(\zeta^j)=(\varphi, \theta^1, \theta^2, ..., \theta^{n-1})$ the standard spherical coordinates  on $\mathbb{S}^n$, where  $\theta^{s}\in(0, \pi)$ for $s=1,\cdots,n-2$, $\theta^{n-1}\in[0, \pi)$, and $\varphi \in (-\frac{\pi}{2}, \frac{3 \pi}{2} ]$.
A direct (though computationally involved) calculation yields the following transformation formulas between the coframes $({\dd}x^i)$ and $({\dd}\zeta^j)$; see \eqref{K=1spherical_x} for details.
\begin{lemma}\label{standardfromRntoSn}Define $\mathfrak{p}:\mathbb{R}^n\rightarrow \mathbb{S}^n_+$ by \eqref{projeticpm}, and let
\begin{equation}\label{maxtricformsphere}
\left(\mathfrak{p}^{-1}\right)^*{\dd}x^k= \mathcal{J}_i^k {\dd}\zeta^i.
\end{equation}
Then the matrix $\mathcal {J}=[\mathcal {J}_i^k]_{n\times n}$ has the following structure:
\begin{itemize}
\item \textbf{Case $n = 2$:}
\[
\mathcal{J} =
\begin{bmatrix}
{\sec^2}\varphi \cos\theta^1 & -\tan\varphi \sin\theta^1 \\
{\sec^2}\varphi \sin\theta^1 & \tan\varphi \cos\theta^1
\end{bmatrix}.
\]

\item \textbf{Case $n \geq 3$:}
\begin{align*}
\mathcal{J}_i^1 &=
\begin{cases}
{\sec^2}\varphi \cos\theta^1, & i = 1; \\
{\sec^2}\varphi \cos\theta^i \prod_{\alpha=1}^{i-1} \sin\theta^\alpha, & i = 2, \cdots, n-1; \\
{\sec^2}\varphi \prod_{\alpha=1}^{n-1} \sin\theta^\alpha, & i = n;
\end{cases} \\
\mathcal{J}_i^2 &=
\begin{cases}
-\tan\varphi \sin\theta^1, & i = 1; \\
\tan\varphi \cos\theta^1 \cos\theta^2, & i = 2; \\
\tan\varphi \cos\theta^1 \cos\theta^i \prod_{\alpha=2}^{i-1} \sin\theta^\alpha, & i = 3, \cdots, n-1; \\
\tan\varphi \cos\theta^1 \prod_{\alpha=2}^{n-1} \sin\theta^\alpha, & i = n;
\end{cases} \\
\mathcal{J}_i^n &=
\begin{cases}
0, & i = 1, \cdots, n-2; \\
-\tan\varphi \prod_{\alpha=1}^{n-1} \sin\theta^\alpha, & i = n-1; \\
\tan\varphi \cos\theta^{n-1} \prod_{\alpha=1}^{n-2} \sin\theta^\alpha, & i = n;
\end{cases}
\end{align*}
and for $3 \leq k \leq n-1$,
\[
\mathcal{J}_i^k =
\begin{cases}
0, & i = 1, \cdots, k-2; \\
-\tan\varphi \prod_{\alpha=1}^{i} \sin\theta^\alpha, & i = k-1; \\
\tan\varphi \cos\theta^i \cos\theta^{k-1} \prod_{\substack{\alpha=1, \alpha \neq k-1}}^{i-1} \sin\theta^\alpha, & i = k, \cdots, n-1; \\
\tan\varphi \cos\theta^{k-1} \prod_{\substack{\alpha=1, \alpha \neq k-1}}^{n-1} \sin\theta^\alpha, & i = n.
\end{cases}
\]
\end{itemize}

\end{lemma}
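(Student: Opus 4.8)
\textbf{Proof plan for Lemma~\ref{standardfromRntoSn}.}
The plan is to verify the matrix $\mathcal{J}$ by straightforward differentiation of the coordinate formulas already recorded in \eqref{K=1spherical_x}, which express the Euclidean coordinates $x^k\circ\mathfrak{p}^{-1}$ as explicit functions of the spherical coordinates $(\varphi,\theta^1,\dots,\theta^{n-1})$. By definition, $\mathcal{J}_i^k=\dfrac{\partial (x^k\circ\mathfrak{p}^{-1})}{\partial \zeta^i}$, so the entire lemma reduces to computing these partial derivatives and organizing them into the stated block structure. First I would treat the case $n=2$ separately, since then $x^1\circ\mathfrak{p}^{-1}=\tan\varphi\cos\theta^1$ and $x^2\circ\mathfrak{p}^{-1}=\tan\varphi\sin\theta^1$; differentiating in $\varphi$ gives the factor $\sec^2\varphi$ in the first column, and differentiating in $\theta^1$ produces the $\tan\varphi$ factor together with the rotation pattern $(-\sin\theta^1,\cos\theta^1)$ in the second column, exactly as displayed.

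For $n\geq 3$, I would proceed column by column according to the index $k$ of $x^k$. The key structural observation is that $x^k\circ\mathfrak{p}^{-1}=\tan\varphi\cdot \Theta^k(\theta)$, where $\Theta^k$ is a product of sines and a single cosine: $\Theta^1=\cos\theta^1$, $\Theta^k=\cos\theta^k\prod_{\alpha=1}^{k-1}\sin\theta^\alpha$ for $2\le k\le n-1$, and $\Theta^n=\sin\theta^{n-1}\prod_{\alpha=1}^{n-2}\sin\theta^\alpha$. Differentiating in $\varphi$ replaces $\tan\varphi$ by $\sec^2\varphi$ and leaves $\Theta^k$ untouched, which gives the first row $\mathcal{J}_1^1$ and shows $\mathcal{J}_1^k=0$ for $k\ge 2$ (since $\Theta^k$ for $k\ge2$ contains a factor $\sin\theta^1$ whose... no: rather, $\partial_\varphi$ acts only on $\tan\varphi$, so $\mathcal{J}_1^k=\sec^2\varphi\,\Theta^k$; the vanishing entries $\mathcal{J}_i^k=0$ for $i\le k-2$ instead come from $\theta$-differentiation). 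For $i\geq 2$, differentiating $\Theta^k$ in $\theta^i$: if $i\le k-2$ then $\theta^i$ does not appear in $\Theta^k$, giving $0$; if $i=k-1$ then $\cos\theta^{k-1}$ (or $\sin\theta^{n-1}$ when $k=n$) gets differentiated into $-\sin\theta^{k-1}$ (resp.\ $+\cos\theta^{n-1}$), producing the single-cosine-replaced-by-sine entries; and if $i\ge k$ then the factor $\sin\theta^i$ present in $\Theta^k$ becomes $\cos\theta^i$. Collecting these three subcases reproduces precisely the piecewise formulas for $\mathcal{J}_i^1$, $\mathcal{J}_i^2$, $\mathcal{J}_i^n$, and the general $\mathcal{J}_i^k$ ($3\le k\le n-1$), once one notes the index bookkeeping about which product the factor $\sin\theta^{k-1}$ is removed from.

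The main obstacle is purely notational: keeping track, across the various ranges of $i$ and $k$, of exactly which index is excluded from the running products $\prod\sin\theta^\alpha$ after a sine is converted to a cosine or a cosine to a sine. There is no conceptual difficulty and no need to invoke any geometric input beyond the defining formulas \eqref{K=1spherical_x}; the verification is a finite bookkeeping exercise. I would present it by stating the generic identity $x^k\circ\mathfrak{p}^{-1}=\tan\varphi\,\Theta^k(\theta)$, recording $\partial_\varphi$ and $\partial_{\theta^i}$ of $\tan\varphi$ and $\Theta^k$ in one display each, and then asserting that substituting these into $\mathcal{J}_i^k=\partial_{\zeta^i}(x^k\circ\mathfrak{p}^{-1})$ yields the tabulated entries by direct inspection, leaving the routine case-checking to the reader. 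This is consistent with how the excerpt itself flags the computation as ``direct (though computationally involved).''
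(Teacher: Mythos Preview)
Your approach is exactly the paper's: the lemma is stated as ``a direct (though computationally involved) calculation'' from \eqref{K=1spherical_x} with no further argument, and your plan to write $x^k\circ\mathfrak p^{-1}=\tan\varphi\cdot\Theta^k(\theta)$ and differentiate term by term is precisely that calculation.

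One bookkeeping error to fix. Although equation \eqref{maxtricformsphere} literally reads $\mathcal J_i^{\,k}=\partial_{\zeta^i}(x^k\circ\mathfrak p^{-1})$, the displayed entries in the lemma (and the use of $\mathcal J^k=[\mathcal J_i^{\,k}]_{n\times 1}$ as the $k$-th column in the proof of Lemma~\ref{differenobversion}) are actually organized by the transposed convention $\mathcal J_i^{\,k}=\partial_{\zeta^k}(x^i\circ\mathfrak p^{-1})$: the subscript $i$ labels the Euclidean coordinate $x^i$ and the superscript $k$ labels the spherical coordinate $\zeta^k$. For instance, the column $\mathcal J_i^{\,1}$ in the $n\geq 3$ case is $\sec^2\varphi\,\Theta^i$, i.e.\ $\partial_\varphi x^i$, not $\partial_{\zeta^i}x^1$. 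This is exactly why your intermediate claim that ``$\theta^i$ does not appear in $\Theta^k$ for $i\le k-2$'' is false (in fact $\Theta^k$ contains a factor $\sin\theta^i$ for every $i\le k-1$); with the roles of $i$ and $k$ exchanged, the correct vanishing condition becomes ``$\theta^{k-1}$ does not appear in $\Theta^i$ for $i\le k-2$,'' which holds, and then your three-subcase differentiation of $\Theta^i$ with respect to $\theta^{k-1}$ matches the stated entries line by line.
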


\begin{lemma}\label{differenobversion}
Let $w\in \partial \mathbb{S}^n_+\subset \mathbb{R}^{n+1}$ and $V=V^i\frac{\partial}{\partial \zeta^i}|_{w}\in T_{w}\mathbb{S}^n$. Regarding $V$ as a vector in $\mathbb{R}^{n+1}$, parallel transport it to the antipodal point $-w\in \partial\mathbb{S}^n_+\subset\mathbb{R}^{n+1}$, and denote the resulting vector by  $V|_{-w}=:\widetilde{V}^i\frac{\partial}{\partial \zeta^i}|_{-w}$. Then
\begin{equation}\label{negivereverspheretange}
V^i=-\widetilde{V}^i.
\end{equation}
\end{lemma}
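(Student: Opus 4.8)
The plan is to work directly in the ambient coordinates of $\mathbb{R}^{n+1}$ and translate back into spherical coordinates. First I would recall the explicit embedding: a point with spherical coordinates $(\zeta^i) = (\varphi, \theta^1, \dots, \theta^{n-1})$ on the equator $\partial\mathbb{S}^n_+$ (where $\varphi = \pm\frac{\pi}{2}$) maps to a unit vector $w \in \mathbb{R}^{n+1}$ via the formulas in \eqref{K=1spherical_x} (extended to $\varphi = \pm\pi/2$), and its antipode $-w$ is obtained, as recorded in Lemma~\ref{differenobversion}'s usage in the proof of Theorem~\ref{plancetospereK=1}, by the coordinate change $\varphi \mapsto \varphi - \sgn(\varphi)\pi$ with the $\theta$-coordinates unchanged. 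The key point is that the coordinate vector fields $\frac{\partial}{\partial \zeta^i}$ are, by the chain rule, linear combinations (with coefficients built from $\sin, \cos, \sec, \tan$ of the coordinates) of the ambient basis vectors $e_1, \dots, e_{n+1}$; that is, $\frac{\partial}{\partial \zeta^i}\big|_w = \sum_j \frac{\partial w^j}{\partial \zeta^i} e_j$, and similarly at $-w$.

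The crux of the argument is then the following observation: at $\varphi = \pm\pi/2$, differentiating the embedding formulas \eqref{K=1spherical_x} and the relation $w^{n+1} = \cos\varphi$ (up to normalization) shows that each partial derivative $\frac{\partial w^j}{\partial \zeta^i}$ changes sign precisely when one replaces $\varphi$ by $\varphi - \sgn(\varphi)\pi$ (i.e. when one passes from $w$ to $-w$). Concretely, every entry of the Jacobian matrix relating $({\dd}\zeta^i)$ to $({\dd}x^k)$ — equivalently the matrix $\mathcal{J}$ computed in Lemma~\ref{standardfromRntoSn}, or rather its analogue for the ambient coordinates $w^j$ — carries an odd power of $\tan\varphi$, $\sec\varphi$, $\sin\varphi$, or a factor that flips sign under $\varphi \mapsto \varphi - \sgn(\varphi)\pi$, while the $\theta$-dependent factors are unchanged. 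Hence $\frac{\partial (-w)^j}{\partial \zeta^i} = -\frac{\partial w^j}{\partial \zeta^i}$ for all $i, j$, so $\frac{\partial}{\partial \zeta^i}\big|_{-w} = -\,\frac{\partial}{\partial \zeta^i}\big|_w$ as vectors in $\mathbb{R}^{n+1}$ (where the right-hand side is the parallel transport, since parallel transport in $\mathbb{R}^{n+1}$ is just identification of vectors).

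Given this, the computation of \eqref{negivereverspheretange} is immediate: write $V = \sum_i V^i \frac{\partial}{\partial \zeta^i}\big|_w$ as a vector in $\mathbb{R}^{n+1}$; its parallel transport to $-w$ is the same element of $\mathbb{R}^{n+1}$, namely $V|_{-w} = \sum_i V^i \frac{\partial}{\partial \zeta^i}\big|_w = \sum_i V^i \big(-\frac{\partial}{\partial \zeta^i}\big|_{-w}\big) = \sum_i (-V^i)\frac{\partial}{\partial \zeta^i}\big|_{-w}$, whence $\widetilde{V}^i = -V^i$. I would present this cleanly by isolating the sign-flip claim for the coordinate vector fields as the main lemma-within-the-proof and then concluding in one line. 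I expect the only genuine obstacle to be bookkeeping: verifying the sign-flip claim entry-by-entry requires care with the product structure of the spherical-coordinate Jacobian (the cases $k=1$, $2 \le k \le n-1$, $k = n$ in Lemma~\ref{standardfromRntoSn}), and one must handle the two sheets $\varphi = +\pi/2$ and $\varphi = -\pi/2$ symmetrically via the $\sgn(\varphi)$; a slick way to avoid the casework is to note that $w \mapsto -w$ is the restriction to $\mathbb{S}^n$ of the linear map $-\id$ on $\mathbb{R}^{n+1}$, whose differential is again $-\id$, so it automatically sends $\frac{\partial}{\partial \zeta^i}\big|_w$ to $-$(the corresponding coordinate vector at $-w$) provided the coordinate chart is equivariant under the antipodal map in the sense that it sends $w$ and $-w$ to parameter values differing only by $\varphi \mapsto \varphi - \sgn(\varphi)\pi$ — which is exactly the content of the coordinate change used above. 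I would likely write the proof using this equivariance observation to keep it short, relegating the explicit Jacobian check to a remark or omitting it as routine.
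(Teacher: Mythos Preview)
Your proposal is correct and follows essentially the same approach as the paper: both express the spherical coordinate frame $\frac{\partial}{\partial \zeta^i}$ in terms of the ambient Euclidean frame via a Jacobian matrix (the paper calls it $\widetilde{\mathcal{J}}$) and verify that this matrix changes sign under the substitution $\varphi \mapsto \varphi - \sgn(\varphi)\pi$, whence $\frac{\partial}{\partial \zeta^i}\big|_{-w} = -\frac{\partial}{\partial \zeta^i}\big|_{w}$ in $\mathbb{R}^{n+1}$ and the conclusion follows. Your ``slick'' equivariance observation---that the antipodal map is $-\mathrm{id}$ on $\mathbb{R}^{n+1}$ with differential $-\mathrm{id}$, while in spherical coordinates it acts as a translation in $\varphi$ with Jacobian the identity, so the two descriptions of $dA$ force the sign flip---is a genuine shortcut over the paper's entry-by-entry check of $\widetilde{\mathcal{J}}|_w = -\widetilde{\mathcal{J}}|_{-w}$, and would make the proof cleaner.
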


\begin{proof}
Let $\left(x^1, \cdot\cdot\cdot, x^{n},x^{n+1} \right)$ be the Euclidian coordinates of $\mathbb{R}^{n+1}$. For a point $w\in \mathbb{S}^n\subset \mathbb{R}^{n+1}$, its coordinates $(x^i)$ and $(\zeta^j)$ are related by
\begin{align*}%\label{K=1transportmatRn+1}
\begin{cases}
x^{n+1} & =\cos \varphi,\\
x^1 & = \sin\varphi \cos\theta^{1}, \\
 &\vdots\\
x^{i} &  = \sin\varphi \cos\theta^{i} \prod_{\alpha=1}^{i-1}\sin\theta^{\alpha} , \quad 2\leq i\leq n-1,\\
& \vdots\\
x^{n} & =\sin\varphi \sin\theta^{n-1} \prod_{\alpha=1}^{n-2}\sin\theta^{\alpha},
\end{cases}
\end{align*}
which
implies the coordinate transformation
 \begin{align}\label{K=1transportmatRnpartialX}
 \left( \frac{\partial}{\partial \varphi},  \frac{\partial}{\partial \theta^1},\cdots,\frac{\partial}{\partial \theta^{n-1}}   \right)= \left( \frac{\partial}{\partial x^{n+1}}, \frac{\partial}{\partial x^1},\cdots,\frac{\partial}{\partial x^n}   \right)\widetilde{\mathcal{J}},
 \end{align}
%\begin{align}
%\begin{bmatrix}
% & \frac{\partial}{\partial \varphi}  \\
%& \frac{\partial}{\partial \theta^1}  \\
%& \vdots \\
%&\frac{\partial}{\partial \theta^{n-2}}\\
%& \frac{\partial}{\partial \theta^{n-1}} \\
%\end{bmatrix}
%=\widetilde{\mathcal{J}}^{T}
%\begin{bmatrix}
% & \frac{\partial}{\partial X^0}  \\
%& \frac{\partial}{\partial X^1}  \\
%& \vdots \\
%& \frac{\partial}{\partial X^{n-1}}\\
%& \frac{\partial}{\partial X^n}
%\end{bmatrix}
%,
%\end{align}
where $\widetilde{\mathcal{J}}$ is an $ (n+1) \times n$ matrix  given by
\begin{align*}
\widetilde{\mathcal{J}} =
\begin{bmatrix}
& -\sin{\varphi} & 0& \cdot\cdot\cdot & 0 \\
& {\cos^3}{\varphi}\ \mathcal{J}^1&  \cos{\varphi}\ \mathcal{J}^2  & \cdot\cdot\cdot & \cos{\varphi}\ \mathcal{J}^n
\end{bmatrix}
,
\end{align*}
and $\mathcal{J}^k=[\mathcal{J}_{i}^k]_{n\times 1}$ defined by Lemma~\ref{standardfromRntoSn}.

The $(n-1)$-tuple $(\theta^1,\ldots,\theta^{n-1})$ in  the spherical coordinate system   $(\varphi,\theta^1,\ldots,\theta^{n-1})$  corresponds to the standard spherical coordinates on $\mathbb{S}^{n-1}$. Thus,
for $w=(\pi/2,\theta^1,\ldots,\theta^{n-1})$, we have $-w=(-\pi/2,\theta^1,\ldots,\theta^{n-1})$. A direct calculation shows  $\widetilde{\mathcal{J}}|_{w } = - \widetilde{\mathcal{J}}|_{-w}$,
which together with \eqref{K=1transportmatRnpartialX} implies  \eqref{negivereverspheretange}.
\end{proof}

\vskip 10mm
\section{Auxiliary properties of Sobolev spaces
%	Section \ref{Forelengthsectonstad}
}\label{propesoboleve}
In this section, we prove Lemma~\ref{SobolevLemma1} using approximations by Lipschitz functions.
\begin{definition}
A function $f:M\rightarrow \mathbb{R}$ is called {\it Lipschitz} if there exists a constant $C\geq 0$ such that
\begin{equation}\label{lispchin}
|f(x_1)-f(x_2)|\leq  C\,d_F(x_1,x_2),\quad \forall  x_1,x_2\in M.
\end{equation}
In this case, $f$ is also said to be {\it $C$-Lipschitz}.
%The optimal  constant $C$ {\color{blue}satisfying \eqref{lispchin}} is called the {\it dilatation} of $f$, denoted by $\dil(f)$.
The minimal constant $C$ satisfying  \eqref{lispchin}  is called the {\it dilatation} of $f$, denoted by $\dil(f)$. %And $f:U\rightarrow \mathbb{R}$ is called {\it locally Lipschitz} if for every compact set $\mathscr{K}\subset U$, the restriction $f|_\mathscr{K}$ is Lipschitz.
\end{definition}

For convenience,  we denote by $\Lip(M)$  the {\it collection of Lipschitz functions on $M$}, and let $\Lip_0(M):=C_0(M)\cap \Lip(M)$ be the {\it collection of Lipschitz functions  with compact support}.
The following approximation (see \cite[Proposition 3.3]{KLZ}) is useful in the sequel.
\begin{lemma}[\cite{KLZ}]\label{lipsconverppax}
Let $(M,F,\m)$ be an $\FMMM$. For every $u\in \Lip_0(M)$,   there exists a sequence of smooth Lipschitz functions $u_n\in C^\infty_0(M)\cap \Lip_0(M)$  such that
\[
\supp u\cup \supp u_n\subset \mathscr{K},\quad |u_n|\leq C,\quad F^*({\dd} u_n)\leq   \dil(u_n)\leq C,\quad u_n\rightrightarrows u,\quad \|u-u_n \|_{W^{1,p}_{\m}}\rightarrow 0,
\]
where $\mathscr{K}\subset M$ is a compact set  and $C>0$ is  a constant. In particular, if $u$ is nonnegative, so are $u_n$'s.
\end{lemma}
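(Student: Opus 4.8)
\textbf{Proof strategy for Lemma~\ref{lipsconverppax}.}
The plan is to combine a standard mollification argument with a careful use of McShane-type Lipschitz extensions and a truncation that preserves compact support. First I would fix $u \in \Lip_0(M)$ with $\supp u \subset \mathscr{K}_0$ for some compact set $\mathscr{K}_0$, and set $C_0 := \max\{\,\sup_M |u|,\ \dil(u)\,\}$. Choose a relatively compact open neighborhood $\mathscr{K}$ of $\mathscr{K}_0$ and a cutoff function $\chi \in C^\infty_0(M)$ with $\chi \equiv 1$ on a neighborhood of $\mathscr{K}_0$ and $\supp \chi \subset \mathscr{K}$. The idea is: mollify $u$ on the compact set $\mathscr{K}$ using a partition of unity subordinate to finitely many coordinate charts, then multiply by $\chi$ to restore compact support, and finally verify the four required properties.

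The key steps, in order, are as follows. \textbf{Step 1 (local mollification).} Cover $\mathscr{K}$ by finitely many charts $(\mathscr{U}_j,\varphi_j)$, $j=1,\dots,N$, with $\varphi_j(\mathscr{U}_j)\subset\mathbb{R}^n$, and take a smooth partition of unity $\{\rho_j\}$ subordinate to $\{\mathscr{U}_j\}$ on a neighborhood of $\mathscr{K}$. In each chart, write $u_j := (\rho_j u)\circ\varphi_j^{-1}$ and mollify: $u_j^{\varepsilon} := u_j * \eta_\varepsilon$ with a standard mollifier $\eta_\varepsilon$; then set $u^\varepsilon := \chi\cdot\sum_j (u_j^\varepsilon\circ\varphi_j)$, extended by $0$. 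For $\varepsilon$ small enough (so that the $\varepsilon$-neighborhoods of $\supp\rho_j$ stay inside $\varphi_j(\mathscr{U}_j)$ and $\chi\equiv 1$ where $\rho_j u\neq 0$) this is a well-defined element of $C^\infty_0(M)$ with $\supp u^\varepsilon\subset\mathscr{K}$. \textbf{Step 2 (uniform bounds).} Since mollification does not increase the sup-norm and since $u$ is continuous on the compact $\mathscr{K}$, one gets $|u^\varepsilon|\le \sup_M|u|\le C$ for a suitable constant $C$ depending only on $u$ and the fixed data. For the gradient bound, use that on each chart the functions $\varphi_j,\varphi_j^{-1}$ are bi-Lipschitz with respect to the Euclidean and Finsler distances (as $\mathscr{K}$ is compact), so $u$ restricted to charts is Lipschitz in the Euclidean sense; mollification of a Lipschitz function has Euclidean gradient bounded by the Lipschitz constant, and pulling back and using the comparability of $F^*$ with the Euclidean cometric on the compact set $\mathscr{K}$ yields $F^*({\dd}u^\varepsilon)\le \dil(u^\varepsilon)\le C$ uniformly in small $\varepsilon$. \textbf{Step 3 (convergence).} Uniform convergence $u^\varepsilon\rightrightarrows u$ follows from uniform continuity of $u$ on $\mathscr{K}$ together with the partition-of-unity decomposition. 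Then $\|u-u^\varepsilon\|_{L^p_\m}\to 0$ by dominated convergence (all functions supported in $\mathscr{K}$, uniformly bounded, $\m(\mathscr{K})<+\infty$). For the gradient term, one has $F^*({\dd}u^\varepsilon)\to F^*({\dd}u)$ in $L^p(\mathscr{K},\m)$: in each chart $\nabla_{\mathsf E}(u_j^\varepsilon)\to \nabla_{\mathsf E}u_j$ in $L^p$ (Euclidean mollification of Sobolev functions), and then the continuity of $\xi\mapsto F^*(x,\xi)$, the uniform bound from Step 2, and dominated convergence give convergence of $\|{\dd}u-{\dd}u^\varepsilon\|_{L^p_\m}$ to $0$. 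Finally, choose $u_n := u^{\varepsilon_n}$ for a sequence $\varepsilon_n\downarrow 0$. \textbf{Step 4 (nonnegativity).} If $u\ge 0$, then each $u_j\ge 0$, hence $u_j^\varepsilon\ge 0$ (mollifier is nonnegative), and $\chi\ge 0$, so $u_n\ge 0$.

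\textbf{Main obstacle.} The delicate point is Step 2: controlling $F^*({\dd}u_n)$ (equivalently $\dil(u_n)$) \emph{uniformly} in $n$. Mollification is a Euclidean operation performed in charts, whereas the dilatation is measured by the Finsler metric; one must pass between the Euclidean cometric in each chart and $F^*$, using that these are comparable with constants bounded on the fixed compact set $\mathscr{K}$ (by continuity and compactness of the indicatrix bundle), and one must track that the partition-of-unity terms $\rho_j u$ remain Lipschitz with a controlled constant (the product rule introduces $\|\nabla\rho_j\|_\infty$, finite and fixed). A second, more routine, subtlety is ensuring $\supp u_n\subset\mathscr{K}$ for all $n$ simultaneously; this is handled by choosing $\mathscr{K}$ strictly larger than $\supp u$ and restricting to $\varepsilon$ below a fixed threshold so that the $\varepsilon$-enlargement of $\supp u$ stays inside the region where $\chi\equiv 1$. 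I expect no difficulty in the remaining steps once these comparability constants are pinned down.
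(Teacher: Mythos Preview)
The paper does not actually prove this lemma: it is quoted verbatim as Proposition~3.3 of \cite{KLZ} and used as a black box, so there is no in-paper argument to compare against. Your mollification-via-partition-of-unity strategy is the standard route for such smooth approximation results and is correct in outline; the only genuinely Finsler-specific content is exactly the point you flagged in Step~2, namely that on the fixed compact set $\mathscr{K}$ the cometrics $F^*(x,\cdot)$ and $|\cdot|_{\mathrm E}$ are uniformly comparable, so Euclidean Lipschitz bounds transfer to bounds on $F^*(\pm{\dd}u^\varepsilon)$ and hence on $\dil(u^\varepsilon)$. One small remark: in the asymmetric setting, bounding $\dil(u^\varepsilon)$ requires controlling both $F^*({\dd}u^\varepsilon)$ and $F^*(-{\dd}u^\varepsilon)$ (since $|u(x_1)-u(x_2)|\le C\,d_F(x_1,x_2)$ needs both signs), but your Euclidean gradient bound gives both simultaneously, so this causes no trouble.
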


\begin{proof}[Proof of Lemma \ref{SobolevLemma1}] The proof processes in two steps.

\smallskip

{\bf Step 1.} In this step, we show $u_\alpha=-e^{-\alpha r(x)}\in W^{1,p}_0(M,F,\m)$.

Note that  ${F}^{*}({\dd}u_\alpha)=\alpha e^{-\alpha r}=\alpha|u_\alpha|$, since $F^*({\dd}r)=1$. By $|u_\alpha|\leq 1$ and $\m(M)<+\infty$, we have
\begin{align*}%\label{Fstarplimited_Kzero}
\int_M |u_\alpha|^p \dm \leq \m(M)<+\infty,\qquad  \int_M {F}^{*p}({\dd}u_\alpha) \dm=\alpha^p \int_M|u_\alpha|^p\dm  < + \infty,
\end{align*}
which implies $\|u_\alpha\|_{W^{1,p}_{\m}}<+\infty$.

For each small $\varepsilon>0$, define
\[
u_{\alpha,\varepsilon}(x):=
\begin{cases}
-e^{-\alpha\varepsilon}, &\text{ if }r(x)<\varepsilon;\\[3pt]

-e^{-\alpha r(x)}, &\text{ if }r(x)\geq \varepsilon.
\end{cases}
\]
Since $ |u_\alpha-u_{\alpha,\varepsilon}|\leq |u_\alpha|\in L^p(M,\m)$ and $F^{*}({\dd}u_\alpha)\in L^p(M,\m)$, the dominated convergence theorem yields
\begin{align*}
&\lim_{\varepsilon\rightarrow 0^+}\int_M |u_\alpha-u_{\alpha,\varepsilon}|^p \dm    = \lim_{\varepsilon\rightarrow 0^+}\int_M \chi_{\{r<\varepsilon\}}(e^{-\alpha r}-e^{-\alpha\varepsilon})^p \dm=0,\\
&\lim_{\varepsilon\rightarrow 0^+}\int_M F^{*p}({\dd} u_\alpha-{\dd} u_{\alpha,\varepsilon}) \dm  = \lim_{\varepsilon\rightarrow 0^+}\int_{M}\chi_{\{r<\varepsilon\}} F^{*p}({\dd}u_\alpha) \dm =0,
\end{align*}
where $\chi_{\{r<\varepsilon\}}$ denotes the characteristic function of the set $\{r<\varepsilon\}$.
Hence, %$u_\alpha$ can be approximated by $(u_{\alpha,\varepsilon})_\varepsilon$ in the sense of $W^{1,p}_0(M,F,\m)$, i.e.,
\begin{equation}\label{firstconverge}
\lim_{\varepsilon\rightarrow 0^+}\|u_\alpha-u_{\alpha,\varepsilon}\|_{W^{1,p}_{\m}}= 0,
\end{equation}
 which implies
\[
\|u_{\alpha,\varepsilon}\|_{W^{1,p}_{\m}}=\|u_{\alpha,\varepsilon}\|_{L^p_{\m}}+\|{\dd} u_{\alpha,\varepsilon}\|_{L^p_{\m}}\leq \Big(\|u_{\alpha,\varepsilon}-u_\alpha\|_{L^p_{\m}}+\|u_\alpha\|_{L^p_{\m}}\Big)+\|{\dd} u_\alpha\|_{L^p_{\m}}<+\infty.
\]

Now define $u_{\alpha, \varepsilon,\delta}:=\min\{0, u_{\alpha, \varepsilon} +\delta\}$.
We claim $u_{\alpha, \varepsilon,\delta}\in \Lip_0(M)$. In fact,
\[
\supp u_{\alpha, \varepsilon,\delta}=\overline{\{u_{\alpha, \varepsilon,\delta}<0\}}=\overline{B^+_{ -a^{-1}{\ln\delta}}\left(o\right)},
\]
where $B^+_{ -a^{-1}{\ln\delta}}\left(o\right)=\{ r< -\alpha^{-1}\ln \delta\}$ is a forward ball defined by \eqref{forward/backwradball}$_1$, and its closure is compact due to forward completeness.
Then the construction implies
\[
 F^{*}(\pm {\dd} u_{\alpha, \varepsilon,\delta})\leq F^{*}(\pm {\dd} u_{\alpha, \varepsilon})\leq F^{*}(\pm {\dd} u_{\alpha})\leq \max_{x\in \overline{B^+_{ -a^{-1}{\ln\delta}}\left(o\right)}}\left\{ F^{*}(\pm {\dd} u_{\alpha})\right\} =:C< +\infty.
\]
Given $x_1,x_2 \in  \overline{B^+_{ -a^{-1}{\ln\delta}}\left(o\right)}$, let $\gamma(t)$, $t\in [0,d_F(x_1,x_2)]$ be a minimal unit-speed geodesic from $x_1$ to $x_2$.
Then by \eqref{dualff*}, we have
\begin{align*}
&|u_{\alpha, \varepsilon,\delta}(x_1)-u_{\alpha, \varepsilon,\delta}(x_2)|=\left| \int^{d_F(x_1, x_2)}_0 \frac{{\dd}}{{\dd}t}u_{\alpha, \varepsilon,\delta}(\gamma(t))  {\dd}t\right|=\left| \int^{d_F(x_1, x_2)}_0 \langle \dot{\gamma}(t),{\dd}u_{\alpha, \varepsilon,\delta}\rangle  {\dd}t\right|\\
\leq &  \int^{d_F(x_1, x_2)}_0 \left|\langle \dot{\gamma}(t),{\dd}u_{\alpha, \varepsilon,\delta} \rangle\right|  {\dd}t\leq    \int^{d_F(x_1, x_2)}_0 F(\dot{\gamma}(t)) \max \{F^*(\pm {\dd} u_{\alpha, \varepsilon,\delta}) \}{\dd}t=  C \, d_F(x_1, x_2),
\end{align*}
which establishes the claim.

By Lemma~\ref{lipsconverppax} and \eqref{firstconverge}, it remains to prove
\begin{equation}\label{convertgelipp}
\lim_{\delta\rightarrow 0^+}\|u_{\alpha,\varepsilon}-u_{\alpha,\varepsilon,\delta}\|_{W^{1,p}_{\m}}=0.
\end{equation}
In fact, if \eqref{convertgelipp} holds, then a diagonal argument shows that  $u_\alpha$ can be approximated by smooth functions with compact supports.

To prove \eqref{convertgelipp}, by $\m(M) < +\infty$ we have
\begin{align*}
&\lim_{\delta\rightarrow 0^+}\int_{\{r < -\alpha^{-1}\ln \delta\}}\delta^p \dm \leq \lim_{\delta\rightarrow 0^+}\delta^p\m(M)=0.
\end{align*}
Moreover, since $u_{\alpha,\varepsilon}\in L^p(M,\m)$ and $\chi_{\{r \geq  -\alpha^{-1}\ln \delta\}}\rightarrow 0$, the dominated convergence theorem yields
\[
\lim_{\delta\rightarrow 0^+}\int_{\{r \geq  -\alpha^{-1}\ln \delta\}}|u_{\alpha,\varepsilon}|^p \dm=\lim_{\delta\rightarrow 0^+}\int_M \chi_{\{r \geq  -\alpha^{-1}\ln \delta\}}|u_{\alpha,\varepsilon}|^p \dm = 0.
\]
Combining these, we have
\begin{align}\label{deltaconvergezero1}
\lim_{\delta\rightarrow 0^+}\int_M |u_{\alpha,\varepsilon}-u_{\alpha,\varepsilon,\delta}|^p \dm=\lim_{\delta\rightarrow 0^+}\int_{\{r < -\alpha^{-1}\ln \delta\}}\delta^p \dm +\lim_{\delta\rightarrow 0^+}\int_{\{r \geq  -\alpha^{-1}\ln \delta\}}|u_{\alpha,\varepsilon}|^p \dm =0.
\end{align}
Similarly, since $ F^{*}({\dd} u_{\alpha,\varepsilon})\in L^p(M,\m)$, the dominated convergence theorem yields
\begin{align}\label{deltaconvergezero2}
\lim_{\delta\rightarrow 0^+}\int_M F^{*p}({\dd} u_{\alpha,\varepsilon}-{\dd} u_{\alpha,\varepsilon,\delta})  \dm =\lim_{\delta\rightarrow 0^+}\int_{\{ r\geq -\alpha^{-1}\ln \delta\}} F^{*p}({\dd} u_{\alpha,\varepsilon}) \dm = 0.
\end{align}
Then \eqref{convertgelipp} follows from \eqref{deltaconvergezero1} and \eqref{deltaconvergezero2}.

\smallskip

{\bf Step 2.} In this step, we show $u= - \left[\ln(2+r)\right]^{-\frac{1}{n}}\in W^{1,p}_0(M,F,\m)$.

\smallskip

The argument is analogous to Step 1, so we provide a sketch. First, observe that
\begin{equation*}%\label{K=0urexpression}
 |u|\leq \ln2^{-\frac{1}n}, \quad u_r = \frac{\partial u}{\partial r} = \frac{1}{n(2+r)} \big[\ln(2+r)\big]^{-1-\frac{1}{n}} \in \left(0,  \frac{1}{2n(\ln2)^{1+\frac{1}{n}}}\right).
 \end{equation*}
Since $\m(M)<+\infty$ and   $F^{*}({\dd} r) =1$,  we have
\begin{align*}
\int_{M} |u|^p {\dd}x
 \leq ( \ln 2 )^{-\frac{p}{n}}\, {\m}({M})<+\infty, \qquad \int_{{M}} {F}^{*p}({\dd}u) {\dd}x=\int_{{M}} u_r^p {\dd}x\leq \frac{{\m}({M})}{(2n)^p(\ln2)^{p+\frac{p}{n}}}<+\infty,\label{K=0ulpberwadl}
\end{align*}
so $\|u\|_{W^{1,p}_{{\m}}}<+\infty$.
For $\varepsilon>0$, define
\[
u_\varepsilon(x):=
\begin{cases}
- [\ln(2+\varepsilon)]^{-\frac{1}{n}}, &\text{ if }r(x)<\varepsilon;\\[3pt]
- [\ln(2+r(x))]^{-\frac{1}{n}}, &\text{ if }r(x)\geq \varepsilon.
\end{cases}
\]
%Then
%\[
%u - u_\varepsilon:=
%\begin{cases}
%- [\ln(2+r)]^{-\frac{1}{n}} + [\ln(2+\varepsilon)]^{-\frac{1}{n}}, &\text{ if }r<\varepsilon;\\
%\\
%0, &\text{ if }r\geq \varepsilon.
%\end{cases}
%\]
Then
  $0\leq |u-u_\varepsilon|\leq |u|\in L^p({M},{\m})$ and ${F}^{*}({\dd}u_\varepsilon) \in L^p({M},{\m})$. By the dominated convergence theorem,
\begin{equation}\label{K=0Berwaldfirstconverge}
\lim_{\varepsilon \rightarrow 0^+}\|u-u_\varepsilon\|_{W^{1,p}_{{\m}}}=0.
\end{equation}
Hence $u$ can be approximated by $(u_\varepsilon)_{\varepsilon}$ in the sense of $W^{1,p}_0({M},F,{\m})$.

Now set $u_{\varepsilon,\delta}:=\min\{0, u_\varepsilon+\delta\}$ for $\delta\in (0,(\ln 2)^{-\frac1n})$. Then $u_{\varepsilon,\delta} \in \Lip_0({M})$ and
\begin{equation}\label{K=0Berwaldconvertgelipp}
\lim_{\delta\rightarrow 0^+}\|u_\varepsilon-u_{\varepsilon,\delta}\|_{W^{1,p}_{{\m}}}=0.
\end{equation}
By Lemma~\ref{lipsconverppax}, \eqref{K=0Berwaldfirstconverge}, and \eqref{K=0Berwaldconvertgelipp}, a diagonal argument shows that $u$ can be approximated by smooth functions with compact supports, completing the proof.
\end{proof}

%\section*{Acknowledgements}

%The authors are grateful to Robert L.~Bryant for his guidance regarding relevant references.
%\vskip 4mm

\vskip 10mm


\begin{thebibliography}{10}
\bibitem{Alvarez} J.~C.~{\'A}lvarez Paiva, \textit{Symplectic geometry and Hilbert's fourth problem}, J.\ Differential\ Geom.,\ \textbf{69}(2)(2005), 353--378.


\bibitem{BL} V.~Bangert, Y.~Long, \textit{The existence of two closed geodesics on every Finsler 2-sphere}, Math.\ Ann.,\ \textbf{346}(2)(2010), 335--366.



\bibitem{BC} D.~Bao, S.-S.~Chern, \textit{A note on the Gauss--Bonnet theorem for Finsler spaces}, Ann.\ of\ Math.,\ \textbf{143}(2)(1996), 233--252.

\bibitem{BCS} D.~Bao, S.-S.~Chern, Z.~Shen, \textit{An Introduction to Riemannian-Finsler Geometry}, GTM, vol. \textbf{200}, Springer--Verlag, 2000.

\bibitem{BRS} D.~Bao, C.~Robles, Z.~Shen, \textit{Zermelo navigation on Riemannian manifolds}, J.\ Differential\ Geom.,\ \textbf{66}(3)(2004), 377--435.

\bibitem{Belt} E.~Beltrami, \textit{Risoluzione del problema: riportare i punti di una superficie sopra un piano in modo che le linee geodetiche vengano rappresentate da linee rette}, Ann.\ Mat.\ Pura\ Appl.,\ \textbf{7}(1865), 185--204.


\bibitem{Be1} L.~Berwald,  \textit{Parallel\'ubertragung  in allgemeinen R\"aumen},  Atti\
Congr.\ Intern.\ Mat Bologna,\ \textbf{4}(1928), 263--270.

\bibitem{Be2} L.~Berwald,  \textit{\"Uber die n-dimensionalen  Geometrien konstanter Kr\"ummung, in denen die Geraden die k\"urzesten sind}, Math.\ Z.,\ \textbf{30}(1929), 449--469.

\bibitem{Bry} R.~L.~Bryant, \textit{Finsler structures on the 2-sphere satisfying $K = 1$}, Finsler Geometry,
Contemporary Mathematics \textbf{196}, Amer. Math. Soc., Providence,
RI, 1996, 27--42. %MR \textbf{97e}:53128.

\bibitem{Bry2} R.~L.~Bryant, \textit{Projectively flat Finsler $2$-spheres of constant curvature},
 Selecta\ Math.,\ New\ Series,\ \textbf{3}(2)(1997), 161--204.

\bibitem{Bryant} R.~L.~Bryant, \textit{Some remarks on Finsler manifolds with constant flag curvature}, Houston\ J.\ Math.,\ \textbf{28}(2)(2002), 221--262.

\bibitem{BHM} R.~L.~Bryant, L.~Huang, X.~Mo, \textit{On Finsler surfaces of constant flag curvature with a Killing field},  J.\ Geom.\ Phys.,\  \textbf{116}(2017), 345--357.

\bibitem{BFIMZ} R.~L.~Bryant, P.~Foulon, S.~V.~Ivanov, V.~S.~Matveev, W.~Ziller, \textit{Geodesic behavior for Finsler metrics of constant positive flag curvature on $S^2$}, J.\ Differential\ Geom.,\ \textbf{117}(1)(2021), 1--22.

\bibitem{BBI} D.~Burago, Y.~Burago, S.~Ivanov, \textit{A Course in Metric Geometry}, American Mathematical Society, 2001.

\bibitem{BI} D.~Burago, S.~Ivanov, \textit{On asymptotic volume of Finsler tori, minimal surfaces in normed spaces, and symplectic filling volume}, Ann.\ of\ Math.,\ \textbf{156}(3)(2002), 891--914.


\bibitem{Busemann} H.~Busemann, \textit{Problem IV: Desarguesian spaces}, in: Mathematical developments arising from Hilbert problems (Proc. Sympos. Pure Math., Northern Illinois Univ., De Kalb, Ill., 1974), Proc.\ Sympos.\ Pure\ Math.,\ Vol.~XXVIII, Amer.\ Math.\ Soc.,\ Providence, RI, 1976, pp.~131--141.

\bibitem{BM} H.~Busemann, W.~Mayer, \textit{On the foundations of calculus of variations}, Trans.\ Amer.\ Math.\ Soc.,\ \textbf{49}(2)(1941), 173--198.

%\bibitem{CM} A.~Cambina, L.~Martein, \textit{Generalized Convexity and Optimization. Theory and Applications}, Springer, Berlin, 2009.

\bibitem{ChengLi} P.~Cheng, B.~Li, \textit{Reversible Finsler metrics of constant
flag curvature}, Differential\ Geom.\ Appl.,\  \textbf{89}(2023), Paper No.~102017, 12~pp.

\bibitem{Ch} S.~Y.~Cheng, \textit{Eigenvalue comparison theorems and its geometric applications}, Math.\ Z.,\ \textbf{143}(1975), 289--297.


\bibitem{DLW} H.~Duan, Y.~Long, W.~Wang, \textit{Two closed geodesics on compact simply connected bumpy Finsler manifolds}, J.\ Differential Geom.,\ \textbf{104}(2)(2016), 275--289.

\bibitem{DQ} H.~Duan, Z.~Qi, \textit{Multiple closed geodesics on Finsler 3-dimensional sphere}, J.\ Funct.\ Anal.,\ \textbf{288}(10)(2025), Paper No.~110863, 17~pp.

\bibitem{Eva} L.~C.~Evans, \textit{Partial Differential Equations}, Berkeley Mathematics, 1994.

\bibitem{Fai} D.~Faifman, \textit{A Funk perspective on billiards, projective geometry and Mahler volume}, J.\ Differential\ Geom.,\ \textbf{127}(1)(2024), 161--212.

\bibitem{Foulon} P.~Foulon, \textit{Ziller-Katok deformations of Finsler metrics}, in: 2004 International Symposium on Finsler Geometry, Tianjin, PRC, 2004, pp.~22--24.

\bibitem{FM} P.~Foulon, V.~S.~Matveev, \textit{Zermelo deformation of Finsler metrics by Killing vector fields}, Electron.\ Res.\ Announc.\ Math.\ Sci.,\ \textbf{25}(2018), 1--7.

\bibitem{Funk1} P.~Funk, \textit{\"Uber Geometrien, bei denen die Geraden die K\"urzesten sind}, Math.\ Ann.,\ \textbf{101}(1929), 226--237.

\bibitem{Gri} P.~Grisvard, \textit{Elliptic problems in nonsmooth domains}, Pitman Advanced Publishing
Program, 1985.

\bibitem{Hamel} G.~Hamel, \textit{\"Uber die Geometrieen, in denen die Geraden die K\"urzesten sind}, Math.\ Ann.,\ \textbf{57}(1903), 231--264.

\bibitem{Hilbert} D.~Hilbert, \textit{Mathematische Probleme: Vortrag, gehalten auf dem internationalen Mathematiker-Congress zu Paris 1900.} (German)  G\"ott.\ Nachr.\ (1900), 253--297. Reprinted in Archiv der Mathematik and Physik, \textbf{3} (1901), 44--63, 213--237. English translation by Mary Winston Newson,  \textit{Mathematische Probleme: Lecture delivered before the international congress of mathematicians at Paris in 1900.} Bull.\ Amer.\ Math.\ Soc.,\ \textbf{8}(1902), 437--479.


\bibitem{Hl} L.~H\"ormander, \textit{Notions of convexity},  Reprint of the 1994 edition,\ Basel,\ Birkh\"auser, 2007.

\bibitem{HKZ} L.~Huang, A.~Krist\'aly, W.~Zhao, \textit{Sharp uncertainty principles on general Finsler manifolds},  Trans.\ Amer.\ Math.\ Soc.,\ \textbf{373}(11)(2020), 8127--8161.

\bibitem{HM} L.~Huang, X.~Mo, \textit{On the flag curvature of a class of Finsler metrics produced by the navigation problem}, Pacific\ J.\ Math.,\ \textbf{277}(1)(2015), 149--168.

\bibitem{Katok} A.~B.~Katok, \textit{Ergodic perturbations of degenerate integrable Hamiltonian systems},
Izv.\  Akad.\ Nauk\ SSSR.,\ \textbf{37}(1973), 539--576. English translation in Math.\ USSR-Isv.,\
\textbf{7}(1973), 535--571.

\bibitem{KLZ} A.~Krist\'aly, B.~Li, W.~Zhao, \textit{Failure of famous functional inequalities on Finsler manifolds: the influence of $S$-curvature}, Preprint (2024). Available at
\textsf{arXiv:2409.05497}

\bibitem{KR} A.~Krist\'aly, I.~J.~Rudas, \textit{Elliptic problems on the ball endowed with Funk-type metrics}, Nonlinear\  Anal.,\ \textbf{199}(2015), 199--208.

\bibitem{KZ} A.~Krist\'aly, W. Zhao, \textit{On the geometry of irreversible metric-measure spaces: Convergence, stability and analytic aspects},  J.\ Math.\ Pures\ Appl.,\ \textbf{158}(9)(2022), 216--292.

\bibitem{Li} B.~Li, \textit{On the classification of projectively flat Finsler metrics with constant flag curvature}, Adv.\ in\ Math.,\ \textbf{257}(2014), 266--284.


\bibitem{Ohta1} S.-I.~Ohta, \textit{Comparison Finsler Geometry},  Springer  Monographs  in  Mathematics,\ Springer,\ Cham,\ 2021.

\bibitem{PT} A.~Papadopoulos, M.~Troyanov, {\it Handbook of Hilbert geometry},\ IRMA Lectures in Mathematics and Theoretical Physics \textbf{22},\ European Mathematical Society Publishing House,\ 2014.

\bibitem{Pogorelov} A.~V.~Pogorelov, \textit{Hilbert's fourth problem},  Wiley,\ New York;\ Holt,\ Rinehart and Winston,\ New York,\ 1979.

\bibitem{Rade1} H.-B.~Rademacher, \textit{A shpere theorem for non-reversible Finsler metrics}, Math.\ Ann.,\ \textbf{328}(3)(2004), 373--387.

\bibitem{Rade} H.-B.~Rademacher, \textit{Non-reversibility Finsler metrics of positive flag curvature}, In ``A\ sampler\ of\ Riemann-Finsler\ geometry",\ MSRI series \textbf{50}, Cambridge\ Univ.\ Press,  2004, 261--302.

\bibitem{RMS} R.~Rossi,  A.~Mielke, G.~Savar\'e, \textit{A metric approach to a class of doubly nonlinear
evolution equations and applications}, Ann.\ Sc.\ Norm.\ Super.\ Pisa\ Cl.\ Sci.,\ \textbf{7}(1)(2008), 97--169.


\bibitem{ShenGauss} Z.~Shen, \textit{Some formulas of Gauss-Bonnet-Chern type in Riemann-Finsler geometry}, J.\ Reine\ Angew.\ Math.,\ \textbf{475} (1996), 149--165.

\bibitem{ShenAdv} Z.~Shen, \textit{Volume comparison and its applications in Riemannian-Finsler geometry},  Adv.\ in\  Math.,\ \textbf{128}(2)(1997), 306--328.

\bibitem{ShenLecture} Z.~Shen, \textit{Lectures on Finsler geometry}, World Scientific, Singapore, 2001.

\bibitem{ShenSpray} Z.~Shen, \textit{Differential Geometry of Spray and Finsler Spaces}, Kluwer\ Acad.\ Publ.,   2001.

\bibitem{Sh1} Z.~Shen, \textit{Projectively  flat Finsler metrics of constant flag curvature}, Trans.\  Amer.\ Math.\ Soc.\ \textbf{355}(4)(2003), 1713--1728.

\bibitem{Szabo} Z.~I.~Szab\'o, \textit{Hilbert's fourth problem. I}, Adv.\ in\ Math.,\ \textbf{59}(3)(1986), 185--301.

\bibitem{To} V.~A.~Toponogov, \textit{Riemannian spaces having their curvatures bounded below by a positive number}, Usphei\ Math.\ Nauk.\, \textbf{14}(1959), 87--135.

\bibitem{WX} B.~Y.~Wu, Y.~L.~Xin, \textit{Comparison theorems in Finsler geometry and their applications},  Math.\  Ann.,\ \textbf{337}(1)(2007), 177--196.

\bibitem{ZS} W.~Zhao, Y.~Shen, \textit{A universal volume comparison theorem for Finsler manifolds and related results},
Canad.\ J.\ Math.,\  \textbf{65}(6)(2013), 1401--1435.


\end{thebibliography}
\end{document}